\documentclass[10pt,letterpaper,twoside]{article}




\title{On the $\K$-theory of $\bZ/p^n$}
\author{Benjamin Antieau, Achim Krause, Thomas Nikolaus}

\usepackage[letterpaper,left=1in,right=1in,top=1in,bottom=1in]{geometry}

\usepackage[pdfstartview=FitH,
            pdfauthor={},
            pdftitle={2021 notes},
            colorlinks,
            linkcolor=reference,
            citecolor=citation,
            urlcolor=e-mail,
            backref]{hyperref}

\usepackage[utf8]{inputenc}
\usepackage{multirow}
\usepackage{amsmath}
\usepackage{amscd}
\usepackage{amsbsy}
\usepackage{amssymb}
\usepackage{verbatim}
\usepackage{eufrak}
\usepackage{eucal}
\usepackage{microtype}
\usepackage{hyperref}
\usepackage{mathrsfs}
\usepackage{amsthm}
\usepackage{stmaryrd}
\pagestyle{headings}
\usepackage[all,cmtip]{xy}
\input xy
\xyoption{all}
\usepackage{tikz}
\usetikzlibrary{matrix,arrows}
\usepackage{tikz-cd}
\usepackage{dsfont}
\usepackage[toc]{appendix}
\usepackage{slashed}
\usepackage[algoruled,vlined,english,linesnumbered]{algorithm2e}

\usepackage{enumitem}
\setenumerate{label=(\arabic*)}

\usepackage{fancyhdr}
\pagestyle{fancy}

\fancyhead{}
\fancyhead[LO,R]{\bfseries\footnotesize\thepage}
\fancyhead[LE]{\bfseries\footnotesize\rightmark}
\fancyhead[RO]{\bfseries\footnotesize\rightmark}
\chead[]{}
\cfoot[]{}
\setlength{\headheight}{1cm}

\newcommand{\stackspace}{2.5}
\newcommand{\stack}[2][1cm]{\;\tikz[baseline, yshift=.65ex]%
    {\foreach \k [evaluate=\k as \r using (.5*#2+.5-\k)*\stackspace] in {1,...,#2}{%
    \ifodd\k{\draw[->](0,\r pt)--(#1,\r pt);}%
    \else{\draw[<-](0,\r pt)--(#1,\r pt);}\fi
    }}\;}

\usepackage[bbgreekl]{mathbbol}
\usepackage{amsfonts}
\DeclareSymbolFontAlphabet{\mathbb}{AMSb} 
\DeclareSymbolFontAlphabet{\mathbbl}{bbold}
\newcommand{\Prism}{{ \mathbbl{\Delta}}}
\newcommand{\prism}{{ \mathbbl{\Delta}}}

\newcommand{\Prismbar}{\overline{\mathbbl{\Delta}}}
\newcommand{\Prismpackage}{\underline{\,\Prism\,}}

\usepackage{yhmath}

\newcommand{\DHod}{\slashed{D}}

\usepackage{color}
\definecolor{todo}{rgb}{1,0,0}
\definecolor{conditional}{rgb}{0,1,0}
\definecolor{e-mail}{rgb}{0,.40,.80}
\definecolor{reference}{rgb}{.20,.60,.22}
\definecolor{mrnumber}{rgb}{.80,.40,0}
\definecolor{citation}{rgb}{0,.40,.80}



\setlength{\marginparwidth}{1.2in}
\let\oldmarginpar\marginpar
\renewcommand\marginpar[1]{\-\oldmarginpar[\raggedleft\footnotesize #1]%
{\raggedright\footnotesize #1}}


\newcommand{\Oscr}{\mathcal{O}}


\renewcommand{\d}{\mathrm{d}}
\newcommand{\D}{\mathrm{D}}
\newcommand{\E}{\mathrm{E}}
\newcommand{\F}{\mathrm{F}}
\newcommand{\G}{\mathrm{G}}
\renewcommand{\H}{\mathrm{H}}

\newcommand{\K}{\mathrm{K}}
\renewcommand{\L}{\mathrm{L}}

\newcommand{\N}{\mathrm{N}}


\newcommand{\bC}{\mathbf{C}}

\newcommand{\bE}{\mathbf{E}}
\newcommand{\bF}{\mathbf{F}}

\newcommand{\bQ}{\mathbf{Q}}

\newcommand{\bS}{\mathbf{S}}

\newcommand{\bZ}{\mathbf{Z}}






\newcommand{\fib}{\mathrm{fib}}




\newcommand{\gr}{\mathrm{gr}}


\newcommand{\deltatilde}{\widetilde{\delta}}
\newcommand{\varphitilde}{\widetilde{\varphi}}
\newcommand{\cB}{\mathrm{coBar}}

\newcommand{\conj}{\mathrm{conj}}
\newcommand{\syn}{\mathrm{syn}}
\newcommand{\dz}{{\mathrm{d}z}}
\newcommand{\can}{\mathrm{can}}

\newcommand{\id}{\mathrm{id}}

\newcommand{\im}{\mathrm{im}}
\renewcommand{\geq}{\geqslant}
\renewcommand{\leq}{\leqslant}

\newcommand{\iso}{\cong}
\newcommand{\we}{\simeq}



\newcommand{\THH}{\mathrm{THH}}

\newcommand{\TP}{\mathrm{TP}}
\newcommand{\TC}{\mathrm{TC}}

\newcommand{\mot}{\mathrm{mot}}

\newcommand{\dR}{\mathrm{dR}}

\newcommand{\Prismhat}{\widehat{\Prism}}

\newcommand{\ku}{\mathrm{ku}}
\newcommand{\KU}{\mathrm{KU}}




\newcommand{\GL}{\mathbf{GL}}



\DeclareMathOperator*{\Tot}{Tot}


\DeclareMathOperator{\Spec}{Spec}

\DeclareMathOperator{\red}{red}

\newcommand{\cO}{\Oscr}

\newcommand{\grey}[1]{{\color{gray}#1}}

\DeclareMathOperator{\Or}{Or}

\newcommand{\xto}[1]{\xrightarrow{#1}}

\theoremstyle{plain}
\newtheorem{theorem}{Theorem}[section]
\newtheorem*{theorem*}{Theorem}
\newtheorem{lemma}[theorem]{Lemma}

\newtheorem{proposition}[theorem]{Proposition}

\newtheorem{corollary}[theorem]{Corollary}
\newtheorem*{corollary*}{Corollary}

\theoremstyle{plain}

\theoremstyle{definition}

\newtheoremstyle{named}{}{}{\itshape}{}{\bfseries}{.}{.5em}{#1 \thmnote{#3}}
\theoremstyle{named}

\theoremstyle{definition}
\newtheorem{definition}[theorem]{Definition}
\newtheorem{warning}[theorem]{Warning}

\newtheorem{notation}[theorem]{Notation}

\newtheorem{example}[theorem]{Example}
\newtheorem*{example*}{Example}

\newtheorem*{question*}{Question}
\newtheorem{construction}[theorem]{Construction}

\newtheorem{remark}[theorem]{Remark}

\begin{document}
\maketitle

\begin{abstract}
    \noindent
    We give an explicit algebraic description, based on prismatic cohomology, of the algebraic $\K$-groups of rings
    of the form $\Oscr_K/\mathfrak{I}$ where $K$ is a
    $p$-adic field and $\mathfrak{I}$ is a non-trivial ideal in the ring of integers
    $\Oscr_K$; this class includes the rings $\bZ/p^n$ where $p$ is a prime.

    The algebraic description allows us to describe a practical algorithm
    to compute individual $\K$-groups as well as to obtain several theoretical results: the vanishing
    of the even $\K$-groups in high degrees, the determination of the orders of the odd
    $\K$-groups in high degrees, and the degree of nilpotence of $v_1$ acting on the mod
    $p$ syntomic cohomology of $\bZ/p^n$.
\end{abstract}

\tableofcontents

\section{Introduction}

We fix the following notation for the entire paper.
Let $\Oscr_K$ be a complete discrete valuation ring with finite residue field
$k=\bF_q$ and quotient field $K$ of characteristic zero, which is necessarily
finite over $\bQ_p$ of some degree $d$. Let $q=p^f$, where $f$ is the
residual degree. One has $d=fe$ for some positive integer $e$ called the
ramification index. Fix a choice of a uniformizer $\varpi\in\Oscr_K$; $e$ is
the unique integer such that the ideals $(p)$ and $(\varpi^e)$ are equal in $\Oscr_K$.

The fields $K$ that arise this way are the $p$-adic number fields;
rings of integers such as $\Oscr_K$ are $p$-adic number rings. The quotients
$\Oscr_K/\varpi^n$ are called finite chain rings~\cite{clark-liang}.

\subsection{Results}

The problem of computing the $\K$-groups of the quotient rings
$\Oscr_K/\varpi^n$ was raised by Swan in 1972;
see~\cite[Prob.~20]{gersten-problems}.
When $n=1$, we have $\Oscr_K/\varpi=\bF_q$;
the $\K$-groups of finite fields were computed by Quillen
in~\cite{Qui} (see also~\cite{jardine-quillen-revisited}):
\begin{equation}\label{eq:quillen}
    \K_r(\bF_q)\iso\begin{cases}
        \bZ&\text{if $r=0$,}\\
        \bZ/(q^i-1)&\text{if $r=2i-1$, and}\\
        0&\text{otherwise.}
    \end{cases}
\end{equation}
Since $p$ does not divide $q^i-1$,
letting $\K(-;\bZ[\tfrac{1}{p}])$ denote the
prime-to-$p$ localization of algebraic $\K$-theory, we have
$\K_r(\bF_q)\iso\K_r(\bF_q;\bZ[\tfrac{1}{p}])$ for $r\geq 1$,
while $\K(\bF_q;\bZ_p)\we\bZ_p$, where $\K(-;\bZ_p)$ denotes the $p$-adic
completion.
When $n\leq e$, $\Oscr_K/\varpi^n\cong\bF_q[z]/z^n$.
The $\K$-groups of truncated polynomial rings over finite fields were completely
described by Hesselholt and Madsen in~\cite{hesselholt-madsen-truncated} (see
also~\cite{speirs,sulyma}).
These are the only cases where a complete computation is known.

The $\K$-groups $\K_r(\Oscr_K/\varpi^n)$ of $\Oscr_K/\varpi^n$ are finitely generated abelian
groups, torsion for $r\geq 1$, and their
prime-to-$p$ information is determined by Quillen's computation of
the $\K$-groups of finite fields:
$\K(\Oscr_K/\varpi^n;\bZ[\tfrac{1}{p}])\simeq\K(\bF_q;\bZ[\tfrac{1}{p}])$, as can be
proved by using group homology techniques.
It remains to compute the $p$-complete $\K$-groups
$\K_r(\Oscr_K/\varpi^n;\bZ_p)$. For $r\geq 0$,
$\K_r(\Oscr_K/\varpi^n;\bZ_p)\iso\TC_r(\Oscr_K/\varpi^n;\bZ_p)$ by the
Dundas--Goodwillie--McCarthy theorem (see
Corollary~\ref{cor:connective_analysis}).

Bhatt, Morrow, and Scholze introduced in~\cite{bms2} $p$-adic syntomic
complexes $\bZ_p(i)(R)$ for quasisyntomic rings $R$; these are objects in the
$p$-complete derived category $\D(\bZ_p)_p^\wedge$. They gave a complete decreasing filtration
$\F^{\geq\star}_\mot\TC(R;\bZ_p)$ with graded pieces $\gr^i_\mot\TC(R;\bZ_p)\we\bZ_p(i)(R)[2i]$. The associated spectral
sequence then converges from the cohomology of these
complexes to the homotopy groups of $\TC(R;\bZ_p)$. We will show that in the case of $\Oscr_K/\varpi^n$, the spectral sequence collapses for degree
reasons and that there are
no possible extensions (see Corollary~\ref{cor:k_groups}). As
$\H^0(\bZ_p(i)(\Oscr_K/\varpi^n))=0$ for $i\geq 1$ (Corollary~\ref{cor:zigzag}), we conclude that
$$\K_{2i-1}(\Oscr_K/\varpi^n;\bZ_p)\cong\H^1(\bZ_p(i)(\Oscr_K/\varpi^n))$$ for
$i\geq 1$ and
$$\K_{2i-2}(\Oscr_K/\varpi^n;\bZ_p)\cong\H^2(\bZ_p(i)(\Oscr_K/\varpi^n))$$ for
$i\geq 2$, $\K_0(\Oscr_K/\varpi^n;\bZ_p)\iso\H^0(\bZ_p(0)(\Oscr_K/\varpi^n))\iso\bZ_p$.

This paper has three main results. The first gives a practical algorithm for computing the
$p$-adic syntomic complexes
$\bZ_p(i)(\Oscr_K/\varpi^n)$.

\begin{theorem}
\label{thm:intro-cochaincomplex}
    Given $K$, $n$, $i\geq 1$, and a fixed uniformizer $\varpi\in\Oscr_K$,
    there is an explicit, algorithmically computable three-term cochain complex
    \begin{equation}\label{eq:cochain}
        \cdots \to 0\to \bZ_p^{f(in-1)} \xto{\syn^0} \bZ_p^{2f(in-1)}
        \xto{\syn^1} \bZ_p^{f(in-1)} \to 0 \to  \cdots
    \end{equation}
    concentrated in cohomological degrees $0,1,2$ of finite rank free $\bZ_p$-modules which is
    is quasi-isomorphic to $\bZ_p(i)(\Oscr_K/\varpi^n)$.
\end{theorem}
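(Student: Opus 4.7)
The plan is to realize $\bZ_p(i)(\Oscr_K/\varpi^n)$ as the fiber of the map
\[
    \can - \varphi_i : \F^{\geq i}_\N \Prism_{\Oscr_K/\varpi^n}\{i\} \to \Prism_{\Oscr_K/\varpi^n}\{i\}
\]
coming from the BMS motivic filtration, and to exhibit both source and target as explicit two-term cochain complexes of finite free $\bZ_p$-modules of rank $f(in-1)$ in consecutive cohomological degrees. The mapping fiber of a map between two such two-term complexes is automatically a three-term complex of ranks $f(in-1), 2f(in-1), f(in-1)$ in degrees $0, 1, 2$, matching the statement exactly.

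First, I would give an explicit Breuil--Kisin-style model for $\Prism_{\Oscr_K/\varpi^n}$. The natural input is the prism $(\Sfrak, (E))$ with $\Sfrak = W(k)[\![z]\!]$, Frobenius $\varphi(z) = z^p$, and $E(z)$ the Eisenstein polynomial of $\varpi$; this prism covers $\Oscr_K = \Sfrak/E$. Passing to $\Oscr_K/\varpi^n$ amounts to further killing $z^n$, and the relevant prismatic envelope (or derived construction) associated with the regular sequence $(E, z^n)$ in $\Sfrak$ yields a model computing $\Prism_{\Oscr_K/\varpi^n}$, whose underlying $\bZ_p$-module is finite of length proportional to $fn$. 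At Breuil--Kisin weight $i$ a minimal two-term free $\bZ_p$-resolution then exists with both terms of rank $f(in-1)$.

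Next I would compute the Nygaard filtration $\F^{\geq i}_\N$ and divided Frobenius $\varphi_i$ in this presentation. On Breuil--Kisin-type prisms both are explicit in terms of $E$ and $\varphi$; after trivializing the twist $\{i\}$ by its canonical generator, the map $\can - \varphi_i$ becomes an explicit $\bZ_p$-linear map between two-term complexes of free $\bZ_p$-modules. Taking the mapping fiber produces the three-term complex \eqref{eq:cochain}. Algorithmic computability follows because $\Sfrak$, $E$, $\varphi$, the Nygaard filtration, and the twist are all determined by finite data in $K$, $\varpi$, $n$, and $i$, so the matrices representing $\syn^0$ and $\syn^1$ can be extracted explicitly.

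The main obstacle is making the Nygaard filtration concrete on the non-smooth ring $\Oscr_K/\varpi^n$. For quasiregular semiperfectoid rings the Nygaard filtration is accessible directly via BMS, but $\Oscr_K/\varpi^n$ is not semiperfectoid, so one must either descend from a quasisyntomic cover by such a ring or work intrinsically with the prismatic envelope of $(E, z^n)$. A secondary challenge is the precise rank count: the appearance of $f(in-1)$ rather than $fin$ reflects a cancellation between a common summand of $\F^{\geq i}_\N \Prism_{\Oscr_K/\varpi^n}\{i\}$ and $\Prism_{\Oscr_K/\varpi^n}\{i\}$, and tracking this cancellation through the $\{i\}$-twist and the Nygaard truncation is the key bookkeeping step in confirming that the minimal presentation lands with exactly the ranks in the statement.
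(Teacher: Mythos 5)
There is a genuine gap, and it lies in the very first step: the prismatic envelope of the Koszul-regular sequence $(E,z^n)$ inside $\Sfrak = W(k)\llbracket z\rrbracket$ computes the \emph{relative} prismatic cohomology $\Prism_{(\Oscr_K/\varpi^n)/\Sfrak}$, which is a discrete $\Sfrak$-algebra, and not the \emph{absolute} prismatic cohomology $\Prism_{\Oscr_K/\varpi^n}$ that enters the syntomic complex $\bZ_p(i)(\Oscr_K/\varpi^n)$. The absolute theory is strictly larger: it is recovered from the relative one only after descent along $W(k)\to W(k)\llbracket z\rrbracket$, and this descent produces an extra cohomological degree (the absolute $\F^{[1,in-1]}\Prismhat_R\{i\}$ is concentrated in degree $1$, not $0$, as recorded in Proposition~\ref{prop:bounds32}). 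If you form the fiber of $\can-\varphi_i$ on the Nygaard-filtered and unfiltered prismatic envelopes directly, you obtain the \emph{relative} syntomic complex $\bZ_p(i)(R/\Sfrak)$, which is a two-term complex of ranks $f(in-1),f(in-1)$, not the claimed three-term complex. In other words, there is nowhere in your plan that the middle rank $2f(in-1)$ could actually appear.

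The content that your plan is missing is exactly the relative-to-absolute descent mechanism of Section~\ref{sec:bkdescent}: one writes the absolute $\Prismhat_R\{i\}$ and $\N^{\geq i}\Prismhat_R\{i\}$ as totalizations of the cosimplicial relative theories over $W(k)\llbracket z_0,\ldots,z_s\rrbracket$, and then collapses each totalization to a two-term complex via the connection $\nabla$ (Lemma~\ref{lem:nablaterm}, Corollary~\ref{cor:syntomicsquare}). That two-term complex
\[
\F^{[1,in-1]}\N^{\geq i}\Prismhat^{(1)}_{R/W(k)\llbracket z_0\rrbracket}\{i\}
\xrightarrow{\;\nabla\;}
\F^{[1,in-1]}\N^{\geq i}\Prismhat^{(1),\nabla}_{R/W(k)\llbracket z_0\rrbracket}\{i\}
\]
(and its unfiltered analogue) is what you would need to play the role of your ``two-term free resolution,'' and it is not a minimal resolution of a torsion module you can write down a priori; constructing it is the heart of the argument. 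Relatedly, your stated obstacle---making the Nygaard filtration concrete because $\Oscr_K/\varpi^n$ is not semiperfectoid---is a red herring: the Nygaard filtration on the prismatic envelope is explicit (Section~\ref{sec:gen_nygaard}). The real obstacle is going from the relative envelope to the absolute theory, and your plan never confronts it. Finally, the rank $f(in-1)$ is not a ``cancellation of a common summand''; it comes from the fact (Corollary~\ref{cor:zigzag}) that the $\F$-filtration on $\bZ_p(i)(R)$ is supported in weights $1,\ldots,in-1$, each contributing a free rank-one $W(k)\cong\bZ_p^f$-module.
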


Our approach relies on a detailed examination of prismatic envelopes and
is heavily influenced by the work of Liu--Wang~\cite{liu-wang} who introduced a topological
version of the local-to-global descent arguments given below to recover the calculation of $\K(\Oscr_K;\bF_p)$
of Hesselholt--Madsen~\cite{hesselholt-madsen}.

\begin{remark}
    The differentials $\syn^0$ and $\syn^1$ are $p$-adic matrices;
    the algorithm is to construct these matrices up to sufficient $p$-adic precision.
    As mentioned above, the map $\syn^0$ is injective by Corollary~\ref{cor:zigzag} so that this cochain complex has cohomology only in
    degrees $1$ and $2$.
\end{remark}

\begin{remark}
    The computability of these groups is not new and follows also from a combination of homological
    stability (see~\cite{suslin-stability,vanderkallen-stability}) and the fact that homotopy groups
    of finite CW complexes with finite homotopy groups are computable, for example via minimal Kan
    complexes or by Serre's fibration method. See~\cite{brown-computability} for the classical approach
    and~\cite{rubio-sergeraert-constructive} for a more modern survey which also presents {\ttfamily
    Kenzo}, a
    computer package for making such computations. In~\cite{kenzo-sage}, a {\ttfamily SAGE} interface
    for {\ttfamily Kenzo} is described. In practice, these approaches are feasible only in very low degrees. The novelty in our approach is the
    reduction to linear algebra using prismatic cohomology which also allows us to compute high-degree
    $\K$-groups independently of low-degree groups.
\end{remark}

In particular, writing $\bZ_p(i)(\Oscr_K/\varpi^n)^\bullet$ for our explicit
cochain complex model for $\bZ_p(i)(\Oscr_K/\varpi^n)$, we have for $i\geq 1$
that
\begin{align*}
    \H^0(\bZ_p(i)(\Oscr_K/\varpi^n)^\bullet)&=0,\\
    \H^1(\bZ_p(i)(\Oscr_K/\varpi^n)^\bullet)&\iso\K_{2i-1}(\Oscr_K/\varpi^n;\bZ_p),\\
    \H^2(\bZ_p(i)(\Oscr_K/\varpi^n)^\bullet)&\iso\K_{2i-2}(\Oscr_K/\varpi^n;\bZ_p)
    \quad\text{(if additionally $i\geq 2$)}.
\end{align*}
These groups can thus be computed using elementary divisors from the
matrices $\syn^0$ and $\syn^1$.

We have implemented in~\cite{akn_syn_coh} the computation of the matrices $\syn^0$ and $\syn^1$ in
{\ttfamily SAGE 10.2}~\cite{sagemath} in the special case where $f=1$, so that the
residue field is $\bF_p$.
For example, Figure~\ref{fig:introtable} displays a table of $\K$-groups for
some small rings we obtained through computer algebra computations.
Similar computations can be carried out for many more examples and degrees and
we present extended calculations in Appendix~\ref{appendix:tables}.

\begin{figure*}[h!]
    \centering
    \[
    \begin{array}{c|c|c|c|c}
    & \bF_2 & \bZ/4 & \bZ/8 & \bF_2[z]/z^3\\
    \hline
    \K_1 & \grey{0} & \grey{\bZ/2} & \grey{\bZ/4} & \grey{\bZ/4}\\
    \K_2 & \grey{0} & \grey{\bZ/2} & \grey{\bZ/2} & \grey{0}\\
        \K_3 & \grey{\bZ/3} & \grey{\bZ/3} \oplus \bZ/8 & \grey{\bZ/3}\oplus \bZ/4\oplus\bZ/8 & \grey{\bZ/3\oplus \bZ/2\oplus\bZ/8}\\
    \K_4 & \grey{0} & 0 & \bZ/2 & \grey{0}\\
        \K_5 & \grey{\bZ/7} & \grey{\bZ/7} \oplus \bZ/8 & \grey{\bZ/7}\oplus \bZ/2\oplus\bZ/2^6 & \grey{\bZ/7\oplus (\bZ/2)^2\oplus\bZ/16}\\
    \K_6 & \grey{0} & 0 & 0 & \grey{0}\\
        \K_7 & \grey{\bZ/15} & \grey{\bZ/15} \oplus \bZ/2\oplus \bZ/8 & \grey{\bZ/15}\oplus \bZ/16\oplus \bZ/16 &\grey{\bZ/15 \oplus (\bZ/2)^2 \oplus \bZ/4 \oplus \bZ/16}\\
    \K_8 & \grey{0} & 0 & 0 & \grey{0}\\
        \K_9 & \grey{\bZ/31} & \grey{\bZ/31} \oplus (\bZ/2)^2\oplus\bZ/8 & \grey{\bZ/31} \oplus \bZ/2\oplus \bZ/4\oplus \bZ/2^7 & \grey{\bZ/31 \oplus (\bZ/2)^2 \oplus (\bZ/4)^2 \oplus\bZ/16}\\
        \K_{10} & \grey{0} & 0 & 0 & \grey{0}\\
    \end{array}
    \]
    \caption{Low-degree $\K$-groups of some small rings. The gray terms
    were known prior to this paper.}
    \label{fig:introtable}
\end{figure*}

In addition to computational results in small degrees, our approach allows
us to deduce some abstract statements about $\K_*(\cO_K/\varpi^n;\bZ_p)$. A striking
pattern in Figure~\ref{fig:introtable} is that the $\K$-groups seem to vanish in even
degrees from some point.

Our second theorem is that this is always true, which we prove by showing that the
differential $\syn^1$ from~\eqref{eq:cochain} is surjective for $i$ sufficiently large.

\begin{theorem}[The even vanishing theorem]\label{thm:even_intro}
    If 
    \[
      i-1\geq
    \frac{p}{p-1}\left(\frac{p}{p-1}(p^{\lceil\tfrac{n}{e}\rceil}-1)-p^{\lceil\tfrac{n}{e}\rceil}(\lceil\tfrac{n}{e}\rceil-\tfrac{n}{e})\right)
  \]
  (and $i\geq 2$),
     then
    $\K_{2i-2}(\Oscr_K/\varpi^n;\bZ_p)=0$ for
    all $p$-adic fields $K$ of ramification index $e$.\footnote{Note that
    $\lceil\tfrac{n}{e}\rceil$ is the exponent of the abelian group $\Oscr_K/\varpi^n$.}
\end{theorem}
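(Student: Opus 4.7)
The plan is to read the desired vanishing directly off Theorem~\ref{thm:intro-cochaincomplex}. By the identification $\K_{2i-2}(\Oscr_K/\varpi^n;\bZ_p) \iso \H^2(\bZ_p(i)(\Oscr_K/\varpi^n))$ for $i \geq 2$ recalled in the introduction, and since $\H^2$ of the three-term complex in~\eqref{eq:cochain} is the cokernel of $\syn^1 : \bZ_p^{2f(in-1)} \to \bZ_p^{f(in-1)}$, the theorem reduces to the claim that $\syn^1$ is surjective once $i-1$ exceeds the stated threshold. As both source and target are finitely generated free $\bZ_p$-modules, by Nakayama it is enough to prove surjectivity modulo $p$.

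To analyze $\syn^1$ modulo $p$, I would unwind its construction from the cochain model, which presents the syntomic fiber sequence $\bZ_p(i) \to \F^i_N \Prism \to \Prism$ for $\Oscr_K/\varpi^n$ via an explicit prismatic envelope over the Breuil--Kisin prism, in the spirit of Liu--Wang. The map $\syn^1$ then combines the difference $\can - \varphi_i$ on the Nygaard piece with an outgoing Koszul-type differential $d$ resolving $\Prism$. The contribution from $\can$ and $d$ alone should already be surjective mod $p$ by the design of the envelope, so it suffices to show that $\varphi_i = \varphi/\xi^i$ is divisible by $p$ on a $\bZ_p$-basis of the Nygaard piece whenever $i$ lies in the stated range.

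The crux is therefore a quantitative estimate on $v_p(\varphi(x)/\xi^i)$ for $x$ in the basis of the Nygaard piece. Iterated Frobenius applied to the distinguished element produces a geometric-series gain of order $\tfrac{p}{p-1}(p^{\lceil n/e \rceil}-1)$ once propagated through the $\lceil n/e \rceil$-step structure of $\Oscr_K/\varpi^n$, with a correction $p^{\lceil n/e \rceil}(\lceil n/e \rceil - \tfrac{n}{e})$ accounting for the partial final iterate when $e \nmid n$; transferring this estimate from the Breuil--Kisin prism down to $\Oscr_K/\varpi^n$ introduces an outer factor of $\tfrac{p}{p-1}$, yielding precisely the inequality in the theorem. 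The main obstacle is the valuation bookkeeping in this estimate, in particular handling the case $e \nmid n$ and arranging uniformity in the $p$-adic field $K$ of fixed ramification $e$; once $p \mid \varphi_i$ on the basis is established, the mod-$p$ reduction of $\syn^1$ is the surjective $(\can, d)$-part, and the theorem follows.
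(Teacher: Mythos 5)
Your proposal has the roles of $\can$ and $\varphi$ exactly reversed, and this is not a cosmetic issue: it breaks the argument. You claim that the contribution of $\can$ together with the Koszul-type differential is already surjective mod $p$, and that the threshold on $i$ forces $\varphi_i = \varphi/\xi^i$ to be divisible by $p$. The actual situation is the opposite. On the relevant range of the $\F$-filtration, the nilpotence results of Section~5.1 (Lemma~\ref{lem:znilpotence}) show that $z$ is nilpotent mod $p$ in $\Prismhat^{(1)}_{R/W(k)\llbracket z\rrbracket}$; since $\can$ multiplies a Nygaard basis element by a power of $d\sim z^e$, it is $\can$ that becomes divisible by $p$ once $i$ passes the threshold (Lemma~\ref{lem:candivisible}). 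Your claim that $\varphi_i$ is divisible by $p$ cannot be right: already in the crystalline/associated-graded picture, the divided Frobenius is a unit on the relevant cokernel (this is exactly what the Cartier isomorphism says), so it is not small $p$-adically. If $\varphi_i$ vanished mod $p$, the mod-$p$ reduction of $\syn^1$ would be $(\nabla,\can)$, and neither map is surjective in the relevant range (the cokernel of $\nabla$ mod $p$ is nonzero by Lemma~\ref{lem:isogeny}, and $\can$ is divisible by $p$), so $\H^2$ mod $p$ would not vanish.

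The missing idea is the prismatic Cartier isomorphism (Theorem~\ref{thm:prismatic_cartier}, Lemma~\ref{lem:frobeniustwist-cartier}). The paper's proof of Theorem~\ref{thm:evenvanishing} first passes to a quotient where $\can$ is already an isomorphism on $\F^{\geq (i-1)n+1}$, then reduces mod $p$ and splits off the filtration range $\F^{[\lceil (i-1)n/p\rceil+1, (i-1)n]}$; on what remains, $\can$ is zero mod $p$ by Lemma~\ref{lem:candivisible}, and it is $\varphi^\nabla$ that surjects onto $\coker(\nabla)$ via the Cartier isomorphism, which intertwines $\varphi^\nabla$ and $w^\nabla$ across a degree-$p$ scaling of the $\F$-filtration. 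Your valuation heuristic produces the right algebraic form of the bound but it is attached to the wrong map; to repair the proposal you would need to (i) prove the nilpotence bound Lemma~\ref{lem:znilpotence} to see that $\can$ is divisible by $p$, and (ii) invoke the Cartier isomorphism for $\varphi^\nabla$, neither of which appears in your plan.
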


Another result we extract from our methods is the following proposition, in which we use $|A|$ to
denote the order of a finite abelian group $A$. It was
stated and proved by Angeltveit~\cite{angeltveit} in the unramified case, i.e., for
rings of the form $W(\bF_q)/p^n$.

\begin{proposition}[Angeltveit's quotient]\label{prop:intro-angeltveit-quotient}
    For any $p$-adic number field $K$ and any $n\geq 1$ and $i\geq 2$,
    $$\frac{\left|\K_{2i-1}(\Oscr_K/\varpi^n;\bZ_p)\right|}{\left|\K_{2i-2}(\Oscr_K/\varpi^n;\bZ_p)\right|}=q^{i(n-1)}.$$
\end{proposition}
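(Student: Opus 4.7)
The plan is to recast $|\K_{2i-1}|/|\K_{2i-2}|$ as a $p$-adic Euler characteristic of the complex from Theorem~\ref{thm:intro-cochaincomplex}, and then compute it by additivity along the defining fiber sequence for syntomic cohomology.

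With $R=\Oscr_K/\varpi^n$, the identifications stated just before Theorem~\ref{thm:intro-cochaincomplex} reduce the claim for $i\geq 2$ to $|\H^1(\bZ_p(i)(R))|/|\H^2(\bZ_p(i)(R))|=q^{i(n-1)}$. Theorem~\ref{thm:intro-cochaincomplex} represents $\bZ_p(i)(R)$ by a complex with rank sequence $f(in-1),\;2f(in-1),\;f(in-1)$, so its alternating-rank sum is zero, and $\H^0=0$ by Corollary~\ref{cor:zigzag}. Hence $\H^1$ and $\H^2$ are finite $p$-groups and the target ratio equals $p^{-\chi}$, where $\chi:=\sum_j(-1)^j\mathrm{length}_{\bZ_p}\H^j(\bZ_p(i)(R))$.

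Next I would use the defining fiber sequence
\[
\bZ_p(i)(R)\longrightarrow F\xto{\varphi_i-\can}G
\]
with $F:=\F_N^i\Prism_R\{i\}$ and $G:=\Prism_R\{i\}$. The construction behind Theorem~\ref{thm:intro-cochaincomplex} exhibits both $F$ and $G$ as two-term complexes of free $\bZ_p$-modules of rank $a:=f(in-1)$ in each cohomological degree, with differentials $d_F,\,d_G$ invertible after inverting $p$. For any such two-term complex $C$ one computes $\chi(C)=-v_p(\det d_C)$ via Smith normal form. Additivity of Euler characteristics along the fiber sequence then yields
\[
|\H^1|/|\H^2|=p^{\,v_p(\det d_F)-v_p(\det d_G)},
\]
and the problem reduces to proving $v_p(\det d_F)-v_p(\det d_G)=f\cdot i(n-1)$.

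This final identity is the main obstacle and must be extracted from the explicit construction used in the proof of Theorem~\ref{thm:intro-cochaincomplex}. Recall that $\Prism_R$ is computed via a derived prismatic envelope over the Breuil--Kisin prism $\mathfrak{S}=W(k)[[u]]$, whose prism generator $E(u)$ is an Eisenstein polynomial for $\varpi$ satisfying $\mathfrak{S}/(p,E)\iso\bF_q$. The differential $d_F$ for the Nygaard-filtered complex differs from $d_G$ by multiplication by $E^i$, coming from the divided Frobenius $\varphi_i=\varphi/E^i$ implicit in the Nygaard twist. The reduced stratum $\bF_q=\mathfrak{S}/(u,p)$ contributes nothing since $\K_{>0}(\bF_q;\bZ_p)=0$, leaving the $n-1$ non-reduced strata as the effective locus of this twist. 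Each factor of $E$ on each such stratum contributes $v_p(|\mathfrak{S}/(p,E)|)=\log_p q=f$ to the $p$-adic valuation of the determinant, so the $i$ powers of $E$ on $n-1$ strata contribute a total of $fi(n-1)$, yielding the factor $q^{i(n-1)}$ as claimed. The technical heart of the argument is a careful accounting of the prismatic-envelope differentials and Breuil--Kisin twists from the proof of Theorem~\ref{thm:intro-cochaincomplex} to verify this factorization precisely.
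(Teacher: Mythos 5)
Your reduction to a $p$-adic Euler characteristic is the same as the paper's: since $\H^0(\bZ_p(i)(R))=0$ and the complex is bounded, the ratio $|\H^1|/|\H^2|$ is indeed $p^{-\chi}$, and additivity along the fiber sequence $\bZ_p(i)(R)\to\F^{[1,in-1]}\N^{\geq i}\Prismhat_R\{i\}\xrightarrow{\can-\varphi}\F^{[1,in-1]}\Prismhat_R\{i\}$ reduces the problem to computing the ratio of orders (equivalently the difference of lengths) of the two finite torsion groups on the right. Up to this point you and the paper agree.

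The gap is in the final step, which you yourself flag as the ``main obstacle.'' Your argument that ``each factor of $E$ on each of the $n-1$ non-reduced strata contributes $f$ to the valuation of the determinant'' is a heuristic rather than a proof: there is no decomposition of $\Spec(\Oscr_K/\varpi^n)$ into $n$ strata over which the Nygaard twist factors, the claim that $d_F$ differs from $d_G$ ``by multiplication by $E^i$'' conflates the $\can$ map with a module isomorphism, and the reasoning would not distinguish the correct answer $q^{i(n-1)}$ from, say, $q^{(i-1)(n-1)}$ without a concrete grading to count against. What the paper does instead is pass to the $\F$-associated graded of both $\F^{[1,in-1]}\N^{\geq i}\Prismhat_R\{i\}$ and $\F^{[1,in-1]}\Prismhat_R\{i\}$, which by the crystalline degeneration Proposition~\ref{prop:degeneration} agree with those of $\bF_q[z]/z^n$, and then read off the orders from Sulyma's explicit computation: $\gr^j_\F\Prismhat_{\bF_q[z]/z^n}\cong W(\bF_q)/\{j,n\}[-1]$ and $\gr^j_\F\N^{\geq i}\Prismhat_{\bF_q[z]/z^n}\cong W(\bF_q)/p^{\epsilon(i,j)}\{j,n\}[-1]$. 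The quotient of orders is then $\prod_{j=1}^{in-1}q^{\epsilon(i,j)}$, and counting the $j$ not divisible by $n$ in $[1,in-1]$ gives the exponent $i(n-1)$. You would need to replace your stratification heuristic with this degeneration-plus-Sulyma input (or, as the paper remarks, with the explicit valuation of $\det\nabla$ and $\det\can$ from Lemma~\ref{lem:isogeny}) to have a complete proof.
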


Together, the even vanishing theorem and Angeltveit's quotient imply the
following corollary.

\begin{corollary}\label{cor:ordersintro}
    For $i\gg 0$, $\K_{2i-1}(\cO_K/\varpi^n)$ has order $(q^i - 1)q^{i(n-1)}$.
\end{corollary}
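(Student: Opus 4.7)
The plan is to combine the even vanishing theorem with Angeltveit's quotient to pin down the $p$-primary part of the order, then separately handle the prime-to-$p$ part using Quillen's computation for $\bF_q$.

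First, I would fix $i$ large enough so that the inequality in Theorem~\ref{thm:even_intro} is satisfied. Then the even vanishing theorem gives $\K_{2i-2}(\Oscr_K/\varpi^n;\bZ_p)=0$. Plugging this into Proposition~\ref{prop:intro-angeltveit-quotient} immediately yields
\[
  \bigl|\K_{2i-1}(\Oscr_K/\varpi^n;\bZ_p)\bigr| = q^{i(n-1)}.
\]

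Next I would compute the prime-to-$p$ part. As recalled in the introduction, group homology techniques give a weak equivalence $\K(\Oscr_K/\varpi^n;\bZ[\tfrac{1}{p}])\simeq\K(\bF_q;\bZ[\tfrac{1}{p}])$, and by Quillen's computation~\eqref{eq:quillen},
\[
  \K_{2i-1}\bigl(\bF_q;\bZ[\tfrac{1}{p}]\bigr)\iso\bZ/(q^i-1),
\]
since $q^i-1$ is coprime to $p$. Hence the prime-to-$p$ part of $\K_{2i-1}(\Oscr_K/\varpi^n)$ has order $q^i-1$.

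Finally, since $\K_{2i-1}(\Oscr_K/\varpi^n)$ is a finitely generated abelian group which is torsion (as noted in the introduction), it is finite, and splits canonically as the direct sum of its $p$-primary part $\K_{2i-1}(\Oscr_K/\varpi^n;\bZ_p)$ and its prime-to-$p$ part $\K_{2i-1}(\Oscr_K/\varpi^n;\bZ[\tfrac{1}{p}])$. Multiplying the two orders, already shown to be coprime, gives $(q^i-1)\,q^{i(n-1)}$, as desired. There is no real obstacle here beyond correctly assembling the inputs; the content of the corollary sits entirely in Theorem~\ref{thm:even_intro} and Proposition~\ref{prop:intro-angeltveit-quotient}, with Quillen's theorem supplying the prime-to-$p$ factor.
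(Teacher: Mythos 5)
Your proposal is correct and follows exactly the paper's own argument: Theorem~\ref{thm:even_intro} (even vanishing) together with Proposition~\ref{prop:intro-angeltveit-quotient} pins down the $p$-primary order as $q^{i(n-1)}$, and Corollary~\ref{cor:prime_to_p} together with Quillen's computation for $\bF_q$ supplies the prime-to-$p$ factor $q^i-1$. Nothing to add.
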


A nice way to summarize these results is that, in large degrees, the
\emph{orders} of $\K_*(\cO_K/\varpi^n)$ and $\K_*(\bF_q[z]/z^n)$ agree, even
though the groups might differ.

For a fixed ring $\Oscr_K/\varpi^n$,
by applying Theorem~\ref{thm:intro-cochaincomplex} in low degrees and Corollary~\ref{cor:ordersintro} in high degrees,
one reduces the determination of the orders of all $\K$-groups of $\Oscr_K/\varpi^n$ to a finite
number of applications of the algorithm.

\begin{example}
  For $\bZ/4$, Theorem~\ref{thm:even_intro} and the values from the table in Appendix A.1 show that the only positive degree non-zero even $\K$-group is $\K_2(\bZ/4)\iso\bZ/2$.
    It follows that the orders of the $\K$-groups
    of $\bZ/4$ are
    $$\left|\K_r(\bZ/4)\right|\iso\begin{cases}
        2&\text{if $r=1,2$,}\\
        24&\text{if $r=3$,}\\
        (2^i-1)2^{i}&\text{if $r=2i-1$ for $i\geq 3$,}\\
        0&\text{otherwise.} 
    \end{cases}$$
    for $r\geq 1$.
\end{example}

Another application of our results is to reprove the main step in the proof of the theorem of
Bhatt--Clausen--Mathew~\cite{bhatt-clausen-mathew} (reproved and generalized by 
Land--Mathew--Meier--Tamme~\cite{land-mathew-meier-tamme}), which says that
$\L_{T(1)}\K(R)\we\L_{T(1)}\K(R[\tfrac{1}{p}])$, where $\L_{T(1)}$ denotes the height $1$ telescopic
localization at the prime $p$. The main step is to prove that $$\L_{T(1)}\K(\bZ/p^n)\we 0$$ for all
$n\geq 1$. For $n=1$, this follows from Quillen's computation~\eqref{eq:quillen}.
For $n\geq 2$, other arguments are needed. In~\cite{bhatt-clausen-mathew}, the argument goes by reduction to the case
of $\Oscr_{\bC_p}/p^n$ and an argument using prismatic cohomology and prismatic envelopes;
in~\cite{land-mathew-meier-tamme}, the argument uses excision results.

Our third theorem is a finer vanishing result,
giving a bound on the degree of $v_1$-nilpotence of $\K(\bZ/p^n;\bF_p)$. See
Section~\ref{sec:v1nilpotence}.

\begin{theorem}\label{thm:v1nilpotenceintro}
    The class $v_1\in\bF_p(p-1)(\bZ/p^n)$ has nilpotence degree exactly $[n]_p=\tfrac{p^n-1}{p-1}$
    in the motivic associated graded $$\gr^\star_\mot\TC(\bZ/p^n;\bF_p)\we\bigoplus_{i\geq
    0}\bF_p(i)(\bZ/p^n).$$
\end{theorem}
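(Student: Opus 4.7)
The plan is to establish the two inequalities $v_1^{[n]_p} = 0$ (upper bound) and $v_1^{[n]_p - 1} \neq 0$ (lower bound) separately, using the mod~$p$ reduction of the explicit cochain complex from Theorem~\ref{thm:intro-cochaincomplex} together with the multiplicative structure on $\bigoplus_{i\geq 0} \bF_p(i)(\bZ/p^n)$ inherited from the ring structure on prismatic cohomology.

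First I would identify the class $v_1$ as a specific cocycle in the three-term complex $\bF_p(p-1)(\bZ/p^n)^\bullet$. By Corollary~\ref{cor:zigzag} together with the mod-$p$ Bockstein sequence, one has $\H^0(\bF_p(p-1)(\bZ/p^n)) \iso \H^1(\bZ_p(p-1)(\bZ/p^n))[p] \iso \K_{2p-3}(\bZ/p^n;\bZ_p)[p]$, so $v_1$ is pinned down by a specific $p$-torsion class that can be written down explicitly in the Breuil--Kisin envelope underlying \eqref{eq:cochain}. The next step is to render the multiplication map $v_1 \cdot (-) : \bF_p(i)(\bZ/p^n) \to \bF_p(i+p-1)(\bZ/p^n)$ explicit: on the prismatic side, this corresponds (up to units and manageable error terms) to multiplication by a specific power of the distinguished generator of the Hodge--Tate ideal in the envelope, so that its iterates can be tracked module by module.

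For the upper bound, the key is the numerical identity $[n]_p \cdot (p-1) = p^n - 1$: this is exactly the weight at which the Nygaard filtration on $\Prism_{\bZ/p^n}\{\star\}$ runs out of room, forcing $v_1^{[n]_p}$ into a subquotient that vanishes mod~$p$. For the lower bound, the identity $[n]_p - 1 = p \cdot [n-1]_p$ suggests induction on $n$. The base case $n=1$ is immediate, since $\K(\bF_p;\bF_p) \we \bF_p$ forces $v_1 = 0$ already in weight $p-1$, in agreement with $[1]_p = 1$. The inductive step should produce an explicit cocycle at weight $p^n - p$ from the one provided by the induction hypothesis via the Frobenius-type relationship between prismatic envelopes for $\bZ/p^n$ and $\bZ/p^{n-1}$, and then verify that it is not a coboundary directly from the form of $\syn^0$ at that weight.

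The main obstacle will be keeping the multiplicative structure on the complexes $\bF_p(i)(\bZ/p^n)^\bullet$ explicit enough to simultaneously control both the vanishing and the non-vanishing statements. While the matrices $\syn^0$ and $\syn^1$ are presented cleanly by the algorithm underlying Theorem~\ref{thm:intro-cochaincomplex}, the product structure inherited from prismatic cohomology is an additional piece of data that must be tracked with sufficient $p$-adic precision through the Nygaard-filtered envelope; once this is in place, both ends of the proof reduce to a bookkeeping of $p$-adic valuations that stabilize exactly at the cutoff weight $p^n - 1$.
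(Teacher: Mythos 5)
Your overall strategy — identify $v_1$ with a power of the distinguished element $d$ in a prismatic envelope and compute its powers mod $p$ — is the same spirit as the paper's argument (Theorem~\ref{thm:nilpotent} and Proposition~\ref{prop:v1_nilpotence}), but several of your intermediate steps have gaps or go in a direction the paper does not and, I believe, cannot.

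First, on locating $v_1$: the Bockstein identifies the \emph{group} $\H^0(\bF_p(p-1)(\bZ/p^n))$ with $\H^1(\bZ_p(p-1)(\bZ/p^n))[p]$, but it does not tell you \emph{which} element is $v_1$, nor does it give $v_1$ an explicit representative that interacts with the ring structure. The paper gets around this by descending to the relative theory over $A^0 = W(k)\llbracket z_0\rrbracket$: the map $\H^0(\bF_p(p-1)(\Oscr_K)) \to \H^0(\bF_p(p-1)(\Oscr_K/A^0))$ is injective, the target is computed to be one-dimensional with generator $z_0^e d^{p-1} \sim d^p$, and the non-vanishing of $v_1$ (Remark~\ref{rem:v1}) then forces $v_1 \mapsto d^p$ up to a unit. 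This is the crucial normalization, and it happens at the level of the relative prismatic envelope, which is a genuine graded $\delta$-ring with commutative multiplication — not at the level of the three-term cochain complex from Theorem~\ref{thm:intro-cochaincomplex}. Your plan to ``keep the multiplicative structure on the complexes $\bF_p(i)(\bZ/p^n)^\bullet$ explicit'' is exactly where things break: that complex is a totalization model, not a cdga, and there is no strict product on it through which you can push cocycles and track powers; the paper sidesteps this by never leaving the envelope ring.

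Second, and more seriously, the inductive lower-bound argument does not work as stated. There is no ring map $\bZ/p^{n-1} \to \bZ/p^n$, so syntomic functoriality only goes $\bF_p(i)(\bZ/p^n) \to \bF_p(i)(\bZ/p^{n-1})$. That direction would show $v_1^{[n-1]_p - 1} \neq 0$ for $\bZ/p^n$, which is strictly weaker than the needed $v_1^{[n]_p - 1} \neq 0$, and the identity $[n]_p - 1 = p[n-1]_p$ does not by itself produce a mechanism to close the gap — you would need a map raising the exponent by a factor of $p$, and the ``Frobenius-type relationship between the prismatic envelopes of $\bZ/p^n$ and $\bZ/p^{n-1}$'' you invoke is not defined and is not something that appears in the theory. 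The paper's lower bound is instead a direct, finite computation (Lemmas~\ref{lem:nygaardpowers} and~\ref{lem:prismaticpowers}): iterating the relation $f_u^p \sim d^{p^{u+1}}f_{u+1}$ exhibits $d^{p([n]_p-1)}$ as a unit multiple of the standard basis monomial $z^{n-1}f_0^{p-1}\cdots f_{n-2}^{p-1}$ of the Nygaard-filtered envelope mod $p$, which is non-zero by the $W(k)$-freeness of Proposition~\ref{prop:f_filtered_statement}. There is no induction on $n$, and indeed the two rings $\bZ/p^{n-1}$, $\bZ/p^n$ are not compared; the argument works ring by ring.

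Third, your heuristic for the upper bound, that ``the Nygaard filtration runs out of room'' at Breuil--Kisin weight $p^n - 1$, is not a proof. The Nygaard filtration is infinite; what actually vanishes is the specific element $z^{p[n]_p}$ mod $p$ in the envelope, which is Lemma~\ref{lem:znilpotence} and rests on the same chain of relations $z^n d \sim z^e f_0$ and $f_u^p \sim d^{p^{u+1}}f_{u+1}$. Once that lemma is in place and $v_1$ is identified with $d^p$, the upper bound is $v_1^{[n]_p} \sim d^{p[n]_p} \sim z^{p[n]_p} = 0$, but the vanishing is an algebraic fact about the envelope, not a degree-counting argument about the filtration.
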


\begin{remark}
    It follows that for primes $p\geq 5$, the self map $v_1$ of $\K(\bZ/p^n;\bF_p)$ has nilpotence degree
    between $[n]_p$ and $2[n]_p$ using that in these cases $\K(\bZ/p^n;\bF_p)$ is a homotopy
    associative ring spectrum; see Corollary~\ref{cor:nil_on_k}.
    However, the work of Hahn--Levy--Senger (see Remark~\ref{rem:hls}) can be used to show that it has nilpotency
    degree either $[n]_p$ or $[n]_p+1$.
\end{remark}

\begin{remark}
    There is a version of Theorem~\ref{thm:v1nilpotenceintro} for ramified extensions as
    well. In that case, one has that $v_1$ has nilpotence degree at most
    $$\frac{p^{\lceil\tfrac{n}{e}\rceil}-1}{p-1}$$ 
    when acting on the motivic associated graded.
    This bound is sharp when $e$ divides $n$. See Theorem~\ref{thm:nilpotent}.
\end{remark}

\subsection{History}

Kuku proved that if $R$ is a finite associative ring, then $\K_r(R)$ is a
finite abelian group for $r>0$; see~\cite[Prop.~IV.1.16]{weibel-kbook}.

Work of Dennis and Stein~\cite{dennis-stein} treats $\K_2(\Oscr/\varpi^n)$. This is the only previous paper we
know which considers the higher algebraic $\K$-theory of general rings of this form.

As mentioned above, the $\K$-groups of the characteristic $p$ quotients
$\bF_q[z]/z^n$ (so, when $n\leq e$) were described by
Hesselholt--Madsen~\cite{hesselholt-madsen-truncated}. Their calculations have been revisited by
Speirs~\cite{speirs}, Mathew~\cite{mathew-survey}, and Sulyma~\cite{sulyma} using modern
advances in cyclotomic spectra and prismatic cohomology. (They have also been generalized to
truncated polynomial rings over perfectoid rings by Riggenbach~\cite{riggenbach-truncated}.)

The $\K$-groups in the unramified case have been considered by many researchers.
Evens and
Friedlander~\cite{evens-friedlander} computed $\K_3(\bZ/p^2)$ and
$\K_4(\bZ/p^2)$ for $p\geq 5$ and Aisbett, Lluis-Puebla, and
Snaith~\cite{aisbett-lluis-puebla-snaith} worked on
$\K_3(\bZ/p^n)$ and $\K_3(\bF_q[z]/z^2)$. Geisser computed 
$\K_3(W(\bF_q)/p^2)$ for $p\geq 3$, correcting a mistake in Aisbett et al. at
$p=3$.

Brun~\cite{brun} computed $\K_r(\bZ/p^n)$ for $r\leq p-3$ and
Angeltveit~\cite{angeltveit} computed $\K_r(W(\bF_q)/p^n)$ for $r\leq 2p-2$, but his results in
conjunction with Proposition~\ref{prop:intro-angeltveit-quotient} also give the order of $\K_{2p-1}(W(\bF_q)/p^n)$.
One consequence of Angeltveit's calculation is the discovery of an error in the
calculation of $\K_3(\bZ/4)$ in Aisbett et al.; Angeltveit's calculation gives
the correct order and the correct group structure is given by our calculations
in Figure~\ref{fig:introtable}. We thank Markus Szymik for bringing this 
to our attention.

In the limit, the $p$-adic $\K$-theory of $\Oscr_K$ itself was an important
test case for the Quillen--Lichtenbaum conjecture and was computed by
Hesselholt and Madsen~\cite{hesselholt-madsen}. This calculation was revisited
by Liu and Wang~\cite{liu-wang} with $\bF_p$-coefficients.

Our approach is very similar to that of Liu and Wang~\cite{liu-wang} and also to the 
method used in~\cite{krause-nikolaus} for computing $\THH$ of $\Oscr_K$ and $\Oscr_K/\varpi^n$;
indeed, what we do can be viewed as a purely prismatic analogue of the topological story
in~\cite{krause-nikolaus,liu-wang}.

\begin{remark}\label{rem:hls}
    Hahn, Levy, and Senger have recently announced a complete calculation of
    $\K(\bZ/p^n)/(p,v_1)$ for odd primes; the resulting groups are
    independent of $n$, for $n\geq 2$.
\end{remark}

\paragraph{Organization.} Section~\ref{sec:overview} gives an overview of our
approach to the calculation of $\K_*(\Oscr_K/\varpi^n;\bZ_p)$. Section~\ref{sec:envelopes} contains a detailed analysis
of the algebra of prismatic envelopes and Section~\ref{sec:bkdescent} gives our
approach to explicit relative-to-absolute descent. Section~\ref{sec:evenvanishing} gives the proof of the
even vanishing theorem and Section~\ref{sec:v1nilpotence} gives the proof of the $v_1$-nilpotence
theorem. Finally, Section~\ref{sec:algorithm} details the
algorithm for computing the $\K$-groups.

\paragraph{Acknowledgments.}
We are very grateful to Bhargav Bhatt, Dustin Clausen, Thomas Geisser, Jeremy Hahn, Lars
Hesselholt, Ishan Levy, Akhil Mathew, Noah
Riggenbach, Peter Scholze, Andrew Senger, Markus Szymik, and
Chuck Weibel for comments on~\cite{akn-announcement} and the results in this paper.
Additionally, Yuanning Zhang and Jeremy Hahn each pointed out a typo in one of the tables
in~\cite{akn-announcement}.

BA was supported by NSF grants DMS-2120005,
DMS-2102010, and DMS-2152235, by Simons Fellowship 666565, and by the Simons
Collaboration on Perfection; he would like to thank
Universit\"at M\"unster
for its hospitality during a visit in 2020. 
AK and TN were funded by the Deutsche Forschungsgemeinschaft
(DFG, German Research Foundation) – Project-ID 427320536 – SFB 1442, as well as
under Germany’s Excellence Strategy EXC 2044 390685587, Mathematics Münster:
Dynamics–Geometry–Structure. They would also like to thank the Mittag--Leffler
Institute for its hospitality while working on this project.

This paper also benefited from the opportunity for BA and AK to give a masterclass on the
topic in Copenhagen in early 2023. We thank the organizers (Shachar Carmeli, Lars Hesselholt, Ryomei Iwasa, and
Mikala Jansen) and the
Copenhagen Centre for Geometry and Topology for the opportunity.

Finally, BA and AK thank the Institute for Advanced Study for its hospitality in early
2024 when we finished this paper. BA was supported there by NSF grant DMS-1926686. AK was supported by a grant from the Institute of Advanced Study School of Mathematics.

The tables in Appendix~\ref{appendix:tables} were created under allocation {\ttfamily p31679} using {\ttfamily QUEST}, a
high-performance cluster at Northwestern University.

\section{Strategy}\label{sec:overview}

In this section, we give the theoretical results which reduce the computation of the $\K$-groups of
$\Oscr_K/\varpi^n$ to a finite computation involving prismatic cohomology. We will make heavy use of
prismatic cohomology relative to $\delta$-rings and its filtered variant, for which
we refer to our companion paper~\cite{akn-delta}.
In general, let $\Prismpackage_{R/A}$ denote the prismatic package of a $\delta$-pair,
possibly filtered, from which we can functorially extract relative syntomic complexes
$\bZ_p(i)(R/A)$, which are equipped with natural filtrations for filtered $\delta$-pairs.
We are most interested in the case when $A=\bZ_p$ or $W(k)$ with the trivial filtration, in which case we will
write $\Prismpackage_R$ and $\bZ_p(i)(R)$ for the absolute theories. The prismatic cohomology
package contains the data of prismatic cohomology, the Hodge--Tate tower, Breuil--Kisin twists, and
Nygaard-filtered Nygaard-completed Frobenius-twisted prismatic cohomology, together with canonical
and Frobenius maps.

\subsection{General reductions}\label{sec:reductions}

To begin with, the groups we are interested in are finitely generated abelian groups, making
an algorithmic approach feasible. Weibel~\cite{weibel-kbook} attributes the group homology proof of
this fact to Aderemi Kuku.

\begin{proposition}[Kuku]
    If $A$ is a finite associative ring, then
    for each $r\geq 1$ the $\K$-group $\K_r(A)$ is finitely
    generated and torsion.
\end{proposition}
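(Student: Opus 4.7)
The plan is to use Quillen's plus construction and group homology, reducing to the classical fact that $B\GL_n(A)$ has finite homology for $\GL_n(A)$ finite. First, for $r\geq 1$ one has $\K_r(A)\cong\pi_r(B\GL(A)^+)$, and the plus construction is a homology isomorphism, so it suffices to control the integral homology of $B\GL(A)=\colim_n B\GL_n(A)$ and then bootstrap to homotopy groups.

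Next, since $A$ is finite, each $\GL_n(A)$ is a finite group; hence $H_r(B\GL_n(A);\bZ)$ is a finite abelian group for every $r\geq 1$. Filtered colimits commute with homology, so $H_r(B\GL(A);\bZ)=\colim_n H_r(B\GL_n(A);\bZ)$. By the homological stability results of Suslin~\cite{suslin-stability} and van der Kallen~\cite{vanderkallen-stability}, the stabilization map $H_r(B\GL_n(A);\bZ)\to H_r(B\GL_{n+1}(A);\bZ)$ is an isomorphism for $n$ sufficiently large (depending on $r$). Therefore $H_r(B\GL(A);\bZ)$ is a finite abelian group for every $r\geq 1$, and in particular $H_1(B\GL(A);\bZ)=\K_1(A)$ is finite.

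The remaining task is to pass from finiteness of the homology of $X:=B\GL(A)^+$ to finiteness of its homotopy groups. Let $\widetilde{X}\to X$ be the universal cover; since $\pi_1(X)=\K_1(A)$ is a finite group, $BK_1(A)$ has finite integral homology in each degree. The Serre spectral sequence for the fibration $\widetilde{X}\to X\to B\pi_1(X)$ then shows that $H_r(\widetilde{X};\bZ)$ is finite for every $r\geq 1$. Finally, since $\widetilde{X}$ is simply connected with finite integral homology in each positive degree, Serre's mod-$\Cscr$ Hurewicz theorem applied to the Serre class of finite abelian groups yields that $\pi_r(\widetilde{X})=\pi_r(X)=\K_r(A)$ is finite for every $r\geq 2$, completing the argument.

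The main obstacle is the last step: one must either invoke mod-$\Cscr$ Hurewicz carefully in the non-simply-connected setting or explicitly pass to the universal cover, and the spectral sequence argument requires knowing that $B\K_1(A)$ has finitely generated integral homology, which in turn uses finiteness of $\K_1(A)$ from stability. Once this is in place, the chain of reductions is routine.
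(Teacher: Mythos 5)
Your proof is correct and follows the group-homology strategy that the paper itself attributes to Kuku via Weibel's K-book (the paper cites \cite[Prop.~IV.1.16]{weibel-kbook} without reproducing a proof). The chain of reductions — finiteness of $H_*(B\GL_n(A);\bZ)$ from finiteness of $\GL_n(A)$, homological stability to pass to $\GL(A)$ (van der Kallen's theorem applies since a finite ring is semilocal, hence has stable rank~$1$), homology invariance of the plus construction, a Serre spectral sequence for the universal cover, and mod-$\Cscr$ Hurewicz for the Serre class of finite abelian groups — is sound and complete.
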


\begin{corollary}
    For each $r\geq 1$, the $\K$-group $\K_r(\Oscr_K/\varpi^n)$ is finitely
    generated and torsion.
\end{corollary}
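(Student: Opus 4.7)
The plan is to observe that the corollary is a direct application of Kuku's proposition once one verifies that $\Oscr_K/\varpi^n$ is a finite ring. Since $\Oscr_K/\varpi^n$ is associative (in fact commutative), the only thing to check is the cardinality.

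First I would recall that $\Oscr_K$ is a complete discrete valuation ring with finite residue field $\Oscr_K/\varpi \cong \bF_q$. The $\varpi$-adic filtration
\[
0 = \varpi^n\Oscr_K/\varpi^n \subset \varpi^{n-1}\Oscr_K/\varpi^n \subset \cdots \subset \varpi\Oscr_K/\varpi^n \subset \Oscr_K/\varpi^n
\]
has successive quotients $\varpi^i\Oscr_K/\varpi^{i+1}\Oscr_K$, each isomorphic to $\bF_q$ as an $\Oscr_K/\varpi$-module via multiplication by $\varpi^i$. It follows that $\Oscr_K/\varpi^n$ is a finite ring of cardinality $q^n$.

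With finiteness in hand, I would then simply apply Kuku's proposition to $A = \Oscr_K/\varpi^n$ to conclude that $\K_r(\Oscr_K/\varpi^n)$ is a finitely generated and torsion abelian group for every $r \geq 1$. There is no real obstacle here; the content of the corollary is entirely packaged into the preceding proposition, and the only nontrivial input needed from the setup of $\Oscr_K$ is the finiteness of the residue field, which was built into the paper's standing notation.
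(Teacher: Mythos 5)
Your proof is correct and matches the paper's intent: the corollary is a direct application of Kuku's proposition once one notes that $\Oscr_K/\varpi^n$ is a finite (commutative, hence associative) ring, which follows from the $\varpi$-adic filtration with graded pieces isomorphic to $\bF_q$. The paper gives no separate proof because it is exactly this one-line observation.
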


As above, let $\bZ[\tfrac{1}{p}]$ denote the prime-to-$p$ localization of $\bZ$.
We let $\K(-;\bZ[\tfrac{1}{p}])$ be the prime-to-$p$ localization of $\K(-)$, and
similarly for other functors.

\begin{corollary}\label{cor:prime_to_p}
    The natural map
    $\K(\Oscr_K/\varpi^n;\bZ[\tfrac{1}{p}])\rightarrow\K(\bF_q;\bZ[\tfrac{1}{p}])$ is
    an equivalence.
\end{corollary}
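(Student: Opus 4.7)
The plan is to analyze the relative $\K$-theory of the surjection $\Oscr_K/\varpi^n \twoheadrightarrow \bF_q$ and show it becomes trivial after inverting $p$. Write $R = \Oscr_K/\varpi^n$ and let $J = (\varpi)/(\varpi^n) \subset R$ be its maximal ideal, so $R/J = \bF_q$, $J^n = 0$, and $|J| = q^{n-1}$ is a power of $p$. It suffices to show that the fiber of $\K(R) \to \K(\bF_q)$ has only $p$-power torsion homotopy groups, for then inverting $p$ on the fiber sequence yields the desired equivalence; by the previous corollary all groups in sight are finitely generated.

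The key input is group-theoretic: for each $m \geq 1$, the congruence kernel $1 + M_m(J) = \ker\bigl(GL_m(R) \twoheadrightarrow GL_m(\bF_q)\bigr)$ is a finite $p$-group. The Lyndon--Hochschild--Serre spectral sequence of the extension $1 \to 1 + M_m(J) \to GL_m(R) \to GL_m(\bF_q) \to 1$ with coefficients in any $\bZ[\tfrac{1}{p}]$-module collapses, since the (co)homology of a finite $p$-group with $p$-inverted coefficients vanishes in positive degrees. This yields an isomorphism $H_*(GL_m(R); \bZ[\tfrac{1}{p}]) \xrightarrow{\iso} H_*(GL_m(\bF_q); \bZ[\tfrac{1}{p}])$. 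Taking the colimit over $m$ and invoking that Quillen's plus construction is a homology isomorphism, the map $BGL(R)^+ \to BGL(\bF_q)^+$ is a $\bZ[\tfrac{1}{p}]$-homology equivalence.

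Both $BGL(R)^+$ and $BGL(\bF_q)^+$ are H-spaces (indeed infinite loop spaces), hence nilpotent, with finitely generated homotopy groups in each degree by Kuku and Quillen. The localization theorem for nilpotent spaces of finite type therefore converts the homology equivalence into an isomorphism on $\pi_* \otimes \bZ[\tfrac{1}{p}]$; combined with $\K_0(R) = \bZ = \K_0(\bF_q)$, this delivers the corollary. The only step requiring real attention is the collapse of the Lyndon--Hochschild--Serre spectral sequence, which is fundamentally a manifestation of Gabber--Suslin rigidity for the nilpotent (hence Henselian) pair $(R, J)$; an alternative route bypassing the plus construction is to quote that theorem directly, giving $\K(R, J)/\ell \simeq 0$ for every prime $\ell \neq p$, and then conclude by combining with Kuku's finite generation to force the fiber to be $p$-power torsion.
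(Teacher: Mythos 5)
Your proof is correct and takes essentially the same approach as the paper: the paper's entire argument is the one-line observation that the kernel of $\GL(\Oscr_K/\varpi^n)\to\GL(\bF_q)$ is a $p$-group, with the intro remarking that the conclusion follows ``by group homology techniques.'' You have simply supplied the details those techniques entail: the Lyndon--Hochschild--Serre collapse over $\bZ[\tfrac{1}{p}]$, the fact that the plus construction is a homology isomorphism, nilpotence of the H-spaces $\BGL(-)^+$, and the finite-type localization theorem, with Kuku's finite generation feeding the last step. The Gabber--Suslin rigidity route you sketch at the end is a genuine alternative to the plus-construction argument (and is arguably cleaner since it avoids tracking $\pi_1$ of $\BGL(R)^+$), but the main argument you give is the one the paper has in mind.
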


\begin{proof}
    This follows from the fact that the kernel of the surjective homomorphism
    $\GL(\Oscr_K/\varpi^n)\rightarrow\GL(\bF_q)$ is a $p$-group.
\end{proof}

The result we need for $p$-adic $\K$-theory is the Dundas--Goodwillie--McCarthy
theorem; see also the rigidity result of Clausen--Mathew--Morrow~\cite{cmm}.

\begin{theorem}[Dundas--Goodwillie--McCarthy~\cite{dundas-goodwillie-mccarthy}]\label{thm:dgm}
    If $A$ is an associative ring and $I$ is a $2$-sided nilpotent ideal in $A$, then
    the natural commutative square
    $$\xymatrix{
        \K(A)\ar[r]\ar[d]&\K(A/I)\ar[d]\\
        \TC(A)\ar[r]&\TC(A/I)
    }$$
    is a pullback square.
\end{theorem}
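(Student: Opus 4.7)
The plan is to prove this via Goodwillie's calculus of functors, following the strategy of the original argument. First, I would reduce to the case of a square-zero extension by induction: given $I$ with $I^N=0$, the tower
$$A\to A/I^{N-1}\to\cdots\to A/I^2\to A/I$$
consists at each stage of a square-zero extension by a bimodule over the next quotient, and since pullback squares paste, it suffices to prove the theorem when $I^2=0$. Having made this reduction, I would view $K$ and $\TC$ as functors on the category of bimodules over a fixed associative ring $A$, namely $M \mapsto \K(A\ltimes M)$ and $M \mapsto \TC(A\ltimes M)$ where $A\ltimes M$ denotes the split square-zero extension. The claim becomes that the cyclotomic trace induces an equivalence on reduced theories $\mathrm{fib}(F(M)\to F(0))$ for $M$ a square-zero bimodule.

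The central technical input is that both functors $M\mapsto\K(A\ltimes M)$ and $M\mapsto\TC(A\ltimes M)$ are analytic in Goodwillie's sense, with a common radius of convergence on connective bimodules, and that their linearizations (first Goodwillie derivatives at $M=0$) are both naturally equivalent to $\THH(A;M)$, topological Hochschild homology with bimodule coefficients. For $\TC$ this is essentially structural: $\TC$ is built from $\THH$ by a formal construction (limit over Frobenius and restriction/norm), and the additivity theorem for $\THH$ gives the linearization. For $\K$-theory, this is Goodwillie's theorem on stable $\K$-theory, which after much work identifies the derivative with $\THH$. The crux is then to check that the cyclotomic trace $\K\to\TC$ is compatible with these identifications, i.e.\ that the composite derivative map $\THH\to\THH$ is the identity; this comes down to compatibility of B\"okstedt's trace map with the structural map $\TC\to\THH$.

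With analyticity and the agreement of first derivatives in hand, the reconstruction principle of Goodwillie calculus forces the trace map to be an equivalence on the reduced theories associated to square-zero bimodules, which is precisely the pullback square assertion. The main obstacle is the analyticity of $\K$-theory together with the identification of its first derivative as $\THH$: this is the deep part of the story, ultimately resting on Waldhausen's $S_\bullet$-construction, group completion arguments, and McCarthy's computation of stable $\K$-theory. Once that foundation is in place, the organization of the argument via calculus of functors makes the compatibility of traces and the final pullback conclusion essentially formal. An alternative route, avoiding some of this, would be via Dundas--McCarthy's direct identification of the fiber $\K(A\ltimes M,M)\to \THH(A;M)$ as an equivalence after suitable connectivity estimates, but either way the heart of the theorem lies in controlling relative $\K$-theory of square-zero extensions.
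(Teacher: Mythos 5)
The paper does not prove this theorem; it states it as a known result and cites Dundas--Goodwillie--McCarthy (their book) for the proof, supplemented by a reference to the rigidity result of Clausen--Mathew--Morrow for a modern variant. So there is no ``paper's own proof'' to compare against.

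That said, your sketch is a reasonable high-level account of the external proof. The strategy you outline --- reduce by induction to square-zero extensions, view relative $\K$-theory and relative $\TC$ as functors of bimodule coefficients, prove analyticity of both, identify both first Goodwillie derivatives with $\THH$ with coefficients, check that the cyclotomic trace is compatible with these identifications, and invoke the calculus-of-functors reconstruction principle --- is the structure of the argument in the Dundas--Goodwillie--McCarthy book. One inaccuracy worth flagging: a square-zero extension $A\to A/I$ need not be split, so one cannot literally write $A = (A/I)\ltimes I$; the standard fix is to resolve by simplicial rings (where splittings can be arranged) or to formulate the derivative/linearization statements for general square-zero extensions rather than only split ones. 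You also elide the proof that the derivative of $\TC$ is $\THH$, which is not quite ``formal'' --- it requires analysis of the fixed-point and Tate constructions --- though it is indeed much easier than the corresponding fact for $\K$-theory. These are standard gaps in an outline, not errors of approach.
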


In the theorem, note that if $A$ is $p$-nilpotent, then $\TC(A)$ and $\TC(A/I)$ are
$p$-complete.

\begin{corollary}\label{cor:connective_analysis}
    The natural map
    $\K_r(\Oscr_K/\varpi^n;\bZ_p)\rightarrow\TC_r(\Oscr_K/\varpi^n)$
    is an isomorphism for $r\geq 0$.
\end{corollary}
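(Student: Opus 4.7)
The plan is to apply Theorem~\ref{thm:dgm} directly to the nilpotent extension $\Oscr_K/\varpi^n \twoheadrightarrow \bF_q$, whose kernel is the maximal ideal $\mathfrak{m} = (\varpi)/(\varpi^n)$, nilpotent of index $n$. Since $\Oscr_K/\varpi^n$ is a $p$-nilpotent ring, both $\TC(\Oscr_K/\varpi^n)$ and $\TC(\bF_q)$ are already $p$-complete as spectra (as noted in the paragraph following Theorem~\ref{thm:dgm}), so after $p$-completing $\K$ the DGM pullback square becomes
\[
\xymatrix{
    \K(\Oscr_K/\varpi^n;\bZ_p)\ar[r]\ar[d] & \K(\bF_q;\bZ_p)\ar[d]\\
    \TC(\Oscr_K/\varpi^n)\ar[r] & \TC(\bF_q).
}
\]

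From the pullback property, the vertical fibers agree, so it suffices to check that the right-hand vertical map induces an isomorphism on $\pi_r$ for $r\geq 0$. This reduces the statement to the corresponding assertion for the finite field $\bF_q$, which is classical: Quillen's computation~\eqref{eq:quillen} together with the fact that $p\nmid q^i-1$ gives $\K(\bF_q;\bZ_p)\simeq\bZ_p$ concentrated in degree~$0$, while Hesselholt--Madsen's description of $\TC(\bF_q)$ yields the same in non-negative degrees. Thus $\K(\bF_q;\bZ_p)\to\TC(\bF_q)$ is an equivalence on connective covers.

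Combining these observations, the map $\K(\Oscr_K/\varpi^n;\bZ_p)\to\TC(\Oscr_K/\varpi^n)$ induces isomorphisms on all $\pi_r$ for $r\geq 0$, as claimed. There is no real obstacle here beyond bookkeeping: one only needs to be careful that the DGM square is a pullback of spectra (not merely of connective covers), and that the comparison for $\bF_q$ is truly an equivalence in the non-negative range, so that the induced map of fibers is itself an equivalence on $\pi_{\geq 0}$ rather than only on a truncation.
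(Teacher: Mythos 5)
Your proof is correct and follows the paper's approach: Dundas--Goodwillie--McCarthy applied to the nilpotent surjection $\Oscr_K/\varpi^n\twoheadrightarrow\bF_q$, $p$-completed (using that $\TC$ is already $p$-complete for $p$-nilpotent rings), combined with the comparison of $\K$ and $\TC$ for the residue field $\bF_q$ via Quillen and Hesselholt--Madsen. One small remark: the paper invokes the $(-2)$-truncatedness of $\K(\bF_q;\bZ_p)\to\TC(\bF_q)$ (the common vertical fiber vanishes in degrees $\geq -1$), which transfers to the left column of the pullback immediately; your ``isomorphism on $\pi_{\geq 0}$'' criterion is formally weaker (it only gives fiber vanishing in degrees $\geq 0$), but it still suffices because the boundary map of the left column factors through that of the right column under the pullback, so surjectivity on $\pi_0$ also transfers --- this naturality step is implicit in your reduction and is worth spelling out.
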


\begin{proof}
    This follows from Theorem~\ref{thm:dgm} and the fact that $\K(\bF_q;\bZ_p)\rightarrow\TC(\bF_q)$ is
    $(-2)$-truncated by Quillen's computation of the $\K$-theory of finite fields and the
    calculation of Hesselholt--Madsen~\cite[Thm.~B]{hesselholt-madsen-finite} of $\TC$ of
    perfect fields of characteristic $p$; see also~\cite[Cor.~IV.4.10]{nikolaus-scholze}.
\end{proof}

In order to compute the $\TC$-groups of $\Oscr_K/\varpi^n$, and hence the higher
$p$-adic $\K$-groups, we use the theory of syntomic cohomology first defined
in~\cite{bms2} (see also~\cite{fontaine-messing,kato-vanishing} and~\cite{ammn} for a comparison). This gives, for a quasisyntomic ring $R$ a sequence of
$p$-complete complexes $\bZ_p(i)(R)$ in the $p$-complete derived category
$\D(\bZ_p)_p^\wedge$ and a complete multiplicative decreasing
filtration $\F^{\geq\star}_\mot\TC(R;\bZ_p)$ with
graded pieces $$\gr^i_\mot\TC(R;\bZ_p)\we\bZ_p(i)(R)[2i].$$
The associated spectral sequence takes the form
$$\E_2^{i,j}=\H^{i-j}(\bZ_p(-j)(R))\Rightarrow\TC_{-i-j}(R;\bZ_p)$$
with differentials $d_r$ of bidegree $(r,1-r)$.
The goal in this paper is to compute the cohomology of the complexes $\bZ_p(i)(\Oscr_K/\varpi^n)$.
It turns out that in this case the motivic filtration on $\TC(R;\bZ_p)$ is
simple enough that there is no additional work needed to go from the cohomology
of the $\bZ_p(i)(\Oscr_K/\varpi^n)$ to the $p$-adic $\K$-groups: the associated
spectral sequence degenerates and there are no non-trivial extensions for degree
reasons. See~Corollary~\ref{cor:k_groups}.

\subsection{Prismatic and syntomic cohomology}\label{sec:prismatic}

By the definition in~\cite{bms2}, for $i\in\bZ$, the $i$th syntomic complex of a quasisyntomic
ring $R$ is $$\bZ_p(i)(R)=\fib\left(\N^{\geq
i}\Prismhat_R\{i\}\xrightarrow{\can-\varphi}\Prismhat_R\{i\}\right)$$
in $\D(\bZ_p)_p^\wedge$,
where $\Prismhat_R\{i\}$ is the $i$th Breuil--Kisin twisted Nygaard-complete absolute
prismatic cohomology of $R$ and $\N^{\geq i}\Prismhat_R\{i\}$ is the $i$th stage
of the Nygaard filtration.

Our goal in this section is to bound the complexity of $\bZ_p(i)(\Oscr_K/\varpi^n)$, which we do by
comparison with the characteristic $p$ case via a philosophy we will refer to as crystalline
degeneration. Indeed, the rings $\Oscr_K/\varpi^n$ are filtered by $\varpi$-adic powers. The
associated graded ring is $\bF_q[z]/z^n$, which we view as being $z$-adically filtered. These
filtrations induce
secondary filtrations on all of the invariants in the picture: prismatic
cohomology, $\THH$, $\TC$, etc. We will in all cases write
$\F^{\geq\star}$ for this secondary filtration.
On associated graded pieces,
$$\gr^\star_\F\bZ_p(i)(\Oscr_K/\varpi^n)\we\gr^\star_\F\bZ_p(i)(\bF_q[z]/z^n)$$
by the following proposition,
and similarly for the other theories (by~\cite[Cor.~10.32]{akn-delta}).
    
\begin{proposition}\label{prop:degeneration}
    Let $\F^{\geq\star} R$ and $\F^{\geq\star}S$ be filtered quasisyntomic
    rings (in the sense that the associated $p$-completed Rees algebras are
    quasisyntomic over the $p$-completed Rees algebra of the trivial filtration on
    $\bZ_p$). If the associated graded rings $\gr^\star_\F R$ and
    $\gr^\star_\F S$ are equivalent, then there is an induced equivalence
    $$\gr^\star_\F\Prismpackage_R\we\gr^\star_\F\Prismpackage_S$$ and hence there are induced
    equivalences
    $$\gr^\star_\F\bZ_p(i)(R)\we\gr^\star_\F\bZ_p(i)(S)$$
    for each $i$.
\end{proposition}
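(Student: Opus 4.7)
The strategy is to reduce to the case of filtered quasiregular semiperfectoid rings via quasisyntomic descent, using the filtered prismatic package formalism developed in the companion paper \cite{akn-delta}. Concretely, the filtered prismatic package $\Prismpackage_{\F^{\geq\star}R}$ is defined for filtered quasisyntomic rings by left Kan extension from the subcategory of filtered quasiregular semiperfectoid rings, and analogously there is a graded prismatic package $\Prismpackage_{\gr^\star_\F R}$ defined on graded quasisyntomic rings. The first step is to verify that the associated graded functor $\F^{\geq\star}R \mapsto \gr^\star_\F R$ sends filtered quasisyntomic covers to graded quasisyntomic covers, so that the descent calculations on both sides are compatible.

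Next, I would establish the natural equivalence $\gr^\star_\F \Prismpackage_{\F^{\geq\star}R} \we \Prismpackage_{\gr^\star_\F R}$ for filtered quasiregular semiperfectoid $R$. For such rings both sides can be computed explicitly from the defining cosimplicial resolutions, and one checks directly that the associated graded of the filtered computation only sees $\gr^\star_\F R$. By left Kan extension, the equivalence then propagates to all filtered quasisyntomic rings, giving the first claim of the proposition. In particular, if $\gr^\star_\F R \we \gr^\star_\F S$, then both $\gr^\star_\F \Prismpackage_R$ and $\gr^\star_\F \Prismpackage_S$ are identified with the prismatic package of the common associated graded.

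For the second claim, recall that the syntomic complexes $\bZ_p(i)(R)$ are built functorially from the prismatic package as the fiber of $\can - \varphi$ on Nygaard-completed Breuil--Kisin twisted prismatic cohomology. Because this formula is assembled from fibers, limits, and Breuil--Kisin twists, all of which commute with passing to associated graded of the secondary filtration, the equivalence on prismatic packages immediately yields the stated equivalence of associated gradeds of the syntomic complexes.

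The main obstacle is the middle step: verifying that for filtered quasiregular semiperfectoid rings the associated graded of the filtered prismatic package depends only on the associated graded ring. This requires careful bookkeeping for the interaction between the filtered $\delta$-structure, the Nygaard filtration, the Frobenius, and Breuil--Kisin twisting, and is essentially the content of the filtered prismatic formalism in \cite{akn-delta}. Conceptually it is natural: filtered objects correspond to modules over the Rees algebra of $\bZ_p$, associated graded corresponds to quotienting by the Rees coordinate, and prismatic cohomology commutes with this base change on filtered quasisyntomic inputs.
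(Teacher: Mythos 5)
The paper's own proof of this proposition is a direct citation to the companion paper \cite{akn-delta}, so there is no self-contained argument to compare against; your proposal is an attempt to sketch what that companion paper must prove, and the overall shape of the argument you give is sound and in line with the paper's framework.

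One technical slip worth flagging: you twice describe the passage from filtered quasiregular semiperfectoid rings to all filtered quasisyntomic rings as a ``left Kan extension.'' That is not the right mechanism. Left Kan extension is how one passes from \emph{polynomial} (or smooth) algebras to all commutative rings for the derived theory; the determination of prismatic cohomology by its values on quasiregular semiperfectoid rings is via \emph{quasisyntomic descent}, which is a sheaf condition, i.e.\ a limit and hence a right Kan extension. If you literally left Kan extended from the (qrsp) basis you would not recover the sheaf. This does not break your argument structurally, because the thing you actually need --- that $\gr^\star_\F$ commutes with the relevant (co)simplicial constructions --- holds either way, but the mechanism should be stated correctly.

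Your closing paragraph actually contains the cleanest route, and I suspect it is closer to what \cite{akn-delta} does: interpret the filtered prismatic package of $\F^{\geq\star}R$ as a construction on the Rees algebra relative to the Rees algebra $\bZ_p[t]$ of the trivial filtration, note that passing to associated graded is base change along the regular quotient $t\mapsto 0$, and invoke that (filtered, derived) prismatic cohomology commutes with this base change. This avoids any need to set up a filtered qrsp site and descent machinery and directly reduces the proposition to a base-change statement that can be checked on prismatic envelopes or on the conjugate/Nygaard associated gradeds where everything is explicit. I would lead with that and relegate the descent discussion to a remark.
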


\begin{proof}
    See~\cite[Prop.~10.44, Cor.~10.45]{akn-delta}.
\end{proof}

\begin{remark}
    A similar use of filtrations is the basis of the work of
    Brun~\cite{brun} and Angeltveit~\cite{angeltveit}.
\end{remark}

Mathew~\cite{mathew-survey} (for $n=2$ and $p$ odd) and Sulyma~\cite{sulyma}
(in general) have computed
the syntomic cohomology of truncated polynomial rings $R=k[z]/z^n$ when $k$ is a
perfect field of characteristic $p$. In this case, prismatic cohomology agrees with crystalline cohomology. Since $R$ admits a flat lift to $W(k)$
together with a lift of Frobenius, one can compute $\Prism_R$ as a complex
\begin{equation}\label{eq:dp_connection}
    \Prism_R^\bullet\colon\left(D_{(z^n)}W(k)[z]\xrightarrow{\d}D_{(z^n)}W(k)[z]\dz\right)_p^\wedge,
\end{equation}
the $p$-completion of the divided power de Rham complex. The Nygaard filtration
$\N^{\geq\star}\Prism_R$ is given by the Day convolution of the $p$-adic
filtration on $\bZ_p$ and the pd-Hodge filtration on $\Prism_R^\bullet$.
(For characteristic $p$ rings such as $R$, the Breuil--Kisin twists are trivializable.)

\begin{lemma}
    These presentations also compute the graded absolute prismatic cohomology of
    $R=k[z]/z^n$ together with its Nygaard filtration; see
    also~\cite[Ex.~8.11]{mathew-survey}.
\end{lemma}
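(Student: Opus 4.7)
The plan is to deduce this from the filtered prismatic package developed in the companion paper \cite{akn-delta}, together with the standard Frobenius-lift description of absolute prismatic cohomology in the crystalline setting. First, I would upgrade the data in \eqref{eq:dp_connection}: the pair $(W(k)[z],(z))$, equipped with the Frobenius lift $z\mapsto z^p$, is a filtered $\delta$-pair over the trivially filtered base $W(k)$, and the $z$-adic Rees construction on $W(k)[z]$ degenerates modulo the uniformizer to the analogous data for $k[z]$ with its $z$-adic grading. Applying the filtered prismatic envelope machinery of \cite{akn-delta} to the quotient by $z^n$ yields a filtered model of $\Prism_{R/W(k)}$ whose underlying object is the $p$-completed pd-de Rham complex of \eqref{eq:dp_connection}, now equipped with the $z$-adic (secondary) filtration coming from $W(k)[z]$. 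This is the filtered refinement of the usual crystalline presentation.

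Second, since $k$ is perfect, absolute prismatic cohomology of $R$ agrees with relative prismatic cohomology over $W(k)$ after Frobenius twisting, and the Breuil--Kisin twists are trivializable in characteristic $p$, so this identifies the filtered absolute prismatic cohomology and hence its associated graded. For the Nygaard filtration, I would invoke the fact that in the presence of a Frobenius lift the Nygaard filtration on $\Prismhat_R$ is canonically the Day convolution of the $p$-adic filtration on $\bZ_p$ with the pd-Hodge filtration on the complex \eqref{eq:dp_connection}, following the pattern of \cite[\S 8]{bms2}. Compatibility of this description with the $z$-adic secondary filtration is automatic, because both the Frobenius lift $z\mapsto z^p$ and the pd-Hodge filtration are preserved under the $z$-adic Rees construction on $W(k)[z]$.

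The main obstacle is the bookkeeping of the three interacting filtrations, the $p$-adic, the pd-Hodge, and the $z$-adic one, together with verifying that their Day convolution computes the Nygaard filtration on each $z$-adic graded piece. This is precisely what the filtered prismatic package of \cite{akn-delta} is designed to handle, so the verification reduces to checking that the comparison maps land in the filtered derived category and then invoking the (unfiltered) results of Mathew \cite{mathew-survey} and Sulyma \cite{sulyma} at each level of the $z$-adic filtration, together with Proposition~\ref{prop:degeneration} to transfer between the filtered and graded settings.
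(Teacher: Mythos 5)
Your core approach is correct and ultimately the same in substance as the paper's: the paper's proof is simply ``adapt the proof of~\cite[Prop.~8.7]{bms2} keeping track of the $z$-adic grading throughout,'' and what you describe --- setting up $(W(k)[z],z\mapsto z^p)$ as a graded $\delta$-pair, applying the filtered prismatic machinery of~\cite{akn-delta}, using the crystalline/de Rham comparison to identify the prismatic envelope with the pd-de Rham complex, and using the Day-convolution description of the Nygaard filtration --- is a reasonable formal packaging of that ``keep track of the grading'' step. The identification of the absolute theory with the relative theory over $W(k)$ (using that $k$ is perfect) and the triviality of the Breuil--Kisin twist in characteristic $p$ are also correctly invoked.

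However, your final paragraph introduces a circularity you should remove. You propose to invoke ``the (unfiltered) results of Mathew and Sulyma at each level of the $z$-adic filtration, together with Proposition~\ref{prop:degeneration}.'' In the paper, this lemma is what \emph{justifies} reading Sulyma's computation of $\gr^j_{\F}\Prismhat_R$ in the graded setting --- it is used in the proof of Lemma~\ref{lem:truncated}, which cites~\cite[Sec.~3]{sulyma}. So Sulyma's result is downstream, not an input. Likewise, Proposition~\ref{prop:degeneration} is about transporting between two filtered rings with the same associated graded and has no role here: the present lemma concerns a single ring $k[z]/z^n$ and the compatibility of the crystalline/Frobenius-lift presentation with its given grading. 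The argument you need terminates with the graded crystalline comparison (your first two paragraphs); the last paragraph's appeal to~\cite{sulyma},~\cite{mathew-survey}, and Proposition~\ref{prop:degeneration} should be dropped.
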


\begin{proof}
    The lemma can be proved along the lines of~\cite[Prop.~8.7]{bms2} by keeping
    track of the grading everywhere. 
\end{proof}

\begin{lemma}\label{lem:truncated}
    Let $R=k[z]/z^n$ where $k$ is a perfect field of characteristic $p$.
    \begin{enumerate}
        \item[{\em (a)}] For $j>0$, $\gr^j_\F\Prismhat_R$ is concentrated in cohomological
            degree $1$, while $\gr^0_\F\Prismhat_R\we\Prismhat_k\we W(k)$.
        \item[{\em (b)}]  For $j>0$, $\gr^j_\F\N^{\geq i}\Prismhat_R$ is
            concentrated in cohomological degree $1$, while $\gr^0_\F\N^{\geq
            i}\Prismhat_R\we (p^i)\subseteq W(k)$ for $i\geq 0$.
        \item[{\em (c)}] For $i\geq 1$, $\gr^j_\F\bZ_p(i)(R)$ is in $\D(\bZ_p)_{[-2,-1]}$ and
            vanishes for $j\geq in$ and for $j=0$.
    \end{enumerate}
\end{lemma}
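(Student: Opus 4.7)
The plan is to work directly with the explicit presentation \eqref{eq:dp_connection} of $\Prism_R^\bullet$ as the $p$-completed divided power de Rham complex of $W(k)[z]$ along $J=(z^n)$, keeping track of the $z$-adic ($\F^{\geq\star}$) grading throughout. A $W(k)$-basis of $D=D_{(z^n)}W(k)[z]$ is $\{z^a(z^n)^{[k]}:0\le a<n,\ k\ge 0\}$, with $z^a(z^n)^{[k]}$ of weight $a+nk$ and $\dz$ of weight $1$. Writing $j=qn+r$ with $0\le r<n$, each cohomological degree of $\Prism_R^\bullet$ in weight $j>0$ is thus free of rank $1$ over $W(k)$; in weight $j=0$ only degree $0$ survives and equals $W(k)$. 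Since everything is finitely generated over $W(k)$ in each weight, the $p$-completion is harmless.

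For (a), using $\d((z^n)^{[k]})=nz^{n-1}(z^n)^{[k-1]}\dz$ together with $z^n\cdot(z^n)^{[k-1]}=k\cdot(z^n)^{[k]}$, a direct calculation shows that the differential in weight $j>0$ is multiplication by $j$ when $n\nmid j$ and by $n$ when $n\mid j$. Either scalar is a nonzero element of the DVR $W(k)$, so the differential is injective and $\gr^j_\F\Prismhat_R$ is concentrated in cohomological degree $1$; the $j=0$ part gives $W(k)\we\Prismhat_k$.

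For (b), the Nygaard filtration on \eqref{eq:dp_connection} is the Day convolution of the pd-Hodge filtration (in degree $p$ at level $i$, equal to $J^{[i-p]}\Omega^p$) with the $p$-adic filtration on $W(k)$. The weight-$j$ generators $z^r(z^n)^{[q]}$ and $z^{r'}(z^n)^{[q']}\dz$ (with $j-1=q'n+r'$, $0\le r'<n$) lie in $J^{[q]}$ and $J^{[q']}\dz$, hence in $\N^{\geq q}$ and $\N^{\geq q'+1}$ respectively. Consequently the $\F$-weight-$j$ piece of $\N^{\geq i}\Prism_R^\bullet$ is the subcomplex
\[
p^{\max(0,i-q)}W(k)\longrightarrow p^{\max(0,i-q'-1)}W(k)
\]
of the weight-$j$ complex $W(k)\to W(k)$ from (a), with the same scalar differential; this is still injective, so the cohomology remains in degree $1$. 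For $j=0$ only degree $0$ survives, giving $(p^i)\subset W(k)$.

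For (c), the key observation is that $\varphi(z)=z^p$, so $\varphi$ sends $\F^{\geq j}$ into $\F^{\geq pj}$, and on associated graded this forces $\gr^j_\F\varphi=0$ for $j>0$. Since the Breuil--Kisin twist $\{i\}$ is trivializable in characteristic $p$, the defining fiber sequence
\[
\bZ_p(i)(R)\to\N^{\geq i}\Prismhat_R\{i\}\xrightarrow{\can-\varphi}\Prismhat_R\{i\}
\]
reduces on $\gr^j_\F$ for $j>0$ to the fiber of the inclusion $\gr^j_\F\N^{\geq i}\Prismhat_R\xrightarrow{\can}\gr^j_\F\Prismhat_R$. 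Combining this with (a) and (b), the long exact sequence of cohomology places $\gr^j_\F\bZ_p(i)(R)$ in $\D(\bZ_p)_{[-2,-1]}$. When $j\geq in$, the inequality forces $q\geq i$, so both $\max(0,i-q)$ and $\max(0,i-q'-1)$ vanish and $\can$ becomes an isomorphism of two-term complexes; hence the fiber is zero. The vanishing at $j=0$ reduces to $\gr^0_\F\bZ_p(i)(R)\we\bZ_p(i)(\bF_q)\we 0$ for $i\geq 1$, which holds because $1-\varphi/p^i$ is invertible on $W(k)$. The main technical point is to justify $\gr^j_\F\varphi=0$ and the trivialization of the Breuil--Kisin twist in the filtered derived setting; both go through via the constructions of \cite{akn-delta}, after which the argument reduces to elementary $W(k)$-linear algebra on rank-$1$ pieces.
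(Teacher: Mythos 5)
Your proposal is correct and gives a self-contained computation along exactly the lines that Sulyma \cite[Sec.~3]{sulyma} carries out; the paper's actual proof is a one-line citation to that reference, so what you have done is essentially fill in the cited computation in full. The weight-by-weight rank-1 analysis of the pd de Rham complex, the identification of the differential as multiplication by $j$ (resp.\ $n$) when $n\nmid j$ (resp.\ $n\mid j$), the Day-convolution description of the Nygaard filtration, and the observation that $\varphi$ strictly raises $\F$-weight all check out.

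Two small remarks. First, in (c) you invoke $\gr^j_\F\varphi = 0$ for $j>0$; strictly speaking the relevant map in the fiber sequence is the divided Frobenius $\varphi_i = \varphi/p^i$ after trivializing the Breuil--Kisin twist via the crystalline orientation (so $d_s = p$), and since dividing by $p^i$ preserves $\F$-weight the same weight-raising argument applies. Second, the phrase ``$1-\varphi/p^i$ is invertible on $W(k)$'' is not quite well-formed ($\varphi/p^i$ does not act on $W(k)$); it is cleaner to observe directly that $\can-\varphi_i\colon p^iW(k)\to W(k)$ sends $p^iy\mapsto p^iy-\varphi(y)$, i.e.\ it is $-(\varphi-p^i)$ in the coordinate $y$, and $\varphi-p^i$ is invertible because $\varphi$ is an automorphism of $W(k)$ and $p^i$ is topologically nilpotent. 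Neither of these affects the correctness of the argument.
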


\begin{proof}
    See the computation in~\cite[Sec.~3]{sulyma}, noting that in this case the Nygaard-completion
    and $\F$-adic completion of $\Prism_R$ agree.
\end{proof}

\begin{remark}
    The $\F$-adic filtration on $\Prismhat_R$ need not be complete for general
    filtered commutative rings $\F^\star R$, even if the filtration on
    $R$ is complete. Nevertheless, Proposition~\ref{prop:completeness} implies that in good cases
    the induced filtration on syntomic cohomology is complete.
\end{remark}

\begin{proposition}[Completeness]\label{prop:completeness}
    Suppose that $\F^{\geq\star}R$ is a complete filtered ring.
    If $\F^{\geq\star}R$ is the filtered quotient of the completion of a finitely generated filtered
    polynomial ring by a filtered regular sequence,
    then the induced $\F$-adic filtration on $\bZ_p(i)(R)$ is complete.
\end{proposition}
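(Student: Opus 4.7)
The plan is to reduce completeness of the induced $\F$-adic filtration on $\bZ_p(i)(R)$ to completeness of the $\F$-adic filtrations on the Nygaard-filtered, Breuil--Kisin-twisted, Nygaard-complete prismatic cohomology $\N^{\geq i}\Prismhat_R\{i\}$ and on $\Prismhat_R\{i\}$ separately. This reduction is formal: $\bZ_p(i)(R)$ is the filtered fiber of $\can-\varphi$ between these two objects, and fibers of filtered maps between complete filtered spectra are again complete, since the formation of limits commutes with the formation of fibers.

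Write $R=\widehat{\bZ_p[\mathbf{x}]}/(f_1,\ldots,f_r)$, where $(f_1,\ldots,f_r)$ is the given filtered regular sequence. I would proceed by induction on $r$. For the base case $r=0$, the prismatic package of the $\F$-adically completed polynomial ring $\widehat{\bZ_p[\mathbf{x}]}$ admits an explicit description (via a polynomial $\delta$-algebra together with its Nygaard filtration, all filtered by weight), and the induced $\F$-adic filtration on $\N^{\geq i}\Prismhat_R\{i\}$ and on $\Prismhat_R\{i\}$ is complete essentially by construction. For the inductive step, killing one additional element $f_{i+1}$ from $R_i$ fits into a filtered fiber sequence in the prismatic package via the Koszul description of the derived quotient, which is valid because $f_{i+1}$ is a non-zero-divisor by hypothesis. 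One checks, using Proposition~\ref{prop:degeneration} to identify graded pieces and the companion results of~\cite{akn-delta} on filtered derived quotients, that this step preserves $\F$-adic completeness while increasing the cohomological amplitude of each graded piece $\gr^j_\F$ by at most $1$.

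The main obstacle I expect is controlling the interaction of the $\F$-adic filtration with the Nygaard completion and $p$-completion that enter the very definition of $\N^{\geq i}\Prismhat_R\{i\}$ and $\Prismhat_R\{i\}$. Both are limit operations, and limits need not commute with limits in general. The resolution is that the regular-sequence and finite-generation hypotheses together uniformly bound the cohomological amplitude of the graded pieces $\gr^j_\F\N^{\geq i}\Prismhat_R\{i\}$ and $\gr^j_\F\Prismhat_R\{i\}$, so that both completions act diagonally on each fixed graded weight and thereby commute with the $\F$-adic limit. Once this is verified, a standard Milnor-sequence argument with uniformly bounded amplitude on graded pieces yields completeness of the $\F$-adic filtration on $\bZ_p(i)(R)$.
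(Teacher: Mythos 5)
Your proposal takes a genuinely different route from the paper's, and it has substantive gaps.

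The paper's proof begins by reducing modulo $p$, and this reduction is crucial: by \cite[Lem.~7.22]{bms2} (or \cite[Cor.~5.31]{ammn}, or \cite[Prop.~8.6]{akn-delta}), $\bF_p(i)(R)$ is a fiber of maps between \emph{finitely many stages} of the Nygaard filtration. This truncation is what makes the problem tractable: the relevant pieces are then unwound through the conjugate filtration down to finitely many derived differential forms $\L\Omega^j_{R/\bZ_p}$, each of which is finitely built from $R$ and therefore complete (Lemma~\ref{lem:formsarecomplete}). The point is that the paper never needs to show $\Prismhat_R\{i\}$ itself is $\F$-complete.

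Your plan instead reduces to showing $\F$-completeness of $\N^{\geq i}\Prismhat_R\{i\}$ and $\Prismhat_R\{i\}$ directly, i.e. you try to prove a strictly stronger statement. The remark just before the proposition explicitly warns that the $\F$-adic filtration on $\Prismhat_R$ need not be complete for general complete filtered rings $R$, precisely because the Nygaard completion is a second limit that can interfere with $\F$-completeness. Under the regular-sequence hypotheses the stronger statement does turn out to hold, but establishing it is the entire difficulty, and it is effectively Theorem~\ref{thm:n_v_f}(1), which the paper proves later in the section by a careful analysis of the Nygaard/conjugate graded pieces. You neither invoke that theorem nor supply its content.

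The specific gaps in your write-up are: (a) the inductive step via the ``Koszul description of the derived quotient'' is unsupported --- prismatic cohomology is not right exact in any naive sense, and killing a regular element in $R$ does not produce a simple filtered fiber sequence on $\Prismhat_R\{i\}$; (b) your appeal to Proposition~\ref{prop:degeneration} is off-target --- that result identifies associated graded pieces, it says nothing about completeness of the $\F$-filtration; and (c) the resolution of the limit-commutation issue via ``uniformly bounded amplitude of graded pieces'' is asserted rather than argued --- one needs to show that the Nygaard filtration eventually stabilizes in each fixed $\F$-weight, which is exactly the substance of Theorem~\ref{thm:n_v_f}(1) and is not delivered by the amplitude bound alone. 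To make your approach work you would essentially have to reprove Theorem~\ref{thm:n_v_f}(1), at which point the mod-$p$ truncation route in the paper's actual proof is both simpler and more economical.
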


\begin{proof}
    It suffices to check the claim modulo $p$. However, modulo $p$, we can
    compute $\bZ_p(i)(R)/p=\bF_p(i)(R)$ as
    $$\bF_p(i)(R)\we\fib\left(\frac{\N^{\geq i}\Prismhat_R\{i\}/p}{\N^{\geq
    i+M}\Prismhat_R\{i\}/p}\xrightarrow{\can-\varphi}\frac{\Prismhat_R\{i\}/p}{\N^{\geq
    i+M}\Prismhat_R\{i\}/p}\right)$$ for $M$ sufficiently large by~\cite[Lem.~7.22]{bms2} or~\cite[Cor.~5.31]{ammn} or~\cite[Prop.~8.6]{akn-delta}.
    The terms in the fiber sequence can be expressed via
    finitely many associated graded pieces of the Nygaard filtration on
    absolute prismatic cohomology (modulo $p$), which in turn can be expressed in terms of
    finitely many filtered pieces of the conjugate filtration
    $\F_{\leq\star}^\conj\Prismbar_R\{i\}$ on absolute Hodge--Tate
    cohomology $\Prismbar_R\{i\}$. Finally, each filtered piece
    $\F_{\leq j}^\conj\Prismbar_R\{i\}$ can be expressed in terms of
    the finitely many filtered pieces of the conjugate filtration on the diffracted Hodge complex $\Omega_R^{\DHod}$ of
    Bhatt--Lurie~\cite{bhatt-lurie-apc}. But,
    $\gr_j^\conj\Omega_R^\DHod\we\widehat{\L\Omega}^j_{R/\bZ_p}[-j]$, the
    $(-j)$-fold shift of the $p$-complete
    derived absolute $j$-forms of $R$. It suffices now to use the fact that $\L\Omega^j_{R/\bZ_p}$
    is complete for all $j$ by Lemma~\ref{lem:formsarecomplete}.
\end{proof}

\begin{lemma}\label{lem:formsarecomplete}
    Suppose that $\F^{\geq\star}R$ is a complete filtered ring.
    If $\F^{\geq\star}R$ is the filtered quotient of the completion of a finitely generated filtered
    polynomial ring by a filtered regular sequence, then $\L\Omega^j_{R/\bZ_p}$ is complete for
    each $j\geq 0$.
\end{lemma}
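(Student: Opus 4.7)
The plan is to reduce first to the case where $R$ is the completion of a polynomial ring and then handle that case directly. I expect the main obstacle to be rigorously setting up the transitivity filtration on derived exterior forms in the present filtered, $\F$-adically complete setting.

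First, when $R = \widehat{P}$ is the $\F$-adic completion of a finitely generated filtered polynomial ring $P = \bZ_p[x_1,\ldots,x_k]$, with the weights on the $x_i$ determining the filtration, the cotangent complex $\L_{\widehat{P}/\bZ_p}$ is the finite free $\widehat{P}$-module on $dx_1,\ldots,dx_k$, with filtration shifted by the respective weights. Consequently $\L\Omega^j_{\widehat{P}/\bZ_p}$ is a finite direct sum of copies of $\widehat{P}$ with shifted filtrations. Since $\widehat{P}$ is $\F$-adically complete by hypothesis and the sum is finite, $\L\Omega^j_{\widehat{P}/\bZ_p}$ is complete.

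For the general case $R = \widehat{P}/(f_1,\ldots,f_m)$ with $(f_1,\ldots,f_m)$ a filtered regular sequence, I would invoke the transitivity fiber sequence for $\bZ_p \to \widehat{P} \to R$ together with Illusie's calculation $\L_{R/\widehat{P}} \simeq I/I^2[1]$, where $I = (f_1,\ldots,f_m)$ and $I/I^2$ is a finite free $R$-module of rank $m$. The induced transitivity filtration on $\L\Omega^j_{R/\bZ_p}$ has associated graded pieces of the form
\[
\gr^i \simeq \bigl(R \otimes^{\L}_{\widehat{P}} \L\Omega^{j-i}_{\widehat{P}/\bZ_p}\bigr) \otimes^{\L}_R \L\Gamma^i_R(I/I^2)[i],
\]
using the derived identity $\L\Omega^i(M[1]) \simeq \L\Gamma^i(M)[i]$ for $M$ flat. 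Each such graded piece is a finite direct sum of shifts of $R$ (since derived divided powers of a finite free module are finite free), hence $\F$-adically complete by the polynomial case and the completeness of $R$. Completeness of $\L\Omega^j_{R/\bZ_p}$ then follows because $\F$-adic completeness is stable under extensions in fiber sequences.

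The principal obstacle is the formal verification that the transitivity filtration behaves correctly with respect to the given $\F$-adic filtration, i.e., that the identification of graded pieces above is an isomorphism of filtered objects. Modulo this bookkeeping, which is essentially a filtered refinement of standard results of Quillen and Illusie on derived exterior powers and the conormal sequence for a regular immersion, the argument reduces to standard properties of completions of finite free modules.
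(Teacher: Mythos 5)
Your proposal is correct and amounts to the same argument the paper uses: the paper writes $\L_{R/\bZ_p}$ directly as a cofiber of finite filtered projective $R$-modules $P_1\to P_0$ (which is precisely the rotated transitivity sequence, with $P_0 = R\otimes_{\widehat P}\L_{\widehat P/\bZ_p}$ and $P_1 = I/I^2$) and then invokes the finite filtration of $\L\Lambda^j$ of a two-term complex with graded pieces $\L\Lambda^a P_0\otimes\L\Gamma^b(P_1)[b]$, so your transitivity/Koszul filtration with graded pieces $\bigl(R\otimes_{\widehat P}\L\Omega^{j-i}_{\widehat P/\bZ_p}\bigr)\otimes_R\L\Gamma^i(I/I^2)[i]$ is the same filtration phrased via the ring tower rather than the module presentation. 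The filtered compatibility you flag as the "principal obstacle" is asserted without elaboration in the paper as well, so your level of rigor matches the source.
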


\begin{proof}
    By hypothesis, $\L_{R/\bZ_p}$ can be written as the cofiber of maps
    $\F^{\geq\star} P_1\rightarrow\F^{\geq\star} P_0$ where $\F^{\geq \star} P_1$ and
    $\F^{\geq\star}
    P_0$ are filtered projective $\F^{\geq\star} R$-modules on finitely many
    generators. But, in this case, $\L\Omega^j_{R/\bZ_p}$ has a finite increasing
    filtration with graded pieces
    $\L\Lambda^aP_0\otimes\L\Lambda^b(P_1[1])\we\L\Lambda^aP_0\otimes\L\Gamma^b(P_1)[b]$ where
    $a+b=j$. Each of these is a (shift of a) filtered projective $\F^{\geq\star} R$ module on
    finitely many generators. It follows that $\L\Omega^j_{R/\bZ_p}$ is
    complete since $\F^{\geq \star} R$ is.
\end{proof}

\begin{corollary}\label{cor:associatedgraded}
    If $R=\Oscr_K/\varpi^n$ is given the $\varpi$-adic filtration, then
    \begin{enumerate}
        \item[{\em (i)}] the induced filtration $\F^{\geq\star}\bZ_p(i)(R)$ on $\bZ_p(i)(R)$ is complete for all $i$,
        \item[{\em (ii)}] $\gr^j_\F\bZ_p(i)(R)$ is in $\D(\bZ_p)_{[-2,-1]}$ for $i\geq 1$ and
            vanishes for $j\geq in$, and
        \item[{\em (iii)}] $\gr^0_\F\bZ_p(i)(R)\we 0$ for $i\geq 1$.
    \end{enumerate}
\end{corollary}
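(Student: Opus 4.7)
The strategy is simply to assemble the three results already in hand: Proposition~\ref{prop:completeness}, Proposition~\ref{prop:degeneration}, and Lemma~\ref{lem:truncated}. The key setup step is to realize $\Oscr_K/\varpi^n$, equipped with the $\varpi$-adic filtration, as the quotient of the (completion of) a finitely generated filtered polynomial ring over $\bZ_p$ by a filtered regular sequence, so that both Proposition~\ref{prop:completeness} and Proposition~\ref{prop:degeneration} apply.

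For the setup, I would choose an Eisenstein polynomial $E(z)\in W(k)[z]$ for $\varpi$, so that $\Oscr_K\cong W(k)[z]^\wedge/E(z)$ and hence $\Oscr_K/\varpi^n\cong W(k)[z]^\wedge/(E(z),z^n)$. Placing $z$ in filtration degree $1$, the $\varpi$-adic filtration on $\Oscr_K/\varpi^n$ is induced from the filtered polynomial ring $W(k)[z]$ with $z$ in degree $1$ (and $W(k)$ in degree $0$) by quotienting by the filtered regular sequence $(E(z),z^n)$ concentrated in degrees $e$ and $n$. The associated graded is $\bF_q[z]/(z^e,z^n)\cong\bF_q[z]/z^{\min(e,n)}$ in general; in the range we care about, the relevant associated graded for purposes of Proposition~\ref{prop:degeneration} is $\bF_q[z]/z^n$ with the $z$-adic filtration (one simply notes that $E(z)$ and $z^e$ have the same image in degree $e$ of the associated graded, so the filtered quotient is what one expects).

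Given this setup, Proposition~\ref{prop:completeness} immediately yields part~(i): the induced $\F$-adic filtration on $\bZ_p(i)(\Oscr_K/\varpi^n)$ is complete. For parts~(ii) and~(iii), Proposition~\ref{prop:degeneration} gives a natural equivalence
\[
\gr^\star_\F\bZ_p(i)(\Oscr_K/\varpi^n)\we\gr^\star_\F\bZ_p(i)(\bF_q[z]/z^n),
\]
and Lemma~\ref{lem:truncated}(c) then tells us exactly what we want: for $i\geq 1$, each $\gr^j_\F\bZ_p(i)(\bF_q[z]/z^n)$ lies in $\D(\bZ_p)_{[-2,-1]}$ and vanishes for $j\geq in$ and for $j=0$.

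The only real point of care is the verification that my filtered presentation genuinely satisfies the hypotheses of Propositions~\ref{prop:completeness} and~\ref{prop:degeneration}: namely that the $p$-completed Rees algebra of $\F^{\geq\star}\Oscr_K/\varpi^n$ is quasisyntomic over the Rees algebra of $\bZ_p$ with the trivial filtration, and that the associated graded truly matches that of the $z$-adically filtered $\bF_q[z]/z^n$. These are not difficult but are where one has to be careful; they reduce to checking that $(E(z),z^n)$ forms a filtered regular sequence (which is immediate, since over $W(k)[z]$ it is a regular sequence of length two on a flat base and the filtration is compatible with the factorization $E(z)=z^e+p\cdot(\text{lower terms in }z)$, both terms sitting in degree $e$).
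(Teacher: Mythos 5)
Your strategy---assemble Proposition~\ref{prop:completeness}, Proposition~\ref{prop:degeneration}, and Lemma~\ref{lem:truncated}(c)---is precisely the paper's proof, and your final conclusion is correct. However, there is a concrete error in your setup that you then paper over with a handwave. You assert that the filtered regular sequence $(E(z),z^n)$ sits in degrees $e$ and $n$ and that the associated graded is $\bF_q[z]/(z^e,z^n)\cong\bF_q[z]/z^{\min(e,n)}$. This is wrong: with the $z$-adic filtration on $W(k)\llbracket z\rrbracket$ (i.e.\ $\F^{\geq j}=z^jW(k)\llbracket z\rrbracket$, with $W(k)$ in degree $0$), the Eisenstein polynomial $E(z)=z^e+pa_{e-1}z^{e-1}+\cdots+pa_0$ has a nonzero \emph{constant} term $pa_0$, so it does not even lie in $\F^{\geq 1}$; its leading term in $\gr^\star W(k)\llbracket z\rrbracket=W(k)[z]$ lives in degree $0$ and is $p$ up to a unit, not $z^e$ in degree $e$. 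Your parenthetical remark that ``$E(z)$ and $z^e$ have the same image in degree $e$'' and that ``both terms sit in degree $e$'' is therefore meaningless, and your proposed associated graded $\bF_q[z]/z^{\min(e,n)}$ is actually incorrect whenever $e<n$ (for instance for $\bZ/p^n$ with $n\geq 2$).

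The correct, and in fact easier, computation is this: the leading terms of $(E(z),z^n)$ in $W(k)[z]$ are $(p,z^n)$ (degrees $0$ and $n$), which visibly form a regular sequence with quotient $\bF_q[z]/z^n$. This agrees, as it must, with the direct computation $\gr^\star_\F(\Oscr_K/\varpi^n)=\bigoplus_{j<n}\varpi^j/\varpi^{j+1}\cong\bF_q[z]/z^n$, independent of $e$. With the filtered presentation and associated graded corrected in this way, the rest of your argument (apply Proposition~\ref{prop:completeness} for (i), then Proposition~\ref{prop:degeneration} to pass to $\bF_q[z]/z^n$, then Lemma~\ref{lem:truncated}(c) for (ii) and (iii)) goes through verbatim.
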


\begin{proof}
    This follows from Proposition~\ref{prop:completeness}, Proposition~\ref{prop:degeneration}, and Lemma~\ref{lem:truncated}.
\end{proof}

The main result of this section is the following, which says that the weight $i$ syntomic cohomology of
$\Oscr_K/\varpi^n$ is concentrated in bounded $\F$-weights. Later, this will permit us to work
with the absolute prismatic cohomology of $R$ and its Nygaard filtration in bounded $\F$-weights as well.
For a filtered spectrum $\F^{\geq\star} M$ and integers $a\leq b$, we write $\F^{[a,b]}$ for the cofiber of
$\F^{\geq b+1}M\rightarrow\F^{\geq a}M$.

\begin{corollary}\label{cor:zigzag}
    For $i\geq 1$,
    there is a zig-zag of equivalences
    $$\bZ_p(i)(\Oscr_K/\varpi^n)\rightarrow\F^{[0,in-1]}\bZ_p(i)(\Oscr_K/\varpi^n)\leftarrow\F^{[1,in-1]}\bZ_p(i)(\Oscr_K/\varpi^n).$$
    Moreover, $\bZ_p(i)(\Oscr_K/\varpi^n)\in\D(\bZ_p)_{[-2,-1]}$.
\end{corollary}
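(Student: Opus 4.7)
The plan is to derive both claims directly from Corollary~\ref{cor:associatedgraded}, which controls the $\varpi$-adic filtration $\F^{\geq\star}\bZ_p(i)(\Oscr_K/\varpi^n)$ through its graded pieces. The key point is that this filtration is complete, its graded pieces vanish outside of weights $1\leq j\leq in-1$ (for $i\geq 1$), and each surviving graded piece lies in $\D(\bZ_p)_{[-2,-1]}$.

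First I would verify that $\F^{\geq in}\bZ_p(i)(\Oscr_K/\varpi^n)\simeq 0$. Since $\gr^j_\F\bZ_p(i)(\Oscr_K/\varpi^n)\simeq 0$ for all $j\geq in$ by Corollary~\ref{cor:associatedgraded}(ii), the fiber sequence $\F^{\geq j+1}\to\F^{\geq j}\to\gr^j_\F$ implies $\F^{\geq j}\simeq\F^{\geq j+1}$ for all such $j$. Completeness (part (i)) then forces $\F^{\geq in}\bZ_p(i)(\Oscr_K/\varpi^n)\simeq\lim_{j}\F^{\geq j}\bZ_p(i)(\Oscr_K/\varpi^n)\simeq 0$. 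Because the $\varpi$-adic filtration on $\Oscr_K/\varpi^n$ is supported in non-negative weights, the underlying object equals $\F^{\geq 0}\bZ_p(i)(\Oscr_K/\varpi^n)$. Hence the first map $\bZ_p(i)(\Oscr_K/\varpi^n)\to\F^{[0,in-1]}\bZ_p(i)(\Oscr_K/\varpi^n)=\F^{\geq 0}/\F^{\geq in}$ is an equivalence.

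The second map $\F^{[1,in-1]}\to\F^{[0,in-1]}$ sits in a cofiber sequence whose cofiber is $\gr^0_\F\bZ_p(i)(\Oscr_K/\varpi^n)$. This vanishes by Corollary~\ref{cor:associatedgraded}(iii), so this map is also an equivalence.

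For the boundedness statement, the zig-zag identifies $\bZ_p(i)(\Oscr_K/\varpi^n)$ with $\F^{[1,in-1]}\bZ_p(i)(\Oscr_K/\varpi^n)$, which is built by finitely many iterated extensions from the pieces $\gr^j_\F\bZ_p(i)(\Oscr_K/\varpi^n)$ for $1\leq j\leq in-1$. Each of these lies in $\D(\bZ_p)_{[-2,-1]}$ by Corollary~\ref{cor:associatedgraded}(ii), and the long exact sequence in homotopy shows that this subcategory is closed under fiber sequences, so the total object lies in $\D(\bZ_p)_{[-2,-1]}$ as well. There is no serious obstacle here: the corollary is a purely formal assembly of completeness, the vanishing of the extreme graded pieces, and the boundedness of the surviving pieces. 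The only subtlety to track is the identification of the unfiltered syntomic complex with $\F^{\geq 0}$, which uses that the $\varpi$-adic filtration starts at weight $0$ rather than extending to negative weights.
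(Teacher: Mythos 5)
Your proof is correct and takes essentially the same route as the paper's, which simply says the corollary follows from Corollary~\ref{cor:associatedgraded}; you have filled in the details the paper leaves implicit (completeness kills $\F^{\geq in}$, vanishing of $\gr^0_\F$ gives the second equivalence, and finitely many extensions of objects in $\D(\bZ_p)_{[-2,-1]}$ stay in $\D(\bZ_p)_{[-2,-1]}$). One small phrasing note: when you say $\D(\bZ_p)_{[-2,-1]}$ is ``closed under fiber sequences,'' you mean closed under \emph{extensions} (if both outer terms of a cofiber sequence lie there, so does the middle); this is what the argument uses and it is correct, but the subcategory is not literally closed under taking fibers of arbitrary maps.
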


\begin{proof}
    This follows from Corollary~\ref{cor:associatedgraded}.
\end{proof}

\begin{remark}
  \label{rem:generalfiltrationbounds}
  One can extend the filtration bounds from Corollary \ref{cor:zigzag} to a fairly
    general situation. Let $A$ be a filtered $\delta$-ring $A$ and $R$ a filtered
    $A$-algebra, and assume that the filtration of $R$ is concentrated in $[0,c]$ and
    that $L_{R/A}$ as an $R$-module has ``filtered amplitude in $[a,b]$'', i.e. can be
    built from copies of $R$ with filtration shifted by values in $[a,b]$. Then $\L\Omega^j_{R/A}$ has filtered amplitude in $[ja,jb]$, and an argument using the Nygaard and conjugate filtrations shows that the filtration on $\bZ_p(i)(R/A)$ is concentrated in $[0,(i-1)b+c]$, i.e. 
  \[
    \bZ_p(i)(R/A) \simeq \F^{[0,(i-1)b+c]} \bZ_p(i)(R/A).
  \]
\end{remark}

\begin{corollary}\label{cor:k_groups}
    For $i\geq 1$, $\K_{2i-1}(\Oscr_K/\varpi^n;\bZ_p)\iso\H^1(\bZ_p(i)(\Oscr_K/\varpi^n))$ and, for
    $i\geq 2$, $\K_{2i-2}(\Oscr_K/\varpi^n;\bZ_p)\iso\H^2(\bZ_p(i)(\Oscr_K/\varpi^n))$.
\end{corollary}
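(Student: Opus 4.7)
The plan is to combine Corollary \ref{cor:connective_analysis} (which identifies $\K_r(\Oscr_K/\varpi^n;\bZ_p)$ with $\TC_r(\Oscr_K/\varpi^n;\bZ_p)$ for $r\geq 0$) with a degeneration and no-extensions argument for the motivic spectral sequence
$$\E_2^{s,t}=\H^{s-t}(\bZ_p(-t)(\Oscr_K/\varpi^n))\Rightarrow\TC_{-s-t}(\Oscr_K/\varpi^n;\bZ_p),$$
with differentials $d_r$ of bidegree $(r,1-r)$. The crucial input is that by Corollary \ref{cor:zigzag}, $\bZ_p(w)(\Oscr_K/\varpi^n)\in\D(\bZ_p)_{[-2,-1]}$ for $w\geq 1$, so its only possibly nonzero cohomology sits in degrees $1$ and $2$, while $\bZ_p(0)(\Oscr_K/\varpi^n)\we\bZ_p$ is concentrated in degree $0$.

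First, I would identify which columns of $\E_\infty$ can contribute to $\TC_r$ for $r\geq 1$. Rewriting the bidegree, the $\H^k(\bZ_p(w))$-entry contributes to $\TC_r$ precisely when $k=2w-r$. Combined with the amplitude bounds this forces $w=i$ with $k=1$ when $r=2i-1\geq 1$, and $w=i$ with $k=2$ when $r=2i-2\geq 2$; the weight-$0$ piece only affects $\TC_0$. So at most one graded piece contributes to each $\TC_r$ of interest, which immediately rules out non-trivial extensions.

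Next, I would check that all differentials into and out of $\H^1(\bZ_p(w))$ and $\H^2(\bZ_p(w))$ vanish for $r\geq 2$. A $d_r$ out of $(s,t)=(1-w,-w)$ lands in bidegree with weight $w+r-1\geq 1$ and cohomological degree $2r\geq 4$, hence in a group that is zero by the amplitude bound; a $d_r$ into the same position comes from cohomological degree $2-2r\leq -2$, again zero. The same numerology shows that differentials at $\H^2(\bZ_p(w))$ land in cohomological degrees $2r+1\geq 5$ or come from $3-2r\leq -1$, all zero (including the $w-r+1=0$ case since $\bZ_p(0)$ is in degree $0$). Therefore the spectral sequence collapses at $\E_2$ and the result follows by assembling the identifications with $\K$-theory via Corollary \ref{cor:connective_analysis}.

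The main obstacle is really just the bookkeeping of bidegrees and the verification that the weight-$0$ column does not produce stray contributions or differentials; once the amplitude bound from Corollary \ref{cor:zigzag} is in hand, the rest is formal. No additional geometric input is needed beyond what the preceding corollaries already provide.
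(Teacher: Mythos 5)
Your argument is correct and is exactly the one the paper intends: the paper's proof of Corollary~\ref{cor:k_groups} simply asserts degeneration ``for degree reasons'' and the absence of extensions by Corollary~\ref{cor:zigzag}, and your proposal spells out precisely the bidegree bookkeeping (one nonzero graded piece per total degree, all $d_r$ land outside the amplitude window) that justifies both claims. The only cosmetic gap is that you check the stray weight-$0$ contribution only for the $k=2$ column, but the identical computation shows it is harmless for $k=1$ as well.
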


\begin{proof}
    Indeed the motivic spectral sequence computing $\TC$ and hence $p$-adic $\K$-theory degenerates
    for degree reasons and there are no possible extensions by Corollary~\ref{cor:zigzag}.
\end{proof}

\begin{remark}\label{rem:in_the_limit}
    The analogue of Corollary~\ref{cor:k_groups} holds in the limit for $\Oscr_K$ as well.
    This can be proved from Corollary~\ref{cor:k_groups} using $p$-adic continuity for syntomic cohomology
    (see~\cite[Cor.~7.4.11]{bhatt-lurie-apc}) and $\TC(-;\bZ_p)$ (see~\cite[Thm.~5.20]{cmm}), or it
    can be proved directly by using crystalline degeneration again to reduce to
    characteristic $p$.
\end{remark}

Already, it is possible to extract qualitative information about the $\K$-groups using crystalline
degeneration for syntomic cohomology and Sulyma's calculation~\cite{sulyma}. The following result
was proved in the unramified case in~\cite{angeltveit}.

\begin{proposition}[Angeltveit quotient]\label{prop:angeltveit}
    For $i\geq 1$ and any finite chain ring $R=\Oscr_K/\varpi^n$ with residue field $\bF_q$,
    $$\frac{\left|\H^1(\bZ_p(i)(R))\right|}{\left|\H^2(\bZ_p(i)(R))\right|}=q^{i(n-1)}.$$
\end{proposition}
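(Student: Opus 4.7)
The plan is to reduce the identity to the truncated polynomial case $R' := \bF_q[z]/z^n$ via crystalline degeneration, and then invoke the Hesselholt--Madsen~\cite{hesselholt-madsen-truncated} computation of $\K_*(R')$. Concretely, I would equip $R = \Oscr_K/\varpi^n$ with the $\varpi$-adic filtration and $R'$ with the $z$-adic filtration. By Corollary~\ref{cor:associatedgraded}, the induced filtration $\F^{\geq\star}\bZ_p(i)(R)$ is complete, supported in filtration weights $1 \leq j \leq in-1$, and each graded piece lies in $\D(\bZ_p)_{[-2,-1]}$; by Proposition~\ref{prop:degeneration}, these graded pieces agree with the corresponding graded pieces of $\F^{\geq\star}\bZ_p(i)(R')$.

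Next, I would invoke Sulyma's~\cite{sulyma} explicit identification of each $\gr^j_\F \bZ_p(i)(R')$ as a finite abelian group concentrated in cohomological degrees $1$ and $2$. Combined with the previous step, this shows that both $\bZ_p(i)(R)$ and $\bZ_p(i)(R')$ admit complete, bounded filtrations whose graded pieces have finite cohomology, so in particular the Euler characteristic $\chi(X) := |\H^1(X)|/|\H^2(X)|$ is defined on each filtered and graded piece. Multiplicativity of $\chi$ in fiber sequences then yields, by induction along the filtration,
\[
\chi(\bZ_p(i)(R)) \;=\; \prod_{j=1}^{in-1} \chi(\gr^j_\F \bZ_p(i)(R)) \;=\; \prod_{j=1}^{in-1} \chi(\gr^j_\F \bZ_p(i)(R')) \;=\; \chi(\bZ_p(i)(R')).
\]

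To finish, I would compute $\chi(\bZ_p(i)(R')) = q^{i(n-1)}$ directly in the characteristic-$p$ setting. For $i \geq 2$, Hesselholt--Madsen~\cite{hesselholt-madsen-truncated} established that $\K_{2i-2}(R'; \bZ_p) = 0$ while $|\K_{2i-1}(R'; \bZ_p)| = q^{i(n-1)}$; by Corollary~\ref{cor:k_groups}, this gives the claim. The case $i = 1$ follows from identifying $\H^1(\bZ_p(1)(R'))$ with $1 + zR'$ (of order $q^{n-1}$) while $\H^2(\bZ_p(1)(R')) = 0$ (e.g.\ because $\TC_0(R';\bZ_p) = \bZ_p$ is torsion-free, forcing the weight-$1$ contribution in that degree to vanish). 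The main obstacle is the Euler-characteristic bookkeeping---establishing finiteness of each individual graded piece of $\bZ_p(i)(R')$, which is exactly the content of Sulyma's computation---after which crystalline degeneration transfers the identity directly from the well-understood characteristic-$p$ model.
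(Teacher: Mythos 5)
Your proposal is correct and reaches the same destination as the paper by a genuinely different final step, after a shared first step. Both arguments open with crystalline degeneration: the $\varpi$-adic filtration on $R$ and the $z$-adic filtration on $R' = \bF_q[z]/z^n$ have the same associated graded, so the graded pieces of $\F^{\geq\star}\bZ_p(i)(R)$ and $\F^{\geq\star}\bZ_p(i)(R')$ agree. You then package the finite, bounded filtration into the multiplicative Euler characteristic $\chi(X)=|\H^1(X)|/|\H^2(X)|$ to transfer the problem to $R'$, and conclude by citing Hesselholt--Madsen for $\K_{2i-2}(R';\bZ_p)=0$ and $|\K_{2i-1}(R';\bZ_p)|=q^{i(n-1)}$, together with Corollary~\ref{cor:k_groups} to translate back to syntomic cohomology (and a correct, separate check at $i=1$, where cor:k\_groups does not directly apply to $\H^2$). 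The paper instead stays at the level of the prismatic package: writing $\bZ_p(i)(R)\we\fib\left(\F^{[1,in-1]}\N^{\geq i}\Prismhat_R\{i\}\to\F^{[1,in-1]}\Prismhat_R\{i\}\right)$ with source and target concentrated in a single degree, so $|\H^1|/|\H^2|$ is simply $|A|/|B|$, and then plugs in Sulyma's explicit formulas $\gr^j_\F\Prismhat_{R'}\iso W(\bF_q)/\{j,n\}[-1]$ and $\gr^j_\F\N^{\geq i}\Prismhat_{R'}\iso W(\bF_q)/p^{\epsilon(i,j)}\{j,n\}[-1]$ to count $\prod_j q^{\epsilon(i,j)}=q^{i(n-1)}$. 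Your route imports the final K-theory answer (Hesselholt--Madsen) and gets the identity for free from multiplicativity; the paper's route is more self-contained, needing only the orders of the filtered graded pieces and never the full K-groups. Since Sulyma's computation is itself the syntomic refinement of Hesselholt--Madsen, the two are ultimately equivalent, but the paper's phrasing avoids routing through Corollary~\ref{cor:k_groups} and is thus slightly more direct in the present framework.
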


\begin{proof}
    By Corollary~\ref{cor:zigzag}, $$\bZ_p(i)(R)\we\fib\left(\F^{[1,in-1]}\N^{\geq
    i}\Prismhat_R\{i\}\xrightarrow{\can-\varphi}\F^{[1,in-1]}\Prismhat_R\{i\}\right).$$
    On associated graded pieces, $$\bigoplus_{j=1}^{in-1}\gr^j_\F\N^{\geq
    i}\Prismhat_R\{i\}\we\bigoplus_{j=1}^{in-1}\gr^j_\F\N^{\geq
    i}\Prismhat_{\bF_q[z]/z^n}$$
    (where the Breuil--Kisin twist is trivialized on the right as we are in the
    crystalline setting there)
    and similarly for $\F^{[1,in-1]}\Prism_R\{i\}$.

    By~\cite{sulyma}, $\gr^j_\F\Prismhat_{\bF_q[z]/z^n}\iso W(\bF_q)/\{j,n\}[-1]$
    for $j\geq 1$,
    where $$\{j,n\}=\begin{cases}
        n&\text{if $n\mid j$,}\\
        j&\text{otherwise.}
    \end{cases}$$
    On the other hand, $\gr^j_\F\N^{\geq i}\Prismhat_{\bF_q[z]/z^n}\iso
    W(\bF_q)/p^{\epsilon(i,j)}\{j,n\}[-1]$ for $j\geq 1$,
    where $$\epsilon(i,j)=\begin{cases}
        1&\text{if $\lfloor\tfrac{j}{n}\rfloor<\lceil\tfrac{j}{n}\rceil\leq i$}\\
        0&\text{otherwise.}
    \end{cases}$$
    It follows that the quotient appearing in the statement of the proposition
    has order $$\prod_{j=1}^{in-1}q^{\epsilon(i,j)}.$$
    Note that $\epsilon(i,j)=1$ if and only if $j$ is not divisible by $n$. In
    the range $1,\ldots,in-1$ there are $i-1$ numbers $j$ divisible by $n$. So, the
    product has order $q^{in-1-(i-1)}=q^{i(n-1)}$, as desired.
\end{proof}

\begin{remark}
    Proposition~\ref{prop:angeltveit} can be proved directly using the machinery of
    prismatic envelopes; see Lemma~\ref{lem:isogeny}.
\end{remark}

Finally, we observe that while $\Prism_{R/A}^{(1)}$ is generally not complete for
either the $\F$-filtration or the Nygaard filtration, the two completions are closely related.

\begin{theorem}\label{thm:n_v_f}
  Let $R/A$ be a filtered prismatic $\delta$-pair.
  \begin{enumerate}
      \item[{\em (1)}] If $\F^{\geq\star}R$ is complete and $\L_{R/A}$ is a perfect filtered $R$-module (i.e. in the thick subcategory generated by filtered shifts of $R$), then the Nygaard-completion of $\Prism_{R/A}^{(1)}$ is also $\F$-complete.
      \item[{\em (2)}] If $\gr^0 R$ is smooth over $\gr^0 \overline{A}$, then the $\F$-completion of $\Prism_{R/A}^{(1)}$ is also Nygaard-complete.
  \end{enumerate}
\end{theorem}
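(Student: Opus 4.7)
Both parts follow the same template: to show that one of the two completions is automatically compatible with the other filtration, reduce to checking that each graded piece of the first filtration is complete for the second.

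For (1), the Nygaard-completion of $\Prism_{R/A}^{(1)}$ is $\lim_i \Prism_{R/A}^{(1)}/\N^{\geq i}$, and each finite Nygaard truncation sits in an iterated extension built from the Nygaard graded pieces $\gr^k_\N \Prismhat_{R/A}^{(1)}$. Since the class of $\F$-complete filtered $R$-modules is closed under extensions and inverse limits, it suffices to show each Nygaard graded piece is $\F$-complete. Via the Frobenius identification of the Nygaard graded pieces with pieces of the conjugate filtration on the Hodge--Tate cohomology $\Prismbar_{R/A}$, each such Nygaard graded piece is in turn an iterated extension of Breuil--Kisin twists and shifts of the derived differentials $\L\Omega^j_{R/\overline{A}} \simeq \L\Lambda^j \L_{R/\overline{A}}$. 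Using the transitivity cofiber sequence
\begin{equation*}
  R \otimes_{\overline{A}}^{\L} \L_{\overline{A}/A} \to \L_{R/A} \to \L_{R/\overline{A}},
\end{equation*}
together with the fact that $\L_{\overline{A}/A} \simeq (I/I^2)[1]$ is a shift of a Breuil--Kisin twist of $\overline{A}$ and hence perfect filtered, the hypothesis that $\L_{R/A}$ is a perfect filtered $R$-module forces $\L_{R/\overline{A}}$ to be perfect filtered as well. Derived exterior powers preserve perfect filteredness, so each $\L\Omega^j_{R/\overline{A}}$ is perfect filtered. Finally, perfect filtered modules over the complete filtered ring $\F^{\geq\star}R$ are automatically $\F$-complete, which concludes part (1).

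For (2), the $\F$-completion of $\Prism_{R/A}^{(1)}$ is $\lim_j \Prism_{R/A}^{(1)}/\F^{\geq j}$, and by the same formal reasoning it suffices to show that each $\F$-graded piece $\gr^k_\F \Prism_{R/A}^{(1)}$ is Nygaard-complete. The crystalline degeneration principle (in the spirit of Proposition~\ref{prop:degeneration}) identifies $\gr^\star_\F \Prism_{R/A}^{(1)}$, together with its Nygaard filtration, with the corresponding object built from the associated graded $\delta$-pair $\gr^\star R / \gr^\star A$. Under the smoothness hypothesis on $\gr^0 R$ over $\gr^0 \overline{A}$, this relative prismatic cohomology is that of a smooth $\delta$-pair, where the Nygaard filtration admits the classical description in terms of Hodge truncations of the Hodge--Tate complex. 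In particular, each Nygaard graded piece on $\gr^k_\F \Prism^{(1)}_{R/A}$ is controlled by a single Hodge component, so the Nygaard filtration on each $\F$-graded piece is complete.

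The main obstacle lies in part (2): one must pin down precisely how the Nygaard filtration on $\Prism_{R/A}^{(1)}$ passes to the $\F$-graded pieces under the degeneration isomorphism, and then verify that the smoothness of $\gr^0 R/\gr^0 \overline{A}$ gives cohomological bounds on the Nygaard graded pieces that are sharp enough to imply completeness rather than merely concentration in bounded Nygaard weights. The classical identification of Nygaard graded pieces with Hodge--Tate truncations in the smooth case, transported through the filtered setup of our companion paper, should be the key input.
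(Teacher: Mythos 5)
Your argument for part (1) tracks the paper's closely and, if anything, is slightly more careful: you pass from $\L_{R/A}$ to $\L_{R/\overline{A}}$ via the transitivity cofiber sequence and perfect filteredness of $\L_{\overline{A}/A}$, which is a step that is genuinely needed (the conjugate graded pieces are $\widehat{\L\Omega}^j_{R/\overline{A}}[-j]$, not $\widehat{\L\Omega}^j_{R/A}[-j]$), and which the paper's wording elides. So part (1) is fine.

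For part (2), there is a real gap. You correctly reduce to showing each $\gr^k_\F \Prism^{(1)}_{R/A}$ is Nygaard-complete and correctly observe that passing to the associated graded $\delta$-pair $\gr^\star R/\gr^\star A$ is the right move, but you then assert that ``this relative prismatic cohomology is that of a smooth $\delta$-pair.'' That is false: only the degree $0$ part $\gr^0 R$ is assumed smooth over $\gr^0\overline{A}$; the full graded ring $\gr^\star R$ is in general far from smooth (think $\gr^\star(\cO_K/\varpi^n) = k[z]/z^n$). Consequently the conjugate/Nygaard filtration is a priori unbounded in each $\F$-degree and one cannot read off completeness from ``a single Hodge component.'' The actual argument has two more ideas you would need. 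First, the Nygaard filtration is recovered as the limit over $j$ of its $I^j$-truncations $\N^{\geq\star}\Prism^{(1)}_{R/A}\otimes_{I^\star A}(I^\star A/I^j A)$, which lets one reduce to the case $j=1$, i.e., to the Hodge-filtered derived de Rham complex $(\dR_{R/\overline{A}})_p^\wedge$. Second, to bound the Hodge filtration in each $\F$-weight, one base changes along $A\to\gr^0_\F A$, passes to the $\F$-associated graded so that $R$ becomes graded, and then writes this graded $A$-algebra (with fixed smooth degree $0$ part of relative dimension $\leq r$) as a sifted colimit of \emph{graded smooth} $A$-algebras with the same degree $0$ part. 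In the graded smooth case the Hodge filtration on $\gr^h_\F$ is bounded above by $h+r$ because $\widehat{\L\Omega}^j_{\gr^0_\F R/\overline{A}}$ vanishes for $j>r$, forcing the $\F$-weight of $\widehat{\L\Omega}^j_{R/\overline{A}}$ to be $\geq j-r$; this bound then persists along the sifted colimit. Your closing paragraph anticipates that ``cohomological bounds'' are needed, but the crucial missing device is the sifted colimit resolution by graded smooth algebras --- without it, smoothness of only the degree $0$ piece gives you no control over the higher-weight differentials.
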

\begin{proof}
  For the first statement, it suffices to check that each term in the Nygaard associated
    graded is $\F$-complete, since this exhibits the Nygaard completion as limit of
    $\F$-complete objects. As $\gr^i_\N\Prism^{(1)}_{R/A} \simeq \F^{\conj}_{\leq i}
    \Prismbar_{R/A}$, it suffices to check that each of the $\gr_i^{\conj}
    \Prismbar_{R/A}\simeq \widehat{\L\Omega}^i_{R/A}[-i]$ is $\F$-complete. By assumption,
    $\L_{R/A}$ is a perfect filtered
    $R$-module, so the same is true for each of its exterior powers. Since
    $\F^{\geq\star} R$ is complete, the $\widehat{\L\Omega}^i_{R/A}[-i]$ are all complete.

    For the second statement, we need to prove that each term in the $\F$-associated graded is Nygaard-complete.
    To do so, we observe that there is a limit
    $\N^{\geq\star}\Prism^{(1)}_{R/A}\we\lim_j\left(\N^{\geq\star}\Prism_{R/A}^{(1)}\otimes_{I^\star
    A}(I^\star A/I^j A)\right)$ in $(p,I)$-complete filtered complexes: this follows
    from $I$-completeness of $\Prism_{R/A}^{(1)}$ upon forgetting the Nygaard
    filtration and from the fact that the filtration is eventually constant in each
    weight on the Nygaard associated graded pieces.
    It is now enough to show that each
    $\gr^k_\F\left(\N^{\geq\star}\Prism_{R/A}^{(1)}\otimes_{I^\star A}(I^\star A/I^j
    A)\right)_p^\wedge$ is a
    complete filtration and for this we can reduce to the case when $j=1$. 
    Recall that the filtered tensor product $\left(\N^{\geq\star}\Prism^{(1)}_{R/A}\otimes_{I^\star
    A}A/I\right)_p^\wedge$ agrees with the Hodge filtration on $p$-complete derived de Rham cohomology,
    for example by~\cite[Thm.~1.8]{li-liu} or~\cite[Cor.~5.2.8]{bhatt-lurie-apc}.
    We prove the following stronger claim: if $r$ is an upper bound for the rank of
    the projective $\gr^0_\F R$-module
    $\L_{\gr^0_\F R/\gr^0_\F A}$,
    then the induced Nygaard/Hodge filtration on
    $\gr^h_\F(\Prism^{(1)}_{R/A} \otimes_{I^\star A} A/I)_p^\wedge\we\gr^h_\F(\dR_{R/\overline{A}})_p^\wedge$
    is bounded by $h+r$ for each $h\geq 0$.
    Indeed, this statement can be checked after base change along $A\to \gr^0_\F A$, so
    we may assume the $\F$-filtration on $A$ to be trivial.
    We may also pass to the $\F$-associated graded to assume that $R$ is in fact graded instead of just filtered.
    Finally, we may write any graded $A$-algebra with the given smooth degree $0$ part as sifted colimit of smooth graded $A$-algebras, constant on the degree $0$ part.
    So the statement is reduced to the case where $R$ is graded smooth over $A$, and
    $\gr^0_\F R$ has relative dimension bounded by $r$. In that case,
    the $p$-completed derived de Rham complex
    $(\dR_{R/\overline{A}})_p^\wedge$ is in fact Hodge-complete (see for
    example~\cite[Prop.~E.12]{bhatt-lurie-apc}) and
    $\widehat{\L\Omega}^h_{R/\overline{A}}$ has $\F$-filtration bounded below by $h-r$ since
    $\widehat{\L\Omega}^j_{\gr^0_\F R/\overline{A}}$ vanishes for $j>r$.
    Thus, $\gr^h_\F (\Prism^{(1)}_{R/A}\otimes_{I^\star A} A/I)_p^\wedge$ has Nygaard/Hodge filtration bounded above by $h+r$, as claimed.
\end{proof}

\subsection{Relative-to-absolute descent}\label{sub:liuwang}

In order to compute the syntomic complexes, we will compute part of the
prismatic cohomology of $\Oscr_K/\varpi^n$. Specifically, by
Corollary~\ref{cor:zigzag}, it is enough to compute
$$\F^{[1,in-1]}\N^{\geq
i}\Prismhat_{\Oscr_K/\varpi^n}\{i\}\quad\text{and}\quad\F^{[1,in-1]}\Prismhat_{\Oscr_K/\varpi^n}\{i\}$$
and the canonical and Frobenius maps between these complexes.
Unfortunately, these absolute prismatic cohomological objects are not abelian groups but objects of $\D(\bZ_p)$ concentrated in cohomological
degrees $0$ and $1$. However, by working relative to the Breuil--Kisin prism $W(k)\llbracket
z_0\rrbracket$, where $z_0$ maps to $\varpi$, the analogous relative terms become discrete, i.e.,
they are
abelian groups and admit algebraic descriptions in the form of prismatic
envelopes. The absolute prismatic cohomology can then be understood via descent along
$W(k)\rightarrow W(k)\llbracket z_0\rrbracket$ using that
$\Prismpackage_{R/\bZ_p}\we\Prismpackage_{R/W(k)}$ by~\cite[Thm.~1.2]{akn-delta}. This is a complementary approach to the
quasisyntomic descent introduced in~\cite{bms2} and was introduced, for $\THH$ and its
variants, by Liu and Wang in~\cite{liu-wang}.\footnote{In~\cite{krause-nikolaus}, two of us used a
closely related fiber sequence
$\THH(R;\bZ_p)\rightarrow\THH(R/\bS[z];\bZ_p)\rightarrow\THH(R/\bS[z];\bZ_p)[2]$ to
give a new computation of the homotopy groups of $\THH(R;\bZ_p)$ for $R$ of the form $\Oscr_K$ or
$\Oscr_K/\varpi^n$. This is a fiber sequence of spectra and the first map is naturally a map of
cyclotomic spectra, but the cofiber term has a complicated cyclotomic structure. It has never been
clear how to
use this to give a computation, even of $\TP(R;\bZ_p)$, the issue being that the structure of the $S^1$-Tate
construction of the cofiber remains mysterious. The descent approach below can be viewed either as a
way around this or as a way to access this mystery term.}

In more detail,
we consider the cosimplicial $\delta$-ring $A^\bullet$ given by taking the
$p$-complete
Amitsur complex of $W(k)\rightarrow W(k)\langle z_0\rangle$
and completing each term with respect to the augmentation ideal given by
mapping to the constant cosimplicial diagram $\bZ_p$ induced by sending $z_0$
to zero. In other words, $A^\bullet$ is
\[
    W(k)\llbracket z_0\rrbracket  \stack{3} W(k)\llbracket z_0,z_1\rrbracket  \stack{5} \ldots.
\]
Moreover, $A^\bullet$ is a cosimplicial filtered $\delta$-ring in the sense
of~\cite[Sec.~10]{akn-delta}, where each
$A^s$ is given the filtration where $z_0,\ldots,z_s$ have weight $1$ and where
$\delta(z_0)=\cdots=\delta(z_s)=0$.
There is a filtered map from $A^\bullet$ to $\Oscr_K/\varpi^n$, given by sending
each of $z_0,\ldots,z_s$ to $\varpi$, which makes $(A^\bullet,\Oscr_K/\varpi^n)$ into a cosimplicial filtered
$\delta$-pair.

\begin{construction}\label{const:relative_syntomic}
    In~\cite{akn-delta}, we introduce relative syntomic complexes
    $$\bZ_p(i)(R/A)\we\fib\left(\N^{\geq
    i}\Prismhat^{(1)}_{R/A}\{i\}\xrightarrow{\can-\varphi}\Prismhat^{(1)}_{R/A}\{i\}\right)$$
    for any $i\in\bZ$,
    which can be defined from the prismatic package $\Prismpackage_{R/A}$ of any
    $\delta$-pair $(A,R)$. In the case of a filtered $\delta$-pair, there is an induced
    natural filtration $\F^\star\bZ_p(i)(R/A)$ on the relative syntomic complex.
\end{construction}

The following theorem follows from our work in~\cite{akn-delta}.

\begin{theorem}[Relative-to-absolute descent]\label{thm:bkdescent}
    Let $R=\Oscr_K/\varpi^n$. The following maps are equivalences:
    \begin{enumerate}
        \item[{\em (a)}]
            $\F^{\geq\star}\Prism_{R/W(k)}\{i\}\rightarrow\Tot\F^{\geq\star}\Prism_{R/A^\bullet}\{i\}$
            for all $i\in\bZ$;
        \item[{\em (b)}]
            $\F^{\geq\star}\N^{\geq
            j}\Prismhat_{R/W(k)}^{(1)}\{i\}\rightarrow\Tot\F^{\geq\star}\N^{\geq
            j}\Prismhat_{R/A^\bullet}^{(1)}\{i\}$ for all $i,j\in\bZ$;
        \item[{\em (c)}]
            $\F^{\geq\star}\bZ_p(i)(R)\rightarrow\Tot\F^{\geq\star}\bZ_p(i)(R/A^\bullet)$ for all
            $i\in\bZ$;
        \item[{\em (d)}]
            $\F^{[a,b]}\bZ_p(i)(R)\rightarrow\Tot\F^{[a,b]}\bZ_p(i)(R/A^\bullet)$ for all
            integers $i\geq 1$ and all intervals $[a,b]\subseteq\bZ$.
    \end{enumerate}
\end{theorem}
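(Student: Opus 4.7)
The plan is to reduce all four parts to filtered faithfully flat descent for relative prismatic cohomology, as developed in the companion paper \cite{akn-delta}, combined with the absolute-to-relative comparison $\Prismpackage_{R/\bZ_p}\we\Prismpackage_{R/W(k)}$ for $R=\Oscr_K/\varpi^n$. By construction, $A^\bullet$ is the (augmented, completed) Amitsur complex of the map $W(k)\to W(k)\langle z_0\rangle = A^0$ of filtered $\delta$-rings, where $W(k)$ carries the trivial filtration and $z_0$ sits in weight $1$ with $\delta(z_0)=0$. Since $W(k)\to W(k)\langle z_0\rangle$ is $p$-completely faithfully flat, one expects \v{C}ech descent along $A^\bullet$ for any suitable ``filtered flat $\delta$-ring invariant'' of $R$.

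For part (a), I would verify the descent statement termwise in the $\F$-filtration, reducing to a flat descent statement for the functor $A\mapsto\gr^j_\F\Prism_{R/A}$ on filtered $\delta$-rings. This fits into the general filtered prismatic descent framework of \cite{akn-delta}; the completion of the Amitsur complex along the augmentation ideal (which sends each $z_s$ to $\varpi$ after applying to $R$) does not destroy descent because the filtration is compatible weight by weight.

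For part (b), the same strategy applies, but the Nygaard completion introduces technical subtleties. I would use Theorem \ref{thm:n_v_f} to control the interaction of the Nygaard and $\F$-adic completions under the smoothness/perfectness hypotheses that hold for $R$ over $W(k)$, and then reduce to descent on the Nygaard associated graded pieces $\gr^i_\N\Prismhat^{(1)}_{R/-}\we\F^{\conj}_{\leq i}\Prismbar_{R/-}$. These graded pieces are built from $p$-complete derived de Rham cohomology and satisfy the required filtered flat descent directly.

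Parts (c) and (d) now follow formally. For (c), $\bZ_p(i)(R/A^\bullet)$ is defined by Construction \ref{const:relative_syntomic} as the fiber of $\can-\varphi$ acting on the objects appearing in (a) and (b), so $\Tot$ commutes with the fiber sequence and the result follows by combining (a) and (b) with the absolute-to-relative comparison. For (d), the truncation $\F^{[a,b]}$ is, on each filtration weight, a finite limit--colimit operation, hence commutes with $\Tot$, and the equivalence follows from (c) by restriction. The main obstacle throughout is the delicate interaction of the various completions (Nygaard, $(p,I)$-adic, and $\F$-adic) with the totalization, which is exactly what Theorem \ref{thm:n_v_f} and its analogues in \cite{akn-delta} are set up to handle.
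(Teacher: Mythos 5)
Your proposal follows essentially the same strategy as the paper's proof: reduce to the unfiltered statement from the companion paper, then pass to the $\F$-associated graded pieces, and further to the Nygaard/conjugate graded pieces, where the claim comes down to descent for the graded cotangent complex (equivalently, for $p$-complete derived differential forms); parts (c) and (d) are then formal consequences of (a) and (b). The one thing I would flag is a looseness of framing rather than a genuine error: ``filtered faithfully flat descent for relative prismatic cohomology'' is not quite the right way to describe what is being invoked --- the Amitsur complex along $W(k)\to W(k)\langle z_0\rangle$ is not a flat hypercover of the prismatic site, and prismatic cohomology is not a sheaf for the flat topology on $\delta$-rings; the descent here is the specific relative-to-absolute comparison of \cite[Thm.~1.1 and Cor.~1.2]{akn-delta}, and it is only on the associated graded pieces (Hodge--Tate/Nygaard graded pieces, expressed via $\widehat{\L\Omega}^j$) that the argument reduces to ordinary descent for a corepresentable functor. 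Your subsequent reduction to the Nygaard graded pieces and the remark that Theorem~\ref{thm:n_v_f} controls the interplay of the $\N$- and $\F$-completions are both correctly placed and match the paper's intent.
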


\begin{proof}
    Part (d) follows from part (c), which follows from (a) and (b).
    On $\F^0$, parts (a) and (b) follow from~\cite[Thm.~1.1 and Cor.~1.2]{akn-delta}.
    Therefore, to prove (a) and (b) it is enough to check on $\F$-associated graded pieces and even
    on the $\F$-associated graded pieces of the Hodge--Tate and the Nygaard filtrations, where it
    follows from descent for the graded cotangent complex.
\end{proof}

\begin{proposition}[Cosimplicial prismatic envelope]\label{prop:cosimpprism}
    The cosimplicial filtered $\delta$-ring
    $\F^{\geq\star}\Prism_{R/A^\bullet}$ is discrete; the same is true for
    $\F^{\geq\star}\N^{\geq i}\Prismhat_{R/A^\bullet}^{(1)}$ and
    $\F^{\geq\star}\Prismhat_{R/A^\bullet}^{(1)}$.
\end{proposition}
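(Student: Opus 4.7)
The plan is to show that at each cosimplicial level $s$, the pair $(A^s, R)$ is presented as a filtered regular quotient, to which the explicit filtered prismatic envelope construction of~\cite[Sec.~10]{akn-delta} applies and produces a discrete object.

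First, I would write $R = A^s / J^s$ with
\[
  J^s = (z_1 - z_0,\, z_2 - z_0,\, \ldots,\, z_s - z_0,\, E(z_0),\, z_0^n),
\]
where $E$ is the Eisenstein polynomial of $\varpi$ over $W(k)$. The sequence of $s+2$ generators is regular in $A^s$: quotienting by the $z_i - z_0$ for $i \geq 1$ gives $W(k)\llbracket z_0 \rrbracket$, inside which $(E(z_0), z_0^n)$ is a standard regular sequence with quotient $\Oscr_K/\varpi^n$. To check regularity in the filtered sense I would pass to the associated graded $\gr^\star_\F A^s \simeq W(k)[z_0,\ldots,z_s]$ (with each $z_i$ in weight $1$): the leading-term sequence is $(z_1 - z_0,\, \ldots,\, z_s - z_0,\, p\cdot u,\, z_0^n)$ for a unit $u \in W(k)^\times$ (the constant term of $E(x)/p$), which is regular with quotient $\bF_q[z_0]/z_0^n \simeq \gr^\star_\F R$, consistent with Proposition~\ref{prop:degeneration}.

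Second, I would invoke the filtered prismatic envelope theorem of~\cite{akn-delta}: for a filtered $\delta$-pair arising as a filtered regular quotient of a filtered $\delta$-ring, the filtered prismatic cohomology $\F^{\geq\star}\Prism_{R/A^s}$ is computed by an explicit envelope construction (adjoining $\delta$-generators corresponding to the generators of $J^s$ divided by the distinguished element, then $(p,E)$-completing) and is in particular discrete in each filtration weight. The same construction, combined with the description of the Nygaard filtration in terms of the conjugate filtration on Hodge--Tate cohomology together with the Breuil--Kisin twist, shows that $\F^{\geq\star}\N^{\geq i}\Prismhat^{(1)}_{R/A^s}$ and $\F^{\geq\star}\Prismhat^{(1)}_{R/A^s}$ are also discrete. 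Functoriality of the envelope in $A^s$ promotes discreteness to the full cosimplicial object $\F^{\geq\star}\Prism_{R/A^\bullet}$, since the cosimplicial face and degeneracy maps act by substitutions of the $z_i$ variables that preserve filtered regularity of the defining sequences.

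I expect the main obstacle to be bookkeeping rather than a conceptual difficulty: one must verify that the generator $E(z_0)$, whose leading term in the associated graded lies in weight $0$ (namely $p\cdot u$) rather than a positive weight, nonetheless satisfies the hypotheses of the filtered envelope theorem. The point is that the framework of~\cite{akn-delta} is flexible enough to accommodate generators of arbitrary filtration weight as long as the full sequence is regular on associated gradeds, which is exactly what the leading-term calculation above verifies.
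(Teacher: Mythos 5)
Your proposal follows essentially the same route as the paper's proof: verify that $R$ is a (filtered) regular quotient at each cosimplicial level, cite the filtered prismatic envelope theory of~\cite{akn-delta} to conclude discreteness of $\F^{\geq\star}\Prism_{R/A^\bullet}$, and handle the Nygaard-filtered Frobenius twist via the conjugate filtration. You actually give more detail than the paper on the regularity check (the paper just cites~\cite[Prop.~3.13]{prisms} and~\cite[Prop.~10.41]{akn-delta} without unwinding why they apply), which is useful.

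One thing worth fixing in your framing: $E(z_0)$ is the distinguished element (the orientation) of the prism $(A^s,(E(z_0)))$, not one of the ideal generators whose fractions get adjoined by the prismatic envelope. The envelope is $A^s\{\tfrac{z_1-z_0}{E(z_0)},\ldots,\tfrac{z_s-z_0}{E(z_0)},\tfrac{z_0^n}{E(z_0)}\}$; what must be $(p,I)$-completely regular is the image of $(z_1-z_0,\ldots,z_s-z_0,z_0^n)$ in $\overline{A^s}=A^s/E(z_0)$. Your associated-graded computation does establish exactly this, since dropping the $E(z_0)$-slot from your weight-$0$ leading-term sequence $(z_1-z_0,\ldots,z_s-z_0,pu,z_0^n)$ leaves a regular sequence on $\gr^\star_\F\overline{A^s}$. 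So the concern in your last paragraph about $E(z_0)$'s weight-$0$ leading term is misplaced: the envelope theorem never needs to "accommodate" $E(z_0)$ as a generator because it is the denominator, and the weight-$0$ leading term of a distinguished element is precisely what makes the filtered prism crystalline (cf. Lemma~\ref{lem:filtered-crystalline-bk}). For the Nygaard case, the paper's proof is slightly more explicit than yours: it reduces, via the conjugate filtration identifying $\gr^i_\N\Prismhat^{(1)}_{R/A^\bullet}$ with $\F^\conj_{\leq i}\Prismbar_{R/A^\bullet}$, to the fact that $\L_{R/A^\bullet}[-1]$ is a levelwise flat filtered $R$-module; you gesture at this but should state the flatness input explicitly.
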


\begin{proof}
    For $\Prism_{R/A^\bullet}$ the levelwise discreteness follows
    from~\cite[Prop.~3.13]{prisms} and for $\F^\star\Prism_{R/A^\bullet}$ it follows
    from~\cite[Prop.~10.41]{akn-delta}. For $\F^{\geq\star}\N^{\geq
    i}\Prismhat_{R/A^\bullet}^{(1)}$, one reduces, using the conjugate filtration, to the fact $\L_{R/A^\bullet}[-1]$ is a levelwise flat filtered $R$-module.
\end{proof}

\begin{proposition}[Cosimplicial freeness]\label{prop:cosimpfree}
    For $[a,b]\subseteq\bZ$,
    both $\F^{[a,b]}\N^{\geq i}\Prismhat_{R/A^\bullet}^{(1)}\{i\}$ and
    $\F^{[a,b]}\Prismhat_{R/A^\bullet}^{(1)}\{i\}$ are cosimplicial $\bZ_p$-modules which are
    levelwise finite and free.
\end{proposition}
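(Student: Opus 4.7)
By Proposition \ref{prop:cosimpprism}, at each cosimplicial level $s$ the objects $\F^{\geq\star}\Prismhat^{(1)}_{R/A^s}\{i\}$ and $\F^{\geq\star}\N^{\geq i}\Prismhat^{(1)}_{R/A^s}\{i\}$ are discrete, so the truncation $\F^{[a,b]}$ of each of them is a discrete $\bZ_p$-module and the only remaining question is whether it is finite and free. The Breuil--Kisin twist $\{i\}$ can be trivialized up to a rank-one free summand once a single level of $A^\bullet$ is fixed, so I would ignore it throughout the argument.

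The core input is the explicit description of prismatic envelopes from Section \ref{sec:envelopes}. The prism $A^s = W(k)\llbracket z_0,\ldots,z_s\rrbracket$ has distinguished element $E(z_0)$, and the kernel of $A^s\twoheadrightarrow R$ is the ideal generated by the filtered regular sequence consisting of the $s$ differences $z_0-z_i$ (for $1\le i\le s$) together with one additional Eisenstein-type generator encoding $\varpi^n = 0$ (which takes the form $z_0^n$ when $n\le e$ and involves $E(z_0)$ otherwise). Applying the envelope formula presents $\Prism_{R/A^s}$ as a $\delta$-algebra obtained by freely adjoining divided generators $\delta^k((z_0-z_i)/E(z_0))$ (and analogous generators for the $\varpi^n = 0$ relation) subject to the prismatic relations. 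Because each $z_j$ has $\F$-weight $1$ while $E(z_0)$ has $\F$-weight $0$, each $\gr^j_\F\Prism_{R/A^s}$ decomposes as a finite free $\bZ_p$-module with an explicit monomial basis. Base-changing along Frobenius $\varphi\colon A^s\to A^s$ preserves this freeness and finiteness weight-by-weight, so summing the finitely many $\F$-weights in $[a,b]$ yields finite freeness of $\F^{[a,b]}\Prism^{(1)}_{R/A^s}$.

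For the Nygaard-filtered version, I would use Theorem \ref{thm:n_v_f}(2) to argue that the Nygaard completion does not change the $\F$-bounded piece and then identify the Nygaard graded pieces with pieces of the conjugate filtration $\F^{\conj}_{\le\star}\Prismbar^{(1)}_{R/A^s}$, each of which is built from shifted $p$-completed derived wedge powers $\widehat{\L\Omega}^*_{R/A^s}$. The filtered regular sequence presentation of $R$ over $A^s$ exhibits these wedge powers as finite free filtered $R$-modules on an explicit basis (cf.\ Lemma \ref{lem:formsarecomplete}), so their graded pieces are finite free $\bZ_p$-modules and the truncation $\F^{[a,b]}$ is finite free as desired. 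The main obstacle is the compatibility of the Nygaard filtration with the $\F$-weight decomposition coming from the envelope: one must track how the explicit envelope generators interact with the Nygaard filtration to confirm that $\F^{[a,b]}\N^{\geq i}$ sits as a direct summand of $\F^{[a,b]}$ that is again finite free, which is where the detailed envelope analysis of Section \ref{sec:envelopes} is essential.
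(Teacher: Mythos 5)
Your proposal is substantively correct and rests on the same foundation as the paper's proof, namely the explicit prismatic envelope analysis of Section~\ref{sec:envelopes}. The paper's own proof is a one-liner invoking Proposition~\ref{prop:cosimpprism} (levelwise discreteness) together with Proposition~\ref{prop:f_filtered_statement}, which already supplies explicit finite $W(k)$-bases of monomials $\widetilde{d}^j z_0^k\prod f_u^{e_u}\prod g_{u,v}^{e_{u,v}}$ for both $\F^{[a,b]}\N^{\geq i}\Prismhat^{(1)}_{R/A^s}$ and $\F^{[a,b]}\Prismhat^{(1)}_{R/A^s}$ at each cosimplicial level $s$; since there are only finitely many such monomials of any given $\F$-weight, finiteness and freeness over $\bZ_p$ are immediate. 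Your detour through Theorem~\ref{thm:n_v_f}(2) and the conjugate filtration for the Nygaard-filtered version is therefore unnecessary but not incorrect, and you correctly identify at the end that the explicit interaction of the envelope generators with the Nygaard filtration is the essential input (which is exactly what Proposition~\ref{prop:f_filtered_statement} records).

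Two small slips: the extra relation cutting $\Oscr_K/\varpi^n$ out of $A^s$ is always $z_0^n$ (together with $E(z_0)$ and the differences $z_j - z_0$); the dichotomy you describe, with a different form ``when $n\leq e$'', does not occur. And $\F^{[a,b]}\N^{\geq i}\Prismhat^{(1)}$ is a finite free $\bZ_p$-submodule of $\F^{[a,b]}\Prismhat^{(1)}$ via the canonical map, but not a direct summand (the cokernel of $\can$ has $p$-torsion, as quantified in Lemma~\ref{lem:can_isogeny}); this does not affect your conclusion, since any finitely generated torsion-free $\bZ_p$-module is free.
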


\begin{proof}
  This follows from Proposition~\ref{prop:cosimpprism} and Proposition~\ref{prop:f_filtered_statement}.
\end{proof}

\begin{proposition}\label{prop:bounds32}
    Let $R=\Oscr_K/\varpi^n$. Then, $\gr^j_\F\Prismhat_R\{i\}$ and
    $\gr^j_\F\N^{\geq i}\Prismhat_R\{i\}$ are in $\D(\bZ_p)_{[-1,-1]}$ for
    each $j,i\geq 1$.
\end{proposition}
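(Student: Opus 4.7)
The plan is to reduce this claim directly to the characteristic-$p$ case handled by Sulyma, via crystalline degeneration. Give $R=\Oscr_K/\varpi^n$ the $\varpi$-adic filtration; the associated graded is the truncated polynomial ring $\bF_q[z]/z^n$ with its $z$-adic filtration. Proposition~\ref{prop:degeneration} (applied to the full filtered prismatic package, as in~\cite[Cor.~10.32]{akn-delta}) then produces equivalences on associated graded pieces
\[
\gr^j_\F\Prismhat_R\{i\}\we\gr^j_\F\Prismhat_{\bF_q[z]/z^n}\{i\}\quad\text{and}\quad\gr^j_\F\N^{\geq i}\Prismhat_R\{i\}\we\gr^j_\F\N^{\geq i}\Prismhat_{\bF_q[z]/z^n}\{i\}.
\]

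Because $\bF_q[z]/z^n$ has characteristic $p$, its Breuil--Kisin twists are canonically trivializable (compatibly with the Nygaard filtration, up to the factor of $p^i$ that was already exploited in Lemma~\ref{lem:truncated}(b) and the proof of Proposition~\ref{prop:angeltveit}), so the right-hand sides agree with $\gr^j_\F\Prismhat_{\bF_q[z]/z^n}$ and $\gr^j_\F\N^{\geq i}\Prismhat_{\bF_q[z]/z^n}$ respectively. Parts (a) and (b) of Lemma~\ref{lem:truncated} then assert precisely that both of these are concentrated in cohomological degree $1$ for $j\geq 1$, which is the desired statement that they lie in $\D(\bZ_p)_{[-1,-1]}$.

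There is no real obstacle here; the argument is essentially a bookkeeping application of crystalline degeneration plus the characteristic-$p$ input. The only subtle point is verifying that the Breuil--Kisin twist interacts with the $\F$-associated graded as advertised, but this is automatic from the fact that $\{i\}$ is part of the prismatic package to which Proposition~\ref{prop:degeneration} applies, together with its triviality in the crystalline setting.
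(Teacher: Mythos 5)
Your argument is correct and is essentially the paper's own proof, which is a two-citation reference to Proposition~\ref{prop:degeneration} and Lemma~\ref{lem:truncated}. The only extra thing you do is spell out why the Breuil--Kisin twist can be dropped in the characteristic-$p$ case, which is the intended (and necessary) unpacking, since Lemma~\ref{lem:truncated} is stated without twists; the parenthetical about a "factor of $p^i$" is a slight red herring — the crystalline trivialization is an actual isomorphism $\Prismhat_R\{i\}\cong\Prismhat_R$ compatible with the Nygaard filtration, and the $p^i$ you mention shows up only in identifying $\gr^0_\F\N^{\geq i}$, which is outside the $j\geq 1$ range the proposition concerns — but this does not affect the correctness of the proof.
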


\begin{proof}
    See Proposition~\ref{prop:degeneration} and Lemma~\ref{lem:truncated}.
\end{proof}

\begin{proof}
    By Proposition~\ref{prop:f_filtered_statement}, all terms are finitely generated free
    $W(k)$-modules of the same ranks. The map is an injection as is seen by rewriting $z_0^n$ in
    terms of $f_{00}$. Thus, it is an isogeny.
\end{proof}

\begin{lemma}\label{lem:cohomology-torsion}
    For $R=\Oscr_K$ or $\Oscr_K/\varpi^n$, the limits $\F^{[a,b]}\N^{\geq i}\Prismhat_{R}\{i\}$ and
    $\F^{[a,b]}\Prismhat_{R}\{i\}$ of the descent diagrams
    \[
        \F^{[a,b]} \N^{\geq i}\Prismhat^{(1)}_{R/A^0 }\{i\} \stack{3} \cdots
    \]
    and
    \[
        \F^{[a,b]} \Prismhat^{(1)}_{R/A^0 }\{i\} \stack{3} \cdots
    \]
    have finitely generated
    torsion cohomology groups for each $i\in\bZ$ and each finite interval $[a,b]\subseteq\bZ$ with $a\geq 1$.
\end{lemma}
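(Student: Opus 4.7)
The plan is to exploit the finite filtration of $\F^{[a,b]}$ by the graded pieces $\gr^j_\F$ for $j \in [a,b]$ and reduce the lemma to showing that each such graded piece is a finitely generated torsion $\bZ_p$-module concentrated in a single cohomological degree. Iterated extensions in a finite range preserve both of these properties (via the long exact sequence of each fiber sequence), so this reduction suffices.

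For $R = \Oscr_K/\varpi^n$, I would first identify the limits with the absolute $\F^{[a,b]}\Prismhat_R\{i\}$ and $\F^{[a,b]}\N^{\geq i}\Prismhat_R\{i\}$ via Theorem~\ref{thm:bkdescent}. Proposition~\ref{prop:bounds32} then places each graded piece $\gr^j_\F$ for $j \geq 1$ in cohomological degree $1$. Combining crystalline degeneration (Proposition~\ref{prop:degeneration}) with Sulyma's explicit calculation of $\gr^j_\F \Prismhat_{\bF_q[z]/z^n}$ and its Nygaard-filtered version (as recalled in the proof of Proposition~\ref{prop:angeltveit}) exhibits each graded piece as a finite torsion $W(\bF_q)$-module, settling this case. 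The Breuil--Kisin twist by $\{i\}$ is trivializable on the characteristic $p$ associated graded, so the argument is independent of $i \in \bZ$.

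For $R = \Oscr_K$, I would reduce to the previous case as follows. For any $n > b$, the surjection $\Oscr_K \twoheadrightarrow \Oscr_K/\varpi^n$ is a morphism of filtered rings that is an isomorphism on $\gr^j$ for $0 \leq j \leq n-1$ (both sides are $\bF_q \cdot \varpi^j$). A functorial reading of Proposition~\ref{prop:degeneration} should then propagate this to isomorphisms on $\gr^j_\F\Prismhat_{(-)}\{i\}$ and $\gr^j_\F\N^{\geq i}\Prismhat_{(-)}\{i\}$ in weights $j \leq n-1$, which covers the whole interval $[a,b]$. The claim thus reduces to the $\Oscr_K/\varpi^n$ case already handled.

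The main obstacle I foresee is in the $\Oscr_K$ case. First, one must check that Theorem~\ref{thm:bkdescent}, as stated for $\Oscr_K/\varpi^n$, extends to $R = \Oscr_K$; this should follow from the arguments of~\cite{akn-delta} without essential change, but needs to be noted. Second and more delicately, one must articulate and verify a sufficiently functorial refinement of Proposition~\ref{prop:degeneration}: if a morphism of filtered quasisyntomic rings is an isomorphism on $\gr^j$ in a specified range of $j$, then the induced map on $\gr^j_\F$ of the (Nygaard-filtered) filtered prismatic packages is also an isomorphism in that range. Such functoriality should be implicit in the construction used in~\cite{akn-delta}, but is the single point most needing careful justification before the proof is complete.
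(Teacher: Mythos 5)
Your strategy is sound, but it runs in the opposite direction from the paper's own argument, and the part you flag as needing care is in fact the less economical route.

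The paper's proof handles $R=\Oscr_K$ directly and leaves $\Oscr_K/\varpi^n$ to the reader: by crystalline degeneration, $\gr^\star_\F\Prismhat_{\Oscr_K}$ is computed from the associated graded ring $k\llbracket z_0\rrbracket$, whose prismatic cohomology is crystalline cohomology, which (via the lift $W(k)\llbracket z_0\rrbracket$) is the de Rham complex $W(k)\llbracket z_0\rrbracket\xrightarrow{\d}W(k)\llbracket z_0\rrbracket\,\d z_0$. Restricting to $\F$-weight $[a,b]$ gives the finite complex $W(k)\cdot\{z_0^a,\ldots,z_0^b\}\xrightarrow{\d}W(k)\cdot\{z_0^{a-1}\d z_0,\ldots,z_0^{b-1}\d z_0\}$, whose differential is multiplication by $j$ in weight $j$; since $a\geq 1$ this has finitely generated torsion cohomology. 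The Nygaard-filtered case and the $\Oscr_K/\varpi^n$ case are similar (the latter replacing the de Rham complex by the pd de Rham complex, giving exactly the $W(\bF_q)/\{j,n\}$-type answers from Sulyma you invoke). You instead treat $\Oscr_K/\varpi^n$ directly via Sulyma's formulas and then reduce $\Oscr_K$ to that case by choosing $n>b$.

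Your reduction of the $\Oscr_K$ case is where the argument is most fragile, and you correctly identify the issue: Proposition~\ref{prop:degeneration} as stated requires an equivalence of the \emph{entire} associated graded ring, not just an isomorphism on $\gr^j$ in a bounded range. A map $\Oscr_K\to\Oscr_K/\varpi^n$ inducing an isomorphism on $\gr^j$ for $j<n$ does not immediately yield an isomorphism on $\gr^j_\F$ of the prismatic packages by a citation of Proposition~\ref{prop:degeneration}; one must check that the graded prismatic invariant in weight $j$ depends only on the ring in weights $\leq j$, which in this situation one would verify by comparing the (pd-)de Rham complexes of the graded rings $k\llbracket z\rrbracket$ and $k[z]/z^n$ in weight $j<n$ — at which point one is doing the paper's computation anyway. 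So your proof is correct but carries an extra reduction step that is more work to justify than the direct calculation it is meant to avoid; the cleanest repair is simply to carry out the $\Oscr_K$ case directly from the de Rham complex of $k\llbracket z_0\rrbracket$ as the paper does.

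One small remark: you do not actually need Proposition~\ref{prop:bounds32} to concentrate each graded piece in a single cohomological degree. The finite extension argument only needs each $\gr^j_\F$ to have finitely generated torsion cohomology in every degree, and that is what the de Rham/Sulyma computation provides. Also, for $i\in\bZ$ arbitrary, the Breuil--Kisin twist is an invertible module, so it does not affect finite generation or torsion-ness, which covers the range of $i$ beyond where Proposition~\ref{prop:bounds32} is stated.
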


\begin{proof}
    This follows from a crystalline degeneration argument. For example, the associate graded of the
    $\varpi$-adic filtration on $\Oscr_K$ is $k\llbracket z_0\rrbracket$. Its prismatic cohomology
    agrees with its crystalline cohomology, but, as it admits a lift to $W(k)$, its crystalline
    cohomology is identified with the de Rham cohomology of the lift, which is computed by the
    complex $$W(k)\llbracket z_0\rrbracket\xrightarrow{\d} W(k)\llbracket
    z_0\rrbracket\,\d z_0.$$
    Restricting to $\F$-filtration $[a,b]$ amounts to restricting to the subcomplex
    $$W(k)\cdot\{z_0^a,\ldots,z_0^b\}\xrightarrow{\d}W(k)\cdot\{z_0^{a-1}\d
    z_0,\cdots,z_0^{b-1}\d
    z_0\},$$ which has finitely generated torsion cohomology, since $a\geq 1$. The other cases are left to the reader.
\end{proof}

\subsection{A finite complex computing syntomic cohomology}\label{sub:construction}

In the absolute case, $\F^{[1,in-1]}\N^{\geq i}\Prismhat_R\{i\}$ and $\F^{[1,in-1]}\Prismhat_R\{i\}$
are each in $\D(\bZ_p)_{[-1,-1]}$ for $i\geq 1$ by Propositions~\ref{prop:degeneration} and~\ref{prop:bounds32}, so
the cochain complexes associated to the cosimplicial abelian groups
$$\F^{[1,in-1]}\N^{\geq
i}\Prismhat_{R/A^\bullet}^{(1)}\{i\}\quad\text{and}\quad\F^{[1,in-1]}\Prismhat_{R/A^\bullet}^{(1)}\{i\}$$
are exact in cohomological degrees at least $2$.

It follows that we can compute $\F^{[1,in-1]}\N^{\geq i}\Prismhat_R$ as the fiber
of an injective map \begin{equation}\label{eq:fiber}\F^{[1,in-1]}\N^{\geq
i}\Prismhat_{R/A^0}^{(1)}\{i\}\to\ker\left(\F^{[1,in-1]}\N^{\geq
    i}\Prismhat_{R/A^1}^{(1)}\{i\}\xrightarrow{d^0-d^1+d^2}\F^{[1,in-1]}\N^{\geq
i}\Prismhat_{R/A^2}^{(1)}\{i\}\right)\end{equation}
and similarly for $\F^{[1,in-1]}\Prismhat_R\{i\}$.

Lemma~\ref{lem:cohomology-torsion} implies that $\F^{[1,in-1]}\N^{\geq
i}\Prismhat_R\{i\}$ and $\F^{[1,in-1]}\Prismhat_R\{i\}$ are finitely generated torsion
abelian groups, while Proposition~\ref{prop:cosimpfree} implies
that the terms in~\eqref{eq:fiber} have the same rank over $\bZ_p$.

We summarize the reductions of this section in the following theorem.

\begin{theorem}\label{thm:main}
    For each $i\geq 1$, the complex
    $\bZ_p(i)(R)\we\F^{[1,in-1]}\bZ_p(i)(R)$ is equivalent to the total complex
    (i.e., total fiber)  of the commutative
    square
    $$\xymatrix{
        \F^{[1,in-1]}\N^{\geq i}\Prismhat_{R/A^0}^{(1)}\{i\} \ar[r]\ar[d]_{\can-\varphi}&\ker\left(\F^{[1,in-1]}\N^{\geq
        i}\Prismhat_{R/A^1}^{(1)}\{i\}\xrightarrow{d^0-d^1+d^2}\F^{[1,in-1]}\N^{\geq
        i}\Prismhat_{R/A^2}^{(1)}\{i\}\right)\ar[d]_{\can-\varphi}\\
        \F^{[1,in-1]}\Prismhat_{R/A^0}^{(1)}\{i\}
        \ar[r]&\ker\left(\F^{[1,in-1]}\Prismhat_{R/A^1}^{(1)}\{i\}\xrightarrow{d^0-d^1+d^2}\F^{[1,in-1]}\Prismhat_{R/A^2}^{(1)}\{i\}\right).
    }$$
\end{theorem}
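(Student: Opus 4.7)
The plan is to assemble Theorem~\ref{thm:main} from the preceding results, the main steps being (i) reduce to the finite $\F$-window, (ii) descend to the cosimplicial relative situation, (iii) commute $\Tot$ with the defining fiber of $\can-\varphi$, and (iv) identify each individual totalization with a kernel-fiber using the fact that the relevant cosimplicial abelian groups compute something concentrated in cohomological degree $1$.

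First I would start from the equivalence $\bZ_p(i)(R)\we\F^{[1,in-1]}\bZ_p(i)(R)$ of Corollary~\ref{cor:zigzag}. Applying Theorem~\ref{thm:bkdescent}(d) rewrites this as $\Tot\F^{[1,in-1]}\bZ_p(i)(R/A^\bullet)$. By Construction~\ref{const:relative_syntomic}, $\bZ_p(i)(R/A^\bullet)$ is levelwise the fiber of the $\can-\varphi$ map between $\N^{\geq i}\Prismhat^{(1)}_{R/A^\bullet}\{i\}$ and $\Prismhat^{(1)}_{R/A^\bullet}\{i\}$, and since $\F^{[1,in-1]}$ and $\Tot$ both commute with finite limits, the expression becomes
\[
\fib\left(\Tot\F^{[1,in-1]}\N^{\geq i}\Prismhat^{(1)}_{R/A^\bullet}\{i\}\xrightarrow{\can-\varphi}\Tot\F^{[1,in-1]}\Prismhat^{(1)}_{R/A^\bullet}\{i\}\right).
\]

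Next, I would identify each of the two totalizations with the fiber appearing in the square. Proposition~\ref{prop:cosimpprism} says both cosimplicial objects are levelwise discrete, so each is an honest cosimplicial abelian group. The main content then is that each totalization is concentrated in cohomological degree $1$: by Proposition~\ref{prop:degeneration} combined with Lemma~\ref{lem:truncated} (that is, Proposition~\ref{prop:bounds32}), both $\F^{[1,in-1]}\N^{\geq i}\Prismhat_R\{i\}$ and $\F^{[1,in-1]}\Prismhat_R\{i\}$ are in $\D(\bZ_p)_{[-1,-1]}$, and the descent equivalences in parts (a) and (b) of Theorem~\ref{thm:bkdescent} transport this concentration to the totalizations. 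For a cosimplicial abelian group $A^\bullet$ with $\Tot A^\bullet\in\D(\bZ_p)_{[-1,-1]}$, the normalized complex has $H^0=0$, so $A^0\to A^1$ is injective with image inside $\ker(A^1\xrightarrow{d^0-d^1+d^2}A^2)$; since the fiber of an injection of abelian groups is the $(-1)$-shift of its cokernel, $\Tot A^\bullet\we\fib(A^0\to\ker(d^0-d^1+d^2))$.

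Finally, I would apply this identification to both rows and use naturality of $\can-\varphi$ in the cosimplicial direction to conclude that the fiber of the vertical map between the two totalizations is exactly the total fiber of the displayed square. The whole argument is formal once the preparations of Sections~\ref{sec:reductions}--\ref{sub:liuwang} are in place; the only nontrivial point to double-check is the concentration in cohomological degree $1$, which ultimately rests on the crystalline-degeneration reduction of Proposition~\ref{prop:degeneration} together with Sulyma's characteristic-$p$ computation underlying Lemma~\ref{lem:truncated}.
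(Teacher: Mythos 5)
Your proof is correct and follows essentially the same route as the paper's own assembly of this theorem from Corollary~\ref{cor:zigzag}, Theorem~\ref{thm:bkdescent}, Propositions~\ref{prop:cosimpprism}, \ref{prop:degeneration} and \ref{prop:bounds32}, and the reduction to the kernel-fiber of Equation~\eqref{eq:fiber}. The only step you argue in slightly more detail than the paper is the formal replacement of $\Tot$ by the two-term $\fib(A^0\to\ker(d^0-d^1+d^2))$ once the totalization is concentrated in cohomological degree $1$, which the paper leaves implicit.
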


To compute the total complex of Theorem~\ref{thm:main},
we analyze the prismatic cohomology of $R$ over $A^0$ and $A^1$ via the
prismatic envelopes introduced in~\cite{prisms}. This lets us find
bases of all of the terms involved and to compute the maps between them; see
Sections~\ref{sec:envelopes} and~\ref{sec:bko}.
Moreover, we identify in Corollary~\ref{cor:syntomicsquare} the horizontal maps with a kind of `connection' on
$\Prismhat_{R/A^0}^{(1)}\{i\}$, not unlike the description of prismatic cohomology of
$k[z]/z^n$ in equation~\eqref{eq:dp_connection}.

\section{Envelope algebra}\label{sec:envelopes}

In~\cite{prisms}, Bhatt and Scholze give a prismatic envelope description of $\Prism_{R/A}$ when
$(A,d)$ is a bounded, oriented prism and $R=A/(d,r)$, where $r\in A$ is an element which defines a non-zero
divisor in $\overline{A}=A/d$. The description is as a ``prismatic envelope'', which is defined as
the $(p,d)$-completion of a pushout
$$\label{eqn:envelope_pushout}\xymatrix{
    A\{x\}\ar[r]^{x\mapsto da-r}\ar[d]_{x\mapsto 0}&A\{a\}\ar[d]\\
    A\ar[r]&A\{a\}/(da-r)_\delta
}$$
in $\delta$-$A$-algebras, where $A\{a\}$ denotes the free $\delta$-$A$-algebra on a
generator $a$.\footnote{Recall that $A\{a\}$ is isomorphic to the countably-generated polynomial
ring $A[a,\delta(a),\delta^2(a),\ldots])$ and that the $\delta$-ideal $(da-r)_\delta$ is equal to
the ideal $(da-r,\delta(da-r),\delta^2(da-r),\ldots)$.} We write $A\{\tfrac{r}{d}\}$ for the pushout and
$A\{\tfrac{r}{d}\}_{p,d}^\wedge$ for its completion. The result is a flat $\delta$-$A$-algebra and
the construction commutes with base change of maps of bounded, oriented prisms
by~\cite[Prop.~3.13]{prisms}.
Given an object $(B,I)$ in the relative prismatic site $(R/A)_\Prism$, one has that $R$ maps to
$\overline{B}=B/I$ so that the image of $r$ in $B$ must be divisible by $I$. Using that $I=dB$ and
writing $r=da\in I$ for some unique $a$, we see there is a natural induced map
$A\{\tfrac{r}{d}\}_{p,d}^\wedge\rightarrow B$ of prisms over $A$. Bhatt and Scholze show that in
fact $A\{\tfrac{r}{d}\}_{p,d}^\wedge$ represents a final object of $(R/A)_\Prism$ so that
$\Prism_{R/A}\we A\{\tfrac{r}{d}\}_{p,d}^\wedge$.

In this section, we examine the algebra of the construction of $A\{\tfrac{r}{d}\}_{p,d}^\wedge$ and
closely-related variants. In particular, we show that, as a commutative ring, the prismatic envelope
is the $(p,d)$-completed quotient of a countably generated polynomial ring over $A$ by countably
many relations which are $(p,d)$-completely Koszul-regular. Then, we give generators and relations descriptions of
$$\F_{\leq\star}^\conj\Prismbar_{R/A},\quad\gr^\star_\N\Prismhat^{(1)}_{R/A},\quad\text{and}\quad\N^{\geq\star}\Prismhat_{R/A}^{(1)},$$
as increasingly filtered, graded, and decreasingly filtered commutative rings.
This allows us to give additive generators of these objects. 
When $A=W(k)\llbracket z_0,\ldots,z_s\rrbracket$ is viewed as filtered prism with
respect to some Eisenstein polynomial $E(z_j)$ where $z_0,\ldots,z_s$ have weight $1$ and when $R=\Oscr_K$ or
$\Oscr_K/\varpi^n$, then we give additive bases for the filtered pieces
$\F^{[a,b]}\N^{\geq i}\Prismhat_{R/A}^{(1)}$. Eventually, these additive bases are how we will
compute the syntomic cohomology of $\Oscr_K/\varpi^n$.

\begin{notation}
    Throughout this section, we consider filtered rings and modules.
    Here we will often think of a decreasingly filtered module $\F^{\geq \star} M$ as a graded module $\bigoplus_{i\in\bZ}
    \F^{\geq i} M$ together with an action by the graded ring $\bZ[\tau]$, where $\tau$ acts by the
    transition maps $\F^{\geq i} M\to \F^{\geq i-1} M$.
\end{notation}

\subsection{Some formulas about $\delta$}

Recall the following facts about $\delta$-rings $A$. We have
\begin{equation}\label{eq:delta_add}
    \delta(x+y)=\delta(x)+\delta(y)+w_1(x,y)
\end{equation}
for all $x,y\in A$,
where
\begin{equation}\label{eq:delta_w1}
    w_1(x,y)=(x^p+y^p-(x+y)^p)/p=-\left(\sum_{j=1}^{p-1}\binom{p}{j}x^jy^{p-j}\right)/p,
\end{equation}
a polynomial in $x$ and $y$ with integer coefficients. If $x,y\in A$, then
\begin{align}\label{eq:delta_mul}
    \delta(xy)&=\delta(x)y^p+x^p\delta(y)+p\delta(x)\delta(y)\\
        &=\delta(x)y^p+\varphi(x)\delta(y)\\
        &=\delta(x)\varphi(y)+x^p\delta(y).
\end{align}
If $x\in A$, then
\begin{equation}\label{eq:delta_phi}
    \varphi(x)\in(p,x)
\end{equation}
since $\varphi(x)=x^p+p\delta(x)$.

\begin{lemma}\label{lem:delta_mod}
    If $A$ is a $\delta$-ring and $x,y\in A$, then
    \begin{itemize}
        \item $\delta(x+y)=\delta(x)+\delta(y)\pmod x$,
        \item $\delta(x-y)=\delta(x)-\delta(y)\pmod {x-y}$, and
        \item $\delta(xy)=\delta(x)\varphi(y)\pmod x$.
    \end{itemize}
\end{lemma}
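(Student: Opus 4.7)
The plan is to derive each of the three congruences directly from the explicit formulas for $\delta$ on sums and products that were just recalled, namely~\eqref{eq:delta_add}, \eqref{eq:delta_w1}, and~\eqref{eq:delta_mul}. Since the statement is a short triple of congruences, I anticipate no serious obstacle; the only mild subtlety is that the second congruence does not come from a direct formula but must be deduced from the first.

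For the first congruence, I would invoke \eqref{eq:delta_add} to write $\delta(x+y) = \delta(x)+\delta(y)+w_1(x,y)$ and then observe from the explicit expression
\[
  w_1(x,y) = -\tfrac{1}{p}\sum_{j=1}^{p-1}\binom{p}{j}x^j y^{p-j}
\]
that every monomial in $w_1(x,y)$ carries a factor $x^j$ with $j\geq 1$, so $w_1(x,y)\equiv 0\pmod{x}$.

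For the third congruence, I would use the form $\delta(xy)=\delta(x)\varphi(y)+x^p\delta(y)$ recorded in~\eqref{eq:delta_mul}; the second term is divisible by $x$, yielding the claim modulo $x$.

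For the second congruence, I would bootstrap from the first by applying it to the pair $(x-y, y)$: the first congruence gives $\delta\bigl((x-y)+y\bigr) \equiv \delta(x-y)+\delta(y)\pmod{x-y}$, i.e., $\delta(x) \equiv \delta(x-y)+\delta(y)\pmod{x-y}$, and rearranging yields $\delta(x-y)\equiv \delta(x)-\delta(y)\pmod{x-y}$. The hardest part is merely to notice that this reduction is the cleanest route; a direct expansion would require manipulating $w_1(x,-y)$ together with sign conventions for $\delta(-y)$, which is unnecessary work.
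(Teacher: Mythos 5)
Your argument is correct and is simply the worked-out version of the paper's one-line proof, which also appeals to the addition and multiplication formulas for $\delta$. The bootstrap for the second congruence (applying the first to the pair $(x-y,y)$) is the natural way to carry this out and is fine.
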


\begin{proof}
    These relations follow from the addition and multiplication formulas for $\delta$.
\end{proof}

\begin{lemma}\label{lem:delta_power}
    If $A$ is a $\delta$-ring, then $\delta(x^p)= 0\pmod p$ for all $x\in A$.
\end{lemma}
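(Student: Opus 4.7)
The plan is to exploit the fact that $\varphi$ is a ring endomorphism, so $\varphi(x^p) = \varphi(x)^p$, and then compare the two sides via the defining relation $\varphi(y) = y^p + p\delta(y)$. Applying the definition with $y = x^p$ on the left and with $y = x$ on the right gives
\[
  x^{p^2} + p\,\delta(x^p) \;=\; \bigl(x^p + p\,\delta(x)\bigr)^p.
\]

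Next, expand the right-hand side by the binomial theorem and cancel $x^{p^2}$ from both sides, so that
\[
  p\,\delta(x^p) \;=\; \sum_{j=1}^{p} \binom{p}{j}\, x^{p(p-j)}\, \bigl(p\,\delta(x)\bigr)^j.
\]
Each summand on the right is divisible by $p^2$: for $1 \leq j \leq p-1$ the factor $\binom{p}{j}$ contributes one power of $p$ and $p^j$ contributes another, while the $j=p$ summand equals $p^p\,\delta(x)^p$, which is divisible by $p^2$ provided $p \geq 2$.

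Dividing the identity by $p$ then yields $\delta(x^p) \equiv 0 \pmod p$. The only subtle point is justifying the division by $p$, but this is standard: the identity can first be verified in the free $\delta$-ring $\bZ\{x\}$, which is a polynomial ring on $x, \delta(x), \delta^2(x), \ldots$ and in particular $p$-torsion free, and then specialized along the unique $\delta$-map $\bZ\{x\} \to A$ sending the generator to the given element. There is no real obstacle; the argument is a direct manipulation using only the definition of $\delta$ via $\varphi$.
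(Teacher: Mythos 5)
Your proof is correct and follows essentially the same route as the paper: both start from $x^{p^2}+p\delta(x^p)=\varphi(x^p)=\varphi(x)^p=(x^p+p\delta(x))^p$, reduce to the free $\delta$-ring on one generator where $p$-torsion freeness permits division by $p$, and conclude. You simply make explicit the binomial expansion and the $p^2$-divisibility of each term, which the paper leaves to the reader.
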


\begin{proof}
    Use $x^{p^2}+p\delta(x^p)=\varphi(x^p)=\varphi(x)^p=(x^p+p\delta(x))^p$ to prove the lemma in the
    free $\delta$-ring on one generator $x$, which is $p$-torsion free, and deduce it in general.
\end{proof}

\begin{lemma}
    If $A$ is a $\delta$-ring, then $\delta(ux^p)=\delta(u)x^{p^2}\pmod p$ for all $u,x\in A$.
\end{lemma}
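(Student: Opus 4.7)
The plan is to apply the multiplicative Leibniz-type formula for $\delta$ directly and then reduce using the previous lemma. Explicitly, by formula~\eqref{eq:delta_mul} applied to the product $u\cdot x^p$, I would write
\begin{equation*}
    \delta(ux^p)=\delta(u)(x^p)^p+u^p\delta(x^p)+p\,\delta(u)\delta(x^p)=\delta(u)x^{p^2}+u^p\delta(x^p)+p\,\delta(u)\delta(x^p).
\end{equation*}
The last term is divisible by $p$ and therefore vanishes modulo $p$. For the middle term, invoking the immediately preceding Lemma~\ref{lem:delta_power}, which gives $\delta(x^p)\equiv 0\pmod p$, we see that $u^p\delta(x^p)\equiv 0\pmod p$ as well. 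Combining, $\delta(ux^p)\equiv\delta(u)x^{p^2}\pmod p$, as claimed.

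There is essentially no obstacle: the proof is a two-line computation, the only subtlety being the need to recognize that both the $p\,\delta(u)\delta(x^p)$ correction and the $u^p\delta(x^p)$ term die modulo $p$, the latter using the previous lemma. I would therefore present this as a brief corollary-style argument following Lemma~\ref{lem:delta_power}, with no need to reduce to a universal case or invoke freeness of the $\delta$-ring on generators.
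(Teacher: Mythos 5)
Your proof is correct and uses essentially the same argument as the paper: apply the product formula for $\delta$ to $u\cdot x^p$ and then kill the terms involving $\delta(x^p)$ via Lemma~\ref{lem:delta_power}. The paper merely uses the compact form $\delta(ux^p)=\delta(u)x^{p^2}+\varphi(u)\delta(x^p)$ so that only one term needs to vanish mod $p$, whereas you expand into three terms and kill two; this is a cosmetic difference.
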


\begin{proof}
    Indeed, $\delta(ux^p)=\delta(u)x^{p^2}+\varphi(u)\delta(x^p)=\delta(u)x^{p^2}\pmod p$ by
    Lemma~\ref{lem:delta_power}.
\end{proof}

\subsection{Prismatic envelopes}

\begin{definition}[Koszul-regularity]
    Say that a sequence $(x_0,x_1,\ldots)$ of elements in a commutative ring $R$ is Koszul-regular if the associated
    Koszul complex $K(x_\bullet)$ has homology concentrated in degree $0$, in which case it is a resolution of $R/(x_0,x_1,\ldots)$. Koszul-regularity is implied by
    regularity. If $I\subseteq R$ is a finitely generated ideal, then such a sequence is
    called
    $I$-completely Koszul-regular if the $I$-completed Koszul complex $K(x_\bullet)_I^\wedge$
    has homology concentrated in degree $0$, in which case it is a resolution of the derived $I$-adic completion of
    $R/(x_0,x_1,\ldots)$.
\end{definition}

\begin{lemma}
  \label{lem:relations}
    Let $(A,d)$ be an oriented prism and fix elements $a,r\in A$.
    Let $\lambda_0=-\tfrac{1}{\delta(d)}$ and $R_0=\tfrac{\delta(r)}{\delta(d)}$ and
    inductively define elements
  \[
    \lambda_{u+1} = \frac{\lambda_u^p}{1-\delta(d^{p^{u+1}} \lambda_u)}
  \]
  and
  \[
    R_{u+1} = \frac{1}{1-\delta(d^{p^{u+1}}\lambda_u)}(\delta(R_u) + w_1(d^{p^{u+1}}\lambda_u \delta^{u+1}(a), R_u))
  \]
  for $u\geq 0$. Then, the following statements hold.
    \begin{enumerate}
        \item[{\em (i)}]  The elements $R_u$, for $u\geq 0$, are polynomials in the $\delta$-powers $\delta^j(a)$ for $j\leq u$ (with
            coefficients in $\bZ_p\{d,r\}[\delta(d)^{-1}]^{\wedge}_{p,d}$), of total degree
            $<p^{u+1}$ with respect to the grading where $|\delta^j(a)| = p^j$. In particular,
            $R_u$ involves only powers of $\delta^u(a)$ with exponent $<p$. 
        \item[{\em (ii)}] The elements $\lambda_u$ are units in $A$ for $u\geq 0$.
        \item[{\em (iii)}]
            The $\delta$-ideal $(da-r)_\delta$ agrees with the ideal generated by $(da-r)$ and
            the elements
          \[
            \delta^u(a)^p - (-p + d^{p^{u+1}} \lambda_u) \delta^{u+1}(a) - R_u.
          \]
    \end{enumerate}
\end{lemma}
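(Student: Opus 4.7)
The three parts are intertwined but can be attacked in the order (ii), (i), (iii), since (ii) justifies the inversion implicit in the definitions of $\lambda_u$ and $R_u$. For (ii), I would induct on $u$, with the base case following from the fact that $d$ is distinguished, so $\delta(d)$ is a unit and hence $\lambda_0=-1/\delta(d)$ is a unit. For the inductive step, it suffices to show $1-\delta(d^{p^{u+1}}\lambda_u)$ is a unit in the $(p,d)$-complete ring. Using $\delta(xy) = \delta(x)y^p + \varphi(x)\delta(y)$,
\[
\delta(d^{p^{u+1}}\lambda_u) = \delta(d^{p^{u+1}})\lambda_u^p + \varphi(d^{p^{u+1}})\delta(\lambda_u).
\]
Because $d^{p^{u+1}} = (d^{p^u})^p$ is a $p$-th power, Lemma~\ref{lem:delta_power} gives $\delta(d^{p^{u+1}}) \in (p)$, and $\varphi(d^{p^{u+1}}) = d^{p^{u+2}} + p\delta(d^{p^{u+1}}) \in (p,d)$, so the whole expression lands in $(p,d)$, as desired.

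For (i), I would again induct on $u$. The base case $R_0 = \delta(r)/\delta(d)$ has no dependence on the $\delta^j(a)$ and so has weighted degree $0$. The main estimate is that if $P$ is a polynomial in $\delta^j(a)$ for $j \leq u$ of weighted degree $\leq D$, then $\delta(P) = (\varphi(P) - P^p)/p$ has weighted degree $\leq pD$ as a polynomial in the $\delta^j(a)$ for $j \leq u+1$; this is because $\varphi$ sends $\delta^j(a)$ to $\delta^j(a)^p + p\delta^{j+1}(a)$, both monomials of weighted degree $p^{j+1} = p\cdot|\delta^j(a)|$. Applied to $R_u$ of weighted degree $<p^{u+1}$, this gives that $\delta(R_u)$ has weighted degree $<p^{u+2}$; the $w_1$-term contributes monomials of weighted degree $j\cdot p^{u+1} + (p-j)(p^{u+1}-1) < p^{u+2}$ for $j = 1,\ldots,p-1$; and division by the unit from (ii) preserves polynomiality. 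The weight-$<p^{u+2}$ bound on $R_{u+1}$ forces the exponent of $\delta^{u+1}(a)$ to be $<p$, as claimed.

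The heart of the lemma is (iii). Let $I = (da-r)_\delta$ and $J = (da-r, F_0, F_1, \ldots)$, where $F_u := \delta^u(a)^p - (-p + d^{p^{u+1}}\lambda_u)\delta^{u+1}(a) - R_u$; rewriting, $F_u = \varphi(\delta^u(a)) - d^{p^{u+1}}\lambda_u\delta^{u+1}(a) - R_u$, so the relation $F_u = 0$ reads
\[
\varphi(\delta^u(a)) = d^{p^{u+1}}\lambda_u\,\delta^{u+1}(a) + R_u + F_u.
\]
The plan is to apply $\delta$ to this identity, use $\delta\circ\varphi = \varphi\circ\delta$ together with the product and sum rules, and then isolate $\delta^{u+1}(a)^p$ by dividing through by the unit $1-\delta(d^{p^{u+1}}\lambda_u)$. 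A direct expansion shows that the surviving ``clean'' part matches exactly the defining recursions for $\lambda_{u+1}$ and $R_{u+1}$, producing the key identity
\[
F_{u+1} \;\equiv\; \frac{\delta(F_u)}{1-\delta(d^{p^{u+1}}\lambda_u)} \pmod{(F_u)}.
\]
The base case, handled by a direct computation, shows $\delta(da - r) = \delta(d)\cdot F_0 + c_0(da-r)$ for some $c_0$, whence both $F_0 \in I$ and $\delta(da-r)\in J$. For $J\subseteq I$, the key identity then inductively places each $F_{u+1}$ into the $\delta$-closure of $(F_u) \subseteq I$. For $I\subseteq J$, one inducts on $u$ to show $\delta^{u+1}(da-r) \in J$, using $\delta(xy), \delta(x+y) \in (x,y,\delta(x),\delta(y))$ to reduce the problem to membership of $\delta(F_j)$, which is handled by the key identity.

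The main obstacle is the careful bookkeeping in the key identity, namely controlling which $w_1$-contributions and product-rule terms reorganize into $\lambda_{u+1}, R_{u+1}, \delta^{u+1}(a)^p$, and which instead land in $(F_u)$. The saving observation is that $w_1(x,y) \in (y)$ and $w_1(x, y+z) - w_1(x,y) \in (z)$, so that passing from $R_u + F_u$ to $R_u$ under $\delta$ and $w_1$ introduces only corrections that lie in $(F_u)$; the ``main'' pieces $\delta(R_u) + w_1(d^{p^{u+1}}\lambda_u\delta^{u+1}(a), R_u)$ assemble, after the unit division from (ii), into exactly $R_{u+1}$. This clean separation is what makes the inductions in (iii) go through.
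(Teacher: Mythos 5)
Your proof is correct and follows essentially the same approach as the paper's. The ``key identity'' $\delta(F_u) \equiv (1-\delta(d^{p^{u+1}}\lambda_u))F_{u+1} \pmod{F_u}$ (you phrased it as $F_{u+1} \equiv \delta(F_u)/(1-\delta(d^{p^{u+1}}\lambda_u))$) is precisely the computation the paper performs, which it packages as the statement $\delta^{u+1}(da-r) = \mu_u F_u \pmod{I_u}$ for an explicit unit $\mu_u$, where $I_u=(da-r,\delta(da-r),\ldots,\delta^u(da-r))$; working with those nested ideals rather than directly with $J$ gives the slightly cleaner conclusion $I_{u+1} = I_u + (F_u)$, making both containments $I\subseteq J$ and $J\subseteq I$ fall out in one stroke, whereas your sketch of $I\subseteq J$ via the rule $\delta(xy),\delta(x+y)\in(x,y,\delta(x),\delta(y))$ is workable but more roundabout. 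Your arguments for (i) and (ii) match the paper's in substance (graded degree bookkeeping for (i), inductive unit check for (ii)), just spelled out in a bit more detail; the ``saving observation'' about $w_1$ in your last paragraph is not really needed, since the identity $\delta(x-y)=\delta(x)-\delta(y)\pmod{x-y}$ from Lemma~\ref{lem:delta_mod} already handles the reduction modulo $F_u$ cleanly.
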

\begin{proof}
    To prove (iii), observe that
  \[
    \delta^u(a)^p - (-p + d^{p^{u+1}} \lambda_u) \delta^{u+1}(a) - R_u
    = \varphi(\delta^u(a)) - d^{p^{u+1}}\lambda_u\delta^{u+1}(a) - R_u.
  \]
 We write $I_u$ for the ideal
  \[
    (da-r, \delta(da-r),\ldots, \delta^u(da-r)).
  \]
  To prove that $(da-r)_{\delta}=\bigcup I_u$ agrees with the ideal generated by $(da-r)$ and the elements $\varphi(\delta^u(a)) - d^{p^{u+1}}\lambda_u\delta^{u+1}(a) - R_u$, we claim there exist units $\mu_u$ with
  \[
    \delta^{u+1}(da-r) = \mu_u \left(\varphi(\delta^u(a)) -
    d^{p^{u+1}}\lambda_u\delta^{u+1}(a) - R_u\right)\pmod{I_u}.
  \]
  This in particular implies
  \[
    I_{u+1} = I_u + (\delta^{u+1}(da-r)) = I_u + (\varphi(\delta^u(a)) - d^{p^{u+1}}\lambda_u\delta^{u+1}(a) - R_u),
  \]
  which is what we want to prove.
    To prove the claim, we proceed by induction. We will make repeated use of
    Lemma~\ref{lem:delta_mod}
  Observe that
  \[
      \delta(da - r) = \delta(da) - \delta(r) \pmod {da-r},
  \]
  and
  \[
      \delta(da)-\delta(r)=\delta(d)\varphi(a) + d^p \delta(a) - \delta(r) = \delta(d) \cdot (\varphi(a) - d^p \lambda_0\delta(a) - R_0),
  \]
  showing the claim for $u=0$ (with $\mu_0 = \delta(d)$).

  For the inductive step, assume the claim for $u$. Observe that $\delta(I_u)\subseteq I_{u+1}$, so
  \[
    \delta^{u+2}(da-r) = \delta\left(\mu_u (\varphi(\delta^u(a)) -
    d^{p^{u+1}}\lambda_u\delta^{u+1}(a) - R_u)\right)\pmod {I_{u+1}}.
  \]
    Next, using Lemma~\ref{lem:delta_mod}, we find that
  \begin{align*}
    \delta\big(\mu_u (\varphi(\delta^u(a)) - d^{p^{u+1}}\lambda_u\delta^{u+1}(a) - R_u)\big)
    &= \varphi(\mu_u) \delta(\varphi(\delta^u(a)) - d^{p^{u+1}}\lambda_u\delta^{u+1}(a)
      - R_u)\pmod{I_{u+1}}\\
    &= \varphi(\mu_u) \big(\delta(\varphi(\delta^u(a))
      -\delta(d^{p^{u+1}}\lambda_u\delta^{u+1}(a) + R_u)\big)\pmod{I_{u+1}},
  \end{align*}
  using that $\varphi(\delta^u(a)) - d^{p^{u+1}}\lambda_u\delta^{u+1}(a) - R_u\in I_{u+1}$.

  Now $\delta(\varphi(\delta^u(a))) = \varphi(\delta^{u+1}(a))$, and
  \[
    \delta(d^{p^{u+1}}\lambda_u\delta^{u+1}(a) + R_u)
    = \delta(d^{p^{u+1}}\lambda_u) \varphi(\delta^{u+1}(a)) + d^{p^{u+2}}\lambda_u^p \delta^{u+2}(a) + \delta(R_u) + w_1(d^{p^{u+1}}\lambda_u\delta^{u+1}(a), R_u).
  \]
  Their difference can be factored as
  \begin{align*}
      &\delta(\varphi(\delta^u(a))) - \delta(d^{p^{u+1}}\lambda_u \delta^{u+1}(a)+R_u)\\
      &=\varphi(\delta^{u+1}(a))-\left(\delta(d^{p^{u+1}}\lambda_u)
      \varphi(\delta^{u+1}(a)) + d^{p^{u+2}}\lambda_u^p \delta^{u+2}(a) + \delta(R_u) +
      w_1(d^{p^{u+1}}\lambda_u\delta^{u+1}(a), R_u)\right)\\
      &=(1-\delta(d^{p^{u+1}} \lambda_u)) \cdot (\varphi(\delta^{u+1}(a)) - d^{p^{u+2}} \lambda_{u+1} \delta^{u+2}(a) - R_{u+1}),
  \end{align*}
  using the definition of $\lambda_{u+1}$ and $R_{u+1}$,
  which completes the inductive step (with $\mu_{u+1}=\varphi(\mu_u) \cdot (1-\delta(d^{p^{u+1}}\lambda_u))$).

Part (i) follows easily from the provided recursion, working in the graded $\delta$-ring
\[
  \bZ_p\{d,r,a\}[\delta(d)^{-1}]^\wedge_{p,d}
\]
where $|a|=1$ and $|d|=|r|=0$. Part (ii) follows inductively from
Lemma~\ref{lem:delta_mod}.
\end{proof}

\begin{remark}
    If $\delta(r)=0$, observe that the $R_u$ are all inductively zero.
\end{remark}

\begin{proposition}\label{prop:envelope_regular}
    Let $A$ be a bounded prism with orientation $I=(d)$ and let $r_1,\ldots,r_c\in A$ map to a
    $p$-completely
    Koszul-regular sequence in $\overline{A}=A/d$.
    Then, the prismatic envelope
    $A\{\tfrac{r_1}{d},\ldots,\tfrac{r_c}{d}\}_{(p,d)}^\wedge=\left(A\{a_1,\ldots,a_c\}/(da_1-r_1,\ldots,da_c-r_c)_\delta\right)_{(p,d)}^\wedge$ is the quotient of the
    $(p,d)$-complete free $\delta$-ring $A\{a_1,\ldots,a_c\}_{(p,d)}^\wedge$ on generators
    $a_1,\ldots,a_c$ by the $(p,d)$-completely Koszul-regular sequence of relations
    \[
      (da_1-r_1,\ldots, da_c-r_c)
    \]
    and
    \[
      \delta^u(a_v)^p - (-p + d^{p^{u+1}} \lambda_u)\delta^{u+1}(a_v) - R_{uv}
    \]
    for $u\geq 0$ and $1\leq v\leq c$,
    where $R_{uv}$ is the polynomial from Lemma \ref{lem:relations} (for $a=a_v$ and $r=r_v$).
\end{proposition}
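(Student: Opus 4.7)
The plan is to split the proof into two parts: first, verify that the listed elements generate the correct ideal so the underlying commutative quotient is presented as claimed; and second, establish the $(p,d)$-complete Koszul-regularity of that list in $A\{a_1,\ldots,a_c\}_{(p,d)}^\wedge$. The first part reduces quickly to Lemma~\ref{lem:relations}(iii): the $\delta$-ideal $(da_1-r_1,\ldots,da_c-r_c)_\delta$ equals the sum $\sum_v (da_v-r_v)_\delta$, because a sum of $\delta$-ideals is again a $\delta$-ideal by the formulas~\eqref{eq:delta_add} and~\eqref{eq:delta_mul}. Applying Lemma~\ref{lem:relations}(iii) individually to each pair $(a_v, r_v)$ then produces the stated generating set as an ordinary ideal.

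For the Koszul-regularity, I would pass to the mod-$(p,d)$ reduction, using the standard fact that $(p,d)$-complete Koszul-regularity of a sequence in a $(p,d)$-complete ring can be tested after reducing modulo $(p,d)$. As an ordinary commutative ring, $A\{a_1,\ldots,a_c\}$ is the polynomial ring $A[\delta^j(a_v) : j \geq 0,\ 1 \leq v \leq c]$, which becomes $(\overline{A}/p)[\delta^j(a_v)]$ modulo $(p,d)$. The relations $da_v-r_v$ reduce to $-r_v$, and these are Koszul-regular in $\overline{A}/p$ by hypothesis, involving only the base ring and so disjoint from the $\delta$-relations. Since $-p + d^{p^{u+1}}\lambda_u \in (p,d)$, each $\delta$-relation reduces to $\delta^u(a_v)^p - \overline{R}_{uv}$. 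By Lemma~\ref{lem:relations}(i), $\overline{R}_{uv}$ is a polynomial in $\delta^j(a_v)$ for $j \leq u$ with $\delta^u(a_v)$ occurring only to exponents strictly less than $p$, so $\delta^u(a_v)^p - \overline{R}_{uv}$ is monic of degree $p$ in $\delta^u(a_v)$ over the subring generated by the earlier $\delta$-variables and the base.

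Ordering the $\delta$-relations by increasing $u$ independently for each $v$, adjoining $\delta^u(a_v)$ and then imposing its monic degree-$p$ relation yields a free module extension of rank $p$ over the previous quotient. Hence any finite prefix of the sequence is regular in the corresponding finitely generated polynomial subring and so Koszul-regular. The full infinite Koszul complex is the filtered colimit of these finite Koszul complexes, so its homology is concentrated in degree zero. Combined with the Koszul-regularity of the $r_v$'s, this yields Koszul-regularity of the full sequence modulo $(p,d)$, hence $(p,d)$-complete Koszul-regularity in $A\{a_1,\ldots,a_c\}_{(p,d)}^\wedge$.

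The main technical obstacle will be the rigorous handling of the infinite sequence and its interaction with $(p,d)$-completion. The key observations that make the argument transparent are (i) that modulo $(p,d)$ the $\delta$-relations become monic in a single fresh variable at a time, via Lemma~\ref{lem:relations}(i), and (ii) that $(p,d)$-complete Koszul-regularity can be detected after reducing modulo $(p,d)$ by a standard derived-completion argument.
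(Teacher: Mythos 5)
Your first step---using Lemma~\ref{lem:relations}(iii) to replace the $\delta$-ideal by the explicit generating set---matches the paper exactly, and the overall idea of detecting Koszul-regularity by a reduction and a monicity/leading-term argument is also in the spirit of the paper's proof. But your reduction is taken one step too far, and this opens a genuine gap under the stated hypotheses.

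You reduce the ambient ring modulo $(p,d)$ (naively, to $(\overline A/p)[\delta^j(a_v)]$) and assert that the $r_v$ become Koszul-regular in $\overline A/p$ ``by hypothesis.'' Two things go wrong for a general bounded prism $A$. First, the hypothesis says that the $r_v$ are $p$-completely Koszul-regular in $\overline A=A/d$; if $\overline A$ has $p$-torsion (which bounded prisms allow---e.g.\ crystalline prisms, where $\overline A$ is an $\bF_p$-algebra, so $p$ kills everything), this does \emph{not} imply that the images are Koszul-regular in the naive quotient $\overline A/p$. Second, and relatedly, testing $(p,d)$-complete Koszul-regularity of a sequence in $A[\delta^j(a_v)]^\wedge_{(p,d)}$ via a naive reduction modulo $(p,d)$ requires that $(p,d)$ itself be a Koszul-regular sequence in $A$ (so that $A/^L(p,d)=A/(p,d)$); since $d$ is a non-zero-divisor this amounts to $p$ being a non-zero-divisor on $\overline A$, i.e.\ that the prism is transversal. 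The proposition is stated for all bounded prisms, so your argument as written only covers the transversal case. (And even then, after killing the $r_v$'s, the intermediate quotient $R=\overline A/(r_v)$ could acquire $p$-torsion, so the further naive reduction mod $p$ of the remaining $\delta$-relations would need the same justification.)

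The paper sidesteps this by reducing only modulo $d$---which is always a derived-equals-naive reduction since $d$ is a non-zero-divisor in a prism---and then never reducing mod $p$. Instead it checks the $\delta$-relations $p$-completely in $R[\delta^u(a_v)]^\wedge_p$ via the ascending weight filtration ($\delta^u(a_v)$ in weight $p^u$): the leading terms are $\delta^u(a_v)^p+p\delta^{u+1}(a_v)$, which form a ($p$-completely) regular sequence (cutting out the free divided-power algebra), with the correction $R_{uv}$ of strictly lower weight by Lemma~\ref{lem:relations}(i). Your monicity observation is precisely the mod-$p$ shadow of this leading-term statement; the fix is to run it at the associated-graded level rather than after a naive mod-$p$ reduction. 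If you are content to add ``transversal'' to the hypotheses, your argument is fine; to prove the proposition as stated you should replace the naive mod-$(p,d)$ reduction by the paper's mod-$d$ reduction together with the filtration argument.
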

\begin{proof}
  By Lemma \ref{lem:relations}, the $\delta$-ideal $(da_1-r_1,\ldots,da_v-r_v)_\delta$ agrees with the ideal generated by $(da_1-r_1,\ldots,da_c-r_c)$ and the
$\delta^u(a_v)^p - (-p + d^{p^{u+1}} \lambda_u)\delta^{u+1}(a_v) - R_{uv}$.

  Thus it suffices to prove that these form a $(p,d)$-completely Koszul-regular sequence in $A[\delta^u(a_v) \mid u\geq 0, 1\leq v\leq c]^\wedge_{p,d}$.
    It suffices to do this after base change to $\overline{A}=A/d$ and hence
    it suffices to prove that the images of the elements
  \[
\delta^u(a_v)^p - (-p + d^{p^{u+1}} \lambda_u)\delta^{u+1}(a_v) - R_{uv}
  = \delta^u(a_v)^p + p \delta^{u+1}(a_v) - R_{uv}
  \]
  form a $p$-completely Koszul-regular sequence in $R[\delta^u(a_v)\mid u\geq 0, 1\leq v\leq
    c]^\wedge_p$. This ring admits an ascending filtration where $\delta^u(a_v)$ is in
    filtration $\leq p^u$. With respect to that filtration, $R_{uv}$ has filtration
    strictly smaller $p^{u+1}$, as discussed in Lemma \ref{lem:relations}. As the
    leading terms with respect to this filtration
  \[
    \delta^u(a_v)^p +p\delta^{u+1}(a_v)
  \]
  of the relations form a regular sequence, the relations do too.
\end{proof}

\begin{proposition}
  \label{prop:prismbar-basis}
  Let $(A,(d))$ be a bounded, oriented prism and let $r_1,\ldots,r_c\in A$ be elements which map to a $p$-completely Koszul-regular sequence in $\overline{A}$.
    Letting $R=A/(d,r_1,\ldots,r_c)$, the filtered $\overline{A}$-algebra
    $\F_{\leq\star}^\conj\Prismbar_{R/A}$ is a $p$-completely Koszul-regular quotient of
    the filtered polynomial ring
    \[
      R[\delta^u(a_v)\mid u\geq 0, 1\leq v\leq c]^\wedge_p
    \]
    by relations
    \[
      \delta^u(a_v)^p + p \delta^{u+1}(a_v) - R_{uv},
    \]
    where $\delta^u(a_v)$ has filtration weight $\geq p^u$. In particular,
    the monomials 
    \[
      \prod_{uv}\delta^u(a_v)^{e_{uv}}
    \]
    for $1\leq v\leq c$, $u\geq 0$, $0\leq e_{uv}<p$, and $\sum_{v,u}
    e_{v,u}p^u\leq i$ form a
    $p$-completely free basis for the $R$-module $\F_{\leq i}^\conj\Prismbar_{R/A}\subseteq
    (A\{\tfrac{r_1}{d},\ldots,\tfrac{r_c}{d}\}_p^\wedge)/d$.
\end{proposition}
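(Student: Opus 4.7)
The plan is to combine Proposition~\ref{prop:envelope_regular} with the Hodge--Tate comparison and then read off the monomial basis from the leading terms of the relations.

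First I would reduce modulo $d$. Proposition~\ref{prop:envelope_regular} presents $\Prism_{R/A}$ as the $(p,d)$-completed quotient of the free $\delta$-$A$-algebra $A\{a_1,\ldots,a_c\}$ by the $(p,d)$-completely Koszul-regular sequence consisting of the elements $da_v-r_v$ together with the envelope relations $\delta^u(a_v)^p - (-p + d^{p^{u+1}}\lambda_u)\delta^{u+1}(a_v) - R_{uv}$. Reducing modulo $d$ turns $da_v-r_v$ into $-r_v$; since by hypothesis $(r_1,\ldots,r_c)$ is $p$-completely Koszul-regular in $\overline{A}$, one can split the $(p,d)$-complete Koszul regularity, first killing $d$ and the $r_v$ to form $R$, and then observing that the envelope relations descend to a $p$-completely Koszul-regular sequence $\delta^u(a_v)^p + p\delta^{u+1}(a_v) - \overline{R}_{uv}$ inside the $p$-completed polynomial ring $R[\delta^u(a_v)\mid u\geq 0,\,1\leq v\leq c]_p^\wedge$. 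This gives the stated presentation of the underlying commutative ring of $\Prismbar_{R/A}$.

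Second, I would identify the filtration that assigns $\delta^u(a_v)$ weight $\geq p^u$ with the conjugate filtration $\F_{\leq\star}^\conj \Prismbar_{R/A}$. The leading term of each relation with respect to this filtration is $\delta^u(a_v)^p + p\delta^{u+1}(a_v)$, of weight exactly $p^{u+1}$, while by Lemma~\ref{lem:relations}(i) the correction $\overline{R}_{uv}$ lies in strictly smaller filtration weight; so the relations are top-weight homogeneous and the filtration descends well to the quotient. To match this with the intrinsic conjugate filtration I would compare associated graded pieces against the derived Hodge--Tate comparison $\gr^i_\conj \Prismbar_{R/A}\simeq \widehat{\L\Omega}^i_{R/\overline{A}}[-i]$: since $L_{R/\overline{A}}$ is, after $p$-completion, a shift $R^c[1]$ of a free $R$-module of rank $c$, the right-hand side is computed by a $p$-completed divided-power expression whose rank in each weight matches exactly the number of admissible monomials described below, providing the required identification (and reducing, if needed, to the classical case $c=1$ via base change).

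Third, the monomial basis statement follows by a normal-form argument. Using the top-weight form $\delta^u(a_v)^p + p\delta^{u+1}(a_v)$ of each relation, any monomial in the $\delta^u(a_v)$ can be rewritten $p$-adically as a combination of monomials in which each exponent satisfies $0\leq e_{uv}<p$; the $p$-completed Koszul regularity of the relations then guarantees linear independence of these reduced monomials, and cutting off by filtration weight $\sum e_{uv}p^u\leq i$ exactly singles out a basis of $\F_{\leq i}^\conj\Prismbar_{R/A}$. The main obstacle in this proof is the middle step: verifying that the filtration built out of the explicit envelope presentation really agrees with the intrinsically defined conjugate filtration, which is where the Hodge--Tate comparison and the interplay between the $\delta$-structure, Frobenius, and the Nygaard filtration carry the real weight.
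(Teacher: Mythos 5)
Your steps (1) and (3) match the paper's strategy: use Proposition~\ref{prop:envelope_regular} to get a Koszul-regular presentation of $\Prismbar_{R/A}$ after reducing modulo $d$, then read off the monomial basis from the normal form dictated by the relations. The difference — and the gap — is in step (2), where you propose to identify the explicit filtration $\G_{\leq\star}$ (generated by putting $\delta^u(a_v)$ in weight $\leq p^u$) with the conjugate filtration $\F^{\conj}_{\leq\star}$ by matching graded ranks against the Hodge--Tate comparison.

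The problem is that equal graded ranks does not by itself imply that two ascending filtrations on the same module coincide. To make a rank-counting argument close, you would need a one-sided containment (say $\G_{\leq j}\subseteq \F^{\conj}_{\leq j}$, which the paper notes in Corollary~\ref{cor:f-uv-mapto-a-uv} can be seen from the relative Frobenius sending $f_{u,v}\in\N^{\geq p^u}$ to $\delta^u(a_v)$), and your outline never establishes any such containment. You also gesture at "reducing to the classical case $c=1$ via base change," which is in the right direction, but you don't specify the mechanism that actually forces the two filtrations to agree there.

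The paper's proof closes this precisely. After reducing by base change and monoidality to the universal single-relation case $A=\bZ_p\{d,r\}$, it endows $A$ with a $\bZ_{\geq 0}$-grading in which $r$ has degree $1$. Then $L_{R/\overline{A}}\simeq R(1)[1]$, so by Hodge--Tate $\gr^{\conj}_j\Prismbar_{R/A}\simeq R(j)$, i.e.\ the conjugate filtration coincides with the filtration by internal degree $\leq j$. But $a=r/d$ also has degree $1$, so $\prod_u\delta^u(a)^{e_u}$ has degree $\sum p^u e_u$ and hence the filtration $\G_{\leq\star}$ is \emph{also} the degree filtration. Thus both filtrations equal the same degree filtration, and agreement follows for free — no rank counting and no a~priori containment needed. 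If you want to salvage your rank-counting route, supply the containment $\G_{\leq j}\subseteq\F^{\conj}_{\leq j}$ (e.g.\ via the Nygaard/relative-Frobenius argument); otherwise adopt the grading argument in the universal case.
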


\begin{proof}
  We first argue that the monomials 
    \[
      \prod_{uv}\delta^u(a_v)^{e_{uv}}
    \]
    form a basis for $\Prismbar_{R/A}$. Indeed, Proposition \ref{prop:envelope_regular}
    exhibits $\Prismbar_{R/A}$ as a Koszul-regular quotient of
    \[
      R[\delta^u(a_v)\mid u\geq 0, 1\leq v\leq c]^\wedge_p
    \]
    by elements
    \[
      \delta^u(a_v)^p + p \delta^{u+1}(a_v) - R_{uv},
    \]
    where here there is no claim about the conjugate filtration.
    We have an ascending filtration $\G_{\leq \star}$ with $\delta^u(a_v)$ in filtration
    $\leq p^u$. On the associated graded ring, we are dealing with the quotient of
    $R[\delta^u(a_v)\mid u\geq 0, 1\leq v\leq c]^\wedge_p$ by relations $\delta^u(a_v)^p
    + p \delta^{u+1}(a_v)$. This is a free divided power algebra, which has the
    indicated basis. So these element form a basis before passage to the associated
    graded ring as well.

    It remains to prove that $\G_{\leq \star}$ and the conjugate filtration
    $\F^\conj_{\leq\star}$ agree. Both sides satisfy base change, so it suffices to
    analyze the universal case of
    \[
      A = \bZ_p\{d,r_1,\ldots,r_c\}[\delta(d)^{-1}]^\wedge_{p,d}.
    \]
    Furthermore, by monoidality we can reduce to the case of a single relation,
    $A=\bZ_p\{d,r\}$ and $R=\overline{A}/r$. We may endow $A$ with the structure of a
    graded prism with $r$ in degree $1$. In that case, $L_{R/\overline{A}} \simeq
    R(1)[1]$ since $R$ is obtained from $\overline{A}$ by quotienting by a regular
    element of degree $1$. So, the Hodge--Tate comparison theorem shows that
    \[
      \gr_j^\conj \Prismbar_{R/A} = \L\Omega^j_{R/\overline{A}}[-j]\simeq R(j).
    \]
    We also have that $a=\frac{r}{d}$ is in degree $1$, and hence the basis element $\prod_u \delta^u(a)^{e_u}$ is in degree $\sum p^ue_u$. It follows that $\gr_j \G$ is also $R(j)$. Both $\G_{\leq j}$ and $\F^{\conj}_{\leq j}$ therefore agree with the degree $\leq j$ part of $\Prismbar_{R/A}$, and in particular agree.
\end{proof}

\begin{remark}
    Below, in Corollary~\ref{cor:f-uv-mapto-a-uv}, we will see another way of seeing that the elements $\delta^i(a)$ must be in
    $\F_{\leq p^i}^\conj\Prismbar_{R/A}$.
\end{remark}

\subsection{The $\delta$-ring structure on $\bigoplus\N^{\geq i}\Prismhat_{R/A}^{(1)}$}

\begin{definition}
    Let $(A,I)$ be a bounded prism and let $R$ be a commutative $\overline{A}$-algebra. The
    Frobenius-twisted prismatic cohomology of $A$, denoted by $\Prism^{(1)}_{R/A}$, is the derived
    $(p,I)$-completion of $\Prism_{R/A}\otimes_{A,\varphi}A$. It is equipped with an $A$-linear
    relative Frobenius map $\Prism_{R/A}^{(1)}\xrightarrow{\varphi_{R/A}}\Prism_{R/A}$
    and there is a $\varphi_A$-semilinear map
    $w\colon\Prism_{R/A}\rightarrow\Prism_{R/A}^{(1)}$. The composition
    $w\circ\varphi_{R/A}$ is the Frobenius endomorphism of $\Prism_{R/A}^{(1)}$ and the
    composition $\varphi_{R/A}\circ w$ is the Frobenius endomorphism of $\Prism_{R/A}$.
\end{definition}

\begin{remark}[The Nygaard filtration]\label{rem:nygaard}
    Let $(A,I)$ be a bounded prism and let $R$ be a commutative $\overline{A}$-algebra. There is a
    decreasing multiplicative
    exhaustive filtration $\N^{\geq\star}\Prism_{R/A}^{(1)}$, the Nygaard filtration, on
    $\Prism_{R/A}^{(1)}$ studied in~\cite{bms2,prisms,bhatt-lurie-apc}.
    The Nygaard filtration has the following properties:
    \begin{enumerate}
        \item[(i)] $\N^{\geq 0}\Prism_{R/A}^{(1)}=\N^{\geq
            -1}\Prism_{R/A}^{(1)}=\cdots=\Prism_{R/A}^{(1)}$, or
            equivalently $\gr^i_\N\Prism_{R/A}^{(1)}=0$ for $i<0$;
        \item[(ii)] the relative Frobenius
            $\varphi_{R/A}\colon\Prism_{R/A}^{(1)}\rightarrow\Prism_{R/A}$ promotes to a natural map
            $\N^{\geq\star}\Prism_{R/A}^{(1)}\rightarrow I^\star\Prism_{R/A}$ of filtered
            $\bE_\infty$-rings;
        \item[(iii)] the map $\gr^i_\N\Prism_{R/A}^{(1)}\rightarrow\Prismbar_{R/A}\{i\}$ induced
            from (ii) yields an equivalence
            $\gr^i_\N\Prism_{R/A}^{(1)}\we\F_{\leq i}^\conj\Prismbar_{R/A}\{i\}$ for all $i\in\bZ$;
        \item[(iv)] if $\L_{R/A}$ has $p$-complete Tor-amplitude in $[1,1]$, then the
            Nygaard-completed Frobenius-twisted prismatic cohomology $\Prismhat_{R/A}^{(1)}$ is discrete, the natural induced relative Frobenius
            map $\varphi_{R/A}\colon\Prismhat_{R/A}^{(1)}\rightarrow\Prism_{R/A}$ is injective, and
            $\N^{\geq i}\Prismhat_{R/A}^{(1)}$ consists of those $x$ such that $\varphi_{R/A}(x)\in
            I^i\Prism_{R/A}$. (For the last part, in this generality, see~\cite[Cor.~6.13]{akn-delta}.)
    \end{enumerate}
\end{remark}

\begin{remark}
    Either property (iii) or property (iv) of Definition~\ref{rem:nygaard} can be used to define the
    filtration general~\cite[Thm.~15.3]{prisms}. If $R$ is $p$-completely smooth over $\overline{A}$, then one lets
    $\N^{\geq\star}\Prism^{(1)}_{R/A}$ be $\L\eta_I\Prism_{R/A}$, where $\L\eta_I$
    denotes the d\'ecalage functor~\cite[Prop.~5.8]{bms2}, and then left Kan extends this
    filtration to all commutative $\overline{A}$-algebras. On the other hand, if $R$ is relatively quasiregular
    semiperfectoid over $\overline{A}$, then one defines the Nygaard filtration by letting
    $x\in\N^{\geq i}\Prism_{R/A}^{(1)}$ if and only if $\varphi_{R/A}(x)\in I^i\Prism_{R/A}$. Then,
    this can be descended to give a definition for all relatively quasisyntomic
    $\overline{A}$-algebras.
\end{remark}

For the most part, we are interested only in the Nygaard-completion of $\Prism_{R/A}^{(1)}$ when $R$ is a
Koszul-regular quotient of $\overline{A}$.

\begin{lemma}\label{lem:twisted_envelope}
    Let $(A,d)$ be a bounded, oriented prism and let $R$ be an $\overline{A}$-algebra of the form $A/(d,r_1,\ldots,r_c)$, where the images of $r_1,\ldots,r_c$ form a $p$-completely Koszul-regular sequence in $A/d$. By~\cite[Lem.~7.7]{prisms}, there is an equivalence
    \[
    \Prism_{R / A} \simeq A\left\{\frac{r_1}  d, \ldots , \frac{r_c} d\right\}^\wedge_{(p,d)}.
    \]
    The relative Frobenius
    \[
      \varphi_{R/A}\colon \Prism^{(1)}_{R/A}\to \Prism_{R/A}
    \]
    induces an injective map
    \[
      \varphi_{R/A}\colon \Prismhat^{(1)}_{R/A}\to \Prism_{R/A}
    \]
    exhibiting the Nygaard completion as the completed sub-$\delta$-ring generated by
    the elements $\frac{\varphi(r_j)}{\varphi(d)} = \varphi\left(\frac{r_j}{d}\right)$,
    i.e. as
    \[
    A\left\{\frac{\varphi(r_1)}{\varphi(d)},\ldots,\frac{\varphi(r_c)}{\varphi(d)}\right\}^\wedge_{(p,N)} \subseteq 
      A\left\{\frac{r_1}{d}, \ldots , \frac{r_c}{d}\right\}^\wedge_{(p,d)},
    \]
    with Nygaard filtration given by the restriction of the $d$-adic filtration.
\end{lemma}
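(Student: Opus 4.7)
The plan is to reduce everything to property (iv) of Remark~\ref{rem:nygaard}, for which I first need to verify that $\L_{R/A}$ has $p$-complete Tor-amplitude in $[1,1]$. Since $\overline{A}=A/d$ with $d$ a non-zero divisor in $A$, we have $\L_{\overline{A}/A}\simeq\overline{A}[1]$. Since $R=\overline{A}/(r_1,\ldots,r_c)$ with the $r_j$ forming a $p$-completely Koszul-regular sequence in $\overline{A}$, the $p$-completed $\L_{R/\overline{A}}$ is $p$-completely $R^{c}[1]$. The transitivity triangle $\L_{\overline{A}/A}\otimes^{\L}_{\overline{A}}R\to\L_{R/A}\to \L_{R/\overline{A}}$ then places $\L_{R/A}$ in $p$-complete Tor-amplitude $[1,1]$, so Remark~\ref{rem:nygaard}(iv) applies: $\Prismhat^{(1)}_{R/A}$ is discrete, the induced relative Frobenius $\varphi_{R/A}\colon\Prismhat^{(1)}_{R/A}\to\Prism_{R/A}$ is injective, and $\N^{\geq i}\Prismhat^{(1)}_{R/A}=\varphi_{R/A}^{-1}(d^i\Prism_{R/A})$.

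Next, I would identify the image of this injection. Writing $\Prism_{R/A}=A\{a_1,\ldots,a_c\}^\wedge_{(p,d)}/(da_j-r_j)_\delta$ with $a_j$ corresponding to $r_j/d$, base-changing this presentation along $\varphi\colon A\to A$ (which, by Proposition~\ref{prop:envelope_regular}, has a Koszul-regular presentation preserved under any base change) identifies $\Prism^{(1)}_{R/A}$ with the prismatic envelope $A\{a_1',\ldots,a_c'\}^\wedge_{(p,\varphi(d))}/(\varphi(d)a_j'-\varphi(r_j))_\delta$, where $a_j'$ is $a_j\otimes 1$; note that $(p,\varphi(d))$ and $(p,d)$ define the same adic topology since $\varphi(d)\equiv d^p\pmod p$. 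The relative Frobenius sends $a_j'\mapsto\varphi(a_j)=\varphi(r_j)/\varphi(d)\in\Prism_{R/A}$. Combined with the injectivity after Nygaard completion, this identifies $\Prismhat^{(1)}_{R/A}$ with the completed sub-$\delta$-$A$-algebra of $\Prism_{R/A}$ generated by the elements $\varphi(r_j)/\varphi(d)$, as desired.

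Finally, the claim that the Nygaard filtration on this sub-$\delta$-ring is the restriction of the $d$-adic filtration is just a rewriting of the formula $\N^{\geq i}\Prismhat^{(1)}_{R/A}=\varphi_{R/A}^{-1}(d^i\Prism_{R/A})$ from step one under the embedding constructed in step two; in particular, the $(p,N)$-completion of the generated sub-$\delta$-ring coincides with its $(p,d)$-completion because the two filtrations literally agree there.

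The step I expect to require the most care is the identification of $\Prism^{(1)}_{R/A}$ itself with the prismatic envelope $A\{\varphi(r_j)/\varphi(d)\}^\wedge$: this requires both that the Koszul-regular presentation from Proposition~\ref{prop:envelope_regular} transports cleanly through the (derived) $\varphi$-base change, and that one tracks the $\delta$-structure carefully so that the image of the generator $a_j\otimes 1$ really is $\varphi(r_j)/\varphi(d)$ and not some other lift. Once this bookkeeping is done, the rest of the lemma is formal from Remark~\ref{rem:nygaard}(iv).
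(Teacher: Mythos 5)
Your overall strategy — verify that $\L_{R/A}$ has $p$-complete Tor-amplitude in $[1,1]$, invoke Remark~\ref{rem:nygaard}(iv) for discreteness and injectivity of $\Prismhat^{(1)}_{R/A}\to\Prism_{R/A}$ and the pullback description of the Nygaard filtration, then identify the image of the injection — is sound, and the Tor-amplitude computation and the last two steps are correct. Citing part (iv) of the remark directly (rather than re-deriving it from (iii) and the discreteness of the conjugate-filtered pieces, as the paper does) is a legitimate shortcut.

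The gap is in your step identifying the image. You assert that base-changing the prismatic-envelope presentation along $\varphi\colon A\to A$ ``by Proposition~\ref{prop:envelope_regular}, has a Koszul-regular presentation preserved under any base change'' and hence identifies $\Prism^{(1)}_{R/A}$ with the non-derived envelope $A\{a_1',\ldots,a_c'\}^\wedge/(\varphi(d)a_j'-\varphi(r_j))_\delta$. But Proposition~\ref{prop:envelope_regular} only yields a Koszul-regular presentation under the hypothesis that the relevant elements (here the $\varphi(r_j)$) map to a $p$-completely Koszul-regular sequence in $\overline{A}$; this is an additional assumption that the lemma does not make and that can fail. Koszul-regularity of a sequence is not preserved under arbitrary (non-flat) base change, and $\varphi\colon A\to A$ need not be flat. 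The paper is explicit on this exact point in the remark immediately following the lemma: ``\emph{The issue is that in general, both $\Prism_{R/A}^{(1)} = \Prism_{R/A}\otimes_{A,\varphi} A$ and this pushout diagram have to be interpreted in a derived way, even if $\Prism_{R/A}$ is discrete. However, in many cases, the $\varphi(r_j)$ also form a $p$-completely Koszul-regular sequence\ldots}'' You flag this as ``the step requiring the most care,'' which is good instinct, but your proposed route through it is the wrong one.

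Fortunately the Koszul-regular presentation of $\Prism^{(1)}_{R/A}$ is not needed. The paper's argument uses only the much weaker (and unconditional) fact that $\Prism_{R/A}$ is generated as a $\delta$-$A$-algebra by the $a_j$, so that $\H_0(\Prism^{(1)}_{R/A}) = \H_0(\Prism_{R/A}\otimes^\L_{A,\varphi}A)$ is generated as a $\delta$-$A$-algebra by the $a_j\otimes 1$ — no regularity of the relations is required for right-exactness of $\otimes$. The image of the relative Frobenius is then generated by $\varphi(a_j)=\varphi(r_j)/\varphi(d)$, and the Nygaard completion is the closure of this image in the $(p,d)$-adic topology, which is the completed sub-$\delta$-ring claimed. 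Replace your base-change step with this surjectivity-on-$\H_0$ argument and the proof goes through.
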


\begin{proof}
  As $\Prism_{R/A}$ is generated, as a $\delta$-ring over $A$, by elements $a_j =
    \frac{r_j}{d}$, one has that $\H_0(\Prism_{R/A}^{(1)})$ is generated as a $\delta$-ring over $A$
    by their images $a_j\otimes 1$. Under the relative Frobenius, these map to
    $\varphi(a_j) = \frac{\varphi(r_j)}{\varphi(d)}$. The Frobenius twist
    $\Prism_{R/A}^{(1)}$ is not necessarily discrete, but by
    Remark~\ref{rem:nygaard}(iii) and discreteness of the associated graded terms
    $\L\Omega^j_{R/\overline{A}}[-j]$ of the conjugate filtration, the Nygaard associated
    graded terms are discrete and embed into the $d$-adic associated graded terms. It
    follows that the Nygaard completion is discrete and obtained as closure of the image
    of $\Prism_{R/A}^{(1)}$ in $\Prism_{R/A}$ with respect to the $(p,d)$-adic topology
    and that the Nygaard filtration on the Nygaard completion can be obtained by
    restricting the $d$-adic filtration.
\end{proof}

\begin{remark}
  As suggested in Lemma \ref{lem:twisted_envelope}, one may think of $\Prism_{R/A}^{(1)}$ as a prismatic envelope in its own right, adjoining fractions $\frac{\varphi(r_j)}{\varphi(d)}$. Indeed, the pushout diagram
    \eqref{eqn:envelope_pushout} at the beginning of the section base changes to an analogous pushout. The issue is that in general, both $\Prism_{R/A}^{(1)} = \Prism_{R/A}\otimes_{A,\varphi} A$ and this pushout diagram have to be interpreted in a derived way, even if $\Prism_{R/A}$ is discrete. However, in many cases, the $\varphi(r_j)$ also form a $p$-completely Koszul-regular sequence in $\overline{A}$, in which case the prismatic envelope is discrete, admits a description as in the preceding section, and agrees with $\Prism_{R/A}^{(1)}$.
\end{remark}

The problem with this description is that the Nygaard filtration is rather inexplicit.
We would like to find an explicit description (in terms of generators and relations) of the Nygaard filtered pieces $\N^{\geq i} \Prismhat^{(1)}_{R / A}$. 

\begin{example}
    The Nygaard filtration on prismatic envelopes is rarely complete, although it will be separated
    in all cases considered in this paper. For example, neither
    $\N^{\geq\star}\Prism^{(1)}_{(\bZ/p^n)/\bZ_p\llbracket z_0\rrbracket}$ nor
    $\N^{\geq\star}\Prism^{(1)}_{\bZ_p/\bZ_p\llbracket z_0,z_1\rrbracket}$ is complete.
    Indeed, the elements $f_j\in\N^{\geq p^j}\Prismhat^{(1)}_{(\bZ/p^n)/\bZ_p\llbracket
    z_0\rrbracket}$ we construct in
    Proposition~\ref{prop:f_filtered_statement} lift to $\N^{\geq
    p^j}\Prism^{(1)}_{(\bZ/p^n)/\bZ_p\llbracket z_0\rrbracket}$, but a sum such as
    $\sum_{j\geq 0}f_j$ exists in the Nygaard-completion, but does not exist before Nygaard-completion.
\end{example}

\begin{construction}
    Let $(A,d)$ be an oriented prism and let $R$ be a commutative $\overline{A}$-algebra with
    $\L_{R/\overline{A}}$ having $p$-complete Tor-amplitude in $[1,1]$. Consider the graded ring
    $\bigoplus_{i\geq 0}\N^{\geq i}\Prismhat_{R/A}^{(1)}$. Define functions
    $$\varphitilde_{i}\colon\N^{\geq i}\Prismhat_{R/A}^{(1)}\rightarrow\N^{\geq
    pi}\Prismhat_{R/A}^{(1)}\quad\text{by}\quad\varphitilde_{i}(x)=\tfrac{\varphi(x)}{\varphi(d)^i}d^{pi},$$
    $$\deltatilde_{i}\colon\N^{\geq i}\Prismhat_{R/A}^{(1)}\rightarrow\N^{\geq
    pi}\Prismhat_{R/A}^{(1)}\quad\text{by}\quad\deltatilde_{i}(x)=\delta(x)-\delta(d^i)\tfrac{\varphi(x)}{\varphi(d)^i}.$$
    Using part (iv) of Remark~\ref{rem:nygaard},
    we also have the $A$-linear $d$-divided relative Frobenius maps $$\varphi_{i,R/A}\colon\N^{\geq
    i}\Prismhat_{R/A}^{(1)}\rightarrow\Prism_{R/A}\quad\text{defined
    by}\quad\varphi_{i,R/A}(x)=\tfrac{\varphi_{R/A}(x)}{d^i}$$
    and the $\varphi(d)$-divided Frobenius maps $$\varphi_i\colon\N^{\geq
    i}\Prismhat_{R/A}^{(1)}\rightarrow\Prismhat_{R/A}^{(1)}\quad\text{defined
    by}\quad\varphi_i(x)=\tfrac{\varphi(x)}{\varphi(d)^i}.$$
\end{construction}

\begin{notation}
    We will write $\deltatilde=\bigoplus_{i\geq 0}\deltatilde_i$ and $\varphitilde=\bigoplus_{i\geq
    0}\varphitilde_i$ as graded functions on $\bigoplus_{i\geq 0}\N^{\geq
    i}\Prismhat_{R/A}^{(1)}$.
\end{notation}

\begin{proposition}[The graded $\delta$-ring structure]\label{prop:graded_delta}
    Let $(A,d)$ be a bounded oriented prism and let $R$ be a commutative $\overline{A}$-algebra with
    $\L_{R/\overline{A}}$ having $p$-complete Tor-amplitude in $[1,1]$.
    The operation $\deltatilde$ makes $\bigoplus_{\star\geq 0}\N^{\geq \star}\Prismhat_{R/A}^{(1)}$ into a graded
    $\delta$-ring with graded Frobenius morphism $\varphitilde$; the relative divided Frobenius
    map
    $$\bigoplus_{\star\geq
    0}\N^{\geq\star}\Prismhat_{R/A}^{(1)}\xrightarrow{\bigoplus\varphi_{i,R/A}}\Prism_{R/A}$$ is a
    $\delta$-ring morphism.
\end{proposition}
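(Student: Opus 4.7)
The plan is to exploit $p$-torsion-freeness: under our hypotheses, Remark~\ref{rem:nygaard}(iv) implies that $\Prismhat_{R/A}^{(1)}$ is discrete and that $\varphi_{R/A}$ embeds it into the $p$-torsion-free prism $\Prism_{R/A}$, so each Nygaard filtered piece $\N^{\geq i}\Prismhat_{R/A}^{(1)}$ is $p$-torsion-free. Hence a graded $\delta$-ring structure on $\bigoplus_{i\geq 0}\N^{\geq i}\Prismhat_{R/A}^{(1)}$ is equivalent to a graded ring endomorphism congruent to the $p$-th power modulo $p$, and my plan is to verify this for $\widetilde{\varphi}$, from which $\widetilde{\delta}$ is determined.

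First I would check that each $\widetilde{\varphi}_i$ is well-defined as a map $\N^{\geq i}\to\N^{\geq pi}$. For $x\in\N^{\geq i}$, we have $\varphi_{R/A}(x)\in d^i\Prism_{R/A}$; writing $\varphi_{R/A}(x)=d^iy$ and applying the $\varphi$-semilinear map $w\colon\Prism_{R/A}\to\Prism_{R/A}^{(1)}$ gives $\varphi_{\Prism^{(1)}}(x)=w(d^iy)=\varphi(d)^i w(y)$, so $\varphi_i(x)=w(y)$ is a bona fide element of $\Prismhat_{R/A}^{(1)}$. Moreover $d^{pi}\in\N^{\geq pi}$ (its image under the $A$-linear $\varphi_{R/A}$ is $d^{pi}\in d^{pi}\Prism_{R/A}$), and multiplicativity of the Nygaard filtration yields $\widetilde{\varphi}_i(x)=\varphi_i(x)\cdot d^{pi}\in\N^{\geq pi}$. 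Additivity and multiplicativity of $\widetilde{\varphi}$ as a graded endomorphism follow routinely from the corresponding properties of $\varphi$ together with $\varphi(d)^{i+j}=\varphi(d)^i\varphi(d)^j$.

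For the mod-$p$ congruence I would use the identity $d^{pi}=\varphi(d)^i-p\delta(d^i)$, obtained by expanding $\varphi(d^i)$, combined with $\varphi_i(x)\cdot\varphi(d)^i=\varphi(x)=x^p+p\delta(x)$, to get
\[
  \widetilde{\varphi}_i(x)=\varphi(x)-p\delta(d^i)\varphi_i(x)=x^p+p\bigl(\delta(x)-\delta(d^i)\varphi_i(x)\bigr)
\]
in $\Prismhat_{R/A}^{(1)}$. The main obstacle, as I see it, is promoting this congruence into the graded ring, i.e.\ showing that $\widetilde{\delta}_i(x)$ lies in $\N^{\geq pi}$ rather than merely in $\Prismhat_{R/A}^{(1)}$. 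This amounts to $p$-saturation: $\Prismhat_{R/A}^{(1)}/\N^{\geq pi}$ must be $p$-torsion-free. I would deduce this from the finite filtration of the quotient with graded pieces $\gr^j_{\N}\Prismhat_{R/A}^{(1)}\we\F_{\leq j}^{\conj}\Prismbar_{R/A}\{j\}$ for $0\leq j<pi$, each of which is $p$-completely flat under our Tor-amplitude hypothesis on $\L_{R/\overline{A}}$ (writing $\L_{R/\overline{A}}\we L[1]$ for $L$ a $p$-completely flat $R$-module, the conjugate-graded pieces identify with shifts of the derived divided powers of $L$). Once this is in hand, $\widetilde{\varphi}$ becomes a Frobenius lift in the graded sense, producing the graded $\delta$-ring structure with $\widetilde{\delta}$ as in the statement.

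For the final claim, that $\bigoplus_{i}\varphi_{i,R/A}$ is a $\delta$-ring morphism to $\Prism_{R/A}$, multiplicativity is immediate from $A$-linearity of $\varphi_{R/A}$ together with $d^{i+j}=d^id^j$. Since $\Prism_{R/A}$ is $p$-torsion-free, it suffices to check Frobenius compatibility, which follows from the computation
\[
  \varphi_{pi,R/A}(\widetilde{\varphi}_i(x))=\frac{d^{pi}\,\varphi_{R/A}(\varphi_i(x))}{d^{pi}}=\varphi_{R/A}\bigl(w(\varphi_{i,R/A}(x))\bigr)=\varphi(\varphi_{i,R/A}(x)),
\]
using $\varphi_{R/A}\circ w=\varphi_{\Prism_{R/A}}$ together with the identification $\varphi_i(x)=w(\varphi_{i,R/A}(x))$ established in the first step.
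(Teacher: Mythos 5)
Your outline correctly isolates the crux --- showing $\deltatilde_i(x)$ lands in $\N^{\geq pi}$ and not merely in $\Prismhat_{R/A}^{(1)}$ --- but the argument you offer for this $p$-saturation is flawed. You want $\Prismhat_{R/A}^{(1)}/\N^{\geq pi}$ to be $p$-torsion-free, and you propose to get this from the finite filtration with graded pieces built from $\Gamma^m L$, $L$ a $p$-completely flat $R$-module. But $p$-complete flatness over $R$ does not imply $p$-torsion-freeness when $R$ itself has $p$-torsion, and the rings of interest here (e.g.\ $R=\bZ/p^n$) have plenty. Concretely, $L$ flat over $\bZ/p^n$ is $p$-completely free, so $\Gamma^m L$ has $p$-torsion; the filtration argument therefore does not deliver the $p$-saturation you need, and the ``once this is in hand'' step is unsupported. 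A secondary issue: your opening claim that $\varphi_{R/A}$ embeds $\Prismhat_{R/A}^{(1)}$ into a $p$-torsion-free ring $\Prism_{R/A}$ requires $A$ to be $p$-torsion-free, which is not part of the hypothesis ($A$ is only assumed bounded).

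The paper sidesteps both problems by base-changing to a universal situation, namely $A$ a suitable free $\delta$-ring localization such as $\bZ_p\{d,r_v\}[\delta(d)^{-1}]^\wedge_{(p,d)}$, where the relevant Nygaard-filtered pieces \emph{are} $p$-torsion-free, checks the weight containment there, and then transports the conclusion along base change (the elements $\deltatilde_i(x)$ are compatible with base change by construction). Your final computation showing that $\bigoplus\varphi_{i,R/A}$ commutes with Frobenius is essentially the paper's. If you wanted to avoid the universal-case reduction entirely, a direct route is available: write $\varphi_{R/A}(x)=d^i z$ with $z\in\Prism_{R/A}$ and compute, using $\delta$-linearity of $\varphi_{R/A}$ and the identity $\varphi(d)^i - p\,\delta(d^i)=d^{pi}$, that
\[
\varphi_{R/A}(\deltatilde_i(x)) \;=\; \delta(d^i z) - \delta(d^i)\,\tfrac{\varphi(d^i z)}{\varphi(d)^i} \;=\; \bigl(\varphi(d)^i - p\,\delta(d^i)\bigr)\delta(z) \;=\; d^{pi}\,\delta(z),
\]
which directly exhibits $\deltatilde_i(x)\in\N^{\geq pi}$ by the characterization in Remark~\ref{rem:nygaard}(iv), with no torsion-freeness hypothesis at all.
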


\begin{proof}
    Let $x\in\N^{\geq i}\Prismhat_{R/A}^{(1)}$.
    While $\varphitilde_{i}(x)\in\N^{\geq pi}\Prismhat_{R/A}^{(1)}$ follows because $d^{pi}\in\N^{\geq
    pi}\Prismhat_{R/A}^{(1)}$, it is less obvious that $\deltatilde_{i}$ preserves the gradings in the
    claimed way. Nevertheless,
    $$x^p+p\deltatilde_{i}(x)=x^p+p\left(\delta(x)-\delta(d^i)\tfrac{\varphi(x)}{\varphi(d)^i}\right)=\frac{(x^p+p\delta(x))\varphi(d)^i-p\delta(d^i)\varphi(x)}{\varphi(d)^i}=\tfrac{\varphi(x)}{\varphi(d)^i}d^{pi}=\varphitilde_i(x).$$
    so that $p\deltatilde_{i}(x)$ is in weight $pi$. By reduction to a universal case using base change, we
    can assume that the Nygaard-filtered pieces $\N^{\geq i}\Prismhat_{R/A}^{(1)}$ are $p$-torsion
    free and hence that $\deltatilde_{i}(x)$ is in weight $pi$, as desired.
    As $\varphitilde_{i}$ is a graded ring endomorphism by definition, it also follows
    that it is a lift of Frobenius with associated $\delta$-ring structure given by
    $\deltatilde$.

    Again, using reduction to a $p$-torsion free case, to check that the relative
    divided Frobenius maps $\varphi_{i,R/A}$ assemble into a $\delta$-ring morphism, it
    is enough to check that they commute with the Frobenius endomorphisms.
    But, ~e have
    $$\varphi_{pi,R/A}(\varphitilde_{i}(x))=\varphi_{R/A}\left(\tfrac{\varphi(x)}{\varphi(d)^i}d^{pi}\right)\tfrac{1}{d^{pi}}=\tfrac{\varphi_{R/A}(\varphi(x))}{\varphi(d)^i}=\varphi\left(\tfrac{\varphi_{R/A}(x)}{d^i}\right)=\varphi(\varphi_{i,R/A}(x)),$$
    using $A$-linearity and $\varphi(d)$-torsion freeness of $\Prism_{R/A}$ to commute $d^{pi}$ and $\varphi(d)^i$ through $\varphi_{R/A}$
    and using that $\varphi_{R/A}$ is a map of $\delta$-rings to say that
    $\varphi(\varphi_{R/A}(x))=\varphi_{R/A}(\varphi(x))$.
\end{proof}

\begin{warning}\label{warning:not_filtered}
    In the oriented case, the $\delta$-ring structure $\deltatilde$ on $\bigoplus_{\star_{\geq
    0}}\N^{\geq\star}\Prismhat^{(1)}_{R/A}$ does not descend to a filtered $\delta$-ring structure on
    $\N^{\geq\star}\Prismhat_{R/A}^{(1)}$. Indeed, if it did, in the case of
    $\Prismhat_{\bF_p/\bZ_p}$, one
    would obtain on $\gr^0_\N\Prismhat_{\bF_p/\bZ_p}\we\bF_p$ a $\delta$-ring structure, which is
    absurd. This is explained by the fact that $\deltatilde$ does not commute with the transition maps $\N^{\geq
    i+1}\Prismhat_{R/A}^{(1)}\rightarrow\N^{\geq i}\Prismhat_{R/A}^{(1)}$.
\end{warning}

\subsection{Generators for the Nygaard filtration}\label{sec:gen_nygaard}

We are now ready to define the generators for the Nygaard filtration.

\begin{construction}[The $f_{u,v}$ generators]\label{const:fuv}
    Fix a bounded, oriented prism $(A,d)$ and a sequence of elements $r_1,\ldots,r_c\in A$ that
    define a $p$-completely Koszul-regular sequence in $\overline{A}$; let $R=A/(d,r_1,\ldots,r_c)$.
    We let $\widetilde{r}_v$ denote the image of $r_v$ in $\N^{\geq 1}\Prismhat_{R/A}^{(1)}$
    under
    \[
      \N^{\geq 1}\Prismhat_{R/A}^{(1)}\subseteq \Prismhat_{R/A}^{(1)}\cong
      (\Prism_{R/A}\otimes_{A,\varphi} A)^{\wedge}_{p,\N}.
    \]
    For $1\leq v\leq c$ and $u\geq 0$, we define $f_{u,v}\in \N^{\geq p^u}\Prismhat^{(1)}_{R/A}$ by
    \[
        f_{u,v}=\deltatilde^u(r_v).
    \]
    Additionally, we let 
    \[
      a_v=\varphi_{1,R/A}(f_{0,v})\in\Prism_{R/A}.
    \]
\end{construction}

We first remark that there is no conflict in notation between the $a_v$ from
Construction~\ref{const:fuv} and those from Proposition~\ref{prop:envelope_regular}.

\begin{lemma}
    If $(A,d)$ is a bounded, oriented prism and $r_1,\ldots,r_c$ is a sequence of elements whose
    image in $\overline{A}$ is $p$-completely Koszul-regular, then $a_v\in\Prism_{R/A}$ satisfies
    $da_v=r_v$ for $1\leq v\leq c$. Moreover, the induced map
    $$A\{\tfrac{r_1}{d},\ldots,\tfrac{r_c}{d}\}_{(p,d)}^\wedge\rightarrow\Prism_{R/A}$$ is an
    equivalence.
\end{lemma}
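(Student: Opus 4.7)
The plan is to show first that $a_v=r_v/d$ in $\Prism_{R/A}$; the relation $da_v=r_v$ then follows tautologically, and the equivalence of the prismatic envelope with $\Prism_{R/A}$ is an application of Bhatt--Scholze's universal property of prismatic envelopes via Lemma~\ref{lem:twisted_envelope}.

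The first step is to unpack the definition $a_v=\varphi_{1,R/A}(f_{0,v})$. Since $f_{0,v}=\widetilde{\deltatilde}{}^0(\widetilde{r}_v)=\widetilde{r}_v$ is just the image of $r_v$ under the structure map $A\to\Prismhat^{(1)}_{R/A}$, and since $\varphi_{1,R/A}(x)=\varphi_{R/A}(x)/d$ on $\N^{\geq 1}\Prismhat_{R/A}^{(1)}$, we need to compute $\varphi_{R/A}(\widetilde{r}_v)\in\Prism_{R/A}$. Here the essential observation is that the relative Frobenius $\varphi_{R/A}\colon\Prism^{(1)}_{R/A}\to\Prism_{R/A}$ is $A$-linear by construction, and the composite $A\to\Prismhat^{(1)}_{R/A}\xrightarrow{\varphi_{R/A}}\Prism_{R/A}$ is the structure map of $\Prism_{R/A}$ as an $A$-algebra. (Concretely, writing $\Prism^{(1)}_{R/A}=(\Prism_{R/A}\otimes_{A,\varphi}A)^{\wedge}_{p,I}$, the structure map sends $a\mapsto 1\otimes a$, and $\varphi_{R/A}(1\otimes a)=\varphi(1)\cdot a=a$.) Thus $\varphi_{R/A}(\widetilde{r}_v)=r_v$ in $\Prism_{R/A}$, so
\[
a_v=\varphi_{R/A}(\widetilde{r}_v)/d=r_v/d,
\]
which is a legitimate element because $r_v\in d\cdot\Prism_{R/A}$ (its image in $\overline{\Prism}_{R/A}$ is zero, since $r_v=0$ in $R$). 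In particular $da_v=r_v$.

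For the second assertion, the element $a_v\in\Prism_{R/A}$ together with the relation $da_v=r_v$ fits the universal property characterizing the free $\delta$-$A$-algebra on elements $a_v$ with $da_v=r_v$ up to $(p,d)$-completion, namely the prismatic envelope $A\{r_1/d,\ldots,r_c/d\}_{(p,d)}^\wedge$. This yields a canonical map
\[
A\{r_1/d,\ldots,r_c/d\}_{(p,d)}^\wedge\longrightarrow\Prism_{R/A}
\]
of $\delta$-$A$-algebras sending the formal variable representing $r_v/d$ to $a_v$. By the Bhatt--Scholze description of $\Prism_{R/A}$ as this prismatic envelope, recalled in Lemma~\ref{lem:twisted_envelope} (itself based on \cite[Prop.~3.13, Lem.~7.7]{prisms}), the map in question is an equivalence; equivalently, our $a_v$ agree with the formal generators appearing in Proposition~\ref{prop:envelope_regular}, which is forced by the uniqueness of $a$ with $da=r_v$ inside the $d$-torsion free ring $\Prism_{R/A}$. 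No significant obstacle is expected; the only care needed is the bookkeeping of the two $A$-module structures on $\Prism^{(1)}_{R/A}$ used in verifying the identity $\varphi_{R/A}(\widetilde{r}_v)=r_v$.
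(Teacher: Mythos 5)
Your proposal is correct and follows essentially the same route as the paper's proof: unwind the definition $a_v=\varphi_{1,R/A}(f_{0,v})=\varphi_{R/A}(\widetilde{r}_v)/d$, use $A$-linearity of the relative Frobenius to identify $\varphi_{R/A}(\widetilde{r}_v)=r_v$ (hence $da_v=r_v$), and then recognize the induced map from the prismatic envelope as the one appearing in the Bhatt--Scholze identification $\Prism_{R/A}\simeq A\{\tfrac{r_1}{d},\ldots,\tfrac{r_c}{d}\}^\wedge_{(p,d)}$ for lci quotients. The extra bookkeeping you supply on the two $A$-module structures on $\Prism^{(1)}_{R/A}$ is a helpful elaboration but not a different argument.
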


\begin{proof}
    By definition, $a_v=\varphi_{1,R/A}(f_{0,v})=\tfrac{\varphi_{R/A}(\widetilde{r}_v)}{d}$, so
    $da_v=\varphi_{R/A}(\widetilde{r}_v)$. But, since $\varphi_{R/A}$ is $A$-linear,
    $\varphi_{R/A}(\widetilde{r}_v)=r_v$, so we have that $da_v=r_v$.
    Now, there is an induced map
    $$A\{\tfrac{r_1}{d},\ldots,\tfrac{r_c}{d}\}_{(p,d)}^\wedge\rightarrow\Prism_{R/A}$$ by
    definition of the prismatic envelope. But, this map is the map arising in the proof of the fact that the prismatic envelope of an
    lci quotient computes the prismatic cohomology in~\cite[Ex.~7.9]{prisms}; in particular, it is
    an equivalence.
\end{proof}

\begin{corollary}
  \label{cor:f-uv-mapto-a-uv}
    If $(A,d)$ is a bounded, oriented prism and $r_1,\ldots,r_c$ is a sequence of elements whose
    image in $\overline{A}$ is $p$-completely Koszul-regular, then for every $1\leq v\leq c$ and
    every $u\geq 0$, one has $\varphi_{p^u,R/A}(f_{u,v})=\delta^u(a_v)$ and the image of $\delta^ua_v$ in $\Prismbar_{R/A}$ is in $\F^\conj_{\leq
    p^u}\Prismbar_{R/A}$.
\end{corollary}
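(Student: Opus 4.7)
The plan is to deduce both claims directly from structure already developed in the text, with essentially no new computation.

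For the first statement, the key input is Proposition~\ref{prop:graded_delta}: the graded divided Frobenius $\bigoplus_{i\geq 0} \varphi_{i,R/A}$ is a morphism of $\delta$-rings from $\bigl(\bigoplus_{i\geq 0}\N^{\geq i}\Prismhat^{(1)}_{R/A},\deltatilde\bigr)$ to $(\Prism_{R/A},\delta)$. Concretely, this means $\varphi_{pi,R/A}\circ\deltatilde_i = \delta\circ\varphi_{i,R/A}$ for every $i\geq 0$. By the definition $a_v = \varphi_{1,R/A}(f_{0,v}) = \varphi_{1,R/A}(\widetilde{r}_v)$, applying this identity $u$ times to $\widetilde{r}_v\in\N^{\geq 1}\Prismhat^{(1)}_{R/A}$ and using $f_{u,v}=\deltatilde^u(\widetilde{r}_v)\in\N^{\geq p^u}\Prismhat^{(1)}_{R/A}$ yields
\[
\varphi_{p^u,R/A}(f_{u,v}) = \varphi_{p^u,R/A}(\deltatilde^u(\widetilde{r}_v)) = \delta^u(\varphi_{1,R/A}(\widetilde{r}_v)) = \delta^u(a_v),
\]
establishing the equality in $\Prism_{R/A}$.

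For the second statement, I would invoke Remark~\ref{rem:nygaard}(ii)--(iii). The relative Frobenius promotes to a map of filtered rings
\[
\varphi_{R/A}\colon \N^{\geq\star}\Prismhat^{(1)}_{R/A}\to d^\star\Prism_{R/A},
\]
and on the $p^u$-th associated graded piece induces, after dividing by $d^{p^u}$ and reducing modulo $d$, the natural inclusion
\[
\gr^{p^u}_\N\Prismhat^{(1)}_{R/A}\xrightarrow{\ \cong\ }\F^\conj_{\leq p^u}\Prismbar_{R/A}\{p^u\}\hookrightarrow\Prismbar_{R/A}\{p^u\}.
\]
Since $f_{u,v}\in\N^{\geq p^u}\Prismhat^{(1)}_{R/A}$, the element $\varphi_{p^u,R/A}(f_{u,v})=\delta^u(a_v)\in\Prism_{R/A}$ reduces modulo $d$ to an element of the image of this composite, i.e.\ to an element of $\F^\conj_{\leq p^u}\Prismbar_{R/A}$ (after untwisting the Breuil--Kisin twist).

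There is no real obstacle: both parts reduce to invoking the graded $\delta$-ring structure of Proposition~\ref{prop:graded_delta} and the basic compatibility of the Nygaard filtration with the conjugate filtration recalled in Remark~\ref{rem:nygaard}. The only point that deserves a line of care is that $\widetilde{r}_v$ genuinely sits in Nygaard filtration $\geq 1$, which is immediate from the construction since $\widetilde{r}_v$ is (the image of) $r_v$ considered in $\N^{\geq 1}\Prismhat^{(1)}_{R/A}$ by the very definition in Construction~\ref{const:fuv}.
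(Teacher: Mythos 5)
Your proposal is correct and takes essentially the same approach as the paper: both parts are deduced from Proposition~\ref{prop:graded_delta} (compatibility of $\deltatilde$ and $\delta$ under the divided relative Frobenius) for the first claim, and from the Nygaard/conjugate filtration comparison (Remark~\ref{rem:nygaard}(iii), which the paper attributes to Bhatt--Lurie) for the second. You simply spell out the induction on $u$ that the paper leaves implicit.
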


\begin{proof}
    The first statement follows by the compatibility
    of $\deltatilde$ and $\delta$ under $\varphi_{\star, R/A}$ established
    in Proposition~\ref{prop:graded_delta}. The second statement follows from the fact
    that the relative Frobenius induces an 
    equivalence $\gr^{p^u}_\N\Prismhat^{(1)}_{R/A}\we\F_{p^u}^\conj\Prismbar_{R/A}$ (see
    for example~\cite[Rem.~5.1.2]{bhatt-lurie-apc}).
\end{proof}

\begin{lemma}[Relations for the $f_{u,v}$]\label{lem:explicitrelations}
  Let $\widetilde{d}\in \N^{\geq 1}\Prismhat^{(1)}_{R/A}$ be $d$, regarded as element of
    Nygaard filtration $\geq 1$. The elements $f_{u,v}$ satisfy relations
  \[
    f_{u,v}^p = (-p + \lambda_u d^{p^{u+1}}) f_{u+1,v} + \widetilde{d}^{p^{u+1}} R_{u,v}',
  \]
  in $\N^{\geq p^u} \Prismhat^{(1)}_{R/A}$,
  where $\lambda_u$ is given by $\lambda_0 = -\frac{1}{\delta(d)}$ and
  \[
    \lambda_{u+1} = \frac{\lambda_u^p}{1-\delta(d^{p^{u+1}}\lambda_u)}
  \]
  and the $R_{u,v}'\in \N^{\geq 0}\Prismhat^{(1)}_{R/A}$ are given by $R_{0,v}' = \frac{\delta(r_v)}{\delta(d)}$ and
  \[
      R_{u+1,v}' = \frac{1}{1-\delta(d^{p^{u+1}}\lambda_u)} (\delta(R_{u,v}') +
      w_1(\lambda_u f_{u+1,v}, R_{u,v}')).
  \]
\end{lemma}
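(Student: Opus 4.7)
The plan is to first establish the ungraded identity
\begin{equation*}
\varphi_{p^u}(f_{u,v}) = \lambda_u f_{u+1,v} + R_{u,v}'
\end{equation*}
in $\Prismhat^{(1)}_{R/A}$, and then to deduce the stated filtered relation by multiplying through by $\widetilde{d}^{p^{u+1}}$ and invoking the graded $\delta$-ring identity
\begin{equation*}
\widetilde{\varphi}_{p^u}(f_{u,v}) = f_{u,v}^p + p f_{u+1,v} = \widetilde{d}^{p^{u+1}}\varphi_{p^u}(f_{u,v})
\end{equation*}
from Proposition~\ref{prop:graded_delta} (applied to $x=f_{u,v}$, with $f_{u+1,v}=\widetilde{\delta}_{p^u}(f_{u,v})$ by definition). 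The verification that all three terms of the relation lie in $\N^{\geq p^{u+1}}\Prismhat^{(1)}_{R/A}$ then follows immediately from multiplicativity of the Nygaard filtration, the fact that $A\subseteq\N^{\geq 0}$, and $\widetilde{d}\in\N^{\geq 1}$.

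For the auxiliary identity, I would apply the $A$-linear, injective $\delta$-ring morphism $\varphi_{R/A}\colon\Prismhat^{(1)}_{R/A}\hookrightarrow\Prism_{R/A}$ from Remark~\ref{rem:nygaard}(iv), which satisfies $\varphi_{R/A}\circ\varphi_{p^i}=\varphi\circ\varphi_{p^i,R/A}$. Using Corollary~\ref{cor:f-uv-mapto-a-uv} to identify $\varphi_{p^u,R/A}(f_{u,v})=\delta^u(a_v)$ and $\varphi_{p^{u+1},R/A}(f_{u+1,v})=\delta^{u+1}(a_v)$, the image of the auxiliary identity under $\varphi_{R/A}$ becomes
\begin{equation*}
\delta^u(a_v)^p + p\,\delta^{u+1}(a_v) = \lambda_u d^{p^{u+1}}\delta^{u+1}(a_v) + \varphi_{R/A}(R_{u,v}').
\end{equation*}
Modulo the auxiliary claim $\varphi_{R/A}(R_{u,v}')=R_{u,v}$ (with $R_{u,v}\in A$ as in Lemma~\ref{lem:relations}), this is exactly the identity established in Lemma~\ref{lem:relations}(iii), and the auxiliary identity will follow by injectivity of $\varphi_{R/A}$.

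The technical heart of the argument is therefore the matching claim $\varphi_{R/A}(R_{u,v}')=R_{u,v}$, which I would prove by induction on $u$. The base case is immediate since both sides equal $\delta(r_v)/\delta(d)\in A$. For the inductive step, apply $\varphi_{R/A}$ to the defining recursion for $R_{u+1,v}'$, commuting it past the $A$-scalar $(1-\delta(d^{p^{u+1}}\lambda_u))^{-1}$, past $\delta$ (since $\varphi_{R/A}$ is a $\delta$-morphism), and past the integer-coefficient polynomial $w_1$; substituting $\varphi_{R/A}(f_{u+1,v})=d^{p^{u+1}}\delta^{u+1}(a_v)$ together with the inductive hypothesis reproduces the recursion defining $R_{u+1,v}$ verbatim. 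The main conceptual point is recognizing the $R_{u,v}'$ as the unique preimages of $R_{u,v}$ under $\varphi_{R/A}$, cooked up precisely so that the two recursions match; once this reformulation is set up, every step reduces to routine $\delta$-ring manipulations.
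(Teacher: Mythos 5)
Your proof is correct, but it takes a genuinely different route from the paper's. The paper applies Lemma~\ref{lem:relations} \emph{once more}, this time to the graded $\delta$-ring $\bigoplus_{\star\geq 0}\N^{\geq\star}\Prismhat^{(1)}_{R/A}$ with the graded structure $\widetilde\delta$ from Proposition~\ref{prop:graded_delta}, using the relation $df_{0,v}=\widetilde{d}r_v$. This produces the relations with remainder terms $R_{u,v}$ living in the graded ring, and the paper then uses $\widetilde\delta(\widetilde{d})=0$ to inductively peel off the factors $\widetilde{d}^{p^{u+1}}$ and exhibit $R_{u,v}=\widetilde{d}^{p^{u+1}}R_{u,v}'$. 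You instead establish the ungraded identity $\varphi_{p^u}(f_{u,v})=\lambda_u f_{u+1,v}+R'_{u,v}$ first (this is precisely Lemma~\ref{lem:f_uv_structure}(1), which the paper derives \emph{afterwards} as a consequence) by transporting everything along the injective $\delta$-morphism $\varphi_{R/A}\colon\Prismhat^{(1)}_{R/A}\hookrightarrow\Prism_{R/A}$, matching $R'_{u,v}$ with the $R_{u,v}$ of Lemma~\ref{lem:relations} applied in $\Prism_{R/A}$ to $a=a_v$; then you multiply by $\widetilde{d}^{p^{u+1}}$ and use Proposition~\ref{prop:graded_delta}. In effect you reverse the logical order of Lemma~\ref{lem:explicitrelations} and Lemma~\ref{lem:f_uv_structure}(1). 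Your argument avoids invoking $\widetilde\delta(\widetilde{d})=0$ and is arguably more conceptual (it locates the $R'_{u,v}$ as the unique $\varphi_{R/A}$-preimages of the $R_{u,v}$), at the cost of relying on the injectivity of $\varphi_{R/A}$ from Remark~\ref{rem:nygaard}(iv). The paper's route stays internal to the graded Nygaard ring and reuses Lemma~\ref{lem:relations} in a more self-contained way. Both arguments share the same mild imprecision about the graded weights of products like $\widetilde{d}^{p^{u+1}}\lambda_u f_{u+1,v}$ versus $\lambda_u d^{p^{u+1}}f_{u+1,v}$, which is harmless since the intended equality is in the common filtered piece $\N^{\geq p^{u+1}}\Prismhat^{(1)}_{R/A}$.
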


\begin{remark}
    Note that in the statement of Lemma~\ref{lem:explicitrelations}, $\delta$ does not preserve the Nygaard filtration in any way.
\end{remark}

\begin{proof}[Proof of Lemma~\ref{lem:explicitrelations}]
    Applying Lemma \ref{lem:relations} to the graded $\delta$-ring $\bigoplus_{\star\geq
    0}\N^{\geq \star}\Prismhat^{(1)}_{R/A}$ and the relation $df_{0,v} = \widetilde{d}r$, we obtain relations
  \[
    f_{u,v}^p = (-p + \lambda_u d^{p^{u+1}}) f_{u+1,v} + R_{u,v},
  \]
    using $\deltatilde(\widetilde{d})=0$,
  with $R_{0,v} = \frac{\deltatilde(\widetilde{d} r_v)}{\delta(d)} = \widetilde{d}^p R_{0,v}'$.
    Now $R_{u,v}$ satisfies the recursion
  \[
    R_{u+1,v} = \frac{1}{1-\delta(d^{p^{u+1}\lambda_u})} (\deltatilde(R_{u,v}) + w_1(d^{p^{u+1}}\lambda_u f_{u+1,v}, R_{u,v}).
  \]
  We may rewrite $d^{p^{u+1}}f_{u+1,v}$ as $\widetilde{d}^{p^{u+1}} \cdot f_{u+1,v}$ in
    $\N^{\geq p^{u+1}}\Prismhat_{R/A}^{(1)}$. It inductively follows that
  \[
    R_{u+1,v} = \widetilde{d}^{p^{u+1}} R_{u+1,v}',
  \]
  proving the claimed relation.
\end{proof}

\begin{lemma}[Structure maps on the $f_{u,v}$]
  \label{lem:f_uv_structure}
  The divided Frobenius and $\delta$-structure maps are determined on the elements $f_{u,v}$ by
  \begin{enumerate}
      \item[{\em (1)}] $\varphi_{p^u}(f_{u,v}) = \lambda_u f_{u+1,v} + R_{u,v}'$,
      \item[{\em (2)}] $\varphi(f_{u,v}) = \varphi(d)^{p^u} \cdot (\lambda_u f_{u+1,v} +
          R_{u,v}')$, and
      \item[{\em (3)}] $\delta(f_{u,v}) =(1+\delta(d^{p^u})\lambda_u)f_{u+1,v} +  \delta(d^{p^u}) R_{u,v}'$
  \end{enumerate}
\end{lemma}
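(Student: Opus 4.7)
The plan is to prove statement (1) first using injectivity of the relative Frobenius $\varphi_{R/A}\colon\Prismhat^{(1)}_{R/A}\to\Prism_{R/A}$ from Remark~\ref{rem:nygaard}(iv), and then to deduce (2) and (3) by algebraic manipulation of (1).

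For (1), I apply $\varphi_{R/A}$ to both sides and check equality in $\Prism_{R/A}$. Using that $\varphi_{R/A}$ is a ring map intertwining the absolute Frobenius, together with Corollary~\ref{cor:f-uv-mapto-a-uv} (which asserts $\varphi_{R/A}(f_{u,v}) = d^{p^u}\delta^u(a_v)$), one computes
\[
\varphi_{R/A}\bigl(\varphi_{p^u}(f_{u,v})\bigr) = \frac{\varphi(\varphi_{R/A}(f_{u,v}))}{\varphi(d)^{p^u}} = \varphi(\delta^u(a_v)) = \delta^u(a_v)^p + p\delta^{u+1}(a_v),
\]
which by Lemma~\ref{lem:relations} equals $\lambda_u d^{p^{u+1}}\delta^{u+1}(a_v) + R_{u,v}$. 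Meanwhile $\varphi_{R/A}(\lambda_u f_{u+1,v} + R_{u,v}') = \lambda_u d^{p^{u+1}}\delta^{u+1}(a_v) + \varphi_{R/A}(R_{u,v}')$, so by injectivity the assertion (1) reduces to $\varphi_{R/A}(R_{u,v}') = R_{u,v}$. I prove this by induction on $u$. The base case $u=0$ holds because $R_{0,v}' = \delta(r_v)/\delta(d)$ lies in the image of $A$, on which $\varphi_{R/A}$ is the identity, and matches $R_{0,v}$. For the inductive step, $\varphi_{R/A}$ commutes with $\delta$ (as a $\delta$-ring map) and with the universal polynomial $w_1$ (as a ring map); applying it to the recursion for $R_{u+1,v}'$ from Lemma~\ref{lem:explicitrelations} and substituting $\varphi_{R/A}(f_{u+1,v}) = d^{p^{u+1}}\delta^{u+1}(a_v)$ reproduces exactly the recursion for $R_{u+1,v}$ from Lemma~\ref{lem:relations}.

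Statement (2) is then immediate from (1) and the defining identity $\varphi(x) = \varphi(d)^{p^u}\varphi_{p^u}(x)$ for $x\in\N^{\geq p^u}\Prismhat^{(1)}_{R/A}$. For (3), substituting (2) and the relation $f_{u,v}^p = (-p + \lambda_u d^{p^{u+1}})f_{u+1,v} + d^{p^{u+1}}R_{u,v}'$ from Lemma~\ref{lem:explicitrelations} into $p\delta(f_{u,v}) = \varphi(f_{u,v}) - f_{u,v}^p$ yields
\[
p\delta(f_{u,v}) = \bigl(\varphi(d)^{p^u} - d^{p^{u+1}}\bigr)\lambda_u f_{u+1,v} + pf_{u+1,v} + \bigl(\varphi(d)^{p^u} - d^{p^{u+1}}\bigr)R_{u,v}'.
\]
Since $\varphi(d^{p^u}) = d^{p^{u+1}} + p\delta(d^{p^u})$, each occurrence of $\varphi(d)^{p^u} - d^{p^{u+1}}$ rewrites as $p\delta(d^{p^u})$; dividing by $p$, which is legitimate because $\Prismhat^{(1)}_{R/A}$ embeds via $\varphi_{R/A}$ into the $p$-torsion-free ring $\Prism_{R/A}$ described in Proposition~\ref{prop:envelope_regular}, produces formula (3). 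The main technical obstacle will be the inductive matching of the recursions defining $R_{u,v}$ and $R_{u,v}'$, which requires careful tracking of how $\varphi_{R/A}$ interacts with $\delta$ and with $w_1$.
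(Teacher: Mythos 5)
Your proof is correct in substance, and parts (2) and (3) match the paper's argument exactly, but your route to part (1) is genuinely different. The paper works entirely in the graded $\delta$-ring $\bigoplus_{\star\geq 0}\N^{\geq\star}\Prismhat^{(1)}_{R/A}$: since $f_{u,v}=\deltatilde^u(\widetilde{r}_v)$ we have $\deltatilde(f_{u,v})=f_{u+1,v}$ tautologically, hence $\varphitilde(f_{u,v})=f_{u,v}^p + pf_{u+1,v}$, and then substituting the relation from Lemma~\ref{lem:explicitrelations} and cancelling the nonzerodivisor $\widetilde{d}^{p^{u+1}}$ gives (1) directly, with no appeal to $\varphi_{R/A}$ or to Lemma~\ref{lem:relations}. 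You instead apply the injective map $\varphi_{R/A}$, invoke Corollary~\ref{cor:f-uv-mapto-a-uv}, and reduce to Lemma~\ref{lem:relations} via the auxiliary identity $\varphi_{R/A}(R_{u,v}')=R_{u,v}$ that you prove by a separate induction. That identity is correct, and your inductive matching of the two recursions goes through; but note that it is essentially re-deriving the bookkeeping that the paper already built into the passage from Lemma~\ref{lem:relations} to Lemma~\ref{lem:explicitrelations}, so you are taking a longer road to a fact the paper gets for free from $\deltatilde(f_{u,v})=f_{u+1,v}$. What your approach buys is that it never needs the ``$\widetilde{d}$ acts as a nonzerodivisor on $\gr_\N^\star$'' argument; what it costs is the extra induction and the need to know that $\varphi_{R/A}$ is a $\delta$-ring homomorphism (true, but used silently in your step $\varphi_{R/A}(\delta(R_{u,v}'))=\delta(\varphi_{R/A}(R_{u,v}'))$ and worth making explicit, since the paper never invokes it).

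There is one small but real gap in part (3). You justify dividing by $p$ by asserting that $\Prism_{R/A}$ is $p$-torsion-free, citing Proposition~\ref{prop:envelope_regular}; but that proposition asserts Koszul-regularity of a presentation, not $p$-torsion-freeness, and the latter does not hold in the stated generality of the lemma (an arbitrary bounded prism $A$ and Koszul-regular sequence). The paper handles this by reducing to the universal case $A=\bZ_p\{d,r_1,\ldots,r_c\}[\delta(d)^{-1}]^\wedge_{p,d}$, where everything in sight is $p$-torsion-free, and then deducing the identity in general by base change; you should replace your torsion-freeness claim with this reduction.
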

\begin{proof}
  We have
  \[
    \varphitilde(f_{u,v}) = f_{u,v}^p + pf_{u+1,v} = \lambda_u d^{p^{u+1}} f_{u+1,v} + \widetilde{d}^{p^{u+1}} R_{u,v}'. 
  \]
    Using $\varphitilde(f_{u,v}) = \widetilde{d}^{p^{u+1}}\varphi_{p^u}(f_{u,v})$ and rewriting $d^{p^{u+1}}f_{u+1,v} = \widetilde{d}^{p^{u+1}} f_{u+1,v}$, this implies
  \[
    \varphi_{p^u}(f_{u,v}) = \lambda_u f_{u+1,v} + R_{u,v}',
  \]
  since $\widetilde{d}$ acts as nonzerodivisor on the Nygaard filtration: indeed, on the
    associated graded pieces of the Nygaard filtration, $\widetilde{d}$ fits into a commutative diagram
  \[
    \begin{tikzcd}
      \gr^{j-1}_\N\Prismhat^{(1)}_{R/A} \dar{\simeq}\rar{\widetilde{d}}& \gr^{j}_\N\Prismhat^{(1)}_{R/A} \dar{\simeq}\\
      \F^{\conj}_{\leq j-1} \Prismbar_{R/A} \rar{\can} & 
      \F^{\conj}_{\leq j} \Prismbar_{R/A},
    \end{tikzcd}
  \]
    where the last map has discrete cofiber $\widehat{\L\Omega}^j_{R/\overline{A}}[-j]$ by assumption.
  Since $\varphi(f_{u,v}) = \varphi(d)^{p^u} \varphi_{p^u}(f_{u,v})$, the second claim follows.

  Finally, observe that
  \[
    f_{u,v}^p = (-p + \lambda_u d^{p^{u+1}})f_{u+1,v} + d^{p^{u+1}}R'_{u,v},
  \]
  and, since $\varphi(d^{p^u}) = d^{p^{u+1}} + p \delta(d^{p^u})$,
  \[
    \varphi(f_{u,v}) = (d^{p^{u+1}}+p\delta(d^{p^u})) \cdot (\lambda_u f_{u+1,v} + R_{u,v}').
  \]
taking the difference,
  \[
      \varphi(f_{u,v})-f_{u,v}^p=p\delta(f_{u,v}) = p(1+\delta(d^{p^u})\lambda_u)f_{u+1,v} + p \delta(d^{p^u}) R_{u,v}',
  \]
  which we may divide by $p$ to prove the last claim, since it suffices to prove these relations in the universal case, where everything is $p$-torsion free.
\end{proof}

\begin{remark}[Translating between the $f_{u,v}$ and $\delta^u(a_v)$]
  Since the relative divided Frobenius satisfies $\varphi_{p^u,R/A}(f_{u,v}) =
    \delta^u(a_v)$, part (1) of Lemma \ref{lem:f_uv_structure} implies that
  \[
    w(\delta^u(a_v)) = \lambda_u f_{u+1,v} + R_{u,v}',
  \]
  expressing the image of our generators of $\Prism_{R/A}$ in the Frobenius twist
    $\Prismhat^{(1)}_{R/A}$ in terms of the $f_{u,v}$. As the leading coefficient here
    is a unit, this in fact provides a way to translate between the $f_{u,v}$ and the
    $\delta^u(a_v)$. One may use this to obtain a generators-and-relations description
    of $\Prism^{(1)}_{R/A}$ in terms of the $f_{u,v}$ and the above relations. We prove
    this below only for the Nygaard completion $\N^{\geq\star}\Prismhat^{(1)}_{R/A}$ in
    a different way and will not need to directly translate between these two sets of generators.
\end{remark}

We now want to express the fact that, as a filtered ring, $\N^{\geq\star}\Prismhat^{(1)}_{R/A}$ is generated by the $f_{u,v}$ subject to the relations from Lemma \ref{lem:explicitrelations}. The problem is that the $R_{u,v}'$ are only defined as elements of $\N^{\geq\star}\Prismhat^{(1)}_{R/A}$ rather than abstract polynomials in the $f_{u,v}$, and since the recursion involves the $\delta$-ring structure on $\Prismhat^{(1)}_{R/A}$, it is not immediately obvious how to lift them to polynomials. The following construction accomplishes that.

\begin{construction}
  \label{cons:Ru_recursion}
  On the polynomial ring $A[f_{u,v} \mid u\geq 0, 1\leq v\leq c]^\wedge_{p,d}$, we introduce a $\delta$-ring structure and elements $R'_{u,v}$ through $R_{0,v}' = \frac{\delta(r_v)}{\delta(d)}$ and the mutual recursions
  \begin{gather*}
    \delta(f_{u,v}) = (1+\delta(d^{p^u})\lambda_u) f_{u+1,v} + \delta(d^{p^u}) R_{u,v}'\\
    R_{u+1,v}' = \frac{1}{1-\delta(d^{p^{u+1}}\lambda_u)} (\delta(R_{u,v}') +w_1(\lambda_u f_{u+1,v}, R_{u,v}')).
  \end{gather*}
  This works since inductively, $R_{u,v}'$ is a polynomial in $f_{0,v},\ldots, f_{u,v}$, and $\delta(f_{u,v})$ is a polynomial in $f_{0,v},\ldots,f_{u+1,v}$.

  Also note that with respect to the ascending filtration $\G_{\leq \star}
    A[f_{u,v}]^\wedge_{p,d}$ where $f_{u,v}$ has filtration $\leq p^u$, one inductively
    sees that $R_{u,v}\in \G_{<p^{u+1}}A[f_{u,v}]^\wedge_{p,d}$ and $\delta
    \left(\G_{\leq\star} A[f_{u,v}]^\wedge_{p,d}\right)\subseteq \G_{\leq p\star}
    A[f_{u,v}]^\wedge_{p,d}$. In particular, $R_{u,v}'$ involves $f_{u,v}$ with
    exponents strictly smaller than $p$.

  Finally, note that the map $A[f_{u,v}]^\wedge_{p,d}\to \Prismhat^{(1)}_{R/A}$ is a map of $\delta$-rings and takes $R_{u,v}'$ to the element of the same name.
\end{construction}

\begin{remark}
    \label{rem:vanishingofRu_prime}
    If $\delta(r)=0$, observe that the $R_u'$ are all inductively zero.
\end{remark}

\begin{proposition}
  \label{prop:nygaard_graded_statement}
    If $(A,d)$ is a bounded, oriented prism and $r_1,\ldots,r_c$ is a sequence of elements whose
    image in $\overline{A}$ is $p$-completely Koszul-regular,
    then, as a graded commutative $R$-algebra, $\bigoplus_{\star\geq
    0}\gr^\star_\N\Prismhat_{R/A}^{(1)}$
    is a $p$-completely Koszul-regular quotient of the $p$-complete graded polynomial ring
    $R[\widetilde{d},f_{u,v}\mid u\geq 0,1\leq
    v\leq c]_p^\wedge$, where $\widetilde{d}$ is in weight $1$ and corresponds to
    $d\in\gr^1_\N\Prismhat_{R/A}^{(1)}$ and $f_{u,v}$ is in weight $p^u$, by the relations
  \[
    f_{u,v}^p = (-p + \lambda_u d^{p^{u+1}}) f_{u+1,v} + \widetilde{d}^{p^{u+1}} R_{u,v}',
  \]
    As a graded $R$-module, $\bigoplus_{\star\geq
    0}\gr^\star_\N\Prismhat_{R/A}^{(1)}$ is $p$-completely free on monomials $\widetilde{d}^k\prod
    f_{u,v}^{e_{u,v}} \in \gr_\N^{k+\sum p^u e_{u,v}}\Prismhat_{R/A}$ with $e_{u,v}<p$.
\end{proposition}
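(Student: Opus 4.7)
The plan is to produce the presentation map, identify an additive basis on the target via the Nygaard/conjugate filtration comparison, and then deduce the Koszul-regularity statement by a leading-term argument using an auxiliary ascending filtration.

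First, I would construct the map. The elements $\widetilde{d}\in\N^{\geq 1}\Prismhat^{(1)}_{R/A}$ and $f_{u,v}=\deltatilde^u(\widetilde{r}_v)\in\N^{\geq p^u}\Prismhat^{(1)}_{R/A}$ of Construction~\ref{const:fuv} assemble into a graded $R$-algebra map
\[
\Phi\colon R[\widetilde{d}, f_{u,v}\mid u\geq 0,\,1\leq v\leq c]^\wedge_p \longrightarrow \bigoplus_{\star\geq 0}\gr^\star_\N\Prismhat^{(1)}_{R/A}.
\]
Lemma~\ref{lem:explicitrelations} shows that the stated relations hold in $\N^{\geq p^{u+1}}\Prismhat^{(1)}_{R/A}$, and, since the scalar $d\in A$ acts as zero on the $R$-modules $\gr^\star_\N\Prismhat^{(1)}_{R/A}$, they pass to equalities in the associated graded. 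Thus $\Phi$ descends to a map $\overline{\Phi}$ out of the claimed quotient.

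Next, I would determine a basis on the target. By Remark~\ref{rem:nygaard}(iii), there are equivalences $\gr^i_\N\Prismhat^{(1)}_{R/A}\simeq \F^\conj_{\leq i}\Prismbar_{R/A}\{i\}$, and the orientation $d$ gives a coherent trivialization of the Breuil--Kisin twists. Under these identifications, multiplication by $\widetilde{d}$ corresponds to the canonical inclusion $\F^\conj_{\leq i-1}\hookrightarrow\F^\conj_{\leq i}$, while the image of $f_{u,v}$ corresponds to $\delta^u(a_v)\in\F^\conj_{\leq p^u}\Prismbar_{R/A}$ by Corollary~\ref{cor:f-uv-mapto-a-uv}. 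Proposition~\ref{prop:prismbar-basis} then yields a $p$-complete $R$-basis of $\bigoplus_i\gr^i_\N\Prismhat^{(1)}_{R/A}$ given by the monomials $\widetilde{d}^{k}\prod_{u,v}f_{u,v}^{e_{u,v}}$ with $k\geq 0$ and $0\leq e_{u,v}<p$, placed in weight $k+\sum_{u,v}e_{u,v}p^u$.

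Third, I would show that the source has this same basis and that the relations are $p$-completely Koszul-regular. Reducing modulo $p$ and using that $d=0$ in $R$, each relation becomes $f_{u,v}^p=\widetilde{d}^{p^{u+1}}R'_{u,v}$. Equip the polynomial ring with the ascending filtration $\G_{\leq\star}$ extending Construction~\ref{cons:Ru_recursion}, with $f_{u,v}$ in $\G$-weight $\leq p^u$ and both $\widetilde{d}$ and scalars in $\G$-weight $\leq 0$; then the leading term of this relation is $f_{u,v}^p$ of $\G$-weight $p^{u+1}$, while $\widetilde{d}^{p^{u+1}}R'_{u,v}$ has $\G$-weight strictly less than $p^{u+1}$ by the defining recursion for $R'_{u,v}$. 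The leading terms $\{f_{u,v}^p\}_{u,v}$ are $p$-th powers of distinct variables and so form a Koszul-regular sequence in the polynomial ring $R/p[\widetilde{d}, f_{u,v}]$; a standard degeneration argument for the Koszul complex with respect to $\G$ lifts this to $p$-complete Koszul-regularity of the original relations. The associated graded of the resulting quotient has exactly the basis of monomials identified in the previous paragraph, so $\overline{\Phi}$ is an isomorphism.

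The main obstacle is the bookkeeping needed to align the various filtrations: one must verify carefully that the ascending $\G$-filtration strictly dominates the correction polynomials $R'_{u,v}$ (which requires unwinding the mutual recursion in Construction~\ref{cons:Ru_recursion}), and that the relations in Lemma~\ref{lem:explicitrelations}, though not manifestly homogeneous in the Nygaard grading when read literally in the source polynomial ring, produce the indicated homogeneous relations after passage to the associated graded.
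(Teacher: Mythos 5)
Your proof is correct and has the same overall architecture as the paper's: construct the map out of the graded polynomial ring, note that the relations hold by Lemma~\ref{lem:explicitrelations}, establish Koszul-regularity, and identify the monomial basis on the target via the conjugate filtration, Corollary~\ref{cor:f-uv-mapto-a-uv}, and Proposition~\ref{prop:prismbar-basis}. The one substantive difference is the route taken for the Koszul-regularity step. The paper reduces the relations modulo the \emph{variable} $\widetilde{d}$; since $\widetilde{d}$ sits in strictly positive Nygaard weight and everything is nonnegatively graded, this suffices, and modulo $\widetilde{d}$ both the $\lambda_u d^{p^{u+1}} f_{u+1,v}$ term and the $\widetilde{d}^{p^{u+1}} R'_{u,v}$ term die outright, leaving the visibly regular sequence $f_{u,v}^p + p f_{u+1,v}$ in $R[f_{u,v}]^\wedge_p$ presenting the free $p$-complete divided power algebra. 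You instead reduce modulo $p$ and invoke the ascending $\G$-filtration of Construction~\ref{cons:Ru_recursion} to isolate $f_{u,v}^p$ as the leading terms; this is exactly the argument the paper uses in the proof of Proposition~\ref{prop:envelope_regular}, so it is a valid alternative. The paper's version is marginally cleaner here: because $\widetilde{d}$ is already singled out as a positive-weight variable by the statement, one can kill the $R'_{u,v}$ correction terms entirely rather than merely dominate them in an auxiliary filtration. When writing this up, the ``standard degeneration argument'' should be spelled out (the $\G$-filtration on the Koszul complex is exhaustive but unbounded above, so one must restrict to each fixed Nygaard weight to get a bounded filtration); and since $R$ may have $p$-torsion, the passage from Koszul-regularity of the leading terms mod $p$ to $p$-complete Koszul-regularity over $R[\widetilde{d}, f_{u,v}]^\wedge_p$ is most safely carried out in the universal $p$-torsion-free case and then base-changed, as is done in the proofs of Propositions~\ref{prop:envelope_regular} and~\ref{prop:prismbar-basis}, rather than by an underived mod-$p$ reduction.
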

\begin{proof}
  Recall that we have commutative diagrams
  \[
    \begin{tikzcd}
      \gr^{j-1}_\N\Prismhat^{(1)}_{R/A} \dar{\simeq}\rar{\widetilde{d}}& \gr^{j}_\N\Prismhat^{(1)}_{R/A} \dar{\simeq}\\
      \F^{\conj}_{\leq j-1} \Prismbar_{R/A} \rar{\can} & 
      \F^{\conj}_{\leq j} \Prismbar_{R/A},
    \end{tikzcd}
  \]
  so $\widetilde{d}$ acts injectively on $\gr^\star_{\N}\Prismhat^{(1)}_{R/A}$.
    The relations hold by Lemma~\ref{lem:explicitrelations}.
    To check that they form a $p$-completely regular sequence, it suffices to know that
    they do so modulo $\widetilde{d}$. This reduces to the claim that the elements
    $f_{u,v}^p + pf_{u+1,v}$ for $u\geq 0$ and $1\leq v\leq c$ form a $p$-completely
    regular sequence in $R[f_{u,v}\mid u\geq 0, 1\leq v\leq c]^\wedge_p$, which is
    clear. This also shows that the monomials $\widetilde{d}^k \prod f_{u,v}^{e_{u,v}}$
    form a basis of the quotient of $R[\widetilde{d},f_{u,v}\mid u\geq 0,1\leq v\leq
    c]^\wedge_p$ by the indicated relations. The divided relative Frobenius
    $\varphi_{i,R/A}\colon \gr^i_\N
    \Prismhat^{(1)}_{R/A} \xto{\simeq} \F^\conj_{\leq i}\Prismbar_{R/A}$ takes the
    $\widetilde{d}^{i-\sum p^u e_{u,v}}\prod f_{u,v}^{e_{u,v}}$ to $\prod
    \delta^u(a_v)^{e_{u,v}}$, so $\gr^\star_\N\Prismhat^{(1)}_{R/A}$ is also free on the
    same set of monomials.
\end{proof}

Recall that in our convention is that in $\N^{\geq\star}\Prismhat_{R/A}^{(1)}$ the
element $\tau$ denotes $1$, but viewed as being in $\N^{\geq -1}\Prismhat_{R/A}^{(1)}$.
In the corollary below, $\tau f_{0,v}$ is then in $\N^{\geq 0}\Prismhat_{R/A}^{(1)}$.

\begin{corollary}\label{cor:nygaard_filtered_statement}
    If $(A,d)$ is a bounded, oriented prism and $r_1,\ldots,r_c$ is a sequence of elements whose
    image in $\overline{A}$ is $p$-completely Koszul-regular with quotient $R$,
    then, as a complete filtered commutative $d^\star
    A\iso\N^{\geq\star}\Prismhat_{\overline{A}/A}^{(1)}$-algebra, $\N^{\geq\star}\Prismhat_{R/A}^{(1)}$
    is a $p$-completely Koszul-regular quotient of the $p$-complete complete filtered
    polynomial ring
    $d^\star A[f_{u,v}\mid  u\geq 0,1\leq
    v\leq c]_{p,\N}^\wedge$, where $f_{u,v}$ is in weight $p^u$, by relations
    $\tau f_{0,v}-r_v$ of weight $0$ and the relations
  \[
    f_{u,v}^p = (-p + \lambda_u d^{p^{u+1}}) f_{u+1,v} + \widetilde{d}^{p^{u+1}} R_{u,v}',
  \]
    $\delta_{d,p^{u-1}}\cdots\delta_{d,1}(df_{0,v}-\widetilde{d}r_v)$ for $u\geq 1$ of weight $p^u$.
\end{corollary}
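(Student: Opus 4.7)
The plan is to construct a map of complete filtered $d^\star A$-algebras from the proposed presentation
\[
Q := d^\star A[f_{u,v}\mid u\geq 0, 1\leq v\leq c]^\wedge_{p,\N}/(\text{stated relations})
\]
to $\N^{\geq\star}\Prismhat^{(1)}_{R/A}$ and show it is an isomorphism by reduction to the graded statement of Proposition~\ref{prop:nygaard_graded_statement}. The generator $f_{u,v}$ of weight $p^u$ is sent to the element of the same name constructed in Construction~\ref{const:fuv}. This is $d^\star A$-linear since the Nygaard filtration on $\Prismhat_{\overline{A}/A}^{(1)}$ restricted from $A$ is exactly the $d$-adic filtration.

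Next I verify that the stated relations hold in the target. The weight-$0$ relation $\tau f_{0,v} - r_v$ vanishes because $f_{0,v} = \widetilde{r}_v \in \N^{\geq 1}\Prismhat_{R/A}^{(1)}$ by definition lifts the image of $r_v$ in $\Prismhat_{R/A}^{(1)}$. The $p$-th power relations
\[
f_{u,v}^p = (-p + \lambda_u d^{p^{u+1}}) f_{u+1,v} + \widetilde{d}^{p^{u+1}} R_{u,v}'
\]
hold by Lemma~\ref{lem:explicitrelations}. Finally, the iterated relations $\delta_{d,p^{u-1}}\cdots\delta_{d,1}(df_{0,v}-\widetilde{d}r_v)=0$ hold because the inner expression already vanishes in $\N^{\geq 2}\Prismhat_{R/A}^{(1)}$ (both sides equal the common product $d\widetilde{r}_v$), and each divided $\delta$-operation $\delta_{d,i}$ is, by construction, set up to preserve the Nygaard filtration and to carry the vanishing into higher weights. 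This is precisely the filtered analogue of the construction that, in the graded ring $\bigoplus_\star \N^{\geq\star}\Prismhat^{(1)}_{R/A}$, produces $f_{u+1,v} = \widetilde{\delta}(f_{u,v})$.

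To show the map is an isomorphism, I pass to the associated graded. The weight-$0$ relation disappears entirely at the associated graded level. The $\delta_{d,i}$-relations become consequences of (or are dominated by) the $p$-th-power relations, since their leading terms with respect to the Nygaard filtration are of lower weight than the indicated total weight $p^u$. What remains in $\gr^\star Q$ is the graded presentation of Proposition~\ref{prop:nygaard_graded_statement}, so the induced map $\gr^\star Q \to \bigoplus_{\star\geq 0}\gr^\star_\N\Prismhat^{(1)}_{R/A}$ is an isomorphism. Both sides of the filtered comparison are complete filtered objects --- the left by $(p,\N)$-completion of the polynomial ring and the right by Nygaard-completion of $\Prism^{(1)}_{R/A}$ --- and the filtrations are eventually constant in each fixed weight (using the convention $\N^{\geq i} = \Prismhat^{(1)}_{R/A}$ for $i \leq 0$). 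Hence the isomorphism on associated graded pieces propagates to an isomorphism of filtered objects. The $p$-complete Koszul-regularity of the filtered relations follows from the same comparison, since Koszul-regularity is detected on the associated graded where it was proved in Proposition~\ref{prop:nygaard_graded_statement}.

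The main obstacle is the last verification in the first step, namely that the $\delta_{d,i}$-relations genuinely hold in $\N^{\geq\star}\Prismhat_{R/A}^{(1)}$ and, more importantly, that they suffice to cut out the correct filtered object. The reason this is subtle is Warning~\ref{warning:not_filtered}: the graded $\delta$-structure $\widetilde{\delta}$ does not descend to a filtered structure on $\N^{\geq\star}\Prismhat_{R/A}^{(1)}$. Hence the recursion $f_{u+1,v} = \widetilde{\delta}(f_{u,v})$, which would otherwise be the cleanest presentation, has to be replaced by a more delicate filtered analogue built from the $\delta_{d,i}$. The bookkeeping to check that these divided-$\delta$ relations are exactly the ones needed --- no more, no fewer --- is the content of the argument on the associated graded, which is where the match with Proposition~\ref{prop:nygaard_graded_statement} takes place.
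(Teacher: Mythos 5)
Your overall strategy matches the paper's: build a filtered map from the proposed presentation to $\N^{\geq\star}\Prismhat^{(1)}_{R/A}$ using the $f_{u,v}$ from Construction~\ref{const:fuv}, verify that the relations hold in the target via Lemma~\ref{lem:explicitrelations}, and then reduce the isomorphism claim to the graded statement of Proposition~\ref{prop:nygaard_graded_statement} by passing to the associated graded (i.e., setting $\tau=0$).

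There is, however, one genuine slip in your associated-graded step. You write that ``the weight-$0$ relation disappears entirely at the associated graded level.'' This is not correct, and taken at face value it breaks the reduction you want. Passing to $\gr^\star$ corresponds to setting $\tau=0$, so the relation $\tau f_{0,v}-r_v$ does not vanish: it becomes $-r_v$, i.e.\ the relation $r_v=0$ in $\gr^0$. This residual relation is essential: $\gr^0(d^\star A)\cong\overline{A}$, so without imposing $r_v=0$ the graded ring would be a quotient of $\overline{A}[\widetilde{d},f_{u,v}]^\wedge_p$ and not $R[\widetilde{d},f_{u,v}]^\wedge_p$, and you would not be looking at the presentation in Proposition~\ref{prop:nygaard_graded_statement}. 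The correct statement---which you implicitly need for your next sentence to be true---is that reducing mod $\tau$ turns $\tau f_{0,v}-r_v$ into $-r_v$, which cuts $\overline{A}$ down to $R$; this is exactly how the paper phrases it. Your treatment of the $\delta_{d,i}$-relations is somewhat speculative, but the paper treats them as a repackaging of the displayed $p$-th power relations (generating the same ideal), so no separate verification is required; the appeal to Warning~\ref{warning:not_filtered} to flag the need for care is a reasonable aside but not load-bearing.
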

\begin{proof}
    There is a filtered map $d^\star A[f_{uv}\mid  u\geq 0,1\leq v\leq
    c]_{p,\N}^\wedge\rightarrow\N^{\geq\star}\Prismhat_{R/A}^{(1)}$ taking the elements $f_{uv}$ to
    the corresponding elements of $\N^{\geq\star}\Prismhat_{R/A}^{(1)}$ defined in
    Construction~\ref{const:fuv}.
    By construction, $\tau f_{0v}$ maps to $r_v$ in $\N^{\geq \star}\Prismhat_{R/A}^{(1)}$, so it
    follows that the map factors through the quotient by the indicated relations. We may
    check whether that map is an equivalence after passing to the associated graded ring, which corresponds to reducing mod $\tau$. This turns the $\tau f_{0v}-r_v$ into $-r_v$, which reduces the claim to 
  Proposition \ref{prop:nygaard_graded_statement}.
\end{proof}

\begin{remark}
    Corollary~\ref{cor:nygaard_filtered_statement} is almost expressing $\bigoplus
    \N^{\geq \star}\Prismhat^{(1)}_{R/A}$ as the prismatic envelope $A[\widetilde{d}]\{\frac{\widetilde{d}r_v}{d}\mid 1\leq v\leq c\}$.
    However, this is just a graded and not a filtered object. One might want to take a
    prismatic envelope in filtered objects, but the problem is that $\deltatilde$ is not
    compatible with $\tau$. (See Warning~\ref{warning:not_filtered}.) One valid description along those lines is
  \[
    \bigoplus_{i\in \bZ} \N^{\geq i} \Prismhat^{(1)}_{R/A} \cong
    \frac{(A[\widetilde{d}]\{\frac{\widetilde{d}r_v}{d}\})[\tau]}{(\tau\widetilde{d} -d,
    \tau\frac{\widetilde{d}r_v}{d}-r_v)},
  \]
  which however is not a derived quotient, as the relations $\tau \widetilde{d}-d$ and
    $\tau\frac{\widetilde{d}r_v}{d}-r_v$ are redundant with $d\cdot
    \frac{\widetilde{d}r_v}{d} - \widetilde{d}r_v$.
\end{remark}

\subsection{The $\F$-filtration on prismatic envelopes}\label{sec:f-filtration}

We now specialize to the following situation. We take as our prism $A$ a multivariable Breuil--Kisin
prism $A=W(k)\llbracket z_0,\ldots,z_s\rrbracket$ with Eisenstein polynomial $E(z_0)$. We consider
this as a filtered $\delta$-ring with $z_0,\ldots,z_s$ in filtration weight $1$. We consider
$R=\Oscr_K$ or $\Oscr_K/\varpi^n$ as a filtered commutative $A$-algebra by letting $z_0,\ldots,z_s$
map to $\varpi$. As explained in~\cite{akn-delta}, in this case the prismatic cohomology
$\Prism_{R/A}$ obtains a secondary filtrations compatible with all structure, which we will denote
everywhere by $\F^\star$. In particular, we have the $\F$-filtrations
$\F^\star\Prism_{R/A}$ and
$\F^\star\N^{[a,b]}\Prismhat_{R/A}^{(1)}$.

\begin{proposition}\label{prop:f_filtered_statement_for_prism}
  \leavevmode
  Let $\Oscr_K$ be a discrete valuation ring with residue field $k$ perfect of characteristic $p$
    and with uniformizer $\varpi$. Let $A=W(k)\llbracket z_0,\ldots,z_s\rrbracket$ be the filtered
    $\delta$-ring defined by letting each of $z_0,\ldots,z_s$ have weight $1$ and letting
    $\delta(z_0)=\cdots=\delta(z_s)=0$. We view $\Oscr_K$ and $\Oscr_K/\varpi^n$ as $A$-algebras by
    letting $z_0,\ldots,z_s$ map to $\varpi$.
  \begin{enumerate}
      \item[{\em (1)}] For $R=\Oscr_K$, we have elements
          $\delta^u(b_1),\ldots,\delta^u(b_s)\in\F^{\geq p^u}\Prism_{R/A}$ for $u\geq 0$ corresponding to the
          relations $z_1-z_0,\ldots,z_s-z_0$. A $W(k)$-basis for $\F^{[a,b]}\Prism_{R/A}$
          is given by the monomials $$z_0^k\prod_{u\geq 0,v\in[1,s]} \delta^u(b_v)^{e_{uv}}$$ with $0\leq e_{uv}<p$ for all $u,v$
          and $k+\sum p^ue_{uv}\in [a,b]$.
      \item[{\em (2)}] For $R = \Oscr_K/\varpi^n$, in addition to
          $\delta^u(b_1),\cdots,\delta^u(b_s)$ above,
          there are elements $\delta^u(a)\in\F^{\geq np^u}\Prism_{R/A}$ for $u\geq
          0$ corresponding to the relation $z^n=0$.
          A $W(k)$-basis for $\F^{[a,b]}\Prism_{R/A}$ is given by the elements
          $$z_0^k\prod_{u\geq 0}\delta^u(a)^{e_u}\prod_{u\geq 0,v\in[1,s]}\delta^u(b_v)^{e_{uv}}$$
          with (i) $0\leq e_u<p$ for all $u\geq 0$, (ii) $0\leq e_{uv}<p$ for all $u,v$,
          (iii) $k<n$, and (iv) $k+n\sum_{u\geq 0}p^ue_{u}+\sum_{v\in[1,s]} p^u e_{uv}\in [a,b]$.
  \end{enumerate}    
\end{proposition}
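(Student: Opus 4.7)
The plan is to realize $\Prism_{R/A}$ as an explicit prismatic envelope via~\cite[Lem.~7.7]{prisms}, apply the Koszul-regular presentation from Proposition~\ref{prop:envelope_regular}, and track $\F$-weights using the filtered prismatic envelope construction of~\cite[Sec.~10]{akn-delta}. First, we identify $\Oscr_K$ as $A/(d,z_1-z_0,\ldots,z_s-z_0)$ with $d=E(z_0)$, and $\Oscr_K/\varpi^n$ as the further quotient by $z_0^n$. Modding out the differences $z_v-z_0$ in $\overline{A}$ produces $\Oscr_K$, in which $z_0^n=\varpi^n$ is a nonzerodivisor, so these relations form a $p$-completely Koszul-regular sequence in $\overline{A}$. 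Hence~\cite[Lem.~7.7]{prisms} yields
\[
  \Prism_{\Oscr_K/A}\simeq A\left\{\tfrac{z_1-z_0}{d},\ldots,\tfrac{z_s-z_0}{d}\right\}^\wedge_{(p,d)},
\]
and analogously for $\Oscr_K/\varpi^n$ with an extra generator $a=z_0^n/d$.

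Next, Proposition~\ref{prop:envelope_regular} presents each envelope as a $(p,d)$-completely Koszul-regular quotient of a free $\delta$-algebra over $A$ by the relations $db_v-(z_v-z_0)$ (and, in part~(2), $da-z_0^n$) together with the Frobenius rewrites from Lemma~\ref{lem:relations}. We use $db_v=z_v-z_0$ to substitute $z_v=z_0+db_v$ and thereby eliminate $z_1,\ldots,z_s$; the Frobenius rewrites then permit bounding each exponent $e_{uv}$ and $e_u$ by $p-1$, exactly as in Proposition~\ref{prop:prismbar-basis}. This produces the claimed list of monomials as a $(p,d)$-completely free $W(k)\llbracket z_0\rrbracket$-generating set after the usual Koszul-regular bookkeeping.

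The $\F$-weights of the generators are forced by the filtered prismatic envelope construction: $d=E(z_0)$ has constant term $E(0)\in pW(k)^\times$, so $d$ sits in $\F$-weight exactly $0$; consequently $b_v=(z_v-z_0)/d$ inherits the weight $1$ of its numerator, $a=z_0^n/d$ has weight $n$, and $\delta$ multiplies $\F$-weight by $p$. Together with $z_0$ in weight $1$, the monomial $z_0^k\prod\delta^u(b_v)^{e_{uv}}\prod\delta^u(a)^{e_u}$ therefore lies in $\F$-weight $k+\sum p^ue_{uv}+n\sum p^ue_u$, as claimed in both parts.

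Finally, to obtain the bound $k<n$ in part~(2) and to verify that the listed monomials form an actual $W(k)$-basis of every $\F^{[a,b]}\Prism_{R/A}$, we pass to the $\F$-associated graded. By Proposition~\ref{prop:degeneration}, $\gr_\F\Prism_{R/A}$ agrees with the prismatic cohomology of $\gr_\F R=k[z_0]/z_0^n$ relative to $\gr_\F A=W(k)[z_0,\ldots,z_s]$ with distinguished element $\gr_\F d=E(0)$, a unit multiple of $p$. In this graded setting the relation $da-z_0^n$ is genuinely homogeneous of weight $n$ and takes the clean form $pu\cdot a=z_0^n$ for some $u\in W(k)^\times$; this exhibits $z_0^n$ as redundant in the presence of $a$ and makes the constraint $k<n$ automatic. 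The listed monomials then descend to a $W(k)$-linearly independent basis of each graded piece, and completeness of the $\F$-filtration on $\Prism_{R/A}$ (Proposition~\ref{prop:completeness}) lifts the graded basis to the claimed filtered basis. The hard part is precisely the inhomogeneity of $d=E(z_0)$ with respect to $\F$, which makes the filtered-level Frobenius rewrites non-homogeneous and prevents a direct filtered basis argument; passing to the associated graded, where $d$ becomes a unit multiple of $p$, is what resolves this obstruction.
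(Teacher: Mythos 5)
Your overall route mirrors the paper's: realize $\Prism_{R/A}$ as a prismatic envelope, use Proposition~\ref{prop:envelope_regular} and Proposition~\ref{prop:prismbar-basis} to extract a monomial generating set over $W(k)\llbracket z_0\rrbracket$, expand to a $W(k)$-basis, and verify the $\F$-weights by passing to the $\F$-associated graded. The conceptual substance matches.

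That said, the two internal references you use in the final paragraph do not actually justify the claims they are attached to. First, Proposition~\ref{prop:completeness} is a statement about the $\F$-filtration on the syntomic complex $\bZ_p(i)(R)$, not on $\Prism_{R/A}$; the paper explicitly remarks (after Lemma~\ref{lem:truncated}) that the $\F$-filtration on $\Prismhat_R$ need not be complete, so invoking this result for $\Prism_{R/A}$ is not legitimate. Fortunately, for the finite window $\F^{[a,b]}$ completeness is not actually needed to lift a graded basis to a filtered basis, so this is a fixable misattribution rather than a broken step. Second, Proposition~\ref{prop:degeneration} compares $\gr_\F \Prismpackage_R$ and $\gr_\F \Prismpackage_S$ when $\gr_\F R \cong \gr_\F S$; to use it the way you do, you need the additional observation that for the graded ring $S = k[z]/z^n$ the filtered package satisfies $\gr_\F \Prismpackage_S \cong \Prismpackage_S$, and you need to make sure the statement applies to the relative terms of the package, not just the absolute ones. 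The paper sidesteps this by citing [Prop.~10.41]{akn-delta}, which directly identifies the $\F$-associated graded of the filtered prismatic envelope with the expected graded envelope. You should either supply that missing identification or cite the appropriate result from~\cite{akn-delta}; as written, the weight-verification step (the crux of the proposition, since everything else is the evident presentation) is not fully justified.
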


\begin{proof}
    We give the proof of (1). In this case, Proposition~\ref{prop:envelope_regular} presents
    $\Prism_{R/W(k)\llbracket z_0,\ldots,z_s\rrbracket}$ as the prismatic envelope
    $$W(k)\llbracket
    z_0,\ldots,z_s\rrbracket\left\{\frac{z_1-z_0}{E(z_0)},\ldots,\frac{z_s-z_0}{E(z_0)}\right\}_{p,E(z_0)}^\wedge.$$
    We let $b_v=\tfrac{z_v-z_0}{E(z_0)}$, which has weight $1$ and hence the
    $\delta$-iterates $\delta^u(b_v)$ have weight $p^u$. By
    Proposition~\ref{prop:prismbar-basis}, the monomials with $k=0$ give
    $p$-complete generators of
    $\Prismbar_{R/W(k)\llbracket z_0,\ldots,z_s\rrbracket}$ as a module over $R\iso W(k)\llbracket z_0\rrbracket/E(z_0)$. It follows that the
    monomials with $k=0$ give $(p,E(z_0))$-complete generators of the prismatic
    cohomology as $W(k)\llbracket z_0\rrbracket$-module. Thus, the monomials where $k$
    is now allowed to vary give an $\F$-complete basis for the prismatic envelope as a
    $p$-complete $W(k)$-module. It remains to argue that each monomial is in the given
    weight and not in a possibly higher weight. Equivalently, we must check that the
    given elements form a basis for the $\F$-associated graded ring, which follows
    from~\cite[Prop.~10.41]{akn-delta}. The proof of (2) proceeds analogously.
\end{proof}

\begin{proposition}\label{prop:f_filtered_statement}
  \leavevmode
  Let $\Oscr_K$ be a discrete valuation ring with residue field $k$ perfect of characteristic $p$
    and with uniformizer $\varpi$. Let $A=W(k)\llbracket z_0,\ldots,z_s\rrbracket$ be the filtered
    $\delta$-ring defined by letting each of $z_0,\ldots,z_s$ have weight $1$ and letting
    $\delta(z_0)=\cdots=\delta(z_s)=0$. We view $\Oscr_K$ and $\Oscr_K/\varpi^n$ as $A$-algebras by
    letting $z_0,\ldots,z_s$ map to $\varpi$.
  \begin{enumerate}
      \item[{\em (1)}] For $R=\Oscr_K$, we have elements $g_{u,1},\ldots,g_{u,s}\in\F^{\geq p^u}\N^{\geq
          p^u}\Prismhat_{R/A}^{(1)}$ for $u\geq 0$ corresponding to the
          relations $z_1-z_0,\ldots,z_s-z_0$. A $W(k)$-basis for $\F^{[a,b]}\N^{\geq i}
          \Prismhat_{R/A}^{(1)}$ is given by the monomials $$\widetilde{d}^j
          z_0^k\prod_{u\geq 0,v\in[1,s]} g_{u,v}^{e_{u,v}}$$ with $0\leq e_{u,v}<p$ for all $u,v$,
          $j+\sum p^{u}e_{u,v}= i$ or $j=0$ and $\sum p^ue_{u,v}\geq i$, and $k+\sum p^ue_{u,v}\in [a,b]$.
      \item[{\em (2)}] For $R = \Oscr_K/\varpi^n$, in addition to $g_{u,1},\ldots,g_{u,s}$ above,
          there are elements $f_{u}\in\F^{\geq np^u}\N^{\geq p^u}\Prismhat_{R/A}^{(1)}$ for $u\geq
          0$ corresponding to the relation $z^n=0$.
          A $W(k)$-basis for $\F^{[a,b]}\N^{\geq i}
          \Prismhat_{R/A}^{(1)}$ is given by the elements
          $$\widetilde{d}^j
          z_0^k\prod_{u\geq 0}f_u^{e_u}\prod_{u\geq 0,v\in[1,s]} g_{u,v}^{e_{u,v}}$$
          such that (i) $0\leq e_u<p$ for all $u\geq 0$,
          (ii) $0\leq e_{u,v}<p$ for all $u\geq 0$ and $1\leq v\leq s$
          (iii) $k<n$, (iv) $j+\sum p^ue_u+\sum p^ue_{u,v}=i$ or $j=0$ and $\sum p^u e_u+\sum p^{u}e_{u,v}\geq i$,
          and (v) $k+n\sum_{u\geq 0}p^ue_{u}+\sum_{v\in[1,s]} p^u e_{u,v}\in [a,b]$.
  \end{enumerate}    
\end{proposition}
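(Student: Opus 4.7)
The plan is to deduce this from Corollary \ref{cor:nygaard_filtered_statement}, which presents $\N^{\geq\star}\Prismhat^{(1)}_{R/A}$ as a $(p,\N)$-completed Koszul-regular quotient of a filtered polynomial $\delta$-ring, by combining it with an $\F$-filtration analysis parallel to the proof of Proposition \ref{prop:f_filtered_statement_for_prism}. Applying Corollary \ref{cor:nygaard_filtered_statement} to the bounded oriented prism $(A, E(z_0))$ with the $p$-completely Koszul-regular relations $r_v = z_v - z_0$ — and additionally $r_0 = z_0^n$ in case (2) — produces the desired generators $g_{u,v} = \widetilde{\delta}^u(\widetilde{r}_v)$ and, in case (2), $f_u = \widetilde{\delta}^u(\widetilde{r}_0)$, each naturally in Nygaard weight $p^u$.

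The argument then proceeds in two steps. First, I check that $g_{u,v}\in\F^{\geq p^u}$ and $f_u\in\F^{\geq np^u}$. This holds for $u=0$ directly from the $\F$-weights of $r_v$ and $r_0$; the inductive step follows from the recursion in Construction \ref{cons:Ru_recursion} together with $\widetilde{\delta}$ multiplying $\F$-weight by $p$, which one can either verify directly from the formula $\widetilde{\delta}_i(x)=\delta(x)-\delta(d^i)\varphi(x)/\varphi(d)^i$ and the $\F$-compatibility of $\delta$ on $A$, or deduce by applying the divided relative Frobenius $\varphi_{p^u, R/A}$ and invoking the $\F$-weights of $\delta^u(b_v)$ and $\delta^u(a)$ in $\Prism_{R/A}$ from Proposition \ref{prop:f_filtered_statement_for_prism}. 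Second, I verify the basis statement by passing to the $\F$-associated graded, where $\gr^\star_\F A = W(k)[z_0,\ldots,z_s]$ is a graded polynomial $\delta$-ring with $\delta(z_i)=0$, and the relations become homogeneous. The graded version of Corollary \ref{cor:nygaard_filtered_statement} combined with Proposition \ref{prop:nygaard_graded_statement} then gives the proposed monomials as a $W(k)$-basis of $\gr^\star_\F\N^{\geq i}\Prismhat^{(1)}_{R/A}$. To promote this to a basis of $\F^{[a,b]}\N^{\geq i}\Prismhat^{(1)}_{R/A}$, I invoke the $\F$-completeness statement in Theorem \ref{thm:n_v_f}(1), which applies since $\L_{R/A}$ is a perfect filtered $R$-module.

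The main obstacle is to correctly orchestrate the simultaneous bookkeeping of Nygaard-weight and $\F$-weight, and in particular to justify the disjunctive exponent condition ``$j+\sum p^u e_{u,v}=i$ or $j=0$ and $\sum p^u e_{u,v}\geq i$''. This reflects the fact that multiplication by $\widetilde{d}$ acts injectively but not surjectively on Nygaard-associated graded pieces (with cokernel given by the $(-i)$-shifted derived forms $\widehat{\L\Omega}^i_{R/\overline{A}}[-i]$ via the identification $\gr^i_\N\Prismhat^{(1)}_{R/A}\simeq\F^\conj_{\leq i}\Prismbar_{R/A}$). Monomials contributing to $\N^{\geq i}\Prismhat^{(1)}_{R/A}$ thus split into those reached via $\widetilde{d}$-multiplication from strictly lower Nygaard weight and those that already lie in $\N^{\geq i}$ without any $\widetilde{d}$-factor, and this decomposition must be verified to be compatible with the $\F$-weight condition $k+\sum p^u e_{u,v}\in[a,b]$.
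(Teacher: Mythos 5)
Your proposal is correct and follows essentially the same route as the paper: pass to the $\F$-associated graded, reduce to the crystalline (equivalently, graded) case, apply Proposition~\ref{prop:nygaard_graded_statement} for the Nygaard-graded basis, extract a "$k[\widetilde{d}]$-basis" via the disjunctive condition, and promote back. Two minor remarks on the bookkeeping: your Step~1 (verifying $\F$-weights of the generators $g_{u,v}$, $f_u$ before forming the basis) is strictly speaking redundant, since the paper goes directly to the $\F$-associated graded and reads off the weights there; and the appeal to Theorem~\ref{thm:n_v_f}(1) to lift from $\gr^\star_\F$ to $\F^{[a,b]}$ is more than needed — for a bounded interval $[a,b]$ the $\F$-filtration is finite, so a basis of the associated graded lifts automatically. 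Your final paragraph correctly identifies the genuinely delicate step (the $\widetilde{d}$-multiplication analysis that produces the disjunction "$j+\sum p^u e_{u,v}=i$ or $j=0$ and $\sum p^u e_{u,v}\geq i$"); the paper carries this out by first obtaining a $k[z]/z^n$-basis of $\bigoplus_{i'\geq i}\gr^{i'}_\N$, expanding over the $k$-basis $\{z^k\}_{k<n}$, and then selecting the monomials not in the image of $\widetilde{d}$-multiplication from lower Nygaard weight.
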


\begin{proof}
  We prove the second statement, the first statement is easier and completely analogous. It suffices to
    check that those elements form a basis for the $\F$-associated graded, i.e. by crystalline degeneration we may reduce to the case of $R = k[z]/z^n$. Now
    Proposition~\ref{prop:nygaard_graded_statement}
    tells us that the elements $\widetilde{d}^j \prod f_{u,v}^{e_{u,v}}$ with $e_{u,v}<p$ and $j+\sum p^u e_{u,v}\geq i$ form a $k[z]/z^n$-basis for the Nygaard-associated graded $\bigoplus_{i'\geq
    i}\gr^{i'}_\N\Prismhat_{R/A}^{(1)}$. So, the elements $\widetilde{d}^j z^k
    \prod f_{u,v}^{e_{u,v}}$ with $k<n$, $e_{u,v}<p$, and $j+\sum p^ue_{u,v}\geq i$ form a $k$-basis, and hence
    the elements $\widetilde{d}^j z^k \prod f_{u,v}^{e_{u,v}}$ with $k<n$, $e_{u,v}<p$ and $j+\sum p^u e_{u,v}=i$ or $j=0$ and $\sum p^ue_{u,v}\geq i$ form a $k[\widetilde{d}]$-basis of $\bigoplus_{i'\geq i}\gr^{i'}_\N\Prismhat_{R/A}^{(1)}$. Analyzing their filtration, those elements with $k+n\sum p^u e_u + n\sum p^u e_{u,v} \in [a,b]$ form a $k[\widetilde{d}]$-basis for $\F^{[a,b]}\bigoplus_{i'\geq i}\gr^{i'}_\N\Prismhat_{R/A}^{(1)}$.  So they form a $W(k)$-basis of $\F^{[a,b]}\N^{\geq i}\Prismhat^{(1)}_{R/A}$ before passing to the associated graded.
\end{proof}

\begin{remark}
We can summarize the interaction between the generators given in
Proposition~\ref{prop:f_filtered_statement} and the Nygaard and $\F$-filtrations on $\Prismhat^{(1)}_{R/W(k)\llbracket z_0\rrbracket}$ as follows.
    The bigraded is concentrated below a staircase of slope $n$, with $\widetilde{d}^j
    z^k \prod f_{u}^{e_{u}}$ of Nygaard-filtration $j+\sum p^u e_u$ and $\F$-filtration $k+n\sum p^u e_u$. The situation is summarized in Figure~\ref{fig:slope}. Note that in the $\F$-associated graded, the copies of $k$ generated by $\widetilde{d}^j z^k\prod f_{u}^{e_u}$ for varying $j$ form a copy of $W(k)$ generated by $z^k \prod f_u^{e_u}$.
\end{remark}

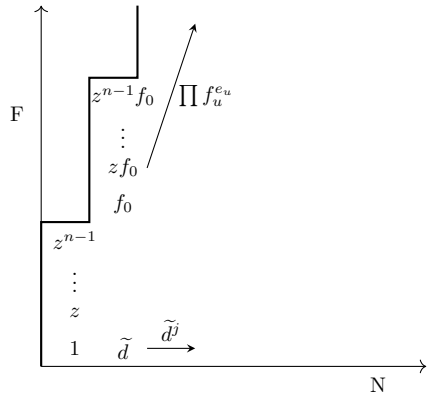
\begin{figure}[h]
    \centering
    \begin{tikzpicture}[scale=0.8, every node/.style={transform shape}, x=0.8cm,y=0.6cm]

    \begin{scope}[shift={(-0.7,-0.5)}]
    \draw[->](0,0) -- (8,0);
    \draw[->](0,0) -- (0,10);
    \node at (7,-0.5) {$\N$};
    \node at (-0.5,7) {$\F$};
    \end{scope}
    \node at (0,0) {$1$};
    \node at (0,1) {$z$};
    \node at (0,2) {$\vdots$};
    \node at (0,3) {$z^{n-1}$};
    \node at (1,4) {$f_0$};
    \node at (1,5) {$zf_0$};
    \node at (1,6) {$\vdots$};
    \node at (1,7) {$z^{n-1}f_0$};
      \draw[-stealth] (1.5,5) -- node[right] {$\prod f_u^{e_u}$} (2.5,9);
      \node at (1,0) {$\widetilde{d}$};
      \draw[-stealth] (1.5,0) -- node[above] {$\widetilde{d}^j$} (2.5,0);
      \draw[thick, shift={(-0.7,-0.5)}] (0,0) -- (0,4) -- (1,4) -- (1,8) -- (2,8) -- (2,10);
    \end{tikzpicture}
    \caption{Note that all monomials $\prod f_u^{e_u}$ lie on the line of slope $n$ through the origin, and everything is concentrated below the staircase depicted bold in the figure. It follows that the
        filtration induced by $\F^{\geq\star}$ on each $\N^{[a,b]}\Prismhat_{R/A}^{(1)}$
        is bounded above, and that the filtration induced by the Nygaard filtration on each
        $\F^{[a,b]}\Prismhat_{R/A}^{(1)}$ is complete (by $d$-completeness). See
        Section~\ref{sec:prismatic} for more on the interaction of the
        $\F$-filtration and the Nygaard filtration.}
    \label{fig:slope}
\end{figure}

\section{Relative-to-absolute descent}\label{sec:bkdescent}

In this section, we discuss in detail the structure of descent from the multivariable
Breuil--Kisin prismatic cohomology of $\Oscr_K/\varpi^n$ to its absolute prismatic cohomology.
Recall the augmented cosimplicial $\delta$-ring $A^\bullet$
\[
    W(k)\rightarrow W(k)\llbracket z_0\rrbracket  \stack{3} W(k)\llbracket z_0,z_1\rrbracket  \stack{5} \ldots,
\]
which is the completion of the Amitsur complex of
$W(k)\rightarrow W(k)\langle z_0\rangle$ at the augmented cosimplicial ideal
$$0\rightarrow(z_0)\stack{3}(z_0,z_1)\stack{5}\cdots.$$
We view $(A^\bullet,\Oscr_K)$ and $(A^\bullet,\Oscr_K/\varpi^n)$ as augmented cosimplicial
$\delta$-pairs by letting $z_0,\ldots,z_s$ map to $\varpi$.

Using the prismatic envelope machinery of Section~\ref{sec:envelopes}, we can compute the prismatic
package of $\Oscr/\varpi^n$ relative to each $A^s$ individually.
Since the multivariable Breuil--Kisin prisms are orientable, one can trivialize away the Breuil--Kisin twist by
Lemma~\ref{lem:bk-orientation-existence} in computing the relative syntomic complexes
$\bZ_p(i)((\Oscr/\varpi^n)/A^\bullet)$ of Construction~\ref{const:relative_syntomic}, as
we explain in Section~\ref{sec:orienting}. But, the transition maps in the diagram are not compatibly maps of prisms, so in
the limit the Breuil--Kisin twist is non-trivial and we must account for it.
We explain how Breuil--Kisin orientations transform in Section~\ref{sec:bko} and study the
Hopf algebroid associated to the descent diagram in the remainder of
Section~\ref{sec:bkdescent}.

\subsection{Breuil--Kisin orientations}\label{sec:bko}

For a prism $(B,I)$, the Breuil--Kisin twist $B\{1\}$ admits a canonical
description as a limit
\begin{equation}\label{eq:bk}
    B\{1\} = \lim_r I_r/I_r^2=\lim\left(\cdots\rightarrow
    I_3/I_3^2\xrightarrow{\tfrac{\can}{p}}I_2/I_2^2\xrightarrow{\tfrac{\can}{p}}I_1/I_1^2\right)
\end{equation}
where $I_r = I\cdot\varphi^*_B(I)\cdots(\varphi^{r-1}_B)^*(I)$.
If $B$ is transversal, the transition map $I_r/I_r^2\rightarrow I_{r-1}/I_{r-1}^2$ for $r\geq 2$
is characterized as the unique map
which, after multiplication with $p$, agrees with the canonical map $I_r/I_r^2
\to I_{r-1}/I_{r-1}^2$. See~\cite[Sec.~2.2]{bhatt-lurie-apc} for
details.

\begin{proposition}\label{prop:twist_mod}
    If $(B,I)$ is a prism, the natural map $B\{1\}/I_r\rightarrow I_r/I_r^2$ is an
    isomorphism.
\end{proposition}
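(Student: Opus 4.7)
The plan is to establish surjectivity of the natural projection $B\{1\} \to I_r/I_r^2$; then, since the target is annihilated by $I_r$, this factors through a surjection $B\{1\}/I_r \twoheadrightarrow I_r/I_r^2$, and because both $B\{1\}/I_r$ and $I_r/I_r^2$ are invertible $B/I_r$-modules (the former using that $B\{1\}$ is an invertible $B$-module, as recalled in \cite[Sec.~2.2]{bhatt-lurie-apc}), any such surjection is automatically an isomorphism: a surjection of invertible modules over a common ring splits with kernel a rank-$0$ projective, which must vanish.

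To show the projection $B\{1\} = \lim_s I_s/I_s^2 \to I_r/I_r^2$ is surjective, I appeal to the Mittag--Leffler criterion: if every transition map $\tfrac{\can}{p}\colon I_{s+1}/I_{s+1}^2 \to I_s/I_s^2$ is surjective, then the inverse limit surjects onto each level. After passing to an \'etale cover to orient the prism with a generator $d\in I$, we have $I_s=(d_s)$ for $d_s=d\varphi(d)\cdots\varphi^{s-1}(d)$, and the trivialization $I_s/I_s^2\cong B/I_s$ identifies $\tfrac{\can}{p}$ with multiplication by a unique element $u_s\in B/I_s$ satisfying $pu_s\equiv\varphi^s(d)\pmod{I_s}$ (well-defined by the transversality implicit in the construction). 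The surjectivity of $\tfrac{\can}{p}$ is then equivalent to $u_s$ being a unit in $B/I_s$, which by Nakayama reduces to showing $u_s$ is a unit modulo $(p,I_s)=(p,d_s)$.

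This last step is the main obstacle. The strategy is an inductive computation: using $\varphi(x)=x^p+p\delta(x)$, one expands $\varphi^s(d)$, divides by $p$, and reads off the residue of $u_s$ modulo $(p,d_s)$ in terms of $\delta(d)$ and its Frobenius twists. In the base case $s=1$, the identity $\varphi(d)=d^p+p\delta(d)$ together with $d^p\in I_1$ gives $u_1\equiv\delta(d)\pmod{(p,d)}$, which is a unit by the defining property of the distinguished element $d$. The inductive step proceeds along similar lines, using the addition and multiplication rules for $\delta$ together with Lemma~\ref{lem:delta_power} to control the $p$-divisibility of the cross terms in the expansion of $\varphi^s(d)$ and to isolate a power of $\delta(d)$ (times a unit) as the residue class of $u_s$. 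With this in hand, surjectivity of the transition maps follows, Mittag--Leffler applies, and the invertible-module reduction of the first paragraph finishes the proof.
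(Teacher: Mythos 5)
Your proof is correct, but it takes a genuinely different route from the paper: the paper's proof of Proposition~\ref{prop:twist_mod} is simply a citation to \cite[Prop.~2.2.12]{bhatt-lurie-apc}, with Remark~\ref{rem:bk_general} supplying the base-change reduction from the transversal case. You instead give a self-contained argument that reproves the relevant content. Two observations are worth making. First, your inductive computation showing $u_s$ is a unit is precisely the computation that appears later in the paper in the proof of Lemma~\ref{lem:bk-orientation-existence} (with the same $u_1 = \delta(d)$ base case and the same expansion $\varphi^{r+1}(d) = \varphi^r(d)^p + p\varphi^r(\delta(d))$); so your argument exposes a nice economy the paper does not exploit, namely that the same unit computation proves both Proposition~\ref{prop:twist_mod} and Breuil--Kisin orientability. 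Second, and more of a caution than a gap: your surjection-of-invertibles step presumes $B\{1\}$ is already known to be an invertible $B$-module, and your formula $pu_s\equiv\varphi^s(d)$ presumes the limit description with the ``$\tfrac{\can}{p}$'' transition maps is available; both of these facts are themselves established in \cite[Sec.~2.2]{bhatt-lurie-apc} by reduction to the transversal case (cf.\ Remark~\ref{rem:bk_general}), so one must take some care that the logical order in which you are invoking them is consistent with BL's development — you flag this with ``transversality implicit in the construction,'' which is the right instinct, but in a fully written proof it would be cleanest to state up front that after an \'etale cover and base change from a transversal orientable prism you may assume $(B,I)$ transversal and oriented. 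Once in that setting, in fact the invertible-module reduction can be dispensed with entirely: since the $u_s$ are units, the tower $I_s/I_s^2 \cong B/I_s$ has transition maps that are unit multiples of the canonical projections, so $B\{1\}\cong B$, and the natural map $B\{1\}/I_r \to I_r/I_r^2$ becomes $B/I_r \to B/I_r$, a unit multiple of the identity.
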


\begin{proof}
    This is~\cite[Prop.~2.2.12]{bhatt-lurie-apc}.
\end{proof}

\begin{remark}
    \label{rem:bk_general}
    In~\cite[Sec.~2.2]{bhatt-lurie-apc}, the authors construct $B\{1\}$ as a limit as
    in~\eqref{eq:bk} under the
    assumption that $B$ is transversal and they prove Proposition~\ref{prop:twist_mod}
    in that case as well. They go on to define $B\{1\}$ in general by base change from
    the transversal case by showing that the category of maps to a given prism from
    transversal ones is sifted. However, Proposition~\ref{prop:twist_mod} holds for
    general $B$ by base change again, since $I_r$ and the map involved are also obtained by base change
    from transversal prisms. Moreover, the limit description of $B\{1\}$ also holds in
    the non-transversal case, once the diagram has been obtained by base change from the
    transversal case. That is, for any prism $B$ there are natural maps
    $\tfrac{\can}{p}\colon I_r/I_{r}^2\rightarrow I_{r-1}/I_{r-1}^2$ for $r\geq 2$ and a
    natural equivalence identifying $B\{1\}$ as the limit of the tower of~\eqref{eq:bk}.
    To see this, it is enough to check it after derived modding out by $p$, where one can use the
    equivalences $I_{r+1}/p\we I^{[r+1]_p}/p$. Now, the result follows from
    $(p,I)$-completeness.
\end{remark}

In this section, we discuss choices of \emph{orientations} on $B\{1\}$, i.e.,
$B$-module isomorphisms $B\cong B\{1\}$. Such isomorphisms are in bijection to choices of
generators $s\in B\{1\}$ as a $B$-module.

\begin{definition}[Breuil--Kisin orientations]
    For a ring $R$ and an invertible $R$-module $M$, we write
    \[
    \Or_R(M)
    \]
    for the $R^{\times}$-torsor (possibly empty) of $R$-module isomorphisms $R\to M$.
    For a prism $(B,I)$, we refer to $\Or_{B}(I)$ as the set of
    orientations of $B$
    and $\Or_{B}(B\{1\})$ as the set of Breuil--Kisin orientations of $B$.
    The prism $(B,I)$ is orientable if $\Or_B(I)$ is nonempty and is Breuil--Kisin
    orientable if $\Or_B(B\{1\})$ is nonempty.
\end{definition}

\begin{remark}
    We will see in Lemma \ref{lem:bk-orientation-existence} that the existence of a
    Breuil--Kisin orientation is equivalent to the existence of an orientation in
    the usual sense (i.e. of the ideal $I$). However, when it comes to choices of
    orientations, the more natural direction is to go from a Breuil--Kisin
    orientation to an ordinary orientation.
\end{remark}

Recall the following construction from~\cite[Const.~2.2.14]{bhatt-lurie-apc}.

\begin{construction}[The Frobenius on the Breuil--Kisin twist]\label{const:frobenius_bk}
    The Breuil--Kisin twisted Frobenius for a prism $B$ arises from a natural isomorphism of $B$-modules
    \[
    B\{1\}\otimes_{B,\varphi} B \xto{\cong} I^{-1}\otimes_{B}B\{1\}
    \]
    whose reduction mod $\varphi^*_B(I_r)$ is given by the isomorphism
    \[
    \begin{tikzcd}
        B\{1\}/I_r \otimes_{B,\varphi} B \rar{\cong}\dar{\cong}&  I^{-1}\otimes_{B} B\{1\}/\varphi_B^*(I_r)\dar{\cong}\\
    I_r/I_r^2 \otimes_{B,\varphi} B \rar{\cong}\dar{\cong} & I^{-1}\otimes_{B} I_{r+1}/I_{r+1}\varphi_B^*(I_r)\dar{=}\\
        \varphi_B^*(I_r)/\varphi_B^*(I_r)^2 \rar{\cong} & I^{-1} \otimes_{B} I_{r+1}/I_{r+1}\varphi_B^*(I_r),
    \end{tikzcd}
    \]
    where the top right vertical arrow exists because $\varphi_B^*(I_r)$ contains $I_{r+1}$ so that
    $B\{1\}\otimes_{B}B/\varphi^*_B(I_r)\iso
    B\{1\}\otimes_{B}B/I_{r+1}\otimes_{B/I_{r+1}}B/\varphi^*_B(I_r)\iso
    I_{r+1}/I_{r+1}^2\otimes_{B/I_{r+1}}B/\varphi_B^*(I_r)$ by
    Proposition~\ref{prop:twist_mod} and where the bottom arrow is an isomorphism
    because $\varphi^*_B(I_r)\iso I^{-1}I_{r+1}$.
    Now, $\varphi_{B\{1\}}$ is the $\varphi_B$-semilinear map obtained by
    adjunction from the composition
    \[
    B\{1\}\to B\{1\}\otimes_{B,\varphi} B \xto{\cong} I^{-1}\otimes_{B}B\{1\};
    \]
    it satisfies
    \[
    \varphi_{B\{1\}}(as) = \varphi_B(a) \varphi_{B\{1\}}(s)
    \]
    for $a\in B$ and $s\in B\{1\}$.
\end{construction}

\begin{lemma}
\label{lem:bk-frobenius}
For a prism $B$, a Breuil--Kisin orientation $s\colon B\to B\{1\}$ on $B$ determines a unique orientation $d_s\in \Or(I)$ with
\[
\varphi_{B\{1\}}(as) = d_s^{-1} \varphi_B(a) s.
\]
The corresponding map
\[
\Or_B(B\{1\})\to \Or_B(I)
\]
is equivariant with respect to the homomorphism $B^\times\to B^\times$ given by
\[
u\mapsto \frac{u}{\varphi_B(u)}.
\]
\end{lemma}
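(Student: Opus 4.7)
The plan is to unpack the definition of $\varphi_{B\{1\}}$ given in Construction~\ref{const:frobenius_bk} and track what happens to a chosen generator $s$ of $B\{1\}$. Recall that the construction provides a $B$-linear isomorphism
\[
\Phi\colon B\{1\}\otimes_{B,\varphi} B \xto{\cong} I^{-1}\otimes_B B\{1\},
\]
and that $\varphi_{B\{1\}}$ is the $\varphi_B$-semilinear map $B\{1\}\to I^{-1}\otimes_B B\{1\}$ obtained by adjunction, i.e.\ $\varphi_{B\{1\}}(t)=\Phi(t\otimes 1)$. Given a Breuil--Kisin orientation $s\colon B\xto{\cong} B\{1\}$, the element $s\otimes 1$ is a generator of $B\{1\}\otimes_{B,\varphi} B$ as a $B$-module, so $\varphi_{B\{1\}}(s)=\Phi(s\otimes 1)$ is a generator of the invertible $B$-module $I^{-1}\otimes_B B\{1\}$.

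Since $s$ is also a generator of $B\{1\}$, the tensor product $I^{-1}\otimes_B B\{1\}$ is a free $B$-module on generators of the form $\xi\otimes s$ as $\xi$ ranges over generators of $I^{-1}$. Thus there is a unique generator $d_s^{-1}\in I^{-1}$ with $\varphi_{B\{1\}}(s)=d_s^{-1}\otimes s$, and $d_s\in I$ is then the unique corresponding generator; since $I$ is invertible (in particular a Cartier divisor), $d_s$ is automatically a nonzerodivisor, so $d_s\in\Or_B(I)$. Extending by $\varphi_B$-semilinearity yields, for each $a\in B$,
\[
\varphi_{B\{1\}}(as)=\varphi_B(a)\,\varphi_{B\{1\}}(s)=\varphi_B(a)\,d_s^{-1}\otimes s = d_s^{-1}\,\varphi_B(a)\,s,
\]
which is the required formula and evidently determines $d_s$ uniquely.

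For the equivariance claim, given $u\in B^\times$ the Breuil--Kisin orientation $us$ is again a generator of $B\{1\}$, and we compute
\[
\varphi_{B\{1\}}(us)=\varphi_B(u)\,\varphi_{B\{1\}}(s)=\varphi_B(u)\,d_s^{-1}\otimes s = \bigl(\tfrac{u}{\varphi_B(u)}\,d_s\bigr)^{\!-1}\otimes(us).
\]
By the uniqueness established above, $d_{us}=\tfrac{u}{\varphi_B(u)}\,d_s$, which is precisely the required equivariance with respect to the homomorphism $u\mapsto u/\varphi_B(u)$. No step here is a genuine obstacle: the content is entirely that, under the adjunction, a generator of $B\{1\}$ is sent to a generator of $I^{-1}\otimes_B B\{1\}$, and the rest is bookkeeping with the semilinearity of $\varphi_{B\{1\}}$.
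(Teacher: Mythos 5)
Your proof is correct and takes essentially the same approach as the paper: both observe that, because the map $B\{1\}\otimes_{B,\varphi}B\to I^{-1}\otimes_B B\{1\}$ from Construction~\ref{const:frobenius_bk} is an isomorphism, $\varphi_{B\{1\}}(s)$ is a generator of $I^{-1}\otimes_B B\{1\}$, hence of the form $d_s^{-1}\otimes s$ for a unique generator $d_s$ of $I$, and then track units through the semilinearity to get the equivariance. Your version is slightly more explicit about the adjunction step and the nonzerodivisor check, but the argument is the same.
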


\begin{proof}
As the map
\[
B\{1\}\otimes_{B,\varphi} B \xto{\cong} I^{-1}\otimes_{B}B\{1\}, \quad s\otimes b \mapsto b \cdot \varphi_{B\{1\}}(s)
\]
is an isomorphism, $\varphi_{B\{1\}}(s)$ is a uniquely determined generator of $I^{-1}\otimes_{B}
B\{1\}$, so of the form $d^{-1}_s\otimes s$ for a generator $d_s$ of $I$.
We can write $d_s$ symbolically as $\frac{s}{\varphi_{B\{1\}}(s)}$. It is
then clear that
\[
d_{s'} = \frac{us}{\varphi_{B\{1\}}(us)} = \frac{u}{\varphi_B(u)} d_s
\]
if $s'=us$ for a unit $u$ in $R$.
\end{proof}

The next lemma is also proven in~\cite[Rem.~2.5.8]{bhatt-lurie-apc}.

\begin{lemma}
\label{lem:bk-orientation-existence}
    A prism $(B,I)$ is orientable if and only if it is Breuil--Kisin
    orientable.
\end{lemma}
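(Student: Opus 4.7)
The plan is to combine Lemma~\ref{lem:bk-frobenius} with Proposition~\ref{prop:twist_mod} and the fact that $B\{1\}$ is an invertible $B$-module (established in~\cite[Sec.~2.2]{bhatt-lurie-apc}).

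The ``only if'' direction is formal: Lemma~\ref{lem:bk-frobenius} produces a map $\Or_B(B\{1\}) \to \Or_B(I)$ sending $s \mapsto d_s$, so any Breuil--Kisin orientation yields an orientation of $I$ in the usual sense.

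For the ``if'' direction, I would argue by a Nakayama-type lift. Suppose $d \in \Or_B(I)$. By Proposition~\ref{prop:twist_mod}, $B\{1\}/I \cong I/I^2$, which is a free $B/I$-module of rank one generated by the class of $d$, since $I$ is principal. Since $B\{1\}$ is an invertible $B$-module and $B$ is $(p,I)$-adically complete, any lift $s \in B\{1\}$ of this generator in fact generates $B\{1\}$: the induced map $B \to B\{1\}$ sending $1 \mapsto s$ is a map of invertible $B$-modules which is an isomorphism modulo $(p,I)$, and the restriction $\mathrm{Pic}(B) \to \mathrm{Pic}(B/(p,I))$ is injective for such $B$. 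Such an $s$ is the desired Breuil--Kisin orientation.

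The main obstacle is the invertibility of $B\{1\}$, which is not obvious from the limit description~\eqref{eq:bk} alone since the transition maps $\tfrac{\can}{p}$ are implicit. It is proved in~\cite[Sec.~2.2]{bhatt-lurie-apc} first for transversal prisms, where each $I_r/I_r^2$ is visibly free of rank one via the generator $d\varphi(d)\cdots\varphi^{r-1}(d)$, and the transition maps can be analyzed explicitly using $\varphi(d) = d^p + p\delta(d)$ together with the fact that $\delta(d)$ is a unit in $B/I$; the result is then extended to general prisms by base change, using that the category of transversal prisms over a given prism is sifted, as noted in Remark~\ref{rem:bk_general}.
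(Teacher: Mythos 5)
Your proof is correct and takes a genuinely different route from the paper. The paper does not invoke the invertibility of $B\{1\}$ as a black box; instead, it constructs an explicit identification $B\cong B\{1\}$ by first working in the transversal case: choosing $d\in\Or_B(I)$, computing by induction that $\varphi^r(d)=pu_r\pmod{I_r}$ for units $u_r$, and then using this to identify the pro-system $\{I_r/I_r^2,\tfrac{\can}{p}\}$ defining $B\{1\}$ with the canonical pro-system $\{B/I_r\}$ rescaled by explicit units, followed by the base-change argument for non-transversal prisms. Your approach instead short-circuits this by granting invertibility of $B\{1\}$ (from Bhatt--Lurie), noting $B\{1\}/(p,I)\cong I/(I^2+pI)$ is free on $[d]$, and applying Nakayama for derived $(p,I)$-complete rings to upgrade any lift of $[d]$ to a generator of $B\{1\}$. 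Both arguments are valid, and you are honest that the ``hard work'' behind invertibility lives in the same transversal analysis the paper performs by hand. The trade-off: the paper's explicit construction yields, as a byproduct, a canonical assignment $\Or_B(I)\to\Or_B(B\{1\})$ (see the remark following the lemma), whereas your Nakayama argument only produces a generator up to the ambiguity of $1+(p,I)\subseteq B^\times$; this canonicity is not needed for the lemma itself, but is relevant to the subsequent discussion of filtered-crystalline orientations. One small imprecision: you write ``$(p,I)$-adically complete,'' but a prism is only required to be \emph{derived} $(p,I)$-complete; the Nakayama step you want is derived Nakayama for finitely generated modules over a derived complete ring, which does go through.
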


\begin{proof}
    Lemma $\ref{lem:bk-frobenius}$ shows that Breuil--Kisin orientability
    implies orientability. For the other direction, choose an orientation $d$
    of $I$. Then $I_r/I_{r}^2$ is a free $B/I_r$-module on the generator
    $d\cdot\varphi(d)\cdots \varphi^{r-1}(d)$. We now observe that, for $r\geq 1$, there are units
    $u_r$ such that
    \[
    \varphi^r(d) = p\cdot u_r \quad \text{ mod $I_r$}.
    \]
    For $r=1$, this follows from
    \[
    \varphi(d) = p\cdot \delta(d) + d^p
    \]
    since $d$ is distinguished by hypothesis. We proceed by induction and assume that
    \[
    \varphi^r(d) = p\cdot u_r + S_r
    \]
    with $S_r\in I_r$. Then
    \begin{align*}
    \varphi^{r+1}(d) &= \varphi^r(d)^p + p\varphi^r(\delta(d))\\
     &= \varphi^r(d)^{p-1}(p u_r + S_r) + p\varphi^r(\delta(d))\\
     &= p(\varphi^r(\delta(d)) + \varphi^r(d)^{p-1} u_r) + \varphi^r(d)^{p-1} S_r,
    \end{align*}
    with $\varphi^r(d)^{p-1} S_r\in I_{r+1}$ and $u_{r+1} := \varphi^r(\delta(d)) + \varphi^r(d)^{p-1} u_r$ a unit since $\delta(d)$ is a unit and $\varphi^r(d)\in (d,p)$.

    It follows that if $(B,I)$ is transversal, we can identify the unique ``$\frac{1}{p}\can$'' map $I_{r+1}/I_{r+1}^2\to I_r/I_{r}^2$ through the following commutative diagram:
    \[
    \begin{tikzcd}[column sep=2cm]
        B/I_{r+1}\dar{u_r}\rar{d\cdots \varphi^{r}(d)} &
        I_{r+1}/I_{r+1}^2\dar{\tfrac{\can}{p}}\\
    B/I_{r}\rar{d\cdots \varphi^{r-1}(d)} & I_{r}/I_{r}^2.
    \end{tikzcd}
    \]
    In particular, the tower of $I_{r}/I_{r}^2$ whose limit defines $B\{1\}$ is
    identified with the tower of $B/I_{r}$ with transition maps given by $u_r$
    times the canonical maps. This further identifies with the canonical tower
    of $B/I_r$ (using appropriate products of units). We get an identification
    $B\to B\{1\}$ in the transversal case. Since the Breuil--Kisin twist on
    non-transversal prisms is obtained by base change from an
    arbitrary transversal prism $\widetilde{B}\to B$
    by~\cite[Sec.~2.5]{bhatt-lurie-apc} and since an orientable prism
    can always be mapped to from an orientable transversal prism (e.g. the
    universal oriented prism), the claim follows for arbitrary prisms.
\end{proof}

\begin{remark}
    In principle, the above proof determines a canonical way to associate to an
    orientation a Breuil--Kisin orientation. This is however not inverse to the
    construction of Lemma \ref{lem:bk-frobenius}, and leads to cumbersome formulas
    in practice. In what follows, we will instead describe situations in which we
    can find preferred lifts against the construction of Lemma
    \ref{lem:bk-frobenius}, i.e., preferred refinements of orientations to
    Breuil--Kisin orientations.
\end{remark}

\begin{remark}
    Subsequently, we will never implicitly identify $B\{i\}$ with $B$. It is
    therefore harmless to just write $\varphi$ both for $\varphi_B$ and
    $\varphi_{B\{i\}}$, so we will do so. This makes computations with the
    Breuil--Kisin twisted Frobenius notationally very intuitive: for example,
    \[
    \varphi(as^i) = \varphi(a)\varphi(s)^i = d_s^{-i} \varphi(a) s^i
    \]
    describes the Frobenius map $B\{i\}\to I^{-i}\otimes_B B\{i\}$.
\end{remark}

\begin{lemma}\label{lem:crystalline-bk-orientation}
    If $(B,(p))$ is a crystalline prism, then there is a Breuil--Kisin
    orientation $s\colon B\to B\{1\}$ with $d_s = p$ which is uniquely characterized
    by the requirement that $s= p^r\pmod{I_r^2}$ in $B\{1\}/I_r^2\iso
    I_r/I_r^2$.
\end{lemma}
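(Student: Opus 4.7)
The idea is that in the crystalline case all relevant ideals collapse to powers of $p$: since $\varphi_B(p) = p$, we have $I_r = \varphi^*(I_r) = p^rB$ and $I_{r+1} = pI_r$ for every $r$, which makes the inverse limit description of $B\{1\}$ very explicit. A crystalline prism is $p$-torsion free and hence transversal, so Remark \ref{rem:bk_general} identifies $B\{1\} = \lim_r p^rB/p^{2r}B$ with transition maps $\tfrac{\can}{p}$. In this setting these transition maps are easily computed (for instance, by specializing the inductive formula in the proof of Lemma \ref{lem:bk-orientation-existence}, where the units $u_r$ become $1$ because $\varphi^r(p) = p$) to send $p^{r+1} \mapsto p^r$. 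Hence the sequence $(p^r)_r$ is compatible and defines the desired element $s \in B\{1\}$ with $s \equiv p^r$ in $I_r/I_r^2 \cong B\{1\}/I_r$ for every $r$; uniqueness of such an $s$ is then immediate from $B\{1\} = \lim_r B\{1\}/I_r$, which holds by $p$-completeness.

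To check that $s$ generates $B\{1\}$ as a $B$-module and therefore defines a genuine Breuil--Kisin orientation, I note that modulo $I_1 = pB$ the element $s$ corresponds to $p \in pB/p^2B$, a $B/pB$-module generator of $I_1/I_1^2 \cong B\{1\}/I_1$; Nakayama's lemma, applied to the $p$-complete $B$-module $B\{1\}$, then gives the claim.

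Finally, to identify $d_s$, by Lemma \ref{lem:bk-frobenius} it suffices to show $\varphi_{B\{1\}}(s) = p^{-1} \otimes s$ in $I^{-1} \otimes_B B\{1\}$, and by $p$-completeness of this $B$-module it is enough to verify the equality modulo $\varphi^*(I_r) = p^rB$ for every $r$. Tracing $s \bmod I_r$, which corresponds to $p^r \in I_r/I_r^2$, through the diagram of isomorphisms in Construction \ref{const:frobenius_bk}: the map $I_r/I_r^2 \otimes_{B,\varphi} B \to \varphi^*(I_r)/\varphi^*(I_r)^2$ sends $p^r \otimes 1 \mapsto \varphi_B(p^r) = p^r$, and the subsequent isomorphism to $I^{-1} \otimes I_{r+1}/I_{r+1}\varphi^*(I_r)$ sends $p^r \mapsto p^{-1} \otimes p^{r+1}$. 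Under the identification $B\{1\}/\varphi^*(I_r) \cong I_{r+1}/I_{r+1}\varphi^*(I_r)$, the class of $s$ corresponds to $p^{r+1}$, so the output is precisely $p^{-1} \otimes s \pmod{\varphi^*(I_r)}$, as required. The main technical obstacle is the careful bookkeeping with the chain of isomorphisms in Construction \ref{const:frobenius_bk} (especially the identification $B\{1\}/\varphi^*(I_r) \cong I_{r+1}/I_{r+1}\varphi^*(I_r)$), though everything simplifies in the crystalline setting because $\varphi^*(I_r) = I_r$ and $\varphi_B$ fixes powers of $p$.
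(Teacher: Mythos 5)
Your proof contains a genuine conceptual gap, and it is exactly the subtlety that the paper's own proof is designed to handle. You write that ``a crystalline prism is $p$-torsion free and hence transversal.'' This is false: transversality of $(B,I)$ means that $B/I$ is $p$-torsion free, and for a crystalline prism $B/I = B/p$ is an $\bF_p$-algebra, hence is entirely $p$-torsion. So crystalline prisms are essentially the opposite of transversal, and this matters because the whole point of the transversal hypothesis is to guarantee that there is a \emph{unique} map $I_{r+1}/I_{r+1}^2 \to I_r/I_r^2$ whose $p$-multiple is the canonical reduction. In the crystalline case $I_r/I_r^2 \cong p^rB/p^{2r}B$ is $p^r$-torsion, so that uniqueness fails. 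The paper's proof opens with precisely this observation (``While there is no \emph{unique} map \ldots'') and then argues that any two allowable systems of transition maps have canonically identified limits because their difference factors through a pro-nil system of $p$-torsion subgroups. Your argument skips this step entirely, so you have not justified that the system $p^{r+1}\mapsto p^r$ computes $B\{1\}$ rather than some other pro-object.

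A secondary imprecision: you assert that the units $u_r$ from the proof of Lemma \ref{lem:bk-orientation-existence} ``become $1$'' in the crystalline case. The recursion actually gives $u_r = 1 - p^{r(p-1)}$ (since $\delta(p) = 1-p^{p-1}$ and $\varphi(p)=p$), which is not $1$, though it is congruent to $1$ modulo $p^r$, and that congruence is what actually makes $p^{r+1}\mapsto p^r$ well-defined modulo $I_r^2$. If you want to use the $u_r$-formula, you need to say this precisely; the formula is derived for transversal prisms and then transferred by base change, which is the route Remark \ref{rem:bk_general} takes, not the shortcut via a false transversality claim. Your final verification that $d_s = p$ by chasing Construction \ref{const:frobenius_bk} is fine and essentially matches the paper's.
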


\begin{proof}
    While there is no \emph{unique} map $I_{r+1}/I_{r+1}^2 \to I_r/I_r^2$ whose multiple
    by $p$ is the canonical map in the crystalline case, it is still true that the canonical identifications
    \[
    \begin{tikzcd}
    B\{1\}/I_{r+1} \rar\dar{\cong} & B\{1\}/I_r\dar{\cong}\\
    I_{r+1}/I_{r+1}^2 \rar & I_r/I_r^2
    \end{tikzcd}
    \]
    exhibit $B\{1\}$ as the limit of \emph{some} such maps; see
    Remark~\ref{rem:bk_general}.

    In the crystalline case, the limits of any two systems of such maps are canonically
    identified, since they differ by maps which factor through the $p$-torsion subgroups
    of
    $(I_r/I_r^2)$, which form a pro-nil system. The maps taking $p^{r+1}$ in
    $I_{r+1}/I_{r+1}^2\iso (p^{r+1})/(p^{2r+2})$ to $p^r$ in $I_r/I_r^2\iso
    (p^r)/(p^{2r})$ yield one choice of such maps. We can thus canonically identify $B\{1\}$ with the limit along those maps. The collection of classes $[p^{r}]$ determine a generator $s\in B\{1\}$ in the limit.

    Chasing through the diagram of isomorphisms which defines the Frobenius
    \[
    B\{1\}\otimes_{B,\varphi} B \cong I^{-1}\otimes_B B\{1\},
    \]
    we see that for this Breuil--Kisin orientation, $d_s = p$.
\end{proof}

\begin{definition}[Crystalline Breuil--Kisin orientation]
    We will refer to the Breuil--Kisin orientation constructed in
    Lemma~\ref{lem:crystalline-bk-orientation} as the crystalline
    Breuil--Kisin orientation. Note that
    it is not uniquely characterized by the requirement that $d_s = p$:
    if $B=\bZ_p$, then changing $s$ by a unit $u\in\bZ_p^\times$ changes $d_s$ by
    $\frac{u}{\varphi(u)} = 1$.
\end{definition}

\begin{remark}
    Bhatt and Lurie show in~\cite[Prop.~2.6.1]{bhatt-lurie-apc} that there is a
    Breuil--Kisin orientation $s\in B\{1\}$ where $(B,I)$ is the $q$-de Rham
    prism with $B=\bZ_p\llbracket q-1\rrbracket$, $\varphi(q)=q^p$, and
    $I=[p]_q=1+q+\cdots+q^{p-1}$. Moreover, the associated orientation $d_s$ in
    this case is equal to $[p]_q$ by~\cite[Rem.~2.6.2]{bhatt-lurie-apc}.
    Specializing along $q\mapsto 1$, one obtains a Breuil--Kisin orientation of
    any crystalline prism with associated orientation equal to $p$;
    this gives an alternative proof of
    Lemma~\ref{lem:crystalline-bk-orientation}.
\end{remark}

In~\cite{akn-delta}, we introduced the notion of filtered
prisms $(\F^\star B,I)$. Say that a filtered prism $(\F^\star B,I)$ is crystalline if
$(\gr^0B,I\otimes_B\gr^0B)$ is crystalline.
Crystalline filtered prisms inherit canonical filtered-crystalline Breuil--Kisin orientations
under the additional assumption of completeness.

\begin{lemma}
    If $(\F^{\geq\star} A,I)$ is a complete filtered crystalline prism, then $(A,I)$
    is orientable.
\end{lemma}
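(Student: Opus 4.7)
The plan is to produce a generator of $I$ by combining the crystalline structure of $\gr^0 A$ with a Nakayama argument in the complete filtered setting. The crystalline hypothesis means precisely that the ideal $I\cdot\gr^0 A$ inside $\gr^0 A$ equals $(p)$, so a generator of $I$ modulo the filtration is readily available; the task is to promote it to a genuine generator of $I$.

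First I would use the surjection $I\twoheadrightarrow I\otimes_A\gr^0 A$ to pick some $d\in I$ whose image in $\gr^0 A$ is $p$; by the crystalline condition this image generates $I\otimes_A\gr^0 A$ as a $\gr^0 A$-module. I would then form the $A$-linear map $\phi\colon A\to I$ sending $1$ to $d$, and set $C=\operatorname{coker}(\phi)$. Showing $C=0$ would give $I=(d)$ and hence the desired orientation.

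By construction, the reduction of $\phi$ modulo $\F^{\geq 1}A$ is surjective, so $C\otimes_A\gr^0 A=0$, i.e.\ $C=\F^{\geq 1}A\cdot C$. Since $I$ is invertible, and in particular finitely generated, as an $A$-module, so is $C$. Completeness of $\F^{\geq\star}A$ combined with multiplicativity of the filtration implies that every element of $\F^{\geq 1}A$ is topologically nilpotent: for $x\in\F^{\geq 1}A$ the geometric series $1+x+x^2+\cdots$ converges by completeness and provides an inverse to $1-x$, so $\F^{\geq 1}A$ lies in the Jacobson radical of $A$. Nakayama's lemma then forces $C=0$, and $d$ is an orientation of $(A,I)$.

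The main obstacle I foresee is confirming that the notion of complete filtered prism from \cite{akn-delta} packages the completeness and multiplicativity of $\F^{\geq\star}A$ in a form strong enough to make the geometric-series step genuinely immediate; once that is in place, the argument is essentially formal Nakayama. A secondary sanity check is to verify that invertibility of $I$ in the filtered-prism framework indeed yields finite generation of $I$ (and hence of $C$) as an ordinary $A$-module, rather than just in some filtered sense.
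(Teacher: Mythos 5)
Your proof is correct, and it takes a genuinely different route from the paper's. Both arguments begin identically, by choosing $d\in I$ whose image in $\gr^0 A$ is $p$ (so that $d$ generates $I\otimes_A\gr^0 A$ by the crystalline hypothesis). From there the paper argues that $\delta(d)$ is a unit (it is a unit modulo $\F^{\geq 1}A$ by the crystalline condition, and completeness of the filtration upgrades this to a genuine unit), so $(A,(d))$ is itself a prism with $(d)\subseteq I$; the conclusion $I=(d)$ then follows from the rigidity lemma of Bhatt--Scholze \cite[Prop.~3.5]{prisms}, which is a structural fact special to prisms. You instead bypass the $\delta$-structure and prism rigidity entirely: you observe that the cokernel $C$ of $A\xrightarrow{1\mapsto d}I$ is a finitely generated $A$-module (since $I$ is invertible and hence finitely generated) satisfying $C=\F^{\geq 1}A\cdot C$, that $\F^{\geq 1}A$ lies in the Jacobson radical because the geometric series $\sum_{n\geq 0}x^n$ converges for $x\in\F^{\geq 1}A$ by multiplicativity and completeness of the filtration, and then you apply Nakayama. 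Your route is more elementary (pure commutative algebra, no appeal to prism rigidity) and is actually somewhat more general, applying to any complete filtered ring with an invertible ideal whose associated-graded piece is principal; the paper's proof is shorter given the prismatic toolkit already in play and sits more naturally alongside the subsequent lemma about $\delta(d)$. Your two flagged "sanity checks" both pass: multiplicativity and completeness of $\F^{\geq\star}A$ are part of the definition of a complete filtered $\delta$-ring in \cite{akn-delta}, and invertibility of $I$ in the prism axioms does give finite generation of $I$ as an $A$-module in the ordinary sense.
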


\begin{proof}
    The map $I\rightarrow\gr_\F^{0}I$ is surjective, so choose an element $d\in I$
    that reduces modulo $\F^{\geq 1}$ to $p\in \gr^0_\F I=(p)\subseteq\gr^0_\F A$.
    Since $\delta(d)$  is a unit modulo $\F^{\geq 1}$ and since $\F^\star A$ is
    a complete filtered commutative ring, $\delta(d)$ is a unit in $A$. Thus,
    $(A,(d))$ is a prism and $(A,(d))\rightarrow (A,I)$ is a map of prisms.
    By rigidity of prisms~\cite[Prop.~3.5]{prisms}, $I=(d)$.
\end{proof}

\begin{lemma}\label{lem:filtered-crystalline-bk}
    Assume $(B,I)$ is a filtered prism. If $B$ is filtered-crystalline and
    complete, then for any $d\in I$ whose
    image in $\gr^0 I$ is $p$, there exists a unique lift $s\colon B\to B\{1\}$ of
    the crystalline Breuil--Kisin orientation on $\gr^0 B$ with $d_s = d$.
\end{lemma}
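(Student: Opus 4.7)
The plan is to start with any Breuil--Kisin orientation of $B$ (which exists by Lemma \ref{lem:bk-orientation-existence}, since filtered-crystalline complete prisms are orientable as shown in the immediately preceding lemma) and then to modify it by a unit $u \in B^\times$ to achieve simultaneously the prescribed reduction $s_0$ to $\gr^0 B\{1\}$ and the prescribed associated orientation $d \in I$. Concretely, choose any $s' \in B\{1\}$ with $s'$ a Breuil--Kisin orientation. Its reduction $\overline{s'} \in \gr^0 B\{1\}$ is a Breuil--Kisin orientation of $\gr^0 B$, so differs from $s_0$ by a unique unit $\overline{c} \in (\gr^0 B)^\times$, and this unit lifts to some $c \in B^\times$ by completeness of the filtration. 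Replacing $s'$ by $cs'$, we may assume $\overline{s'} = s_0$, which forces $d_{s'}$ to reduce to $p \in \gr^0 I$. Writing $d = w \cdot d_{s'}$, we then have $w \in 1 + \F^{\geq 1} B \subseteq B^\times$, and we seek the further adjustment $s = us'$ by some $u \in 1 + \F^{\geq 1} B$ preserving the reduction.

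By Lemma \ref{lem:bk-frobenius}, such a $u$ satisfies $d_s = (u/\varphi(u)) d_{s'}$, so the condition $d_s = d$ becomes the fixed-point equation
\[
u = w\,\varphi(u)
\]
for $u \in 1 + \F^{\geq 1} B$. For existence I propose the candidate
\[
u := \prod_{n=0}^{\infty} \varphi^n(w),
\]
whose partial products lie in $1 + \F^{\geq 1} B$, and which converges because in a filtered $\delta$-ring one has $\varphi(\F^{\geq i}B) \subseteq \F^{\geq pi}B$ (as $\varphi(x) = x^p + p\delta(x)$ and both terms increase the filtration by a factor of $p$), so $\varphi^n(w) \in 1 + \F^{\geq p^n} B$ and the tail contributions vanish by completeness. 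A direct telescoping computation then shows $\varphi(u) = w^{-1}u$, verifying the fixed-point relation.

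For uniqueness, if $u_1$ and $u_2$ both satisfy the fixed-point equation, then their quotient $\mu := u_2/u_1 \in 1 + \F^{\geq 1}B$ satisfies $\mu = \varphi(\mu)$. Writing $\mu = 1 + v$ with $v \in \F^{\geq 1}B$, one gets $v = \varphi(v) \in \F^{\geq p}B$, and iterating $v \in \bigcap_n \F^{\geq p^n} B = 0$ by completeness, so $\mu = 1$.

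The main obstacle, and essentially the only nontrivial input beyond formal manipulations, is the filtration estimate $\varphi(\F^{\geq i} B) \subseteq \F^{\geq pi} B$, which is what makes both the infinite product converge and the fixed-point equation rigidify uniquely. This compatibility is guaranteed by the axioms of a filtered $\delta$-ring developed in our companion paper~\cite{akn-delta}, but it is the sole place where the specific filtered-$\delta$-ring structure (and not just the underlying filtered ring) enters the argument.
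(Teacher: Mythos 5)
Your proof is correct and follows essentially the same approach as the paper. You establish that the relevant ambiguity lives in $(1+\F^{\geq 1}B)^\times$ and then solve the fixed-point equation $u = w\,\varphi(u)$ by the telescoping product $\prod_{n\geq 0}\varphi^n(w)$ (existence) and the observation that $\mu=\varphi(\mu)$ forces $\mu=1$ (uniqueness); the paper compresses these two steps into the single remark that $u\mapsto u/\varphi(u)$ is an automorphism of $(1+\F^{\geq 1}B)^\times$ because it is the identity on associated graded pieces, but the underlying mechanism — Frobenius raising filtration weight from $m$ to $pm$, together with completeness — is identical.
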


\begin{proof}
    Any two lifts of the crystalline Breuil--Kisin orientation differ by
    elements of $(1+ \F^{\geq 1}B)^\times \subseteq B^{\times}$. Similarly, $d_s$ for
    such a lift differs from $d$ by an element of $(1+\F^{\geq 1}B)^\times$. The claim
    now follows from the observation that the map
    \[
    u \mapsto \frac{u}{\varphi(u)}
    \]
    is an automorphism of $(1+\F^{\geq 1}B)^\times$, since it agrees with the
    identity on the associated graded pieces as the Frobenius raises filtration
    weight from $m$ to $pm$.
\end{proof}

This enables us to talk about canonical choices of Breuil--Kisin orientations
on filtered crystalline prisms such as $(z^\star W(k)\llbracket z\rrbracket,E(z))$,
where $E(z)$ is a choice of a (not necessarily monic) Eisenstein
polynomial with $E(0)=p$.

For future reference, we also record how these transform.

\begin{lemma}\label{lem:bk-factor-general}
    Let $(\F^\star B,I)$ be a complete filtered-crystalline prism, and $d,d'$ two generators of
    $I$ whose image in $\gr^0 I$ is $p$. Let $u\in (1+\F^{\geq 1}B)^\times$ with
    $d'=ud$. If $s$ and $s'$ are the canonical filtered-crystalline Breuil--Kisin
    orientations with $d_s = d$ and $d_{s'}=d'$, then
    \[
    s' = \left(\prod_{r\geq 0} \varphi^r(u)\right)\cdot s.
    \]
\end{lemma}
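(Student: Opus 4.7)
The plan is to translate the statement into a fixed-point equation for a unit $v$ in $(1+\F^{\geq 1}B)^\times$, produce the explicit convergent product as a solution, and then invoke uniqueness from the proof of Lemma~\ref{lem:filtered-crystalline-bk}.

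First, since both $s$ and $s'$ are generators of $B\{1\}$, we may write $s' = v \cdot s$ for a unique $v \in B^\times$. By construction in Lemma~\ref{lem:filtered-crystalline-bk}, both $s$ and $s'$ lift the crystalline Breuil--Kisin orientation on $\gr^0 B$, so they agree after reduction to $\gr^0 B\{1\}$; hence $v \in (1+\F^{\geq 1}B)^\times$. Applying Lemma~\ref{lem:bk-frobenius} to this change of Breuil--Kisin orientation gives
\[
d_{s'} = \frac{v}{\varphi(v)} \, d_s,
\]
so the relation $d' = ud$ together with $d_s = d$ and $d_{s'} = d'$ translates to $\frac{v}{\varphi(v)} = u$, i.e., $v = u \varphi(v)$ in $(1+\F^{\geq 1}B)^\times$.

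Next, I would construct the proposed solution by an explicit convergent product. Since $u \in 1 + \F^{\geq 1}B$, the Frobenius raises filtration weight from $m$ to $pm$, so $\varphi^r(u) \in 1 + \F^{\geq p^r}B$. By completeness of $\F^{\geq\star}B$, the finite products $v_N := \prod_{r=0}^{N}\varphi^r(u)$ converge to a unit $v_\infty := \prod_{r\geq 0}\varphi^r(u) \in (1+\F^{\geq 1}B)^\times$. A direct manipulation shows
\[
u \cdot \varphi(v_\infty) \;=\; u \cdot \prod_{r\geq 0}\varphi^{r+1}(u) \;=\; \prod_{r\geq 0}\varphi^r(u) \;=\; v_\infty,
\]
so $v_\infty$ solves the fixed-point equation $v = u\varphi(v)$.

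Finally, uniqueness of this solution follows from the observation already used in the proof of Lemma~\ref{lem:filtered-crystalline-bk}: the map $v \mapsto \tfrac{v}{\varphi(v)}$ is an automorphism of the group $(1+\F^{\geq 1}B)^\times$, because on each associated graded piece $\gr^m$ for $m\geq 1$ it agrees with the identity (the Frobenius contribution lives in higher weight $pm$). In particular it is injective, so $v_\infty$ is the unique solution and we conclude $v = v_\infty$, which is exactly the asserted formula. There is no real obstacle here: the whole content is packaging the two preceding lemmas together with the completeness of the filtration.
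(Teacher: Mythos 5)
Your proof is correct and takes essentially the same route as the paper: write $s'=vs$, use Lemma~\ref{lem:bk-frobenius} to reduce to the fixed-point equation $v/\varphi(v)=u$, verify the convergent product solves it, and invoke the automorphism property from Lemma~\ref{lem:filtered-crystalline-bk} for uniqueness. You spell out the uniqueness and convergence steps a bit more explicitly than the paper does, but the argument is the same.
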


\begin{proof}
We have $s' = v\cdot s$ with $v\in (1+\F^{\geq 1}B)^\times$ by construction. Now
\[
d_{s'} = \frac{v}{\varphi(v)} d_{s},
\]
i.e., $\frac{v}{\varphi(v)} = u$. The unique solution $v$ is given by the product
\[
v = \prod_{r\geq 0} \varphi^r(u),
\]
which converges because the filtration $\F^\star B$ is complete and because
Frobenius raises filtration weight $m$ to $pm$.
\end{proof}

\subsection{Orienting the syntomic complex}\label{sec:orienting}

Suppose that $(A,I)$ is a prism with a Breuil--Kisin orientation $s\in A\{1\}$. Our goal
in this section is to use $s$ to rewrite the relative syntomic complex of a commutative
$A$-algebra $R$
$$\bZ_p(i)(R/A)\we\fib\left(\N^{\geq
i}\Prismhat^{(1)}_{R/A}\{i\}\xrightarrow{\can-\varphi}\Prismhat^{(1)}_{R/A}\{i\}\right)$$
as an isomorphic complex
$$\fib\left(\N^{\geq
i}\Prismhat^{(1)}_{R/A}\xrightarrow{\can-\varphi_i}\Prismhat^{(1)}_{R/A}\right),$$ for
an appropriate divided Frobenius morphism $\varphi_i$.

We briefly review the Nygaard filtration and the construction of the relative syntomic
complexes.

\begin{construction}
    We write $\varphi_A^*$ for the $(p,I)$-completed extension of scalars along the
    Frobenius of $A$.
    Suppose that $(A,I)$ is a prism and recall the $A$-linear isomorphism
    $\varphi_A^*(A\{i\})\iso I^{-i}A\{i\}$ given in
    Construction~\ref{const:frobenius_bk}. Given a commutative $\overline{A}$-algebra
    $R$, there is an induced $A$-linear map
    $$\varphi_A^*(\Prism_{R/A}\{i\})\iso\varphi_A^*(\Prism_{R/A})\otimes_A\varphi_A^*(A\{i\})\iso\varphi_A^*(\Prism_{R/A})\otimes_A
    I^{-i}A\{i\}\xrightarrow{\varphi_{R/A}\otimes\id} \Prism_{R/A}\otimes_A I^{-i}A\{i\}\iso
    I^{-i}\Prism_{R/A}\{i\},$$ where $\varphi_A^*(\Prism_{R/A})\rightarrow\Prism_{R/A}$
    is the relative Frobenius $\varphi_{R/A}$.
\end{construction}

\begin{notation}
    We write $\Prism_{R/A}^{(1)}\{i\}$ for
    $\varphi_A^*(\Prism_{R/A}\{i\})$, despite the possibility for confusion.
\end{notation}

\begin{remark}[The Nygaard filtration]
    The Frobenius $\varphi\colon\Prism^{(1)}_{R/A}\{i\}\rightarrow
    I^{-i}\Prism_{R/A}\{i\}$ refines to a filtered map
    $\N^{\geq\star}\Prism^{(1)}_{R/A}\rightarrow I^{\star-i}\Prism_{R/A}\{i\}$. In fact,
    this can be taken to be the defining property of $\N^{\geq\star}\Prism_{R/A}\{i\}$ in the case when
    $\L_{R/\overline{A}}$ has $p$-complete Tor-amplitude in $[1,1]$. In that case,
    $\N^{\geq j}\Prism_{R/A}^{(1)}\{i\}$ is defined to be
    the non-derived pullback
    $$\xymatrix{
        \N^{\geq j}\Prism_{R/A}^{(1)}\{i\}\ar[r]\ar[d]&I^{j-i}\Prism_{R/A}\{i\}\ar[d]\\
        \Prism_{R/A}^{(1)}\{i\}\ar[r]&I^{-i}\Prism_{R/A}\{i\}
    }$$
    for $j\geq 0$. For details, see~\cite[Thm.~15.2]{prisms}.
\end{remark}

\begin{definition}[The Breuil--Kisin orientation on the Frobenius twist]
    Suppose that $A$ is a prism with a Breuil--Kisin orientation $s$. In this case,
    there is an induced $A$-linear isomorphism $\Prism_{R/A}^{(1)}\xrightarrow{w(s^i)}\Prism_{R/A}^{(1)}\{i\}$.
\end{definition}

\begin{lemma}\label{lem:divided_frobenius}
    Suppose that $A$ is a prism with a Breuil--Kisin orientation $s$ and that $R$ is a
    commutative $A$-algebra. Then, there is a natural commutative diagram
    $$\xymatrix{
        \Prism^{(1)}_{R/A}\ar[d]_\iso^{w(s^i)}\ar[r]^{\varphi_{i,R/A}}&I^{-i}\Prism_{R/A}\ar[d]_\iso^{s^i}\\
    \Prism^{(1)}_{R/A}\{i\}\ar[r]^{\varphi_{R/A}}& I^{-i}\Prism_{R/A}\{i\}
    }$$
    for some $A$-linear map $\varphi_{i,R/A}$. If $x\in\N^{\geq i}\Prism_{R/A}^{(1)}$, then 
    $\varphi_{i,R/A}(x)=\tfrac{\varphi_{R/A}}{d_s^i}$, where $d_s$ is the orientation
    associated to the Breuil--Kisin orientation $s$.
\end{lemma}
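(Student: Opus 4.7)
The plan is to construct $\varphi_{i,R/A}$ by forcing the square to commute, and then to identify its values on the Nygaard filtration using the characterization in Remark~\ref{rem:nygaard}(iv). The main input is Lemma~\ref{lem:bk-frobenius}, which tells us how the Breuil--Kisin Frobenius interacts with a Breuil--Kisin orientation: $\varphi_{A\{1\}}(as) = d_s^{-1}\varphi_A(a)\,s$ inside $I^{-1}\otimes_A A\{1\}$. Since both vertical maps in the square are isomorphisms, we \emph{define} $\varphi_{i,R/A}$ to be the composition $(s^i)^{-1}\circ\varphi_{R/A}\circ w(s^i)$; the content of the lemma is then that this composition in fact takes values in $I^{-i}\Prism_{R/A}\subseteq I^{-i}\Prism_{R/A}\{i\}$ and is $A$-linear, together with the explicit formula on the Nygaard filtration.

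First, I would unwind $w(s^i)$. The Breuil--Kisin orientation $s$ trivializes $A\{1\}$ and therefore all twists $\Prism_{R/A}\{i\}$, and applying $\varphi_A^*$ to multiplication by $s^i$ gives the $A$-linear isomorphism $w(s^i)\colon \Prism^{(1)}_{R/A}\xrightarrow{\sim}\Prism^{(1)}_{R/A}\{i\}$ appearing on the left. Next, I would iterate Lemma~\ref{lem:bk-frobenius} to obtain $\varphi_{A\{i\}}(as^i) = d_s^{-i}\varphi_A(a)\,s^i$ in $I^{-i}\otimes_A A\{i\}$. Tensoring with the relative Frobenius $\varphi_{R/A}$ of $\Prism_{R/A}$, this says exactly that the diagram
\[
\begin{tikzcd}
\Prism^{(1)}_{R/A}\rar{d_s^{-i}\varphi_{R/A}}\dar[swap]{w(s^i)}& I^{-i}\Prism_{R/A}\dar{s^i}\\
\Prism^{(1)}_{R/A}\{i\}\rar{\varphi_{R/A}}& I^{-i}\Prism_{R/A}\{i\}
\end{tikzcd}
\]
commutes, so $\varphi_{i,R/A}=d_s^{-i}\varphi_{R/A}$ is the desired $A$-linear lift. (The $A$-linearity is inherited from that of $\varphi_{R/A}\colon\Prism^{(1)}_{R/A}\to\Prism_{R/A}$.)

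Finally, for the formula on the Nygaard filtration, let $x\in\N^{\geq i}\Prismhat^{(1)}_{R/A}$. By the characterization recalled in Remark~\ref{rem:nygaard}(iv) (and the definition of $\N^{\geq i}\Prism^{(1)}_{R/A}\{i\}$ via the pullback square), one has $\varphi_{R/A}(x)\in I^i\Prism_{R/A}$, so $d_s^{-i}\varphi_{R/A}(x)\in\Prism_{R/A}$; this is precisely the assertion $\varphi_{i,R/A}(x)=\varphi_{R/A}(x)/d_s^i$. The only step that requires some care is the bookkeeping of semilinearity versus linearity when passing from $\varphi_{A\{i\}}$ on $A\{i\}$ to the induced map on the Frobenius-twisted prismatic cohomology; once Lemma~\ref{lem:bk-frobenius} is applied at the level of $A\{i\}$ and then extended by $A$-linear base change of the relative Frobenius, the rest is formal.
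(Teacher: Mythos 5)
Your proof is correct and follows essentially the same route as the paper's: define $\varphi_{i,R/A}$ by conjugating through the invertible vertical maps $w(s^i)$ and $s^i$, then use the formula $\varphi_{A\{i\}}(as^i)=d_s^{-i}\varphi_A(a)s^i$ (iterated from Lemma~\ref{lem:bk-frobenius}) together with the $A$-linear base change defining $\varphi_{R/A}$ on $\Prism^{(1)}_{R/A}\{i\}$ to identify $\varphi_{i,R/A}=d_s^{-i}\varphi_{R/A}$, and finally invoke the Nygaard characterization from Remark~\ref{rem:nygaard}(iv) to see the values on $\N^{\geq i}$ land in $\Prism_{R/A}$. The only difference is that you are somewhat more explicit than the paper about unwinding Construction~\ref{const:frobenius_bk} and citing Remark~\ref{rem:nygaard}(iv), which the paper leaves implicit.
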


\begin{proof}
    Existence follows from the fact that $w(s^i)$ and $s^i$ are isomorphisms.
    Given $x\in\Prismhat_{R/A}^{(1)}$, we have that
    $$\varphi_{R/A}(x w(s^i))=d_s^{-i}\varphi_{R/A}(x)s^i=\varphi_{i,R/A}(x)s^i,$$
    where the first equality is by construction and the second is by commutativity of
    the diagram and the definition of $\varphi_{i,R/A}$.
    As $s^i$ is invertible it follows that $\varphi_{i,R/A}(x)=d_s^{-i}\varphi_{R/A}(x)$
    in $I^{-i}\Prism_{R/A}$. Now, if $x\in\N^{\geq i}\Prism_{R/A}^{(1)}$, then we can
    write $\varphi_{i,R/A}(x)=d_s^{-i}\varphi_{i,R/A}(x)$ in $\Prism_{R/A}$, as desired.
\end{proof}

\begin{definition}\label{def:relative_divided_frobenius}
    Let $s$ be a Breuil--Kisin orientation of a prism $A$ and let $R$ be a commutative
    ring. We define $$\varphi_{i,R/A}(x)=\frac{\varphi_{R/A}(x)}{d_s^{i}}$$ for
    $x\in\N^{\geq i}\Prism_{R/A}^{(1)}$. This is the $i$th relative divided Frobenius.
\end{definition}

\begin{definition}\label{def:divided_frobenius}
    The square
    \begin{equation}\label{eq:w_square}\begin{gathered}\xymatrix{
            \Prism_{R/A}\ar[r]^w\ar[d]^{s^i}&\Prism_{R/A}^{(1)}\ar[d]^{w(s^i)}\\
        \Prism_{R/A}\{i\}\ar[r]^{w}&\Prism_{R/A}^{(1)}\{i\},
    }\end{gathered}\end{equation} is naturally commutative.
    It follows that by composing $\varphi_{i,R/A}$ with
    $w\colon\Prism_{R/A}\rightarrow\Prism_{R/A}^{(1)}$, one obtains a map
    $$\varphi_i\colon\N^{\geq i}\Prism_{R/A}^{(1)}\rightarrow\Prism_{R/A}^{(1)},$$
    semilinear with respect to the Frobenius on $\Prism_{R/A}^{(1)}$ and which satisfies
    $$\varphi_i(x)=\frac{\varphi(x)}{\varphi(d)^i}.$$
    This follows from the fact that $w$ is $\varphi$-semilinear, so $w(d)=\varphi(d)$, and
    $w\circ\varphi_{R/A}=\varphi_{\Prism_{R/A}^{(1)}}$.
\end{definition}

\begin{definition}[Oriented syntomic complexes]
    Both $\can,\varphi_i\colon\N^{\geq i}\Prism_{R/A}^{(1)}\rightarrow\Prism_{R/A}^{(1)}$ 
    descend to the Nygaard-completions to give maps $\can,\varphi_i\colon\N^{\geq
    i}\Prismhat_{R/A}^{(1)}\rightarrow\Prismhat_{R/A}^{(1)}$.
    Fix a prism $A$ and a commutative $\overline{A}$-algebra $R$.
    If $s$ is a Breuil--Kisin orientation of $A$, then the oriented syntomic complex
    $\bZ_p(i)(R/A,s^i)$ is defined as $$\fib\left(\N^{\geq
    i}\Prismhat_{R/A}^{(1)}\xrightarrow{\can-\varphi_i}\Prismhat_{R/A}^{(1)}\right).$$
\end{definition}

\begin{proposition}
    \label{prop:oriented_syntomic}
    If $A$ is a prism, $R$ is a commutative $\overline{A}$-algebra, and $s$ is a
    Breuil--Kisin orientation for $A$, then there is an equivalence
    $$\bZ_p(i)(R/A,s^i)\we\bZ_p(i)(R/A),$$ natural in $R$ and the pair $(A,s)$.
\end{proposition}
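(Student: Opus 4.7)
The plan is to promote the Breuil--Kisin orientation $s$ to a natural isomorphism of complexes between the two fibers, showing that ``untwisting by $s^i$'' identifies $\can - \varphi$ with $\can - \varphi_i$. First I would use the Breuil--Kisin orientation $s\colon A \to A\{1\}$ to produce the $\Prism_{R/A}$-linear multiplication-by-$s^i$ isomorphism $s^i\colon \Prism_{R/A} \xrightarrow{\cong} \Prism_{R/A}\{i\}$ and, via the $\varphi_A$-semilinear map $w$, the Nygaard-completed Frobenius-twisted analogue $w(s^i)\colon \Prismhat^{(1)}_{R/A} \xrightarrow{\cong} \Prismhat^{(1)}_{R/A}\{i\}$. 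Since both are multiplication by invertible sections of the respective Breuil--Kisin line bundles, they preserve the Nygaard filtration, giving an isomorphism $\N^{\geq i}\Prismhat^{(1)}_{R/A} \xrightarrow{\cong} \N^{\geq i}\Prismhat^{(1)}_{R/A}\{i\}$ compatible with the inclusions into the ambient complexes; this takes care of the $\can$ part.

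The main computation is to check that $w(s^i)$ intertwines $\varphi_i$ and $\varphi$, i.e., that
\begin{equation*}
    \varphi\bigl(w(s^i)\cdot x\bigr) = w(s^i)\cdot \varphi_i(x)
\end{equation*}
for $x \in \N^{\geq i}\Prismhat^{(1)}_{R/A}$. I would unwind this as follows. By Lemma~\ref{lem:divided_frobenius}, the square
\begin{equation*}
    \xymatrix{
        \Prismhat^{(1)}_{R/A}\ar[d]_{w(s^i)}\ar[r]^{\varphi_{i,R/A}}&\Prismhat_{R/A}\ar[d]^{s^i}\\
        \Prismhat^{(1)}_{R/A}\{i\}\ar[r]^{\varphi_{R/A}}&I^{-i}\Prismhat_{R/A}\{i\}
    }
\end{equation*}
commutes; applying $w$ and using $\varphi = w\circ \varphi_{R/A}$ on the Frobenius-twisted side and $\varphi_i = w \circ \varphi_{i,R/A}$ from Definition~\ref{def:divided_frobenius} yields the desired identity. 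The commutation with $\can$ is automatic since $w(s^i)$ is a module isomorphism.

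Combining the two squares produces an isomorphism of fiber sequences
\begin{equation*}
    \xymatrix{
        \N^{\geq i}\Prismhat^{(1)}_{R/A}\ar[d]_{w(s^i)}^{\cong}\ar[r]^{\can-\varphi_i}&\Prismhat^{(1)}_{R/A}\ar[d]^{w(s^i)}_{\cong}\\
        \N^{\geq i}\Prismhat^{(1)}_{R/A}\{i\}\ar[r]^{\can-\varphi}&\Prismhat^{(1)}_{R/A}\{i\},
    }
\end{equation*}
whose induced equivalence on fibers is the desired $\bZ_p(i)(R/A, s^i) \simeq \bZ_p(i)(R/A)$. Naturality in $R$ is transparent as every map in sight is functorial in $R$, and naturality in the pair $(A,s)$ follows because $w(s^i)$ is manifestly functorial in the Breuil--Kisin oriented prism: a morphism of oriented prisms $(A,s)\to (A',s')$ induces base-change compatible morphisms on both the Nygaard-filtered Frobenius-twisted prismatic cohomology and on the Breuil--Kisin twists, carrying $w(s^i)$ to $w((s')^i)$.

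I expect no serious obstacle: the only point requiring a little care is the target of $\varphi_{R/A}$ in the twisted picture (an $I^{-i}$ appears that is cancelled precisely on the Nygaard filtration $\N^{\geq i}$), and this cancellation is exactly what makes $\varphi_{i,R/A}$ integral in the first place, as recorded in Lemma~\ref{lem:divided_frobenius}.
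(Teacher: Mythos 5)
Your proposal is correct and follows essentially the same path as the paper's (compressed) proof: it relies on Lemma~\ref{lem:divided_frobenius} for the key commutative square, the $w$-square of Definition~\ref{def:divided_frobenius} to pass from $\varphi_{R/A}$ and $\varphi_{i,R/A}$ to $\varphi$ and $\varphi_i$, and the compatibility of the Nygaard filtrations with the isomorphism $w(s^i)$. The only slip is cosmetic: in your displayed restatement of Lemma~\ref{lem:divided_frobenius} the top-right entry should carry an $I^{-i}$-twist, a point you yourself flag at the end as cancelling on $\N^{\geq i}$.
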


\begin{proof}
    The proposition follows from Lemma~\ref{lem:divided_frobenius}, the commutativity
    of~\eqref{eq:w_square}, and the compatibility of the Nygaard filtrations on
    $\Prism_{R/A}^{(1)}$ and $\Prism^{(1)}_{R/A}\{i\}$
    with the Breuil--Kisin twists under the orientations $s^i$.
\end{proof}

\subsection{The descent diagram}

Let $\cO_K$ be a DVR with perfect residue field $k$, uniformizer $\varpi$, and
Eisenstein polynomial $E(z_0) \in W(k)\llbracket z_0\rrbracket $. There is a prism
structure on $W(k)\llbracket z_0\rrbracket $ with $d=E(z_0)$. Let $R = \cO_K/\varpi^n$. Then
\[
\prism_{R / W(k)\llbracket z_0\rrbracket } = W(k)\llbracket z_0\rrbracket
\left\{\frac{z_0^n}{E(z_0)}\right\}.
\]
More generally, we have a prism structure on $W(k)\llbracket z_0,\ldots,z_s\rrbracket $, given (somewhat arbitrarily) by the ideal $(E(z_0))$. We have
\[
\prism_{R / W(k)\llbracket z_0,\ldots,z_s\rrbracket } = W(k)\llbracket z_0,\ldots,z_s\rrbracket  \left\{\frac{z^n_0}{E(z_0)}, \frac{z_1-z_0}{E(z_0)},\ldots,\frac{z_s-z_0}{E(z_0)}\right\}^\wedge,
\]
where we take all $z_j \mapsto \varpi$. These are more natural than we expect
from the description. For example, there should be a map from $\prism_{R/W(k)\llbracket
z_0\rrbracket } \to \prism_{R/W(k)\llbracket z_0,z_1\rrbracket }$ taking $z_0\mapsto z_1$,
even though that does not define a map on the base prisms. The problem is the arbitrary
choice of ideal $(E(z_0))$: The map $z_0\mapsto z_1$ is only compatible with the prism
structure defined by $(E(z_1))$ instead. These different choices do however become
equivalent in $\prism_{R/W(k)\llbracket z_0,\ldots,z_s\rrbracket }$, as one can see from
the prismatic envelope description: $E(z_j) - E(z_0)$ is divisible by $z_j-z_0$, and is
thus being made divisible by $E(z_0)$, since the prismatic envelope contains an element
$\frac{z_j-z_0}{E(z_0)}$. In fact, the various $E(z_j)$ become unit multiples of each other in the
prismatic envelope. Hence the above prismatic envelope also contains elements
$\frac{z_{j'}-z_0}{E(z_{j})}$ for $0\leq j,j'\leq s$ and satisfies the universal property of the prismatic envelope
describing prismatic cohomology relative to the prism structure $(E(z_j))$ on the base.

In \cite{akn-delta}, this idea is used to systematically extend prismatic cohomology to
a functor defined on the category of $\delta$-pairs $(A,R)$, where $A$ is a
$\delta$-ring and $R$ an $A$-algebra in such a way that, for a prism $(A,I)$ and an $A/I$-algebra
$R$, it agrees with derived prismatic cohomology in the sense of
\cite{prisms}, and, for $A=\bZ$, it agrees with absolute prismatic cohomology
in the sense of \cite{bhatt-lurie-apc}. This additional functoriality allows us to make
the following statement.

\begin{lemma}\label{lem:local_to_global}
  For a perfect $\bF_p$-algebra $k$ and a $W(k)\llbracket z_0\rrbracket$-algebra $R$, we have
      \[
        \Prism_R\{i\} \simeq \Prism_{R/W(k)}\{i\}\simeq \Tot\left(\Prism_{R/W(k)\llbracket z_0\rrbracket}\{i\} \stack{3} \Prism_{R/W(k)\llbracket z_0,z_1\rrbracket }\{i\} \stack{5} \ldots\right)\]
    and
    \[
        \Prismhat^{(1)}_{R/W(k)}\{i\} \simeq
    \Tot\left(\Prismhat_{R/W(k)\llbracket z_0\rrbracket }^{(1)}\{i\} \stack{3}
    \Prismhat^{(1)}_{R/W(k)\llbracket z_0,z_1\rrbracket }\{i\} \stack{5} \ldots\right)
\]
for all weights $i\in\bZ$.
\end{lemma}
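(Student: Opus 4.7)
The plan is to derive both equivalences by direct appeal to the foundational results on prismatic cohomology of $\delta$-pairs developed in \cite{akn-delta}; the lemma is essentially the absolute (i.e.\ $\F^0$) version of Theorem \ref{thm:bkdescent} applied to an arbitrary $W(k)\llbracket z_0\rrbracket$-algebra rather than to $R=\Oscr_K/\varpi^n$, so the proof amounts to isolating the portions of that argument that do not use the ramified quotient structure.

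First I would establish the equivalence $\Prism_R\{i\}\we \Prism_{R/W(k)}\{i\}$. Since $R$ is a $W(k)\llbracket z_0\rrbracket$-algebra and hence in particular a $W(k)$-algebra, and since $(W(k),(p))$ is the canonical perfect prism with $\overline{W(k)}=k$, the $\delta$-pair functoriality of absolute prismatic cohomology together with \cite[Thm.~1.2]{akn-delta} identifies absolute prismatic cohomology with prismatic cohomology computed relative to $W(k)$. The Breuil--Kisin twists pull through this identification tautologically because $W(k)\to \bZ_p$ is an unramified map of crystalline prisms.

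Next, for the totalization equivalence, the plan is to invoke the cosimplicial descent theorem for $\delta$-pairs from \cite{akn-delta}. The key point is that the cosimplicial $\delta$-ring $A^\bullet$ is, by construction, the completion of the Amitsur complex of the map $W(k)\to W(k)\langle z_0\rangle$ at the augmentation ideal, and the $\delta$-pair prismatic cohomology functor satisfies descent along this cosimplicial $\delta$-ring (this is \cite[Thm.~1.1]{akn-delta}). Concretely, the natural map
\[
\Prism_{R/W(k)}\{i\}\longrightarrow \Tot\left(\Prism_{R/A^\bullet}\{i\}\right)
\]
is an equivalence. The same argument, applied to Nygaard-completed Frobenius-twisted prismatic cohomology, yields the second totalization equivalence. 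This is justified because, both in the relative and in the cosimplicial setup, the Nygaard filtration and the Frobenius-twist are natural constructions on $\delta$-pair prismatic cohomology that commute with totalization, as used already in the proof of parts (a)--(b) of Theorem \ref{thm:bkdescent}.

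The main technical content is the cosimplicial descent invoked in the second step; all of the hard work has already been done in \cite{akn-delta} and also implicitly in the proof of Theorem \ref{thm:bkdescent}, where it was reduced to checking descent on $\F$-associated graded pieces of the Hodge--Tate and Nygaard filtrations via descent for the graded cotangent complex. Since the present lemma is stated without filtrations and for general $R$, one does not need to perform any of the filtration bookkeeping; one only needs to observe that the descent statement in \cite{akn-delta} is formulated for arbitrary $W(k)\llbracket z_0\rrbracket$-algebras $R$ and does not rely on $R$ being a quotient of $\Oscr_K$. Thus, the expected obstacle---verifying descent in the generality needed---is already dispatched by the companion paper.
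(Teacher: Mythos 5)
Your approach is exactly what the paper does: the paper's entire proof is the single citation ``See~\cite[Thm.~1.2(6)]{akn-delta},'' and your longer account (identifying $\Prism_R\{i\}\simeq\Prism_{R/W(k)}\{i\}$ via the companion paper's main comparison theorem, then invoking its cosimplicial descent statement for the totalizations) is a reasonable unpacking of what that reference contains. The only discrepancy is cosmetic --- you cite Thm.~1.1 and Thm.~1.2 of \cite{akn-delta} separately where the paper points to a single packaged item Thm.~1.2(6) --- but the substance and the source of the argument are the same.
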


\begin{proof}
    See~\cite[Thm.~1.2(6)]{akn-delta}.
\end{proof}

We will call the cosimplicial diagrams of Lemma~\ref{lem:local_to_global} the \emph{relative-to-absolute descent diagrams}
for absolute prismatic cohomology and Nygaard-complete absolute prismatic cohomology,
respectively. They give prismatic cohomology analogues of the approach to $\TP$ and
$\TC$ pioneered by Liu and Wang in
\cite{liu-wang}. Note that in the case $R = \cO_K / \varpi^n$, all terms appearing in the
diagrams are discrete, and thus the totalization is represented by the total complex of the
diagram. In this case, we will also call this diagram the relative-to-absolute descent complex.
There is also an $\F$-completed version of the untwisted relative-to-absolute descent
diagram which will play a role in the arguments below.

\begin{notation}
    If $(\F^\star A,\F^\star R)$ is a filtered $\delta$-pair in the sense
    of~\cite{akn-delta}, then $\Prism_{R/A}$ admits an associated $\F$-filtration
    $\F^\star\Prism_{R/A}$. We will write
    $\Prismhat_{R/A}$ for the completion of $\Prism_{R/A}$ with respect to $\F^\star$.
    If $A=\bZ_p$ with the trivial filtration, we write $\Prismhat_R$ for
    $\Prismhat_{R/\bZ_p}$.
\end{notation}

\begin{warning}
    Recall that for rings $R$ such as $\Oscr_K$ or $\Oscr_K/\varpi^n$, the Nygaard
    completion $\Prismhat_{R/W(k)\llbracket z_0,\ldots,z_s\rrbracket}^{(1)}$ agrees with
    the $\F$-completion of $\Prism_{R/W(k)\llbracket z_0,\ldots,z_s\rrbracket}^{(1)}$ by
    Theorem~\ref{thm:n_v_f}.
    In~\cite{bms2,bhatt-lurie-apc}, the Nygaard filtration is constructed on absolute
    prismatic cohomology $\Prism_R$ and one typically writes $\Prismhat_R$ for the
    completion with respect to the Nygaard filtration.
    In general, this conflicts with our notation. However, for rings $R$ such as
    $\Oscr_K$ or $\Oscr_K/\varpi^n$, the two completions agree using an argument similar
    to that of Theorem~\ref{thm:n_v_f}. In this paper,
    $\Prismhat_{R/A}$ will always denote the completion of $\Prism_{R/A}$ with respect
    to the $\F$-filtration induced by a filtration on the $\delta$-pair $(A,R)$ while
    $\Prismhat_{R/A}^{(1)}$ will always denote the completion of $\Prism_{R/A}^{(1)}$ with
    respect to the Nygaard filtration.
\end{warning}

\begin{definition}
    By Lemma~\ref{lem:local_to_global} for a filtered commutative $z_0^\star W(k)\llbracket
    z_0\rrbracket$-algebra $R$, there is an induced equivalence
      \[
        \Prismhat_R\{i\} \simeq \Prismhat_{R/W(k)}\{i\}\simeq
        \Tot\left(\Prismhat_{R/W(k)\llbracket z_0\rrbracket}\{i\} \stack{3}
        \Prismhat_{R/W(k)\llbracket z_0,z_1\rrbracket }\{i\} \stack{5} \ldots\right).\]
    We will refer to this as the {\em $\F$-completed relative-to-absolute descent
    diagram} for
    absolute prismatic cohomology.
\end{definition}

The relative-to-absolute descent diagrams are the cobar complexes associated to Hopf
algebroids, as we now explain.

\begin{definition}[Hopf algebroids]
    A Hopf algebroid consists of a pair of commutative rings $(\Gamma_0,\Gamma_1)$ with maps
    \begin{enumerate}
      \item $\eta_L,\eta_R\colon \Gamma_0\to \Gamma_1$,
      \item $\varepsilon\colon \Gamma_1\to \Gamma_0$,
      \item $\iota\colon \Gamma_1\to \Gamma_1$, and
      \item $\Delta\colon \Gamma_1\to \Gamma_1\otimes_{\eta_R,\Gamma_0,\eta_L} \Gamma_1$
    \end{enumerate}
    fulfilling various identities which make the pair $(\Spec\Gamma_0,\Spec\Gamma_1)$ into a
    groupoid object in the category of affine schemes. See~\cite[App.~1]{ravenel-green} for
    details.
\end{definition}

\begin{definition}[Comodules]
    For a Hopf algebroid $(\Gamma_0,\Gamma_1)$, a (right) \emph{comodule} is given by a
    right $\Gamma_0$-module $M$ together with a coaction map $M \to M\otimes_{\Gamma_0}
    \Gamma_1$, which is a right $\Gamma_0$-module map and which satisfies
    counit and coassociative identities as in~\cite[App.~1]{ravenel-green}.
\end{definition}

\begin{definition}
    The cobar complex of a Hopf algebroid $(\Gamma_0,\Gamma_1)$ is a cosimplicial
    commutative ring $\cB(\Gamma_0,\Gamma_1)$ of the form
    \[
      \Gamma_0 \stack{3} \Gamma_1 \stack{5} \Gamma_1\otimes_{\Gamma_0} \Gamma_1 \cdots.
    \]
    To a comodule $M$ over $(\Gamma_0,\Gamma_1)$, there is an associated cobar complex
    $\cB(M)$, taking the form
    \[
      M \stack{3} M\otimes_{\Gamma_0} \Gamma_1 \stack{5} M\otimes_{\Gamma_0} \Gamma_1 \otimes_{\Gamma_0}\Gamma_1\cdots,
    \]
    which is a cosimplicial module over $\cB(\Gamma_0,\Gamma_1)$.
    See~\cite[Def.~A1.2.11]{ravenel-green} for a reference to the reduced version of
    this construction.
\end{definition}

\begin{remark}
    Analogous definitions make sense in (complete) filtered commutative rings and modules.
\end{remark}

\begin{construction}\label{const:hopf_algebroid}
    Consider the Hopf algebroid $(W(k)\llbracket z_0\rrbracket,W(k)\llbracket
    z_0,z_1\rrbracket)$
    in commutative $W(k)$-algebras,
    where
    \begin{enumerate}
        \item[(1)] $\eta_L(z_0)=z_0$ and $\eta_R(z_0)=z_1$,
        \item[(2)] $\epsilon(z_0)=\epsilon(z_1)=z_0$,
        \item[(3)] $\iota(z_0)=z_1$ and $\iota(z_1)=z_0$, and
        \item[(4)] $\Delta(z_0)=z_0$ and $\Delta(z_1)=z_2$ under the identification
            $$W(k)\llbracket z_0,z_1\rrbracket\otimes_{\eta_R,W(k)\llbracket
            z_0\rrbracket,\eta_L}W(k)\llbracket z_0,z_1\rrbracket\iso W(k)\llbracket
            z_0,z_1,z_2\rrbracket.$$
    \end{enumerate}
    Write $W(k)\llbracket z_\bullet\rrbracket$ for the cobar complex of this Hopf
    algebroid, which agrees with the (completed) descent complex of $W(k)\rightarrow W(k)\llbracket
    z_0\rrbracket$. If $R$ is a commutative $W(k)\llbracket z_0\rrbracket$-algebra, we
    can consider the pair of pairs $$((R/W(k)\llbracket z_0\rrbracket),(R/W(k)\llbracket
    z_0,z_1\rrbracket)),$$ which is a Hopf algebroid in the category of $\delta$-pairs.
\end{construction}

\begin{lemma}\label{lem:prismatic_hopf_algebroid}
    Suppose that $R$ is a commutative $W(k)\llbracket z_0\rrbracket$-algebra such that
    $\L_{R/W(k)\llbracket z_0\rrbracket}$ has $p$-complete Tor-amplitude in $[0,1]$.
  \begin{enumerate}
      \item[{\em (a)}] The pair $(\Prism_{R/W(k)\llbracket z_0\rrbracket}, \Prism_{R/W(k)\llbracket
          z_0,z_1\rrbracket})$ forms a complete filtered Hopf algebroid (with respect to
          the Hodge--Tate filtration) and the
          local-to-global descent complex $\Prism_{R/W(k)\llbracket z_\bullet\rrbracket}$ identifies with its cobar complex.
      \item[{\em (b)}] For each integer $i$, the $i$th Breuil--Kisin twist
          $\Prism_{R/W(k)\llbracket z_0\rrbracket}\{i\}$ is a comodule over the Hopf
          algebroid of part (a) and the descent complex $\Prism_{R/W(k)\llbracket z_\bullet\rrbracket}\{i\}$ identifies with its cobar complex.
  \end{enumerate}
\end{lemma}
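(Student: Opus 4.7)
The plan is to apply the functor $\Prism_{R/-}$ to the Hopf algebroid $(A^0,A^1)=(W(k)\llbracket z_0\rrbracket, W(k)\llbracket z_0,z_1\rrbracket)$ of Construction~\ref{const:hopf_algebroid}, viewed as a groupoid object in complete augmented filtered $\delta$-rings. The first step is to reinterpret the cosimplicial diagram $W(k)\llbracket z_\bullet\rrbracket$ as the nerve of this groupoid: its terms $A^s=W(k)\llbracket z_0,\ldots,z_s\rrbracket$ are the (completed) iterated pushouts $A^1\hotimes_{A^0}\cdots\hotimes_{A^0}A^1$ (with respect to the appropriate left/right structure maps). Provided the functor $\Prism_{R/-}$ carries these completed pushouts of $\delta$-pairs $(A^s,R)$ to the corresponding completed relative tensor products of prismatic cohomology rings, part (a) will follow formally by transporting the groupoid structure.

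Concretely, the reduction is to verify the base-change statement
\[
\Prism_{R/A^1}\we \Prism_{R/A^0}\hotimes^{L}_{A^0}\Prism_{R/A^0},
\]
where the two factors are coupled via $\eta_L$ and $\eta_R$ respectively, and analogously for all higher $A^s$. This is exactly the pushout property of relative prismatic cohomology for $\delta$-pairs established in the companion paper \cite{akn-delta} (via the universal property of $\Prism_{R/-}$ on the category of $\delta$-pairs). The Tor-amplitude hypothesis on $\L_{R/A^0}$ ensures that $\Prism_{R/A^s}$ is concentrated in degree zero (by repeated application of the Hodge--Tate comparison), so no derived issues arise, and that the $\F$-adic/Hodge--Tate filtration on each $\Prism_{R/A^s}$ is complete and compatible with the pushouts. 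Combined with Lemma~\ref{lem:local_to_global}, which identifies the totalization with $\Prism_R$, this yields the cobar complex identification.

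For part (b), the $i$th Breuil--Kisin twist is pulled back from $A^0$: along either $\eta_L$ or $\eta_R$ the twist $\Prism_{R/A^1}\{i\}$ is obtained by base change from $\Prism_{R/A^0}\{i\}$, since the Breuil--Kisin twist is functorial in maps of prisms and the maps $A^0\to A^1$ (with prism structure $(E(z_0))$ on the source and $(E(z_j))$ on the target for suitable $j$) become maps of prisms in the prismatic envelope, with unit-equivalent orientations by the discussion in Section~\ref{sec:bko}. This upgrades $\Prism_{R/A^0}\{i\}$ to a comodule over the Hopf algebroid of (a) with coaction $\Prism_{R/A^0}\{i\}\xto{\eta_R}\Prism_{R/A^1}\{i\}$, and the cobar complex of this comodule is the twisted descent diagram.

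The main obstacle is the careful verification of the base-change equivalence together with its filtered and Breuil--Kisin twisted refinements. In particular, one must show that the two a priori different prism structures on $A^1$ (induced by $E(z_0)$ versus $E(z_1)$) give naturally identified Breuil--Kisin twists on $\Prism_{R/A^1}$, with the identification being equivariant for the Hopf algebroid structure; equivalently, the orientations $E(z_j)$ for $0\leq j\leq s$ should differ by units in $\Prism_{R/A^s}$ in a coherent way. Once this coherence is checked (using that the divisibilities $E(z_j)-E(z_0)\in(z_j-z_0)\subseteq (E(z_0))$ hold inside the envelope), everything else is formal naturality.
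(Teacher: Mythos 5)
Your big-picture strategy coincides with the paper's: apply $\Prism_{R/-}$ pointwise to the Hopf algebroid of Construction~\ref{const:hopf_algebroid}, show that the functor carries the completed tensor decompositions of $A^s$ to the corresponding tensor decompositions of $\Prism_{R/A^s}$, and then transport the groupoid/comodule structure formally. But the ``concrete'' base-change statement you reduce to is misstated. You write
\[
\Prism_{R/A^1}\we \Prism_{R/A^0}\hotimes^{L}_{A^0}\Prism_{R/A^0},
\]
and this is not the Hopf-algebroid identification you need, nor is it correct: the right-hand side has no $z_1$ in its base and the two sides have visibly different generators-and-relations presentations in the envelope picture. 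The statement that actually encodes ``$\Gamma_1\otimes_{\Gamma_0}\Gamma_1\cong\Gamma_2$'' is the one the paper verifies, namely that for $0\le j\le s$ the natural map
\[
\Prism_{R/W(k)\llbracket z_0,\ldots,z_j\rrbracket}\{i\} \otimes_{\Prism_{R/W(k)\llbracket z_j\rrbracket}} \Prism_{R/W(k)\llbracket z_j,\ldots,z_s\rrbracket} \longrightarrow \Prism_{R/W(k)\llbracket z_0,\ldots,z_s\rrbracket}\{i\}
\]
is an equivalence. Note the tensor product is over $\Gamma_0=\Prism_{R/A^0}$, not over $A^0$, and the factors are copies of $\Gamma_s=\Prism_{R/A^s}$ with $s\ge 1$, not of $\Gamma_0$. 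Taking $s=2,\,j=1$ produces $\Delta$, and the higher $s$ give the identification of the descent complex with the cobar complex. The paper deduces this equivalence from symmetric monoidality of prismatic cohomology for prismatic $\delta$-pairs, via the compatibility of relative Cartier--Witt stacks with limits of schemes and their affineness (Bhatt--Lurie); you invoke the companion paper \cite{akn-delta} for the pushout property instead, which would be a fine alternative if that reference indeed carries the statement, but as written the target of your reduction is the wrong formula.

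Two smaller points. Your assertion that the Tor-amplitude hypothesis in $[0,1]$ on $\L_{R/W(k)\llbracket z_0\rrbracket}$ forces each $\Prism_{R/A^s}$ to be discrete is false: amplitude $[0,1]$ permits a smooth direction, so the conjugate-filtered pieces $\widehat{\L\Omega}^j_{R/A}[-j]$ need not be concentrated in degree $0$ (that requires amplitude $[1,1]$). Fortunately this claim is extraneous; the monoidality statement is established at the level of filtered complexes without discreteness. Finally, your more detailed discussion for part (b) of how the different prism structures $(E(z_j))$ on $A^s$ give unit-equivalent orientations inside the envelope is sound, and parallels the discussion just before the lemma; the paper compresses this to a citation of functoriality of Breuil--Kisin twists. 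That part is fine.
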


\begin{proof}
    We can apply prismatic cohomology (in the sense of~\cite{akn-delta}) pointwise to the cobar complex of the Hopf algebroid in
    $\delta$-rings of Construction~\ref{const:hopf_algebroid}. This produces the
    local-to-global descent complex computing absolute prismatic cohomology of
    Lemma~\ref{lem:local_to_global}. For each $i\in\bZ$, $s\geq 2$, and $0\leq j\leq s$, the natural map
  \[
    \Prism_{R/W(k)\llbracket z_0,\ldots,z_j\rrbracket}\{i\} \otimes_{\Prism_{R/W(k)\llbracket z_j\rrbracket}} \Prism_{R/W(k)\llbracket z_j,\ldots,z_s\rrbracket} \to \Prism_{R/W(k)\llbracket z_0,\ldots,z_s\rrbracket}\{i\}
  \]
  is an equivalence by symmetric monoidality of prismatic cohomology for prismatic
    $\delta$-pairs, which follows for example from the compatibility of relative
    (derived) Cartier--Witt
    stacks with limits of schemes and the affineness of all such stacks in question
    (see~\cite[Thm.~7.17]{bhatt-lurie-prism}), and by functoriality of the Breuil--Kisin twists~\cite[Prop.~2.5.1]{bhatt-lurie-apc}.
    Taking $s=2$ and $j=1$ produces the comultiplication $\Delta$.
    For $s>2$, one learns that the cobar complex is indeed isomorphic to the local-to-global descent
    complex. This completes the proof.
\end{proof}

\begin{remark}\label{rem:completed_hopf_algebroid}
    The $\F$-completion $(\F^\star\Prismhat_{R/W(k)\llbracket
    z_0\rrbracket},\F^\star\Prismhat_{R/W(k)\llbracket z_0,z_1\rrbracket})$ of the Hopf
    algebroid of Lemma~\ref{lem:prismatic_hopf_algebroid} is a Hopf algebroid in complete
    $p$-complete filtered complexes (with respect to the $\F$-filtration) and the
    $\F$-completed relative-to-absolute descent diagram is its cobar complex.
    A result analogous to Lemma~\ref{lem:prismatic_hopf_algebroid}(2) holds for the Breuil--Kisin twists.
\end{remark}

The analogous statements hold with Frobenius twists, and the proof is similar.
For the relevant symmetric monoidality of the Nygaard-completed Frobenius twists, one
can reduce to the associated graded pieces of the Nygaard filtration and hence to the
conjugate filtered pieces of Hodge--Tate cohomology where it follows using symmetric
monoidality of $p$-completed derived differential forms $\widehat{\L\Omega}^*_{-/-}$.

\begin{lemma}
  \begin{enumerate}
      \item[{\em (a)}] The pair $(\Prismhat_{R/W(k)\llbracket z_0\rrbracket}^{(1)},
          \Prismhat_{R/W(k)\llbracket z_0,z_1\rrbracket}^{(1)})$ forms a complete
          filtered Hopf algebroid (with respect to the Nygaard filtration), and
          the descent complex $\Prismhat_{R/W(k)\llbracket z_\bullet\rrbracket}^{(1)}$
          identifies with its cobar complex.
      \item[{\em (b)}] For each integer $i$, the $i$th Breuil--Kisin twist
          $\Prismhat_{R/W(k)\llbracket z_0\rrbracket}^{(1)}\{i\}$ is a comodule over the
          Hopf algebroid of part (a) and the local-to-global descent complex $\Prismhat_{R/W(k)\llbracket z_\bullet\rrbracket}^{(1)}\{i\}$ identifies with its cobar complex.
  \end{enumerate}
\end{lemma}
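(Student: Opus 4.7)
The plan is to mirror the proof of Lemma~\ref{lem:prismatic_hopf_algebroid} but with all prismatic terms replaced by their Nygaard-completed Frobenius twists. As before, we apply the construction $(A,S) \mapsto \Prismhat^{(1)}_{S/A}$ levelwise to the cobar complex of the Hopf algebroid of $\delta$-pairs induced by Construction~\ref{const:hopf_algebroid}. This produces the descent complex $\Prismhat^{(1)}_{R/W(k)\llbracket z_\bullet\rrbracket}$ of Lemma~\ref{lem:local_to_global}, equipped with its Nygaard filtration. To identify this with the cobar complex of a Hopf algebroid, it suffices (as in the untwisted case) to show that for each $s \geq 2$ and each $0 \leq j \leq s$, the natural comparison map
\[
    \Prismhat^{(1)}_{R/W(k)\llbracket z_0,\ldots,z_j\rrbracket}\,\tensorhat_{\Prismhat^{(1)}_{R/W(k)\llbracket z_j\rrbracket}}\,\Prismhat^{(1)}_{R/W(k)\llbracket z_j,\ldots,z_s\rrbracket} \longrightarrow \Prismhat^{(1)}_{R/W(k)\llbracket z_0,\ldots,z_s\rrbracket}
\]
is an equivalence of Nygaard-complete filtered objects, where the completed tensor product is taken in the Nygaard-filtered sense. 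Taking $s=2$, $j=1$ then produces the comultiplication $\Delta$, and for larger $s$ the cobar complex is forced to coincide with the descent complex.

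The main step, and the essential new ingredient beyond the proof of Lemma~\ref{lem:prismatic_hopf_algebroid}, is to verify this symmetric monoidality property. Since the claim is a statement about complete filtered objects with respect to the Nygaard filtration, it suffices to check it on associated graded pieces of the Nygaard filtration. By Remark~\ref{rem:nygaard}(iii), the Nygaard-graded pieces are identified with conjugate-filtered pieces of Hodge--Tate cohomology, namely $\gr^i_\N \Prismhat^{(1)}_{S/A} \simeq \F^\conj_{\leq i}\Prismbar_{S/A}\{i\}$. The Breuil--Kisin twists base change monoidally along maps of prisms by~\cite[Prop.~2.5.1]{bhatt-lurie-apc}, so we may drop them. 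Then the conjugate-graded pieces are in turn computed by $\gr^\conj_j \Prismbar_{S/A} \simeq \widehat{\L\Omega}^j_{S/\overline{A}}[-j]$, and the desired symmetric monoidality reduces to symmetric monoidality of $p$-completed derived exterior powers of cotangent complexes, which holds by the standard base change formula $\L_{S/\overline{A}} \tensorhat_S S' \simeq \L_{S'/\overline{A}'}$ combined with the additivity of $\L\Omega^*$ in extensions.

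For part (b), the Breuil--Kisin twists $\Prismhat^{(1)}_{R/W(k)\llbracket z_0\rrbracket}\{i\}$ are functorial in the $\delta$-pair, again by~\cite[Prop.~2.5.1]{bhatt-lurie-apc}, so they assemble into a comodule over the filtered Hopf algebroid produced in part (a). The identification of the descent complex $\Prismhat^{(1)}_{R/W(k)\llbracket z_\bullet\rrbracket}\{i\}$ with the cobar complex of this comodule follows from the analogous equivalence
\[
    \Prismhat^{(1)}_{R/W(k)\llbracket z_0,\ldots,z_j\rrbracket}\{i\}\,\tensorhat_{\Prismhat^{(1)}_{R/W(k)\llbracket z_j\rrbracket}}\,\Prismhat^{(1)}_{R/W(k)\llbracket z_j,\ldots,z_s\rrbracket} \we \Prismhat^{(1)}_{R/W(k)\llbracket z_0,\ldots,z_s\rrbracket}\{i\},
\]
obtained by tensoring the untwisted symmetric monoidality with the Breuil--Kisin twist. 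The main obstacle is the symmetric monoidality after Nygaard completion, since Nygaard completion does not interact well with tensor products in general; reducing to the conjugate filtration on Hodge--Tate cohomology, where completeness is automatic because the filtration is bounded above by Lemma~\ref{lem:truncated} in the cases of interest, and using the Hodge--Tate comparison theorem, resolves this.
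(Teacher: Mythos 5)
Your proposal is correct and follows essentially the same route as the paper: the paper disposes of this lemma in a single sentence right before the statement, saying that one reduces the symmetric monoidality of the Nygaard-completed Frobenius twist to the Nygaard-associated-graded pieces, identifies these with conjugate-filtered Hodge--Tate cohomology via Remark~\ref{rem:nygaard}(iii), and concludes from symmetric monoidality of $p$-completed $\widehat{\L\Omega}^*_{-/-}$. Your elaboration is faithful to this; the only minor imprecision is that the base change formula you cite ($\L_{S/\overline{A}}\tensorhat_S S'\simeq\L_{S'/\overline{A}'}$) is a change-of-base statement, whereas what is really needed is the Künneth-type decomposition $\L_{S_1\tensorhat_{S_0}S_2/\overline{A}}$ in terms of $\L_{S_1/\overline{A}}$, $\L_{S_2/\overline{A}}$, $\L_{S_0/\overline{A}}$ together with the binomial expansion of $\L\Omega^j$ of a direct sum — which is evidently what you mean by ``additivity of $\L\Omega^*$ in extensions.''
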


\subsection{Structure maps of the descent complex}

We now describe the structure maps in detail for $R= \cO_K/\varpi^n$. From now on, we
will be using $\F$-completed prismatic cohomology $\Prismhat_{R/A}$. In this case,
\[
  \Prismhat_{R/W(k)\llbracket z_0\rrbracket} = W(k)\llbracket z_0\rrbracket \left\{\frac{z_0^n}{E(z_0)}\right\}^\wedge_{p,E(z_0)}
\]
is an $\F$-completely free $W(k)$-module on monomials $z_0^k \prod_{u}
\delta^u(a)^{e_u}$ for $k<n$ and $e_u<p$, where $a = \frac{z_0^n}{E(z_0)}$
by Proposition~\ref{prop:f_filtered_statement_for_prism}. On
\[
  \Prismhat_{R/W(k)\llbracket z_0,z_1\rrbracket} = W(k)\llbracket z_0\rrbracket \left\{\frac{z_0^n}{E(z_0)}, \frac{z_1-z_0}{E(z_0)}\right\}^\wedge_p
\]
we similarly have a $W(k)$-basis of monomials $z_0^k \prod_{u} \delta^u(a)^{e_u} \prod_u \delta^u(b)^{e'_u}$ with $k<n$ and $e_u,e'_u<p$, where $a = \frac{z_0^k}{E(z_0)}$ and $b=\frac{z_1-z_0}{E(z_0)}$.

Observe that the two orientations $E(z_1)$ and $E(z_0)$ of $\Prismhat_{R/W(k)\llbracket z_0,z_1\rrbracket}$ differ by the factor
\[
  u = \frac{E(z_1)}{E(z_0)} = 1 + \frac{E(z_1)-E(z_0)}{z_1-z_0} b.
\]
By Lemma~\ref{lem:bk-factor-general}, the corresponding filtered-crystalline
Breuil--Kisin orientations $s_0$ and $s_1$ compatible with the left and right unit maps
from $\Prismhat_{R/W(k)\llbracket z_1\rrbracket}$ differ by the factor $v = \frac{s_1}{s_0}
= \prod_{r\geq 0} \varphi^r(u)$.

\begin{lemma}\label{lem:untwisted_hopf_algebroid}
    The structure maps in the Hopf algebroid $(\Prismhat_{R/W(k)\llbracket
    z_0\rrbracket},\Prismhat_{R/W(k)\llbracket z_0,z_1\rrbracket})$ are determined, in terms of
    the prismatic envelope description of
    Proposition~\ref{prop:f_filtered_statement_for_prism}, by noting that
  \begin{enumerate}
      \item[{\em (i)}] $\eta_L\colon\Prismhat_{R/W(k)\llbracket z_0\rrbracket}\to
          \Prismhat_{R/W(k)\llbracket z_0,z_1\rrbracket}$ takes $z_0 \mapsto z_0$ and $a\mapsto a$;
      \item[{\em (ii)}] $\eta_R\colon \Prismhat_{R/W(k)\llbracket z_0\rrbracket}\to
          \Prismhat_{R/W(k)\llbracket z_0,z_1\rrbracket}$ takes 
      \[
        z_0 \mapsto z_1 = z_0 + E(z_0)\cdot b
      \]
      and 
      \[
        a\mapsto \frac{z_1^n}{E(z_1)} =  \frac{(z_0 + E(z_0)b)^n}{E(z_0)} \cdot u^{-1};
      \]
  \item[{\em (iii)}] $\varepsilon\colon \Prismhat_{R/W(k)\llbracket
      z_0,z_1\rrbracket}\to \Prismhat_{R/W(k)\llbracket z_0\rrbracket}$ takes $z_0\mapsto z_0$, $a\mapsto a$, and $b \mapsto 0$;
  \item[{\em (iv)}] $\iota\colon \Prismhat_{R/W(k)\llbracket z_0,z_1\rrbracket}\to
      \Prismhat_{R/W(k)\llbracket z_0,z_1\rrbracket}$ takes $z_0 \mapsto \eta_R(z_0)=z_1$, $a\mapsto \eta_R(a)$, and
      \[
        b \mapsto \frac{z_0-z_1}{E(z_1)} = -b u^{-1};
      \]
  \item[{\em (v)}] $\Delta\colon \Prismhat_{R/W(k)\llbracket z_0,z_1\rrbracket}
      \to \Prismhat_{R/W(k)\llbracket z_0,z_1,z_2\rrbracket} \cong \Prismhat_{R/W(k)\llbracket
          z_0,z_1\rrbracket} \otimes_{\eta_R,\Prism_{R/W(k)\llbracket
          z_0\rrbracket},\eta_L} \Prismhat_{R/W(k)\llbracket z_0,z_1\rrbracket}$
    takes $z_0\mapsto z_0\otimes 1$, $a\mapsto a\otimes 1$, and
        \[
          b = \frac{z_1-z_0}{E(z_0)}\mapsto \frac{z_2-z_0}{E(z_0)} =
          \frac{z_1-z_0}{E(z_0)} + \frac{z_2-z_1}{E(z_1)}\left(1 +
          \frac{E(z_1)-E(z_0)}{z_1-z_0} b\right) = b \otimes 1 + u \otimes b.
        \]
  \end{enumerate}
  The values on the remaining generators are determined by compatibility of the
    structure maps with the $\delta$-ring structures.
\end{lemma}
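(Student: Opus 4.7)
The plan is to derive each formula by pushing the structure maps of the base Hopf algebroid $(W(k)\llbracket z_0\rrbracket, W(k)\llbracket z_0,z_1\rrbracket)$ of Construction~\ref{const:hopf_algebroid} through the prismatic envelope functor, using Lemma~\ref{lem:prismatic_hopf_algebroid} (which applies because $\L_{R/W(k)\llbracket z_0\rrbracket}$ has $p$-complete Tor-amplitude in $[0,1]$, $R$ being a complete intersection quotient). The universal property of the prismatic envelope tells us that once the image of $z_0$ is specified, the image of $a = \frac{z_0^n}{E(z_0)}$ and $b = \frac{z_1-z_0}{E(z_0)}$ is forced. The recurring technical input is that the two orientations $E(z_0)$ and $E(z_1)$ in $\Prismhat_{R/W(k)\llbracket z_0,z_1\rrbracket}$ are related by the explicit unit $u = E(z_1)/E(z_0) = 1 + \frac{E(z_1)-E(z_0)}{z_1-z_0}\,b$, valid because $(E(z_1)-E(z_0))/(z_1-z_0)$ is a polynomial in $z_0,z_1$.

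First I would dispatch $\eta_L$ and $\varepsilon$. For $\eta_L$, the base map sends $z_0\mapsto z_0$, preserves the chosen orientation, and so fixes $a$ on the nose. For $\varepsilon$, the augmentation $z_1\mapsto z_0$ sends $z_1-z_0$ to $0$ while preserving $E(z_0)$, so $b\mapsto 0$ and $a\mapsto a$. Next I would treat $\eta_R$, which is induced by $z_0\mapsto z_1$; thus $a\mapsto\frac{z_1^n}{E(z_1)}$, which we re-express using $z_1 = z_0 + E(z_0)b$ and $E(z_1) = u\cdot E(z_0)$ to obtain the stated formula.

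For the antipode $\iota$ (the swap $z_0\leftrightarrow z_1$), the image of $b=(z_1-z_0)/E(z_0)$ is $(z_0-z_1)/E(z_1) = -b\cdot u^{-1}$, again by the $u$-identity. For the coproduct $\Delta$, the key computation is to decompose the image of $b$ in $\Prismhat_{R/W(k)\llbracket z_0,z_1,z_2\rrbracket}$ as
\[
\frac{z_2 - z_0}{E(z_0)} \;=\; \frac{z_1-z_0}{E(z_0)} \;+\; \frac{z_2-z_1}{E(z_1)}\cdot\frac{E(z_1)}{E(z_0)},
\]
and then to identify the first summand with $b\otimes 1$ and the second with $u\otimes b$ under the pushout identification $\Prismhat_{R/W(k)\llbracket z_0,z_1\rrbracket}\otimes_{\eta_R,\Prismhat_{R/W(k)\llbracket z_0\rrbracket},\eta_L}\Prismhat_{R/W(k)\llbracket z_0,z_1\rrbracket} \cong \Prismhat_{R/W(k)\llbracket z_0,z_1,z_2\rrbracket}$ from Lemma~\ref{lem:prismatic_hopf_algebroid}.

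The main obstacle, such as there is one, is bookkeeping the change of orientation at each step: the quotient $a$ is canonically attached to the orientation $E(z_0)$ of the source prism, and each transfer map must compare it with the orientation $E(z_j)$ that is natural on the target. Once the unit $u$ and its compatibility with $\eta_L,\eta_R$ (so that one can rewrite $1/E(z_1) = u^{-1}/E(z_0)$ unambiguously) are in hand, every formula reduces to an algebraic identity in the prismatic envelope. The values on the remaining generators $\delta^u(a),\delta^u(b)$ are then forced by compatibility of Hopf algebroid structure maps with the $\delta$-ring structure, needing no additional argument.
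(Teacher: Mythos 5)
Your proposal is correct and fills in, in detail, exactly what the paper's one-line proof ("This follows from the description as prismatic envelope") leaves implicit: the structure maps of the base Hopf algebroid of Construction~\ref{const:hopf_algebroid} are pushed through the prismatic envelope via its universal property, and the only nontrivial bookkeeping is the change-of-orientation unit $u=E(z_1)/E(z_0)$. The approach is the same; you have simply spelled out each case.
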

\begin{proof}
  This follows from the description as prismatic envelope.
\end{proof}

For the Frobenius-twisted term, we have a similar description. Recall from
Proposition~\ref{prop:f_filtered_statement} that
\[
  \Prismhat^{(1)}_{R/W(k)\llbracket z_0\rrbracket} = W(k)\llbracket z_0\rrbracket \left\{\frac{z_0^{pn}}{\varphi(E(z_0))}\right\}^\wedge
\]
is $\F$-completely $W(k)$-free on monomials $z_0^k \prod f_u^{e_u}$ where $k<n$ and $e_u<p$, and
\[
  \Prismhat^{(1)}_{R/W(k)\llbracket z_0,z_1\rrbracket} = W(k)\llbracket z_0,z_1\rrbracket \left\{\frac{z_0^{pn}}{\varphi(E(z_0))}, \frac{z_1^p-z_0^p}{\varphi(E(z_0))}\right\}^\wedge
\]
is $\F$-completely $W(k)$-free on monomials $z_0^k \prod f_u^{e_u} \prod g_u^{e'_u}$ where $k<n$, $e_u<p$, $e'_u<p$.
Here $f_0$ is $z_0^n$, viewed as element of $\N^{\geq 1}\Prismhat^{(1)}$, $g_0$ is $z_1-z_0$, viewed as element of $\N^{\geq 1}\Prismhat^{(1)}$, and $f_u$ and $g_u$ arise from iterating $\widetilde{\delta}: \N^{\geq \star}\Prismhat^{(1)}\to \N^{\geq p\star}\Prismhat^{(1)}$.

\begin{lemma}\label{lem:hopf_algebroid}
    The structure maps in the Hopf algebroid $(\Prismhat^{(1)}_{R/W(k)\llbracket
    z_0\rrbracket},\Prismhat^{(1)}_{R/W(k)\llbracket z_0,z_1\rrbracket})$ are determined, in terms of
    the prismatic envelope description of Proposition~\ref{prop:f_filtered_statement}, by
    noting that
  \begin{enumerate}
      \item[{\em (a)}] $\eta_L$ takes $z_0\mapsto z_0$ and $f_0\mapsto f_0$;
      \item[{\em (b)}] $\eta_R$ takes $z_0 \mapsto z_1 = z_0 + g_0$ and $f_0 \mapsto (z_0+g_0)^n$;
      \item[{\em (c)}] $\varepsilon$ takes $z_0 \mapsto z_0$, $f_0\mapsto f_0$ and $g_0\mapsto 0$;
      \item[{\em (d)}] $\iota$ takes $z_0 \mapsto \eta_R(z_0)=z_0+g_0$, $f_0\mapsto
          \eta_R(f_0)=(z_0+g_0)^n$ and $g_0 \mapsto z_0 - z_1 = -g_0$;
      \item[{\em (e)}] $\Delta$ takes $z_0\mapsto z_0$, $f_0\mapsto f_0$, and $g_0 \mapsto z_2 - z_0 = (z_1 - z_0) + (z_2-z_1) = g_0\otimes 1 + 1\otimes g_0$.
  \end{enumerate}
  The values on the other generators are determined by compatibility of the structure
    maps with the $\delta$-ring structures.
\end{lemma}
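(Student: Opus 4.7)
The plan is to mimic the proof of Lemma~\ref{lem:untwisted_hopf_algebroid} verbatim, but for the Nygaard-completed Frobenius twist rather than $\F$-completed prismatic cohomology. By Remark~\ref{rem:completed_hopf_algebroid} (in its Frobenius-twisted version), the Hopf algebroid in question arises functorially by applying Nygaard-completed Frobenius-twisted prismatic cohomology to the Hopf algebroid of $\delta$-pairs $((R/W(k)\llbracket z_0\rrbracket),(R/W(k)\llbracket z_0,z_1\rrbracket))$ from Construction~\ref{const:hopf_algebroid}. In particular, the structure maps are all maps of filtered (by Nygaard) commutative rings, and on the underlying discrete rings they are maps of $\delta$-rings. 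Thus their action is determined by the images of the $\delta$-ring generators $z_0, f_0, g_0$ of the Nygaard-completed prismatic envelopes from Proposition~\ref{prop:f_filtered_statement}.

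First I would read off the action on $z_0$ from the formulas in Construction~\ref{const:hopf_algebroid}: $\eta_L$ and $\varepsilon$ send $z_0 \mapsto z_0$, $\eta_R$ and $\iota$ send $z_0 \mapsto z_1 = z_0 + g_0$, and $\Delta(z_0) = z_0 \otimes 1$. Next, because the structure maps preserve the Nygaard filtration and because $f_0 \in \N^{\geq 1}\Prismhat^{(1)}$ is defined (via Construction~\ref{const:fuv} and Lemma~\ref{lem:twisted_envelope}) as the image of the ``relation'' $r = z_0^n$, functoriality of the prismatic envelope forces $\eta_L(f_0) = f_0$, $\eta_R(f_0) = (z_0+g_0)^n$, $\varepsilon(f_0)= f_0$, $\iota(f_0) = (z_0+g_0)^n$, and $\Delta(f_0) = f_0 \otimes 1$; here $(z_0+g_0)^n$ really does lie in $\N^{\geq 1}\Prismhat^{(1)}_{R/W(k)\llbracket z_0,z_1\rrbracket}$ because the term $z_0^n$ is the relation $f_0$ of the target envelope and the remaining terms are multiples of $g_0 \in \N^{\geq 1}$.

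Similarly, $g_0 \in \N^{\geq 1}$ is the image of the relation $z_1-z_0$, so its image under each structure map is just $z_1 - z_0$ read off under the relevant substitution from Construction~\ref{const:hopf_algebroid}: one obtains $\varepsilon(g_0) = z_0 - z_0 = 0$ and $\iota(g_0) = z_0 - z_1 = -g_0$. The only computation that requires a brief thought is $\Delta(g_0)$: under the identification
\[
    \Prismhat^{(1)}_{R/W(k)\llbracket z_0,z_1\rrbracket}\otimes_{\eta_R,\Prismhat^{(1)}_{R/W(k)\llbracket z_0\rrbracket},\eta_L}\Prismhat^{(1)}_{R/W(k)\llbracket z_0,z_1\rrbracket}\cong\Prismhat^{(1)}_{R/W(k)\llbracket z_0,z_1,z_2\rrbracket},
\]
which sends $z_0\otimes 1\mapsto z_0$, $z_1\otimes 1\mapsto z_1 \mapsto 1\otimes z_0$, and $1\otimes z_1\mapsto z_2$, the element $g_0 = z_1 - z_0$ pushes forward to $z_2 - z_0 = (z_1 - z_0) + (z_2 - z_1)$, which matches $g_0 \otimes 1 + 1\otimes g_0$ under the identification.

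Finally, the higher generators $f_u, g_u$ for $u\geq 1$ are obtained by iterating $\widetilde{\delta}$ on $f_0$ and $g_0$ (Construction~\ref{const:fuv}). Since $\widetilde{\delta}_i$ is defined on each Nygaard-filtered piece in terms of the $\delta$-ring structure, the Frobenius, and the orientation $d$ (Proposition~\ref{prop:graded_delta}), and since the structure maps are Nygaard-filtered $\delta$-ring maps, they are compatible with $\widetilde{\delta}$ and hence their values on $f_u, g_u$ are forced. The potentially delicate point worth checking is that this remains true along $\eta_R$, where the orientation $E(z_0)$ in the source pulls back to $E(z_1) = u\cdot E(z_0)$ for a unit $u$ in the target envelope; however, both $E(z_0)$ and $E(z_1)$ generate the same ideal $I$ in $\Prismhat^{(1)}_{R/W(k)\llbracket z_0,z_1\rrbracket}$, so the $\widetilde{\delta}$'s computed from either orientation agree on Nygaard associated graded pieces after the unit correction, and the resulting recursion lifts uniquely to the completion. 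The main obstacle is really just bookkeeping of this unit factor, and once it is verified the lemma follows.
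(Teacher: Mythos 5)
Your derivation of parts (a)--(e) is correct and is essentially how the paper treats them: the maps on $z_0, f_0, g_0$ are forced by functoriality of the prismatic envelope and the formulas from Construction~\ref{const:hopf_algebroid}. The problem is in your final paragraph. You claim that ``since the structure maps are Nygaard-filtered $\delta$-ring maps, they are compatible with $\widetilde{\delta}$ and hence their values on $f_u, g_u$ are forced.'' This is false, and the paper says so explicitly in the remark immediately following the lemma: \emph{the structure maps are not compatible with $\widetilde{\delta}$}. The operation $\widetilde{\delta}$ is not intrinsic to the filtered $\delta$-ring; it depends on a choice of orientation $d$ (see its defining formula $\widetilde{\delta}_i(x) = \delta(x) - \delta(d^i)\tfrac{\varphi(x)}{\varphi(d)^i}$). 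Both $\Prismhat^{(1)}_{R/W(k)\llbracket z_0\rrbracket}$ and $\Prismhat^{(1)}_{R/W(k)\llbracket z_0, z_1\rrbracket}$ are equipped with the orientation $E(z_0)$, but $\eta_R$ transports $E(z_0)$ to $E(z_1) = u\cdot E(z_0)$, so $\eta_R\circ\widetilde{\delta}_{E(z_0)} = \widetilde{\delta}_{E(z_1)}\circ\eta_R \neq \widetilde{\delta}_{E(z_0)}\circ\eta_R$. You do flag this as ``the potentially delicate point'' but then dismiss it as bookkeeping; it is in fact the entire obstruction, and the sentence ``the resulting recursion lifts uniquely to the completion'' is not an argument.

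The phrase ``compatibility of the structure maps with the $\delta$-ring structures'' in the lemma statement refers to the underlying (orientation-independent) $\delta$-ring structure on $\Prismhat^{(1)}_{R/A}$, not to $\widetilde{\delta}$. The correct route is: the structure maps are $\delta$-ring morphisms on the underlying rings, and Lemma~\ref{lem:f_uv_structure} gives $\delta(f_u)$ and $\delta(g_u)$ as explicit polynomials in $f_0,\dots,f_{u+1}$ (resp.\ $g_0,\dots,g_{u+1}$), which lets one propagate the values on $f_0, g_0$ to the higher generators without ever invoking $\widetilde{\delta}$. The paper moreover notes that for the actual use made of this lemma, it suffices to identify the structure maps on the $\F$-associated graded, which is Lemma~\ref{lem:hopf_algebroid_graded}; your proposal does not address that reduction either.
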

 Note that the maps in Lemma~\ref{lem:hopf_algebroid} are not compatible with the operations $\widetilde{\delta}$, since
 they depend on the orientation, so one has to use the formulas for $\delta$ on the
 $f_u$ and $g_u$ generators from Section~\ref{sec:gen_nygaard}.
 Alternatively, one can consider different
 versions of $\widetilde{\delta}$ and how to translate them into each other. In the
 algorithm we have implemented, we use an alternative approach to compute the map $\nabla$ from
 Definition~\ref{def:nabla}; see Section~\ref{sec:connection}. For the proof of Lemma
 \ref{lem:hopf_algebroid}, it
 will suffice to determine the structure maps on the associated graded of the
 $\F$-filtration, which we do below in Lemma~\ref{lem:hopf_algebroid_graded}.

Finally, we describe the comodule structure on $\Prismhat_{R/W(k)\llbracket
z_0\rrbracket}\{i\}$ and $\Prismhat_{R/W(k)\llbracket z_0\rrbracket}^{(1)}\{i\}$. Recall
from Lemma~\ref{lem:filtered-crystalline-bk}
that $\Prismhat_{R/W(k)\llbracket z_0\rrbracket}\{i\}$ is a free
$\Prismhat_{R/W(k)\llbracket z_0\rrbracket}$-module on an element $s^i$ with $s$ determined
as the unique filtered-crystalline Breuil--Kisin orientation with $\varphi(s) = \frac{s}{E(z_0)}$.
Note that this implies that
$\Prismhat_{R/W(k)\llbracket z_0\rrbracket}^{(1)}\{i\}$ is a free
$\Prismhat_{R/W(k)\llbracket z_0\rrbracket}^{(1)}$-module on $w(s^i)$. 

\begin{lemma}\label{lem:coaction_unit}
    The coaction 
  \[
    \Prismhat_{R/W(k)\llbracket z_0\rrbracket}\{i\}  \to 
    \Prismhat_{R/W(k)\llbracket z_0,z_1\rrbracket}\{i\}  \simeq 
    \Prismhat_{R/W(k)\llbracket z_0\rrbracket}\{i\} \otimes_{\Prismhat_{R/W(k)\llbracket
    z_0\rrbracket}} \Prismhat_{R/W(k)\llbracket z_0,z_1\rrbracket}
  \]
  takes $s^i \mapsto v^i s^i$ where
  \[
    v =\prod_{r\geq 0} \varphi^r(u).
  \]
\end{lemma}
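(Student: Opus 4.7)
The plan is to trace the Breuil--Kisin orientations through the two unit maps and then apply the transformation formula of Lemma~\ref{lem:bk-factor-general}. The coaction is the composition
\[
\Prismhat_{R/W(k)\llbracket z_0\rrbracket}\{i\}\xrightarrow{\eta_R}\Prismhat_{R/W(k)\llbracket z_0,z_1\rrbracket}\{i\}
\]
and the base-change identification on the right uses $\eta_L$. So the task reduces to: (a) identify the image of the generator $s^i$ under $\eta_R$ and under $\eta_L$ with concrete Breuil--Kisin orientations of $\Prismhat_{R/W(k)\llbracket z_0,z_1\rrbracket}$; then (b) relate these two orientations via Lemma~\ref{lem:bk-factor-general}.

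First I would observe that the filtered $\delta$-pair $(W(k)\llbracket z_0,z_1\rrbracket,R)$ is complete and filtered-crystalline, so that Lemma~\ref{lem:filtered-crystalline-bk} applies. Since $E(z_1)$ and $E(z_0)$ both lift $p\in\gr^0$, there are canonical filtered-crystalline Breuil--Kisin orientations $s_0,s_1$ of $\Prismhat_{R/W(k)\llbracket z_0,z_1\rrbracket}$ with $d_{s_0}=E(z_0)$ and $d_{s_1}=E(z_1)$. By naturality of the Breuil--Kisin twist and of the filtered-crystalline construction under maps of filtered $\delta$-pairs (applied to the two unit maps of the Hopf algebroid), the image of the canonical generator $s\in\Prismhat_{R/W(k)\llbracket z_0\rrbracket}\{1\}$ under $\eta_L$ equals $s_0$ (since $\eta_L$ sends $E(z_0)$ to $E(z_0)$), while its image under $\eta_R$ equals $s_1$ (since $\eta_R$ sends $E(z_0)$ to $E(z_1)$). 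In particular, under the base-change identification on the right-hand side of the coaction map, the element $s^i\otimes 1$ corresponds to $s_0^i$.

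Next I would invoke Lemma~\ref{lem:bk-factor-general}: the two orientations satisfy $E(z_1)=u\cdot E(z_0)$ with $u=1+\tfrac{E(z_1)-E(z_0)}{z_1-z_0}b\in(1+\F^{\geq 1})^\times$, so
\[
s_1=\Bigl(\prod_{r\geq 0}\varphi^r(u)\Bigr)\cdot s_0=v\cdot s_0.
\]
Raising both sides to the $i$th power and combining with the identifications from the previous step gives $\eta_R(s^i)=s_1^i=v^i s_0^i=v^i\cdot(s^i\otimes 1)$, which is the claim.

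The only potentially subtle point is verifying step two, namely that the canonical filtered-crystalline Breuil--Kisin orientation is strictly functorial in maps of complete filtered $\delta$-pairs which send the chosen generator of $I$ to the chosen generator of the target ideal. This is immediate from the uniqueness clause in Lemma~\ref{lem:filtered-crystalline-bk}: both $\eta_L(s)$ and $\eta_R(s)$ are lifts of the crystalline Breuil--Kisin orientation on $\gr^0$ (since $\eta_L$ and $\eta_R$ become the identity on the graded pieces, and the underlying crystalline orientation was fixed once and for all), and each has the correct associated ordinary orientation ($E(z_0)$ and $E(z_1)$ respectively). Everything else is routine bookkeeping with the multiplicative structure of the twist.
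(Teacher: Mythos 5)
Your proof is correct and follows the paper's own approach: identify $\eta_R(s)$ as the filtered-crystalline Breuil--Kisin orientation with associated ordinary orientation $E(z_1)$, then relate it to the one for $E(z_0)$ via Lemma~\ref{lem:bk-factor-general}. You are somewhat more explicit than the paper about the functoriality of the filtered-crystalline Breuil--Kisin orientation under maps of complete filtered-crystalline prisms, but this is just filling in a step the paper takes for granted, not a different argument.
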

\begin{proof}
  The map is induced by $z_0\mapsto z_1$. So it takes the filtered-crystalline
    Breuil--Kisin orientation $s$ with $d_s = E(z_0)$ to the filtered-crystalline
    Breuil--Kisin orientation $s_R$ with $d_{s_R} = E(z_1)$. Since
  \[
    u = \frac{E(z_1)}{E(z_0)}
  \]
  we have $d_{s_R} = u d_s$, and so
  \[
    s \mapsto v s,
  \]
    with $v =\prod_{r\geq 0} \varphi^r(u)$ by Lemma~\ref{lem:bk-factor-general}.
\end{proof}

\begin{lemma}\label{lem:coaction_unit_twist}
    The coaction 
  \[
    \Prismhat_{R/W(k)\llbracket z_0\rrbracket}^{(1)}\{i\}  \to 
    \Prismhat_{R/W(k)\llbracket z_0,z_1\rrbracket}^{(1)}\{i\}  \simeq 
    \Prismhat_{R/W(k)\llbracket z_0\rrbracket}^{(1)}\{i\}
    \otimes_{\Prismhat_{R/W(k)\llbracket z_0\rrbracket}^{(1)}} \Prismhat_{R/W(k)\llbracket z_0,z_1\rrbracket}^{(1)}
  \]
    takes $w(s^i)$ to $w(v^i) \cdot w(s^i)$.
\end{lemma}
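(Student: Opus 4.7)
The plan is to deduce this directly from Lemma \ref{lem:coaction_unit} by naturality of the Frobenius twist. Since the map $w\colon \Prismhat_{R/A}\{i\} \to \Prismhat_{R/A}^{(1)}\{i\}$ is natural in the filtered $\delta$-pair $(A, R)$, the Hopf algebroid coactions on the untwisted and Frobenius-twisted $i$th Breuil--Kisin twists fit into a commutative square
\[
\begin{tikzcd}[column sep=large]
\Prismhat_{R/W(k)\llbracket z_0\rrbracket}\{i\} \ar[r]\ar[d,"w"'] & \Prismhat_{R/W(k)\llbracket z_0,z_1\rrbracket}\{i\} \ar[d,"w"]\\
\Prismhat_{R/W(k)\llbracket z_0\rrbracket}^{(1)}\{i\} \ar[r] & \Prismhat_{R/W(k)\llbracket z_0,z_1\rrbracket}^{(1)}\{i\},
\end{tikzcd}
\]
in which the horizontal arrows are the respective coactions.

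First I would trace $s^i$ across the top arrow and then down the right vertical arrow. By Lemma \ref{lem:coaction_unit}, the top arrow sends $s^i$ to $v^i s^i$, so the composite image is $w(v^i s^i)$. Then I would use that $w$ is a ring homomorphism compatible with the module structure on the Breuil--Kisin twist (since $\Prismhat^{(1)}\{i\}$ is obtained by base change from $\Prismhat\{i\}$ along the Frobenius of the base $\delta$-ring), so that
\[
w(v^i s^i) \;=\; w(v)^i\cdot w(s^i) \;=\; w(v^i)\cdot w(s^i).
\]

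By commutativity of the square, this equals the image of $w(s^i)$ under the bottom coaction, proving the formula. I expect no real obstacle: the only nontrivial input is the commutativity of the square above, which amounts to the statement that $\varphi_A^*$ is natural in the filtered $\delta$-pair $(A,R)$. This is immediate from the definition of $\Prismhat_{R/A}^{(1)}$ as a $(p,I)$-completed base change along the Frobenius of $A$, combined with the functoriality of Breuil--Kisin twists along maps of prisms recorded in Construction~\ref{const:frobenius_bk}.
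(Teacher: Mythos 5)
Your proposal is correct and takes essentially the same approach as the paper, which simply says the claim follows by applying $w$ to the result of Lemma~\ref{lem:coaction_unit}; you have spelled out the same argument in detail via the naturality square and the multiplicativity of $w$.
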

\begin{proof}
    This follows by applying $w$ to the result of Lemma~\ref{lem:coaction_unit}.
\end{proof}

\subsection{The connection}\label{sec:connection}

In the case of $R = \cO_K$ or $\cO_K/\varpi^n$, one can show using the Hodge--Tate
comparison theorem in absolute prismatic cohomology~\cite{bhatt-lurie-apc} that
$\Prismhat_R$ and $\Prismhat_R^{(1)}$ are cohomologically
$1$-dimensional. This means that, in these cases, the relative-to-absolute descent complexes are quasi-isomorphic to the $2$-term complexes
\begin{equation}\label{eq:kernel}
    \begin{gathered}
        \Prismhat_{R/W(k)\llbracket z_0\rrbracket}\{i\}  \to \ker\left(\Prismhat_{R/W(k)\llbracket
    z_0,z_1\rrbracket}\{i\} \to \Prismhat_{R/W(k)\llbracket
    z_0,z_1,z_2\rrbracket}\{i\}\right),\\
  \Prismhat^{(1)}_{R/W(k)\llbracket z_0\rrbracket}\{i\}  \to
    \ker\left(\Prismhat^{(1)}_{R/W(k)\llbracket z_0,z_1\rrbracket}\{i\} \to
    \Prismhat^{(1)}_{R/W(k)\llbracket z_0,z_1,z_2\rrbracket}\{i\}\right)
    \end{gathered}
\end{equation}
obtained as their good truncations $\tau^{\leq 1}$.
In this section, we want to produce a more explicit description of the second term, reproducing the statement about the cohomological dimension along the way.

It follows from Proposition~\ref{prop:f_filtered_statement_for_prism}
that $\Prismhat_{R/W(k)\llbracket z_0,z_1\rrbracket}$ is $\F$-completely free as a
$\Prismhat_{R/W(k)\llbracket z_0\rrbracket}$-module, on generators $\prod
\delta^u(b)^{e_u}$ with $e_u<p$. Similarly, $\Prismhat^{(1)}_{R/W(k)\llbracket
z_0,z_1\rrbracket}$ is $\F$-completely free as a $\Prismhat^{(1)}_{R/W(k)\llbracket z_0\rrbracket}$-module, on generators $\prod g_u^{e_u}$ with $e_u<p$.

\begin{definition}\label{def:nabla}
  We define maps
  \begin{gather*}
    \theta\colon \F^{\geq \star}\Prismhat_{R/W(k)\llbracket z_0,z_1\rrbracket}\{i\}\to
      \F^{\geq \star-1}\Prismhat_{R/W(k)\llbracket z_0\rrbracket}\{i-1\}=: \F^{\geq
      \star} \Prismhat_{R/W(k)\llbracket z_0\rrbracket}^\nabla\{i\}\\
    \theta\colon \F^{\geq \star}\N^{\geq \star}\Prismhat^{(1)}_{R/W(k)\llbracket z_0,z_1\rrbracket}\{i\}\to \F^{\geq \star-1}\N^{\geq \star-1}\Prismhat^{(1)}_{R/W(k)\llbracket z_0\rrbracket}\{i-1\}=: \F^{\geq \star}\N^{\geq \star}\Prismhat^{(1),\nabla}_{R/W(k)\llbracket z_0\rrbracket}\{i\}
  \end{gather*}
  as the $\Prismhat_{R/W(k)\llbracket z_0\rrbracket}$-linear map taking $bs^i\mapsto
    s^{i-1}$ and all other basis monomials $\prod \delta^u(b)^{e_u}s^i\mapsto 0$ for the
    first line, and as the $\Prismhat^{(1)}_{R/W(k)\llbracket z_0\rrbracket}$-linear map
    taking $g_0 \cdot w(s^i)\mapsto w(s^{i-1})$ and all other basis
    monomials $\prod g_u^{e_u} \cdot w(s^i)\mapsto 0$ for the second line. We
    then define maps
  \begin{gather*}
    \nabla\colon \F^{\geq \star}\Prismhat_{R/W(k)\llbracket z_0\rrbracket}\{i\}\to
      \F^{\geq \star} \Prismhat_{R/W(k)\llbracket z_0\rrbracket}^\nabla\{i\}\\
    \nabla\colon \F^{\geq \star}\N^{\geq \star}\Prismhat^{(1)}_{R/W(k)\llbracket z_0\rrbracket}\{i\}\to  \F^{\geq \star}\N^{\geq \star}\Prismhat^{(1),\nabla}_{R/W(k)\llbracket z_0\rrbracket}\{i\}
  \end{gather*}
  as the composite of the first differential $\eta_R-\eta_L$ of the descent complexes with $\theta$.
\end{definition}

\begin{remark}
    Note that we define the $\Prism^\nabla$ terms with shifted $\F$ and Nygaard filtrations.
    Additionally, this where the $\F$-completion of relative prismatic is necessary as
    the map $\theta$ is defined only with respect to an $\F$-complete basis.
\end{remark}

In Sections~\ref{sec:hopf_operations} and~\ref{sec:proof_of_nabla}, we will prove the following lemma.

\begin{lemma}
    There are commutative diagrams
  \label{lem:nablaterm}
  \[
    \begin{tikzcd}
      \F^{\geq \star}\Prismhat_{R/W(k)\llbracket z_0\rrbracket}\{i\}\rar\dar{\id} &
        \F^{\geq \star}\Prismhat_{R/W(k)\llbracket z_0,z_1\rrbracket}\{i\} \rar\dar{\theta} & \ldots\\
      \F^{\geq \star}\Prismhat_{R/W(k)\llbracket z_0\rrbracket}\{i\}\rar{\nabla} &
        \F^{\geq \star}\Prismhat^{\nabla}_{R/W(k)\llbracket z_0\rrbracket}\{i\} \rar & 0\\
    \end{tikzcd}
  \]
  and
  \[
    \begin{tikzcd}
      \F^{\geq \star}\N^{\geq \star}\Prismhat^{(1)}_{R/W(k)\llbracket z_0\rrbracket}\{i\}\rar\dar{\id} &  \F^{\geq \star}\N^{\geq \star}\Prismhat^{(1)}_{R/W(k)\llbracket z_0,z_1\rrbracket}\{i\} \rar\dar{\theta} & \ldots\\
      \F^{\geq \star}\N^{\geq \star}\Prismhat^{(1)}_{R/W(k)\llbracket
        z_0\rrbracket}\{i\}\rar{\nabla} &  \F^{\geq \star}\N^{\geq
        \star}\Prismhat^{(1),\nabla}_{R/W(k)\llbracket z_0\rrbracket}\{i\} \rar & 0,\\
    \end{tikzcd}
  \]
  where the top rows in each case are the cochain complexes associated to the
    relative-to-absolute descent diagrams, and
  the vertical map induces a quasi-isomorphism between the rows.
\end{lemma}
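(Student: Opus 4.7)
The plan is first to check that the displayed diagram constitutes a chain map and then to establish quasi-isomorphism by reducing to the $\F$-associated graded, where the Hopf algebroid structure admits an explicit analysis.

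To start, I would observe that the vertical maps automatically assemble into a chain map: the only square carrying content is the one between cosimplicial degrees $0$ and $1$, and it commutes by the very definition $\nabla = \theta \circ (\eta_R - \eta_L)$; all squares in higher degrees commute trivially since $D^k = 0$ for $k \geq 2$. Next I would invoke completeness. The top row is obtained from the $\F$-completed relative-to-absolute descent diagram of Lemma~\ref{lem:local_to_global}, so its totalization is $\F$-complete, and $D^\bullet$ is $\F$-complete by construction. Therefore quasi-isomorphism can be tested after passing to the $\F$-associated graded, and by Proposition~\ref{prop:degeneration} (crystalline degeneration) we may replace $\Oscr_K/\varpi^n$ by $k[z]/z^n$, where prismatic cohomology is explicitly controlled by divided-power de Rham complexes and the basis descriptions of Propositions~\ref{prop:f_filtered_statement_for_prism} and~\ref{prop:f_filtered_statement}.

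On the $\F$-associated graded, the Hopf algebroid becomes a graded object with $\Gamma_1$ a $\Gamma_0$-module freely generated by monomials $\prod \delta^u(b)^{e_u}$ (respectively $\prod g_u^{e_u}$ in the Frobenius-twisted case) with $e_u < p$. The map $\theta$ is then literally projection onto the generator indexed by $b$ (or $g_0$). The strategy is to filter the associated graded cobar complex by the ideal generated by the $\delta^u(b)$ for $u \geq 1$ and, using the explicit relations of Lemma~\ref{lem:explicitrelations} and Construction~\ref{cons:Ru_recursion}, to construct an explicit contracting homotopy for the subcomplex of cochains which lie in this ideal. This collapses everything but the ``linear in $b$'' direction, leaving precisely the 2-term complex $D^\bullet$.

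The main obstacle is carrying out the last step explicitly: one must exhibit the homotopy by writing down the coaction on $\delta^u(b)$ for $u \geq 1$ in terms of the $\delta$-ring relations and verifying that these higher generators are cohomologically redundant. Concretely, one needs to show both that $\theta$ restricted to $\ker(d^1)$ surjects onto $D^1$ and that its kernel there equals $\im(d^0)$; the surjectivity reflects the fact that every cocycle can, after adding a coboundary, be made to lie in the $b$-linear direction, while the injectivity on cohomology reflects the discreteness and cohomological $1$-dimensionality of $\Prismhat_R\{i\}$. This bookkeeping with the $\delta$-structure is what Sections~\ref{sec:hopf_operations} and~\ref{sec:proof_of_nabla} are presumably designed to handle.
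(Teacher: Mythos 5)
Your setup is on the right track — the commutativity of the one nontrivial square is indeed immediate from $\nabla = \theta\circ(\eta_R-\eta_L)$, and the reduction to the $\F$-associated graded (where $b$, resp.\ $g_0$, becomes primitive with a divided power basis, per Lemma~\ref{lem:hopf_algebroid_graded}) is the correct first move. But the completion of the argument is where you diverge from the paper and where the gap lies.

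Your plan is to filter the associated graded cobar complex by the ideal generated by $\delta^u(b)$ for $u\geq 1$ and to construct a contracting homotopy on the complementary part. This is left as a sketch, and it is genuinely different from — and less structured than — what the paper actually does. The paper's key step, which you do not identify, is \emph{Lemma~\ref{lem:resolution}}: the sequence
\[
0\to \Gamma_0 \xrightarrow{\eta_L}\Gamma_1 \xrightarrow{\nabla_\theta}\Gamma_1 \to 0
\]
is an $\F$-completely exact sequence of \emph{comodules}, proved by observing that on the divided power basis $\nabla_\theta$ is strictly upper triangular with ones on the first off-diagonal. With that in hand, the quasi-isomorphism is established not by a hands-on homotopy on a filtered ideal but by a formal two-sided cobar argument: for induced comodules $M = N\otimes_{\Gamma_0}\Gamma_1$ both the cobar complex and the two-term complex are augmented over $N$ with acyclic augmentation (extra degeneracy for the cobar, Lemma~\ref{lem:resolution} for the two-term complex), and then one resolves the trivial comodule $\Gamma_0\{i\}$ by such induced comodules. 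It is not clear how your ad hoc filtration would reproduce this without eventually discovering the same short exact sequence; and as written your ``contracting homotopy on the ideal'' is an assertion rather than a construction.

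Two smaller issues. First, your appeal to crystalline degeneration and to $k[z]/z^n$ is unnecessary here — the reduction to the $\F$-associated graded together with Lemma~\ref{lem:hopf_algebroid_graded} is enough, and nothing in the paper's proof of this lemma passes to the truncated polynomial case. Second, you invoke ``discreteness and cohomological $1$-dimensionality of $\Prismhat_R\{i\}$'' as an input to the injectivity part; but the paper explicitly treats this lemma as \emph{reproving} that $1$-dimensionality, so baking it in reverses the intended logical dependency (even if it could be supplied independently via the Hodge--Tate comparison).
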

Note that the upper horizontal map is not $\Prismhat^{(1)}_{R/W(k)\llbracket
z_0\rrbracket}$-linear, so the bottom horizontal map is also not a linear map.
Additionally, the next
differential in the upper complex is not linear, so while the above lemma implies that
the restriction of $\theta$ defines an isomorphism between the kernel term in Equation~\ref{eq:kernel} and $\Prismhat^{(1),\nabla}_{R/W(k)\llbracket z_0\rrbracket}$,
that isomorphism is not a module map with respect to any a priori defined module
structure on the kernel. In fact, the whole identification is non-canonical in the sense
that if we switch the roles of $z_0$ and $z_1$, we obtain different maps $\theta$ and
$\nabla$. It is an interesting problem to give $\nabla$ and its codomain a more invariant
interpretation, but we will not pursue this here. A more canonical version of this type
of connection exists in the $q$-de~Rham setting, see \cite{gros-stum-quiros}.

\begin{corollary}
  \label{cor:syntomicsquare}
  The syntomic complex $\bZ_p(i)(R)$ with its $\F$-filtration identifies with the total fiber of a square
  \[
    \begin{tikzcd}
      \F^{\geq \star} \N^{\geq i} \Prismhat^{(1)}_{R/W(k)\llbracket z_0\rrbracket}\{i\}\rar{\N^{\geq i}\nabla}\dar{\can-\varphi} & \F^{\geq\star}\N^{\geq i}\Prismhat^{(1),\nabla}_{R/W(k)\llbracket z_0\rrbracket}\{i\} \dar{\can - \varphi^\nabla}\\
      \F^{\geq \star}\Prismhat^{(1)}_{R/W(k)\llbracket z_0\rrbracket}\{i\}\rar{\nabla} & \F^{\geq\star}\Prismhat^{(1),\nabla}_{R/W(k)\llbracket z_0\rrbracket}\{i\}\\
    \end{tikzcd}
  \]
for some map $\varphi^\nabla$. (Compare Definition \ref{def:nablamaps} below.)
\end{corollary}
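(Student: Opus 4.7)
The plan is to combine the three main inputs developed in this section: relative-to-absolute descent (Theorem~\ref{thm:bkdescent}), the fact that the Hopf algebroid cobar complexes compute the correct absolute theories together with their $\F$-filtrations, and the explicit $\tau^{\leq 1}$-truncation supplied by Lemma~\ref{lem:nablaterm}.

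First, recall that by definition
$$\bZ_p(i)(R) \we \fib\bigl(\N^{\geq i}\Prismhat_R\{i\} \xrightarrow{\can-\varphi} \Prismhat_R\{i\}\bigr)$$
in the $p$-complete derived category, equipped with its induced $\F$-filtration. By Theorem~\ref{thm:bkdescent} (and its $\F$-completed Nygaard analogue from Remark~\ref{rem:completed_hopf_algebroid}), both $\F^{\geq\star}\N^{\geq i}\Prismhat_R\{i\}$ and $\F^{\geq\star}\Prismhat_R\{i\}$ are computed as the totalizations of the cosimplicial Hopf algebroid cobar complexes for $(\Prismhat^{(1)}_{R/W(k)\llbracket z_0\rrbracket},\Prismhat^{(1)}_{R/W(k)\llbracket z_0,z_1\rrbracket})$ and for $(\Prismhat_{R/W(k)\llbracket z_0\rrbracket},\Prismhat_{R/W(k)\llbracket z_0,z_1\rrbracket})$, respectively. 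Moreover, the Frobenius $\varphi\colon \N^{\geq i}\Prismhat^{(1)}_{R/A^\bullet}\{i\}\to \Prismhat_{R/A^\bullet}\{i\}$ is a natural map of cosimplicial objects, since it exists at each cosimplicial level and is compatible with the face and degeneracy maps (which are induced by maps of $\delta$-pairs).

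Next, I would apply Lemma~\ref{lem:nablaterm} twice, once for the Nygaard-filtered and once for the unfiltered cobar complex, to replace each totalization by its $\tau^{\leq 1}$-truncation, i.e., the two-term complex
$$\F^{\geq\star}\N^{\geq i}\Prismhat^{(1)}_{R/W(k)\llbracket z_0\rrbracket}\{i\} \xrightarrow{\nabla}\F^{\geq\star}\N^{\geq i}\Prismhat^{(1),\nabla}_{R/W(k)\llbracket z_0\rrbracket}\{i\}$$
and its unfiltered analogue. The naturality statement in Lemma~\ref{lem:nablaterm} (i.e., the fact that $\theta$ restricts to an equivalence between the kernel in~\eqref{eq:kernel} and $\Prismhat^{(1),\nabla}_{R/W(k)\llbracket z_0\rrbracket}\{i\}$) lets us transport the cosimplicial map $\can-\varphi$ to a map of two-term complexes. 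The $\can$ component is tautologically compatible with the identification (it is just the inclusion of Nygaard filtration at each cosimplicial level and then inclusion of the kernel). For the $\varphi$ component, we define $\varphi^\nabla$ to be precisely the composite
$$\F^{\geq\star}\N^{\geq i}\Prismhat^{(1),\nabla}_{R/W(k)\llbracket z_0\rrbracket}\{i\}\to \mathrm{ker}\bigl(\ldots\bigr)\xrightarrow{\varphi}\mathrm{ker}\bigl(\ldots\bigr)\xrightarrow{\theta} \F^{\geq\star}\Prismhat^{(1),\nabla}_{R/W(k)\llbracket z_0\rrbracket}\{i\},$$
where the first arrow is the inverse of the isomorphism induced by $\theta$ and the middle arrow is the relative Frobenius at cosimplicial level $1$ (restricted to the appropriate kernels). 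This is well-defined because $\varphi$ commutes with the cosimplicial face maps and hence preserves kernels. A concrete description of $\varphi^\nabla$ on this module will be spelled out in Definition~\ref{def:nablamaps}.

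Putting everything together, one obtains that $\F^{\geq\star}\bZ_p(i)(R)$ is the total fiber of the commutative square displayed in the statement. The main (minor) obstacle is the verification that $\theta$ intertwines the cosimplicial $\varphi$ with $\varphi^\nabla$ so that the square commutes; this is the content of defining $\varphi^\nabla$ by the formula above, and ultimately it reduces to the observation that $\varphi$ is a morphism of $\F$-complete filtered objects defined pointwise in the cosimplicial diagram, together with the fact that $\theta$ is a natural splitting of the kernel exhibited in the proof of Lemma~\ref{lem:nablaterm}.
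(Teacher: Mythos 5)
Your proposal is correct and follows essentially the same route as the paper: express $\bZ_p(i)(R)$ as the total fiber of a square whose right column is the kernel of the second cobar differential, invoke Lemma~\ref{lem:nablaterm} to identify that kernel with the $\nabla$-term via $\theta$, and define $\varphi^\nabla$ by transport along $\theta$. One small caveat: you describe the compatibility of $\can$ with the identification as ``tautological,'' but in the paper this is the content of Lemma~\ref{lem:can_commutes}, whose proof requires checking on basis monomials that $\theta$ commutes with $\can$ --- it holds because $\theta$ simply extracts the $g_0$-coefficient in the $\prod g_u^{e_u}$-basis, and $\can$ preserves that basis, but it is not automatic from the definition of $\theta$ alone. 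The paper makes this explicit precisely so that $\can$ can ``appear unchanged'' in the resulting square.
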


\begin{proof}
  By definition and the fact that prismatic cohomology is $1$-dimensional, the syntomic
    complex identifies with the total fiber of
  \[
    \begin{tikzcd}
      \F^{\geq \star}\N^{\geq i}\Prismhat^{(1)}_{R/W(k)\llbracket z_0\rrbracket}\{i\}\rar \dar{\can-\varphi} &  \F^{\geq \star}\N^{\geq \star} K\dar{\can-\varphi}\\
      \F^{\geq \star}\Prismhat^{(1)}_{R/W(k)\llbracket z_0\rrbracket}\{i\}\rar &  \F^{\geq \star}K,
    \end{tikzcd}
  \]
  where $K$ denotes the kernel of the second differential in the descent complex. By
    Lemma \ref{lem:nablaterm}, the restriction of $\theta$ to $K$ is an isomorphism. By
    transporting the maps $\can$ and $\varphi$ along $\theta|_K$, we thus obtain the
    claim. Note that although $\theta$ is not a module map, it commutes with $\can$ (see
    Lemma~\ref{lem:can_commutes}), so
    $\can$ appears unchanged in the square above.
\end{proof}

\begin{remark}
    The map $\varphi^\nabla$ can in fact be characterized uniquely by commutativity of
    the square above, since we will see that $\nabla$ is $\F$-completely a rational
    isomorphism, and all involved terms are torsion-free. In the algorithmic application
    of Corollary \ref{cor:syntomicsquare}, we can therefore avoid explicitly computing
    $K$ and $\theta$.
\end{remark}

\subsection{Generalities about Hopf algebroid operations}\label{sec:hopf_operations}

In this section, let $\Gamma=(\Gamma_0,\Gamma_1)$ be a Hopf algebroid where $\Gamma_1$ is free as a left $\Gamma_0$-module.

\begin{definition}
  For a left $\Gamma_0$-linear map $\theta\colon \Gamma_1\to \Gamma_0$, we obtain a transformation
  \[
    \nabla_\theta\colon M \to M
  \]
  natural in right $\Gamma$-comodules $M$, as the composite
  \[
      M\to M\otimes_{\Gamma_0} \Gamma_1 \xto{\id\otimes \theta} M.
  \]
\end{definition}

\begin{lemma}
  The above construction determines an equivalence between the set of left linear homomorphisms $\Gamma_1\to \Gamma_0$ and the set of natural endomorphisms
  \[
    M\to M
  \]
  of the forgetful functor from right $\Gamma$-comodules to abelian groups.
\end{lemma}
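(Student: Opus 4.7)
The plan is to exhibit an explicit inverse to the assignment $\theta \mapsto \nabla_\theta$ by evaluating any natural endomorphism $\eta \colon U \to U$ on the canonical right comodule $\Gamma_1$ (with coaction $\Delta$) and postcomposing with the counit. Concretely, I will set $\theta_\eta := \varepsilon \circ \eta_{\Gamma_1}$. The first substantive step will be to verify that $\theta_\eta$ is left $\Gamma_0$-linear: for each $a \in \Gamma_0$, left multiplication by $\eta_L(a)$ on $\Gamma_1$ is a right $\Gamma$-comodule endomorphism (since $\Delta$ is $\Gamma_0$-linear for the left module structure via $\eta_L$), so naturality of $\eta$ forces $\eta_{\Gamma_1}$ to commute with it, and $\varepsilon \circ \eta_L = \id$ then yields the desired linearity.

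The composite $\theta \mapsto \nabla_\theta \mapsto \theta_{\nabla_\theta}$ is the identity by a short direct computation: evaluating $\nabla_\theta$ on $\Gamma_1$ and using that $\varepsilon$ is a ring homomorphism produces
\[
\theta_{\nabla_\theta}(\gamma) = \sum \varepsilon(\gamma_{(1)}) \, \theta(\gamma_{(2)}),
\]
and the counit axiom $\sum \varepsilon(\gamma_{(1)}) \gamma_{(2)} = \gamma$ together with left $\Gamma_0$-linearity of $\theta$ identifies this with $\theta(\gamma)$.

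The harder composite $\eta \mapsto \theta_\eta \mapsto \nabla_{\theta_\eta}$ requires computing $\eta$ on cofree comodules. For each right $\Gamma_0$-module $N$ and each $n \in N$, the map $j_n \colon \Gamma_1 \to N \otimes_{\Gamma_0} \Gamma_1$, $\gamma \mapsto n \otimes \gamma$, is a comodule homomorphism, so naturality of $\eta$ applied to $j_n$ forces
\[
\eta_{N \otimes_{\Gamma_0} \Gamma_1}(n \otimes \gamma) = n \otimes \eta_{\Gamma_1}(\gamma);
\]
well-definedness of this formula across the balanced tensor product is precisely where the left $\Gamma_0$-linearity of $\eta_{\Gamma_1}$ established in the first step is needed. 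Specializing $N = M$ and applying naturality of $\eta$ to the coaction map $\psi_M \colon M \to M \otimes_{\Gamma_0} \Gamma_1$, which is itself a comodule map to the cofree comodule on $M$, yields
\[
\psi_M \circ \eta_M = (\id_M \otimes \eta_{\Gamma_1}) \circ \psi_M;
\]
postcomposing with $\id_M \otimes \varepsilon$ and invoking the counit axiom for $M$ then identifies $\eta_M$ with $\nabla_{\theta_\eta}^M$. The main technical obstacle will be keeping straight the two $\Gamma_0$-module structures on $\Gamma_1$ coming from $\eta_L$ and $\eta_R$; once these are carefully tracked the argument is a routine co-Yoneda/Tannakian computation.
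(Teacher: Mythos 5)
Your proof is correct and follows essentially the same strategy as the paper's: evaluate a natural endomorphism on $\Gamma_1$, postcompose with $\varepsilon$, check left $\Gamma_0$-linearity via naturality against left multiplication by $\eta_L(a)$, and verify the two composites using the counit axiom and the fact that $\eta$ acts as $\id_N\otimes\eta_{\Gamma_1}$ on cofree comodules $N\otimes_{\Gamma_0}\Gamma_1$. The one small stylistic difference is in that last step: the paper justifies $\eta_{M\otimes\Gamma_1}=\id_M\otimes\eta_{\Gamma_1}$ by citing colimit-preservation of the cofree comodule functor $(-)\otimes_{\Gamma_0}\Gamma_1$, whereas you apply naturality directly to the comodule maps $j_n\colon\Gamma_1\to N\otimes_{\Gamma_0}\Gamma_1$, $\gamma\mapsto n\otimes\gamma$; your route is a bit more self-contained and avoids the implicit appeal to a free presentation of $M$.
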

\begin{proof}
  We may evaluate a natural endomorphism $\nabla$ on $\Gamma_1$ to obtain a map
  \[
    \nabla\colon \Gamma_1\to \Gamma_1
  \]
  which is a \emph{left} $\Gamma_0$-module homomorphism by naturality, as the left
    $\Gamma_0$-action on $\Gamma_1$ is through right comodule morphisms. We may
    postcompose with $\varepsilon\colon \Gamma_1\to \Gamma_0$ to obtain a left linear
    homomorphism $\theta\colon \Gamma_1\to \Gamma_0$. We claim those constructions are
    inverse to each other. Starting with a left linear homomorphism $\theta\colon \Gamma_1\to \Gamma_0$, passing to $\nabla_\theta$ and forming the composite as above, we recover $\theta$, since the diagram
  \[
    \begin{tikzcd}
      \Gamma_1 \ar[rd]\ar[rrd,"\nabla_\theta"]\ar[rdd,"\id",swap] & & \\
         &
         \Gamma_1\otimes_{\Gamma_0}\Gamma_1\rar["\id\otimes\theta",swap]\dar{\varepsilon\otimes\id} & \Gamma_1\dar{\varepsilon}\\
         & \Gamma_1\rar{\theta} & \Gamma_0
    \end{tikzcd}
  \]
  commutes. Conversely, if we start with a natural endomorphism $\nabla$ and obtain a left linear
    $\theta\colon \Gamma_1\to \Gamma_0$ as above, $\nabla=\nabla_\theta$, since the following diagram commutes:
  \[
    \begin{tikzcd}
      M \rar{\nabla}\dar & M\dar\ar[rdd,"\id"] & \\
      M\otimes_{\Gamma_0} \Gamma_1 \rar{\nabla =
        \id\otimes\nabla}\ar[rrd,"\id\otimes\theta",swap] & M\otimes_{\Gamma_0}\Gamma_1
        \ar[rd,"\id\otimes\varepsilon"]& \\
      & & M.
    \end{tikzcd}
  \]
  Here $\nabla$ on $M\otimes_{\Gamma_0}\Gamma_1$ agrees with $\id_M\otimes \nabla$ by naturality of $\nabla$, since $(-)\otimes_{\Gamma_0} \Gamma_1$ is a colimit-preserving functor from right $\Gamma_0$-modules to right $\Gamma$-comodules.
\end{proof}

\begin{example}
    For $b_i$ a left $\Gamma_0$-basis of $\Gamma_1$ (possibly in some completed sense), $\theta^i\colon \Gamma_1\to \Gamma_0$ the dual basis homomorphisms, and $x\in \Gamma_1$, we have
  \[
    \Delta(x) = \sum_i \nabla_{\theta^i}(x) \otimes b_i.
  \]
  Similarly, for $x\in \Gamma_0$, we have
  \[
    \eta_R(x) = \sum_i \nabla_{\theta^i}(x) \cdot b_i.
  \]
\end{example}

\begin{example}
  If $\Gamma_1$ is a free divided power algebra over $\Gamma_0$ on a generator $y$
    which is primitive (meaning that $\Delta(y)=y\otimes 1 + 1\otimes y$), then 
  \[
    \Delta(y^{[n]}) = \sum_i y^{[i]} \otimes y^{[n-i]},
  \]
  so $\nabla_\theta(y^{[n]}) = y^{[n-1]}$ when $\theta\colon \Gamma_1\to \Gamma_0$
    is the dual basis element to $y$ with respect to the divided power basis, which is
    to say the map
    that takes $y\mapsto 1$ and  $y^{[n]}\mapsto 0$ for $n\neq 1$. In particular,
    $\varepsilon \circ \nabla_{\theta}^n\colon \Gamma_1 \to \Gamma_0$ is the dual basis
    element to $y^{[n]}$, hence after a suitable completion, \emph{every} $\nabla$ may
    be expressed as a power series in the chosen $\nabla_\theta$. We also have
  \[
    \eta_R(x) = \sum_n \nabla_{\theta}^n(x) \cdot y^{[n]},
  \]
  for $x\in \Gamma_0$, and evaluating $\eta_R(xy)$ two ways shows a Leibniz rule
  \[
    \nabla_\theta(xy) = \nabla_\theta(x) y + x\nabla_\theta(y).
  \]
  Since the sequence
    \begin{equation}\label{eq:nabla}
0\to \Gamma_0\xto{\eta_L} \Gamma_1\xto{\nabla_\theta} \Gamma_1\to 0
    \end{equation}
  is a short exact sequence of left $\Gamma$-comodules (by direct inspection), applying
  the two-side cobar complex
    $\cB_\Gamma(M,-)$ to~\eqref{eq:nabla} yields a fiber sequence
    \[\cB_\Gamma(M,\Gamma_0)\to
    M \xto{\nabla_\theta} M.
  \]
\end{example}

\begin{remark}
    We will see below that the situation of the descent Hopf algebroid behaves as a
    deformation of this divided power situation: the $\nabla$ from Lemma~\ref{lem:nablaterm}
    is indeed of the form discussed in this section, and while we do not have a divided
    power structure, it still generates all operations in a suitably completed sense, and
    identifies the limit of the cobar complex of $M$ with the fiber of $\nabla$ on $M$
    (Lemma~\ref{lem:nablaterm} is a special case of this). Since we work with the bases
    $\prod \delta^u(b)^{e_u}$ or $\prod g_u^{e_u}$ instead of a divided power basis,
    and the relations for $g_u^p$ for example may involve terms linear in $g_0$, the analogue of
    the Leibniz rule breaks down and instead involves higher correction terms. The precise
    behaviour is of course highly sensitive to the choice of basis, and we currently do not
    know whether there is a better choice of basis that leads to closed-form formulas.
\end{remark}

\subsection{Proof of Lemma \ref{lem:nablaterm}}\label{sec:proof_of_nabla}

We need two preliminary lemmas. The first gives a set of generators of the
$\F$-associated graded pieces of the relevant prismatic envelopes
which are related to, but not the same as, those given in Section~\ref{sec:f-filtration}.

\begin{lemma}\label{lem:hopf_algebroid_graded}
  \begin{enumerate}
      \item[{\em (i)}] In the Hopf algebroid $(\gr^\star_\F \Prism_{R/W(k)\llbracket z_0\rrbracket}, \gr^\star_\F \Prism_{R/W(k)\llbracket z_0,z_1\rrbracket})$,
          the element $b$ satisfies $\Delta(b) = b\otimes 1 + 1\otimes b$, and admits divided powers.
          Furthermore, the divided powers $b^{[k]}$ provide a basis of
          $\gr^\star_\F \Prism_{R/W(k)\llbracket z_0,z_1\rrbracket}$ as a $\gr^\star_\F\Prism_{R/W(k)\llbracket z_0\rrbracket}$-module.
      \item[{\em (ii)}] In the Hopf algebroid $(\gr^\star_\F \Prismhat^{(1)}_{R/W(k)\llbracket z_0\rrbracket},\gr^\star_\F \Prismhat^{(1)}_{R/W(k)\llbracket z_0,z_1\rrbracket})$, the element $g_0$ satisfies $\Delta(g_0) = g_0\otimes 1 + 1\otimes g_0$, and admits divided powers.
          Furthermore, the divided powers $g_0^{[k]}$ provide a basis of
          $\gr^\star_\F\Prismhat_{R/W(k)\llbracket z_0,z_1\rrbracket}^{(1)}$ as a $\Prismhat^{(1)}_{R/W(k)\llbracket z_0\rrbracket}$-module.
  \end{enumerate}
\end{lemma}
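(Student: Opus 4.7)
We focus on part (i); part (ii) proceeds in parallel, using $g_0$ in the Frobenius-twisted envelope, with the advantage that Lemma~\ref{lem:hopf_algebroid}(e) already gives $\Delta(g_0)=g_0\otimes 1+1\otimes g_0$ on the nose, so no passage to the graded is needed for primitivity in that case.

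For primitivity of $b$, I invoke Lemma~\ref{lem:untwisted_hopf_algebroid}(v), which yields
$$\Delta(b)\;=\;b\otimes 1+u\otimes b,\qquad u=1+\tfrac{E(z_1)-E(z_0)}{z_1-z_0}\,b.$$
Since $b\in\F^{\geq 1}$, we have $u-1\in\F^{\geq 1}$, so the summand $(u-1)\otimes b$ lies in $\F^{\geq 2}$ and drops out of the weight-$1$ graded piece. This produces $\Delta(b)=b\otimes 1+1\otimes b$ in $\gr^\star_\F$.

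For the divided power and basis statements, Proposition~\ref{prop:f_filtered_statement_for_prism} already provides a $\gr^\star_\F\Prism_{R/W(k)\llbracket z_0\rrbracket}$-basis of $\gr^\star_\F\Prism_{R/W(k)\llbracket z_0,z_1\rrbracket}$ given by monomials $\prod_u\delta^u(b)^{e_u'}$ with $0\leq e_u'<p$, where $\delta^u(b)$ has weight $p^u$. The plan is to identify $\delta^u(b)$, up to a unit in $\gr^0_\F$, with the divided power $b^{[p^u]}$, and then to extend to all $k$ via the $p$-adic expansion $k=\sum e_u'p^u$, setting $b^{[k]}:=\bigl(\prod_u e_u'!\bigr)^{-1}\prod_u(b^{[p^u]})^{e_u'}$. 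The key input is the relation $\delta^u(b)^p=(-p+E(z_0)^{p^{u+1}}\lambda_u)\delta^{u+1}(b)+R_u$ from Lemma~\ref{lem:relations}; its image in $\gr^{p^{u+1}}_\F$ reduces to $\delta^u(b)^p=-p\cdot(\text{unit})\cdot\delta^{u+1}(b)+R_u^{(p^{u+1})}$, because the weight-$0$ leading term of $E(z_0)^{p^{u+1}}\lambda_u$ has $p$-adic valuation $p^{u+1}>1$. This matches the divided-power relation $(b^{[p^u]})^p=\tfrac{(p^{u+1})!}{(p^u!)^p}b^{[p^{u+1}]}$, since $v_p\!\bigl(\tfrac{(p^{u+1})!}{(p^u!)^p}\bigr)=1$ by Legendre's formula, so an inductive unit rescaling of the $\delta^u(b)$ accomplishes the identification and the remaining divided-power axioms follow formally.

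The main obstacle is controlling the correction $R_u^{(p^{u+1})}$ during the inductive rescaling: one must verify that it lies in the subring already equipped with divided powers, generated by $b^{[p^j]}$ for $j<u$, and that its expansion produces the correct multinomial factors compatible with divided-power multiplication. The total-degree bound in Lemma~\ref{lem:relations}(i) restricts $R_u$ to polynomials of weighted degree $<p^{u+1}$ with the exponent of $\delta^u(b)$ at most $p-1$, and uniqueness of basis expansions from Proposition~\ref{prop:f_filtered_statement_for_prism} makes the rescaling rigid; unwinding this bookkeeping is the main calculation. Alternatively, one can sidestep it by invoking crystalline degeneration (Proposition~\ref{prop:degeneration}) to reduce the $\gr^\star_\F$ computation to the graded $\delta$-pair over $\bF_q[z]/z^n$, where one is effectively in characteristic $p$ on the graded generic fiber and prismatic envelopes become honest divided-power envelopes, rendering the divided-power structure manifest.
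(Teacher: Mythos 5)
Your treatment of primitivity is correct and matches the paper: Lemma~\ref{lem:hopf_algebroid}(e) gives $\Delta(g_0)=g_0\otimes1+1\otimes g_0$ on the nose, and Lemma~\ref{lem:untwisted_hopf_algebroid}(v) gives primitivity of $b$ after passing to the $\F$-graded since $u-1\in\F^{\geq 1}$.

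For divided powers, however, there is a genuine gap. You correctly identify that the key relation is $\delta^u(b)^p=(-p+d^{p^{u+1}}\lambda_u)\delta^{u+1}(b)+R_u$ and that the coefficient of $\delta^{u+1}(b)$ becomes a unit multiple of $-p$ in the graded. But for $\delta^u(b)^p$ to be $p$ times something \emph{in the graded ring}, and hence for a divided power to exist, you also need the correction term $R_u$ to be divisible by $p$ in the relevant graded piece. The degree bound from Lemma~\ref{lem:relations}(i) restricts $R_u$ from above but says nothing about a constant term; nor does basis uniqueness from Proposition~\ref{prop:f_filtered_statement_for_prism} supply the missing divisibility. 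What is actually needed — and what the paper proves — is that $R_u'$ (and hence $R_u$) has \emph{no constant term} as a polynomial in the $g_u$ (resp.\ $\delta^u(b)$). The paper verifies this at the base of the induction via the explicit identity $\delta(z_1-z_0)=\bigl((z_0+g_0)^p-z_0^p-g_0^p\bigr)/p$, which visibly has $g_0$ as a factor, and then propagates through Construction~\ref{cons:Ru_recursion}; without this step, the divided-power structure is not established. The paper also handles part~(ii) first and deduces part~(i) by applying the relative divided Frobenius, which is cleaner than attacking the $b$-variable directly (where $R_0$ involves $\varphi(b)$ and $\delta(b)$ and the no-constant-term claim is slightly less transparent). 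Finally, your crystalline-degeneration sidestep is a legitimate idea — the paper itself mentions it in a remark — but as stated it is too hand-wavy: the reduction lands you on the graded prism $(W(k)[z_0,z_1],(p))$, which is crystalline, and the divided-power structure on the Frobenius twist then comes from the crystalline comparison theorem; however, making the identification of $b$ (or $g_0$) with the divided-power generator precise, and getting from the twisted to the untwisted statement, still requires essentially the bookkeeping you are trying to avoid.
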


\begin{proof}
  We established that $\Delta(g_0) = g_0\otimes 1 + 1\otimes g_0$ and
  \[
      \Delta(b) = b\otimes 1 + u\otimes b = b\otimes 1 + 1\otimes b \pmod{\F^{\geq
      2}\Prism_{R/W(k)\llbracket z_0,z_1,z_2\rrbracket}}
  \]
    in Lemmas~\ref{lem:untwisted_hopf_algebroid} and~\ref{lem:hopf_algebroid}.
 For the statements about divided powers, we first do the Frobenius-twisted case. Recall
    from Lemma~\ref{lem:explicitrelations} that 
  \[
    g_u^p = (-p+d^{p^{u+1}}\lambda_u) g_{u+1} + d^{p^{u+1}} R'_{u}.
  \]
  As $R_0' = \frac{\delta(z_1-z_0)}{\delta(d)}$ and
  \[
      \delta(z_1-z_0) = \frac{z_1^p - z_0^p - (z_1-z_0)^p}{p} = \frac{(z_0+g_0)^p - z_0^p - g_0^p}{p},
  \]
  we have that $R_0'$ is divisible by $g_0$, so in particular it is a polynomial in the $g_u$ without constant term.
    The explicit formula for $\delta(g_0)$ then shows that $\delta(g_0)$ is a polynomial
    in the $g_u$ without constant term, and then the formula for $R_1'$ shows that $R_1'$
    is a polynomial in the $g_u$ without constant term. Inductively, this shows that all
    $R'_u$ and $\delta(g_u)$ are polynomials in the $g_u$ without constant terms. So
    $d^{p^{u+1}-1} R'_u$ is still in $\N^{\geq p^{u+1}}\Prismhat^{(1)}_{R/W(k)\llbracket z_0,z_1\rrbracket}$, and we have
  \[
    g_u^p = (-p + p^{p^{u+1}}\lambda_u) g_{u+1} + p d^{p^{u+1}-1} R_u'
  \]
  in $\gr^{p^{u+1}}_\F \N^{\geq p^{u+1}} \Prismhat^{(1)}_{R/W(k)\llbracket
    z_0,z_1\rrbracket}$. So $g_u$ has a $p$th divided power in the $\F$-associated
    graded ring (unique because of torsion-freeness), and it agrees with
  \[
    g_u^{[p]} = (-1 + p^{p^{u+1}-1}\lambda_u) g_{u+1} + d^{p^{u+1}-1} R'_u.
  \]
  As $R'_u$ is a polynomial in $g_0,\ldots,g_u$, this equation also proves that $g_u^{[p]}$ again has a $p$th divided power, and we deduce inductively that $g_0$ has all divided powers. We also see that $g_0^{[p^u]}$ differs from a unit multiple of $g_{u}$ by some polynomial in $g_0,\ldots,g_{u-1}$, and so it follows that the $g_0^{[p^u]}$ form a basis as claimed.

  We obtain the statement for the non-Frobenius twisted version by applying the relative divided Frobenius.
\end{proof}

\begin{remark}
    Because the relative-to-absolute Hopf algebroid is compatible with the $\F$-filtration and because the
    associated graded pieces are crystalline prisms, the existence of the divided power
    structure on the Frobenius twists follows from the crystalline comparison theorem.
\end{remark}

\begin{lemma}
  \label{lem:resolution}
  If $(\Gamma_0,\Gamma_1)$ denotes either the non-Frobenius twisted or Frobenius twisted descent
    Hopf algebroid and $\theta\colon \Gamma_1\to \Gamma_0$ as in Lemma \ref{lem:nablaterm}, then
    the sequence
  \[
    0\to   \Gamma_0 \xto{\eta_L} \Gamma_1 \xto{\nabla_\theta} \Gamma_1 \to 0
  \]
  is $\F$-completely exact.
\end{lemma}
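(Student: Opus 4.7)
The plan is to reduce to the $\F$-associated graded, invoke the free divided power structure of Lemma~\ref{lem:hopf_algebroid_graded}, and convert exactness into a triangular linear-algebra problem.

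Since both $\Gamma_0$ and $\Gamma_1$ are $\F$-complete (as they are constructed as $\F$-completions of prismatic envelopes in Section~\ref{sec:envelopes}), it suffices to verify exactness after passing to the $\F$-associated graded. On $\gr^\star_\F$, Lemma~\ref{lem:hopf_algebroid_graded} identifies $\gr^\star_\F\Gamma_1$ as the free divided power algebra over $\gr^\star_\F\Gamma_0$ on a primitive generator $y$ (with $y = b$ in the non-twisted case and $y = g_0$ in the Frobenius-twisted case), so that $\{y^{[k]}\}_{k\geq 0}$ is a left $\gr^\star_\F\Gamma_0$-basis and $\Delta(y^{[k]})=\sum_{i+j=k}y^{[i]}\otimes y^{[j]}$. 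I would then set $\beta_j := \theta(y^{[j]}) \in \gr^{j-1}_\F\Gamma_0$ and observe that $\beta_0=0$ and $\beta_1=1$: this is immediate from the definition of $\theta$, which is dual to the basis-A element $g_0=y$ of $\F$-weight $1$, while $y^{[0]}=1$ is a different basis-A element. Higher $\beta_j$ can be non-zero; for instance, $\beta_p$ picks up a correction from the relation $g_0^p=(-p+d^p\lambda_0)g_1+d^p R'_0$ in which $R'_0$ has a non-zero $g_0$-component. This apparent obstruction will turn out to be harmless.

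A divided power computation combined with the $\Gamma_0$-linearity of $\theta$ yields
\[
  \nabla_\theta(y^{[k]}) = \sum_{j=1}^{k} \beta_j\, y^{[k-j]} = y^{[k-1]} + \sum_{j\geq 2}\beta_j\, y^{[k-j]},
\]
so for a homogeneous element $x = \sum_{k=0}^N a_k y^{[k]} \in \gr^N_\F\Gamma_1$ with $a_k \in \gr^{N-k}_\F\Gamma_0$, the coefficient of $y^{[m]}$ in $\nabla_\theta(x)$ is $a_{m+1} + \sum_{j\geq 2} a_{m+j}\beta_j$. The main (quite mild) obstacle is pinpointing the essential combinatorial content: because $\beta_1 = 1$, solving $\nabla_\theta(x) = \sum b_m y^{[m]}$ reduces to an upper-triangular system with unit diagonal, which one inverts by descending induction on $m$ starting from $a_N = b_{N-1}$. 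This simultaneously proves surjectivity of $\nabla_\theta$ (arbitrary targets can be hit) and identifies $\ker(\nabla_\theta) = \gr^\star_\F\Gamma_0 \cdot 1 = \im(\eta_L)$ (setting $b_m = 0$ forces $a_k = 0$ for all $k \geq 1$, leaving only $a_0$ free); injectivity of $\eta_L$ is immediate from the basis structure.
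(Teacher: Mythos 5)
Your proof is correct and follows essentially the same route as the paper's: both reduce to the $\F$-associated graded, invoke the primitive divided-power generator from Lemma~\ref{lem:hopf_algebroid_graded}, and conclude from the fact that $\nabla_\theta$ is strictly upper triangular with ones on the first off-diagonal with respect to the divided-power basis. You make the surjectivity step more explicit by writing out and solving the triangular system; the paper leaves this implicit in the phrase ``ones on the first off diagonal,'' but the content is the same.
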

\begin{proof}
  It suffices to check this after passage to the $\F$-associated graded. In that case, we let $y=b$ in the non-Frobenius twisted case or $y=g_0$ in the Frobenius twisted case, and recall that $y$ is primitive and has divided powers. We have
  \[
    \Delta(y^{[k]}) = \sum_j y^{[j]}\otimes y^{[k-j]}
  \]
  and $\theta(y)=1$, and so
  \[
      \nabla_\theta(y^{[k]}) = y^{[k-1]} + \sum_{k\geq 2} y^{[j]} \cdot\theta(y^{[k-j]}).
  \]
  We also have $\theta(1)=0$ and thus $\nabla_\theta(1)=0$. Since $\nabla_\theta$ on
    $\Gamma_1$ is a left $\Gamma_0$ module map, and the above shows that with respect to
    the $y^{[k]}$-basis it is strictly upper triangular with ones on the first off
    diagonal, the kernel is precisely $\Gamma_0\cdot 1$, as claimed.
\end{proof}

\begin{proof}[Proof of Lemma \ref{lem:nablaterm}]
    The claim now follows by applying the two-sided coBar construction
    $\cB_\Gamma(M,-)$ with the trivial right comodule $\Gamma_0$ to the short exact sequence from Lemma \ref{lem:resolution}. More precisely, consider the map
  \[
    \begin{tikzcd}
        M\rar\dar & M\otimes_{\Gamma_0} \Gamma_1 \rar\dar{1\otimes\theta} &
        M\otimes_{\Gamma_0}\Gamma_1\otimes_{\Gamma_0}\Gamma_1\rar\dar & \ldots\\
      M\rar{\nabla_\theta} & M\rar & 0
    \end{tikzcd}
  \]
  of complexes, natural in a right $\Gamma$-comodule $M$. It is a quasi-isomorphism if $M$ is of the form $N\otimes_{\Gamma_0} \Gamma_1$ for a flat right $\Gamma_0$-module $N$, since then both complexes are augmented by $N$, and the augmentation is a quasi-isomorphism for the top one since the cobar complex for $\Gamma_1$ has an extra degeneracy, and for the bottom one cause of Lemma \ref{lem:resolution}. Since we may resolve $\Gamma_0\{i\}$ as right $\Gamma$-comodule by such induced comodules, the statement then follows for $M=\Gamma_0\{i\}$ as well, which is what we wanted to prove.
\end{proof}

\subsection{Properties of $\nabla$}

We now analyze compatibility of $\nabla$ with the structure maps. Observe that Lemma
\ref{lem:nablaterm} asserts that $\theta$ determines an isomorphism
\[
  \ker\big(\F^{\geq\star} \Prismhat_{R/W(k)\llbracket z_0,z_1\rrbracket} \{i\}\to
  \F^{\geq\star} \Prismhat_{R/W(k)\llbracket z_0,z_1,z_2\rrbracket}\{i\}\big) \cong
  \F^{\geq\star} \Prismhat^\nabla_{R/W(k)\llbracket z_0\rrbracket}\{i\},
\]
and analogously for the Frobenius-twisted version, in which case it is also compatible with the Nygaard filtration (including the shift on the $\nabla$-term).

\begin{definition}
  \label{def:nablamaps}
For $R=\cO_K/\varpi^n$ or $\cO_K$ and the natural structure maps $\can, \varphi,
    \varphi_{R/W(k)\llbracket z_0\rrbracket}, w$, where $w\colon \Prismhat_{R/A}\to
    \Prismhat_{R/A}^{(1)}$ is the canonical $\varphi$-semilinear map, we let 
  \begin{gather*}
    \can^\nabla\colon \F^{\geq\star}\N^{\geq j}\Prismhat^{(1),\nabla}_{R/W(k)\llbracket
      z_0\rrbracket}\{i\}\to \F^{\geq\star}\Prismhat^{(1),\nabla}_{R/W(k)\llbracket
      z_0\rrbracket}\{i\},\\
    \varphi^\nabla\colon \F^{\geq\star}\N^{\geq
      i}\Prismhat^{(1),\nabla}_{R/W(k)\llbracket z_0\rrbracket}\{i\}\to \F^{\geq
      p\cdot\star}\Prismhat^{(1),\nabla}_{R/W(k)\llbracket z_0\rrbracket}\{i\},\\
    \varphi_{R/W(k)\llbracket z_0\rrbracket}^\nabla\colon \F^{\geq\star}\N^{\geq
      i}\Prismhat^{(1),\nabla}_{R/W(k)\llbracket z_0\rrbracket}\{i\}\to
      \F^{\geq\star}\Prismhat^{\nabla}_{R/W(k)\llbracket z_0\rrbracket}\{i\},\\
    w^\nabla\colon \F^{\geq\star}\Prismhat^{\nabla}_{R/W(k)\llbracket z_0\rrbracket}\{i\}\to \F^{\geq p\cdot\star}\Prismhat^{(1),\nabla}_{R/W(k)\llbracket z_0\rrbracket}\{i\}
  \end{gather*}
  denote the corresponding maps obtained by transporting them along the above isomorphism $\theta$.
\end{definition}

\begin{lemma}\label{lem:can_commutes}
  For $R=\cO_K/\varpi^n$ or $\cO_K$, $\nabla$ commutes with $\can$. In other words, the following diagram commutes:
  \[
    \begin{tikzcd}
      \F^{\geq\star}\N^{\geq j} \Prismhat^{(1)}_{R/W(k)\llbracket z_0\rrbracket}\{i\}\dar{\can} \rar{\N^{\geq j}\nabla} & \F^{\geq\star}\N^{\geq j}\Prismhat^{(1),\nabla}_{R/W(k)\llbracket z_0\rrbracket}\{i\}\dar{\can^\nabla}\rar[equal] & \F^{\geq\star-1}\N^{\geq j-1} \Prismhat^{(1)}_{R/W(k)\llbracket z_0\rrbracket}\{i-1\}\dar{\can}\\
      \F^{\geq\star} \Prismhat^{(1)}_{R/W(k)\llbracket z_0\rrbracket}\{i\} \rar{\nabla}
        &\F^{\geq\star} \Prismhat^{(1),\nabla}_{R/W(k)\llbracket
        z\rrbracket}\{i\}\rar[equal] & \F^{\geq\star-1}\Prismhat^{(1)}_{R/W(k)\llbracket
        z\rrbracket}\{i-1\}.
    \end{tikzcd}
  \]
\end{lemma}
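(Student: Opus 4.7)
The plan is to break $\nabla$ into its two constituent pieces and check that each is compatible with the Nygaard filtration individually. Recall from Definition~\ref{def:nabla} that $\nabla$ is the composite
\[
  M \xrightarrow{\eta_R - \eta_L} M\otimes_{\Gamma_0}\Gamma_1 \xrightarrow{1\otimes\theta} M[\text{shift}],
\]
where $(\Gamma_0,\Gamma_1) = (\Prismhat^{(1)}_{R/W(k)\llbracket z_0\rrbracket},\Prismhat^{(1)}_{R/W(k)\llbracket z_0,z_1\rrbracket})$ is the descent Hopf algebroid. The cleanest route is to observe that $\can$ is an inclusion $\N^{\geq j}\hookrightarrow$ (the unfiltered object), and so commutativity with $\nabla$ reduces to checking that the whole composite construction of $\nabla$, applied to the unfiltered object, restricts to the composite on the Nygaard-filtered pieces.

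First, the coaction (equivalently $\eta_R$ and $\eta_L$ individually) is a morphism of filtered objects for the Nygaard filtration. This is a formal consequence of the Hopf algebroid structure being defined on the \emph{filtered} category via the functoriality of Nygaard-completed Frobenius-twisted prismatic cohomology on (filtered) $\delta$-pairs, as encoded in Lemma~\ref{lem:prismatic_hopf_algebroid} and the subsequent analysis of Section~\ref{sec:bkdescent}. So the coaction sends $\N^{\geq j}\Prismhat^{(1)}_{R/W(k)\llbracket z_0\rrbracket}\{i\}$ into $\N^{\geq j}\Prismhat^{(1)}_{R/W(k)\llbracket z_0,z_1\rrbracket}\{i\}$, and this is strictly compatible with the inclusion into the ambient unfiltered object.

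Second, $\theta$ is a single $\Prismhat^{(1)}_{R/W(k)\llbracket z_0\rrbracket}$-linear map on the full module, defined on the basis $\prod g_u^{e_u}\cdot w(s^i)$ by stripping off the factor $g_0$ (when present). By Proposition~\ref{prop:f_filtered_statement}, the element $g_0$ has Nygaard weight $1$, so the $W(k)$-basis for $\N^{\geq j}\Prismhat^{(1)}_{R/W(k)\llbracket z_0,z_1\rrbracket}\{i\}$ consists of monomials $\widetilde{d}^h z_0^k\prod f_u^{e_u}\prod g_u^{e'_u}$ whose $g_0$-factor contributes $e'_0$ to the total Nygaard weight. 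Thus $\theta$, restricted to $\N^{\geq j}$, produces monomials of Nygaard weight $\geq j-1$ and Breuil--Kisin weight $i-1$, landing in $\N^{\geq j-1}\Prismhat^{(1)}_{R/W(k)\llbracket z_0\rrbracket}\{i-1\}$, which is by definition $\N^{\geq j}\Prismhat^{(1),\nabla}_{R/W(k)\llbracket z_0\rrbracket}\{i\}$. Again, since $\theta$ is a single map on the unfiltered object, its restriction to $\N^{\geq j}$ commutes with $\can$ tautologically.

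Combining the two steps, $\nabla = (1\otimes\theta)\circ(\text{coact})$ sends $\N^{\geq j}$ to the correct shifted Nygaard piece on the target, and the commutativity with $\can$ is the composition of the two strict compatibilities above. The only substantive input is the filtered character of the Hopf algebroid coaction, which we already rely on elsewhere in Section~\ref{sec:bkdescent}; once this is granted, the verification is essentially bookkeeping with basis monomials.
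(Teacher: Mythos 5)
Your proof is correct and takes essentially the same route as the paper's, which is a two-sentence argument: the left square commutes by definition of $\can^\nabla$ (implicitly using that $\eta_R - \eta_L$ respects the Nygaard filtration, which you make explicit as a consequence of the filtered Hopf algebroid structure), and the right square unwinds to the claim that $\theta$ commutes with $\can$, which is the basis-monomial check you carry out. One small wording note: $\theta$ is the dual basis functional to $g_0$, so it sends $g_0\cdot w(s^i)\mapsto w(s^{i-1})$ and kills all other $\prod g_u^{e_u'}$-monomials (e.g.\ $\theta(g_0^2\cdot w(s^i))=0$), rather than ``stripping off the factor $g_0$ when present''; your downstream reasoning uses the correct description, so this is only a matter of phrasing.
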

\begin{proof}
  The left square commutes by definition. For the right square, commutativity unwinds to the claim that $\theta$ commutes with $\can$, which can be checked on basis elements.
\end{proof}

\begin{lemma}
  For $R=\cO_K/\varpi^n$ or $\cO_K$, and $R' = \cO_K/\varpi^m$, with $R\to R'$ the
    reduction map, $\nabla$ commutes with the induced map on prismatic cohomology, i.e., the following diagram commutes:
  \[
    \begin{tikzcd}
      \F^{\geq\star}\N^{\geq j} \Prismhat_{R/W(k)\llbracket z_0\rrbracket}\{i\}\dar{\red}
        \rar{\N^{\geq j}\nabla} & \F^{\geq\star}\N^{\geq j}\Prismhat^{\nabla}_{R/W(k)\llbracket
        z_0\rrbracket}\{i\}\dar{\red^\nabla}\rar[equal] & \F^{\geq\star-1}\N^{\geq j-1}
        \Prismhat_{R/W(k)\llbracket z_0\rrbracket}\{i-1\}\dar{\red}\\
      \F^{\geq\star} \Prismhat_{R/W(k)\llbracket z_0\rrbracket}\{i\} \rar{\nabla}
        &\F^{\geq\star} \Prismhat^{\nabla}_{R/W(k)\llbracket z_0\rrbracket}\{i\}\rar[equal] &
        \F^{\geq\star-1}\Prismhat_{R/W(k)\llbracket z_0\rrbracket}\{i-1\}.
    \end{tikzcd}
  \]
  The analogous statement holds for the Frobenius-twisted version.
\end{lemma}
\begin{proof}
  This again reduces to checking that $\theta$ commutes with the induced maps, which
    follows since it preserves the $\prod g_u^{e_u}$-basis expressing
    $\Prismhat_{R/W(k)\llbracket z_0,z_1\rrbracket}$ as an $\F$-completely free
    $\Prismhat_{R/W(k)\llbracket
    z_0\rrbracket}$-module.
\end{proof}

\begin{lemma}
  \label{lem:nablafrobenius}
  For $R=\cO_K/\varpi^n$ or $\cO_K$, $\nabla$ commutes with the relative Frobenius,
    i.e., the following diagram commutes:
  \[
    \begin{tikzcd}
      \F^{\geq\star}\N^{\geq i} \Prismhat^{(1)}_{R/W(k)\llbracket z_0\rrbracket}\{i\}\dar{\varphi_{R/W(k)\llbracket z_0\rrbracket}} \rar{\nabla} & \F^{\geq\star}\N^{\geq i}\Prismhat^{(1),\nabla}_{R/W(k)\llbracket z_0\rrbracket}\{i\}\dar{\varphi_{R/W(k)\llbracket z_0\rrbracket}^\nabla}\rar[equal] & \F^{\geq\star}\N^{\geq i-1} \Prismhat^{(1)}_{R/W(k)\llbracket z_0\rrbracket}\{i-1\}\dar{\varphi_{R/W(k)\llbracket z_0\rrbracket}}\\
      \F^{\geq\star} \Prismhat_{R/W(k)\llbracket z_0\rrbracket}\{i\} \rar{\nabla}
        &\F^{\geq\star} \Prismhat^{\nabla}_{R/W(k)\llbracket z_0\rrbracket}\{i\}\rar[equal]
        & \F^{\geq\star}\Prismhat_{R/W(k)\llbracket z_0\rrbracket}\{i-1\}.
    \end{tikzcd}
  \]
\end{lemma}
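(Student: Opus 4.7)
The plan is to reduce the commutativity of the square in the statement to the fact that the relative Frobenius defines a morphism of cosimplicial objects between the Frobenius-twisted and non-Frobenius-twisted relative-to-absolute descent diagrams, modulo the tautology that $\varphi^\nabla_{R/W(k)\llbracket z_0\rrbracket}$ is \emph{defined} by transport along $\theta|_K$.

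First, I would unpack Definition~\ref{def:nabla} to write $\nabla$ as the composite $\theta \circ (\eta_R - \eta_L)$, where $\eta_R - \eta_L$ is the first differential in the cobar complex of the relative-to-absolute descent diagram. By the cosimplicial identities, its image lies in the kernel $K$ of the next cosimplicial differential, where $\theta$ restricts to the isomorphism $\theta|_K$ of Lemma~\ref{lem:nablaterm} (in both the Frobenius-twisted and non-Frobenius-twisted versions). Definition~\ref{def:nablamaps} introduces $\varphi^\nabla_{R/W(k)\llbracket z_0\rrbracket}$ precisely as the transport of $\varphi_{R/W(k)\llbracket z_0,z_1\rrbracket}|_K$ along this isomorphism, so tautologically $\theta|_K \circ \varphi_{R/W(k)\llbracket z_0,z_1\rrbracket} = \varphi^\nabla_{R/W(k)\llbracket z_0\rrbracket} \circ \theta|_K$ on $K$.

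The substantive step is to check that the relative Frobenius $\varphi_{R/W(k)\llbracket z_\bullet\rrbracket}$ assembles into a morphism of cosimplicial objects from $\F^{\geq\star}\N^{\geq i}\Prismhat^{(1)}_{R/W(k)\llbracket z_\bullet\rrbracket}\{i\}$ to $\F^{\geq\star}\Prismhat_{R/W(k)\llbracket z_\bullet\rrbracket}\{i\}$. I would deduce this from the naturality of prismatic cohomology of $\delta$-pairs from~\cite{akn-delta}, combined with the functoriality of the Frobenius twist, the Nygaard filtration, the $\F$-filtration, and the Breuil--Kisin twists. Granted this, $\varphi$ commutes with $\eta_R - \eta_L$ and preserves $K$, so combining with the tautological identity above yields
\[
\varphi^\nabla_{R/W(k)\llbracket z_0\rrbracket}(\nabla(x)) = \theta|_K(\varphi_{R/W(k)\llbracket z_0,z_1\rrbracket}((\eta_R - \eta_L)(x))) = \theta|_K((\eta_R - \eta_L)(\varphi_{R/W(k)\llbracket z_0\rrbracket}(x))) = \nabla(\varphi_{R/W(k)\llbracket z_0\rrbracket}(x)),
\]
which is the desired commutativity.

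The main obstacle I anticipate is verifying the cosimplicial naturality of the relative Frobenius in the presence of the Breuil--Kisin twists: the cosimplicial transition maps are not maps of oriented prisms in the strict sense, and the canonical filtered-crystalline Breuil--Kisin orientations transform by the unit $v$ of Lemmas~\ref{lem:coaction_unit} and~\ref{lem:coaction_unit_twist}. One must confirm that the relative Frobenius intertwines these orientation factors under the left and right units; this should reduce, via Lemma~\ref{lem:bk-factor-general}, to the compatibility of $v$ with $\varphi$ already implicit in Lemma~\ref{lem:coaction_unit_twist}.
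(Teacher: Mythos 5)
Your argument addresses only the left square of the diagram, which asserts $\varphi^\nabla_{R/W(k)\llbracket z_0\rrbracket}\circ\nabla = \nabla\circ\varphi_{R/W(k)\llbracket z_0\rrbracket}$. That square is indeed a formal consequence of the cosimplicial naturality of the relative Frobenius together with the definition of $\varphi^\nabla$ as a transport along $\theta|_K$, and this is also what the paper means by ``the left square commutes by definition.'' However, the lemma also contains a right square, which asserts that $\varphi^\nabla_{R/W(k)\llbracket z_0\rrbracket}$ agrees with the shifted ordinary relative Frobenius $\varphi_{R/W(k)\llbracket z_0\rrbracket}$ under the identification $\Prismhat^{(1),\nabla}_{R/W(k)\llbracket z_0\rrbracket}\{i\} = \Prismhat^{(1)}_{R/W(k)\llbracket z_0\rrbracket}\{i-1\}$ (and similarly for the non-twisted version). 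Your chain of identities ends with $\nabla(\varphi(x))$ and does not touch this claim, so a genuine ingredient is missing.

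The right square is equivalent to the statement that $\theta$ intertwines the relative Frobenius, i.e.\ $\theta|_{K'}\circ\varphi_{R/W(k)\llbracket z_0,z_1\rrbracket}|_K = \varphi_{R/W(k)\llbracket z_0\rrbracket}\circ\theta|_K$. This cannot be deduced from abstract cosimplicial naturality alone, because $\theta$ is not a cosimplicial or natural map: it is defined by picking out the coefficient of $g_0\cdot w(s^i)$ (resp.\ $b\cdot s^i$) with respect to a specific $\F$-complete monomial basis. The paper's proof verifies the right square by the explicit fact (Corollary~\ref{cor:f-uv-mapto-a-uv} / Lemma~\ref{lem:f_uv_structure}) that the divided relative Frobenius sends $g_u\mapsto\delta^u(b)$, hence respects the two monomial bases that define $\theta$ on the Frobenius-twisted and non-twisted sides. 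You would need to add this basis check, or else argue that the right square follows from the left square combined with the fact that $\nabla$ is a rational isomorphism with torsion-free target (Lemma~\ref{lem:isogeny}), which is a valid alternative but is also not present in your proposal. Your final paragraph on Breuil--Kisin orientation factors addresses the cosimplicial naturality, not this missing step.
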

\begin{proof}
  Again, the left square commutes by definition. For the right square, we again check
    that $\theta$ commutes with the relative Frobenius, which comes from the fact that
    the divided Frobenius takes $g_u\mapsto \delta^u b$, i.e., it respects the monomial bases involved in the definition of $\theta$ on the Frobenius-twisted and the non Frobenius-twisted versions of relative prismatic cohomology.
\end{proof}

\begin{remark}
    The corresponding statement for the absolute Frobenius (and the canonical map
    $w\colon \Prismhat \to \Prismhat^{(1)}$) is false: $\varphi^{\nabla}$ and $w^\nabla$ do not agree with $\varphi$ and $w$.
\end{remark}

\subsection{Isogeny properties}

In this section, we determine the action of $\can$, $\nabla$ and the map induced by $\cO_K\to \cO_K/\varpi^n$ on the $\F$-associated graded. We obtain that various maps are rationally invertible, and explicit formulas for their determinants. We note that these results are very similar in nature to \cite{sulyma} and could in fact be reduced to this using the idea of crystalline degeneration.

Recall that $\gr^j_\F\Prismhat^{(1)}_{(\cO_K/\varpi^n)/W(k)\llbracket z_0\rrbracket}$ is
a free $W(k)$-module on a generator $z_0^k \prod f_u^{e_u}$ where $\sum p^u e_u =
\lfloor \frac{j}{n}\rfloor$ with all $e_u<p$, and $k$ is the remainder of $j$ mod $n$.
Similarly, $\gr^j\N^{\geq i}_\F\Prismhat^{(1)}_{(\cO_K/\varpi^n)/W(k)\llbracket
z_0\rrbracket}$ is a free $W(k)$-module on a generator $\widetilde{d}^lz_0^k \prod
f_u^{e_u}$ where $\sum p^u e_u = \lfloor \frac{j}{n}\rfloor$ with all $e_u<p$, $k$ is
the remainder of $j$ mod $n$, and $l=\max(i - \sum e_up^u, 0)$. Analogously, $\gr^k_\F
\Prismhat^{(1)}_{\cO_K/W(k)\llbracket z_0\rrbracket}$ and $\gr^k_\F \N^{\geq
i}\Prismhat^{(1)}_{\cO_K/W(k)\llbracket z_0\rrbracket}$ are free on $z_0^k$ and
$\widetilde{d}^iz_0^k$ respectively. Generators for the Breuil--Kisin twisted terms are obtained by formally multiplying with the orientation $w(s^i)$. On the $\nabla$-terms, we have the same generators with a shift in indexing.

\begin{lemma}
For $R=\cO_K/\varpi^n$, the $\can$ and $\can^\nabla$ maps are described on the $\F$-associated graded as follows with respect to the chosen bases:
  \begin{enumerate}
      \item[{\em (1)}]  \[
        \gr^j_\F\N^{\geq i}\Prismhat^{(1)}_{R/W(k)\llbracket z_0\rrbracket}\to 
       \gr^j_\F\N^{\geq i}\Prismhat^{(1)}_{R/W(k)\llbracket z_0\rrbracket}
      \]
      is multiplication with $p^{\max(i - \lfloor \frac{j}{n}\rfloor,0)}$;
  \item[{\em (2)}]  \[
        \gr^j_\F\N^{\geq i}\Prismhat^{(1),\nabla}_{R/W(k)\llbracket z_0\rrbracket}\to 
       \gr^j_\F\N^{\geq i}\Prismhat^{(1),\nabla}_{R/W(k)\llbracket z_0\rrbracket}
      \]
      is multiplication with $p^{\max(i-1 - \lfloor \frac{j-1}{n}\rfloor,0)}$.
  \end{enumerate}
  The analogous statements hold with Breuil--Kisin twists.
\end{lemma}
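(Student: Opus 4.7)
The plan is to read off both identifications directly from the explicit $W(k)$-bases of Proposition~\ref{prop:f_filtered_statement}, together with the observation that, in the $\F$-associated graded, the element $\widetilde{d}$ becomes $p$.

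First, I would unpack the basis description on the source of~(1). By Proposition~\ref{prop:f_filtered_statement}(2), a $W(k)$-basis of $\gr^j_\F\N^{\geq i}\Prismhat^{(1)}_{R/W(k)\llbracket z_0\rrbracket}$ is given by monomials $\widetilde{d}^l z_0^k\prod_u f_u^{e_u}$ with $0\leq k<n$ and $0\leq e_u<p$ satisfying $k+n\sum p^u e_u=j$; this forces $k=j\bmod n$ and $\sum p^u e_u=\lfloor j/n\rfloor$, and the unique $\widetilde{d}$-exponent is $l=\max(i-\sum p^u e_u,0)=\max(i-\lfloor j/n\rfloor,0)$. Under the canonical inclusion into $\gr^j_\F\Prismhat^{(1)}_{R/W(k)\llbracket z_0\rrbracket}$, whose corresponding basis vector is $z_0^k\prod f_u^{e_u}$, this generator simply maps to $\widetilde{d}^l$ times the target generator.

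Next, I would identify the image of $\widetilde{d}$ on the $\F$-associated graded. Recall that $\widetilde{d}=E(z_0)$ regarded in $\N^{\geq 1}\Prismhat^{(1)}_{R/W(k)\llbracket z_0\rrbracket}$; since $E$ is Eisenstein for $\varpi$, we have $E(z_0)\equiv p\pmod{z_0}$, and as $z_0\in\F^{\geq 1}$, passing to $\gr^0_\F$ replaces $\widetilde{d}$ by $p$. Hence on the $\F$-associated graded, multiplication by $\widetilde{d}^l$ is multiplication by $p^{\max(i-\lfloor j/n\rfloor,0)}$, yielding~(1). The Breuil--Kisin twisted version is immediate after tensoring with the rank-one generator $w(s^i)$, which is unaffected by the canonical map. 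Statement~(2) is then a bookkeeping shift: by Definition~\ref{def:nabla}, $\F^{\geq\star}\N^{\geq\star}\Prismhat^{(1),\nabla}_{R/W(k)\llbracket z_0\rrbracket}\{i\}=\F^{\geq\star-1}\N^{\geq\star-1}\Prismhat^{(1)}_{R/W(k)\llbracket z_0\rrbracket}\{i-1\}$, and by Lemma~\ref{lem:can_commutes} the map $\can^\nabla$ corresponds to $\can$ at the shifted indices $(i-1,j-1)$, so~(1) applied there gives the claimed exponent $\max(i-1-\lfloor(j-1)/n\rfloor,0)$.

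I do not anticipate any serious obstacle; the argument is pure basis bookkeeping. The only point requiring attention is matching up the Nygaard-graded basis (carrying the extra $\widetilde{d}^l$ factor) with the $\F$-graded basis of $\Prismhat^{(1)}$ (without it), and recognizing that the reduction $E(z_0)\equiv p\pmod{z_0}$ is precisely what converts the combinatorial exponent $l$ into the advertised power of $p$.
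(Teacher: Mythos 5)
Your argument coincides with the paper's: both read off the single basis monomial $\widetilde{d}^l z_0^k\prod f_u^{e_u}$ with $l=\max(i-\lfloor j/n\rfloor,0)$ from Proposition~\ref{prop:f_filtered_statement}, observe that $\widetilde{d}=E(z_0)$ reduces to $p$ in $\gr^0_\F$, and handle $\can^\nabla$ by the index shift built into Definition~\ref{def:nabla} (justified via Lemma~\ref{lem:can_commutes}). The only stylistic difference is that you spell out the $E(z_0)\equiv p\pmod{z_0}$ step and make the shift $(i,j)\mapsto(i-1,j-1)$ explicit, which the paper leaves implicit.
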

\begin{proof}
  For the first statement, the basis element is $\widetilde{d}^lz_0^k \prod f_u^{e_u}$
    with $l = \max(i - \sum p^ue_u,0)$, $\sum p^ue_u = \lfloor \frac{j}{n} \rfloor$, and $k$ the residue of $j$ mod $n$. Since the basis element in the target is $z_0^k \prod f_u^{e_u}$ and $\widetilde{d} = E(z_0)$ is $p$ in the $\F$-associated graded, the claim follows. 

  The second statement follows from the first by taking into account the indexing shift from the definition of the target of $\nabla$.
\end{proof}

\begin{lemma}
  The reduction maps induced by $\cO_K\to \cO_{K/\varpi^n}$ are described on the $\F$-associated graded as follows with respect to the chosen bases:
  \begin{enumerate}
      \item[{\em (1)}]
      \[
       \gr^j_\F\Prismhat^{(1)}_{\cO_K/W(k)\llbracket z_0\rrbracket}\to 
       \gr^j_\F\Prismhat^{(1)}_{(\cO_K/\varpi^n)/W(k)\llbracket z_0\rrbracket}
      \]
      is multiplication with a unit multiple of $\lfloor \frac{j}{n}\rfloor!$ and
  \item[{\em (2)}]
      \[
       \gr^j_\F\Prismhat^{(1),\nabla}_{\cO_K/W(k)\llbracket z_0\rrbracket}\to 
       \gr^j_\F\Prismhat^{(1),\nabla}_{(\cO_K/\varpi^n)/W(k)\llbracket z_0\rrbracket}
      \]
      is multiplication with a unit multiple of $\lfloor \frac{j-1}{n}\rfloor!$.
  \end{enumerate}
  The analogous statements hold with Breuil--Kisin twists.
\end{lemma}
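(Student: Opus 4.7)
The plan is to read off the reduction map directly on the explicit monomial bases from Proposition~\ref{prop:f_filtered_statement}, and then track the $p$-adic valuation accumulated when rewriting $z_0^j$ in the target basis. First, I would unwind the bases: for $s=0$, the source $\gr^j_\F \Prismhat^{(1)}_{\cO_K/W(k)\llbracket z_0\rrbracket}$ is $W(k)$-free on $z_0^j$, while the target $\gr^j_\F \Prismhat^{(1)}_{(\cO_K/\varpi^n)/W(k)\llbracket z_0\rrbracket}$ is $W(k)$-free on the single monomial $z_0^k\prod_u f_u^{e_u}$, where $j = k+nq$ with $0\leq k<n$ and $q=\sum_u e_u p^u$ is the base-$p$ expansion of $q=\lfloor j/n\rfloor$ with digits $0\leq e_u<p$. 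Since the reduction is a morphism of $\delta$-pairs over $W(k)\llbracket z_0\rrbracket$, on relative prismatic cohomology it is the natural inclusion of $W(k)\llbracket z_0\rrbracket$ into the prismatic envelope $W(k)\llbracket z_0\rrbracket\{z_0^n/E(z_0)\}$, and it sends the source generator $z_0^j = z_0^k(z_0^n)^q$ to $z_0^k f_0^q$ in the target, using $f_0=z_0^n\in\N^{\geq 1}\Prismhat^{(1)}$.

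The heart of the argument is to compute $f_0^q$ in the chosen basis. Because $\delta(z_0)=0$ implies $\delta(z_0^n)=0$, Remark~\ref{rem:vanishingofRu_prime} gives $R'_u=0$ identically in Construction~\ref{cons:Ru_recursion}, so the relations of Lemma~\ref{lem:explicitrelations} reduce to $f_u^p = (-p + \lambda_u d^{p^{u+1}}) f_{u+1}$. Passing to the $\F$-associated graded, the constant term $p$ of $d=E(z_0)$ and the unitarity of $\lambda_u$ yield $-p+\lambda_u d^{p^{u+1}} = -p(1-\lambda_u p^{p^{u+1}-1})$, which is $-p$ times a $p$-adic unit (since $p^{u+1}-1\geq 1$). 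Hence $f_u^p = -p\cdot(\text{unit})\cdot f_{u+1}$ in the $\F$-graded, and induction on $u$ gives
\[
  f_0^{p^u} = \pm\,p^{[u]_p}\cdot(\text{unit})\cdot f_u,\qquad [u]_p=\tfrac{p^u-1}{p-1},
\]
using $p\cdot[u]_p+1=[u+1]_p$. Raising to the $e_u$ and multiplying,
\[
  f_0^q = \prod_u (f_0^{p^u})^{e_u} = p^{\sum_u e_u[u]_p}\cdot(\text{unit})\cdot\prod_u f_u^{e_u}.
\]
By Legendre's formula, $\sum_u e_u[u]_p = (q-s_p(q))/(p-1) = v_p(q!)$, so $p^{\sum_u e_u[u]_p}$ and $q!$ differ by a $p$-adic unit. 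This yields $f_0^q = q!\cdot(\text{unit})\cdot\prod_u f_u^{e_u}$ and proves (1).

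Part (2) then follows from (1) by the indexing convention of Definition~\ref{def:nabla}: $\gr^j_\F\Prismhat^{(1),\nabla}_{R/W(k)\llbracket z_0\rrbracket}\{i\}$ equals $\gr^{j-1}_\F\Prismhat^{(1)}_{R/W(k)\llbracket z_0\rrbracket}\{i-1\}$ by construction, and naturality of the reduction map replaces $\lfloor j/n\rfloor!$ with $\lfloor(j-1)/n\rfloor!$. The Breuil--Kisin-twisted variants follow by trivializing $\Prismhat^{(1)}\{i\}\cong\Prismhat^{(1)}\cdot w(s^i)$ via the canonical filtered-crystalline Breuil--Kisin orientation $s$ supplied by Lemma~\ref{lem:filtered-crystalline-bk}: this trivialization is preserved by the reduction map because both sides live over the common base prism $W(k)\llbracket z_0\rrbracket$, so the twisted statement reduces to the untwisted one.

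The main obstacle is the inductive identification $f_0^{p^u} = \pm p^{[u]_p}\cdot(\text{unit})\cdot f_u$ in the $\F$-associated graded, which requires careful bookkeeping to confirm that every intermediate structure constant (coming from $\lambda_u$ and from the expansion of $E(z_0)^{p^{u+1}}$) is a $p$-adic unit. Once this is in hand, the matching of $p$-adic valuations via Legendre's formula is routine.
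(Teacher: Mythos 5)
Your proof is correct and takes essentially the same route as the paper: both use the observation that, on the $\F$-associated graded, $d=p$ and $R'_u=0$ force $f_u^p$ to be a unit multiple of $pf_{u+1}$, so that $\prod_u f_u^{e_u}$ is a unit multiple of the divided power $f_0^{[\lfloor j/n\rfloor]}$. Your Legendre-formula computation of $\sum_u e_u[u]_p=v_p(\lfloor j/n\rfloor!)$ simply makes explicit the standard identity $f_0^s=s!\,f_0^{[s]}$ that the paper invokes directly.
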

\begin{proof}
  On the $\F$-associated graded, $d=p$, and so $f_u^p = (-p + \lambda_u
    p^{p^{u+1}}f_{u+1})$. This exhibits $f_u^p$ as unit multiple of $pf_{u+1}$, so $z_0^n=f_0$ has all divided powers,
    and the basis monomials $f_u^{e_u}$ are unit multiples of the divided powers
    $(z_0^n)^{[s]}$. It follows that $z_0^j$ is taken to $z_0^k s! (z_0^n)^{[s]}$ where $k$ is the remainder of $j$ modulo $n$ and $s = \lfloor \frac{j}{n}\rfloor$.
    The second statement follows from the first by taking into account the indexing shift from the definition of the target of $\nabla$.
\end{proof}

\begin{lemma}
  For $\cO_K$, the map $\nabla$ acts on the $\F$-associated graded with respect to the chosen bases as follows:
  \begin{enumerate}
      \item[{\em (1)}] \[
 \nabla\colon  \gr^j_\F\Prismhat^{(1)}_{\cO_K/W(k)\llbracket z_0\rrbracket}\{i\}\to 
    \gr^j_\F\Prismhat^{(1),\nabla}_{\cO_K/W(k)\llbracket z_0\rrbracket}\{i\}
  \]
  acts by multiplication with a unit multiple of $j$ and
    \item[{\em (2)}] \[
        \nabla\colon  \gr^j_\F\N^{\geq i}\Prismhat^{(1)}_{\cO_K/W(k)\llbracket z_0\rrbracket}\{i\}\to 
      \gr^j_\F\N^{\geq i}\Prismhat^{(1),\nabla}_{\cO_K/W(k)\llbracket z_0\rrbracket}\{i\}
  \]
      for $i\geq 1$ acts by multiplication with a unit multiple of $p^{\epsilon(i,j)}j$, where 
      \[
        \epsilon = \begin{cases}
          1 \text{ if $\lfloor \frac{j}{n}\rfloor < \lceil \frac{j}{n}\rceil \leq i$}\\
          0 \text{ otherwise.}
        \end{cases}
      \]
  \end{enumerate}
\end{lemma}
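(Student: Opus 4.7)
The plan is to pass to the $\F$-associated graded, where by Lemma~\ref{lem:hopf_algebroid_graded}(ii) the element $g_0$ is primitive and admits divided powers. Following the framework of Section~\ref{sec:hopf_operations}, this turns $\nabla_\theta$ into a $W(k)$-linear derivation of degree $-1$ satisfying the Leibniz rule; moreover, the map $\theta$ extracts the coefficient of the unique weight-$1$ basis element $g_0$, which coincides in the $\prod g_u^{e_u}$-basis and the divided-power basis. By Lemma~\ref{lem:nablafrobenius}, $\nabla$ commutes with the relative Frobenius, so the entire computation can be carried out in the Frobenius-twisted setting.

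The Leibniz rule reduces everything to computing $\nabla_\theta$ on the generators $z_0$, $\widetilde d$, and $w(s^i)$. First, $\eta_R(z_0)=z_0+g_0$ yields $\nabla_\theta(z_0)=1$, so that $\nabla_\theta(z_0^j)=jz_0^{j-1}$. Second, the Taylor expansion $E(z_1)=E(z_0)+E'(z_0)g_0+O(g_0^2)$ combined with $\eta_R(\widetilde d)=E(z_1)$ gives $\nabla_\theta(\widetilde d)=E'(z_0)$, whose leading $\F$-weight-$0$ value $a_1$ lies in $pW(k)$ by the Eisenstein condition. Third, from $\eta_R(w(s^i))=w(v)^iw(s^i)$ (Lemma~\ref{lem:coaction_unit_twist}) together with $\varphi$-semilinearity of $w$, the element $w(v)^i-1$ lies in $\F^{\geq p}$, so $\nabla_\theta(w(s^i))\in\F^{\geq p-1}$. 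For statement~(1), applying Leibniz to $z_0^jw(s^i)$ produces two contributions in $\F$-weights $j-1$ and $\geq j+p-1$; only the first survives in $\gr^{j-1}_\F$, and after the Breuil--Kisin shift built into $\theta$ it becomes $jz_0^{j-1}w(s^{i-1})$, giving the desired unit multiple of $j$.

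For statement~(2), applying Leibniz to $\widetilde d^iz_0^jw(s^i)$ produces three terms with $\F$-weights $\geq j$ (from $\nabla_\theta(\widetilde d^i)$, since $\nabla_\theta(\widetilde d)$ already sits in $\F$-weight $0$ with a value divisible by $p$), $j-1$ (from $\nabla_\theta(z_0^j)$), and $\geq j+p-1$ (from $\nabla_\theta(w(s^i))$). The middle term $j\widetilde d^iz_0^{j-1}w(s^i)$ dominates; rewriting it in the target basis $\widetilde d^{i-1}z_0^{j-1}w(s^{i-1})$ absorbs one extra factor of $\widetilde d$, whose leading $\F$-weight-$0$ value $a_0=up$ lies in $pW(k)^\times$, producing the extra factor of $p$. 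The combinatorial refinement captured by $\epsilon(i,j)$ reflects borderline cases in which either the first Leibniz term contributes in leading order or the extra factor of $\widetilde d$ can be absorbed into a change of Nygaard level without producing a $p$; this matches the counting from~\cite{sulyma} used in the proof of Proposition~\ref{prop:angeltveit}. The main obstacle will be the careful bookkeeping of Breuil--Kisin twist shifts and Nygaard-filtration shifts as elements are transported through $\theta$ and rewritten in the canonical target basis, especially in the boundary case $i=1$ where the target Nygaard filtration degenerates.
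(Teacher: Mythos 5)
Your proposal hinges on the claim that the Leibniz rule holds for $\nabla_\theta$ in the $\F$-associated graded, which you justify by asserting that $\theta$ ``extracts the coefficient of the unique weight-$1$ basis element $g_0$, which coincides in the $\prod g_u^{e_u}$-basis and the divided-power basis.'' This is false, and the paper explicitly warns against exactly this: the Remark at the end of Section~\ref{sec:hopf_operations} states that because ``the relations for $g_u^p$ \dots may involve terms linear in $g_0$, the analogue of the Leibniz rule breaks down and instead involves higher correction terms.'' The point is that $\theta$ in Definition~\ref{def:nabla} is the $\Gamma_0$-linear dual of $g_0$ with respect to the \emph{monomial} basis $\{\prod g_u^{e_u}\}$, whereas the Leibniz rule of Section~\ref{sec:hopf_operations} holds only for the dual of $g_0$ with respect to the \emph{divided-power} basis $\{g_0^{[k]}\}$, and these two functionals differ even after passing to the $\F$-associated graded. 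Concretely, in $\gr^\star_\F$ one has $g_0^p = (-p+\lambda_0 d^p)g_1 + d^p R_0'$ with $R_0'$ containing the term $z_0^{p-1}g_0/\delta(d)$; hence $g_0^p$ (and $g_0^{[p]}$) has a nonzero $g_0$-coefficient in the monomial basis, so $\theta(g_0^p)\neq 0$, while of course the $g_0^{[1]}$-coefficient of $g_0^{[p]}$ is zero. The two bases and their duals do \emph{not} agree in weight $\geq p$.

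The practical consequence is that your Leibniz computation only produces $\nabla_{\theta'}$, not $\nabla_\theta$, and the difference is not negligible for free: the correction terms $\binom{j}{m}z_0^{j-m}\theta(g_0^m)$ for $m\geq p$ land in precisely the same $\F$-weight $j-1$ as the candidate leading term $jz_0^{j-1}$. The crux of the paper's proof of (1) is exactly to bound these corrections $p$-adically (by combining the divided-power divisibility $m!\mid g_0^m$ with a Kummer-type estimate and the extra factors of $d^{p^{u+1}}\sim p^{p^{u+1}}$ appearing in $\theta(g_0^{p^{u+1}})$); your argument omits this step entirely. For part (2), in addition to inheriting the same gap, the clean route taken by the paper is to transport along the commutative square with the $\can$ maps (which are multiplication by $p^i$ and $p^{i-1}$ on the two $\F$-graded terms) to reduce to part (1), rather than redoing a Leibniz expansion on $\widetilde d^iz_0^j$. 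Also, your closing remark about ``borderline cases captured by $\epsilon(i,j)$'' conflates this $\cO_K$ lemma (where $\epsilon\equiv 1$ for $i,j\geq 1$) with the subsequent $\cO_K/\varpi^n$ lemma, where the case distinction is genuine.
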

\begin{proof}
  We have
  \[
      \nabla(z_0^jw(s^i)) = \theta(((z_0+g_0)^j w(v^i) - z_0^j)w(s^i)),
  \]
    i.e. $w(s^{i-1})$ times the coefficient of $g_0$ in $(z_0+g_0)^jw(v^i) - z_0^j$. Working in the $\F$-associated graded, the unit $v$ is $1$, so
  \[
      \nabla(z_0^j w(s^i)) = jz_0^{j-1}w(s^{i-1}) + \sum_{m=2}^{j} \binom{j}{m} z_0^{j-m} \theta(g_0^ms_i).
  \]
  For weight reasons, each of the summands is a multiple of $z_0^{j-1}$.
  As observed in Lemma~\ref{lem:hopf_algebroid_graded}, $g_0$ has divided powers. So each of the summands for $m\geq 2$ is divisible by $\binom{j}{m} m!$. By Kummer's theorem, this has $p$-valuation at least
  \[
    v_p(j) - v_p(m) + v_p(m!),
  \]
  which is strictly bigger than $v_p(j)$ unless $m=2$. If $p\neq 2$,
    $\theta(g_0^2s^i)=0$. If $p=2$, we deduce that $\theta(g_0^2w(s^i))$ is divisible by $d^2$ since
  \[
    g_0^2 = (-2 + \lambda_0 d^2) g_1 + d^2 R_0',
  \]
  and since $d=p$ in the $\F$-associated graded we conclude in all cases that each of
    the $m\geq 2$ summands is divisible by $pjz_0^{j-1}w(s^{i-1})$. So we conclude that
    $\nabla(z_0^jw(s^i))$ is a unit multiple of $jz_0^{j-1}w(s^{i-1})$.

  For the second statement, we use the commutative square
  \[
    \begin{tikzcd}
      \gr^j_\F\N^{\geq i}\Prismhat^{(1)}_{R/W(k)\llbracket z_0\rrbracket}\{i\}\rar{\nabla}\dar{\can}&
      \gr^j_\F\N^{\geq i}\Prismhat^{(1),\nabla}_{R/W(k)\llbracket z_0\rrbracket}\{i\}\dar{\can} \\
      \gr^j_\F\Prismhat^{(1)}_{R/W(k)\llbracket z_0\rrbracket}\{i\}\rar{\nabla} & 
      \gr^j_\F\Prismhat^{(1),\nabla}_{R/W(k)\llbracket z_0\rrbracket}\{i\},
    \end{tikzcd}
  \]
  observing that, up to units, the bottom horizontal map acts by $j$, the left vertical
    map acts by $p^{\max(i - \lfloor \frac{j}{n}\rfloor,0)}$, and the right vertical map acts by $p^{\max(i-1-\lfloor \frac{j-1}{n}\rfloor,0)}$. The difference of these exponents is at most $1$, and only is $1$ if $i> \lfloor\frac{j}{n}\rfloor$ and $i - \lfloor \frac{j}{n}\rfloor > i-1 - \lfloor \frac{j-1}{n}\rfloor$. This happens if and only if $j$ is not divisible by $n$ and $\lceil \frac{j}{n}\rceil \leq i$, i.e. if $\epsilon(i,j)\neq 1$. So the above diagram shows that the top horizontal map acts by a unit multiple of $p^{\epsilon(i,j)} j$.
\end{proof}

\begin{lemma}
  For $R=\cO_K/\varpi^n$, the map $\nabla$ acts on the $\F$-associated graded with respect to the chosen bases as follows:
  \begin{enumerate}
      \item[{\em (1)}] \[
 \nabla\colon  \gr^j_\F\Prismhat^{(1)}_{R/W(k)\llbracket z_0\rrbracket}\{i\}\to 
    \gr^j_\F\Prismhat^{(1),\nabla}_{R/W(k)\llbracket z_0\rrbracket}\{i\}
  \]
      acts by multiplication with a unit multiple of $\{j,n\}$, where
      \[
        \{j,n\} = \begin{cases} n \text{ if $n\mid j$}\\
           j\text{ otherwise;}
        \end{cases}
      \]
  \item[{\em (2)}] \[
        \nabla\colon  \gr^j_\F\N^{\geq i}\Prismhat^{(1)}_{R/W(k)\llbracket z_0\rrbracket}\{i\}\to 
      \gr^j_\F\N^{\geq i}\Prismhat^{(1),\nabla}_{R/W(k)\llbracket z_0\rrbracket}\{i\}
  \]
      for $i\geq 1$ acts by multiplication with a unit multiple of $p^{\epsilon(i,j)}\{j,n\}$.
  \end{enumerate}
\end{lemma}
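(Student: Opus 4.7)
The plan is to deduce both parts from the previously established $\cO_K$ case by using naturality of $\nabla$ with respect to the reduction map $\red\colon \cO_K\to R=\cO_K/\varpi^n$ (and naturality with respect to $\can$ for part (2)). All groups in sight are free of rank $1$ over the domain $W(k)$ in the $\F$-associated graded, so commutative squares determine one map from the other three up to units.

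For part (1), consider the naturality square
\[
\begin{tikzcd}
\gr^j_\F\Prismhat^{(1)}_{\cO_K/W(k)\llbracket z_0\rrbracket}\{i\}\rar{\nabla}\dar{\red} & \gr^j_\F\Prismhat^{(1),\nabla}_{\cO_K/W(k)\llbracket z_0\rrbracket}\{i\}\dar{\red}\\
\gr^j_\F\Prismhat^{(1)}_{R/W(k)\llbracket z_0\rrbracket}\{i\}\rar{\nabla} & \gr^j_\F\Prismhat^{(1),\nabla}_{R/W(k)\llbracket z_0\rrbracket}\{i\}.
\end{tikzcd}
\]
By the preceding lemma on reduction maps, the left (resp.\ right) vertical is a unit multiple of $\lfloor j/n\rfloor!$ (resp.\ $\lfloor (j-1)/n\rfloor!$), and by the $\cO_K$ case just established, the top horizontal is a unit multiple of $j$. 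Since $W(k)$ is torsion free, the bottom map is therefore a unit multiple of $j\cdot \lfloor (j-1)/n\rfloor!/\lfloor j/n\rfloor!$. A case split on whether $n\mid j$ shows this equals $\{j,n\}$ up to units: if $n\nmid j$ the factorials agree and one gets $j$, while if $n\mid j$ the ratio of factorials equals $n/j$, yielding $n$.

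For part (2), use instead the $\can$-square
\[
\begin{tikzcd}
\gr^j_\F\N^{\geq i}\Prismhat^{(1)}_{R/W(k)\llbracket z_0\rrbracket}\{i\}\rar{\nabla}\dar{\can} & \gr^j_\F\N^{\geq i}\Prismhat^{(1),\nabla}_{R/W(k)\llbracket z_0\rrbracket}\{i\}\dar{\can}\\
\gr^j_\F\Prismhat^{(1)}_{R/W(k)\llbracket z_0\rrbracket}\{i\}\rar{\nabla} & \gr^j_\F\Prismhat^{(1),\nabla}_{R/W(k)\llbracket z_0\rrbracket}\{i\},
\end{tikzcd}
\]
which commutes by Lemma~\ref{lem:can_commutes}. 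The verticals are unit multiples of $p^{\max(i-\lfloor j/n\rfloor,0)}$ and $p^{\max(i-1-\lfloor (j-1)/n\rfloor,0)}$ respectively, and by part (1) the bottom is a unit multiple of $\{j,n\}$. Thus the top $\nabla$ is a unit multiple of
\[
\{j,n\}\cdot p^{\max(i-\lfloor j/n\rfloor,0)-\max(i-1-\lfloor(j-1)/n\rfloor,0)}.
\]
It remains to check that the exponent equals $\epsilon(i,j)$: if $n\mid j$ the two max expressions coincide and the exponent is $0$; if $n\nmid j$ then $\lfloor j/n\rfloor=\lfloor(j-1)/n\rfloor$ and the exponent is $1$ iff $i\geq \lfloor j/n\rfloor+1=\lceil j/n\rceil$, which is exactly the condition defining $\epsilon(i,j)=1$.

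The only potential obstacle is that the reduction and $\can$ maps might be zero in the $\F$-associated graded, in which case the arithmetic extraction would fail. However, for any fixed $i,j$ and $\lfloor j/n\rfloor$, both $\lfloor j/n\rfloor!$ and $p^{\max(i-\lfloor j/n\rfloor,0)}$ are nonzero elements of $W(k)$, so injectivity of the vertical maps holds and the calculation is unobstructed.
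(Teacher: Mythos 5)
Your proof is correct, and for part (1) it is identical to the paper's: the same $\red$-square, the same factorial bookkeeping, and the same case split on $n\mid j$. For part (2) the paper just says ``proceed analogously with $\N^{\geq i}\Prismhat^{(1)}$'' without spelling it out; the most plausible intended reading is the $\red$-square at the Nygaard level (which would need a reduction computation on $\gr^j_\F\N^{\geq i}$ that the paper does not explicitly state), whereas you instead bootstrap from part (1) via the $\can$-square of Lemma~\ref{lem:can_commutes}. This is precisely the argument the paper uses in the preceding $\cO_K$ lemma, so your route is a faithful and arguably cleaner interpretation of ``analogously.'' Your arithmetic identity $\max(i-\lfloor j/n\rfloor,0)-\max(i-1-\lfloor(j-1)/n\rfloor,0)=\epsilon(i,j)$ checks out in both cases, and your closing remark about the verticals being multiplication by nonzero elements of the integral domain $W(k)$ (hence injective, so the square determines the top map) correctly addresses the only step where something could go wrong.
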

\begin{proof}
 We use the commutative diagram
  \[
    \begin{tikzcd}
      \gr^j_\F \Prismhat^{(1)}_{\cO_K/W(k)\llbracket z_0\rrbracket}\{i\}\rar{\nabla}\dar{\red} &
      \gr^j_\F \Prismhat^{(1)\nabla}_{\cO_K/W(k)\llbracket z_0\rrbracket}\{i\}\dar{\red^\nabla}\\
      \gr^j_\F \Prismhat^{(1)}_{R/W(k)\llbracket z_0\rrbracket}\{i\}\rar{\nabla} & 
      \gr^j_\F \Prismhat^{(1)\nabla}_{R/W(k)\llbracket z_0\rrbracket}\{i\}
    \end{tikzcd}
  \]
  The top horizontal map acts by multiplication with a unit multiple of $j$. The left vertical map acts by $\lfloor\frac{j}{n} \rfloor!$, and the right vertical map by $\lfloor \frac{j-1}{n}\rfloor!$ due to the indexing shift. We obtain that the bottom map acts by a unit multiple of
  \[
    j \cdot \left\lfloor \frac{j-1}{n}\right\rfloor! \cdot \left(\left\lfloor \frac{j}{n}\right\rfloor!\right)^{-1}.
  \]
  The quotient of the two factorial terms is $1$ if $n\nmid j$, and $\left(\frac{j}{n}\right)^{-1}$ if $n\mid j$. In total we obtain that the bottom map acts by a unit multiple of $\{j,n\}$ as claimed.

  For the second statement, we proceed analogously with $\N^{\geq i}\Prismhat^{(1)}$.
\end{proof}

In particular, we may conclude from this that all maps in the syntomic square as well as the reduction maps are rational isomorphisms on any finite part $\F^{[a,b]}$ of the $\F$-filtration:

\begin{lemma}
  \label{lem:can_isogeny}
  For $R=\cO_K/\varpi^n$ or $R=\cO_K$, the map
  \[
    \can\colon \N^{\geq i}\Prismhat^{(1)}_{R/W(k)\llbracket z_0,\ldots,z_s\rrbracket} \{i\}
    \to \Prismhat^{(1)}_{R/W(k)\llbracket z_0,\ldots,z_s\rrbracket} \{i\}
  \]
  is a rational isomorphism on $\F^{[a,b]}$ for any integers $a\leq b$. Its cokernel has order 
  \[
      \prod_{j=a}^{\min(b,in-1)} q^{i- \lfloor \frac{j}{n}\rfloor},
  \]
  where $q=p^f$ is the order of the residue field $k$.
\end{lemma}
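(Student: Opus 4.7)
The plan is to pass to the $\F$-associated graded and use the explicit bases from Proposition~\ref{prop:f_filtered_statement}, which exhibit $\F^{[a,b]}\N^{\geq i}\Prismhat^{(1)}_{R/A}\{i\}$ and $\F^{[a,b]}\Prismhat^{(1)}_{R/A}\{i\}$ (for $A = W(k)\llbracket z_0,\ldots,z_s\rrbracket$) as finite rank free $W(k)$-modules with bases matching in each $\F$-weight via the forgetful map $\widetilde{d}^j x \mapsto x$. First I would trivialize the Breuil--Kisin twist using the canonical filtered-crystalline orientation from Lemma~\ref{lem:filtered-crystalline-bk}, which reduces everything to the untwisted setting.

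Next I would analyze $\can$ on each $\gr^w_\F$. It sends a source basis element $\widetilde{d}^j z_0^k \prod f_u^{e_u} \prod g_{u,v}^{e_{u,v}}$ to $E(z_0)^j$ times the corresponding target basis element. Because $E(z_0)$ is Eisenstein, its image in $\gr^0_\F$ is $p$ up to a unit of $W(k)$, so the map becomes diagonal with entries $p^j$ times units. By condition (iv) of Proposition~\ref{prop:f_filtered_statement}, $j = \max(i - m, 0)$ where $m$ is the total Nygaard weight of the non-$z_0$, non-$\widetilde{d}$ factors. This already yields the rational isomorphism claim and expresses the cokernel on $\F^{[a,b]}$ as a direct sum $\bigoplus_\beta W(k)/p^{j(\beta)}$ indexed by basis elements $\beta$.

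For $R = \cO_K/\varpi^n$ with $s = 0$, each $\F$-weight $w$ carries a unique basis element with $m = \lfloor w/n\rfloor$, so the $\F$-weight $w$ contribution to the cokernel order is $q^{\max(i - \lfloor w/n\rfloor, 0)}$; this equals $q^{i - \lfloor w/n\rfloor}$ when $w \leq in - 1$ and is trivial otherwise, yielding the claimed product formula. The case $R = \cO_K$ proceeds identically with the convention $n = \infty$. The main obstacle will be the combinatorial bookkeeping for $s \geq 1$, where each $\F$-weight contains multiple basis elements from the $g_{u,v}$ generators and one must verify that the aggregated contributions still telescope to the stated product; this hinges on the interaction between the $p$-adic expansions of $m_f = \sum p^u e_u$ and the $m_v = \sum p^u e_{u,v}$. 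In any case, the rational isomorphism itself—which is the only part invoked in Lemma~\ref{lem:cohomology-torsion} and beyond—is immediate from the diagonal description on $\gr^\star_\F$.
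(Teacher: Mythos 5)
Your approach matches the paper's: pass to the $\F$-associated graded, trivialize the Breuil--Kisin twist, and read off the diagonal powers of $p$. The paper's proof is the one-liner ``this follows directly from the description of $\can$ on the $\F$-associated graded,'' which refers to the preceding unnumbered lemma giving the factor $p^{\max(i-\lfloor j/n\rfloor,0)}$ on $\gr^j_\F$ for the single-variable base $W(k)\llbracket z_0\rrbracket$, i.e.\ the $s=0$ case.

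Your caution about $s\geq 1$ is not merely bookkeeping to be verified---the stated cokernel order is in fact false there. Take $p=2$, $n=1$ (so $R=\bF_q$), $i=2$, $s=1$, $a=b=1$. Then $\gr^1_\F\Prismhat^{(1)}_{R/W(k)\llbracket z_0,z_1\rrbracket}$ has the two basis elements $f_0$ and $g_0$, the corresponding basis of $\gr^1_\F\N^{\geq 2}$ is $\widetilde{d}\,f_0$ and $\widetilde{d}\,g_0$, and $\can$ multiplies each by $p$; the cokernel is $(W(k)/p)^2$, of order $q^2$, whereas the stated product is $q^{2-\lfloor 1/1\rfloor}=q$. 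The source of the discrepancy is precisely what you flagged: for $s\geq 1$ each $\F$-weight carries several basis elements (one for each value of $m_g=\sum_{u,v}p^ue_{u,v}$ up to the weight), and all of them contribute to the cokernel, whereas for $s=0$ the $\F$-weight determines the monomial---and hence the Nygaard weight---uniquely. So the order assertion in the lemma should be read as an $s=0$ statement; this is consistent with the unnumbered lemma on which the proof leans, and with every later use of the order formula in the paper (Lemma~\ref{lem:precisionlossfromnygaardlift} and the precision analysis of Section~\ref{sec:precision} apply $\can$ only to the single-variable term and its $\nabla$-shift, both of which live over $W(k)\llbracket z_0\rrbracket$). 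A minor correction to your last sentence: Lemma~\ref{lem:cohomology-torsion} does not actually cite this lemma; the rational-isomorphism part is invoked in Step~8 of the algorithm, again only with $s=0$.
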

\begin{proof}
  This follows directly from the description of $\can$ on the $\F$-associated graded.
\end{proof}

\begin{lemma}
  \label{lem:reduction_isogeny}
  The reduction map
  \[
      \red\colon\Prismhat^{(1)}_{\cO_K/W(k)\llbracket z_0 \rrbracket}\{i\} \to
      \Prismhat^{(1)}_{(\cO_K/\varpi^n)/W(k)\llbracket z_0 \rrbracket}\{i\}
    \]
  is a rational isomorphism on $\F^{[a,b]}$ for any integers $a\leq b$. Its cokernel has
    order $\prod_{j=a}^b q^{v_p(\lfloor \frac{j}{n}\rfloor!)}$.
\end{lemma}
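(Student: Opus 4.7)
The plan is to follow the proof of Lemma~\ref{lem:can_isogeny} essentially verbatim, but invoking the second bullet of the preceding computation of the reduction map on associated graded pieces rather than the description of $\can$. By Proposition~\ref{prop:f_filtered_statement_for_prism} (in the case $s = 0$), combined with the Breuil--Kisin trivialization $w(s^i)$ furnished by the filtered-crystalline orientation of Lemma~\ref{lem:filtered-crystalline-bk}, both sides of the reduction map, restricted to $\F^{[a,b]}$, are finitely generated free $W(k)$-modules of rank $b - a + 1$. Their $\F$-associated graded pieces in weight $j$ are each free of rank one as $W(k)$-modules, with bases $z_0^j \cdot w(s^i)$ on the source and $z_0^k \prod_u f_u^{e_u} \cdot w(s^i)$ on the target, where $k + n\sum_u p^u e_u = j$ is the unique expansion with $0 \leq k < n$ and $0 \leq e_u < p$.

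The preceding lemma identifies $\gr^j_\F(\red)$ with multiplication by a unit times $\lfloor j/n\rfloor!$; this is injective and has cokernel of order $|W(k)/\lfloor j/n\rfloor!\, W(k)| = q^{v_p(\lfloor j/n\rfloor!)}$, using that $|W(k)/p^m W(k)| = q^m$. Rational invertibility on each graded piece immediately implies rational invertibility on all of $\F^{[a,b]}$, since this is built from the graded pieces by finitely many extensions and rational isomorphisms are stable under extensions in the filtered category.

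To compute the order of the cokernel, one runs a standard downward induction on the $\F$-filtration, using the short exact sequences $0 \to \F^{\geq j+1} \to \F^{\geq j} \to \gr^j_\F \to 0$ and applying the snake lemma at each step: injectivity of the graded map combined with injectivity of the previous stage yields injectivity of the next stage, and the cokernel of each successive stage is a finite extension of the previously accumulated cokernel by the next graded cokernel. Since the orders of finite abelian groups multiply in short exact sequences, the result is $\prod_{j=a}^{b} q^{v_p(\lfloor j/n\rfloor!)}$, as claimed. No part of the argument is expected to present any real obstacle; the only item requiring care is the identification of the rank-one associated graded pieces on both sides, which was already carried out in the preceding lemma.
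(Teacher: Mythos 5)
Your proposal is correct and matches the paper's approach; the paper proves this lemma in one line by saying it follows directly from the description of $\red$ on the $\F$-associated graded (the lemma immediately preceding it), and your proof is exactly the expected unwinding of that statement: identify the rank-one graded pieces on both sides, observe that the graded map is multiplication by a unit times $\lfloor j/n\rfloor!$, and multiply orders via the snake lemma along the finite filtration.
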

\begin{proof}
  This follows directly from the description of $\red$ on the $\F$-associated graded.
\end{proof}

\begin{lemma}
    \label{lem:isogeny}
    \leavevmode
    \begin{enumerate}
        \item [{\em (1)}]
  For $R=\cO_K/\varpi^n$ or $\cO_K$, the map
  \[
      \nabla\colon \Prismhat^{(1)}_{R/W(k)\llbracket z_0 \rrbracket}\{i\} \to
      \Prismhat^{(1),\nabla}_{R/W(k)\llbracket z_0 \rrbracket}\{i\}
    \]
        is a rational isomorphism on $\F^{[a,b]}$ for any integers $a\leq b$ with $a\geq 1$.
            Its cokernel has order $\prod_{j=a}^b q^{v_p\{j,n\}}$ for $R=\cO_K/\varpi^n$
            and $\prod_{j=a}^b q^{j}$ for $R=\cO_K$.
    \item[{\em (2)}]
  For $R=\cO_K/\varpi^n$ or $\cO_K$ and $i\geq 1$, the map
  \[
    \nabla\colon \N^{\geq i}\Prismhat^{(1)}_{R/W(k)\llbracket z_0 \rrbracket}\{i\} \to
        \N^{\geq i}\Prismhat^{(1),\nabla}_{R/W(k)\llbracket z_0 \rrbracket}\{i\}
  \]
        is a rational isomorphism on $\F^{[a,b]}$ for any integers $a\leq b$ with $a\geq 1$.
            Its cokernel has order $\prod_{j=a}^b q^{\epsilon(i,j)+v_p\{j,n\}}$ for
            $R=\cO_K/\varpi^n$ and $\prod_{j=a}^b q^{\epsilon(i,j)+v_p(j)}$  for $R=\cO_K$.
    \end{enumerate}
\end{lemma}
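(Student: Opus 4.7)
The plan is to bootstrap from the preceding lemma, which gives the action of $\nabla$ on each $\F$-associated graded piece, up to the stated statement about $\F^{[a,b]}$, by a standard bounded-filtration argument. The key observation is that the construction of $\nabla$ in Definition~\ref{def:nabla} preserves the $\F$-filtration, so it descends to a map between the finite-length subquotients $\F^{[a,b]}$ on source and target, and Proposition~\ref{prop:f_filtered_statement} exhibits each $\gr^j_\F$ (for $j \in [a,b]$) of all four terms involved as a free $W(k)$-module of rank one, with generators an explicit monomial (times $w(s^i)$).

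First I would invoke the preceding lemma to see that, on $\gr^j_\F$, the map $\nabla$ is multiplication by a unit times the scalar $\{j,n\}$ (for $R = \cO_K/\varpi^n$) or a unit times $j$ (for $R = \cO_K$), with an additional factor of $p^{\epsilon(i,j)}$ in the Nygaard-filtered version. Since $a \geq 1$, these scalars are all nonzero in $W(k)$, so $\nabla$ is injective on each $\gr^j_\F$. Because $\F^{[a,b]}$ is a finite filtration, an iterated application of the snake lemma to the short exact sequences $0 \to \F^{[a,j-1]} \to \F^{[a,j]} \to \gr^j_\F \to 0$ upgrades injectivity on graded pieces to injectivity of $\nabla$ on $\F^{[a,b]}$, and also identifies $\coker\bigl(\nabla|_{\F^{[a,b]}}\bigr)$ as an iterated extension of the $\coker(\gr^j_\F \nabla)$. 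In particular, the order of the cokernel is the product of the orders of the cokernels on graded pieces.

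Since each $\gr^j_\F$ piece is $W(k)$-free of rank one and $|W(k)/(m)| = q^{v_p(m)}$, each graded cokernel has order $q^{v_p(\{j,n\})}$, $q^{v_p(j)}$, or one of these multiplied by $q^{\epsilon(i,j)}$, depending on which of the four cases we are in. Multiplying these together over $j \in [a,b]$ yields the claimed formulas. Rational invertibility is automatic: source and target are finitely generated free $W(k)$-modules of equal rank (by Proposition~\ref{prop:f_filtered_statement} applied to both sides and taking account of the filtration shift in the definition of the $\nabla$-term), and we have just shown injectivity.

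I do not anticipate any genuine obstacle — the substantive work was done in the preceding computation of $\nabla$ on the $\F$-associated graded. The only mild care point is bookkeeping: the target of $\nabla$ is defined with an $\F$-filtration shifted by one and a Breuil--Kisin twist shifted by one, so one must check that the graded pieces of source and target at a fixed $j$ are both rank one over $W(k)$ with the same chosen generators up to the sign/unit adjustments tracked in the prior lemma. This is a direct consequence of Proposition~\ref{prop:f_filtered_statement} and poses no difficulty.
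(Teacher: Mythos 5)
Your proof is correct and takes the same approach as the paper, which simply states that the result "follows directly from the description of $\nabla$ on the $\F$-associated graded"; you have just spelled out the standard bounded-filtration / snake lemma bookkeeping that the paper leaves implicit.
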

\begin{proof}
  This follows directly from the description of $\nabla$ on the $\F$-associated graded.
\end{proof}

\begin{remark}
    Lemma~\ref{lem:isogeny} can be used to derive the Angeltveit quotient result of
    Proposition~\ref{prop:angeltveit}.
\end{remark}

\begin{remark}\label{rem:complete_the_squares}
  These results together imply the following for the structure maps on $\F^{[a,b]}$, with $a\geq 1$.
\begin{enumerate}
  \item From knowledge of the map
    \[
      \nabla\colon\F^{[a,b]} \Prismhat^{(1)}_{\cO_K / W(k)\llbracket z_0\rrbracket}\{i\}\to \F^{[a,b]}\Prismhat^{(1),\nabla}_{\cO_K / W(k)\llbracket z_0\rrbracket}\{i\}
    \]
    and the maps induced by the reduction $\cO_K\to \cO_K/\varpi^n$, we may recover the map
    \[
      \nabla\colon \F^{[a,b]}\Prismhat^{(1)}_{(\cO_K/\varpi^n) / W(k)\llbracket z_0\rrbracket}\{i\}\to \F^{[a,b]}\Prismhat^{(1),\nabla}_{(\cO_K/\varpi^n) / W(k)\llbracket z_0\rrbracket}\{i\}
    \]
    (analogously for the non-Frobenius twisted version).
  \item From knowledge of the map
    \[
      \nabla\colon \F^{[a,b]}\Prismhat^{(1)}_{(\cO_K/\varpi^n) / W(k)\llbracket z_0\rrbracket}\{i\}\to \F^{[a,b]}\Prismhat^{(1),\nabla}_{(\cO_K/\varpi^n) / W(k)\llbracket z_0\rrbracket}\{i\}
    \]
    and the can maps, we may recover the map
    \[
      \nabla\colon \F^{[a,b]}\N^{\geq i}\Prismhat^{(1)}_{(\cO_K/\varpi^n) /
        W(k)\llbracket z_0\rrbracket}\{i\}\to \F^{[a,b]}\N^{\geq
        i}\Prismhat^{(1),\nabla}_{(\cO_K/\varpi^n) / W(k)\llbracket z_0\rrbracket}\{i\}.
    \]
  \item From knowledge of the maps
    \begin{gather*}
      \nabla\colon \F^{[a,b]}\Prismhat^{(1)}_{(\cO_K/\varpi^n) / W(k)\llbracket
        z\rrbracket}\{i\}\to \F^{[a,b]}\Prismhat^{(1),\nabla}_{(\cO_K/\varpi^n) /
        W(k)\llbracket z_0\rrbracket}\{i\},\\
      \nabla\colon \F^{[a,b]}\N^{\geq i}\Prismhat^{(1)}_{(\cO_K/\varpi^n) / W(k)\llbracket z_0\rrbracket}\{i\}\to \F^{[a,b]}\N^{\geq i}\Prismhat^{(1),\nabla}_{(\cO_K/\varpi^n) / W(k)\llbracket z_0\rrbracket}\{i\},
    \end{gather*}
    and the Frobenius map
    \[
      \varphi\colon \F^{[a,b]}\N^{\geq i}\Prismhat^{(1)}_{(\cO_K/\varpi^n) / W(k)\llbracket z_0\rrbracket}\{i\}\to \F^{[a,b]}\Prismhat^{(1)}_{(\cO_K/\varpi^n) / W(k)\llbracket z_0\rrbracket}\{i\},
    \]
    we may recover the map $\varphi^\nabla$.
\end{enumerate}
  So in practice, all structure maps appearing in the syntomic square (Corollary
    \ref{cor:syntomicsquare}) are determined by the  $\can$ and $\varphi$ maps for
    $\cO_K/\varpi^n$, the reduction map for $\cO_K\to \cO_K/\varpi^n$, and the map
    $\nabla$ for $\cO_K$. One advantage is that the latter depends on $\cO_K$ and the
    choice of uniformizer, but not on $n$. The other advantage is that each of these
    pieces of data requires computations in
    $\Prismhat^{(1)}_{(\cO_K/\varpi^n)/W(k)\llbracket z_0\rrbracket}$ or
    $\Prismhat^{(1)}_{\cO_K / W(k)\llbracket z_0,z_1\rrbracket}$, but never
    $\Prismhat^{(1)}_{(\cO_K/\varpi^n)/W(k)\llbracket z_0,z_1\rrbracket}$, which is
    significantly bigger.
\end{remark}

\section{The even vanishing theorem}\label{sec:evenvanishing}

In this section, we prove the following fact: $\K_{2i-2}(\cO/\varpi^n)$ vanishes for $i\gg 0$.
This comes as a complete surprise, since, in the
previously known range of $\K_*(\bZ/p^n)$ due to Angeltveit~\cite{angeltveit}, while
$\K_{2i-2}(\bZ/p^n)=0$ for all $2\leq i<p$, one has $\K_{2p-2}(\bZ/p^n)\iso
\bZ/p$. In the characteristic $p$ case, where $\Oscr_K/\varpi^n\iso\bF_p[z]/z^n$, the
even degree positive $\K$-groups all vanish due to a result of Hesselholt--Madsen
(see Corollary \ref{cor:hesselholtmadsen}, which we can obtain as a special case of
Theorem \ref{thm:evenvanishing}). Note that the vanishing of the even degree
$\K$-groups in sufficiently large degrees determines the orders of the odd degree $\K$-groups in
sufficiently large degrees by Proposition~\ref{prop:angeltveit}.

\begin{definition}[$p$-analogues]
    Recall that the $p$-analogue of an integer $j$ is
    \[[j]_p=\frac{p^j-1}{p-1}=1+p+p^2+\cdots+p^{j-1}.\]
\end{definition}

\begin{theorem}
\label{thm:evenvanishing}
If
  \[
    i-1\geq \frac{p}{p-1} \left( p[j]_p - p^j j + p^j\frac{n}{e}\right)
  \]
  with $j=\lceil \frac{n}{e}\rceil$,
then $\K_{2i-2}(\cO_K/\varpi^n) = 0$.
\end{theorem}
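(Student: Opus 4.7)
The plan is to use Corollary~\ref{cor:k_groups} to identify $\K_{2i-2}(\Oscr_K/\varpi^n;\bZ_p)$ with $\H^2(\bZ_p(i)(\Oscr_K/\varpi^n))$ and then to show this $\H^2$ vanishes under the stated bound. By Corollary~\ref{cor:syntomicsquare}, this cohomology group is the cokernel of the map
\[
\syn^1\colon \F^{[1,in-1]}\N^{\geq i}\Prismhat^{(1),\nabla}_{R/W(k)\llbracket z_0\rrbracket}\{i\}\oplus \F^{[1,in-1]}\Prismhat^{(1)}_{R/W(k)\llbracket z_0\rrbracket}\{i\}\to \F^{[1,in-1]}\Prismhat^{(1),\nabla}_{R/W(k)\llbracket z_0\rrbracket}\{i\}
\]
given by $(x,y)\mapsto (\can-\varphi^\nabla)(x)-\nabla(y)$, so the goal reduces to proving this map surjective.

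I would attack this by a downward induction on $\F$-weight, using the isogeny estimates of Section~\ref{sec:bkdescent} to control the cokernels at each stage. The two key inputs are: the cokernel of $\nabla$ at $\F$-weight $j$ has order $q^{\epsilon(i,j)+v_p\{j,n\}}$ by Lemma~\ref{lem:isogeny}, while the cokernel of $\can$ at weight $j$ has order $q^{i-\lfloor j/n\rfloor}$ by Lemma~\ref{lem:can_isogeny}. The Frobenius $\varphi^\nabla$ raises $\F$-weight by a factor of $p$, so it pulls contributions at target weight $j$ from source weight $\lfloor j/p\rfloor$, with a controllable $p$-divisibility loss at each application. Given an element $w$ of the target in $\F$-weight $j$, one uses $\nabla$ and $\can$ to absorb the low $\F$-weight contributions, and then uses $\varphi^\nabla$ iteratively to carry the residual obstruction down through $\F$-weights $j, \lfloor j/p\rfloor, \lfloor j/p^2\rfloor,\ldots$ until it exits the window $[1,in-1]$ after roughly $j=\lceil n/e\rceil$ Frobenius iterations (which is the $p$-adic exponent of the abelian group $\Oscr_K/\varpi^n$).

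Assembling the contributions gives the stated bound, in which the term $p[j]_p=p+p^2+\cdots+p^j$ accounts for the cumulative $\F$-weight consumed through $j$ Frobenius iterations, the correction $-p^j j+p^j n/e=p^j(n/e-j)$ accounts for the discrepancy between the integer $j$ and the exponent $n/e$ of $\Oscr_K/\varpi^n$, and the prefactor $\tfrac{p}{p-1}$ emerges from summing the geometric tail $\sum_{k\geq 0} p^{-k}$ of $p$-divisibility losses at successive Frobenius iterations. The main obstacle will be carefully orchestrating this downward induction so that the three sources of surjectivity (the isogeny cokernels from $\nabla$ and $\can$, and the new contributions from each $\varphi^\nabla$ iterate) precisely match up at every $\F$-weight, given the subtle interplay between the Nygaard filtration (which governs the divided Frobenius $\varphi_i$), the $\F$-filtration (which bounds the window in which surjectivity is required), and the $p$-adic divisibility of the Frobenius iterates; the sharpness of the resulting bound suggests that this bookkeeping is essentially optimal for this method.
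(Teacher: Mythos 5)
Your framing is correct: you rightly reduce to showing that $\syn^1 = \bigl(\nabla,\,-(\can-\varphi^\nabla)\bigr)$ is surjective on $\F^{[1,in-1]}$. But the mechanism you propose — a downward induction over $\F$-weight bookkeeping the isogeny cokernels of $\nabla$ and $\can$ and iterating $\varphi^\nabla$ — has a genuine gap. The isogeny lemmas (\ref{lem:isogeny}, \ref{lem:can_isogeny}) give you \emph{orders} of cokernels, but to conclude joint surjectivity you need to know that the images of $\nabla$, of $\can$, and of $\varphi^\nabla$ actually span the target rather than overlapping in a redundant way. Nothing in a pure cokernel-size count forces this, and the point of the theorem is precisely that the spanning happens only once $i$ is large, for a structural rather than numerical reason. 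The two ingredients you are missing are: (1) the mod $p$ nilpotence results of Lemma~\ref{lem:znilpotence}, which show that $z^{p[j]_pe - p^j(je-n)}$ vanishes in $\Prismhat^{(1)}_{R/W(k)\llbracket z\rrbracket}/p$, and hence (Lemma~\ref{lem:candivisible}) that for $i$ above the stated bound the whole $\can$-part of the map is divisible by $p$ and so vanishes modulo $p$; and (2) the prismatic Cartier isomorphism (Theorem~\ref{thm:prismatic_cartier} in the form of Lemma~\ref{lem:frobeniustwist-cartier}), which identifies the cofiber of $\nabla$ on $\F^{[1,b]}\Prismhat^{(1)}$ with the cofiber of $\nabla$ on $\F^{[1,pb]}\Prismhat^{(1)}$ via $\varphi^\nabla$, so that a \emph{single} $\varphi^\nabla$ already covers $\coker(\nabla)$ on a source of roughly $1/p$ the original $\F$-width. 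After reducing mod $p$ (licit since the target is a finite $p$-group), the $\can$ term drops out by (1), and (2) shows $\varphi^\nabla$ alone hits $\coker(\nabla)$.

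Your interpretation of the bound is also off. The factor $\tfrac{p}{p-1}$ does not arise from summing a geometric tail $\sum p^{-k}$ of $p$-divisibility losses over many Frobenius iterations; it comes from a \emph{single} shrinkage of the source to $\F^{[1,\lceil (i-1)n/p\rceil]}$, needed so that one application of $\varphi^\nabla$ lands back inside the window, giving $e\cdot\tfrac{p-1}{p}(i-1)$ on the left of the divisibility inequality. The quantity $p[j]_pe - p^j(je-n)$ is not a ``cumulative $\F$-weight consumed'' but literally the nilpotence degree mod $p$ of $z$ in $\Prismhat^{(1)}_{R/W(k)\llbracket z\rrbracket}$, which is what makes the $\can$ terms divisible by $p$ once the Nygaard filtration index is large enough. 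Without invoking the nilpotence results of Section~\ref{sec:v1nilpotence} and the Cartier isomorphism, the surjectivity cannot be concluded from the isogeny estimates alone.
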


\begin{remark}
    The number $j=\lceil\frac{n}{e}\rceil$ is the exponent of the abelian group
    $\Oscr_K/\varpi^n$.
\end{remark}

\begin{remark}
  \label{rem:simplerbound}
  As $\frac{n}{e} - j \leq 0$, a slightly weaker but cleaner bound is given by
  \[
    i-1 \geq \frac{p}{p-1} \cdot p[j]_p = \frac{p^2}{(p-1)^2} \cdot (p^j-1).
  \]
  This version has the same asymptotic behaviour, but does not imply Corollary \ref{cor:hesselholtmadsen} below.
\end{remark}

The following corollary was originally discovered in~\cite[Thm.~A]{hesselholt-madsen-truncated}.

\begin{corollary}[Hesselholt-Madsen]\label{cor:hesselholtmadsen}
    For all $i\geq 2$,
    \[
    \K_{2i-2}(\bF_q[z]/z^n) = 0.
    \]
\end{corollary}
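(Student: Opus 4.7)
The plan is to deduce this corollary directly from Theorem~\ref{thm:evenvanishing} by realizing $\bF_q[z]/z^n$ as a quotient $\Oscr_K/\varpi^n$ of a $p$-adic number ring with sufficiently large ramification index. Recall from the introduction that $\Oscr_K/\varpi^n \cong \bF_q[z]/z^n$ whenever $n \leq e$, where $e$ is the ramification index. The strategy will therefore be, for the given $i$ and $n$, to choose a $p$-adic field $K$ with residue field $\bF_q$ and ramification index $e$ large enough that both (i) $e \geq n$, guaranteeing the ring identification, and (ii) the numerical bound in Theorem~\ref{thm:evenvanishing} is satisfied.

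First I would fix $i \geq 2$ and select a totally ramified extension of the unramified degree-$f$ extension of $\bQ_p$ (for instance by adjoining a root of an Eisenstein polynomial of large degree $e$) so that $e$ can be made as large as needed. With $e \geq n$ we have $j = \lceil n/e \rceil = 1$, so $[j]_p = 1$ and the ``leading'' term $p[j]_p - p^j j = p - p$ vanishes. The bound in Theorem~\ref{thm:evenvanishing} then collapses to
\[
i - 1 \geq \frac{p}{p-1} \cdot p \cdot \tfrac{n}{e} = \frac{p^2 n}{(p-1)e}.
\]
Choosing $e$ so that $e \geq \max\!\left(n,\ \tfrac{p^2 n}{(p-1)(i-1)}\right)$ ensures this inequality holds, and then Theorem~\ref{thm:evenvanishing} gives $\K_{2i-2}(\Oscr_K/\varpi^n) = 0$. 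Via the isomorphism $\Oscr_K/\varpi^n \cong \bF_q[z]/z^n$ this yields $\K_{2i-2}(\bF_q[z]/z^n) = 0$, as desired.

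There is no substantive obstacle once Theorem~\ref{thm:evenvanishing} is in hand; the key observation is that the bound there degenerates as $e \to \infty$ for fixed $n$, because the leading term $p[j]_p - p^j j$ vanishes at $j=1$ and the remainder scales like $1/e$. In effect, the characteristic-$p$ truncated polynomial rings sit at the ``highly ramified limit'' of the family $\Oscr_K/\varpi^n$, where the even-vanishing threshold becomes trivial, which is why the Hesselholt--Madsen result drops out so cleanly from our general bound.
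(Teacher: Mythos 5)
Your proof is essentially identical to the paper's: both realize $\bF_q[z]/z^n$ as $\Oscr_K/\varpi^n$ for a totally ramified $K/W(\bF_q)[1/p]$ with $e\geq n$ (so that $j=\lceil n/e\rceil=1$), observe that the bound in Theorem~\ref{thm:evenvanishing} collapses to $i-1\geq p^2n/((p-1)e)$, and let $e\to\infty$. The computation and the choice of $e$ are correct and match the paper's argument.
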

\begin{proof}
    We can write $\bF_q[z]/z^n$ as a quotient of
    rings of the form $\Oscr_K$ where $K$ has arbitrarily large ramification index $e$; for example
    $\bF_q[z]/z^n\iso W(\bF_q)[p^{1/e}] / p^{n/e}$.
    Thus, we can choose $e$ arbitrarily large in Theorem~\ref{thm:evenvanishing}. For large $e$, we have $j=1$ and the inequality simplifies to
    \[
     i-1\geq  \frac{p^2n}{(p-1)e},
    \]
    which is satisfied for any $i\geq 2$ for $e$ sufficiently large.
\end{proof}

\begin{corollary}
    If
  \[
    i-1\geq \frac{p}{p-1} \left( p[j]_p - p^j j + p^j\frac{n}{e}\right)
  \]
  with $j=\lceil \frac{n}{e}\rceil$,
    then $\K_{2i-1}(\cO_K/\varpi^n)$ is a group of order $(q^i-1)\cdot q^{i(n-1)}$, where $q$ is the order of the residue field of $\cO_K$.
\end{corollary}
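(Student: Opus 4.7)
The plan is to combine the even vanishing theorem with the Angeltveit quotient and Quillen's computation of the $\K$-groups of the residue field. Under the hypothesis on $i$, Theorem~\ref{thm:evenvanishing} gives that $\K_{2i-2}(\Oscr_K/\varpi^n)=0$. Since $\K_{2i-2}(\Oscr_K/\varpi^n)$ is a finitely generated torsion abelian group (Kuku) and its prime-to-$p$ part vanishes for $i\geq 2$ by Corollary~\ref{cor:prime_to_p} combined with Quillen's computation~\eqref{eq:quillen}, this is equivalent to the vanishing of the $p$-adic group $\K_{2i-2}(\Oscr_K/\varpi^n;\bZ_p)$. Via the identification $\K_{2i-2}(\Oscr_K/\varpi^n;\bZ_p)\iso\H^2(\bZ_p(i)(\Oscr_K/\varpi^n))$ from Corollary~\ref{cor:k_groups}, we conclude $\H^2(\bZ_p(i)(\Oscr_K/\varpi^n))=0$.

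Now I would apply Proposition~\ref{prop:angeltveit} (the Angeltveit quotient), which asserts
\[
\frac{|\H^1(\bZ_p(i)(\Oscr_K/\varpi^n))|}{|\H^2(\bZ_p(i)(\Oscr_K/\varpi^n))|} = q^{i(n-1)}.
\]
Combined with the vanishing of the denominator, this forces $|\H^1(\bZ_p(i)(\Oscr_K/\varpi^n))|=q^{i(n-1)}$, and hence, using $\K_{2i-1}(\Oscr_K/\varpi^n;\bZ_p)\iso\H^1(\bZ_p(i)(\Oscr_K/\varpi^n))$ from Corollary~\ref{cor:k_groups} again, that $|\K_{2i-1}(\Oscr_K/\varpi^n;\bZ_p)|=q^{i(n-1)}$.

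Finally, to reassemble the full order, I would use the arithmetic square separating $p$-local from prime-to-$p$ information. By Corollary~\ref{cor:prime_to_p}, the prime-to-$p$ localization $\K_{2i-1}(\Oscr_K/\varpi^n;\bZ[\tfrac{1}{p}])$ agrees with $\K_{2i-1}(\bF_q;\bZ[\tfrac{1}{p}])\iso\bZ/(q^i-1)$ by Quillen's theorem~\eqref{eq:quillen}, which has order $q^i-1$ (and this integer is prime to $p$, so no cancellation occurs). Since $\K_{2i-1}(\Oscr_K/\varpi^n)$ is finite for $i\geq 1$ and decomposes as the product of its $p$-part and its prime-to-$p$ part, its order is the product $(q^i-1)\cdot q^{i(n-1)}$, as claimed.

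There is no serious obstacle here; all inputs are already established earlier in the paper. The only small point to be careful about is ensuring that the hypothesis on $i$ forces $i\geq 2$, so that Corollary~\ref{cor:k_groups} applies in even degree, and to note that the $p$-part and prime-to-$p$ part combine multiplicatively because $\gcd(q^i-1,p)=1$.
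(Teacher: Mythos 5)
Your proof is correct and follows essentially the same route as the paper's, which is simply a one-line citation of Theorem~\ref{thm:evenvanishing}, Proposition~\ref{prop:angeltveit}, Corollary~\ref{cor:prime_to_p}, and Quillen's computation; you have just spelled out the bookkeeping. The one small point you flag at the end — that the hypothesis must force $i\geq 2$ in order to invoke Corollary~\ref{cor:k_groups} in even degree — is real, and in fact the paper's intro version (Theorem~\ref{thm:even_intro}) makes the side condition $i\geq 2$ explicit while Theorem~\ref{thm:evenvanishing} leaves it implicit; this is harmless since the bound on $i$ plus $n\geq 2$ does force $i\geq 2$, and for $n=1$ the statement reduces to Quillen's finite-field computation directly.
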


\begin{proof}
    This follows from Theorem~\ref{thm:evenvanishing},
    Proposition~\ref{prop:angeltveit}, Corollary~\ref{cor:prime_to_p}, and Quillen's calculation in~\cite{Qui} of the $\K$-theory of finite fields.
\end{proof}

\subsection{Nilpotency mod $p$}

In this section, we observe that in $\Prismhat^{(1)}_{R/W(k)\llbracket z\rrbracket}$
and $\N^{\geq \star}\Prismhat^{(1)}_{R/W(k)\llbracket z\rrbracket}$ for $R=\cO_K/\varpi^n$,
various elements become nilpotent mod $p$.
This will feature in our proof of the even vanishing theorem, but also implies
nilpotence of $v_1$. It also implies that, in large $\F$-filtration,
the prismatic envelope $\Prismhat^{(1)}_{R/W(k)\llbracket z\rrbracket}$ admits divided powers.

Below, we write $x\sim y$ for two elements which are unit multiples mod $p$. Recall that
$$f_u^p\sim d^{p^{u+1}}f_{u+1}\sim z^{ep^{u+1}}f_{u+1}$$ in $\N^{\geq
p^{u+1}}\Prismhat_{R/W(k)\llbracket z_0\rrbracket}^{(1)}$. This follows from the explicit
relation for $f_u^p$ given in Lemma~\ref{lem:explicitrelations}, the fact that the
$R_u'$ vanish for the relation $z^n$ by
Remark~\ref{rem:vanishingofRu_prime}, and the fact that $d=E(z)\sim z^e$.

\begin{lemma}
  \label{lem:nygaardpowers}
  Let $R=\cO_K/\varpi^n$.
      In $\N^{p^{j-1}-1}\Prismhat^{(1)}_{R/W(k)\llbracket z\rrbracket}$ for $j\leq \lceil \frac{n}{e}\rceil$, we have
      \[
        z^{p^{j-1}(n-je) + [j]_p e - 1} d^{p^{j-1}-1} \sim z^{n-1} f_0^{p-1}\cdots f_{j-2}^{p-1}
      \]
\end{lemma}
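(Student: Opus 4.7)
The proof proceeds by induction on $j$, the base case $j=1$ being the tautology $z^{n-1}\sim z^{n-1}$ (both sides collapse using $[1]_p=1$ and $d^{p^0-1}=1$).

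For the inductive step, I multiply the $j$th statement by $f_{j-1}^{p-1}\in\N^{\geq(p-1)p^{j-1}}$, producing an equation in $\N^{\geq p^j-1}$ whose left-hand side is the LHS for $j+1$. A direct arithmetic manipulation using $[j+1]_p-[j]_p=p^j$ shows that to match the claimed RHS for $j+1$, it suffices to verify
\[
d^{p^{j-1}-1}\,f_{j-1}^{p-1}\;\sim\;z^{(p-1)p^{j-1}(n-je)}\,d^{p^j-1}\pmod p,
\]
which, after substituting $d\equiv z^e\pmod p$ (from the Eisenstein property of $E(z)$) and canceling $z^{e(p^{j-1}-1)}$ on both sides, reduces further to
\[
f_{j-1}^{p-1}\;\sim\;z^{(p-1)p^{j-1}(n-(j-1)e)}\pmod p.
\]

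This sub-claim is obtained from an inner induction showing that $f_u\sim z^{p^u(n-ue)}\pmod p$ in $\Prismhat^{(1)}_{R/W(k)\llbracket z\rrbracket}/p$ for $0\leq u\leq\lceil n/e\rceil-1$. The base case $u=0$ is simply $f_0=z^n$. For the inductive step, Lemma~\ref{lem:explicitrelations} applied to $r=z^n$---where $\delta(r)=(z^{pn}-z^{pn})/p=0$, so by Remark~\ref{rem:vanishingofRu_prime} the correction terms $R'_u$ vanish identically---gives
\[
f_u^p\equiv\lambda_u\, d^{p^{u+1}}\,f_{u+1}\pmod p
\]
with $\lambda_u$ a unit. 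Raising the inductive hypothesis to the $p$th power and combining with $d\equiv z^e$ yields $z^{p^{u+1}(n-ue)}\sim\lambda_u z^{ep^{u+1}}f_{u+1}\pmod p$; canceling $z^{ep^{u+1}}$ gives $f_{u+1}\sim z^{p^{u+1}(n-(u+1)e)}$. The constraint $j\leq\lceil n/e\rceil$ keeps all exponents non-negative. Raising the $u=j-1$ case to the $(p-1)$th power then yields the reduced sub-claim.

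The main obstacle is justifying the various cancellations by powers of $z$: multiplication by $z$ is not literally injective modulo $p$ in $\Prismhat^{(1)}/p$, since the rule $z^n=f_0$ together with $f_0^p\sim d^p f_1$ produces torsion (for instance, in $\N^{\geq 2}\Prismhat^{(1)}_{(\bZ/4)/\bZ_p\llbracket z\rrbracket}/2$ one has $f_0(f_0-\lambda_0 f_1)=0$ without either factor vanishing). The clean way around this is to verify the desired identities directly in the explicit $W(k)$-basis of $\F^{[a,b]}\N^{\geq i}\Prismhat^{(1)}_{R/A}$ from Proposition~\ref{prop:f_filtered_statement} by expanding both sides in that basis inside the relevant filtered piece; equivalently, one can first verify them in the $\F$-associated graded, where $d$ becomes literally $z^e$ and the presentation takes the clean polynomial form of Proposition~\ref{prop:nygaard_graded_statement}, and then lift.
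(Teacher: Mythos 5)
Your reduction to the sub-claim $f_u\sim z^{p^u(n-ue)}\pmod p$ is where the argument fails: this identity is \emph{false} for $u\geq 1$. Take $p=2$, $n=3$, $e=1$, $u=1$: then $z^{p^u(n-ue)}=z^4=zf_0$ (since $z^3=f_0$), which is the basis monomial of $\F$-weight $4$ from Proposition~\ref{prop:f_filtered_statement}, whereas $f_1$ is the basis monomial of $\F$-weight $np=6$. Units of $\Prismhat^{(1)}_{R/W(k)\llbracket z\rrbracket}/p$ have nonzero $\F$-degree-zero component, so unit multiplication preserves leading $\F$-weight, and $zf_0$ and $f_1$ cannot be unit multiples. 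The same obstruction (distinct basis monomials of different $\F$-weights) kills the intermediate identity $d^{p^{j-1}-1}f_{j-1}^{p-1}\sim z^{(p-1)p^{j-1}(n-je)}d^{p^j-1}$: for $j=2$ in the same example it reduces, using $d\sim z$, to $zf_1\sim z^2f_0$. Because these reduced statements are genuinely false, your suggested fix --- checking in the explicit $W(k)$-basis or in the $\F$-associated graded --- has nothing true to verify; indeed, the falsity is most visible exactly there.

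The \emph{un-cancelled} identity $z^{A_j}d^{p^{j-1}-1}f_{j-1}^{p-1}\sim z^{A_{j+1}}d^{p^j-1}$, with $A_j=p^{j-1}(n-je)+[j]_pe-1$, is true (in the example both sides reduce to a unit multiple of $z^2f_0f_1$), but only because the prefactor $z^{A_j}$ absorbs the discrepancy --- and this is precisely why cancelling $z^{A_j}$, or the factors $z^{ep^{u+1}}$ inside your inner induction, is not permitted: $\Prismhat^{(1)}/p$ has $z$-torsion, as you observed yourself. The paper's proof never cancels. It derives the telescoping relation $z^{p^{j-1}(n-je)+[j]_pe}d^{p^{j-1}}\sim z^{[j]_pe}f_{j-1}$ from the $j$th inductive hypothesis, using $z^nd\sim z^ef_0$ and $f_u^p\sim z^{ep^{u+1}}f_{u+1}$; this differs from your sub-claim precisely by the retained factor $z^{[j]_pe}$ on both sides and is true for exactly that reason. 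It then iterates this $p-1$ times, multiplying in a fresh factor $z^{p^{j-1}(n-je)}$ at each step rather than trying to express $f_{j-1}$ as a power of $z$, and only at the end multiplies by $z^{p^{j-1}(n-je)-1}$ and re-invokes the inductive hypothesis. The $z$-prefactors your cancellations discard are exactly what make the true intermediate statements true.
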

\begin{proof}
  For $j=1$, this is the tautology $z^{n-1} = z^{n-1}$. We now proceed by induction. Assuming the relation for $j<\frac{n}{e}$, first observe that
  \[
    z^{p^{j-1}(n-je) + [j]_p e - 1} d^{p^{j-1}} \sim z^{n-1} f_0^{p-1}\cdots
    f_{j-2}^{p-1}d.
  \]
  Multiplying both sides with $z$ and using $z^nd \sim z^e f_0$ and $f_u^p \sim
    z^{ep^{u+1}} f_{u+1}$, we learn that
  \[
    z^{p^{j-1}(n-je) + [j]_p e} d^{p^{j-1}} \sim z^{[j]_p e}f_{j-1}.
  \]
  As $n-je>0$, we may use this relation $p-1$ times to deduce that
  \[
    z^{(p-1)p^{j-1}(n-je) + [j]_p e} d^{p^j-1} \sim z^{[j]_pe} f_{j-1}^{p-1}
    d^{p^{j-1}-1};
  \]
    (note that the exponent on the left-hand side is $p^{j}-1$, not $p^{j-1}$).
  Finally, multiplying with $z^{p^{j-1}(n-je)-1}$ and using the inductive assumption once more, we obtain
  \[
    z^{p^{j}(n-je) + [j]_p e -1} d^{p^j-1} \sim z^{n-1} f_0^{p-1}\cdots f_{j-1}^{p-1}.
  \]
  Since $p^j (n-je) + [j]_p e -1 = p^j(n-(j+1)e) + [j+1]_p e - 1$, this completes the inductive step.
\end{proof}

\begin{lemma}
  \label{lem:prismaticpowers}
  Let $R=\cO_K/\varpi^n$.
      In $\Prismhat^{(1)}_{R/W(k)\llbracket z\rrbracket}$ for $j\leq \lceil \frac{n}{e}\rceil$, we have
      \[
        z^{p^j(n-je) + p[j]_p e - 1} \sim z^{n-1} f_0^{p-1}\cdots f_{j-1}^{p-1}
      \]
\end{lemma}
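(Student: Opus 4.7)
The plan is to prove this by induction on $j$, closely mirroring the proof of Lemma~\ref{lem:nygaardpowers} but working directly inside $\Prismhat^{(1)}_{R/W(k)\llbracket z\rrbracket}$ rather than tracking the Nygaard filtration. Forgetting the Nygaard structure, the graded relation $df_0 = \widetilde{d}\cdot z^n$ becomes the identity $f_0 = z^n$ in $\Prismhat^{(1)}$, while the relations from Lemma~\ref{lem:explicitrelations} reduce modulo $p$ (using $d\sim z^e$) to $f_u^p \sim z^{ep^{u+1}} f_{u+1}$. The base case $j=1$ is then the tautology $z^{pn-1} = z^{n-1}(z^n)^{p-1} = z^{n-1} f_0^{p-1}$.

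For the inductive step, I assume the statement for $j$ (with $j+1\leq \lceil n/e\rceil$, ensuring $n-je\geq 1$). Multiplying the inductive hypothesis by $z$ and using $z^n=f_0$ gives
\[
  z^{p^j(n-je) + p[j]_p e} \sim f_0^p f_1^{p-1}\cdots f_{j-1}^{p-1}.
\]
Telescopically applying $f_u^p \sim z^{ep^{u+1}} f_{u+1}$ for $u=0,1,\ldots,j-1$ collapses the right-hand side to $z^{ep[j]_p} f_j$, with accumulated $z$-exponent $e(p+p^2+\cdots+p^j)=ep[j]_p$. This produces the ``single-$f_j$'' relation
\[
  (\dagger)\qquad z^{p^j(n-je) + p[j]_p e} \sim z^{ep[j]_p} f_j.
\]

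Next I bootstrap $(\dagger)$ to higher powers of $f_j$ by iteration: multiplying both sides of $(\dagger)$ by $z^{p^j(n-je)}$ and substituting $(\dagger)$ back on the right converts the additional $z^{p^j(n-je)}$ into a new factor of $f_j$. After $p-1$ iterations,
\[
  z^{(p-1)p^j(n-je) + p[j]_p e} \sim z^{ep[j]_p} f_j^{p-1}.
\]
Multiplying through by $z^{p^j(n-je) - 1}$ (valid since $n-je\geq 1$) turns the left exponent into $p^{j+1}(n-je)+p[j]_p e - 1$, while the right-hand factor $z^{p^j(n-je) + p[j]_p e - 1}$ matches the inductive hypothesis's $z^{M_j}$ and may be rewritten as $z^{n-1} f_0^{p-1}\cdots f_{j-1}^{p-1}$, giving
\[
  z^{p^{j+1}(n-je) + p[j]_p e - 1} \sim z^{n-1} f_0^{p-1}\cdots f_j^{p-1}.
\]
The elementary identity $p[j+1]_p - p[j]_p = p^{j+1}$ then shows that $p^{j+1}(n-je) + p[j]_p e = p^{j+1}(n-(j+1)e) + p[j+1]_p e$, rewriting the left-hand exponent into the required form.

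The only mild obstacle is the range check: the inductive step requires $n-je\geq 1$, which is automatic for $j+1\leq \lceil n/e\rceil$, matching the stated bound. Alternatively, one could obtain the cases $j\leq \lceil n/e\rceil-1$ by substituting $d\sim z^e\pmod p$ into Lemma~\ref{lem:nygaardpowers} at index $j+1$, but this shortcut misses the boundary case $j=\lceil n/e\rceil$, which the direct induction above handles uniformly.
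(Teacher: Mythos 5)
Your proof is correct and follows essentially the same inductive strategy as the paper's: start from the tautological base case, multiply the inductive hypothesis by $z$, telescope via $f_u^p\sim z^{ep^{u+1}}f_{u+1}$ down to the single-$f_j$ relation $(\dagger)$, iterate to build $f_j^{p-1}$, and then multiply by $z^{p^j(n-je)-1}$ to close the loop (which is where $n-je\geq 1$ is used). The only cosmetic differences are your choice of base case $j=1$ rather than $j=0$ and your slightly more explicit tracking of the telescoping; your closing remark about deducing the cases $j<\lceil n/e\rceil$ from Lemma~\ref{lem:nygaardpowers} is also exactly the observation the paper opens its proof with.
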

\begin{proof}
  The $j<\lceil \frac{n}{e}\rceil$ case can be deduced directly from Lemma
    \ref{lem:nygaardpowers}, but the $j=\lceil \frac{n}{e}\rceil$ case requires special
    manipulations either way, so we proceed by an analogous induction. For $j=0$, we
    again have the tautology $z^{n-1} = z^{n-1}$. Given the relation for some
    $j<\frac{n}{e}$, multiplication by $z$ yields
    \begin{equation}\label{eq:rel_1}
        z^{p^j(n-je) + p[j]_p e} \sim z^{p[j]_pe} f_j.
    \end{equation}
      As $n-je>0$, we may multiply~\eqref{eq:rel_1} with $z^{(p-2)p^j(n-je)}$ and
      simplify using the relation~\eqref{eq:rel_1} to obtain
      \begin{equation}\label{eq:rel_2}
        z^{(p-1)p^j(n-je) + p[j]_p e} \sim z^{p[j]_pe} f_j^{p-1}.
      \end{equation}
      Multiplying~\eqref{eq:rel_2} with $z^{p^j(n-je)-1}$ and using the inductive
      assumption yields the relation
      \[
        z^{p^{j+1}(n-je) + p[j]_p e - 1} \sim z^{n-1} f_0^{p-1}\cdots f_j^{p-1}.
      \]
      As $p^{j+1}(n-je) + p[j]_p e - 1 = p^{j+1}(n-(j+1)e) + p[j+1]_p e - 1$, this
      completes the induction.
\end{proof}

\begin{lemma}
  Let $R=\cO_K/\varpi^n$ with $n\geq 2$. For $j = \lceil \frac{n}{e}\rceil$,
  \label{lem:znilpotence}
  \begin{enumerate}
      \item[{\em (1)}] $z^{p[j]_p e - p^j(je-n)} = 0$ in $\Prismhat^{(1)}_{R/W(k)\llbracket z\rrbracket}/p$, but $z^{p[j]_p e - p^j(je-n) - 1} \neq 0$;
      \item[{\em (2)}] for any $i\geq p^j-1$,
      \[
        z^{[j]_p e - p^{j-1}(je-n)} d^i = 0
      \]
      in $\N^{\geq i}\Prismhat^{(1)}_{R/W(k)\llbracket z\rrbracket} / p$, but $z^{[j]_p e - p^{j-1}(je-n)- 1} d^i \neq 0$.
  \end{enumerate}
\end{lemma}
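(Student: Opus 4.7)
My plan is to derive both the non-vanishing and vanishing assertions from the explicit identification provided by the preceding two lemmas, combined with the basis description of Proposition~\ref{prop:f_filtered_statement} and $\F$-completeness of $\Prismhat^{(1)}_{R/W(k)\llbracket z\rrbracket}/p$ (established in Theorem~\ref{thm:n_v_f}). For non-vanishing: Lemma~\ref{lem:prismaticpowers} (respectively Lemma~\ref{lem:nygaardpowers}, multiplied by $d^{i-(p^{j-1}-1)}$) identifies $z^{p[j]_p e - p^j(je-n) - 1}$ (respectively $z^{[j]_p e - p^{j-1}(je-n)-1}d^i$) with a unit multiple modulo $p$ of one of the basis monomials of Proposition~\ref{prop:f_filtered_statement}, which is manifestly nonzero mod $p$.

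For the vanishing in (1), I will multiply the identity of Lemma~\ref{lem:prismaticpowers} through by $z$ to obtain
$$z^{p[j]_p e - p^j(je-n)} \equiv (\mathrm{unit})\cdot z^n\prod_{u=0}^{j-1}f_u^{p-1} \pmod p.$$
Since $f_0$ is the image of $r = z^n$ under $\N^{\geq 1}\Prismhat^{(1)}\hookrightarrow \Prismhat^{(1)}$ (Construction~\ref{const:fuv}, with the inclusion injective by Lemma~\ref{lem:twisted_envelope}), we have $z^n = f_0$, so the right-hand side equals $f_0^p\prod_{u=1}^{j-1}f_u^{p-1}$. The relations $f_u^p \equiv \lambda_u d^{p^{u+1}}f_{u+1}\pmod{p}$ follow from Lemma~\ref{lem:explicitrelations} (noting $R'_u=0$ by Remark~\ref{rem:vanishingofRu_prime}, since $\delta(z^n)=0$), and together with $d\equiv z^e\pmod p$ allow this expression to be rewritten iteratively as $(\mathrm{unit})\cdot z^{ep[j-1]_p}f_{j-1}^p$. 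Observing that $z^{ep[j-1]_p}f_{j-1}^p = (z^{e[j-1]_p}f_{j-1})^p$ and applying the analogous reduction to $z^{e[j-1]_p}f_{j-1}$, I identify it modulo $p$ with a unit multiple of $z^{M}$ where $M = p[j-1]_p e + p^{j-1}(n-(j-1)e)$. Since $p$-th powers of units remain units mod $p$, this yields $z^{p[j]_p e - p^j(je-n)} \equiv (\mathrm{unit})\cdot z^{Mp} \pmod p$; a direct calculation gives $Mp - (p[j]_p e - p^j(je-n)) = e(p^j-p)$, which is strictly positive for $j\geq 2$. Iterating this identity shows $z^{p[j]_p e - p^j(je-n)} \equiv (\mathrm{unit})^k\cdot z^{p[j]_p e - p^j(je-n) + ke(p^j-p)}\pmod p$ for every $k\geq 0$, placing the element in $\F^{\geq N_0}$ for arbitrarily large $N_0$, hence forcing vanishing by $\F$-completeness.

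For $j=1$ (in which case $n\leq e$ and $R=\bF_q[z]/z^n$), the shift $e(p^j-p)$ degenerates to zero, so I argue directly: $z^{pn} = f_0^p \equiv \lambda_0 z^{ep}f_1 \pmod p$; rewriting $z^{ep} = z^{p(e-n)}\cdot z^{pn}$ produces the self-referential relation $z^{pn}(1 - \lambda_0 z^{p(e-n)}f_1)\equiv 0\pmod p$. Because $f_1 \in \F^{\geq np}$, the factor in parentheses is of the form $1+($positive $\F$-weight$)$ and hence a unit in the $\F$-complete ring modulo $p$, forcing $z^{pn}\equiv 0\pmod p$. Part (2) follows from the same template with Lemma~\ref{lem:nygaardpowers} in place of Lemma~\ref{lem:prismaticpowers}, tracking the Nygaard filtration throughout: the hypothesis $i\geq p^j-1$ is exactly what provides enough powers of $d$ to execute the analog of the $f_u^p$-iteration inside $\N^{\geq i}\Prismhat^{(1)}/p$ and to run the subsequent bootstrap.

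The main obstacle will be the bookkeeping of the iterative reduction: tracking the units that accumulate, verifying the correct $\F$- and Nygaard-weight shifts after each substitution, and confirming that the bootstrap factor $Mp - N = e(p^j-p)$ is strictly positive exactly when $j\geq 2$. The edge case $j=1$ is distinguished by the degeneracy of this shift, reflecting the fact that $R = \bF_q[z]/z^n$ is of characteristic $p$ and requires a slightly different self-referential argument. A secondary technical point is that the units produced at each stage must remain units in the $\F$-complete ring modulo $p$, which is where $\F$-completeness of $\Prismhat^{(1)}_{R/W(k)\llbracket z\rrbracket}/p$ is repeatedly invoked.
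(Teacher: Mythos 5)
Your overall strategy is the same as the paper's: reduce to the relations $f_u^p\sim z^{ep^{u+1}}f_{u+1}$ via Lemma~\ref{lem:prismaticpowers} (resp.\ Lemma~\ref{lem:nygaardpowers}), then use $\F$-completeness. The non-vanishing halves are correct, the $j=1$ case is correct, and the early steps of the $j\geq 2$ vanishing argument are correct (multiplying Lemma~\ref{lem:prismaticpowers} by $z$, substituting $z^n=f_0$, and telescoping the $f_u^p$-relations to reach $z^{N}\sim z^{ep[j-1]_p}f_{j-1}^p$, where $N=p[j]_pe-p^j(je-n)$).

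However, the key step ``applying the analogous reduction to $z^{e[j-1]_p}f_{j-1}$, I identify it modulo $p$ with a unit multiple of $z^M$'' is \emph{incorrect}. What the telescoping (equivalently, Lemma~\ref{lem:prismaticpowers} applied with $j-1$, multiplied by $z$) actually yields is $z^{ep[j-1]_p}f_{j-1}\sim z^M$; the exponent on $z$ is $ep[j-1]_p$, not $e[j-1]_p$. These two elements differ by the factor $z^{(p-1)e[j-1]_p}$, which is not a unit, and indeed they are generically independent basis monomials. A concrete check: for $p=2$, $e=1$, $n=2$, $j=2$ one has $z^{e[j-1]_p}f_{j-1}=zf_1$ and $z^M=z^4\sim z^2f_1=f_0f_1$, which are distinct nonzero elements of the standard basis mod~$p$. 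Moreover, the identity $z^N\equiv(\text{unit})\cdot z^{Mp}$ you aim for is itself \emph{equivalent} to $z^N=0$ (since $z^{Mp}=z^N\cdot z^{e(p^j-p)}$ with $e(p^j-p)>0$), so deriving it through a false intermediate step makes the argument circular.

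The repair is to take one further telescoping step rather than the $p$-th power detour: from $z^N\sim z^{ep[j-1]_p}f_{j-1}^p$, apply $f_{j-1}^p\sim z^{ep^j}f_j$ once more to get $z^N\sim z^{p[j]_pe}f_j=z^N\cdot z^{p^j(je-n)}f_j$. Since $z^{p^j(je-n)}f_j$ has strictly positive $\F$-weight ($f_j$ alone contributes $np^j$, and $je-n\geq 0$), the factor $1-(\text{unit})\,z^{p^j(je-n)}f_j$ is a unit in the $\F$-complete ring mod $p$, forcing $z^N=0$. This is exactly the paper's argument, it is shorter, and it works uniformly for all $j\geq 1$, so the case split at $j=1$ is unnecessary. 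Finally, for part (2) the paper does not just run the ``same template'': it iterates the Nygaard-level relation $p-1$ times using $je-n\geq 0$, applies Lemma~\ref{lem:nygaardpowers} again, and then reduces to part (1); the one-sentence deferral in your proposal hides real work that needs to be carried out.
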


\begin{proof}
  We start with the relation
    \[
      z^{p^j(n-je) + p[j]_p e - 1} \sim z^{n-1} f_0^{p-1}\cdots f_{j-1}^{p-1}
    \]
  from Lemma \ref{lem:prismaticpowers};
  these terms are not zero mod $p$ since the right-hand term is one of our standard
    basis elements of the prismatic cohomology. Multiplying with $z$, we get
  \[
    z^{p[j]_p e - p^j(je-n)} \sim z^{p[j]_p e} f_j.
  \]
  As $je-n\geq 0$, the exponent on the right is at least as big as on the left. Thus we may iterate, obtaining elements of arbitrarily high Nygaard filtration. This proves the first statement.

  For the second statement, it suffices to treat $i=p^j-1$. We start with the relation
  \[
    z^{p^{j-1}(n-je) + [j]_p e - 1} d^{p^{j-1}} \sim z^{n-1} f_0^{p-1}\cdots f_{j-2}^{p-1} d
  \]
    in $\N^{\geq p^{j-1}}\Prismhat_{R/W(k)\llbracket z_0\rrbracket}^{(1)}$ implied by
    Lemma~\ref{lem:nygaardpowers}. The right-hand side of this relation is non-zero as
    it is one of the standard basis elements of the Nygaard filtration (see
    Proposition~\ref{prop:f_filtered_statement}). Multiplying with $z$, we obtain
  \[
    z^{[j]_p e - p^{j-1}(je-n)} d^{p^{j-1}} \sim z^{[j]_pe}f_{j-1}.
  \]
  As $je-n\geq 0$, we may iterate this to obtain
  \[
    z^{[j]_p e - p^{j-1}(je-n)} d^{p^j-1} \sim z^{[j]_pe + (p-2) p^{j-1}(je-n)}f_{j-1}^{p-1} d^{p^{j-1}-1}.
  \]
  Again, using the relation from Lemma \ref{lem:nygaardpowers}, we further learn
  \[
    z^{[j]_p e - p^{j-1}(je-n)} d^{p^j-1} \sim z^{(p-1) p^{j-1}(je-n) + 1} z^{n-1} f_0^{p-1} \cdots f_{j-1}^{p-1}.
  \]
  The right hand side is a multiple of
  \[
    z^n f_0^{p-1} \cdots f_{j-1}^{p-1} \sim z^{p[j]_pe} f_j,
  \]
    which is zero mod $p$ by the first statement.
\end{proof}

\begin{corollary}\label{cor:divided_powers}
  For $j\geq \lceil \frac{n}{e}\rceil$, the element $f_j^p$ agrees with $pf_{j+1}$ up to a unit.
\end{corollary}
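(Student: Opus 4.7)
The plan is a short computation combining Lemma~\ref{lem:explicitrelations} with the mod-$p$ vanishing from Lemma~\ref{lem:znilpotence}(1). Since $\delta(z_0^n)=0$ in the filtered $\delta$-ring $W(k)\llbracket z_0\rrbracket$, Remark~\ref{rem:vanishingofRu_prime} forces $R'_j=0$ for every $j$, so the relation from Lemma~\ref{lem:explicitrelations} collapses to
$$f_j^p = (-p+\lambda_j d^{p^{j+1}})f_{j+1},$$
with $\lambda_j\in\Prismhat^{(1)}_{R/W(k)\llbracket z_0\rrbracket}$ a unit. The corollary then reduces to showing that $-p+\lambda_j d^{p^{j+1}}$ equals $p$ times a unit in $\Prismhat^{(1)}_{R/W(k)\llbracket z_0\rrbracket}$.

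My first step will be to prove that $d^{p^{j+1}}\in p\Prismhat^{(1)}_{R/W(k)\llbracket z_0\rrbracket}$ for $j\geq \lceil n/e\rceil$. Writing $d=E(z_0)\equiv u\cdot z_0^e\pmod p$ with $u$ a unit, we get $d^{p^{j+1}}\equiv u^{p^{j+1}}z_0^{ep^{j+1}}\pmod p$. With $j_0:=\lceil n/e\rceil$, Lemma~\ref{lem:znilpotence}(1) says $z_0^M=0$ in $\Prismhat^{(1)}_{R/W(k)\llbracket z_0\rrbracket}/p$ for $M:=p[j_0]_p e-p^{j_0}(j_0 e-n)$. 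Using $p^{j+1}\geq p^{j_0+1}$ together with $[j_0]_p\leq p^{j_0}$, one checks $ep^{j+1}\geq M$ for $j\geq j_0$, so $z_0^{ep^{j+1}}=0$ in the mod-$p$ reduction and hence $d^{p^{j+1}}=ph$ for some $h\in\Prismhat^{(1)}_{R/W(k)\llbracket z_0\rrbracket}$. Substituting yields $f_j^p=p(\lambda_j h - 1)f_{j+1}$.

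The final step is to verify that $\lambda_j h-1$ is a unit, which is where the bulk of the work will go. Since $d\in\N^{\geq 1}\Prismhat^{(1)}_{R/W(k)\llbracket z_0\rrbracket}$, the product $d^{p^{j+1}}$ lies in $\N^{\geq p^{j+1}}$, and I plan to show that $h$ can be chosen in positive Nygaard filtration $\N^{\geq 1}\Prismhat^{(1)}_{R/W(k)\llbracket z_0\rrbracket}$. With such an $h$, $1-\lambda_j h$ inverts by the geometric series and hence $\lambda_j h-1=-(1-\lambda_j h)$ is a unit in the Nygaard-complete ring $\Prismhat^{(1)}_{R/W(k)\llbracket z_0\rrbracket}$. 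The hard part will be this selection of $h$: because $\Prismhat^{(1)}_{R/W(k)\llbracket z_0\rrbracket}$ may have $p$-torsion when $n>e$, the element $h$ is only determined modulo the kernel of multiplication by $p$, and extracting a representative in $\N^{\geq 1}$ requires tracking the interaction of $p$-divisibility with the Nygaard filtration explicitly --- for instance by expanding $d^{p^{j+1}}$ in the $W(k)$-basis of Proposition~\ref{prop:f_filtered_statement} and verifying that, after dividing by $p$, each surviving basis monomial still lies in positive Nygaard filtration.
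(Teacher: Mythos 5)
Your outline follows the paper's proof up to the last step, but you have set up the division by $p$ one power of $d$ too late, and that creates the ``hard part'' you flag — which is in fact an artificial difficulty that the paper avoids entirely. You observe $d^{p^{j+1}}\equiv 0\pmod p$ and then write $d^{p^{j+1}}=ph$, at which point you worry (correctly!) that $h$ is only well defined modulo $p$-torsion and that forcing a representative into $\N^{\geq 1}$ would require an explicit basis analysis. The paper instead proves the slightly sharper divisibility $d^{p^{j+1}-1}\equiv 0\pmod p$: the relevant inequality is $e(p^{j+1}-1)\geq ep[j_0]_p\geq M$ (with $j_0=\lceil n/e\rceil$), which holds by the same kind of estimate you carry out for $ep^{j+1}\geq M$. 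Writing $d^{p^{j+1}-1}=pr$ and then $d^{p^{j+1}}=p\,rd$ hands you $h=rd$ on a plate: it visibly lies in the ideal $(d)$, so $-1+\lambda_j r d$ is invertible in the $d$-complete (equivalently Nygaard-complete) ring by a geometric series, and no torsion bookkeeping is needed. So the fix is not to chase a representative of $h$ in positive Nygaard filtration, but to apply Lemma~\ref{lem:znilpotence}(1) to $d^{p^{j+1}-1}$ rather than $d^{p^{j+1}}$.

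One more small point: in your proposed final step you invoke Nygaard-completeness to invert $1-\lambda_j h$ for $h\in\N^{\geq 1}$. This is fine in principle, but note the paper states the inversion via $d$-completeness, i.e., using that $h$ lies in the ideal $(d)$; the two are equivalent here since $d\in\N^{\geq 1}$ and the filtrations interact well, but when $h$ is literally a multiple of $d$ the $d$-adic phrasing is the most immediate.
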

\begin{proof}
  Recall that $f_j^p = (-p + \lambda_j d^{p^{j+1}}) f_{j+1}$ where $\lambda_j$ is some
    unit. We have that
  \[
    e(p^{j+1}-1) \geq ep[j]_p,
  \]
  and so
  \[
      d^{p^{j+1}-1} \sim z^{e(p^{j+1}-1)}\sim 0
  \]
    by Lemma~\ref{lem:znilpotence}(1).
  It follows that $d^{p^{j+1}-1} = 0$ modulo $p$, and thus
  \[
    f_j^p = (-1 + \lambda_j r d) pf_{j+1}
  \]
    for some $r$. By $d$-completeness, the factor $(-1+\lambda_jrd)$ is a unit.
\end{proof}

\begin{remark}
    Corollary~\ref{cor:divided_powers} says that ``asymptotically'' $\Prismhat^{(1)}_{(\cO_K/\varpi^n) / W(k)\llbracket z\rrbracket}$ looks like a free divided power algebra.
\end{remark}

\subsection{A prismatic Cartier isomorphism}

Observe the following immediate consequence of relative-to-absolute descent
(Corollary~\ref{cor:syntomicsquare}).
\begin{theorem}\label{thm:prismatic_cartier}
    The commutative square
  \[
    \begin{tikzcd}
      \Prismhat_{R/W(k)\llbracket z_0\rrbracket}\{i\} \dar{w}\rar{\nabla} & \Prismhat^{\nabla}_{R/W(k)\llbracket z_0\rrbracket}\{i\}\dar{w^\nabla}\\
      \Prismhat^{(1)}_{R/W(k)\llbracket z_0\rrbracket}\{i\} \rar{\nabla} &
        \Prismhat^{(1),\nabla}_{R/W(k)\llbracket z_0\rrbracket}\{i\}
    \end{tikzcd}
  \]
  induces a quasi-isomorphism between the rows.
\end{theorem}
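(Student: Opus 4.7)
The plan is to reduce the claim to a statement about the totalizations of the relative-to-absolute descent complexes, and then invoke the triviality of the $\delta$-ring Frobenius on $\bZ_p$.

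First, by Lemma~\ref{lem:nablaterm}, each row of the square is quasi-isomorphic, via the map $(\id,\theta,0,\ldots)$ produced by that lemma, to the corresponding $\F$-completed relative-to-absolute descent complex: the top row to the cosimplicial object $\F^{\geq\star}\Prismhat_{R/W(k)\llbracket z_\bullet\rrbracket}\{i\}$, and the bottom row to $\F^{\geq\star}\Prismhat^{(1)}_{R/W(k)\llbracket z_\bullet\rrbracket}\{i\}$. Moreover, by Definition~\ref{def:nablamaps}, the vertical maps $w$ and $w^\nabla$ appearing in the square were built precisely by transporting the levelwise canonical Frobenius map $w\colon \Prismhat_{R/W(k)\llbracket z_\bullet\rrbracket}\{i\}\to \Prismhat^{(1)}_{R/W(k)\llbracket z_\bullet\rrbracket}\{i\}$ through these $\theta$-quasi-isomorphisms. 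So by two-out-of-three, proving the theorem reduces to showing that this levelwise canonical map $w$ induces a quasi-isomorphism upon totalization.

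Next, by Lemma~\ref{lem:local_to_global} (and its $\F$-completed variant), the two totalizations compute the absolute prismatic cohomology objects $\F^{\geq\star}\Prismhat_R\{i\}$ and $\F^{\geq\star}\Prismhat^{(1)}_R\{i\}$ respectively. But at the absolute level, the base is $\bZ_p$, whose canonical $\delta$-structure has the identity as Frobenius; hence $\Prism^{(1)}_R = \Prism_R\otimes_{\bZ_p,\varphi_{\bZ_p}}\bZ_p = \Prism_R$ canonically, and the canonical map $w$ is the identity under this identification. For $R=\Oscr_K/\varpi^n$, the Nygaard- and $\F$-completions agree by Theorem~\ref{thm:n_v_f}, so this identification passes to the completed versions. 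By naturality of $w$ along the base change $\bZ_p\to W(k)\llbracket z_0,\ldots,z_s\rrbracket$, the totalization of the levelwise $w$ maps agrees with the identity on $\F^{\geq\star}\Prismhat_R\{i\}$, which is evidently a quasi-isomorphism.

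The main conceptual content is the identification $\Prism^{(1)}_R\cong\Prism_R$ coming from triviality of $\varphi_{\bZ_p}$; the rest is a formal manipulation of the descent statement. The genuine subtlety lies in tracking the compatibility of $w$ with both the descent identifications and the two (a priori different) completion procedures, which is essentially what the warning preceding Section~\ref{sec:connection} and Theorem~\ref{thm:n_v_f} together resolve. If the abstract argument were to stumble on such compatibility issues, an alternative would be to check the statement directly on $\F$-associated gradeds using the explicit bases of Proposition~\ref{prop:f_filtered_statement} together with the isogeny computations of Lemma~\ref{lem:isogeny}, but the argument above is cleaner.
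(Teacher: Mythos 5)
Your proof is correct and takes essentially the same approach as the paper's: both arguments identify the horizontal fibers with the absolute prismatic cohomology of $R$ via relative-to-absolute descent, and then observe that the canonical map $w\colon\Prismhat_R\to\Prismhat^{(1)}_R$ is an isomorphism because the base ($\bZ_p$, equivalently $W(k)$) is a perfect $\delta$-ring. One small imprecision worth flagging: the assertion that $\theta$ intertwines the levelwise $w_\bullet$ with $(w,w^\nabla)$ on the nose in degree one does not follow directly from Definition~\ref{def:nablamaps}, since $w^\nabla$ is defined by transporting $w$ only along $\theta$ restricted to the kernel $K$ of the second cobar differential, and on the complement of $K$ inside $\Gamma_1$ the maps $\theta_B\circ w$ and $w^\nabla\circ\theta_A$ genuinely differ (the remark after Lemma~\ref{lem:nablafrobenius} already warns that $w$ and $w^\nabla$ do not match under such naive transport). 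The argument survives because the descent complexes are cohomologically concentrated in degrees $0$ and $1$, so one only needs the compatibility on $H^0$ (where both maps are $w$ on $A_0$) and on $H^1$ (where only the restriction to $K$ matters, and there the compatibility holds by construction of $w^\nabla$); making this explicit — for instance by factoring through the good truncation $\tau^{\leq 1}$ — would close the gap cleanly.
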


\begin{proof}
    This is due to the fact that both horizontal fibers are $\Prismhat_{R/\bZ_p}$ and
    the canonical inclusion $\Prismhat_{R/\bZ_p}\to \Prismhat_{R/\bZ_p}^{(1)}$ is an isomorphism since $\bZ_p$ is perfect.
\end{proof}

\begin{remark}
  We view this as a kind of Cartier isomorphism, since it looks formally similar to the Cartier isomorphism for $k[z]$ as quasi-isomorphism between the rows of
  \[
    \begin{tikzcd}
      \Omega^0_{k[z]/k} \rar{0}\dar& \Omega^1_{k[z]/k}\dar\\
      \Omega^0_{k[z]/k} \rar{d}&\Omega^1_{k[z]/k},
    \end{tikzcd}
  \]
    where the vertical maps are induced by the Frobenius on $k[z]$. (See for
    example~\cite{deligne-illusie}.)
    A meaningful justification of this analogy would require a more conceptual understanding of the connection $\nabla$.
\end{remark}

\begin{remark}\label{rem:divisible_by_p}
    Note that the vertical maps take $\F^{\geq j} \to \F^{\geq pj}$, due to the semilinearity of $w$. It follows in
    particular that $w$ induces an equivalence between the cokernel of
    \[
      \nabla\colon \gr^j_\F\Prism_{R/W(k)\llbracket z_0\rrbracket}\{i\}\to
      \gr^j_\F\Prism^\nabla_{R/W(k)\llbracket z_0\rrbracket}\{i\}
    \]
    and that of
    \[
      \nabla\colon \gr^{pj}_\F\Prismhat^{(1)}_{R/W(k)\llbracket z_0\rrbracket}\{i\}\to \gr^{pj}_\F\Prismhat^{(1),\nabla}_{R/W(k)\llbracket z_0\rrbracket}\{i\},
    \]
    and that 
    \[
      \nabla\colon \gr^{j}_\F\Prismhat^{(1)}_{R/W(k)\llbracket z_0\rrbracket}\{i\}\to \gr^{j}_\F\Prismhat^{(1),\nabla}_{R/W(k)\llbracket z_0\rrbracket}\{i\},
    \]
    is an isomorphism if $p\nmid j$.
\end{remark}

\begin{lemma}
\label{lem:frobeniustwist-cartier}
In the case $R=\cO_K/\varpi^n$, the prismatic Cartier isomorphism fits into a
    commutative diagram
\[
\begin{tikzcd}
  \F^{[1,b]}\N^{\geq i} \prism_{R/W(k)\llbracket z_0\rrbracket }^{(1)}\{i\}
    \rar{\nabla}\dar{\varphi_{R/W(k)\llbracket z_0\rrbracket }} & \F^{[1,b]}\N^{\geq i}
    \prism_{R/W(k)\llbracket z_0\rrbracket }^{(1),\nabla}\{i\}\dar{\varphi^\nabla_{R/W(k)\llbracket z_0\rrbracket }} \\
  \F^{[1,b]} \prism_{R/W(k)\llbracket z_0\rrbracket }\{i\} \rar{\nabla}\dar{w} &  \F^{[1,b]} \prism_{R/W(k)\llbracket z_0\rrbracket }^{\nabla}\{i\}\dar{w^\nabla} \\
  \F^{[1,pb]}\prism_{R/W(k)\llbracket z_0\rrbracket }^{(1)}\{i\} \rar{\nabla} & \F^{[1,pb]}\prism_{R/W(k)\llbracket z_0\rrbracket }^{(1),\nabla}\{i\}
\end{tikzcd}
\]
    inducing quasi-isomorphisms between the rows whenever $b\leq n(i+1) - 1$.
\end{lemma}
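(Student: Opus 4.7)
The diagram breaks naturally into two squares. Commutativity of the top square is Lemma~\ref{lem:nablafrobenius}, and commutativity of the bottom square is built into Theorem~\ref{thm:prismatic_cartier}. The plan is to verify the two quasi-isomorphism claims separately.

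For the middle and bottom rows, I would repackage Theorem~\ref{thm:prismatic_cartier}. By Lemma~\ref{lem:nablaterm}, the fibers of the horizontal $\nabla$'s on either row identify with absolute prismatic cohomology of $R$, and $w$ restricts there to the canonical identification $\Prismhat_R\{i\}\simeq \Prismhat^{(1)}_R\{i\}$, which is an isomorphism because $\bZ_p$ is perfect. The pairing of $\F$-truncations $\F^{[1,b]}$ on the middle row with $\F^{[1,pb]}$ on the bottom row precisely accounts for the $\varphi$-semilinearity of $w$, which sends $\F^{\geq j}$ to $\F^{\geq pj}$ (cf.\ Remark~\ref{rem:divisible_by_p}); with these compatible truncations, the induced chain map of two-term complexes remains a quasi-isomorphism.

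For the top and middle rows, the heart of the argument is to show that in the range $b\leq n(i+1)-1$ the left vertical map $\varphi_{R/W(k)\llbracket z_0\rrbracket}$ is already an isomorphism. I would use the explicit $W(k)$-bases from Propositions~\ref{prop:f_filtered_statement} and~\ref{prop:f_filtered_statement_for_prism}. A basis monomial $\widetilde{d}^j z_0^k \prod_u f_u^{e_u}\cdot w(s^i)$ of the source satisfies $n\sum_u p^u e_u \leq b < n(i+1)$, forcing $\sum_u p^u e_u\leq i$. The Nygaard constraint then pins down $j = i - \sum_u p^u e_u \geq 0$. Since the relative Frobenius takes $\widetilde{d}\mapsto d$ and $f_u\mapsto d^{p^u}\delta^u(a)$ up to units and the Breuil--Kisin twist absorbs $d^i$, the monomial maps to a unit multiple of the basis element $z_0^k\prod_u \delta^u(a)^{e_u}\cdot s^i$ of the target, giving a bijection on bases and hence an isomorphism of filtered $W(k)$-modules. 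With the left vertical an isomorphism and the square commuting, the induced map on $H^0$ of the two-term complexes (i.e., on $\ker\nabla$, which equals the appropriate truncation of absolute prismatic cohomology by Lemma~\ref{lem:nablaterm}) is automatically an isomorphism.

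The hardest step will be the comparison of the cokernels of $\nabla$ (i.e., $H^1$ of the two-term complexes). After the $\nabla$ reindexing, the source $\F^{[0,b-1]}\N^{\geq i-1}\Prismhat^{(1)}_{R/W(k)\llbracket z_0\rrbracket}\{i-1\}$ admits basis elements with $\sum_u p^u e_u = i$, which map to $d$-multiples rather than to basis elements of the target; thus the right vertical $\varphi^\nabla_{R/W(k)\llbracket z_0\rrbracket}$ is \emph{not} itself an isomorphism throughout the range $b\leq n(i+1)-1$. To conclude, I would combine the isogeny cokernel formulas of Lemma~\ref{lem:isogeny} (which give explicit orders for $\coker(\nabla)$ on the top and middle rows) with commutativity of the top square and the left-column isomorphism, verifying that the discrepancy in the right vertical is exactly absorbed by the image of $\nabla$ on the middle row, so that the induced map $\coker(\nabla_{\mathrm{top}})\to\coker(\nabla_{\mathrm{middle}})$ is still an isomorphism.
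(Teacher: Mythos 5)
You correctly identify a real gap in the paper's proof: the assertion that ``similarly, the upper right vertical map is an isomorphism'' is false when $b>ni$. After the $\nabla$-reindexing, $\varphi^\nabla_{R/W(k)\llbracket z_0\rrbracket}$ becomes the relative divided Frobenius $\F^{[0,b-1]}\N^{\geq i-1}\Prismhat^{(1)}_{R/W(k)\llbracket z_0\rrbracket}\{i-1\}\to\F^{[0,b-1]}\Prismhat_{R/W(k)\llbracket z_0\rrbracket}\{i-1\}$, whose source admits monomials with $\sum p^ue_u=i$ (and Nygaard exponent $j=0$) exactly when $b\geq ni+1$; any such monomial maps to $d$ times a basis element of the target.

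However, your proposed recovery cannot work: the cokernels of $\nabla$ on the top and middle rows genuinely have different orders in the range $ni<b\leq n(i+1)-1$, so no isomorphism between them exists. Taking determinants around the commuting top square, with $v_p(\det\varphi_{R/W(k)\llbracket z_0\rrbracket})=0$ and $v_p(\det\varphi^\nabla_{R/W(k)\llbracket z_0\rrbracket})=f\max(b-ni,0)$, gives
\[
\left|\coker\left(\nabla_{\mathrm{mid}}\right)\right|=\left|\coker\left(\nabla_{\mathrm{top}}\right)\right|\cdot q^{\max(b-ni,0)}.
\]
For instance with $n=2$, $p=3$, $i=2$, $b=5=n(i+1)-1$, Lemma~\ref{lem:isogeny}(2) gives $|\coker(\nabla_{\mathrm{top}})|=q^{3}$ while $|\coker(\nabla_{\mathrm{mid}})|=q^{4}$. (Note also that Lemma~\ref{lem:isogeny} gives cokernel formulas only for the Frobenius-twisted $\Prismhat^{(1)}$, not for the untwisted middle row; its order has to be derived, e.g.\ by the determinant argument above.) So the top and middle rows are not quasi-isomorphic in this range, and both the paper's claim and your patch fail.

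The lemma holds only with the stronger hypothesis $b\leq ni$. Under this bound $n\sum p^ue_u\leq b-1\leq ni-1$ forces $\sum p^ue_u\leq i-1$ on the $\nabla$-term, so both verticals in the upper square send generators to generators and the paper's ``both verticals are isomorphisms'' argument goes through without the extra work you propose. The tightened bound still covers the lemma's only application, in the proof of Theorem~\ref{thm:evenvanishing}, where it is invoked with $b=\lceil(i-1)n/p\rceil\leq ni-n$.
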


\begin{proof}
    The statement about the lower two rows follows from the fact that the prismatic
    Cartier isomorphism of Theorem~\ref{thm:prismatic_cartier} is compatible with filtrations.
    Commutativity of the upper square follows from Lemma~\ref{lem:nablafrobenius}.
    Recall that $\F^{[1,b]}\prism_{R/W(k)\llbracket
    z\rrbracket }$ is a free $W(k)$-module on monomials $z_0^c \prod \delta^u(a)^{e_u}s^i$ with $c<n$,
    $e_u<p$ and total filtration $\leq b$. Similarly, $\F^{[1,b]}\N^{\geq i}
    \prism_{R/W(k)\llbracket z_0\rrbracket }^{(1)}$ is free on monomials $d^{i - \sum
    p^u e_u} z_0^c
    \prod f_u^{e_u}w(s^i)$ with total filtration $\leq b$. The bound $b\leq
    n(i+1)-1$ guarantees that the $i$th divided Frobenius sends generators to
    generators: by
    construction, and Corollary~\ref{cor:f-uv-mapto-a-uv},
    \[
        \varphi_{W(k)\llbracket z_0\rrbracket } \left(d^{i - \sum p^u e_u} z_0^c \prod
    f_u^{e_u}w(s^i)\right)
    = z_0^c d^i\prod \varphi_{p^u}(f_u)^{e_u}\tfrac{s^i}{d^i} = z_0^c \prod \delta^u
    (a)^{e_u}s^i,
    \]
    so the upper left vertical map is an isomorphism. Similarly, the upper right
    vertical map is an isomorphism and hence the upper square induces a quasi-isomorphism
    between the top two rows.
\end{proof}

\subsection{Proof of the even vanishing theorem}

\begin{lemma}
  \label{lem:candivisible}
Let $R=\cO_K/\varpi^n$ and fix $b\geq 1$. If 
  \[
    e\left(i - \left\lfloor \frac{b}{n}\right\rfloor\right) \geq p[j]_p e - p^j (je - n)
  \]
  with $j = \lceil \frac{n}{e}\rceil$, then
  \[
      \can\colon \F^{[0,b]}\N^{\geq i} \Prismhat^{(1)}_{R/W(k)\llbracket
      z_0\rrbracket}\{i\} \to \F^{[0,b]}\Prismhat^{(1)}_{R/W(k)\llbracket
      z_0\rrbracket}\{i\}
  \]
  is divisible by $p$. Similarly, if
  \[
    e\left(i-1 - \left\lfloor \frac{b}{n}\right\rfloor\right) \geq p[j]_p e - p^j (je - n),
  \]
  then
  \[
    \can\colon \F^{[1,b+1]}\N^{\geq i} \Prismhat^{(1),\nabla}_{R/W(k)\llbracket
    z_0\rrbracket}\{i\} \to \F^{[1,b+1]}\Prismhat^{(1),\nabla}_{R/W(k)\llbracket
    z_0\rrbracket}\{i\}
  \]
  is divisible by $p$.
\end{lemma}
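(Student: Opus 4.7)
The plan is to reduce both statements to a single mod-$p$ computation and then invoke the nilpotence bound from Lemma~\ref{lem:znilpotence}(1). First I would work with the explicit $W(k)$-basis of $\F^{[0,b]}\N^{\geq i}\Prismhat^{(1)}_{R/W(k)\llbracket z_0 \rrbracket}\{i\}$ provided by Proposition~\ref{prop:f_filtered_statement} (taking $s=0$): namely, the elements
\[
\widetilde{d}^l z_0^k \prod_u f_u^{e_u}\cdot w(s^i),
\]
with $0 \leq e_u < p$, $0 \leq k < n$, $l = \max(i - M, 0)$ for $M = \sum_u p^u e_u$, and $k + nM \leq b$. Since $\can$ sends $\widetilde{d}$ to $d = E(z_0)$, and since $E$ is Eisenstein so that $E(z_0) \equiv z_0^e \pmod p$, the image modulo $p$ of such a basis element is (a unit times) $z_0^{le+k} \prod_u f_u^{e_u}\cdot w(s^i)$.

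The main step is then an immediate application of Lemma~\ref{lem:znilpotence}(1): it gives $z_0^N = 0$ in $\Prismhat^{(1)}_{R/W(k)\llbracket z_0\rrbracket}/p$ as soon as $N \geq p[j]_p e - p^j(je - n)$, and once this vanishing holds the whole monomial vanishes in $\Prismhat^{(1)}/p$, hence also in the quotient $\F^{[0,b]}\Prismhat^{(1)}/p$. So the first statement will follow once I verify that $le + k \geq p[j]_p e - p^j(je - n)$ for every basis element.

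This is pure bookkeeping. The right-hand side of the hypothesis is strictly positive (using $j = \lceil n/e \rceil$), forcing $i > \lfloor b/n \rfloor$; together with $k + nM \leq b$ and $k \geq 0$ this gives $M \leq \lfloor b/n \rfloor < i$, hence $l = i - M > 0$ for every basis element. Minimizing $le + k = (i - M)e + k$ over $M \leq \lfloor b/n \rfloor$ and $k \geq 0$ attains the value $(i - \lfloor b/n \rfloor)e$ at $M = \lfloor b/n\rfloor$, $k = 0$, and the hypothesis says exactly that this minimum dominates the nilpotence threshold.

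For the second statement, I would exploit that by definition $\Prismhat^{(1),\nabla}_{R/W(k)\llbracket z_0\rrbracket}\{i\}$ is $\Prismhat^{(1)}_{R/W(k)\llbracket z_0\rrbracket}\{i-1\}$ with both filtrations shifted up by one, giving an identification
\[
\F^{[1,b+1]}\N^{\geq i}\Prismhat^{(1),\nabla}_{R/W(k)\llbracket z_0\rrbracket}\{i\} = \F^{[0,b]}\N^{\geq i-1}\Prismhat^{(1)}_{R/W(k)\llbracket z_0 \rrbracket}\{i-1\}
\]
which is compatible with $\can$ by construction. The second statement is then just the first applied with $i$ replaced by $i-1$. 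The only nontrivial input is Lemma~\ref{lem:znilpotence}; the rest is counting exponents, and the one point that deserves care is verifying that the hypothesis really rules out basis elements with $l = 0$ (equivalently, with $\sum p^u e_u \geq i$), which is what the positivity of the right-hand side ensures.
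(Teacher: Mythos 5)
Your proposal is correct and follows essentially the same route as the paper: pass to the explicit $W(k)$-basis $\widetilde{d}^l z_0^k \prod f_u^{e_u}\cdot w(s^i)$ of Proposition~\ref{prop:f_filtered_statement}, observe that $\can$ replaces $\widetilde{d}$ by $d = E(z_0) \sim z_0^e$ mod~$p$, minimize the resulting $z_0$-exponent $(i-M)e+k$ over the admissible range of $(M,k)$, and then apply the nilpotence bound of Lemma~\ref{lem:znilpotence}(1); the $\nabla$-version follows by the indexing shift $\F^{[1,b+1]}\N^{\geq i}\Prismhat^{(1),\nabla} \cong \F^{[0,b]}\N^{\geq i-1}\Prismhat^{(1)}$. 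One small merit of your write-up is that you explicitly rule out basis monomials with Nygaard-excess (i.e.\ $j=0$ and $\sum p^u e_u \geq i$) by noting that the right-hand side of the hypothesis is strictly positive, hence $i > \lfloor b/n\rfloor \geq M$; the paper's proof writes the basis as if $l = i - \sum p^u e_u$ without comment, so your argument closes a small gap in the bookkeeping that the paper leaves implicit.
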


\begin{proof}
    We may assume $i=0$.
  A basis for $\F^{[0,b]}\N^{\geq i} \Prismhat^{(1)}_{R/W(k)\llbracket z_0\rrbracket}$ is given by $\widetilde{d}^{i - \sum p^ue_u} z^k \prod f_u^{e_u}$ with $e_u<p$ and $k<n$.
    Here, $k+\sum np^u e_u \leq b$, and so the exponent of $d$ is $\geq i - \lfloor
    \frac{b}{n}\rfloor$. By Lemma \ref{lem:znilpotence}, we have
  \[
    z^{p[j]_p e - p^j(je-n)} = 0
  \]
  mod $p$. As $d\sim z^e$, the claim follows. The claim for the $\nabla$-term follows from the observation that
  \[
    \F^{[1,b+1]}\N^{\geq i} \Prismhat^{(1),\nabla}_{R/W(k)\llbracket z_0\rrbracket} \cong \F^{[0,b]}\N^{\geq i-1} \Prismhat^{(1)}_{R/W(k)\llbracket z_0\rrbracket},
  \]
  compatibly with $\can$.
\end{proof}

\begin{proof}[Proof of Theorem \ref{thm:evenvanishing}]
It is enough to show that the maps into the lower right corner in the diagram
\[
\begin{tikzcd}
  \F^{[1,in-1]} \N^{\geq i} \Prismhat^{(1)}_{R/W(k)\llbracket z_0\rrbracket }\{i\} \rar{\nabla}\dar{\can-\varphi} & \F^{[1,in-1]}\N^{\geq i}\Prismhat^{(1),\nabla}_{R/W(k)\llbracket z_0\rrbracket }\{i\}\dar{\can - \varphi^\nabla}\\
  \F^{[1,in-1]} \Prismhat^{(1)}_{R/W(k)\llbracket z_0\rrbracket }\{i\} \rar{\nabla} & \F^{[1,in-1]} \Prismhat^{(1),\nabla}_{R/W(k)\llbracket z_0\rrbracket }\{i\} 
\end{tikzcd}
\]
  are jointly surjective. In fact, on the $\nabla$ terms, can is already an isomorphism on $\F^{\geq (i-1)n+1}$, and so we may pass to the quasi-isomorphic quotient
\[
\begin{tikzcd}
  \F^{[1,in-1]} \N^{\geq i} \Prismhat^{(1)}_{R/W(k)\llbracket z_0\rrbracket }\{i\} \rar{\nabla}\dar{\can-\varphi} & \F^{[1,(i-1)n]}\N^{\geq i}\Prismhat^{(1),\nabla}_{R/W(k)\llbracket z_0\rrbracket }\{i\}\dar{\can - \varphi^\nabla}\\
  \F^{[1,in-1]} \Prismhat^{(1)}_{R/W(k)\llbracket z_0\rrbracket }\{i\} \rar{\nabla} & \F^{[1,(i-1)n]} \Prismhat^{(1),\nabla}_{R/W(k)\llbracket z_0\rrbracket }\{i\}.
\end{tikzcd}
\]
Joint surjectivity of the maps into the bottom right corner is equivalent to
    surjectivity of
\[
  (\can - \varphi^\nabla)\colon \F^{[1,(i-1)n]}\N^{\geq
    i}\Prismhat^{(1),\nabla}_{R/\bZ_p\llbracket z_0\rrbracket }\{i\} \to \F^{[1,(i-1)n]}
    \Prismhat^{(1),\nabla}_{R/W(k)\llbracket z_0\rrbracket }\{i\} / \im(\nabla).
\]
Since the target is a finitely generated abelian $p$-group, it suffices to check that
\[
  (\can - \varphi^\nabla)\colon \F^{[1,(i-1)n]}\N^{\geq i}\Prismhat^{(1),\nabla}_{R/W(k)\llbracket z_0\rrbracket }\{i\} \to \F^{[1,(i-1)n]} \Prismhat^{(1),\nabla}_{R/W(k)\llbracket z_0\rrbracket }\{i\} / (\im(\nabla),p)
\]
is surjective. Now a map $f\colon A\to B$ is surjective if and only if $A/H\to B/f(H)$ is surjective, for an arbitrary subgroup $H\subseteq A$.

  Applying this to the subgroup $\F^{[\lceil \frac{(i-1)n}{p}\rceil+1, (i-1)n]}\N^{\geq i}\Prismhat^{(1),\nabla}_{R/W(k)\llbracket z_0\rrbracket }\{i\}$, we are
    thus reduced to checking that
\begin{equation}
\label{eqn:surjectivemodp}
  \can-\varphi^\nabla\colon \F^{[1,\lceil \frac{(i-1)n}{p}\rceil ]}\N^{\geq i}\Prismhat^{(1),\nabla}_{R/W(k)\llbracket z_0\rrbracket }\{i\} \to
    \frac{\F^{[1,(i-1)n]} \Prismhat^{(1),\nabla}_{R/W(k)\llbracket z_0\rrbracket
    }\{i\}}{\left(\im(\nabla), p, (\can-\varphi^\nabla) \F^{[\lceil \frac{(i-1)n}{p} \rceil +
    1, (i-1)n]}\right)}
\end{equation}
is surjective. By assumption, 
  \[
    i-1\geq \frac{p}{p-1} \left( p[j]_p - p^j j + p^j\frac{n}{e}\right),
  \]
  so
  \begin{align*}
    e \left(i-1 -\left\lfloor  \frac{\lceil
      \frac{(i-1)n}{p}\rceil-1}{n}\right\rfloor\right) &\geq e \left(i-1 - \frac{\lceil \frac{(i-1)n}{p}\rceil-1}{n}\right) \\
    &\geq e \left(i-1 - \frac{(i-1)n}{pn}\right)\\
      &\geq e\cdot \frac{p-1}{p}\cdot (i - 1)\\
      &\geq p[j]_pe-p^j(je-n).
  \end{align*}
Thus, Lemma \ref{lem:candivisible} applies to show that the $\can$ part of \eqref{eqn:surjectivemodp} is divisible by $p$, thus zero.
    We are reduced to showing that the $\varphi^\nabla$ part of the map
    \eqref{eqn:surjectivemodp} is surjective. We claim this holds even when quotienting
    the target by less, i.e., that the map
\[
  \varphi^\nabla\colon \F^{[1,\lceil \frac{(i-1)n}{p}\rceil ]}\N^{\geq i}\Prismhat^{(1),\nabla}_{R/W(k)\llbracket z_0\rrbracket }\{i\} \to \F^{[1,(i-1)n]} \Prismhat^{(1),\nabla}_{R/W(k)\llbracket z_0\rrbracket }\{i\} / \im(\nabla)
\]
is surjective. This follows directly from the form of the prismatic Cartier isomorphism presented in Lemma \ref{lem:frobeniustwist-cartier}.
\end{proof}

\section{Nilpotence of $v_1$}\label{sec:v1nilpotence}

It has recently been proven in~\cite{bhatt-clausen-mathew, land-mathew-meier-tamme} that
$L_{T(1)}\K(\bZ/p^n)\we 0$ for all $n\geq 1$, where $L_{T(1)}$ denotes the Bousfield localization at
the $T(1)$-equivalence. Recall that a map
of spectra $X\rightarrow Y$ is a $T(1)$-equivalence if
$X/p[v_1^{-1}]\rightarrow Y/p[v_1^{-1}]$ is an equivalence, where $v_1$ is an element of
$\bS/p$ which induces an equivalence after tensoring with $\KU$. For $p$ odd, such an element exists
in $\pi_{2p-2}(\bS/p)$ and is uniquely determined up to units, while, for $p=2$, only
$v_1^4\in\pi_8(\bS/2)$ exists. This story is complicated by the fact that
$\bS/p$ is not an $A_\infty$-ring spectrum, that for $p=3$ multiplication is not
associative, and that
for $p=2$ there is no multiplication; see~\cite[App.~A]{thomason-etale}.

In mod $p$ algebraic $K$-theory, the story is somewhat simpler. For all primes $p$, even and odd,
there is a class $v_1\in\K_{2p-2}(\bZ_p,\bF_p)$, which is the image of $v_1\in\pi_{2p-2}(\bS/p)$
when $p$ is odd and whose fourth power is the image of $v_1^4\in\pi_8(\bS/2)$ for $p=2$.
In this case, $\K(\bZ_p;\bF_p)\we\TC(\bZ_p;\bF_p)$ and $v_1$ arises from a class
$$v_1\in\H^0(\bF_p(p-1)(\bZ_p))$$ in syntomic cohomology for degree reasons (see
Remark~\ref{rem:in_the_limit}).
The sheared down associated graded $$\bigoplus_{i\geq
0}\gr^i_\mot\TC(\bZ_p;\bF_p)[-2i]\we\bigoplus_{i\geq 0}\bF_p(i)(\bZ_p)$$ is an $\bE_\infty$-algebra in graded spectra and
thus one can invert $v_1$ on $\bigoplus_{i\geq 0}\bF_p(i)(\bZ_p)$ to obtain $\bigoplus_{i\in\bZ}\bF_p(i)(\bZ_p)[v_1^{-1}]$.
Similarly, if $R$ is any quasisyntomic $\bZ_p$-algebra, the element $v_1$ acts on
$\bigoplus_{i\in\bZ}\bF_p(i)(R)$ and we can form the localization
$\bigoplus_{i\in\bZ}\bF_p(i)(R)[v_1^{-1}]$.

\begin{remark}\label{rem:v1}
    We will use that $v_1\in\K_{2p-2}(\Oscr_K;\bF_p)$ is non-zero
    for every $p$-adic field $K$. This follows from the fact that $v_1$ maps to (a unit
    multiple of)
    $\beta^{p-1}$ in $\K(\overline{K};\bF_p)\we\ku/p$, where $\beta$ denotes the Bott
    element.
\end{remark}

\begin{proposition}\label{prop:v1_nilpotence}
  Let $R=\cO_K/\varpi^n$ where $n\geq 2$. Let $v\in \N^{\geq
    p-1}\Prismhat^{(1)}_{R/W(k)\llbracket z\rrbracket}/p$ be the element
    $d^{p}$. For $j = \lceil \frac{n}{e}\rceil$, we have $v^{[j]_p}=0$ in
    $\N^{\geq [j]_p(p-1)}\Prismhat^{(1)}_{R/W(k)\llbracket z\rrbracket}/p$. If $e$ divides $n$, this is sharp, i.e. $v^{[j]_p-1}\neq 0$.
\end{proposition}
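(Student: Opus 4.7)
The plan is to derive both assertions from Lemma~\ref{lem:znilpotence}, leveraging the basic observation that the Eisenstein polynomial $d=E(z)$ reduces to $z^e$ modulo $p$ in $W(k)\llbracket z\rrbracket$, so modulo $p$ the elements $d$ and $z^e$ are interchangeable in $\Prismhat^{(1)}_{R/W(k)\llbracket z\rrbracket}$.

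For the vanishing, I rewrite $v^{[j]_p}=d^{p[j]_p}$ and use the identity $p[j]_p = (p^j-1)+[j]_p = [j]_p(p-1)+[j]_p$ to factor
\[
d^{p[j]_p}=d^{[j]_p}\cdot d^{p^j-1},
\]
which naturally exhibits it as an element of $\N^{\geq [j]_p(p-1)}\Prismhat^{(1)}_{R/W(k)\llbracket z\rrbracket}$ through the $d^{p^j-1}$ factor. Working modulo $p$, I replace $d^{[j]_p}$ by $z^{e[j]_p}$; since $je\geq n$, I can further factor
\[
z^{e[j]_p}=z^{p^{j-1}(je-n)}\cdot z^{[j]_pe-p^{j-1}(je-n)}.
\]
The second factor annihilates $d^{p^j-1}$ in $\N^{\geq p^j-1}\Prismhat^{(1)}_{R/W(k)\llbracket z\rrbracket}/p$ by Lemma~\ref{lem:znilpotence}(2) applied with $i=p^j-1$, so the product vanishes.

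For the sharpness under the assumption $e\mid n$, in which case $je=n$, it suffices to detect non-vanishing of $v^{[j]_p-1}=d^{p([j]_p-1)}$ after applying the canonical inclusion $\can\colon \N^{\geq (p-1)([j]_p-1)}\Prismhat^{(1)}_{R/W(k)\llbracket z\rrbracket}/p\to \Prismhat^{(1)}_{R/W(k)\llbracket z\rrbracket}/p$. Using $d\equiv z^e\pmod p$, the image is congruent to $z^{ep([j]_p-1)}=z^{p[j]_pe-pe}$. The sharpness half of Lemma~\ref{lem:znilpotence}(1), specialized to $je-n=0$, asserts that $z^m\neq 0$ in $\Prismhat^{(1)}_{R/W(k)\llbracket z\rrbracket}/p$ for every $m\leq p[j]_pe-1$; since $pe\geq 1$ forces $p[j]_pe-pe\leq p[j]_pe-1$, this applies to our $m$ and gives the desired non-vanishing.

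The main subtle point is that the factorization $d^{p[j]_p}=d^{p^j-1}\cdot d^{[j]_p}$ matches exactly the power of $d$ for which Lemma~\ref{lem:znilpotence}(2) is zero, so no slack remains between the vanishing argument and the filtration level in the statement. When $e\nmid n$, the positive quantity $p^{j-1}(je-n)$ introduces extra $z$-multiplicity in Lemma~\ref{lem:znilpotence}(2), and correspondingly in the sharpness clause of part (1) a $z^{p^j(je-n)}$ slack appears; one might naively hope the second half of the statement extends, but tracking whether the resulting $z$-factor is actually nonzero in the relevant filtration level is considerably more delicate, which is why sharpness is only claimed for $e\mid n$.
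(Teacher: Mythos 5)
Your proposal is correct. For the vanishing claim, your argument is essentially the paper's own: the paper writes $v^{[j]_p}\sim z^{[j]_pe}d^{p^j-1}$ and cites Lemma~\ref{lem:znilpotence}; you unpack this by factoring $z^{[j]_pe}=z^{p^{j-1}(je-n)}\cdot z^{[j]_pe-p^{j-1}(je-n)}$ and applying part (2) of that lemma with $i=p^j-1$. Same decomposition, same lemma; you have merely spelled out the factorization.

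For the sharpness claim, you take a genuinely different route from the paper, and it is a clean one. The paper invokes Lemma~\ref{lem:nygaardpowers} directly (in the $je=n$ case) to see that $z^{[j]_pe-1}d^i$ is a nonzero basis monomial of $\N^{\geq i}\Prismhat^{(1)}/p$ for large $i$, and then deduces $v^{[j]_p-1}\neq 0$ by a divisibility argument (implicitly multiplying $v^{[j]_p-1}$ by $z^{e-1}d^{p-1}$ and observing the product is the nonzero element $z^{[j]_pe-1}d^{p^j-1}$). You instead apply $\can$ to $v^{[j]_p-1}$, compute the image $\sim z^{ep([j]_p-1)}=z^{p[j]_pe-pe}$ in $\Prismhat^{(1)}/p$, and invoke the sharpness half of Lemma~\ref{lem:znilpotence}(1). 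This works because $\can$ is (trivially) a well-defined map mod $p$, so a nonzero image forces a nonzero source, and because $d=uz^e$ mod $p$ for a genuine unit $u\in\Prismhat^{(1)}/p$ (the Eisenstein hypothesis on $E$), so $d^m$ and $z^{em}$ are unit multiples. Your argument stays one lemma downstream of the paper's, works in the simpler unfiltered object, and avoids the factoring step; the tradeoff is that the $\can$-image can genuinely vanish when $e\nmid n$ even when $v^{[j]_p-1}$ does not, which you correctly identify in your closing remark as the reason the direct approach cannot be pushed to the general case. The paper's approach, working throughout inside $\N^{\geq\star}$, sees more refined information but still needs $e\mid n$ to control the slack term $p^{j-1}(je-n)$ in Lemma~\ref{lem:nygaardpowers}.
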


\begin{proof}
  After trivializing the Breuil--Kisin orientation and using $d\sim z^e$, we have
  \[
    v^{[j]_p} \sim z^{[j]_p e} d^{(p-1)[j]_p} = z^{[j]_p e} d^{p^j-1}.
  \]
  This is zero mod $p$ by Lemma~\ref{lem:znilpotence}.
  If $e$ divides $n$, we have $n-ej = 0$, and Lemma~\ref{lem:nygaardpowers} shows that
  \[
    z^{[j]_p e - 1} d^{i} \sim z^{n-1}f_0^{p-1}\cdots f_{j-2}^{p-1} d^{i-(p^{j-1}-1)}
  \]
  is nonzero mod $p$ for any large enough $i$, and so in particular
  \[
    v^{[j]_p-1} \sim z^{([j]_p-1) e} d^{(p-1)([j]_p-1)}
  \]
  is nonzero mod $p$.
\end{proof}

\begin{theorem}\label{thm:nilpotent}
    The class $v_1\in\H^0(\bF_p(p-1)(\bZ_p))$ acts on
    $\bigoplus_{i\in\bZ}\bF_p(i)(\Oscr_K/\varpi^n)$ with nilpotency
    degree at most $[j]_p$, where $j=\lceil\tfrac{n}{e}\rceil$. If $e$
    divides $n$, this is the exact nilpotence degree.
\end{theorem}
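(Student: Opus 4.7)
The plan is to reduce Theorem~\ref{thm:nilpotent} to the relative computation of Proposition~\ref{prop:v1_nilpotence} using the descent framework of Section~\ref{sec:bkdescent}.

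First I would identify the image of $v_1 \in \H^0(\bF_p(p-1)(\bZ_p))$ with the class $v = d^p \in \N^{\geq p-1}\widehat{\Prism}^{(1)}_{R/A^0}/p$ when pulled back to the oriented relative syntomic complex over $A^0 = W(k)\llbracket z_0\rrbracket$. Here I would use the filtered-crystalline Breuil--Kisin orientation of Lemma~\ref{lem:filtered-crystalline-bk} to trivialize Breuil--Kisin twists (Proposition~\ref{prop:oriented_syntomic}); since $v_1$ on the crystalline prism $(\bZ_p,(p))$ is detected by a Nygaard cycle in $p^{p-1}$, orientation transport carries this to the class of $d^p$ on the Breuil--Kisin side. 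This identification is compatible with the $v_1$-action on $\bigoplus_i \bF_p(i)(R/A^0)$. Moreover, the same identification and the same proof of Proposition~\ref{prop:v1_nilpotence} apply at each cosimplicial level $A^s = W(k)\llbracket z_0,\ldots,z_s\rrbracket$, since only the relations $d = E(z_0)$ and $z_0^n \equiv 0$ in $R$ are used; the additional generators $g_u$ of $\widehat{\Prism}^{(1)}_{R/A^s}$ do not obstruct the nilpotence of $d^p$.

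For the upper bound, I would then invoke relative-to-absolute descent (Theorem~\ref{thm:bkdescent}): because $v_1^{[j]_p}$ acts as zero levelwise on the cosimplicial diagram $\bF_p(\star)(R/A^\bullet)$, it acts as zero on its totalization, hence on the absolute graded $\bigoplus_i \bF_p(i)(R)$.

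For the sharpness claim when $e \mid n$, the descent map runs absolute-to-relative and need not be injective on cohomology, so non-vanishing of $v^{[j]_p-1}$ in the relative setting does not automatically lift to absolute. My plan is to exhibit a nonzero absolute cycle on which $v_1^{[j]_p-1}$ acts non-trivially by working directly in the total complex of the syntomic square of Corollary~\ref{cor:syntomicsquare}. Starting from the witnessing class $z_0^{n-1}f_0^{p-1}\cdots f_{j-1}^{p-1}$ of the sharpness half of Proposition~\ref{prop:v1_nilpotence}, I would check, in the top $\F$-filtration where by Lemma~\ref{lem:frobeniustwist-cartier} and the prismatic Cartier isomorphism (Theorem~\ref{thm:prismatic_cartier}) the $\varphi^\nabla$-differential is controlled, that this class represents a nonzero absolute syntomic class in some explicit weight $\bF_p(i_0)(R)$. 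The hardest step is ruling out cancellation from the $\can - \varphi^\nabla$ differential and the $\nabla$-image in the syntomic square; I would handle this using the isogeny analysis of Lemma~\ref{lem:isogeny}, which makes the $\F$-associated graded action of $\nabla$ and $\can$ transparent, and reducing the remaining verification to the characteristic-$p$ associated graded via crystalline degeneration (Proposition~\ref{prop:degeneration}).
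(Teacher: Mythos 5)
Your identification of $v_1$ with a unit multiple of $d^p$ in the relative theory and your levelwise-descent argument for the upper bound are both sound, and in substance they match the paper, though the paper's route is slightly different: it observes that the edge map
\[
  \H^0\bigl(\bF_p(i)(\Oscr_K/\varpi^n)\bigr)\longrightarrow \H^0\bigl(\bF_p(i)((\Oscr_K/\varpi^n)/A^0)\bigr)
\]
of the relative-to-absolute descent spectral sequence is \emph{injective} for every weight $i$, because the cosimplicial terms $\bF_p(i)(R/A^s)$ are all coconnective, and it then verifies the vanishing of $v_1^{[j]_p}$ after the map, using Proposition~\ref{prop:v1_nilpotence}.

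Where your proposal goes astray is the sharpness claim. You flag a gap --- that non-vanishing of $d^{p([j]_p-1)}$ in the relative theory ``need not lift to absolute'' --- and then propose an elaborate cycle-hunting argument in the syntomic square, via Lemma~\ref{lem:isogeny} and crystalline degeneration, to repair it. That gap is not there. Two observations close it immediately. First, the edge map on $\H^0$ displayed above \emph{is} injective; your worry about non-injectivity of descent maps applies to $\H^1$ and $\H^2$ but is beside the point here, since the class being tracked lives in $\H^0$. Second, and this is the crucial point you appear to be missing, $\H^0(\bF_p(i)(\Oscr_K/\varpi^n))$ is \emph{not} zero for $i\geq 1$: the integral $\H^0(\bZ_p(i))$ vanishes by Corollary~\ref{cor:zigzag}, but modulo $p$ the Bockstein contributes $\H^1(\bZ_p(i))[p]\neq 0$, and that is precisely where the classes $v_1^N\cdot 1$ live. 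Since $\bigoplus_i\bF_p(i)(\Oscr_K/\varpi^n)$ is an algebra over $\bigoplus_i\bF_p(i)(\bZ_p)$, multiplication by $v_1^N$ on the whole graded object is null exactly when $v_1^N\cdot 1 = 0$ in $\H^0(\bF_p(N(p-1))(\Oscr_K/\varpi^n))$. With these two facts in hand, Proposition~\ref{prop:v1_nilpotence} really does finish the sharpness: the image of $v_1^{[j]_p-1}$ in the relative $\H^0$ is a unit multiple of $d^{p([j]_p-1)}$ by functoriality, this is nonzero when $e\mid n$ by that proposition, and injectivity of the edge map forces $v_1^{[j]_p-1}\neq 0$ in the absolute $\H^0$; it therefore acts non-trivially, already on the unit class. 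No detour through the syntomic square or the isogeny analysis is required.
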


\begin{proof}
    We first locate $v_1$ in the relative-to-absolute descent spectral sequence for the syntomic
    cohomology of $\Oscr_K$, finding that there is an exact sequence
    $$0\rightarrow\H^0(\bF_p(p-1)(\Oscr_K))\rightarrow\H^0(\bF_p(p-1))(\Oscr_K/A^0)\xrightarrow{d^0-d^1}\H^0(\bF_p(p-1))(\Oscr_K/A^1).$$
    In particular, by the coconnectivity of the syntomic complexes involved, $v_1$ is
    non-zero in $\H^0(\bF_p(p-1)(\Oscr_K/A^0))$.
    By Proposition~\ref{prop:oriented_syntomic}, the relative syntomic complex
    $\bZ_p(p-1)(\Oscr_K/A^0)$ is equivalent to
    $$\fib\left(W(k)\llbracket z_0\rrbracket\cdot d^{p-1}\xrightarrow{f(z_0)\cdot d^{p-1}\mapsto
    (f(z_0)d^{p-1}-\varphi(f(z_0)))}W(k)\llbracket
    z_0\rrbracket\right),$$ where $d=E(z_0)$ is the distinguished element of the Breuil--Kisin prism
    $A^0$. Working modulo $p$ we find that the complex $\bF_p(p-1)(\Oscr_K/A^0)$ is given by
    $$\fib\left(k\llbracket z_0\rrbracket\cdot z_0^{p-1}\xrightarrow{f(z_0)\cdot d^{p-1}\mapsto
    (f(z_0)(uz_0^e)^{(p-1)}-\varphi(f(z_0)))}k\llbracket
    z_0\rrbracket\right),$$ where $u$ is the coefficient of $z^e_0$ in $E(z_0)$. As
    $u^{p-1}= 1\pmod p$, we can rewrite the map as $f(z_0)\mapsto
    f(z_0)z_0^{e(p-1)}-\varphi(f(z_0))$ modulo $p$. The kernel of this map is generated by
    $z_0^e\cdot d^{p-1}$, so that
    $\H^0(\bF_p(p-1)(\Oscr_K/A^0))$ is a $1$-dimensional $\bF_p$-vector space. Thus, by counting
    dimensions and Remark~\ref{rem:v1}, it follows that the class $z_0^e\cdot
    d^{p-1}\sim d^p=v$ survives to give a unit multiple of
    $v_1$ in the absolute $p$-adic syntomic complex $\bF_p(p-1)(\Oscr_K)$.

    To prove that $v_1^{[j]_p}=0$ in $\bF_p([j]_p(p-1))(\Oscr_K/\varpi^n)$ it suffices to show that
    $d^{p[j]_p}$ maps to zero in $\bF_p([j]_p(p-1))((\Oscr_K/\varpi^n)/A^0)$.
    This is the content of Proposition~\ref{prop:v1_nilpotence}. The claim about the
    exact nilpotence degree follows from the same proposition.
\end{proof}

We recover the result of~\cite{bhatt-clausen-mathew, land-mathew-meier-tamme}.

\begin{corollary}
    For any $n\geq 1$ and any prime number $p$, $$L_{T(1)}\K(\bZ/p^n)\we 0,$$
    where $T(1)$ denotes the height $1$ telescopic localization at the prime $p$.
\end{corollary}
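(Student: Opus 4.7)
The plan is to reduce the statement to a $v_1$-nilpotence claim on $\K(\bZ/p^n;\bF_p)$ and then bootstrap Theorem~\ref{thm:nilpotent} from the motivic associated graded to the $\K$-theory spectrum itself. Recall that a $p$-complete spectrum $X$ has $L_{T(1)}X\we 0$ if and only if $v_1$ acts nilpotently on $\pi_*(X/p)$; since $\K(\bZ/p^n)$ is $p$-complete (as $\bZ/p^n$ is $p$-nilpotent and $\K(\bZ/p^n;\bZ[\tfrac{1}{p}])\we\K(\bF_p;\bZ[\tfrac{1}{p}])$ is $T(1)$-acyclic by Quillen), it suffices to show that $v_1$ is nilpotent on $\K(\bZ/p^n;\bF_p)$.

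For $n=1$ this is immediate from Quillen's calculation~\eqref{eq:quillen}: $\K(\bF_p;\bF_p)$ is concentrated in degrees $0$ and $-1$, so $v_1$ of positive degree $2p-2$ acts as zero. For $n\geq 2$, I would apply Theorem~\ref{thm:nilpotent} in the unramified case $K=\bQ_p$, $e=1$, $j=n$, which gives that $v_1^{[n]_p}=0$ acting on the bigraded motivic associated graded
\[
    \gr^\star_\mot\TC(\bZ/p^n;\bF_p)\we\bigoplus_{i\geq 0}\bF_p(i)(\bZ/p^n).
\]

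The key step is then to promote this nilpotence from the associated graded to the filtered object $\TC(\bZ/p^n;\bF_p)\we\K(\bZ/p^n;\bF_p)$ (using the Dundas--Goodwillie--McCarthy theorem in the form of Corollary~\ref{cor:connective_analysis}). The motivic filtration $\F^{\geq\star}_\mot\TC(\bZ/p^n;\bF_p)$ is complete, and for each fixed degree $d$ only finitely many graded pieces contribute to $\pi_d$ because by Corollary~\ref{cor:zigzag} the syntomic complex $\bF_p(i)(\bZ/p^n)$ is concentrated in cohomological degrees $1$ and $2$, contributing only to $\pi_{2i-1}$ and $\pi_{2i-2}$. Thus the filtration on each homotopy group is effectively finite, so nilpotence of $v_1$ on the graded pieces (raising motivic filtration by $p-1$) implies nilpotence of a sufficiently high power of $v_1$ on each $\K_d(\bZ/p^n;\bF_p)$, and a uniform bound follows from $v_1$ having bounded degree.

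The main obstacle is ensuring the associated-graded-to-filtered bootstrap is uniform across all degrees rather than degreewise; this can be handled either by writing the motivic filtration as a limit of its truncations and observing that multiplication by a nilpotent-on-graded class is eventually filtration-raising beyond any chosen truncation depth, or more cleanly by noting that $(\bigoplus_i\bF_p(i)(\bZ/p^n))[v_1^{-1}]\we 0$ directly from Theorem~\ref{thm:nilpotent} and that $v_1$-inversion commutes with the complete filtration on $\TC(\bZ/p^n;\bF_p)$ in this bounded setting, giving $\K(\bZ/p^n;\bF_p)[v_1^{-1}]\we 0$, hence $L_{T(1)}\K(\bZ/p^n)\we 0$.
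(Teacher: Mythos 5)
Your argument is correct and follows essentially the same route as the paper: both reduce, via Dundas--Goodwillie--McCarthy and the factorization of $T(1)$-localization through $p$-completion, to showing $\TC(\bZ/p^n;\bF_p)[v_1^{-1}]\we 0$, and both deduce this by combining Theorem~\ref{thm:nilpotent} with the fact that the motivic filtration on each $\pi_r\TC(\bZ/p^n;\bF_p)$ is finite (the paper cites Corollary~\ref{cor:k_groups}; you cite Corollary~\ref{cor:zigzag}, which gives the same boundedness). One small inaccuracy in your first paragraph: $\K(\bZ/p^n)$ is not literally $p$-complete as a spectrum since $\K_0(\bZ/p^n)\iso\bZ$; what you mean (and what the paper says) is that $L_{T(1)}$ depends only on the $p$-completion, so $L_{T(1)}\K(\bZ/p^n)\we L_{T(1)}\K(\bZ/p^n;\bZ_p)$, which is then identified with $L_{T(1)}\TC(\bZ/p^n;\bZ_p)$ because the fiber of $\K(\bZ/p^n;\bZ_p)\to\TC(\bZ/p^n;\bZ_p)$ is coconnective and $T(1)$-acyclic.
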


\begin{proof}
    We have $L_{T(1)}\K(\bZ/p^n)\we L_{T(1)}\K(\bZ/p^n;\bZ_p)\we L_{T(1)}\TC(\bZ/p^n;\bZ_p)$.
    It is enough to show that $\TC(\bZ/p^n;\bF_p)[v_1^{-1}]$ vanishes. This follows from
    Theorem~\ref{thm:nilpotent} and the fact that the motivic filtration induces a finite
    filtration on each $\pi_r\TC(\bZ/p^n;\bF_p)$ by Corollary~\ref{cor:k_groups}.
\end{proof}

\begin{corollary}\label{cor:nil_on_k}
    Suppose that $p\geq 5$ is a prime and that $j=\lceil\frac{n}{e}\rceil$. Then, for any $n\geq 1$, the action of $v_1^{2[j]_p}$ on
    $\K(\Oscr_K/\varpi^n;\bF_p)$ is homotopic to $0$.
\end{corollary}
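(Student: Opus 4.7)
The plan is to deduce the null-homotopy from Theorem~\ref{thm:nilpotent} via a motivic filtration count in the target homotopy group, exploiting the fact that for $p\geq 5$ the Moore spectrum $\bS/p$ is a homotopy associative ring. Consequently, $\K(\Oscr_K/\varpi^n;\bF_p)$ is a homotopy associative ring spectrum and the Bhatt--Morrow--Scholze motivic filtration $\F^{\geq\star}_\mot\K(\Oscr_K/\varpi^n;\bF_p)$ is a complete multiplicative filtration. Under this structure, multiplication by $v_1^{2[j]_p}$ as a self-map is null-homotopic if and only if the class $v_1^{2[j]_p}$ itself vanishes in $\pi_{4[j]_p(p-1)}\K(\Oscr_K/\varpi^n;\bF_p)$, reducing the claim to a homotopy-group computation.

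First I would observe that $v_1^{[j]_p}$, viewed as an element of $\pi_{2[j]_p(p-1)}\K(\Oscr_K/\varpi^n;\bF_p)$, lifts to motivic filtration at least $[j]_p(p-1)+1$. Indeed, by Theorem~\ref{thm:nilpotent}, $v_1^{[j]_p}$ acts as zero on the motivic associated graded $\bigoplus_i\bF_p(i)(\Oscr_K/\varpi^n)$; its image in $\gr^{[j]_p(p-1)}_\mot\K$ corresponds to a class in $\H^0(\bF_p([j]_p(p-1))(\Oscr_K/\varpi^n))$, which vanishes by Corollary~\ref{cor:zigzag}. By multiplicativity of the filtration, squaring yields $v_1^{2[j]_p}\in\F^{\geq 2[j]_p(p-1)+2}_\mot\K(\Oscr_K/\varpi^n;\bF_p)$.

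Next I would identify where the non-zero contributions to $\pi_{4[j]_p(p-1)}\K(\Oscr_K/\varpi^n;\bF_p)$ sit in the motivic filtration. Using the $\bF_p$-coefficient analogue of Corollary~\ref{cor:k_groups} together with the cohomological amplitude bound of Corollary~\ref{cor:zigzag} (which confines $\bF_p(i)(\Oscr_K/\varpi^n)$ to cohomological degrees $1$ and $2$ for $i\geq 1$), the only motivic weight contributing to homotopical degree $4[j]_p(p-1)$ is $i=2[j]_p(p-1)+1$, via $\H^2(\bF_p(2[j]_p(p-1)+1)(\Oscr_K/\varpi^n))$. In particular the target group is concentrated in motivic filtration exactly $2[j]_p(p-1)+1$, which is strictly less than the filtration bound $2[j]_p(p-1)+2$ established for $v_1^{2[j]_p}$. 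Hence $v_1^{2[j]_p}$ must vanish in $\pi_{4[j]_p(p-1)}\K(\Oscr_K/\varpi^n;\bF_p)$, completing the argument.

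The main point requiring care---which is not a serious obstacle but needs to be invoked---is that the motivic filtration on $\K(\Oscr_K/\varpi^n;\bF_p)$ is genuinely multiplicative so that filtrations add under product. This follows from the filtered ring structure on $\F^{\geq\star}_\mot\TC(-;\bZ_p)$ after smashing with $\bS/p$, which is valid for $p\geq 5$. With this in hand, the doubling of the exponent from $[j]_p$ to $2[j]_p$ appears naturally: a single factor of $v_1^{[j]_p}$ lifts by one filtration degree beyond the a~priori minimum, and squaring provides the one additional unit of filtration needed to escape the (uniquely) contributing weight in the target homotopy group.
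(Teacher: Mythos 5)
Your argument follows essentially the same route as the paper's proof: use the ring structure on $\K(\Oscr_K/\varpi^n;\bF_p)$ for $p\geq 5$ to reduce to a vanishing statement, observe via Theorem~\ref{thm:nilpotent} that $v_1^{[j]_p}$ has trivial leading term and so lifts to filtration $[j]_p(p-1)+1$, then square and compare against the filtration range of the target homotopy group. The conclusion is correct, and the strategy matches the paper's.

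However, you twice state that $\bF_p(i)(\Oscr_K/\varpi^n)$ is concentrated in cohomological degrees $1$ and $2$ for $i\geq 1$ and attribute this to Corollary~\ref{cor:zigzag}, but that corollary controls $\bZ_p(i)(\Oscr_K/\varpi^n)$, not its reduction mod $p$. Since $\bF_p(i)(R)=\bZ_p(i)(R)/p$, one has $\H^0(\bF_p(i)(R))\cong\H^1(\bZ_p(i)(R))[p]$, which is generically nonzero (indeed $v_1$ itself lives in such a degree-zero group). So the $\bF_p$-complex has amplitude $[0,2]$, not $[1,2]$. Consequently your claim that $\H^0(\bF_p([j]_p(p-1))(\Oscr_K/\varpi^n))$ vanishes is false; what you actually need is the weaker (and true) statement that $v_1^{[j]_p}=0$ in that group, which Theorem~\ref{thm:nilpotent} gives directly without any appeal to Corollary~\ref{cor:zigzag}. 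Similarly, $\pi_{4[j]_p(p-1)}$ receives a potential contribution from weight $2[j]_p(p-1)$ via $\H^0$ in addition to the weight-$(2[j]_p(p-1)+1)$ contribution via $\H^2$. Neither slip is fatal, since $v_1^{2[j]_p}\in\F^{\geq 2[j]_p(p-1)+2}$ still lies strictly above both contributing weights, but the cited justification should be corrected.
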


\begin{proof}
    We use that for $p\geq 5$, $\K(\Oscr_K/\varpi^n;\bF_p)$ is a homotopy associative ring
    spectrum. This reduces the corollary to checking that $v_1^{2[j]_p}=0$ in
    $\K_*(\Oscr_K/\varpi^n;\bF_p)$.
    However, $v_1^{[j]_p}=0$ in $$\H^0(\bF_p([j]_p(p-1))(\Oscr_K/\varpi^n)),$$ the motivic associated graded, so it lifts to some element, say
    $x\in\H^2(\bF_p([j]_p(p-1)+1))(\Oscr_K/\varpi^n))$. But, now, $v_1^{[j]_p}x=0$ in the motivic associated
    graded, and thus $v_1^{2[j]_p}=0$ in $\K_*(\Oscr_K/\varpi^n;\bF_p)$, as claimed.
\end{proof}

\section{An algorithm for syntomic cohomology}\label{sec:algorithm}

Section~\ref{sec:implemented} describes our algorithm as we implemented it.
Section~\ref{sec:precision} contains an analysis of the $p$-adic precision required in order to
guarantee correct results.

\subsection{The algorithm as implemented}\label{sec:implemented}

In this subsection, we give the algorithm as implemented in~\cite{akn_syn_coh} for the computation of the $\K$-groups of
$R=\Oscr_K/\varpi^n$.

\paragraph{{\ttfamily Input}:} integers $q=p^f,i,n$ for $f,i,n\geq 1$ and an Eisenstein polynomial $E(z)\in
W(\bF_q)[z]$, assumed to be normalized so that $E(0)=p$ and specified up to the working $p$-adic
precision specified in Step 1 below.

\paragraph{{\ttfamily Output}:} $p$-adic matrices $\syn^0$ and $\syn^1$ defining a complex
\begin{equation*}
    \cdots \to 0\to \bZ_p^{f(in-1)} \xto{\syn^0} \bZ_p^{2f(in-1)}
    \xto{\syn^1} \bZ_p^{f(in-1)} \to 0 \to  \cdots
\end{equation*}
quasi-isomorphic
to $\bZ_p(i)(\Oscr_K/\varpi^n)$. The cohomology of this matrix can be computed using Smith normal form
and gives the cohomology of $\bZ_p(i)(\Oscr_K/\varpi^n)$ and hence the structure of
$\K_{2i-2}(\Oscr_K/\varpi^n;\bZ_p)$ and $\K_{2i-1}(\Oscr_K/\varpi^n;\bZ_p)$.

\paragraph{{\ttfamily Strategy}:} we will compute the commutative square
\begin{equation}\label{eq:algorithm}
    \begin{gathered}
        \xymatrix{
            \F^{[1,in-1]}\N^{\geq i}\Prismhat^{(1)}_{R/W(\bF_q)\llbracket
            z_0\rrbracket}\{i\}\ar[r]^{\N^{\geq i}\nabla}\ar[d]^{\can-\varphi}&
            \F^{[1,in-1]}\N^{\geq i}\Prismhat^{(1),\nabla}_{R/W(\bF_q)\llbracket
            z_0\rrbracket}\{i\}\ar[d]^{\can-\varphi}\\
            \F^{[1,in-1]}\Prismhat^{(1)}_{R/W(\bF_q)\llbracket z_0\rrbracket}\{i\}\ar[r]^{\nabla}& \F^{[1,in-1]}\Prismhat^{(1),\nabla}_{R/W(\bF_q)\llbracket z_0\rrbracket}\{i\}\\
        }
    \end{gathered}
\end{equation}
Each of the vertices in this square is a free $W(\bF_q)$-module of rank $(in-1)$. The maps in the
diagram are not $W(\bF_q)$-linear thanks to Frobenius, but they are of course $\bZ_p$-linear, so we use
$W(\bF_q)\iso\bZ_p^f$, to compute matrix representatives for this square in $\bZ_p$-modules.
For the computation of $\nabla$, we will also work in
$\Prismhat^{(1)}_{\cO_K/W(\bF_q)\llbracket z_0\rrbracket}\{i\}$ and
$\Prismhat^{(1)}_{\cO_K/W(\bF_q)\llbracket z_0,z_1\rrbracket}\{i\}$.

\paragraph{{\ttfamily Step 1: find the working $p$-adic precision}.}
Lemma~\ref{lem:targetprecision} shows that $\syn^0$ and $\syn^1$ should be computed up to $p$-adic
precision $$\sum_{j=1}^{in-1}\epsilon(i,j)+v_p\{j,n\},$$
where $\epsilon(i,j)=1$ if $n\mid j$ and $0$ otherwise in the given range and $\{j,n\}=n$ if $n\mid j$ and
$j$ otherwise (compare Proposition \ref{prop:angeltveit}). This is the {\em target $p$-adic precision}. (It is also possible to work with a smaller target precision, at the cost of potentially having to restart the computation with a larger target precision, see Remark \ref{rem:movingtarget}.) Several steps in the algorithm below
result in a cumulative precision loss of at most
$$\lfloor\log_p(in-1)\rfloor+\sum_{s=p}^{i-1}nv_p(s!)+n\binom{i}{2}+\sum_{j=1}^{in-1}\epsilon(i,j)+v_p\{j,n\}.$$
Thus, all computations in the algorithm below should be made starting at a $p$-adic precision of
$$2\left(\sum_{j=1}^{in-1}\epsilon(i,j)+v_p\{j,n\}\right)+\lfloor\log_p(in-1)\rfloor+\sum_{s=p}^{i-1}nv_p(s!)+n\binom{i}{2},$$
the {\em working $p$-adic precision}, although we warn the reader that the effective
precision drops in several of the steps below.

\paragraph{{\ttfamily Step 2: initialize linear bases}.}
Choose a basis $1,x,x^2,\cdots,x^{f-1}$ for $\bF_q$ over $\bF_p$ and lift this to a basis for
$W(\bF_q)$ over $\bZ_p$. Assume that $x$ satisfies a fixed degree $f$ monic polynomial $h(x)=0$.

A $W(\bF_q)$-basis for $\F^{[1,in-1]}\Prismhat^{(1)}_{R/W(\bF_q)\llbracket z_0\rrbracket}$ is given by
monomials $z_0^k\prod f_u^{e_u}$ for $0\leq k<n$ and $1\leq k+\sum np^ue_u\leq in-1$
with $e_u<p$. A $W(\bF_q)$-basis for
$\F^{[1,in-1]}\N^{\geq i}\Prismhat^{(1)}_{R/W(\bF_q)\llbracket z_0\rrbracket}$ is given by elements of the
form $\widetilde{d}^{i-\sum p^ue_u} z_0^k\prod f_u^{e_u}$ with $0\leq k<n$, $1\leq
k+\sum np^ue_u\leq in-1$, and $e_u<p$. Bases of $\F^{[1,in-1]}\Prismhat^{(1)}_{R/W(\bF_q)\llbracket z_0\rrbracket }\{i\}$ and
$\F^{[1,in-1]}\N^{\geq i}\Prismhat^{(1)}_{R/W(\bF_q)\llbracket z_0\rrbracket }\{i\}$ are obtained symbolically by
multiplying with $w(s^i)$, where $s$ is our preferred filtered crystalline Breuil--Kisin orientation. As $\F^{[1,in-1]}\N^{\geq\star}\Prismhat^{(1),\nabla}_{R/W(\bF_q)\llbracket z_0\rrbracket}\{i\} = \F^{[0,in-2]} \N^{\geq \star-1}\Prismhat^{(1)}_{R/W(\bF_q)\llbracket z_0\rrbracket} \{i-1\}$, bases for the $\nabla$-terms are described analogously, with certain indexing shifts.
This determines $W(\bF_q)$-bases on all the terms in the diagram \eqref{eq:algorithm}.
Finally, $\bZ_p$-bases are given by $x^dz_0^k\prod f_u^{e_u}w(s^i)$ for
$0\leq d<f$, $0\leq k<n$, $1\leq k+\sum np^ue_u\leq in-1$, and $e_u<p$ on the lower left term, and analogously for the others, where we present $W(\bF_q) = \bZ_p[x]/\widetilde{h}(x)$ where $h$ is an arbitrary $\bZ_p$-lift of the minimal polynomial $h(x)$ used to present $\bF_q$.
Let $M_f$ be the largest $u$ such that $f_u$ appears ($M_f=\lfloor\log_p(i-1)\rfloor$ if
$i>1$ and $0$ otherwise).

For $\F^{[1,in-1]}\Prismhat_{\Oscr_K/W(\bF_q)\llbracket z_0\rrbracket}^{(1)}$ we have a
$W(\bF_q)$-linear basis $z_0,\ldots,z_0^{in-1}$. For $\F^{[1,in-1]}\N^{\geq
i}\Prismhat^{(1)}_{\Oscr_K/W(\bF_q)\llbracket z_0\rrbracket}$ a $W(\bF_q)$-linear basis is
given by $\widetilde{d}^i z_0^k$ for $1\leq k\leq in-1$. For,
$\F^{[1,in-1]}\Prismhat_{\Oscr_K/W(\bF_q)\llbracket z_0,z_1\rrbracket}^{(1)}$ there is a
$W(\bF_q)$-linear basis $z_0^k\prod g_u^{e_u}$ for $1\leq k+p^ue_u\leq in-1$ and
$e_u<p$. For $\F^{[1,in-1]}\N^{\geq i}\Prismhat^{(1)}_{\Oscr_K/W(\bF_q)\llbracket z_0,z_1\rrbracket}$,
a $W(\bF_q)$-linear basis is given by monomials $\widetilde{d}^{i-\sum p^ue_u}z_0^k\prod
g_u^{e_u}$ with $1\leq k+\sum p^ue_u\leq in-1$, and $e_u<p$. The Breuil--Kisin twists
obtain analogous bases by multiplying with $w(s^i)$ or shifting, respectively.
Let $M_g$ be the largest $u$ such that $g_u$ appears ($M_f=\lfloor\log_p(in-1)\rfloor$).

\paragraph{{\ttfamily Step 3: compute relations and structure of the prismatic envelopes}.} 

For $\Prismhat^{(1)}_{R/W(\bF_q)\llbracket z_0\rrbracket}$, find
elements $\lambda_u\in W(\bF_q)\llbracket z_0\rrbracket$ as in Lemma
\ref{lem:explicitrelations} and create a lookup table of relations for $f_u^p$ in the
prismatic envelope in terms of those, noting that here the terms $R_u'$ are all zero
(Remark \ref{rem:vanishingofRu_prime}). We compute the $\delta$ operation on
$W(\bF_q)\llbracket z_0\rrbracket$ by $\delta(x) = \frac{\varphi(x)-x^p}{p}$, so each
step of the recursion for $\lambda_u$ causes a $p$-adic precision loss by $1$ digit.
Also create a lookup table for the values of $\varphi_{p^u}(f_u)$ following Lemma
\ref{lem:f_uv_structure}.

For the structure of $\Prismhat^{(1)}_{\cO_K/W(\bF_q)\llbracket z_0,z_1\rrbracket}$, use
Construction \ref{cons:Ru_recursion} to compute elements $R_u'$ and a $\delta$-ring
structure on $W(\bF_q)\llbracket z_0\rrbracket [g_u\mid 0\leq u\leq M_g]$, using the additional trick that
we may rewrite
\[
  R_0' = \frac{\delta(z_1-z_0)}{\delta(d)} = \frac{1}{\delta(d)} \sum_{1\leq j\leq p-1} \frac{1}{p}\binom{p}{j} g_0^j,
\]
and so may work in $W(\bF_q)\llbracket z_0\rrbracket[g_u\mid 0\leq u\leq M_g]$ instead
of the bigger $W(\bF_q)\llbracket z_0,z_1\rrbracket[g_u\mid 0\leq u\leq M_g]$.
Following Lemma~\ref{lem:explicitrelations}, compute a lookup table of relations for $g_u^p$ in terms of those. The $\delta$-ring structure from Construction \ref{cons:Ru_recursion} is stored in terms of the Frobenius lift
\[
  \varphi(g_u) = \varphi(d)^{p^u}(\lambda_u g_{u+1,v} + R_u')
\]
instead of directly storing the values of $\delta(g_u)$, so each of the applications of $\delta$ in the recursion for $R_u'$ causes a $p$-adic precision loss by $1$ digit, which is the same as for $\lambda_u$. Also create a lookup table for the values of $\varphi_{p^u}(g_u)$ in following Lemma \ref{lem:f_uv_structure}.

\paragraph{{\ttfamily Subalgorithm: reduce}.} At various points below, we have to rewrite a general element
of one of $\Prismhat^{(1)}_{R/W(\bF_q)\llbracket z_0\rrbracket}$ or $\Prismhat^{(1)}_{\cO_K/W(\bF_q)\llbracket z_0\rrbracket}$ in terms of the chosen linear bases above; we do so using the relations introduced in {\ttfamily Step 3}. The rewriting process is
$W(\bF_q)$-linear and we use the relations $z_0^n=f_0$ and the lookup table for $f_u^p$ from {\ttfamily Step 3} in the first case, and $z_1 = z_0+g_0$ and the lookup table for $g_u^p$ from {\ttfamily Step 3} in the second case.
An application of the {\ttfamily reduce subalgorithm} means in this context to replace a given polynomial in $z_0$ and the $f_u$ (or $z_0, z_1$ and $g_u$) by an equal term in prismatic cohomology which is expressed as a sum of terms in the $W(\bF_q)$-linear basis introduced in {\ttfamily Step 2}. In practice, we perform this reduction
by simultaneously imposing all relations above as well as all of the relations in the lookup table
for the $p$th powers and repeating this process until no further changes occur in $\F^{[1,in-1]}$.
This process converges in $\F$ and hence terminates in finite time in $\F^{[1,in-1]}$.
(It is possible to recursively reduce the expressions in the lookup table for the $f_u^p$ and $g_u^p$ relations, which seems to improve runtimes in the case of the $f_u^p$, and worsen runtimes in the case of the $g_u^p$.)

\paragraph{{\ttfamily Step 4: compute the change-of-orientation unit $w(v)$}.} The map
$\nabla$ in Breuil--Kisin weight $i$ involves the unit $w(v)$ from Lemma \ref{lem:bk-factor-general}. In $\Prism_{\cO_K/W(\bF_q)\llbracket z_0\rrbracket}$, we have
\[
  u = \frac{E(z_1)}{E(z_0)} = 1 + \frac{E(z_1)-E(z_0)}{z_1-z_0} b,
\]
and thus in $\Prismhat^{(1)}_{\cO_K/W(\bF_q)\llbracket z_0,z_1\rrbracket}$,
\[
  w(u) = 1 + \varphi\left(\frac{E(z_1)-E(z_0)}{z_1-z_0}\right) \varphi_1(g_0).
\]
Using the value of $\varphi_1(g_0)$ described in Step 3, we may compute the value
\[
  w(v) = \prod_{r\geq 0} \varphi^r(w(u))
\]
as the fixed point of the recursion $x = w(u)\cdot \varphi(x)$ in
$\F^{[1,in-1]}\Prismhat^{(1)}_{\cO_K/W(\bF_q)\llbracket z_0,z_1\rrbracket}$, by
iterating (and reducing) until the value remains unchanged, starting with $x=w(u)$. This
converges rapidly in the $\F$-filtration and takes around $\lfloor\log_p{(in-1)}\rfloor$
number of steps.

\paragraph{{\ttfamily Step 5: compute $\nabla: \F^{[1,in-1]}\Prismhat^{(1)}_{\cO_K/W(\bF_q)\llbracket
z_0\rrbracket}\{i\}\rightarrow\F^{[1,in-1]}\Prismhat^{(1),\nabla}_{\cO_K/W(\bF_q)\llbracket z_0\rrbracket}\{i\}$}.}

A basis for $\Prismhat^{(1)}_{\cO_K/W(\bF_q)\llbracket z_0\rrbracket}\{i\}$ is given by
$z_0^k w(s^i)$ for $1\leq k\leq in-1$ and $\nabla(z_0^k w(s^i))$ can be computed as
($w(s^{i-1})$ times) the coefficient of $g_0$ with respect to the $W(\bF_q)\llbracket z_0\rrbracket$-basis $\prod g_u^{e_u}$ in 
\[
  w(v^i)(z_0+g_0)^k - z_0^k, 
\]
after reducing. Compute this using the value of $w(v)$ from Step 4.

\paragraph{{\ttfamily Step 6: compute the reduction maps $\Prismhat^{(1)}_{\cO_K/W(\bF_q)\llbracket z_0\rrbracket}\{i\} \to \Prismhat^{(1)}_{R/W(\bF_q)\llbracket z_0\rrbracket}\{i\}$}.}
 Reduce $z_0^k$ for $1\leq k\leq in-1$ with respect to the relations in $\Prismhat^{(1)}_{R/W(\bF_q)\llbracket z_0\rrbracket}$ and express as a linear combination of the basis elements $z_0^k\prod f_u^{e_u}$ to obtain a matrix $\red$ for the map
 \[
\Prismhat^{(1)}_{\cO_K/W(\bF_q)\llbracket z_0\rrbracket}\{i\} \to \Prismhat^{(1)}_{R/W(\bF_q)\llbracket z_0\rrbracket}\{i\}
\]
induced by $\cO_K\to \cO_K/\varpi^n=R$. Analogously (taking the indexing shift into account), compute a matrix $\red^\nabla$ for
 \[
\Prismhat^{(1),\nabla}_{\cO_K/W(\bF_q)\llbracket z_0\rrbracket}\{i\} \to
\Prismhat^{(1),\nabla}_{R/W(\bF_q)\llbracket z_0\rrbracket}\{i\}.
\]

\paragraph{{\ttfamily Step 7: compute $\nabla:\F^{[1,in-1]}\Prismhat^{(1)}_{R/W(\bF_q)\llbracket z_0\rrbracket}\{i\}\rightarrow\F^{[1,in-1]}\Prismhat^{(1),\nabla}_{R/W(\bF_q)\llbracket z_0\rrbracket}\{i\}$}.}

The map $\nabla$ is natural in $R$, so we may reduce to the $\Oscr_K$ case via the commutative diagram
\[
\begin{tikzcd}
  \F^{[1,in-1]}\Prismhat^{(1)}_{\Oscr_K/W(\bF_q)\llbracket z_0\rrbracket}\{i\} \dar{\red}\rar{\nabla}&
    \F^{[1,in-1]}\Prismhat^{(1),\nabla}_{\Oscr_K/W(\bF_q)\llbracket z_0\rrbracket}\{i\}\dar{\red^\nabla}\\
    \F^{[1,in-1]}\Prismhat^{(1)}_{R/W(\bF_q)\llbracket z_0\rrbracket}\{i\} \rar{\nabla} &
    \F^{[1,in-1]}\Prismhat^{(1),\nabla}_{R/W(\bF_q)\llbracket z_0\rrbracket}\{i\}.
\end{tikzcd}
\]
The vertical maps were computed in Step 6 and the upper horizontal map in Step 5. Using that the vertical maps are rational isomorphisms (Lemma \ref{lem:reduction_isogeny}) and that all terms are free $\bZ_p$ modules, compute the top map as $(\red^\nabla)^{-1}\circ \nabla \circ \red$.

The precision loss is bounded by the $p$-valuation of the determinant of $\red^\nabla$.

\paragraph{{\ttfamily Step 8: compute $\nabla$ on the Nygaard filtration}.}
Use $\nabla$ and the $\can$ maps to compute the unique map $\N^{\geq i}\nabla$ making the diagram below commute:
\[
\begin{tikzcd}
    \F^{[1,in-1]}\N^{\geq i}\Prism^{(1)}_{R/W(\bF_q)\llbracket z_0\rrbracket} \dar{\can}\rar{\N^{\geq
    i} \nabla} & \F^{[0,in-2]}\N^{\geq i-1}\Prism^{(1)}_{R/W(\bF_q)\llbracket z_0\rrbracket}\dar{\can}\\
    \F^{[1,in-1]}\Prism^{(1)}_{R/W(\bF_q)\llbracket z_0\rrbracket} \rar{\nabla} &
    \F^{[0,in-2]}\Prism^{(1)}_{R/W(\bF_q)\llbracket z_0\rrbracket}.
\end{tikzcd}
\]
Specifically, the $\can$ maps are rational isomorphisms by Lemma \ref{lem:can_isogeny}, so the top map
can be computed as $\can^{-1}\circ\nabla\circ\can$. The
resulting matrix has integral entries, but some $p$-adic precision has been lost, bounded by the valuation of the determinant of $\can$.

\paragraph{{\ttfamily Step 9: compute $\can-\varphi$}.}
Compute a matrix for $\can$ by reducing each of the basis elements $\widetilde{d}^{i-\sum p^ue_u} z_0^k \prod f_u^{e_u}$ in $\Prismhat^{(1)}_{R/W(k)\llbracket z_0\rrbracket}$ (replacing $\widetilde{d}$ to $E(z_0)$). Compute a matrix for $\varphi$ by reducing
\[
  \varphi_i\left(\widetilde{d}^{i-\sum p^ue_u} z_0^k \prod f_u^{e_u}\right) = z_0^{pk} \prod \varphi_{p^u}(f_u)^{e_u},
\]
using the values for $\varphi_{p^u}(f_u)$ computed in Step 3. Obtain a matrix for
\[
  \can-\varphi\colon \F^{[1,in-1]}\N^{\geq i}\Prismhat^{(1)}_{R/W(\bF_q)\llbracket z_0\rrbracket} \to 
  \F^{[1,in-1]}\Prismhat^{(1)}_{R/W(\bF_q)\llbracket z_0\rrbracket}
\]
as the difference.

\paragraph{{\ttfamily Step 10: compute $\can-\varphi^\nabla$}.}
Using that $\N^{\geq i}\nabla$ is a rational isomorphism, compute the unique map $\can-\varphi^\nabla$ such that
\[
    \begin{tikzcd}
      \F^{[1,in-1]}\N^{\geq i}\Prism^{(1)}_{R/W(\bF_q)\llbracket z_0\rrbracket}\{i\} \dar{\can-\varphi}\rar{\N^{\geq
        i} \nabla} & \F^{[1,in-1]}\N^{\geq i}\Prism^{(1),\nabla}_{R/W(\bF_q)\llbracket
        z\rrbracket}\{i\}\dar{\can-\varphi^\nabla}\\
        \F^{[1,in-1]}\Prism^{(1)}_{R/W(\bF_q)\llbracket z_0\rrbracket}\{i\} \rar{\nabla} &
        \F^{[1,in-1]}\Prism^{(1),\nabla}_{R/W(\bF_q)\llbracket z_0\rrbracket}\{i\}
    \end{tikzcd}
\]
commutes, as $\nabla\circ (\can-\varphi)\circ(\N^{\geq i}\nabla)^{-1}$.
There is a precision loss bounded by the valuation of the determinant of $\N^{\geq i}\nabla$.

\paragraph{{\ttfamily Return}:} $\syn^0=\begin{pmatrix}\can-\varphi\\\N^{\geq i}\nabla\end{pmatrix}$ and
    $\syn^1=\begin{pmatrix}\nabla,&-\can+\varphi^\nabla\end{pmatrix}$ at the target $p$-adic precision.

\subsection{The $p$-adic precision}\label{sec:precision}

In order to produce correct results, we give an {\em a priori} way of
specifying the $p$-adic precision for the algorithm by tracking bounds on the precision
loss of the reductions explained in Remark~\ref{rem:complete_the_squares}.

\subsubsection{The target $p$-adic precision}\label{sec:targetprecision}

Let $R=\Oscr_K/\varpi^n$.
We have found that $\bZ_p(i)(R)\we\F^{[1,in-1]}\bZ_p(i)(R)$, which is the total complex of
\[
    \begin{tikzcd}
      \F^{[1,in-1]} \N^{\geq i} \Prismhat^{(1)}_{R/W(k)\llbracket z_0\rrbracket}\{i\}\rar{\N^{\geq i}\nabla}\dar{\can-\varphi} & \F^{[1,in-1]}\N^{\geq i}\Prismhat^{(1),\nabla}_{R/W(k)\llbracket z_0\rrbracket}\{i\} \dar{\can - \varphi^\nabla}\\
      \F^{[1,in-1]}\Prismhat^{(1)}_{R/W(k)\llbracket z_0\rrbracket}\{i\}\rar{\nabla} & \F^{[1,in-1]}\Prismhat^{(1),\nabla}_{R/W(k)\llbracket z_0\rrbracket}\{i\}.
    \end{tikzcd}
\]
By  Lemma \ref{lem:isogeny} (or by crystalline degeneration and \cite{sulyma}), the horizontal maps are injective and their cokernels have orders
\[
  \prod_{j=1}^{in-1} q^{\epsilon(i,j)}q^{v_p\{j,n\}} \text{ and }
  \prod_{j=1}^{in-1} q^{v_p\{j,n\}}.
\]
The exponents of these groups are bounded above by the same expressions with $p$ replacing $q$.

We will use the following lemma throughout our analysis of $p$-adic precision.

\begin{lemma}\label{lem:elementarydivisorprecision}
    Let $C^\bullet$ be a cochain complex of finitely generated free $\bZ_p$-modules with
    differentials $d^s\colon C^s\rightarrow C^{s+1}$. If the cohomology of $C^\bullet$ is
    annihilated by $p^N$, then the cohomology groups can be computed from knowledge of the differentials 
    up to precision $p^{N+1}$ (i.e., their reductions modulo $p^{N+1}$). If additionally the ranks
    of each differential is known (for example if $C^\bullet$ is bounded), then the cohomology
    groups can be computed from knowledge of the differentials up to precision $p^N$.
\end{lemma}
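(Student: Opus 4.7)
The plan is to reduce computing $H^s(C^\bullet)$ to computing the Smith normal form (SNF) of each differential $d^s$ over $\bZ_p$, and to observe that this SNF can be recovered from the differentials modulo $p^{N+1}$ (or modulo $p^N$ when ranks are known). First I would establish the key structural observation: under the hypothesis, every elementary divisor of every $d^s$ has $p$-adic valuation at most $N$. Indeed, the short exact sequence
$$0 \to H^{s+1} \to \coker(d^s) \to \im(d^{s+1}) \to 0$$
has third term $\im(d^{s+1}) \subseteq C^{s+2}$ a submodule of a free $\bZ_p$-module, hence free, so the sequence splits and the torsion subgroup of $\coker(d^s)$ is identified with $H^{s+1}$ (which is already torsion by the $p^N$-annihilation hypothesis). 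The torsion of $\coker(d^s)$ is precisely $\bigoplus_i \bZ_p/p^{a_i}$, where $p^{a_i}$ are the nonunit elementary divisors of $d^s$, so each $a_i \leq N$.

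Next I would argue that the Smith normal form algorithm over $\bZ_p$ consists purely of row and column operations in $\mathrm{GL}(\bZ_p)$, so it commutes with reduction modulo $p^{N+1}$: the SNF of any representative $\tilde d^s \equiv d^s \pmod{p^{N+1}}$ agrees with the true SNF modulo $p^{N+1}$. Since each nonzero elementary divisor $p^{a_i}$ satisfies $a_i \leq N < N+1$, it survives as a nonzero entry, so the rank of $d^s$ and all of its elementary divisors are correctly recovered. The cohomology is then assembled as $H^{s+1} \cong \bigoplus_{a_i \geq 1} \bZ_p/p^{a_i}$, using that $H^{s+1}$ has no free part by the torsion hypothesis (equivalently, $\mathrm{rank}(H^{s+1}) = \mathrm{rank}(C^{s+1}) - \mathrm{rank}(d^{s+1}) - \mathrm{rank}(d^s)$ vanishes).

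For the refinement to precision $p^N$: modulo $p^N$, an elementary divisor $p^{a_i}$ with $a_i = N$ collapses to zero and so becomes indistinguishable from a genuine kernel vector. If the rank $r_s$ of $d^s$ is known in advance, however, one can sort the zero columns in the reduced matrix: exactly $\mathrm{rank}(C^s) - r_s$ correspond to kernel vectors, while the remaining zero columns are those with elementary divisor equal to $p^N$. Combined with the visible elementary divisors of valuation strictly less than $N$, this still recovers the full SNF. In the bounded case, the torsion hypothesis forces $\mathrm{rank}(d^s) = \mathrm{rank}(C^s) - \mathrm{rank}(d^{s-1})$, so the ranks can be computed recursively. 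The main obstacle is not conceptual but rather precision bookkeeping: one must check carefully that the SNF algorithm preserves precision exactly as claimed and incurs no hidden loss through intermediate unimodular operations. This is elementary, since every operation lies in $\mathrm{GL}(\bZ_p)$, but deserves explicit treatment to justify the precision bounds used in Section~\ref{sec:implemented}.
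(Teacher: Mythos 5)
Your proposal is correct and follows essentially the same route as the paper's own proof: compute the Smith normal form of each differential over $\bZ_p$, observe that the torsion hypothesis bounds every nonunit elementary divisor by $p^N$ so the SNF survives reduction modulo $p^{N+1}$, and for the $p^N$-precision refinement use the known ranks to separate genuine zero elementary divisors from those equal to $p^N$. Your treatment is slightly more detailed in two spots the paper leaves implicit — the splitting of $0\to H^{s+1}\to\coker(d^s)\to\im(d^{s+1})\to 0$ identifying $H^{s+1}$ with the torsion of the cokernel, and the recursive rank computation in the bounded case — but the argument and its ingredients are the same.
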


\begin{proof}
    The cohomology of $C^\bullet$ is in particular torsion. This means that each cohomology group
    $\H^s(C^\bullet)$ can be computed by finding the elementary divisors of $d^{s-1}\colon
    C^{s-1}\rightarrow C^s$. Indeed, the Smith normal form of $d^{s-1}$ is a diagonal matrix with
    diagonal entries $[1,\ldots,1,p^{a_1},\ldots,p^{a_M},0,\ldots,0]$ (up to units). The cohomology in this case
    is $\bigoplus_{j=1}^M\bZ/p^{a_j}$. By hypothesis, each $a_j\leq N$. Thus, the diagonal matrix can
    be computed by computing the Smith normal form in $\bZ/p^{N+1}$.\footnote{For the Smith normal
    form in principal ideal rings (as opposed to domains), see for
    example~\cite[Chap.~15]{brown-matrices}.}

    For the second statement, note that if we compute the elementary divisors in $\bZ/p^N$, then
    some elementary divisors in $[1,\ldots,1,p^{a_1},\ldots,p^{a_M},0\ldots,0]$ might be
    indistinguishable from zero, namely those where $a_j=N$. However, if we already know the correct
    number of zeros in the list of elementary divisors, we can still find the number of elementary
    divisors with $a_j=N$ by taking the difference from the number produced by running the Smith
    normal form algorithm modulo $p^N$ and the given number.
\end{proof}

\begin{lemma}[Target precision]\label{lem:targetprecision}
    If the matrices $\syn^0,\syn^1$ in the cochain complex
    $\bZ_p(i)(\Oscr_K/\varpi^n)^\bullet$ are computed
    to $p$-adic precision 
    \[
    \sum_{j=1}^{in-1}\epsilon(i,j)+v_p\{j,n\},
    \]
    then the cohomology can be computed by the procedure described above.
\end{lemma}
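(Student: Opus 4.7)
The plan is to use the long exact sequence arising from the total-complex description in Corollary~\ref{cor:syntomicsquare}, bound the $\bZ_p$-module exponent of the cohomology by a filtration argument on $\coker(\N^{\geq i}\nabla)$, and then invoke Lemma~\ref{lem:elementarydivisorprecision}.

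First, I would use Corollary~\ref{cor:syntomicsquare} to identify $\bZ_p(i)(\Oscr_K/\varpi^n)^\bullet$ with the total complex of the syntomic square, whose horizontal maps $\N^{\geq i}\nabla$ and $\nabla$ are injective on $\F^{[1,in-1]}$ by Lemma~\ref{lem:isogeny}. Viewing this total complex as the fiber of the vertical map between the two horizontal two-term complexes, and using Corollary~\ref{cor:zigzag} to kill $\H^0(\bZ_p(i)(R))$, the associated long exact sequence collapses to
\[
0 \to \H^1(\bZ_p(i)(R)) \to C_0 \to C_1 \to \H^2(\bZ_p(i)(R)) \to 0,
\]
where $C_0 = \coker(\N^{\geq i}\nabla)$ and $C_1 = \coker(\nabla)$. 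Consequently, the exponent of $\H^1 \oplus \H^2$ is at most that of $C_0$, since $\epsilon(i,j)\geq 0$ guarantees $\exp(C_1) \leq \exp(C_0)$.

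Next, I would bound $\exp(C_0)$ via the induced $\F$-filtration. By the description underlying Lemma~\ref{lem:isogeny}, the map $\N^{\geq i}\nabla$ acts on each $\F$-graded piece (a rank-one free $W(\bF_q)$-module) by a unit times $p^{\epsilon(i,j)+v_p\{j,n\}}$; hence $\gr^j_\F C_0 \iso W(\bF_q)/p^{\epsilon(i,j)+v_p\{j,n\}}$, which has $\bZ_p$-exponent $p^{\epsilon(i,j)+v_p\{j,n\}}$. For any short exact sequence $0\to M_1\to M_2\to M_3\to 0$ of finite $\bZ_p$-modules, $\exp(M_2)$ divides $\exp(M_1)\cdot\exp(M_3)$, so an induction on the filtration length yields
\[
\exp(C_0)\mid p^{\sum_{j=1}^{in-1}(\epsilon(i,j)+v_p\{j,n\})}.
\]

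Finally, since $\bZ_p(i)(R)^\bullet$ is a bounded complex of finite-rank free $\bZ_p$-modules (so the ranks of $\syn^0$ and $\syn^1$ are determined a priori by finiteness of $\H^1$ and $\H^2$), Lemma~\ref{lem:elementarydivisorprecision} in its ``known ranks'' form reduces the task to knowing the differentials modulo $p^N$ for any $N$ at least the exponent of the cohomology. The choice $N=\sum_{j=1}^{in-1}(\epsilon(i,j)+v_p\{j,n\})$ satisfies this by the previous step, completing the argument. The main technical input, and the step where I expect the only substantive work, is the exponent bound on $C_0$; this relies crucially on the explicit description of $\gr^j_\F\,\N^{\geq i}\nabla$ as multiplication by a unit multiple of $p^{\epsilon(i,j)+v_p\{j,n\}}$ from Lemma~\ref{lem:isogeny}.
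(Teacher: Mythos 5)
Your proof is correct and follows essentially the same approach as the paper: identify $\bZ_p(i)(R)^\bullet$ with the total complex via Corollary~\ref{cor:syntomicsquare}, use Lemma~\ref{lem:isogeny} to bound the $p$-exponent of the cohomology by the exponent of the cokernel of the horizontal maps, and invoke Lemma~\ref{lem:elementarydivisorprecision} in its known-ranks form. The paper phrases the exponent bound in terms of the $W(\bF_q)$-module length of the cokernels (a length-$M$ module has $\bZ_p$-exponent at most $p^M$), while you unwind the same bound via the $\F$-filtration term by term, but these are the same estimate. One minor nitpick: the intermediate assertion that ``$\epsilon(i,j)\geq 0$ guarantees $\exp(C_1)\leq\exp(C_0)$'' does not by itself follow from comparing associated graded pieces, since exponents of filtered modules depend on extensions; however, your subsequent filtration bound $\exp(C_0)\mid p^{\sum(\epsilon(i,j)+v_p\{j,n\})}$ and the analogous (smaller) bound on $\exp(C_1)$ are what you actually use, so the argument goes through and this phrasing is harmless.
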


\begin{proof}
    As discussed above, the $p$-exponents of the $\F^{[1,in-1]} \N^{\geq i} \Prismhat_R\{i\}$ and
    $\F^{[1,in-1]}\Prismhat_R\{i\}$ terms are bounded by $\sum \epsilon(i,j) + v_p\{j,n\}$. Moreover,
    the ranks of $d^0$ and $d^1$ are both $f(in-1)$. So, both parts of
    Lemma~\ref{lem:elementarydivisorprecision} apply.
\end{proof}

\begin{remark}\label{rem:movingtarget}
    Lemma~\ref{lem:elementarydivisorprecision} guarantees that if a $p$-adic cochain complex is known
    up to precision $p^N$, then as long as the cohomology computed using any given representatives of the maps
    happens be torsion with $p$-exponent strictly smaller than $N$, then the cohomology computation
    is correct. This means that instead
    of running the algorithm with the target precision derived from an {\em a priori} upper bound for the
    $p$-exponent as in Lemma \ref{lem:targetprecision}, we may instead run it with an arbitrary
    target precision. As long as the computed cohomology groups have strictly smaller exponent than
    this target precision, they are correct, otherwise, we have to restart at a higher precision.
    In practice, certain specific steps in the algorithm in this approach might result in
    division-by-zero errors, which would also have to be caught.
\end{remark}

It follows syntomic square should be computed to $p$-adic precision
$$\sum_{j=1}^{in-1}\epsilon(i,j)+v_p\{j,n\},$$ which is bounded above for
example by $i(n-1)+v_p((in-1)!)$, but is typically smaller, especially as $i$
grows with respect to $n$.

\begin{lemma}[Target precision in the even range]\label{lem:targetprecisioneven}
    If the matrix $\syn^0$ in the cochain complex $\bZ_p(i)(\Oscr_K/\varpi^n)^\bullet$ is computed to
    $p$-adic precision $\lfloor\tfrac{fi(n-1)}{2}\rfloor$ and
    $\H^2(\bZ_p(i)(\Oscr_K/\varpi^n)^\bullet)=0$, then $\H^1(\bZ_p(i)(\Oscr_K/\varpi^n)^\bullet)$ is
    computable.
\end{lemma}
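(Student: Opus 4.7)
The plan is to combine Angeltveit's quotient (Proposition~\ref{prop:angeltveit}) with an elementary bound on the second-largest elementary divisor of $\syn^0$. The point is that knowing $|\H^1|$ in advance allows Smith normal form information modulo a small power of $p$ to suffice, even though the $p$-exponent of $\H^1$ could in principle be as large as $fi(n-1)$.

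First, the hypothesis $\H^2 = 0$ means $\syn^1$ is surjective, so the short exact sequence $0 \to \ker(\syn^1) \to \bZ_p^{2f(in-1)} \to \bZ_p^{f(in-1)} \to 0$ splits, exhibiting $\ker(\syn^1)$ as a rank-$f(in-1)$ direct summand of $\bZ_p^{2f(in-1)}$. Combined with injectivity of $\syn^0$ (Corollary~\ref{cor:zigzag}), the induced sequence $0 \to \H^1 \to \coker(\syn^0) \to \bZ_p^{f(in-1)} \to 0$ also splits, giving $\coker(\syn^0) \cong \H^1 \oplus \bZ_p^{f(in-1)}$; in particular, $\H^1$ is the torsion subgroup of $\coker(\syn^0)$. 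By Proposition~\ref{prop:angeltveit}, $|\H^1| = q^{i(n-1)} = p^{fi(n-1)}$.

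Next, let $p^{c_1}, \ldots, p^{c_r}$ denote the elementary divisors of $\syn^0$ with positive $p$-valuation, arranged so that $1 \leq c_1 \leq \cdots \leq c_r$ and $r \leq f(in-1)$. Then $\sum_j c_j = fi(n-1)$, and consequently
\[
    2 c_{r-1} \leq c_{r-1} + c_r \leq \sum_j c_j = fi(n-1),
\]
so $c_{r-1} \leq \lfloor fi(n-1)/2 \rfloor =: N$. In particular, at most one $c_j$ exceeds $N$.

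Finally, I would read off the Smith normal form of $\syn^0$ over $\bZ/p^N$. This recovers the number $f(in-1) - r$ of unit entries, the exact values of all $c_j < N$, and the number $u$ of indices $j$ with $c_j \geq N$. The bound $c_{r-1} \leq N$ forces the first $u-1$ unknown entries $c_{r-u+1}, \ldots, c_{r-1}$ to equal $N$ exactly, and then the Angeltveit sum constraint determines $c_r = fi(n-1) - \sum_{j<r} c_j$. This reconstructs $\H^1$ up to isomorphism. The only subtle point is the case $u \geq 2$, where several $c_j$ are indistinguishable modulo $p^N$; the inequality from the previous paragraph is precisely what eliminates the apparent ambiguity by pinning each of these unknowns, other than the very last, to the value $N$.
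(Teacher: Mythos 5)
Your proof is correct and follows the same route as the paper's: combine Angeltveit's quotient (giving $|\H^1| = p^{fi(n-1)}$ once $\H^2 = 0$) with the observation that the second-largest elementary divisor of $\syn^0$ has valuation at most $\lfloor fi(n-1)/2\rfloor$, then recover the last one from the known product. Your opening paragraph makes explicit the splitting $\coker(\syn^0) \cong \H^1 \oplus \bZ_p^{f(in-1)}$, which the paper leaves implicit by appealing to the setup of Lemma~\ref{lem:elementarydivisorprecision}, but the substance is the same.
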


\begin{proof}
    Under the vanishing hypothesis, the order of $\H^1(\bZ_p(i)(\Oscr_K/\varpi^n)^\bullet)$ is
    $q^{i(n-1)}$ by Proposition~\ref{prop:angeltveit}. The elementary divisors of $\syn^0$ are
    $\alpha_1,\ldots,\alpha_{f(in-1)},0,\ldots,0$, for some $p$-powers $\alpha_j$, where there are $f(in-1)$ copies of zero.
    Since $\prod_{j=1}^{f(in-1)}\alpha_j=q^{i(n-1)}=p^{fi(n-1)}$, it is enough to compute all but the last non-zero
    elementary divisor. Let $\nu_j=v_p(\alpha_j)$. Thus, $0\leq\nu_1\leq\cdots\leq\nu_{f(in-1)-1}\leq\nu_{f(in-1)}$ is
    an increasing sequence of integers whose sum is $fi(n-1)$. The worst case is when
    $\nu_{f(in-1)-1}$ is as large as possible, which is when
    $\nu_{f(in-1)-1}=\lfloor\tfrac{fi(n-1)}{2}\rfloor$ (and
    $\nu_{f(in-1)}=\lceil\tfrac{fi(n-1)}{2}\rceil$). These can be computed to the necessary
    precision by using the Smith normal form as in the proof of
    Lemma~\ref{lem:elementarydivisorprecision}.
\end{proof}

\subsubsection{Precision loss from $\delta$}

Applying $\delta$ results in a loss of a single digit of $p$-adic precision, as we compute the $\delta$-structure from $\delta(x)=\tfrac{\varphi(x)-x^p}{p}$. Thus, each time $\delta$ is
applied in the construction of the prismatic envelopes, we lose a digit of
precision. This causes $\lambda_u$ in Step 3 above to be known with a precision $u+1$ less than the initial precision. Similarly, it causes the $R'_u$ appearing in the relations for $g_u^p$ to be known to precision $u+2$ less than the initial precision.

\begin{lemma}[Precision loss from $\delta$]\label{lem:precisionlossfromdelta}
    \begin{enumerate}
        \item[{\em (a)}] Working in $p$-adic power series with $p$-adic precision $N$,
            the units $\lambda_u$ are computed with $p$-adic precision $N-u-1$, and the $R_u'$ are computable with $p$-adic precision $N-u-2$.
        \item[{\em (b)}]
          The largest index $u$ appearing in the required relations and values of $\varphi_{p^u}(f_u)$ and $\varphi(f_u)$ is $u=M_f-1$. The largest index $u$ appearing in the required relations and values of $\varphi_{p^u}(g_u)$ and $\varphi(g_u)$ is $u=M_g-1$.
        \item[{\em (c)}] All necessary data in Step 3 is computed to $p$-adic precision
            $N-\lfloor\log_p{(in-1)}\rfloor-1=N-M_g-1$.
    \end{enumerate}
\end{lemma}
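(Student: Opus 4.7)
\medskip
\noindent
\textbf{Proof plan.} The claim splits into three essentially bookkeeping statements, each established by tracking how many times $\delta$ is applied during the construction of the data in Step 3 of the algorithm. My plan is to prove (a) by a joint induction on $u$, prove (b) by inspection of the bases from Proposition~\ref{prop:f_filtered_statement} together with truncation in the range $\F^{[1,in-1]}$, and then obtain (c) by combining (a) and (b) at the worst index.

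For (a), I would start from the observation that one application of $\delta(x) = (\varphi(x) - x^p)/p$ loses exactly one digit of $p$-adic precision, since $\varphi(x)-x^p$ lies in $p$-times the ring. The base case $\lambda_0 = -1/\delta(d)$ thus has precision $N-1$, and division by a unit is precision-preserving. For the inductive step of $\lambda_{u+1} = \lambda_u^p/(1-\delta(d^{p^{u+1}}\lambda_u))$, the numerator has the same precision as $\lambda_u$, while the denominator is a unit of precision $\min(N-u-1,N-1)-1 = N-u-2$; hence $\lambda_{u+1}$ has precision $N-u-2 = N-(u+1)-1$. For $R'_u$, the base case $R'_0$ comes from $\delta(r)/\delta(d)$ (two applications of $\delta$ in the general template of Construction~\ref{cons:Ru_recursion}), giving precision at least $N-2$. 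The recursion
\[
R'_{u+1} = \frac{1}{1-\delta(d^{p^{u+1}}\lambda_u)}\bigl(\delta(R'_u) + w_1(\lambda_u f_{u+1},R'_u)\bigr)
\]
involves one further $\delta$ applied to $R'_u$ (using $\delta(g_u) = (1+\delta(d^{p^u})\lambda_u) g_{u+1} + \delta(d^{p^u})R'_u$, whose precision is controlled by the induction hypothesis), plus polynomial manipulations that do not decrease precision below that of the inputs. This yields $R'_{u+1}$ with precision $N-(u+1)-2$.

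For (b), I would look at the explicit monomial basis of Proposition~\ref{prop:f_filtered_statement}. A basis element of $\F^{[1,in-1]}\N^{\geq i}\Prismhat^{(1)}_{R/W(k)\llbracket z_0\rrbracket}$ has the form $\widetilde d^{\,j}z_0^k\prod f_u^{e_u}$ with $k + n\sum p^u e_u \leq in-1$; since $k\geq 0$, any $u$ with $e_u \geq 1$ satisfies $np^u \leq in-1$, hence $p^u \leq i-1$, hence $u \leq M_f$. The same count, without the factor $n$ (as $g_u$ has weight $p^u$ rather than $np^u$), gives $u \leq M_g$ in the two-variable Breuil--Kisin envelope. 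The relations $f_u^p = (-p+\lambda_u d^{p^{u+1}})f_{u+1}+\widetilde d^{\,p^{u+1}}R'_u$ and the values $\varphi_{p^u}(f_u) = \lambda_u f_{u+1} + R'_u$ only feed information back into our range through the term $f_{u+1}$; if $u+1 > M_f$, then $f_{u+1}$ is outside the range $\F^{[1,in-1]}$ and these terms truncate to zero, so the largest index for which we need $\lambda_u$ and the corresponding relation is $u = M_f-1$, and similarly for the $g$-side we only need $u = M_g-1$.

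Finally, (c) follows immediately by inserting the bounds from (b) into (a): the largest index at which $R'_u$ is ever evaluated is $u = M_g-1$, so the worst precision loss is $(M_g-1)+2 = M_g+1$ digits, producing the bound $N-M_g-1$; the $\lambda_u$ terms contribute at most $M_f \leq M_g$ digits of loss, which is subsumed. The main (mild) obstacle is bookkeeping: the precision of $R'_u$ depends on the precision of $\delta(g_u)$, which itself depends on $\lambda_u$ and $R'_u$, so the induction in (a) must be run simultaneously for $\lambda_\bullet$ and $R'_\bullet$ and one has to check that no term inside the recursion ever drops faster than claimed, which is straightforward once the formulas are unwound.
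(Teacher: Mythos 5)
Your part (a) follows the paper's (terse) suggestion correctly: the paper just says it "follows from the inductive definition" in Construction~\ref{cons:Ru_recursion}, and you unfold the recursions for $\lambda_u$ and $R'_u$ while tracking one digit of loss per application of $\delta$. Minor quibbles there (e.g.\ $\delta(r)/\delta(d)$ is two \emph{parallel}, not nested, applications of $\delta$, so $R'_0$ actually has precision $N-1\geq N-2$), but since the lemma's bounds are lower bounds the conclusion holds, and the mutual-recursion bookkeeping you flag at the end is the right concern.

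Part (b) is where there is a genuine gap. Your reasoning — that $\varphi_{p^u}(f_u)=\lambda_u f_{u+1}+R'_u$ and the $f_u^p$-relation "only feed information back into the range through $f_{u+1}$", so truncating $f_{u+1}$ when $u+1>M_f$ kills the whole expression — is correct on the $f$-side precisely because there $R'_u=0$ (Remark~\ref{rem:vanishingofRu_prime}, since $\delta(z_0^n)=0$). But on the $g$-side $R'_u\neq 0$ and has $\F$-weight strictly less than $p^{u+1}$, so truncating $g_{M_g+1}$ inside $\varphi_{p^{M_g}}(g_{M_g})=\lambda_{M_g}g_{M_g+1}+R'_{M_g}$ leaves a potentially nonzero $R'_{M_g}$ in range. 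Your "similarly for the $g$-side" therefore does not establish the claim. The argument the paper is actually using (the "vanish for weight reasons" clause) is different: the divided Frobenius $\varphi_{p^u}$ sends $\F^{\geq j}$ to $\F^{\geq pj}$, so \emph{any} occurrence of $\varphi_{p^{M_g}}(g_{M_g})$ or $\varphi(g_{M_g})$ in the evaluation of $\varphi$ (or in the reduce step for a $g_{M_g}^p$) produces an output of $\F$-weight $\geq p^{M_g+1}>in-1$, which is therefore killed by the truncation to $\F^{[1,in-1]}$ regardless of what the expression for $\varphi_{p^{M_g}}(g_{M_g})$ looks like. You need this output-weight argument rather than the term-by-term truncation of $\lambda_u g_{u+1}+R'_u$.

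Part (c) is a correct combination of (a) and (b) once (b) is fixed.
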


\begin{proof}
    Part (a) follows from the inductive definition of the $R_u'$ from
    Construction~\ref{cons:Ru_recursion}. The others are clear from the fact that in $\F^{[1,in-1]}$ only $f_0,\ldots, f_{M_f}$ and $g_0,\ldots,g_{M_g}$ are involved, and $f_{M_f}^p$ and $\varphi_{p^{M_f}}(f_{M_f})$ etc. vanish for weight reasons already.
\end{proof}

\subsubsection{Precision loss from passage from $\Oscr_K$ to
$\Oscr_K/\varpi^n$}\label{sec:precisionlossfromquotient}

The map
$\F^{[1,in-1]}\Prismhat^{(1)}_{\Oscr_K/W(\bF_q)\llbracket z_0\rrbracket}\{i\}\xrightarrow{\nabla}\F^{[1,in-1]}\Prismhat^{(1),\nabla}_{\Oscr_K/W(\bF_q)\llbracket z_0\rrbracket}\{i\}$
can be computed exactly (i.e., without further loss of precision). It is also straightforward to compute the reduction maps
$\F^{[1,in-1]}\Prismhat^{(1)}_{\Oscr_K/W(\bF_q)\llbracket z_0\rrbracket}\{i\}\rightarrow\F^{[1,in-1]}\Prismhat^{(1)}_{(\Oscr_K/\varpi^n)/W(\bF_q)\llbracket z_0\rrbracket}\{i\}$
and
$\F^{[1,in-1]}\Prismhat^{(1),\nabla}_{\Oscr_K/W(\bF_q)\llbracket z_0\rrbracket}\{i\}\rightarrow\F^{[1,in-1]}\Prismhat^{(1),\nabla}_{(\Oscr_K/\varpi^n)/W(\bF_q)\llbracket z_0\rrbracket}\{i\}$
without further loss of precision.

\begin{lemma}[Precision loss from $\nabla$ on $\cO_K/\varpi^n$]\label{lem:precisionlossfromquotient}
    Filling in the square
    $$\xymatrix{
      \F^{[1,in-1]}\Prismhat^{(1)}_{\Oscr_K/W(\bF_q)\llbracket z_0\rrbracket}\ar[r]^\nabla\{i\}\ar[d]^\red&\F^{[1,in-1]}\Prismhat^{(1),\nabla}_{\Oscr_K/W(\bF_q)\llbracket z_0\rrbracket}\{i\}\ar[d]^{\red^\nabla}\\
        \F^{[1,in-1]}\Prismhat^{(1)}_{(\Oscr_K/\varpi^n)/W(\bF_q)\llbracket z_0\rrbracket}\{i\}\ar@{.>}[r]^\nabla&\F^{[1,in-1]}\Prismhat^{(1),\nabla}_{(\Oscr_K/\varpi^n)/W(\bF_q)\llbracket z_0\rrbracket}\{i\}
    }$$ involves a precision loss of at most $\sum_{s=p}^{i-1}nv_p(s!)$.
\end{lemma}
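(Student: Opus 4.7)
The plan is to identify the precise source of precision loss in the commutative square and then bound it using the isogeny estimate of Lemma~\ref{lem:reduction_isogeny}. Since the upper horizontal map $\nabla$ on $\Oscr_K$ was computed without further precision loss in Step~5 of the algorithm, and the two vertical reduction maps are computed exactly (their matrix entries are reductions of integral polynomials), the only source of precision loss in computing the dotted bottom arrow as $(\red^\nabla)^{-1}\circ\nabla\circ\red$ is the inversion of $\red^\nabla$. Thus the core of the argument is to bound the $p$-adic length of $\coker(\red^\nabla)$ and to argue that the precision loss when inverting a $W(\bF_q)$-linear rational isomorphism between finite free modules of equal rank is controlled by this quantity.

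For the precision loss from inversion, I would use the Smith normal form: if $\red^\nabla$ has elementary divisors $p^{a_1},\ldots,p^{a_d}$ as a $W(\bF_q)$-linear map, then inverting it after reduction modulo $p^N$ produces a matrix correct modulo $p^{N-\max_i a_i}$, and $\max_i a_i\leq\sum_i a_i$ equals the $W(\bF_q)$-length of $\coker(\red^\nabla)$. (Since $|W(\bF_q)/p|=q=p^f$, the $W(\bF_q)$-length equals $\log_q|\coker(\red^\nabla)|$.)

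Next, I would translate the $\nabla$-term into the ordinary prismatic term via the indexing shift built into Definition~\ref{def:nabla}: concretely,
\[
\F^{[1,in-1]}\Prismhat^{(1),\nabla}_{R/W(\bF_q)\llbracket z_0\rrbracket}\{i\}
= \F^{[0,in-2]}\Prismhat^{(1)}_{R/W(\bF_q)\llbracket z_0\rrbracket}\{i-1\},
\]
and likewise on $\Oscr_K$, so $\red^\nabla$ is identified with the reduction map on $\F^{[0,in-2]}\Prismhat^{(1)}_{\Oscr_K/W(\bF_q)\llbracket z_0\rrbracket}\{i-1\}$. Applying Lemma~\ref{lem:reduction_isogeny} on this range gives
\[
|\coker(\red^\nabla)| = \prod_{j=0}^{in-2} q^{v_p(\lfloor j/n\rfloor!)},
\]
so the $W(\bF_q)$-length of the cokernel is $\sum_{j=0}^{in-2} v_p(\lfloor j/n\rfloor!)$.

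The final step is a bookkeeping calculation. Grouping $j\in[0,in-2]$ according to the value $s=\lfloor j/n\rfloor$, we obtain $n$ values of $j$ for each $s\in\{0,\ldots,i-2\}$ and $n-1$ values for $s=i-1$, whence
\[
\sum_{j=0}^{in-2} v_p(\lfloor j/n\rfloor!)
= \sum_{s=0}^{i-2} n\,v_p(s!) + (n-1)\,v_p((i-1)!)
\leq \sum_{s=0}^{i-1} n\,v_p(s!)
= \sum_{s=p}^{i-1} n\,v_p(s!),
\]
using $v_p(s!)=0$ for $s<p$ in the last equality. Combined with the bound from the first paragraph, this yields the claimed precision loss. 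No step presents a real obstacle; the only mildly delicate point is the observation that what controls the precision loss is the maximal $W(\bF_q)$-elementary divisor rather than the full $\bZ_p$-length of the cokernel, which is why the factor of $f$ appearing in $q=p^f$ does not show up in the final bound.
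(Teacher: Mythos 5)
Your proposal inverts the wrong vertical map. To fill in the dotted bottom arrow from the known top arrow, the commutativity $\nabla_{\mathrm{bot}}\circ\red = \red^\nabla\circ\nabla_{\mathrm{top}}$ forces $\nabla_{\mathrm{bot}} = \red^\nabla\circ\nabla_{\mathrm{top}}\circ\red^{-1}$, so the source of precision loss is the inversion of $\red$, not of $\red^\nabla$. The formula $(\red^\nabla)^{-1}\circ\nabla\circ\red$ you quote does not even compose: if $\nabla$ means the top map, $\nabla\circ\red$ is ill-typed, and if it means the bottom map the formula recomputes the already-known top map. (Step~7 of Section~\ref{sec:implemented} contains the same typo, which may be where this came from, but the paper's proof of this lemma explicitly inverts the \emph{left} vertical $\red$.)

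The quantity that actually controls the loss is therefore $\coker(\red)$ on $\F^{[1,in-1]}$, of order $\prod_{j=1}^{in-1}q^{v_p(\lfloor j/n\rfloor!)}$ by Lemma~\ref{lem:reduction_isogeny}; binning $j\in[1,in-1]$ by $s=\lfloor j/n\rfloor$ gives $n-1$ indices with $s=0$ and exactly $n$ indices for each $s=1,\ldots,i-1$, so the $W(\bF_q)$-length is \emph{exactly} $\sum_{s=p}^{i-1}nv_p(s!)$, with no inequality needed. Your computation of $\coker(\red^\nabla)$ via the shift to $\F^{[0,in-2]}\Prismhat^{(1)}\{i-1\}$ is correct and happens to give a slightly smaller number (smaller by $v_p((i-1)!)$, since the bin $s=i-1$ there has only $n-1$ indices), so your final numerical bound is still within the claimed one — but you bounded a quantity that is not responsible for the precision loss, so the argument as written does not prove the statement. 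The rest of your reasoning (Smith normal form, the observation that the relevant exponent is a $W(\bF_q)$-module exponent rather than the $\bZ_p$-length, the bin-counting) is correct and agrees with the paper; the fix is simply to replace $\red^\nabla$ by $\red$ and compute $\coker(\red)$ on $[1,in-1]$, where the bound becomes an equality.
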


\begin{proof}
By Lemma \ref{lem:reduction_isogeny}, the left vertical map $\red$ is rationally invertible. Inverting it results in a matrix with
    entries in $\bQ_p$. By considering the Smith normal form of $\red$, it is clear that its inverse has $p$-adic valuation bounded below by $-\sigma$, where $\sigma$ is the valuation of the largest elementary divisor of $\red$. This agrees with the exponent of its cokernel. Since the cokernel has order
    \[
      \prod_{j=1}^{in-1} q^{v_p(\lfloor \frac{j}{n}\rfloor!)} 
    \]
    and is a finite $W(\bF_q)$-module, its exponent is bounded by 
    \[
      \sum_{j=1}^{in-1} v_p(\lfloor \tfrac{j}{n}\rfloor!) =
      \sum_{j=pn}^{in-1} v_p(\lfloor \tfrac{j}{n}\rfloor!) = \sum_{s=p}^{i-1} nv_p(s!).
    \]
\end{proof}

\begin{remark}
    In particular, there is no precision loss in this step up to weight $i=p$.
\end{remark}

\subsubsection{Precision loss from computing the Nygaard lift of
$\nabla$}\label{sec:precisionlossfromnygaardlift}

\begin{lemma}[Precision loss from $\N^{\geq i}\nabla$]\label{lem:precisionlossfromnygaardlift}
    Filling in the square
    $$\xymatrix{
        \F^{[1,in-1]}\N^{\geq
        i}\Prismhat_{R/W(\bF_q)\llbracket z_0\rrbracket}^{(1)}\{i\}\ar@{.>}[r]^{\N^{\geq i}\nabla}\ar[d]^\can&\F^{[1,in-1]}\N^{\geq
        i}\Prismhat_{R/W(\bF_q)\llbracket z_0\rrbracket}^{(1),\nabla}\{i\}\ar[d]^\can\\
        \F^{[1,in-1]}\Prismhat^{(1)}_{R/W(\bF_q)\llbracket z_0\rrbracket}\{i\}\ar[r]^\nabla&\F^{[1,in-1]}\Prismhat^{(1),\nabla}_{R/W(\bF_q)\llbracket z_0\rrbracket}\{i\}
    }$$
    involves a precision loss of at most $n\binom{i}{2}$.
\end{lemma}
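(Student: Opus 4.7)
The plan is to reduce bounding the precision loss to bounding the $\bZ_p$-exponent of the cokernel of the right-vertical $\can$ in the square, which we will then evaluate via the $\F$-associated graded. First I would recall the general principle: if $A$ is a $\bZ_p$-linear rational isomorphism of finite free $\bZ_p$-modules whose largest elementary divisor is $p^a$ (equivalently, $p^a$ is the $\bZ_p$-exponent of $\coker A$), then computing $A^{-1}x$ from $x$ known to precision $p^N$ yields an answer known to precision at least $p^{N-a}$. Since the algorithm produces $\N^{\geq i}\nabla$ as $\can^{-1}\circ\nabla\circ\can$, inverting the right-vertical $\can$, the precision loss in this step is controlled by the $\bZ_p$-exponent of the cokernel of that particular $\can$.

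Next I would translate via the indexing convention for the $\nabla$-terms, namely
\[
\F^{[1,in-1]}\N^{\geq i}\Prismhat^{(1),\nabla}_{R/W(\bF_q)\llbracket z_0\rrbracket}\{i\} = \F^{[0,in-2]}\N^{\geq i-1}\Prismhat^{(1)}_{R/W(\bF_q)\llbracket z_0\rrbracket}\{i-1\},
\]
and analogously on the target. Under this identification, the right-vertical $\can$ is an instance of the usual canonical map in Breuil--Kisin weight $i-1$ on the window $\F^{[0,in-2]}$. By Proposition~\ref{prop:f_filtered_statement}, each $\gr^w_\F$ of source and target is free of rank one over $W(\bF_q)$, and on this graded piece $\can$ acts as multiplication by $E(z_0)^{\max(i-1-\lfloor w/n\rfloor,\,0)}$; since $E(z_0)$ has constant term $p$, this reduces in the $\F$-associated graded to $p^{\max(i-1-\lfloor w/n\rfloor,\,0)}$ times a unit, so the cokernel on $\gr^w_\F$ is cyclic over $W(\bF_q)$ of exactly that exponent. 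This is the same analysis as that underlying Lemma~\ref{lem:can_isogeny}.

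Finally, applying the standard principle that the $\bZ_p$-exponent of a finite complete filtered $\bZ_p$-module of finite length is at most the sum of the $\bZ_p$-exponents of its $\F$-associated graded pieces, the bound reduces to a routine arithmetic computation entirely analogous to the one carried out in the proof of Lemma~\ref{lem:precisionlossfromquotient}:
\[
\sum_{w=0}^{in-2}\max\big(i-1-\lfloor w/n\rfloor,\,0\big) \;=\; \sum_{w=0}^{(i-1)n-1}\big(i-1-\lfloor w/n\rfloor\big) \;=\; n\sum_{\ell=1}^{i-1}\ell \;=\; n\binom{i}{2}.
\]
The only genuinely new step is the identification of the right-vertical $\can$ with a canonical-map instance in weight $i-1$ after the $\nabla$-indexing shift; once that identification is in place, the remaining bookkeeping with the explicit bases of Proposition~\ref{prop:f_filtered_statement} is straightforward, and nothing more than the filtered exponent bound is required.
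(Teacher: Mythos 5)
Your proposal is correct and matches the paper's argument in all essentials: both reduce the precision loss to the $\bZ_p$-exponent of the cokernel of the right-vertical $\can$, perform the indexing shift to identify the $\nabla$-term with $\F^{[0,in-2]}\N^{\geq i-1}\Prismhat^{(1)}_{R/W(\bF_q)\llbracket z_0\rrbracket}\{i-1\}$, and analyze $\can$ on the $\F$-associated graded via Lemma~\ref{lem:can_isogeny}. The only (cosmetic) difference is the final accounting: the paper first computes the order of the cokernel, $q^{n\binom{i}{2}}$, and then invokes the fact that a finite $W(\bF_q)$-module of order $q^N$ has $p$-exponent at most $N$, whereas you bound the exponent directly by summing the exponents of the $\F$-graded cokernel pieces. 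Both evaluate to $n\binom{i}{2}$ and both are valid; the paper's phrasing is marginally more uniform with the analogous bound in Lemma~\ref{lem:precisionlossfromquotient}.
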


\begin{proof}
  By Lemma \ref{lem:can_isogeny}, the right-hand vertical map is rationally invertible and has cokernel of order
  \[
    \prod_{j=1}^{in-1}q^{i-1 - \lfloor \tfrac{j-1}{n}\rfloor} = \prod_{s=0}^{i-2} q^{n(i-1-s)} = q^{n \binom{i}{2}}.
  \]
  Since it is a $W(\bF_q)$-module, its exponent is bounded by $n\binom{i}{2}$, so we conclude as in the previous argument that inverting the right-hand map causes a precision loss of at most $n\binom{i}{2}$.
\end{proof}

\subsubsection{Precision loss from computing $\can-\varphi$ on the primitives}\label{sec:precisionlossfromprimitives}

\begin{lemma}[Precision loss from $\can-\varphi^\nabla$]\label{lem:precisionlossfromprimitives}
    Filling in the square
    $$\xymatrix{
        \F^{[1,in-1]}\N^{\geq
        i}\Prismhat_{R/W(\bF_q)\llbracket z_0\rrbracket}^{(1)}\{i\}\ar[r]^{\N^{\geq i}\nabla}\ar[d]^{\can-\varphi}&\F^{[1,in-1]}\N^{\geq
        i}\Prismhat_{R/W(\bF_q)\llbracket z_0\rrbracket}^{(1),\nabla}\{i\}\ar@{.>}[d]^{\can-\varphi^\nabla}\\
        \F^{[1,in-1]}\Prismhat^{(1)}_{R/W(\bF_q)\llbracket z_0\rrbracket}\{i\}\ar[r]^\nabla&\F^{[1,in-1]}\Prismhat^{(1),\nabla}_{R/W(\bF_q)\llbracket z_0\rrbracket}\{i\}
    }$$
    involves a precision loss of at most $\sum_{j=1}^{in-1}\epsilon(i,j)+v_p\{j,n\}$.
\end{lemma}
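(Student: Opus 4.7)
The plan is to argue exactly as in the proofs of Lemmas~\ref{lem:precisionlossfromquotient} and~\ref{lem:precisionlossfromnygaardlift}: since $\N^{\geq i}\nabla$ is a rational isomorphism on $\F^{[1,in-1]}$, the dotted map is uniquely determined by commutativity of the square as
\[
  \can-\varphi^\nabla = \nabla\circ(\can-\varphi)\circ(\N^{\geq i}\nabla)^{-1},
\]
and the precision loss is controlled by the $p$-adic valuation of the largest elementary divisor of $\N^{\geq i}\nabla$, viewed as a $\bZ_p$-linear endomorphism of finite free modules of the same rank.

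First I would invoke Lemma~\ref{lem:isogeny}(2), which asserts that the injection
\[
  \N^{\geq i}\nabla\colon \F^{[1,in-1]}\N^{\geq i}\Prismhat^{(1)}_{R/W(\bF_q)\llbracket z_0\rrbracket}\{i\}\longrightarrow \F^{[1,in-1]}\N^{\geq i}\Prismhat^{(1),\nabla}_{R/W(\bF_q)\llbracket z_0\rrbracket}\{i\}
\]
has cokernel of order $\prod_{j=1}^{in-1}q^{\epsilon(i,j)+v_p\{j,n\}}$. More precisely, the proof of Lemma~\ref{lem:isogeny} identifies the cokernel as a direct sum over weights $j\in[1,in-1]$ of the cyclic $W(\bF_q)$-modules $W(\bF_q)/p^{\epsilon(i,j)+v_p\{j,n\}}$, so its exponent as an abelian group equals $\max_j(\epsilon(i,j)+v_p\{j,n\})$. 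In particular the exponent is bounded above by $\sum_{j=1}^{in-1}(\epsilon(i,j)+v_p\{j,n\})$.

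Then I would conclude using the standard Smith normal form fact: if $T\colon M\to N$ is a $\bZ_p$-linear injection of finite free modules of equal rank with cokernel annihilated by $p^N$, then all elementary divisors of $T$ have $p$-valuation at most $N$, so $T^{-1}$ (computed rationally) has entries of $p$-valuation $\geq -N$. Composing with the integral maps $\can-\varphi$ and $\nabla$ therefore causes a precision loss of at most $N=\sum_{j=1}^{in-1}(\epsilon(i,j)+v_p\{j,n\})$, which is the claimed bound.

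There is no real obstacle here; the only subtlety is the comparison between exponent as $W(\bF_q)$-module and exponent as $\bZ_p$-module, which is immediate because the $W(\bF_q)$-module structure on the cokernel refines the $\bZ_p$-structure and $W(\bF_q)$ is $p$-adically unramified. Note that the bound stated coincides with the target precision from Lemma~\ref{lem:targetprecision}, which is the source of the factor of $2$ appearing in the working $p$-adic precision of Step~1 of the algorithm.
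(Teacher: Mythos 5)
Your argument is correct and is essentially the same as the paper's: both bound the precision loss by the exponent of the cokernel of $\N^{\geq i}\nabla$, which is read off from the order computed in Lemma~\ref{lem:isogeny}(2). One caveat worth flagging: your parenthetical assertion that the cokernel \emph{splits} as $\bigoplus_j W(\bF_q)/p^{\epsilon(i,j)+v_p\{j,n\}}$, so that its exponent is $\max_j(\epsilon(i,j)+v_p\{j,n\})$, is not justified by Lemma~\ref{lem:isogeny} — that lemma only computes the map on the $\F$-associated graded, and a filtered injection whose associated graded cokernel is a given direct sum can still have a cokernel with nontrivial extensions (e.g.\ $\binom{p\ 1}{0\ p}$ on $\bZ_p^2$ with the two-step filtration has graded cokernel $(\bZ/p)^2$ but actual cokernel $\bZ/p^2$). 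This is harmless for your conclusion, since you then fall back to the sum bound, which follows honestly from the fact that a finite $W(\bF_q)$-module of order $q^{\sum}$ has exponent dividing $p^{\sum}$; but the intermediate $\max$ claim should be dropped, and it certainly cannot be used to sharpen the target precision in Step~1 of the algorithm without further argument.
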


\begin{proof}
    As in the proof of Lemma~\ref{lem:precisionlossfromquotient}, the $p$-adic valuation of the
    exponent of the cokernel of the top horizontal map is bounded above by $\sum_{j=1}^{in-1}\epsilon(i,j)+v_p\{j,n\}$
    by the analysis in Section~\ref{sec:targetprecision}.
\end{proof}

\newpage
\appendix

\section{Tables}\label{appendix:tables}

In this section we include calculations for the $p$-adic $\K$-groups of $\bZ/p^n$ for some small $p$
and $n$. More extensive calculations for other rings of the form $\Oscr_K/\varpi^n$ have been made
and will be released elsewhere. The labels refer to local fields in the {\ttfamily
lmfdb}~\cite{lmfdb}.
All calculations were performed on the high-performance cluster {\ttfamily QUEST} at Northwestern
University.

Each entry in the table consists of a list of positive integers or the mark `{\ttfamily x}'. In
the former case, it is a list of the {\em exponents} in
the decomposition of a given $\K$-group into cyclic groups. For example, $\K_{15}(\bZ/3^2;\bZ_3)$
is given as {\ttfamily 1,1,6}, which means the group is isomorphic to
$\bZ/3^1\oplus\bZ/3^1\oplus\bZ/3^6$. An empty list implies the group is zero. A mark
`{\ttfamily x}' indicates that the calculation did not succeed in the allotted time (48 hours per computation) on {\ttfamily QUEST}.

\subsection{2.1.0.1: $\bZ/2^n$}
\scalebox{.6}{
\begin{tabular}{|r||r|r|r|r|} \hline
$\K_{{r}}\backslash n$ & $\bZ/2^2$ & $\bZ/2^3$ & $\bZ/2^4$ & $\bZ/2^5$ \\ \hline \hline
$\K_{1}$ & {{\ttfamily1}} & {{\ttfamily1,1}} & {{\ttfamily1,2}} & {{\ttfamily1,3}} \\ \hline
$\K_{2}$ & {{\ttfamily1}} & {{\ttfamily1}} & {{\ttfamily1}} & {{\ttfamily1}} \\ \hline
$\K_{3}$ & {{\ttfamily3}} & {{\ttfamily2,3}} & {{\ttfamily3,4}} & {{\ttfamily3,6}} \\ \hline
$\K_{4}$ & {{\ttfamily}} & {{\ttfamily1}} & {{\ttfamily2}} & {{\ttfamily3}} \\ \hline
$\K_{5}$ & {{\ttfamily3}} & {{\ttfamily1,6}} & {{\ttfamily1,1,9}} & {{\ttfamily1,2,12}} \\ \hline
$\K_{6}$ & {{\ttfamily}} & {{\ttfamily}} & {{\ttfamily1}} & {{\ttfamily1}} \\ \hline
$\K_{7}$ & {{\ttfamily1,3}} & {{\ttfamily4,4}} & {{\ttfamily1,4,8}} & {{\ttfamily1,1,4,11}} \\ \hline
$\K_{8}$ & {{\ttfamily}} & {{\ttfamily}} & {{\ttfamily1}} & {{\ttfamily2}} \\ \hline
$\K_{9}$ & {{\ttfamily1,1,3}} & {{\ttfamily1,2,7}} & {{\ttfamily1,1,2,12}} & {{\ttfamily1,1,1,2,17}} \\ \hline
$\K_{10}$ & {{\ttfamily}} & {{\ttfamily}} & {{\ttfamily}} & {{\ttfamily1}} \\ \hline
$\K_{11}$ & {{\ttfamily1,5}} & {{\ttfamily3,9}} & {{\ttfamily3,3,12}} & {{\ttfamily1,3,5,16}} \\ \hline
$\K_{12}$ & {{\ttfamily}} & {{\ttfamily}} & {{\ttfamily}} & {{\ttfamily1}} \\ \hline
$\K_{13}$ & {{\ttfamily1,2,4}} & {{\ttfamily1,1,3,9}} & {{\ttfamily1,1,1,3,15}} & {{\ttfamily1,1,1,1,3,22}} \\ \hline
$\K_{14}$ & {{\ttfamily}} & {{\ttfamily}} & {{\ttfamily}} & {{\ttfamily1}} \\ \hline
$\K_{15}$ & {{\ttfamily1,1,1,5}} & {{\ttfamily1,1,6,8}} & {{\ttfamily1,1,2,5,15}} & {{\ttfamily1,1,2,3,5,21}} \\ \hline
$\K_{16}$ & {{\ttfamily}} & {{\ttfamily}} & {{\ttfamily}} & {{\ttfamily1}} \\ \hline
$\K_{17}$ & {{\ttfamily1,1,1,3,3}} & {{\ttfamily1,1,2,2,3,9}} & {{\ttfamily1,2,2,2,3,17}} & {{\ttfamily1,1,2,2,2,3,26}} \\ \hline
$\K_{18}$ & {{\ttfamily}} & {{\ttfamily}} & {{\ttfamily}} & {{\ttfamily}} \\ \hline
$\K_{19}$ & {{\ttfamily2,3,5}} & {{\ttfamily1,3,4,12}} & {{\ttfamily3,3,4,20}} & {{\ttfamily3,3,3,4,27}} \\ \hline
$\K_{20}$ & {{\ttfamily}} & {{\ttfamily}} & {{\ttfamily}} & {{\ttfamily}} \\ \hline
$\K_{21}$ & {{\ttfamily1,1,2,2,5}} & {{\ttfamily1,1,1,2,2,3,12}} & {{\ttfamily1,2,2,2,2,4,20}} & {{\ttfamily1,1,2,2,2,2,4,30}} \\ \hline
$\K_{22}$ & {{\ttfamily}} & {{\ttfamily}} & {{\ttfamily}} & {{\ttfamily}} \\ \hline
$\K_{23}$ & {{\ttfamily1,1,1,1,2,6}} & {{\ttfamily1,1,1,2,5,14}} & {{\ttfamily1,1,1,1,4,6,22}} & {{\ttfamily1,1,1,1,3,4,6,31}} \\ \hline
$\K_{24}$ & {{\ttfamily}} & {{\ttfamily}} & {{\ttfamily}} & {{\ttfamily x}} \\ \hline
$\K_{25}$ & {{\ttfamily1,1,1,1,2,3,4}} & {{\ttfamily1,1,1,2,2,3,4,12}} & {{\ttfamily1,1,1,1,2,3,3,4,23}} & {{\ttfamily x}} \\ \hline
$\K_{26}$ & {{\ttfamily}} & {{\ttfamily}} & {{\ttfamily}} & {{\ttfamily x}} \\ \hline
$\K_{27}$ & {{\ttfamily1,3,4,6}} & {{\ttfamily1,3,3,6,15}} & {{\ttfamily1,3,3,3,6,26}} & {{\ttfamily x}} \\ \hline
$\K_{28}$ & {{\ttfamily}} & {{\ttfamily}} & {{\ttfamily}} & {{\ttfamily x}} \\ \hline
$\K_{29}$ & {{\ttfamily1,1,1,2,2,3,5}} & {{\ttfamily1,1,1,1,2,2,3,4,15}} & {{\ttfamily1,1,1,1,2,2,3,4,4,26}} & {{\ttfamily x}} \\ \hline
$\K_{30}$ & {{\ttfamily}} & {{\ttfamily}} & {{\ttfamily}} & {{\ttfamily x}} \\ \hline
$\K_{31}$ & {{\ttfamily1,1,1,1,1,1,3,7}} & {{\ttfamily1,1,1,1,3,3,8,14}} & {{\ttfamily1,1,1,1,1,3,3,3,7,27}} & {{\ttfamily x}} \\ \hline
$\K_{32}$ & {{\ttfamily}} & {{\ttfamily}} & {{\ttfamily x}} & {{\ttfamily x}} \\ \hline
$\K_{33}$ & {{\ttfamily1,1,1,1,1,2,2,3,5}} & {{\ttfamily1,1,1,1,2,2,2,2,4,4,14}} & {{\ttfamily x}} & {{\ttfamily x}} \\ \hline
$\K_{34}$ & {{\ttfamily}} & {{\ttfamily}} & {{\ttfamily x}} & {{\ttfamily x}} \\ \hline
$\K_{35}$ & {{\ttfamily2,3,3,5,5}} & {{\ttfamily1,2,3,4,5,5,16}} & {{\ttfamily x}} & {{\ttfamily x}} \\ \hline
$\K_{36}$ & {{\ttfamily}} & {{\ttfamily}} & {{\ttfamily x}} & {{\ttfamily x}} \\ \hline
$\K_{37}$ & {{\ttfamily1,1,1,1,2,2,3,3,5}} & {{\ttfamily1,1,1,1,1,2,2,2,3,3,4,17}} & {{\ttfamily x}} & {{\ttfamily x}} \\ \hline
$\K_{38}$ & {{\ttfamily}} & {{\ttfamily}} & {{\ttfamily x}} & {{\ttfamily x}} \\ \hline
$\K_{39}$ & {{\ttfamily1,1,1,1,1,1,1,2,4,7}} & {{\ttfamily1,1,1,1,1,2,3,5,7,18}} & {{\ttfamily x}} & {{\ttfamily x}} \\ \hline
$\K_{40}$ & {{\ttfamily}} & {{\ttfamily}} & {{\ttfamily x}} & {{\ttfamily x}} \\ \hline
$\K_{41}$ & {{\ttfamily1,1,1,1,1,1,2,2,3,3,5}} & {{\ttfamily1,1,1,1,1,2,2,2,2,3,3,6,17}} & {{\ttfamily x}} & {{\ttfamily x}} \\ \hline
$\K_{42}$ & {{\ttfamily}} & {{\ttfamily x}} & {{\ttfamily x}} & {{\ttfamily x}} \\ \hline
$\K_{43}$ & {{\ttfamily1,3,3,4,4,7}} & {{\ttfamily x}} & {{\ttfamily x}} & {{\ttfamily x}} \\ \hline
$\K_{44}$ & {{\ttfamily}} & {{\ttfamily x}} & {{\ttfamily x}} & {{\ttfamily x}} \\ \hline
$\K_{45}$ & {{\ttfamily1,1,1,1,1,2,2,2,3,3,6}} & {{\ttfamily x}} & {{\ttfamily x}} & {{\ttfamily x}} \\ \hline
$\K_{46}$ & {{\ttfamily}} & {{\ttfamily x}} & {{\ttfamily x}} & {{\ttfamily x}} \\ \hline
$\K_{47}$ & {{\ttfamily1,1,1,1,1,1,1,1,1,3,5,7}} & {{\ttfamily x}} & {{\ttfamily x}} & {{\ttfamily x}} \\ \hline
$\K_{48}$ & {{\ttfamily}} & {{\ttfamily x}} & {{\ttfamily x}} & {{\ttfamily x}} \\ \hline
$\K_{49}$ & {{\ttfamily1,1,1,1,1,1,1,2,2,3,3,3,5}} & {{\ttfamily x}} & {{\ttfamily x}} & {{\ttfamily x}} \\ \hline
$\K_{50}$ & {{\ttfamily}} & {{\ttfamily x}} & {{\ttfamily x}} & {{\ttfamily x}} \\ \hline
$\K_{51}$ & {{\ttfamily2,3,3,3,4,5,6}} & {{\ttfamily x}} & {{\ttfamily x}} & {{\ttfamily x}} \\ \hline
$\K_{52}$ & {{\ttfamily}} & {{\ttfamily x}} & {{\ttfamily x}} & {{\ttfamily x}} \\ \hline
$\K_{53}$ & {{\ttfamily1,1,1,1,1,1,2,2,2,2,3,4,6}} & {{\ttfamily x}} & {{\ttfamily x}} & {{\ttfamily x}} \\ \hline
$\K_{54}$ & {{\ttfamily}} & {{\ttfamily x}} & {{\ttfamily x}} & {{\ttfamily x}} \\ \hline
$\K_{55}$ & {{\ttfamily1,1,1,1,1,1,1,1,1,1,2,3,6,7}} & {{\ttfamily x}} & {{\ttfamily x}} & {{\ttfamily x}} \\ \hline
$\K_{56}$ & {{\ttfamily}} & {{\ttfamily x}} & {{\ttfamily x}} & {{\ttfamily x}} \\ \hline
$\K_{57}$ & {{\ttfamily1,1,1,1,1,1,1,1,2,2,2,3,3,4,5}} & {{\ttfamily x}} & {{\ttfamily x}} & {{\ttfamily x}} \\ \hline
$\K_{58}$ & {{\ttfamily}} & {{\ttfamily x}} & {{\ttfamily x}} & {{\ttfamily x}} \\ \hline
$\K_{59}$ & {{\ttfamily1,3,3,3,4,4,5,7}} & {{\ttfamily x}} & {{\ttfamily x}} & {{\ttfamily x}} \\ \hline
\end{tabular}}

\subsection{3.1.0.1: $\bZ/3^n$}
\scalebox{.6}{
\begin{tabular}{|r||r|r|r|r|} \hline
$\K_{{r}}\backslash n$ & $\bZ/3^2$ & $\bZ/3^3$ & $\bZ/3^4$ & $\bZ/3^5$ \\ \hline \hline
$\K_{1}$ & {{\ttfamily1}} & {{\ttfamily2}} & {{\ttfamily3}} & {{\ttfamily4}} \\ \hline
$\K_{2}$ & {{\ttfamily}} & {{\ttfamily}} & {{\ttfamily}} & {{\ttfamily}} \\ \hline
$\K_{3}$ & {{\ttfamily1,1}} & {{\ttfamily1,3}} & {{\ttfamily1,5}} & {{\ttfamily1,7}} \\ \hline
$\K_{4}$ & {{\ttfamily1}} & {{\ttfamily1}} & {{\ttfamily1}} & {{\ttfamily1}} \\ \hline
$\K_{5}$ & {{\ttfamily4}} & {{\ttfamily7}} & {{\ttfamily10}} & {{\ttfamily13}} \\ \hline
$\K_{6}$ & {{\ttfamily}} & {{\ttfamily}} & {{\ttfamily}} & {{\ttfamily}} \\ \hline
$\K_{7}$ & {{\ttfamily1,3}} & {{\ttfamily1,1,6}} & {{\ttfamily1,2,9}} & {{\ttfamily1,2,13}} \\ \hline
$\K_{8}$ & {{\ttfamily}} & {{\ttfamily1}} & {{\ttfamily1}} & {{\ttfamily1}} \\ \hline
$\K_{9}$ & {{\ttfamily1,4}} & {{\ttfamily1,1,9}} & {{\ttfamily1,1,14}} & {{\ttfamily1,1,19}} \\ \hline
$\K_{10}$ & {{\ttfamily}} & {{\ttfamily}} & {{\ttfamily}} & {{\ttfamily}} \\ \hline
$\K_{11}$ & {{\ttfamily3,3}} & {{\ttfamily1,2,9}} & {{\ttfamily1,1,2,14}} & {{\ttfamily1,1,2,20}} \\ \hline
$\K_{12}$ & {{\ttfamily}} & {{\ttfamily1}} & {{\ttfamily2}} & {{\ttfamily2}} \\ \hline
$\K_{13}$ & {{\ttfamily1,1,5}} & {{\ttfamily1,1,1,12}} & {{\ttfamily1,1,2,19}} & {{\ttfamily1,1,2,26}} \\ \hline
$\K_{14}$ & {{\ttfamily}} & {{\ttfamily}} & {{\ttfamily}} & {{\ttfamily}} \\ \hline
$\K_{15}$ & {{\ttfamily1,1,6}} & {{\ttfamily1,1,2,12}} & {{\ttfamily1,1,1,2,19}} & {{\ttfamily1,1,1,2,27}} \\ \hline
$\K_{16}$ & {{\ttfamily}} & {{\ttfamily}} & {{\ttfamily1}} & {{\ttfamily1}} \\ \hline
$\K_{17}$ & {{\ttfamily2,7}} & {{\ttfamily1,2,15}} & {{\ttfamily2,2,24}} & {{\ttfamily2,2,33}} \\ \hline
$\K_{18}$ & {{\ttfamily}} & {{\ttfamily}} & {{\ttfamily}} & {{\ttfamily}} \\ \hline
$\K_{19}$ & {{\ttfamily1,1,1,2,5}} & {{\ttfamily1,1,1,1,2,14}} & {{\ttfamily1,1,1,1,1,2,23}} & {{\ttfamily1,1,1,1,2,2,32}} \\ \hline
$\K_{20}$ & {{\ttfamily}} & {{\ttfamily}} & {{\ttfamily1}} & {{\ttfamily1}} \\ \hline
$\K_{21}$ & {{\ttfamily1,1,1,2,6}} & {{\ttfamily1,1,2,3,15}} & {{\ttfamily1,1,1,2,3,26}} & {{\ttfamily1,1,1,2,3,37}} \\ \hline
$\K_{22}$ & {{\ttfamily}} & {{\ttfamily}} & {{\ttfamily}} & {{\ttfamily}} \\ \hline
$\K_{23}$ & {{\ttfamily1,3,8}} & {{\ttfamily1,1,2,3,17}} & {{\ttfamily1,1,1,2,4,27}} & {{\ttfamily1,1,1,2,5,38}} \\ \hline
$\K_{24}$ & {{\ttfamily}} & {{\ttfamily}} & {{\ttfamily1}} & {{\ttfamily2}} \\ \hline
$\K_{25}$ & {{\ttfamily1,1,1,1,2,7}} & {{\ttfamily1,1,1,2,3,18}} & {{\ttfamily1,1,1,1,2,3,31}} & {{\ttfamily1,1,1,1,3,3,44}} \\ \hline
$\K_{26}$ & {{\ttfamily}} & {{\ttfamily}} & {{\ttfamily}} & {{\ttfamily}} \\ \hline
$\K_{27}$ & {{\ttfamily1,1,1,1,2,8}} & {{\ttfamily1,1,1,1,1,2,2,19}} & {{\ttfamily1,1,1,1,1,1,2,2,32}} & {{\ttfamily1,1,1,1,1,2,2,2,45}} \\ \hline
$\K_{28}$ & {{\ttfamily}} & {{\ttfamily}} & {{\ttfamily1}} & {{\ttfamily1}} \\ \hline
$\K_{29}$ & {{\ttfamily1,2,3,9}} & {{\ttfamily1,1,3,4,21}} & {{\ttfamily1,1,1,3,4,36}} & {{\ttfamily1,1,1,3,4,51}} \\ \hline
$\K_{30}$ & {{\ttfamily}} & {{\ttfamily}} & {{\ttfamily}} & {{\ttfamily}} \\ \hline
$\K_{31}$ & {{\ttfamily1,1,1,1,2,2,8}} & {{\ttfamily1,1,1,1,1,1,2,2,22}} & {{\ttfamily1,1,1,1,1,1,1,2,2,37}} & {{\ttfamily1,1,1,1,1,1,2,2,2,52}} \\ \hline
$\K_{32}$ & {{\ttfamily}} & {{\ttfamily}} & {{\ttfamily1}} & {{\ttfamily x}} \\ \hline
$\K_{33}$ & {{\ttfamily1,1,1,1,2,2,9}} & {{\ttfamily1,1,1,2,2,2,2,23}} & {{\ttfamily1,1,1,1,2,2,2,2,40}} & {{\ttfamily x}} \\ \hline
$\K_{34}$ & {{\ttfamily}} & {{\ttfamily}} & {{\ttfamily}} & {{\ttfamily x}} \\ \hline
$\K_{35}$ & {{\ttfamily2,2,5,9}} & {{\ttfamily2,2,2,2,4,24}} & {{\ttfamily2,2,2,2,3,3,40}} & {{\ttfamily x}} \\ \hline
$\K_{36}$ & {{\ttfamily}} & {{\ttfamily}} & {{\ttfamily1}} & {{\ttfamily x}} \\ \hline
$\K_{37}$ & {{\ttfamily1,1,1,1,1,2,2,2,8}} & {{\ttfamily1,1,1,1,1,2,2,2,2,25}} & {{\ttfamily1,1,1,1,1,1,2,2,2,2,44}} & {{\ttfamily x}} \\ \hline
$\K_{38}$ & {{\ttfamily}} & {{\ttfamily}} & {{\ttfamily}} & {{\ttfamily x}} \\ \hline
$\K_{39}$ & {{\ttfamily1,1,1,1,1,1,2,2,10}} & {{\ttfamily1,1,1,1,1,2,2,2,2,27}} & {{\ttfamily1,1,1,1,1,1,2,2,2,3,45}} & {{\ttfamily x}} \\ \hline
$\K_{40}$ & {{\ttfamily}} & {{\ttfamily}} & {{\ttfamily x}} & {{\ttfamily x}} \\ \hline
$\K_{41}$ & {{\ttfamily1,2,3,3,12}} & {{\ttfamily1,1,1,3,3,4,29}} & {{\ttfamily x}} & {{\ttfamily x}} \\ \hline
$\K_{42}$ & {{\ttfamily}} & {{\ttfamily}} & {{\ttfamily x}} & {{\ttfamily x}} \\ \hline
$\K_{43}$ & {{\ttfamily1,1,1,1,1,1,1,2,2,11}} & {{\ttfamily1,1,1,1,1,2,2,2,2,3,28}} & {{\ttfamily x}} & {{\ttfamily x}} \\ \hline
$\K_{44}$ & {{\ttfamily}} & {{\ttfamily}} & {{\ttfamily x}} & {{\ttfamily x}} \\ \hline
$\K_{45}$ & {{\ttfamily1,1,1,1,1,1,1,2,2,12}} & {{\ttfamily1,1,1,1,1,1,1,1,2,3,3,30}} & {{\ttfamily x}} & {{\ttfamily x}} \\ \hline
$\K_{46}$ & {{\ttfamily}} & {{\ttfamily}} & {{\ttfamily x}} & {{\ttfamily x}} \\ \hline
$\K_{47}$ & {{\ttfamily1,2,2,3,3,13}} & {{\ttfamily1,1,1,1,1,3,3,5,32}} & {{\ttfamily x}} & {{\ttfamily x}} \\ \hline
$\K_{48}$ & {{\ttfamily}} & {{\ttfamily}} & {{\ttfamily x}} & {{\ttfamily x}} \\ \hline
$\K_{49}$ & {{\ttfamily1,1,1,1,1,1,1,2,2,2,12}} & {{\ttfamily1,1,1,1,1,1,1,1,2,2,2,3,33}} & {{\ttfamily x}} & {{\ttfamily x}} \\ \hline
$\K_{50}$ & {{\ttfamily}} & {{\ttfamily}} & {{\ttfamily x}} & {{\ttfamily x}} \\ \hline
$\K_{51}$ & {{\ttfamily1,1,1,1,1,1,1,2,2,2,13}} & {{\ttfamily1,1,1,1,1,1,1,1,2,2,2,3,35}} & {{\ttfamily x}} & {{\ttfamily x}} \\ \hline
$\K_{52}$ & {{\ttfamily}} & {{\ttfamily}} & {{\ttfamily x}} & {{\ttfamily x}} \\ \hline
$\K_{53}$ & {{\ttfamily2,2,2,2,4,15}} & {{\ttfamily1,1,1,2,2,2,2,2,4,37}} & {{\ttfamily x}} & {{\ttfamily x}} \\ \hline
$\K_{54}$ & {{\ttfamily}} & {{\ttfamily x}} & {{\ttfamily x}} & {{\ttfamily x}} \\ \hline
$\K_{55}$ & {{\ttfamily1,1,1,1,1,1,1,1,1,2,2,2,13}} & {{\ttfamily x}} & {{\ttfamily x}} & {{\ttfamily x}} \\ \hline
$\K_{56}$ & {{\ttfamily}} & {{\ttfamily x}} & {{\ttfamily x}} & {{\ttfamily x}} \\ \hline
$\K_{57}$ & {{\ttfamily1,1,1,1,1,1,1,1,2,2,2,3,12}} & {{\ttfamily x}} & {{\ttfamily x}} & {{\ttfamily x}} \\ \hline
$\K_{58}$ & {{\ttfamily}} & {{\ttfamily x}} & {{\ttfamily x}} & {{\ttfamily x}} \\ \hline
$\K_{59}$ & {{\ttfamily1,2,3,3,3,4,14}} & {{\ttfamily x}} & {{\ttfamily x}} & {{\ttfamily x}} \\ \hline
$\K_{60}$ & {{\ttfamily}} & {{\ttfamily x}} & {{\ttfamily x}} & {{\ttfamily x}} \\ \hline
$\K_{61}$ & {{\ttfamily1,1,1,1,1,1,1,1,1,2,2,2,3,13}} & {{\ttfamily x}} & {{\ttfamily x}} & {{\ttfamily x}} \\ \hline
$\K_{62}$ & {{\ttfamily}} & {{\ttfamily x}} & {{\ttfamily x}} & {{\ttfamily x}} \\ \hline
$\K_{63}$ & {{\ttfamily1,1,1,1,1,1,1,1,1,2,2,2,3,14}} & {{\ttfamily x}} & {{\ttfamily x}} & {{\ttfamily x}} \\ \hline
$\K_{64}$ & {{\ttfamily}} & {{\ttfamily x}} & {{\ttfamily x}} & {{\ttfamily x}} \\ \hline
$\K_{65}$ & {{\ttfamily1,2,2,3,3,3,4,15}} & {{\ttfamily x}} & {{\ttfamily x}} & {{\ttfamily x}} \\ \hline
$\K_{66}$ & {{\ttfamily}} & {{\ttfamily x}} & {{\ttfamily x}} & {{\ttfamily x}} \\ \hline
$\K_{67}$ & {{\ttfamily1,1,1,1,1,1,1,1,1,1,2,2,2,3,15}} & {{\ttfamily x}} & {{\ttfamily x}} & {{\ttfamily x}} \\ \hline
$\K_{68}$ & {{\ttfamily}} & {{\ttfamily x}} & {{\ttfamily x}} & {{\ttfamily x}} \\ \hline
$\K_{69}$ & {{\ttfamily1,1,1,1,1,1,1,1,1,1,2,2,2,3,16}} & {{\ttfamily x}} & {{\ttfamily x}} & {{\ttfamily x}} \\ \hline
$\K_{70}$ & {{\ttfamily}} & {{\ttfamily x}} & {{\ttfamily x}} & {{\ttfamily x}} \\ \hline
$\K_{71}$ & {{\ttfamily2,2,2,2,2,4,5,17}} & {{\ttfamily x}} & {{\ttfamily x}} & {{\ttfamily x}} \\ \hline
$\K_{72}$ & {{\ttfamily}} & {{\ttfamily x}} & {{\ttfamily x}} & {{\ttfamily x}} \\ \hline
$\K_{73}$ & {{\ttfamily1,1,1,1,1,1,1,1,1,1,1,2,2,2,2,3,15}} & {{\ttfamily x}} & {{\ttfamily x}} & {{\ttfamily x}} \\ \hline
$\K_{74}$ & {{\ttfamily}} & {{\ttfamily x}} & {{\ttfamily x}} & {{\ttfamily x}} \\ \hline
$\K_{75}$ & {{\ttfamily1,1,1,1,1,1,1,1,1,1,1,2,2,2,2,3,16}} & {{\ttfamily x}} & {{\ttfamily x}} & {{\ttfamily x}} \\ \hline
$\K_{76}$ & {{\ttfamily}} & {{\ttfamily x}} & {{\ttfamily x}} & {{\ttfamily x}} \\ \hline
$\K_{77}$ & {{\ttfamily1,2,2,3,3,3,3,4,18}} & {{\ttfamily x}} & {{\ttfamily x}} & {{\ttfamily x}} \\ \hline
$\K_{78}$ & {{\ttfamily}} & {{\ttfamily x}} & {{\ttfamily x}} & {{\ttfamily x}} \\ \hline
$\K_{79}$ & {{\ttfamily1,1,1,1,1,1,1,1,1,1,1,1,2,2,2,2,3,17}} & {{\ttfamily x}} & {{\ttfamily x}} & {{\ttfamily x}} \\ \hline
\end{tabular}}

\subsection{5.1.0.1: $\bZ/5^n$}
\scalebox{.55}{
\begin{tabular}{|r||r|r|r|r|r|} \hline
$\K_{{r}}\backslash n$ & $\bZ/5^2$ & $\bZ/5^3$ & $\bZ/5^4$ & $\bZ/5^5$ & $\bZ/5^6$ \\ \hline \hline
$\K_{1}$ & {{\ttfamily1}} & {{\ttfamily2}} & {{\ttfamily3}} & {{\ttfamily4}} & {{\ttfamily5}} \\ \hline
$\K_{2}$ & {{\ttfamily}} & {{\ttfamily}} & {{\ttfamily}} & {{\ttfamily}} & {{\ttfamily}} \\ \hline
$\K_{3}$ & {{\ttfamily2}} & {{\ttfamily4}} & {{\ttfamily6}} & {{\ttfamily8}} & {{\ttfamily10}} \\ \hline
$\K_{4}$ & {{\ttfamily}} & {{\ttfamily}} & {{\ttfamily}} & {{\ttfamily}} & {{\ttfamily}} \\ \hline
$\K_{5}$ & {{\ttfamily3}} & {{\ttfamily6}} & {{\ttfamily9}} & {{\ttfamily12}} & {{\ttfamily15}} \\ \hline
$\K_{6}$ & {{\ttfamily}} & {{\ttfamily}} & {{\ttfamily}} & {{\ttfamily}} & {{\ttfamily}} \\ \hline
$\K_{7}$ & {{\ttfamily1,3}} & {{\ttfamily1,7}} & {{\ttfamily1,11}} & {{\ttfamily1,15}} & {{\ttfamily1,19}} \\ \hline
$\K_{8}$ & {{\ttfamily1}} & {{\ttfamily1}} & {{\ttfamily1}} & {{\ttfamily1}} & {{\ttfamily1}} \\ \hline
$\K_{9}$ & {{\ttfamily6}} & {{\ttfamily11}} & {{\ttfamily16}} & {{\ttfamily21}} & {{\ttfamily26}} \\ \hline
$\K_{10}$ & {{\ttfamily}} & {{\ttfamily}} & {{\ttfamily}} & {{\ttfamily}} & {{\ttfamily}} \\ \hline
$\K_{11}$ & {{\ttfamily1,5}} & {{\ttfamily2,10}} & {{\ttfamily2,16}} & {{\ttfamily2,22}} & {{\ttfamily2,28}} \\ \hline
$\K_{12}$ & {{\ttfamily}} & {{\ttfamily}} & {{\ttfamily}} & {{\ttfamily}} & {{\ttfamily}} \\ \hline
$\K_{13}$ & {{\ttfamily1,1,5}} & {{\ttfamily1,1,12}} & {{\ttfamily1,1,19}} & {{\ttfamily1,1,26}} & {{\ttfamily1,1,33}} \\ \hline
$\K_{14}$ & {{\ttfamily}} & {{\ttfamily}} & {{\ttfamily}} & {{\ttfamily}} & {{\ttfamily}} \\ \hline
$\K_{15}$ & {{\ttfamily1,1,6}} & {{\ttfamily1,1,1,13}} & {{\ttfamily1,1,1,21}} & {{\ttfamily1,1,1,29}} & {{\ttfamily1,1,1,37}} \\ \hline
$\K_{16}$ & {{\ttfamily}} & {{\ttfamily1}} & {{\ttfamily1}} & {{\ttfamily1}} & {{\ttfamily1}} \\ \hline
$\K_{17}$ & {{\ttfamily1,8}} & {{\ttfamily1,1,17}} & {{\ttfamily1,1,26}} & {{\ttfamily1,1,35}} & {{\ttfamily1,1,44}} \\ \hline
$\K_{18}$ & {{\ttfamily}} & {{\ttfamily}} & {{\ttfamily}} & {{\ttfamily}} & {{\ttfamily}} \\ \hline
$\K_{19}$ & {{\ttfamily1,9}} & {{\ttfamily1,1,18}} & {{\ttfamily1,1,28}} & {{\ttfamily1,1,38}} & {{\ttfamily1,1,48}} \\ \hline
$\K_{20}$ & {{\ttfamily}} & {{\ttfamily}} & {{\ttfamily}} & {{\ttfamily}} & {{\ttfamily}} \\ \hline
$\K_{21}$ & {{\ttfamily1,1,1,8}} & {{\ttfamily1,1,2,18}} & {{\ttfamily1,1,2,29}} & {{\ttfamily1,1,2,40}} & {{\ttfamily1,1,2,51}} \\ \hline
$\K_{22}$ & {{\ttfamily}} & {{\ttfamily}} & {{\ttfamily}} & {{\ttfamily}} & {{\ttfamily}} \\ \hline
$\K_{23}$ & {{\ttfamily1,1,1,9}} & {{\ttfamily1,1,1,2,19}} & {{\ttfamily1,1,1,2,31}} & {{\ttfamily1,1,1,2,43}} & {{\ttfamily1,1,1,2,55}} \\ \hline
$\K_{24}$ & {{\ttfamily}} & {{\ttfamily1}} & {{\ttfamily1}} & {{\ttfamily1}} & {{\ttfamily1}} \\ \hline
$\K_{25}$ & {{\ttfamily1,1,1,10}} & {{\ttfamily1,1,1,1,23}} & {{\ttfamily1,1,1,1,36}} & {{\ttfamily1,1,1,1,49}} & {{\ttfamily1,1,1,1,62}} \\ \hline
$\K_{26}$ & {{\ttfamily}} & {{\ttfamily}} & {{\ttfamily}} & {{\ttfamily}} & {{\ttfamily}} \\ \hline
$\K_{27}$ & {{\ttfamily1,1,1,11}} & {{\ttfamily1,1,1,1,24}} & {{\ttfamily1,1,1,1,38}} & {{\ttfamily1,1,1,1,52}} & {{\ttfamily1,1,1,1,66}} \\ \hline
$\K_{28}$ & {{\ttfamily}} & {{\ttfamily}} & {{\ttfamily}} & {{\ttfamily}} & {{\ttfamily}} \\ \hline
$\K_{29}$ & {{\ttfamily1,2,12}} & {{\ttfamily2,2,26}} & {{\ttfamily2,2,41}} & {{\ttfamily2,2,56}} & {{\ttfamily2,2,71}} \\ \hline
$\K_{30}$ & {{\ttfamily}} & {{\ttfamily}} & {{\ttfamily}} & {{\ttfamily}} & {{\ttfamily}} \\ \hline
$\K_{31}$ & {{\ttfamily1,1,1,1,1,11}} & {{\ttfamily1,1,1,1,1,1,26}} & {{\ttfamily1,1,1,1,1,2,41}} & {{\ttfamily1,1,1,1,1,2,57}} & {{\ttfamily1,1,1,1,1,2,73}} \\ \hline
$\K_{32}$ & {{\ttfamily}} & {{\ttfamily1}} & {{\ttfamily1}} & {{\ttfamily1}} & {{\ttfamily1}} \\ \hline
$\K_{33}$ & {{\ttfamily1,1,1,2,12}} & {{\ttfamily1,1,1,1,2,29}} & {{\ttfamily1,1,1,1,2,46}} & {{\ttfamily1,1,1,1,2,63}} & {{\ttfamily1,1,1,1,2,80}} \\ \hline
$\K_{34}$ & {{\ttfamily}} & {{\ttfamily}} & {{\ttfamily}} & {{\ttfamily}} & {{\ttfamily}} \\ \hline
$\K_{35}$ & {{\ttfamily1,1,1,2,13}} & {{\ttfamily1,1,1,1,2,30}} & {{\ttfamily1,1,1,1,2,48}} & {{\ttfamily1,1,1,1,2,66}} & {{\ttfamily1,1,1,1,2,84}} \\ \hline
$\K_{36}$ & {{\ttfamily}} & {{\ttfamily}} & {{\ttfamily}} & {{\ttfamily}} & {{\ttfamily}} \\ \hline
$\K_{37}$ & {{\ttfamily1,1,1,1,1,14}} & {{\ttfamily1,1,1,1,1,1,32}} & {{\ttfamily1,1,1,1,1,1,51}} & {{\ttfamily1,1,1,1,1,1,70}} & {{\ttfamily1,1,1,1,1,1,89}} \\ \hline
$\K_{38}$ & {{\ttfamily}} & {{\ttfamily}} & {{\ttfamily}} & {{\ttfamily}} & {{\ttfamily x}} \\ \hline
$\K_{39}$ & {{\ttfamily1,2,3,14}} & {{\ttfamily1,1,1,2,2,33}} & {{\ttfamily1,1,2,2,2,52}} & {{\ttfamily1,1,2,2,2,72}} & {{\ttfamily x}} \\ \hline
$\K_{40}$ & {{\ttfamily}} & {{\ttfamily1}} & {{\ttfamily2}} & {{\ttfamily2}} & {{\ttfamily x}} \\ \hline
$\K_{41}$ & {{\ttfamily1,1,1,1,1,1,15}} & {{\ttfamily1,1,1,1,1,1,1,36}} & {{\ttfamily1,1,1,1,1,1,2,57}} & {{\ttfamily1,1,1,1,1,1,2,78}} & {{\ttfamily x}} \\ \hline
$\K_{42}$ & {{\ttfamily}} & {{\ttfamily}} & {{\ttfamily}} & {{\ttfamily x}} & {{\ttfamily x}} \\ \hline
$\K_{43}$ & {{\ttfamily1,1,1,1,1,1,16}} & {{\ttfamily1,1,1,1,1,1,2,36}} & {{\ttfamily1,1,1,1,1,1,2,58}} & {{\ttfamily x}} & {{\ttfamily x}} \\ \hline
$\K_{44}$ & {{\ttfamily}} & {{\ttfamily}} & {{\ttfamily}} & {{\ttfamily x}} & {{\ttfamily x}} \\ \hline
$\K_{45}$ & {{\ttfamily1,1,1,1,1,1,17}} & {{\ttfamily1,1,1,1,1,1,2,38}} & {{\ttfamily1,1,1,1,1,1,2,61}} & {{\ttfamily x}} & {{\ttfamily x}} \\ \hline
$\K_{46}$ & {{\ttfamily}} & {{\ttfamily}} & {{\ttfamily}} & {{\ttfamily x}} & {{\ttfamily x}} \\ \hline
$\K_{47}$ & {{\ttfamily1,1,1,1,1,1,18}} & {{\ttfamily1,1,1,1,1,1,2,40}} & {{\ttfamily1,1,1,1,1,1,1,2,63}} & {{\ttfamily x}} & {{\ttfamily x}} \\ \hline
$\K_{48}$ & {{\ttfamily}} & {{\ttfamily}} & {{\ttfamily1}} & {{\ttfamily x}} & {{\ttfamily x}} \\ \hline
$\K_{49}$ & {{\ttfamily2,2,2,19}} & {{\ttfamily1,2,2,2,43}} & {{\ttfamily2,2,2,2,68}} & {{\ttfamily x}} & {{\ttfamily x}} \\ \hline
$\K_{50}$ & {{\ttfamily}} & {{\ttfamily}} & {{\ttfamily x}} & {{\ttfamily x}} & {{\ttfamily x}} \\ \hline
$\K_{51}$ & {{\ttfamily1,1,1,1,1,1,1,2,17}} & {{\ttfamily1,1,1,1,1,1,1,1,2,42}} & {{\ttfamily x}} & {{\ttfamily x}} & {{\ttfamily x}} \\ \hline
$\K_{52}$ & {{\ttfamily}} & {{\ttfamily}} & {{\ttfamily x}} & {{\ttfamily x}} & {{\ttfamily x}} \\ \hline
$\K_{53}$ & {{\ttfamily1,1,1,1,1,1,1,2,18}} & {{\ttfamily1,1,1,1,1,1,1,2,2,43}} & {{\ttfamily x}} & {{\ttfamily x}} & {{\ttfamily x}} \\ \hline
$\K_{54}$ & {{\ttfamily}} & {{\ttfamily}} & {{\ttfamily x}} & {{\ttfamily x}} & {{\ttfamily x}} \\ \hline
$\K_{55}$ & {{\ttfamily1,1,1,1,1,1,1,2,19}} & {{\ttfamily1,1,1,1,1,1,1,2,3,44}} & {{\ttfamily x}} & {{\ttfamily x}} & {{\ttfamily x}} \\ \hline
$\K_{56}$ & {{\ttfamily x}} & {{\ttfamily}} & {{\ttfamily x}} & {{\ttfamily x}} & {{\ttfamily x}} \\ \hline
$\K_{57}$ & {{\ttfamily x}} & {{\ttfamily1,1,1,1,1,1,2,3,47}} & {{\ttfamily x}} & {{\ttfamily x}} & {{\ttfamily x}} \\ \hline
$\K_{58}$ & {{\ttfamily}} & {{\ttfamily}} & {{\ttfamily x}} & {{\ttfamily x}} & {{\ttfamily x}} \\ \hline
$\K_{59}$ & {{\ttfamily1,2,2,3,22}} & {{\ttfamily1,1,1,2,2,4,49}} & {{\ttfamily x}} & {{\ttfamily x}} & {{\ttfamily x}} \\ \hline
$\K_{60}$ & {{\ttfamily}} & {{\ttfamily}} & {{\ttfamily x}} & {{\ttfamily x}} & {{\ttfamily x}} \\ \hline
$\K_{61}$ & {{\ttfamily1,1,1,1,1,1,1,1,2,21}} & {{\ttfamily1,1,1,1,1,1,1,1,2,3,49}} & {{\ttfamily x}} & {{\ttfamily x}} & {{\ttfamily x}} \\ \hline
$\K_{62}$ & {{\ttfamily}} & {{\ttfamily}} & {{\ttfamily x}} & {{\ttfamily x}} & {{\ttfamily x}} \\ \hline
$\K_{63}$ & {{\ttfamily1,1,1,1,1,1,1,1,1,2,21}} & {{\ttfamily1,1,1,1,1,1,1,1,2,2,2,50}} & {{\ttfamily x}} & {{\ttfamily x}} & {{\ttfamily x}} \\ \hline
$\K_{64}$ & {{\ttfamily}} & {{\ttfamily}} & {{\ttfamily x}} & {{\ttfamily x}} & {{\ttfamily x}} \\ \hline
$\K_{65}$ & {{\ttfamily1,1,1,1,1,1,1,1,2,2,21}} & {{\ttfamily1,1,1,1,1,1,1,2,2,2,53}} & {{\ttfamily x}} & {{\ttfamily x}} & {{\ttfamily x}} \\ \hline
$\K_{66}$ & {{\ttfamily}} & {{\ttfamily x}} & {{\ttfamily x}} & {{\ttfamily x}} & {{\ttfamily x}} \\ \hline
$\K_{67}$ & {{\ttfamily1,1,1,1,1,1,1,2,2,23}} & {{\ttfamily x}} & {{\ttfamily x}} & {{\ttfamily x}} & {{\ttfamily x}} \\ \hline
$\K_{68}$ & {{\ttfamily}} & {{\ttfamily x}} & {{\ttfamily x}} & {{\ttfamily x}} & {{\ttfamily x}} \\ \hline
$\K_{69}$ & {{\ttfamily1,2,2,3,3,24}} & {{\ttfamily x}} & {{\ttfamily x}} & {{\ttfamily x}} & {{\ttfamily x}} \\ \hline
$\K_{70}$ & {{\ttfamily}} & {{\ttfamily x}} & {{\ttfamily x}} & {{\ttfamily x}} & {{\ttfamily x}} \\ \hline
$\K_{71}$ & {{\ttfamily1,1,1,1,1,1,1,1,1,2,2,23}} & {{\ttfamily x}} & {{\ttfamily x}} & {{\ttfamily x}} & {{\ttfamily x}} \\ \hline
$\K_{72}$ & {{\ttfamily}} & {{\ttfamily x}} & {{\ttfamily x}} & {{\ttfamily x}} & {{\ttfamily x}} \\ \hline
$\K_{73}$ & {{\ttfamily1,1,1,1,1,1,1,1,1,2,2,24}} & {{\ttfamily x}} & {{\ttfamily x}} & {{\ttfamily x}} & {{\ttfamily x}} \\ \hline
$\K_{74}$ & {{\ttfamily}} & {{\ttfamily x}} & {{\ttfamily x}} & {{\ttfamily x}} & {{\ttfamily x}} \\ \hline
$\K_{75}$ & {{\ttfamily1,1,1,1,1,1,1,1,1,2,2,25}} & {{\ttfamily x}} & {{\ttfamily x}} & {{\ttfamily x}} & {{\ttfamily x}} \\ \hline
$\K_{76}$ & {{\ttfamily}} & {{\ttfamily x}} & {{\ttfamily x}} & {{\ttfamily x}} & {{\ttfamily x}} \\ \hline
$\K_{77}$ & {{\ttfamily1,1,1,1,1,1,1,1,1,2,2,26}} & {{\ttfamily x}} & {{\ttfamily x}} & {{\ttfamily x}} & {{\ttfamily x}} \\ \hline
$\K_{78}$ & {{\ttfamily}} & {{\ttfamily x}} & {{\ttfamily x}} & {{\ttfamily x}} & {{\ttfamily x}} \\ \hline
$\K_{79}$ & {{\ttfamily1,2,2,2,3,3,27}} & {{\ttfamily x}} & {{\ttfamily x}} & {{\ttfamily x}} & {{\ttfamily x}} \\ \hline
$\K_{80}$ & {{\ttfamily}} & {{\ttfamily x}} & {{\ttfamily x}} & {{\ttfamily x}} & {{\ttfamily x}} \\ \hline
$\K_{81}$ & {{\ttfamily1,1,1,1,1,1,1,1,1,1,1,2,2,26}} & {{\ttfamily x}} & {{\ttfamily x}} & {{\ttfamily x}} & {{\ttfamily x}} \\ \hline
$\K_{82}$ & {{\ttfamily}} & {{\ttfamily x}} & {{\ttfamily x}} & {{\ttfamily x}} & {{\ttfamily x}} \\ \hline
$\K_{83}$ & {{\ttfamily1,1,1,1,1,1,1,1,1,1,2,2,28}} & {{\ttfamily x}} & {{\ttfamily x}} & {{\ttfamily x}} & {{\ttfamily x}} \\ \hline
$\K_{84}$ & {{\ttfamily}} & {{\ttfamily x}} & {{\ttfamily x}} & {{\ttfamily x}} & {{\ttfamily x}} \\ \hline
$\K_{85}$ & {{\ttfamily1,1,1,1,1,1,1,1,1,1,2,2,29}} & {{\ttfamily x}} & {{\ttfamily x}} & {{\ttfamily x}} & {{\ttfamily x}} \\ \hline
$\K_{86}$ & {{\ttfamily}} & {{\ttfamily x}} & {{\ttfamily x}} & {{\ttfamily x}} & {{\ttfamily x}} \\ \hline
$\K_{87}$ & {{\ttfamily1,1,1,1,1,1,1,1,1,1,1,1,2,30}} & {{\ttfamily x}} & {{\ttfamily x}} & {{\ttfamily x}} & {{\ttfamily x}} \\ \hline
\end{tabular}}

\subsection{7.1.0.1: $\bZ/7^n$}
\scalebox{.6}{
\begin{tabular}{|r||r|r|r|r|r|r|} \hline
$\K_{{r}}\backslash n$ & $\bZ/7^2$ & $\bZ/7^3$ & $\bZ/7^4$ & $\bZ/7^5$ & $\bZ/7^6$ & $\bZ/7^7$ \\ \hline \hline
$\K_{1}$ & {{\ttfamily1}} & {{\ttfamily2}} & {{\ttfamily3}} & {{\ttfamily4}} & {{\ttfamily5}} & {{\ttfamily6}} \\ \hline
$\K_{2}$ & {{\ttfamily}} & {{\ttfamily}} & {{\ttfamily}} & {{\ttfamily}} & {{\ttfamily}} & {{\ttfamily}} \\ \hline
$\K_{3}$ & {{\ttfamily2}} & {{\ttfamily4}} & {{\ttfamily6}} & {{\ttfamily8}} & {{\ttfamily10}} & {{\ttfamily12}} \\ \hline
$\K_{4}$ & {{\ttfamily}} & {{\ttfamily}} & {{\ttfamily}} & {{\ttfamily}} & {{\ttfamily}} & {{\ttfamily}} \\ \hline
$\K_{5}$ & {{\ttfamily3}} & {{\ttfamily6}} & {{\ttfamily9}} & {{\ttfamily12}} & {{\ttfamily15}} & {{\ttfamily18}} \\ \hline
$\K_{6}$ & {{\ttfamily}} & {{\ttfamily}} & {{\ttfamily}} & {{\ttfamily}} & {{\ttfamily}} & {{\ttfamily}} \\ \hline
$\K_{7}$ & {{\ttfamily4}} & {{\ttfamily8}} & {{\ttfamily12}} & {{\ttfamily16}} & {{\ttfamily20}} & {{\ttfamily24}} \\ \hline
$\K_{8}$ & {{\ttfamily}} & {{\ttfamily}} & {{\ttfamily}} & {{\ttfamily}} & {{\ttfamily}} & {{\ttfamily}} \\ \hline
$\K_{9}$ & {{\ttfamily5}} & {{\ttfamily10}} & {{\ttfamily15}} & {{\ttfamily20}} & {{\ttfamily25}} & {{\ttfamily30}} \\ \hline
$\K_{10}$ & {{\ttfamily}} & {{\ttfamily}} & {{\ttfamily}} & {{\ttfamily}} & {{\ttfamily}} & {{\ttfamily}} \\ \hline
$\K_{11}$ & {{\ttfamily1,5}} & {{\ttfamily1,11}} & {{\ttfamily1,17}} & {{\ttfamily1,23}} & {{\ttfamily1,29}} & {{\ttfamily1,35}} \\ \hline
$\K_{12}$ & {{\ttfamily1}} & {{\ttfamily1}} & {{\ttfamily1}} & {{\ttfamily1}} & {{\ttfamily1}} & {{\ttfamily1}} \\ \hline
$\K_{13}$ & {{\ttfamily8}} & {{\ttfamily15}} & {{\ttfamily22}} & {{\ttfamily29}} & {{\ttfamily36}} & {{\ttfamily43}} \\ \hline
$\K_{14}$ & {{\ttfamily}} & {{\ttfamily}} & {{\ttfamily}} & {{\ttfamily}} & {{\ttfamily}} & {{\ttfamily}} \\ \hline
$\K_{15}$ & {{\ttfamily1,7}} & {{\ttfamily2,14}} & {{\ttfamily2,22}} & {{\ttfamily2,30}} & {{\ttfamily2,38}} & {{\ttfamily2,46}} \\ \hline
$\K_{16}$ & {{\ttfamily}} & {{\ttfamily}} & {{\ttfamily}} & {{\ttfamily}} & {{\ttfamily}} & {{\ttfamily x}} \\ \hline
$\K_{17}$ & {{\ttfamily1,1,7}} & {{\ttfamily1,1,16}} & {{\ttfamily1,1,25}} & {{\ttfamily1,1,34}} & {{\ttfamily1,1,43}} & {{\ttfamily x}} \\ \hline
$\K_{18}$ & {{\ttfamily}} & {{\ttfamily}} & {{\ttfamily}} & {{\ttfamily}} & {{\ttfamily x}} & {{\ttfamily x}} \\ \hline
$\K_{19}$ & {{\ttfamily1,1,8}} & {{\ttfamily1,1,18}} & {{\ttfamily1,1,28}} & {{\ttfamily1,1,38}} & {{\ttfamily x}} & {{\ttfamily x}} \\ \hline
$\K_{20}$ & {{\ttfamily}} & {{\ttfamily}} & {{\ttfamily}} & {{\ttfamily}} & {{\ttfamily x}} & {{\ttfamily x}} \\ \hline
$\K_{21}$ & {{\ttfamily1,1,9}} & {{\ttfamily1,1,20}} & {{\ttfamily1,1,31}} & {{\ttfamily1,1,42}} & {{\ttfamily x}} & {{\ttfamily x}} \\ \hline
$\K_{22}$ & {{\ttfamily}} & {{\ttfamily}} & {{\ttfamily}} & {{\ttfamily x}} & {{\ttfamily x}} & {{\ttfamily x}} \\ \hline
$\K_{23}$ & {{\ttfamily1,1,10}} & {{\ttfamily1,1,1,21}} & {{\ttfamily1,1,1,33}} & {{\ttfamily x}} & {{\ttfamily x}} & {{\ttfamily x}} \\ \hline
$\K_{24}$ & {{\ttfamily}} & {{\ttfamily1}} & {{\ttfamily1}} & {{\ttfamily x}} & {{\ttfamily x}} & {{\ttfamily x}} \\ \hline
$\K_{25}$ & {{\ttfamily1,12}} & {{\ttfamily1,1,25}} & {{\ttfamily1,1,38}} & {{\ttfamily x}} & {{\ttfamily x}} & {{\ttfamily x}} \\ \hline
$\K_{26}$ & {{\ttfamily}} & {{\ttfamily}} & {{\ttfamily}} & {{\ttfamily x}} & {{\ttfamily x}} & {{\ttfamily x}} \\ \hline
$\K_{27}$ & {{\ttfamily1,13}} & {{\ttfamily1,1,26}} & {{\ttfamily1,1,40}} & {{\ttfamily x}} & {{\ttfamily x}} & {{\ttfamily x}} \\ \hline
$\K_{28}$ & {{\ttfamily}} & {{\ttfamily}} & {{\ttfamily x}} & {{\ttfamily x}} & {{\ttfamily x}} & {{\ttfamily x}} \\ \hline
$\K_{29}$ & {{\ttfamily1,1,1,12}} & {{\ttfamily1,1,2,26}} & {{\ttfamily x}} & {{\ttfamily x}} & {{\ttfamily x}} & {{\ttfamily x}} \\ \hline
$\K_{30}$ & {{\ttfamily}} & {{\ttfamily}} & {{\ttfamily x}} & {{\ttfamily x}} & {{\ttfamily x}} & {{\ttfamily x}} \\ \hline
$\K_{31}$ & {{\ttfamily1,1,1,13}} & {{\ttfamily1,1,2,28}} & {{\ttfamily x}} & {{\ttfamily x}} & {{\ttfamily x}} & {{\ttfamily x}} \\ \hline
$\K_{32}$ & {{\ttfamily}} & {{\ttfamily}} & {{\ttfamily x}} & {{\ttfamily x}} & {{\ttfamily x}} & {{\ttfamily x}} \\ \hline
$\K_{33}$ & {{\ttfamily1,1,1,1,13}} & {{\ttfamily1,1,1,1,30}} & {{\ttfamily x}} & {{\ttfamily x}} & {{\ttfamily x}} & {{\ttfamily x}} \\ \hline
$\K_{34}$ & {{\ttfamily}} & {{\ttfamily}} & {{\ttfamily x}} & {{\ttfamily x}} & {{\ttfamily x}} & {{\ttfamily x}} \\ \hline
$\K_{35}$ & {{\ttfamily1,1,1,1,14}} & {{\ttfamily1,1,1,1,1,31}} & {{\ttfamily x}} & {{\ttfamily x}} & {{\ttfamily x}} & {{\ttfamily x}} \\ \hline
$\K_{36}$ & {{\ttfamily}} & {{\ttfamily x}} & {{\ttfamily x}} & {{\ttfamily x}} & {{\ttfamily x}} & {{\ttfamily x}} \\ \hline
$\K_{37}$ & {{\ttfamily1,1,1,16}} & {{\ttfamily x}} & {{\ttfamily x}} & {{\ttfamily x}} & {{\ttfamily x}} & {{\ttfamily x}} \\ \hline
$\K_{38}$ & {{\ttfamily}} & {{\ttfamily x}} & {{\ttfamily x}} & {{\ttfamily x}} & {{\ttfamily x}} & {{\ttfamily x}} \\ \hline
$\K_{39}$ & {{\ttfamily1,1,1,17}} & {{\ttfamily x}} & {{\ttfamily x}} & {{\ttfamily x}} & {{\ttfamily x}} & {{\ttfamily x}} \\ \hline
$\K_{40}$ & {{\ttfamily}} & {{\ttfamily x}} & {{\ttfamily x}} & {{\ttfamily x}} & {{\ttfamily x}} & {{\ttfamily x}} \\ \hline
$\K_{41}$ & {{\ttfamily1,2,18}} & {{\ttfamily x}} & {{\ttfamily x}} & {{\ttfamily x}} & {{\ttfamily x}} & {{\ttfamily x}} \\ \hline
$\K_{42}$ & {{\ttfamily}} & {{\ttfamily x}} & {{\ttfamily x}} & {{\ttfamily x}} & {{\ttfamily x}} & {{\ttfamily x}} \\ \hline
$\K_{43}$ & {{\ttfamily1,1,1,1,1,17}} & {{\ttfamily x}} & {{\ttfamily x}} & {{\ttfamily x}} & {{\ttfamily x}} & {{\ttfamily x}} \\ \hline
$\K_{44}$ & {{\ttfamily}} & {{\ttfamily x}} & {{\ttfamily x}} & {{\ttfamily x}} & {{\ttfamily x}} & {{\ttfamily x}} \\ \hline
$\K_{45}$ & {{\ttfamily1,1,1,1,1,18}} & {{\ttfamily x}} & {{\ttfamily x}} & {{\ttfamily x}} & {{\ttfamily x}} & {{\ttfamily x}} \\ \hline
$\K_{46}$ & {{\ttfamily}} & {{\ttfamily x}} & {{\ttfamily x}} & {{\ttfamily x}} & {{\ttfamily x}} & {{\ttfamily x}} \\ \hline
$\K_{47}$ & {{\ttfamily1,1,1,1,1,19}} & {{\ttfamily x}} & {{\ttfamily x}} & {{\ttfamily x}} & {{\ttfamily x}} & {{\ttfamily x}} \\ \hline
$\K_{48}$ & {{\ttfamily}} & {{\ttfamily x}} & {{\ttfamily x}} & {{\ttfamily x}} & {{\ttfamily x}} & {{\ttfamily x}} \\ \hline
$\K_{49}$ & {{\ttfamily1,1,1,1,1,20}} & {{\ttfamily x}} & {{\ttfamily x}} & {{\ttfamily x}} & {{\ttfamily x}} & {{\ttfamily x}} \\ \hline
$\K_{50}$ & {{\ttfamily}} & {{\ttfamily x}} & {{\ttfamily x}} & {{\ttfamily x}} & {{\ttfamily x}} & {{\ttfamily x}} \\ \hline
$\K_{51}$ & {{\ttfamily1,1,1,1,1,21}} & {{\ttfamily x}} & {{\ttfamily x}} & {{\ttfamily x}} & {{\ttfamily x}} & {{\ttfamily x}} \\ \hline
\end{tabular}}

\bibliographystyle{amsplain}
\addcontentsline{toc}{section}{References}
\bibliography{kzpn}

\providecommand{\bysame}{\leavevmode\hbox to3em{\hrulefill}\thinspace}
\providecommand{\MR}{\relax\ifhmode\unskip\space\fi MR }
\providecommand{\MRhref}[2]{%
  \href{http://www.ams.org/mathscinet-getitem?mr=#1}{#2}
}
\providecommand{\href}[2]{#2}
\begin{thebibliography}{10}

\bibitem{aisbett-lluis-puebla-snaith}
Janet~E. Aisbett, Emilio Lluis-Puebla, and Victor Snaith, \emph{On
  {$K_*(\mathbf{Z}/n)$} and {$K_*(\mathbf{F}_q[t]/(t^2))$}}, Mem. Amer. Math.
  Soc. \textbf{57} (1985), no.~329, vi+200, With an appendix by Christophe
  Soul\'{e}. \MR{803974}

\bibitem{angeltveit}
Vigleik Angeltveit, \emph{On the algebraic {K}-theory of {W}itt vectors of
  finite length}, arXiv preprint arXiv:1101.1866 (2011).

\bibitem{akn-announcement}
Benjamin Antieau, Achim Krause, and Thomas Nikolaus, \emph{On the {K}-theory of
  {$\bZ/p^n$}--announcement}, arXiv preprint arXiv:2204.03420 (2022).

\bibitem{akn-delta}
\bysame, \emph{Prismatic cohomology relative to $\delta$-rings}, arXiv preprint
  arXiv:2310.12770 (2023).

\bibitem{akn_syn_coh}
\bysame, \emph{{\ttfamily KZPN} ({\ttfamily v0.2})},
  \url{https://github.com/antieau/kzpn} (2024).

\bibitem{ammn}
Benjamin Antieau, Akhil Mathew, Matthew Morrow, and Thomas Nikolaus, \emph{On
  the {B}eilinson fiber square}, Duke Math. J. \textbf{171} (2022), no.~18,
  3707--3806. \MR{4516307}

\bibitem{bhatt-clausen-mathew}
Bhargav Bhatt, Dustin Clausen, and Akhil Mathew, \emph{Remarks on {$K
  (1)$}-local {$K$}-theory}, Selecta Math. (N.S.) \textbf{26} (2020), no.~3,
  Paper No. 39, 16. \MR{4110725}

\bibitem{bhatt-lurie-apc}
Bhargav Bhatt and Jacob Lurie, \emph{Absolute prismatic cohomology}, arXiv
  preprint arXiv:2201.06120 (2022).

\bibitem{bhatt-lurie-prism}
\bysame, \emph{The prismatization of $ p $-adic formal schemes}, arXiv preprint
  arXiv:2201.06124 (2022).

\bibitem{bms2}
Bhargav Bhatt, Matthew Morrow, and Peter Scholze, \emph{Topological
  {H}ochschild homology and integral {$p$}-adic {H}odge theory}, Publ. Math.
  Inst. Hautes \'{E}tudes Sci. \textbf{129} (2019), 199--310. \MR{3949030}

\bibitem{prisms}
Bhargav Bhatt and Peter Scholze, \emph{Prisms and prismatic cohomology}, Ann.
  of Math. (2) \textbf{196} (2022), no.~3, 1135--1275. \MR{4502597}

\bibitem{brown-computability}
Edgar~H. Brown, Jr., \emph{Finite computability of {P}ostnikov complexes}, Ann.
  of Math. (2) \textbf{65} (1957), 1--20. \MR{83733}

\bibitem{brown-matrices}
William~C. Brown, \emph{Matrices over commutative rings}, Monographs and
  Textbooks in Pure and Applied Mathematics, vol. 169, Marcel Dekker, Inc., New
  York, 1993. \MR{1200234}

\bibitem{brun}
Morten Brun, \emph{Filtered topological cyclic homology and relative
  {$K$}-theory of nilpotent ideals}, Algebr. Geom. Topol. \textbf{1} (2001),
  201--230. \MR{1823499}

\bibitem{clark-liang}
W.~Edwin Clark and Joseph~J. Liang, \emph{Enumeration of finite commutative
  chain rings}, J. Algebra \textbf{27} (1973), 445--453. \MR{337910}

\bibitem{cmm}
Dustin Clausen, Akhil Mathew, and Matthew Morrow, \emph{{$K$}-theory and
  topological cyclic homology of henselian pairs}, J. Amer. Math. Soc.
  \textbf{34} (2021), no.~2, 411--473. \MR{4280864}

\bibitem{kenzo-sage}
Juli\'{a}n Cuevas-Rozo, Jose Divas\'{o}n, Miguel Marco-Buzun\'{a}riz, and Ana
  Romero, \emph{A {K}enzo interface for algebraic topology computations in
  {S}age{M}ath}, ACM Commun. Comput. Algebra \textbf{53} (2019), no.~2, 61--64.
  \MR{4033388}

\bibitem{deligne-illusie}
Pierre Deligne and Luc Illusie, \emph{Rel\`evements modulo {$p^2$} et
  d\'{e}composition du complexe de de {R}ham}, Invent. Math. \textbf{89}
  (1987), no.~2, 247--270. \MR{894379}

\bibitem{dennis-stein}
R.~Keith Dennis and Michael~R. Stein, \emph{{$K_{2}$} of discrete valuation
  rings}, Advances in Math. \textbf{18} (1975), no.~2, 182--238. \MR{437620}

\bibitem{dundas-goodwillie-mccarthy}
Bj\o rn~Ian Dundas, Thomas~G. Goodwillie, and Randy McCarthy, \emph{The local
  structure of algebraic {K}-theory}, Algebra and Applications, vol.~18,
  Springer-Verlag London, Ltd., London, 2013. \MR{3013261}

\bibitem{evens-friedlander}
Leonard Evens and Eric~M. Friedlander, \emph{On {$K_\ast(\bZ/p^2)$} and related
  homology groups}, Trans. Amer. Math. Soc. \textbf{270} (1982), no.~1, 1--46.
  \MR{642328}

\bibitem{fontaine-messing}
Jean-Marc Fontaine and William Messing, \emph{{$p$}-adic periods and {$p$}-adic
  \'{e}tale cohomology}, Current trends in arithmetical algebraic geometry
  ({A}rcata, {C}alif., 1985), Contemp. Math., vol.~67, Amer. Math. Soc.,
  Providence, RI, 1987, pp.~179--207. \MR{902593}

\bibitem{gersten-problems}
S.~M. Gersten, \emph{Problems about higher {$K$}-functors}, Algebraic
  {$K$}-theory, {I}: {H}igher {$K$}-theories ({P}roc. {C}onf., {B}attelle
  {M}emorial {I}nst., {S}eattle, {W}ash., 1972), 1973, pp.~43--56. Lecture
  Notes in Math., Vol. 341. \MR{0338125}

\bibitem{gros-stum-quiros}
Michel Gros, Bernard~Le Stum, and Adolfo Quir{\'o}s, \emph{Absolute calculus
  and prismatic crystals on cyclotomic rings}, arXiv preprint arXiv:2310.13790
  (2023).

\bibitem{hesselholt-madsen-truncated}
Lars Hesselholt and Ib~Madsen, \emph{Cyclic polytopes and the {$K$}-theory of
  truncated polynomial algebras}, Invent. Math. \textbf{130} (1997), no.~1,
  73--97. \MR{1471886}

\bibitem{hesselholt-madsen-finite}
\bysame, \emph{On the {$K$}-theory of finite algebras over {W}itt vectors of
  perfect fields}, Topology \textbf{36} (1997), no.~1, 29--101. \MR{1410465}

\bibitem{hesselholt-madsen}
\bysame, \emph{On the {$K$}-theory of local fields}, Ann. of Math. (2)
  \textbf{158} (2003), no.~1, 1--113. \MR{1998478}

\bibitem{jardine-quillen-revisited}
J.~F. Jardine, \emph{The {$K$}-theory of finite fields, revisited}, $K$-Theory
  \textbf{7} (1993), no.~6, 579--595. \MR{1268594}

\bibitem{kato-vanishing}
Kazuya Kato, \emph{On {$p$}-adic vanishing cycles (application of ideas of
  {F}ontaine-{M}essing)}, Algebraic geometry, {S}endai, 1985, Adv. Stud. Pure
  Math., vol.~10, North-Holland, Amsterdam, 1987, pp.~207--251. \MR{946241}

\bibitem{krause-nikolaus}
Achim Krause and Thomas Nikolaus, \emph{B\"{o}kstedt periodicity and quotients
  of {DVR}s}, Compos. Math. \textbf{158} (2022), no.~8, 1683--1712.
  \MR{4490929}

\bibitem{land-mathew-meier-tamme}
Markus Land, Akhil Mathew, Lennart Meier, and Georg Tamme, \emph{Purity in
  chromatically localized algebraic {K}-theory}, arXiv preprint
  arXiv:2001.10425 (2020).

\bibitem{li-liu}
Shizhang Li and Tong Liu, \emph{Comparison of prismatic cohomology and derived
  de {R}ham cohomology}, Journal of the European Mathematical Society (Online
  First) (2023).

\bibitem{liu-wang}
Ruochuan Liu and Guozhen Wang, \emph{Topological cyclic homology of local
  fields}, Invent. Math. \textbf{230} (2022), no.~2, 851--932. \MR{4493328}

\bibitem{lmfdb}
The {LMFDB Collaboration}, \emph{The {L}-functions and modular forms database},
  \url{http://www.lmfdb.org}, 2022, [Online; accessed 22 March 2022].

\bibitem{mathew-survey}
Akhil Mathew, \emph{Some recent advances in topological {H}ochschild homology},
  Bull. Lond. Math. Soc. \textbf{54} (2022), no.~1, 1--44. \MR{4396920}

\bibitem{nikolaus-scholze}
Thomas Nikolaus and Peter Scholze, \emph{On topological cyclic homology}, Acta
  Math. \textbf{221} (2018), no.~2, 203--409. \MR{3904731}

\bibitem{Qui}
Daniel Quillen, \emph{On the cohomology and {$K$}-theory of the general linear
  groups over a finite field}, Ann. of Math. (2) \textbf{96} (1972), 552--586.
  \MR{0315016}

\bibitem{ravenel-green}
Douglas~C. Ravenel, \emph{Complex cobordism and stable homotopy groups of
  spheres}, Pure and Applied Mathematics, vol. 121, Academic Press, Inc.,
  Orlando, FL, 1986. \MR{860042}

\bibitem{riggenbach-truncated}
Noah Riggenbach, \emph{{$K$}-theory of truncated polynomials}, arXiv preprint
  arXiv:2211.11110 (2022).

\bibitem{rubio-sergeraert-constructive}
Julio Rubio and Francis Sergeraert, \emph{Constructive algebraic topology},
  Bull. Sci. Math. \textbf{126} (2002), no.~5, 389--412. \MR{1914728}

\bibitem{speirs}
Martin Speirs, \emph{On the {$K$}-theory of truncated polynomial algebras,
  revisited}, Adv. Math. \textbf{366} (2020), 107083, 18. \MR{4070307}

\bibitem{sulyma}
Yuri J.~F. Sulyma, \emph{Floor, ceiling, slopes, and {$K$}-theory}, Ann.
  K-Theory \textbf{8} (2023), no.~3, 331--354. \MR{4632370}

\bibitem{suslin-stability}
A.~A. Suslin, \emph{Stability in algebraic {$K$}-theory}, Algebraic
  {$K$}-theory, {P}art {I} ({O}berwolfach, 1980), Lecture Notes in Math., vol.
  966, Springer, Berlin, 1982, pp.~304--333. \MR{689381}

\bibitem{sagemath}
{The Sage Developers}, \emph{{S}age{M}ath, the {S}age {M}athematics {S}oftware
  {S}ystem ({V}ersion 10.2)}, 2023, {\tt https://www.sagemath.org}.

\bibitem{thomason-etale}
R.~W. Thomason, \emph{Algebraic {$K$}-theory and \'{e}tale cohomology}, Ann.
  Sci. \'{E}cole Norm. Sup. (4) \textbf{18} (1985), no.~3, 437--552.
  \MR{826102}

\bibitem{vanderkallen-stability}
Wilberd van~der Kallen, \emph{Homology stability for linear groups}, Invent.
  Math. \textbf{60} (1980), no.~3, 269--295. \MR{586429}

\bibitem{weibel-kbook}
Charles~A. Weibel, \emph{The {$K$}-book}, Graduate Studies in Mathematics, vol.
  145, American Mathematical Society, Providence, RI, 2013. \MR{3076731}

\end{thebibliography}

\medskip
\noindent
\textsc{Department of Mathematics, Northwestern University}\\
{\ttfamily antieau@northwestern.edu}

\medskip
\noindent
\textsc{FB Mathematik und Informatik, Universit\"at M\"unster}\\
{\ttfamily krauseac@uni-muenster.de}

\medskip
\noindent
\textsc{FB Mathematik und Informatik, Universit\"at M\"unster}\\
{\ttfamily nikolaus@uni-muenster.de}

\end{document}